\newcommand{\C}{\mathbb{C}}
\newcommand{\R}{\mathbb{R}}
\newcommand{\SL}{\operatorname{SL}}
\newcommand{\SU}{\operatorname{SU}}
\newcommand{\PSL}{\operatorname{PSL}}
\newcommand{\slg}{\operatorname{sl}}
\newcommand{\su}{\operatorname{su}}
\newcommand{\mathcalD}{{\mathcal D}}
\newcommand{\ord}{\operatorname{ord}}
\renewcommand{\Re}{\operatorname{Re}}
\renewcommand{\Im}{\operatorname{Im}}
\renewcommand{\Im}{\operatorname{Im}}
\newcommand{\ef}{\mathfrak{f}}   
   \newtheorem{theorem}{Theorem}[section]
   \newtheorem{proposition}[theorem]{Proposition}
   \newtheorem{corollary}[theorem]{Corollary}
   \newtheorem{lemma}[theorem]{Lemma}
   \newtheorem{defn}[theorem]{Definition}
 \theoremstyle{remark}
   \newtheorem{example}[theorem]{Example}
   \newtheorem{remark}[theorem]{Remark}
\numberwithin{equation}{section}
\begin{document}
\begin{titlepage}
\begin{center}
{\Huge\bf Discrete Constant Mean} \\  \vspace{10pt}
{\Huge\bf Curvature Surfaces} \\ \vspace{10pt}
{\Huge\bf via Conserved Quantities} \\ 
\vspace{36pt}
{\LARGE Wayne Rossman} \\
\vspace{96pt}

\newpage

\section*{Forward}

\vspace{0.4in}

These notes are about discrete constant mean curvature surfaces 
defined by an approach related to 
integrable systems techniques.  We introduce the 
notion of discrete constant mean curvature surfaces by 
first introducing properties of 
smooth constant mean curvature surfaces.  We 
describe the mathematical structure of the smooth 
surfaces using conserved quantities, which can 
be converted into a discrete theory in a natural way.  

\vspace{0.4in}

About referencing: 
We do not attempt to give a complete reference list, and omit 
what is already referenced in \cite{wisky}.  We 
list only articles referenced in the body of the text, or that were 
written after \cite{wisky} was published, or were otherwise not 
included in the reference list in \cite{wisky}, or that were 
referenced in \cite{wisky} but need to be updated.  

\vspace{0.4in}

About using quaternions: In following with the historical 
development of the field, we use a model that involves quaternions.  However, the 
use of a more standard model has some advantages, as it can be applied in more 
general dimensions and settings (see Chapter 
\ref{discretemaximalsurface} here, for example), and sometimes 
gives less cluttered computations.  It would be a good exercise to convert 
this text into one involving a 
more standard quaternion-free model, but we do not do that here 
(see \cite{BHRS}), and instead only make 
occasional comments about this.  

\vspace{0.4in}

Acknowledgements: Primary thanks must go to Udo Hertrich-Jeromin, 
who carefully and patiently taught the author more than half 
of the material in this text.  The author also owes thanks to 
many others for numerous mathematical tips: Fran Burstall, Tim Hoffmann, 
Boris Springborn, Ulrich Pinkall, Masaaki Umehara, Kotaro Yamada, 
Takeshi Sasaki, Masaaki Yoshida, Masatoshi Kokubu, 
Shoichi Fujimori, Shimpei Kobayashi, Yusuke Kinoshita 
and Tetsuhiro Tachiyama amongst them.

\vspace{0.4in}

Also, the author would like to express his gratitude to the Kyushu University 
GCOE program for giving him the opportunity to present the material in this 
text in a short course at Kyushu University in February of 2010.  

\vspace{0.4in}

It goes without saying that I, the author, am solely responsible for 
choices of approaches and for any possible errors.

\end{center}
\end{titlepage}

\pagenumbering{roman}
\setcounter{page}{2}

\tableofcontents

\newpage

\pagenumbering{arabic}

\section{Motivations for studying CMC surfaces}
\label{chapter0}

These notes are about surfaces of constant mean curvature, or, more briefly, 
"CMC" surfaces.  In particular, we will focus on discrete versions of 
CMC surfaces.  However, it is useful to first take a close look at 
the smooth case, so let us start there.  

Smooth CMC surfaces can be thought of as mathematical models for 
soap films, or we might say that they 
are "mathematically perfect" soap films.  
Saying that CMC surfaces are models for soap films is certainly not a rigorous 
mathematical definition, but it is a good starting point for 
appreciating why CMC surfaces are interesting objects.  In fact, it would be 
impossible to explain why mathematicians have put so much effort 
into understanding 
CMC surfaces without discussing soap films, or interfaces between 
fluids, or some 
other similar idea.  Even though modern-day research on CMC 
surfaces might not always 
relate immediately to soap films, the notion of soap films is 
invariably lurking in 
the background.  So let this be our first informal definition: 
\begin{quote} {\em CMC surfaces are soap films.}  
\end{quote} 
In fact, CMC surfaces are defined to be 
those surfaces whose mean curvature is constant, as their name suggests.  
But we save a rigorous definition of mean curvature for later.  This rigorous 
definition is locally equivalent to the above informal definition, 
and we also explain 
this later.  

\subsection{Soap films} 
A soap film forms a surface that minimizes area with respect to some given 
constraints, and it is the constraints that determine which soap film will be 
formed.  Let us give some examples, all of which can be physically 
constructed if 
one has the necessary ingredients: 
\begin{enumerate}
\item If one puts a circular wire ring 
into a fluid soap solution and then extracts it, one obtains a soap film that 
is a flat planar disk with this ring as its boundary.  Here 
the only constraint 
on this soap film is its boundary, which is fixed to be a round 
circle, and this 
boundary constraint then determines the resulting soap film (the flat planar 
disk).  
\item Blowing sufficiently hard on the above flat round disk in item (1) above 
would cause this soap film to break free of the circular ring and become a 
free floating round sphere.  (This activity is a common pastime for 
young children.)  This sphere contains a pocket of air of a certain 
volume, and since this air cannot escape to the other side of the 
soap film, this volume is fixed.  Here the only constraint on this 
soap film is the 
fixed volume it contains.  With respect to this volume constraint, the 
soap film minimizes its area, and the round sphere is the unique 
shape that accomplishes this.  
\item Taking two circular wire rings of the same radius, we can produced two 
flat soap films in the shapes of round disks, as in item (1).  
Putting these two disks together so that they coincide and then 
pulling them slightly 
apart in the direction perpendicular to the planes they lie in 
results in a soap film 
that has three smooth pieces meeting along a singular round 
circle.  Two of the 
smooth pieces are surfaces of revolution and are reflections of 
each other across the 
plane that is midway between the two parallel planes containing 
the two circular wire 
rings.  The third smooth piece is a flat planar disk contained 
in that plane of 
reflection.  If one pushes a dry pointed object (such as a pencil) 
into the third 
smooth piece, then the soap film will instantly pop into a single smooth 
anvil-shaped surface of revolution.  This last soap film is 
called a catenoid.  
It is determined by its boundary constraint, which is two 
fixed circular wire rings.  
\item Taking the catenoidal soap film in the previous example, 
we can place two 
flat plastic disks so that they fill the planar regions 
inside the two boundary 
circular wires.  We have then trapped air inside the catenoid.  
Making a small hole 
in one of the plastic disks and pumping more air into this 
interior region (through that hole), the 
sides of the anvil-shaped catenoid will expand to accommodate 
the increase of volume 
inside.  If just the right amount of air is pumped in (and if 
the two boundary circular 
wires are not too far from each other), the soap film will 
become exactly a portion of 
a round cylinder.  Thus the round cylinder can be made 
using a soap film.  In this 
case there are two constraints.  One constraint is the 
fixed boundary (two circular 
wire rings in parallel planes), and the other is the fixed volume (inside the 
cylinder).  Other surfaces of revolution can be made from soap films in this 
way by pumping air into the interior region, and these surfaces turn out to be 
portions of Delaunay surfaces, which we have described in detail 
in \cite{wisky}.  

\end{enumerate}
These examples show that the flat plane, the round sphere, the 
catenoid and the round 
cylinder are all CMC surfaces.  

\begin{figure}[phbt]
\begin{center}
\includegraphics[width=1.0\linewidth]{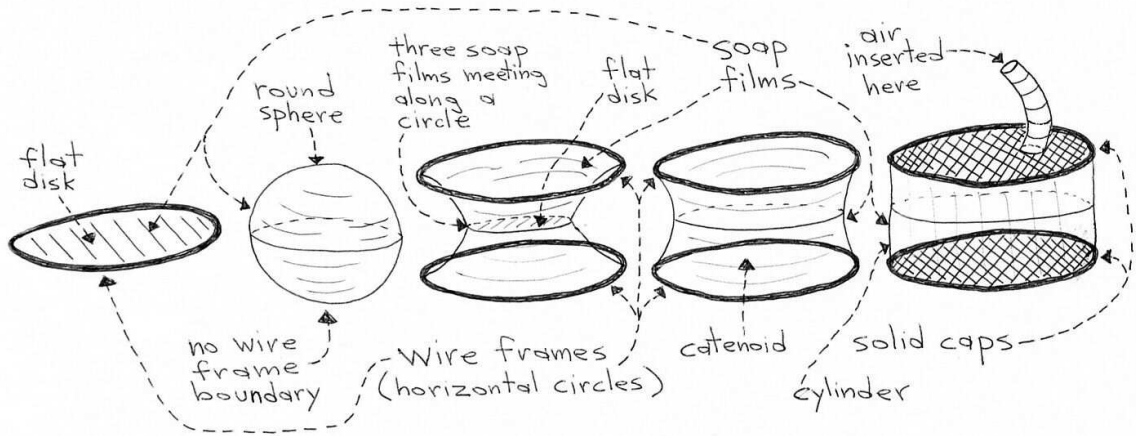}
\caption{The soap films described in items (1), (2), (3) and (4) at the 
beginning of Chapter \ref{chapter0}.}
\end{center}
\end{figure}

Amongst the four examples above, only the second and fourth ones 
have any volume 
constraints.  The volume constraints in these two cases are that the volume to 
one side of the surface is constrained to be a fixed quantity.  
In the case that there are only boundary constraints and no volume constraints 
(as in the first and 
third examples), the resulting soap film is a special case of a 
CMC surface that is 
called a minimal surface.  Thus the flat plane and catenoid are 
minimal surfaces.  
In the case that there are volume constraints (as in the second and 
fourth examples), the resulting soap film is a non-minimal CMC 
surface.  Thus the 
round sphere and round cylinder are non-minimal CMC surfaces.  

\subsection{Interfaces} 
More generally, CMC surfaces are models for the interface between two 
distinct uniform fluids.  For example, when one pours 
some lighter-than-water oil into a cup of water, the oil will rise to the top 
and the interface between the oil and the 
water will become a flat horizontal plane, a minimal surface.  If one has 
two types of oils of equal density that do not like to interact 
with each other, and 
one puts a small amount of one type into a glass container filled 
with the other type, then 
the first type will take the shape of a round ball floating in 
the other type.  Since this 
ball is round, the interface between the two oils will be the CMC surface 
that is a round sphere.  (In the presence of gravity, the interface between 
two distinct uniform non-interacting fluids can be a more general 
type of surface called a capillary surface, not always a 
CMC surface.  Robert Finn has done much work on capillary surfaces; see 
\cite{Finn1}, \cite{Finn2}, \cite{Finn3}, \cite{Finn4} for nice 
introductions to the subject.)

\subsection{Variational property} 
That soap films minimize area with respect to some given 
constraints is called a {\em variational} property, because this 
minimization property 
can be rephrased in the following way: 
If one continuously {\em varies} (deforms) the soap film so 
that its given constraints are 
preserved, then the area of the soap film will increase.  Thus soap films 
minimize area under continuous {\em variations} that preserve the 
constraints.  
Once we give a formal definition of CMC surfaces, we will see that 
CMC surfaces 
are a larger class of surfaces than soap films, in part because 
CMC surfaces include 
nonphysical objects called "unstable" soap films, and so the 
above statement is not 
strictly true for CMC surfaces.  However, this is a technical 
point that we can 
ignore for the moment, and simply note that the above variational 
property turns out 
to still be true for small pieces of CMC surfaces: If one continuously 
varies a sufficiently small portion of a CMC surface so that 
its given constraints are 
still preserved, then the area of the varied surfaces will be larger 
than that of the 
original CMC surface.  Thus we can give a second definition for CMC surfaces 
that is still informal, but is intuitively useful: 
\begin{quote} {\em CMC surfaces are surfaces that locally 
minimize area with respect to 
boundary and volume constraints.}  \end{quote}
We will describe the meaning of an "unstable" CMC surface in more 
detail in Section 
\ref{section1.6}, and we will see some examples there.  

\begin{figure}[phbt]
\label{fig:examplesofsoapfilms}
\begin{center}
\includegraphics[width=0.20\linewidth]{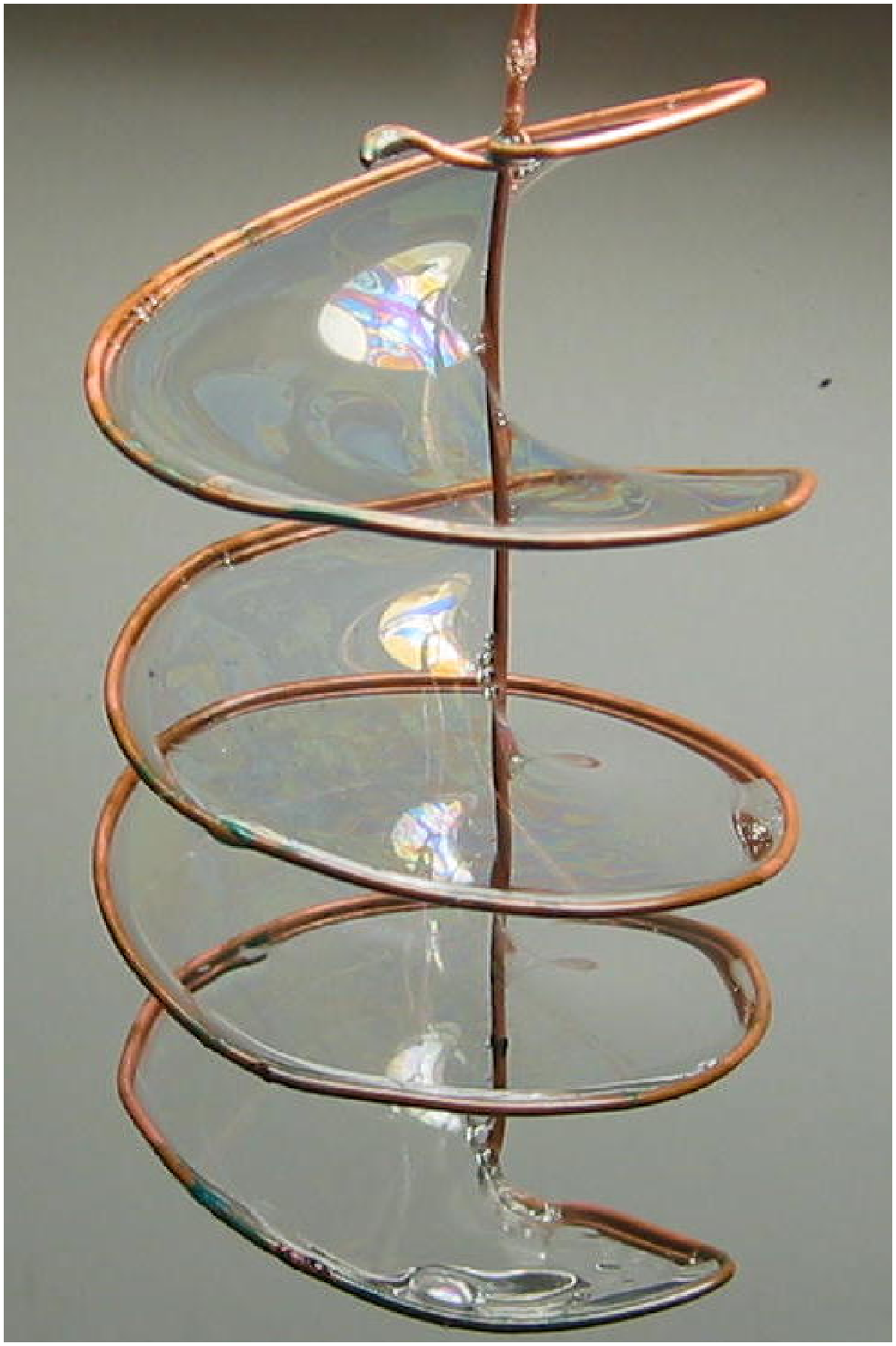}
\includegraphics[width=0.3\linewidth]{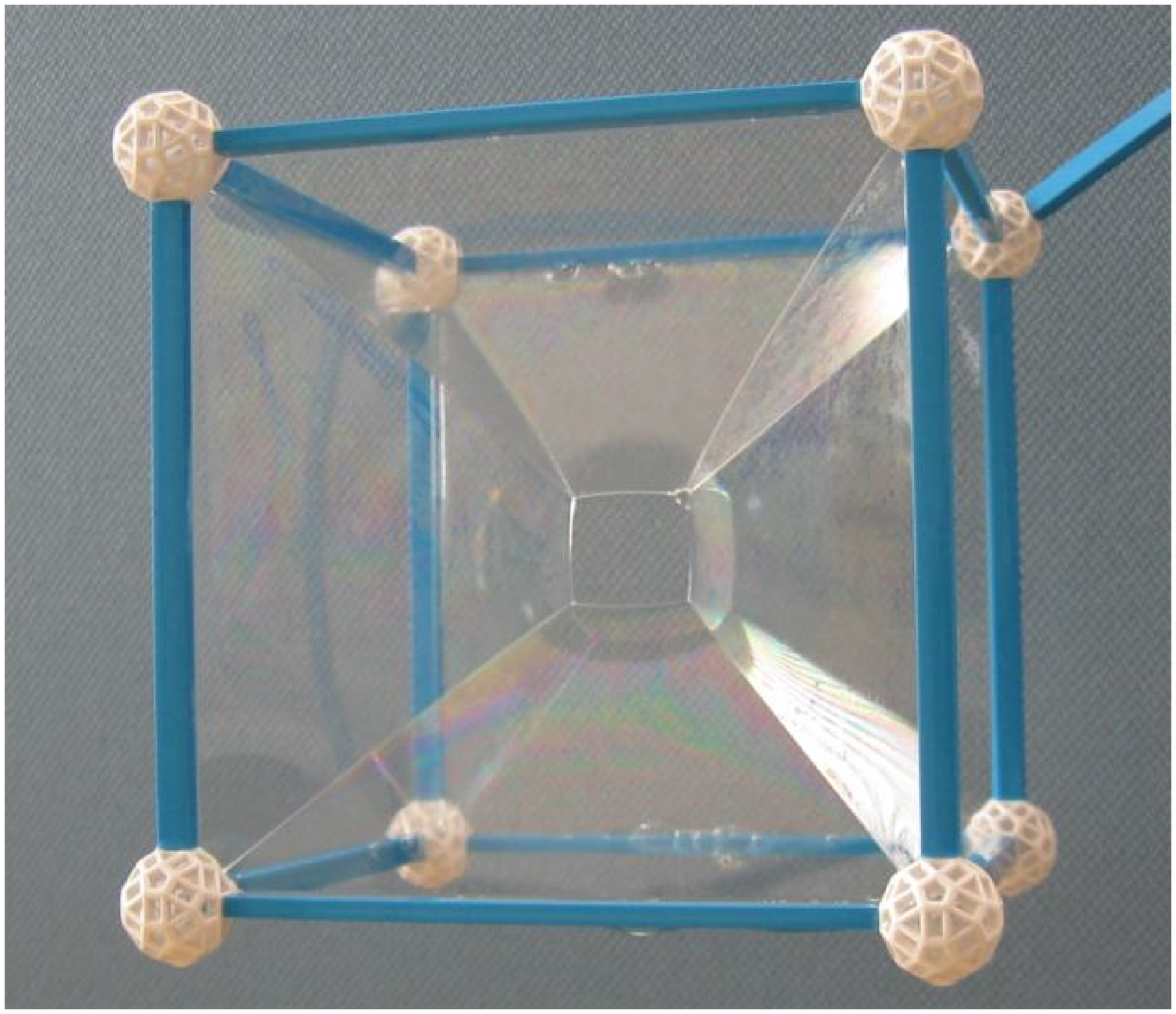}
\includegraphics[width=0.27\linewidth]{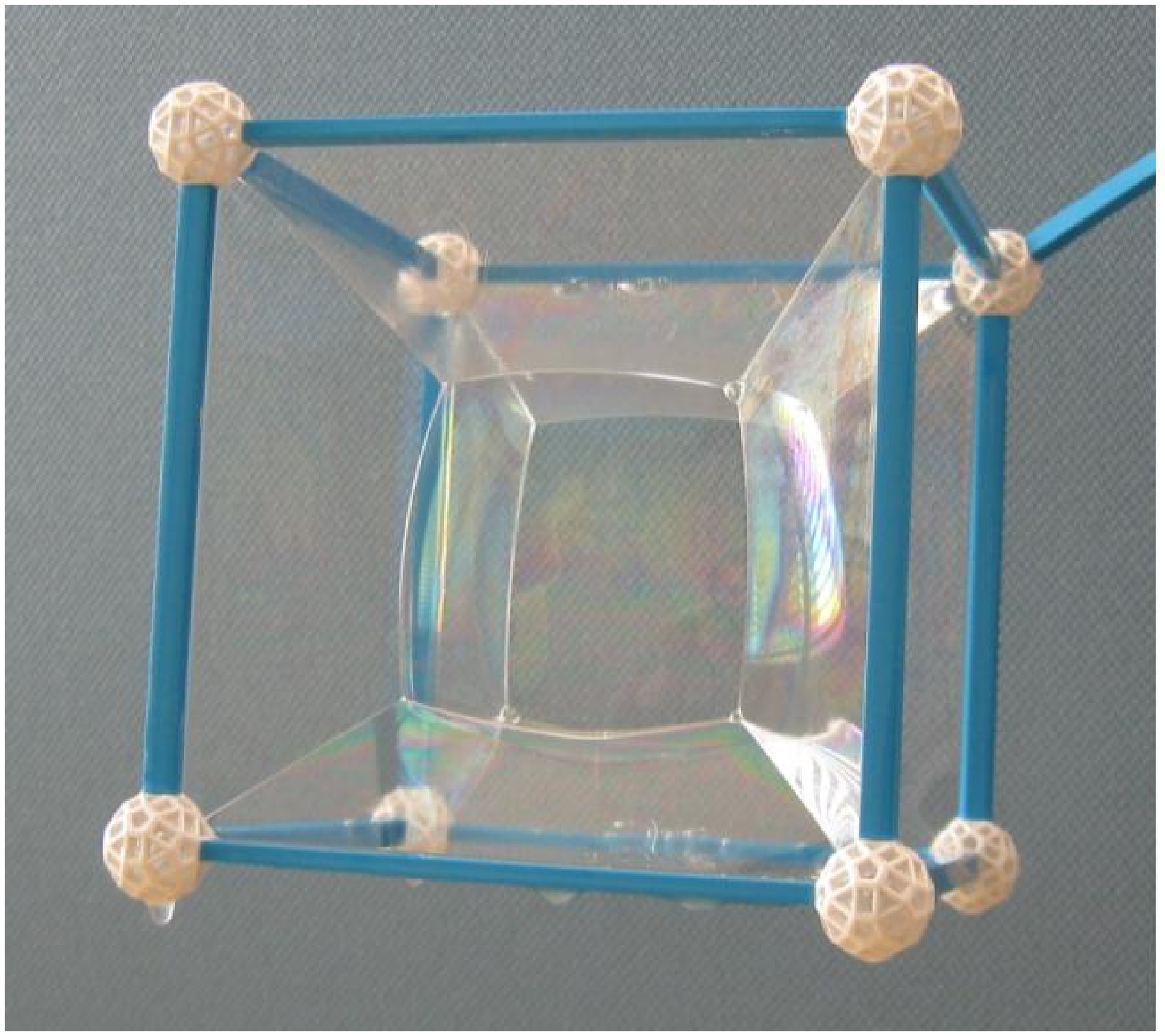}
\includegraphics[width=0.24\linewidth]{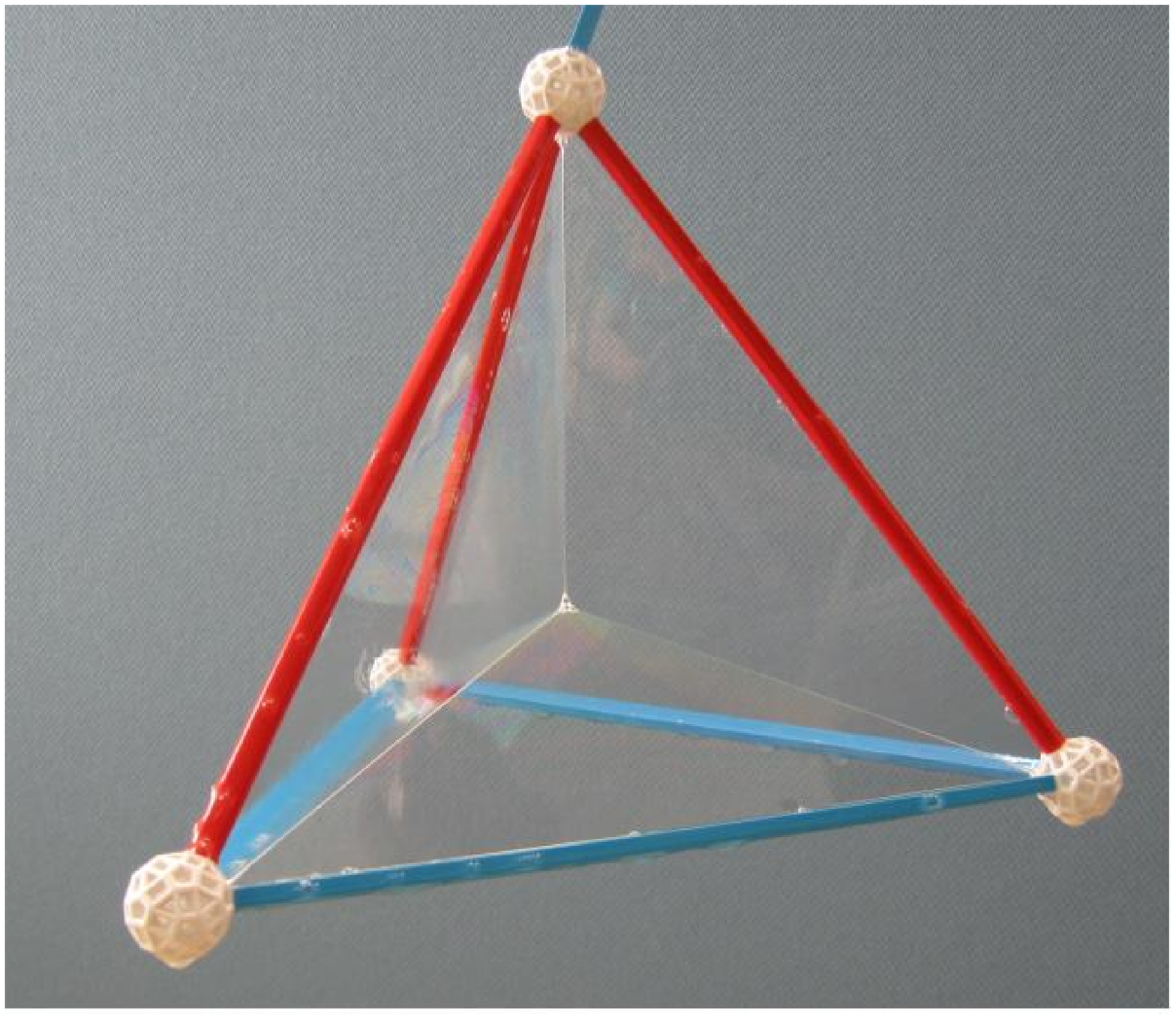}
\includegraphics[width=0.33\linewidth]{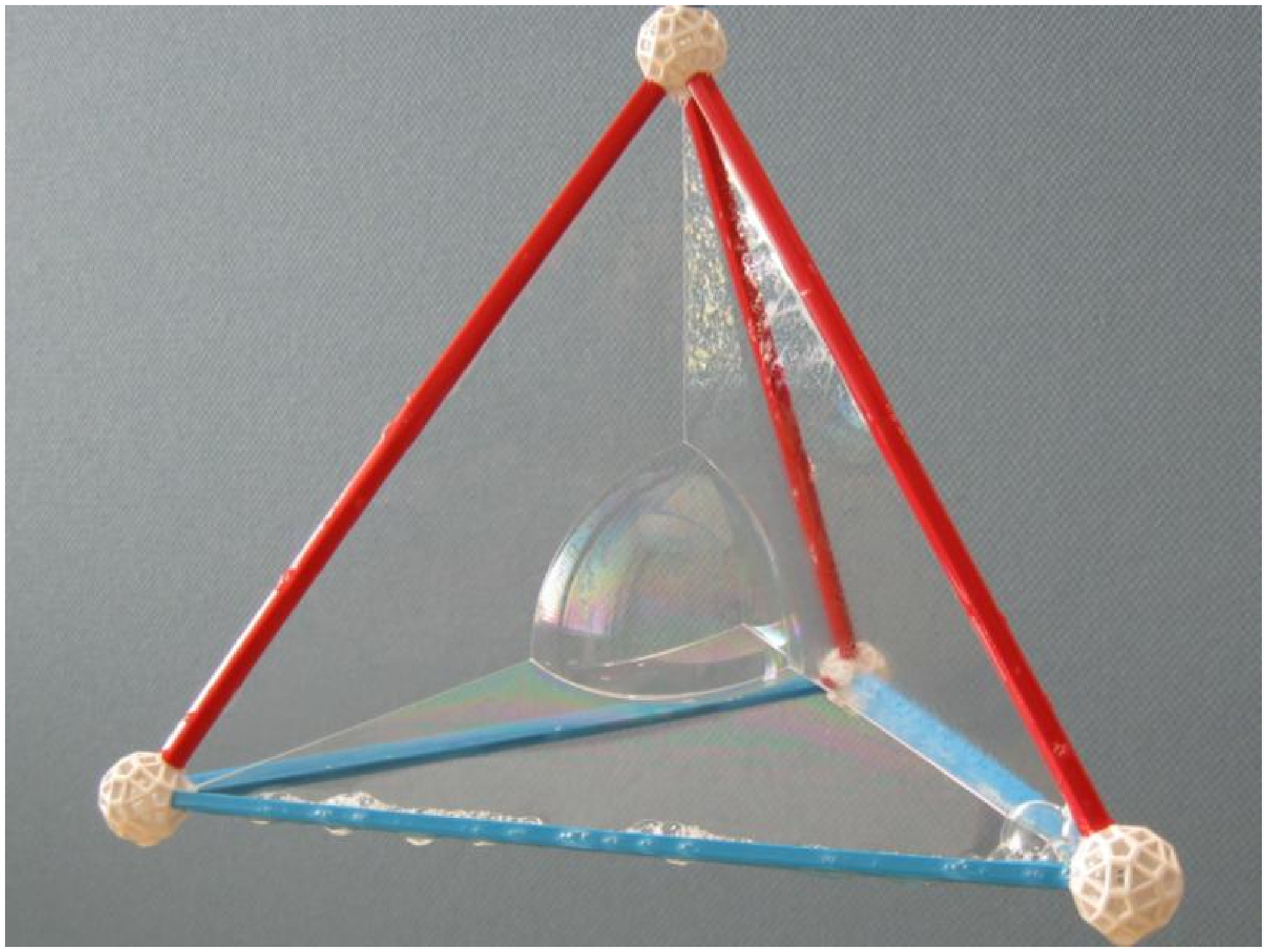}
\includegraphics[width=0.3\linewidth]{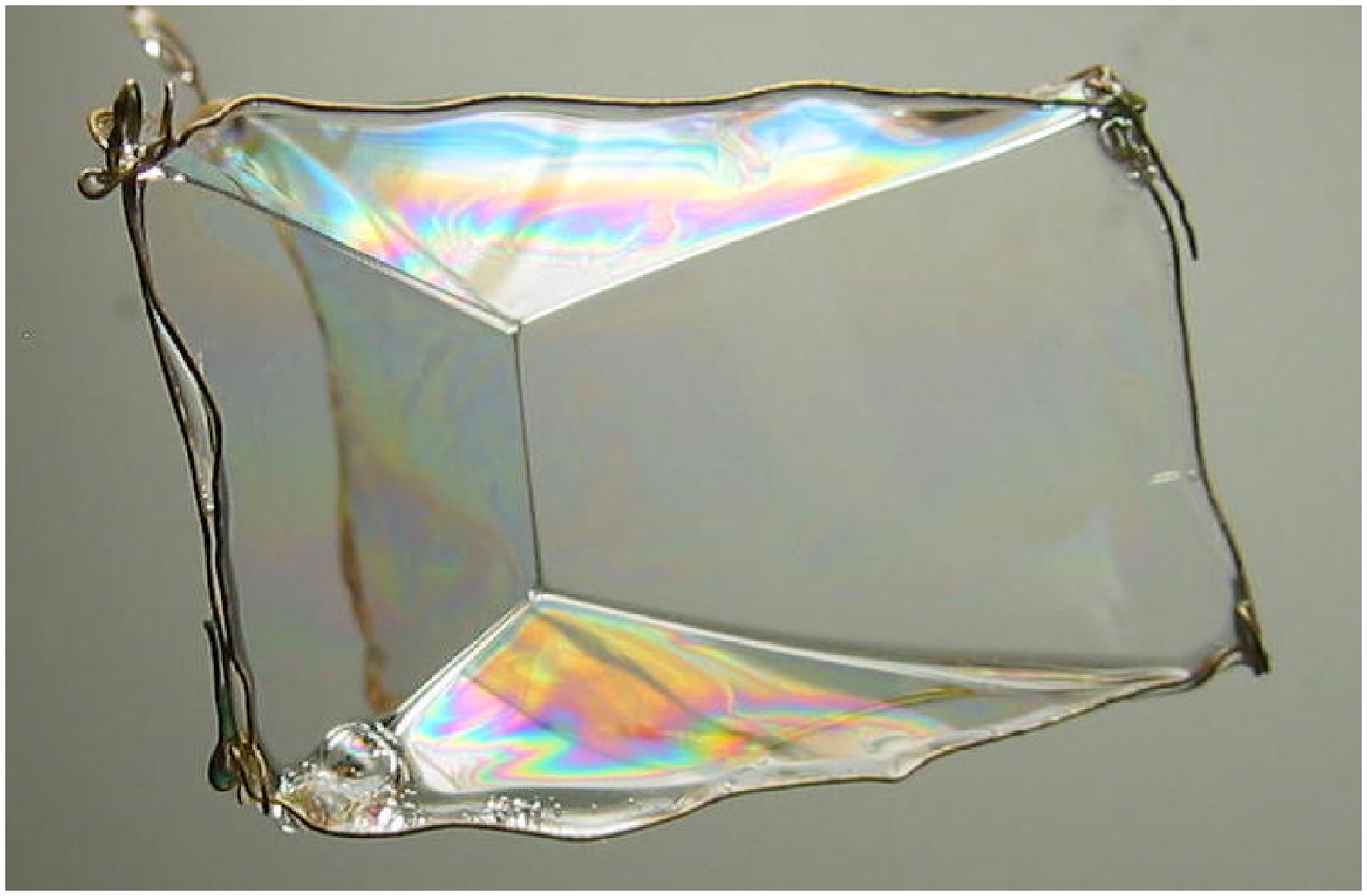}
\includegraphics[width=0.225\linewidth]{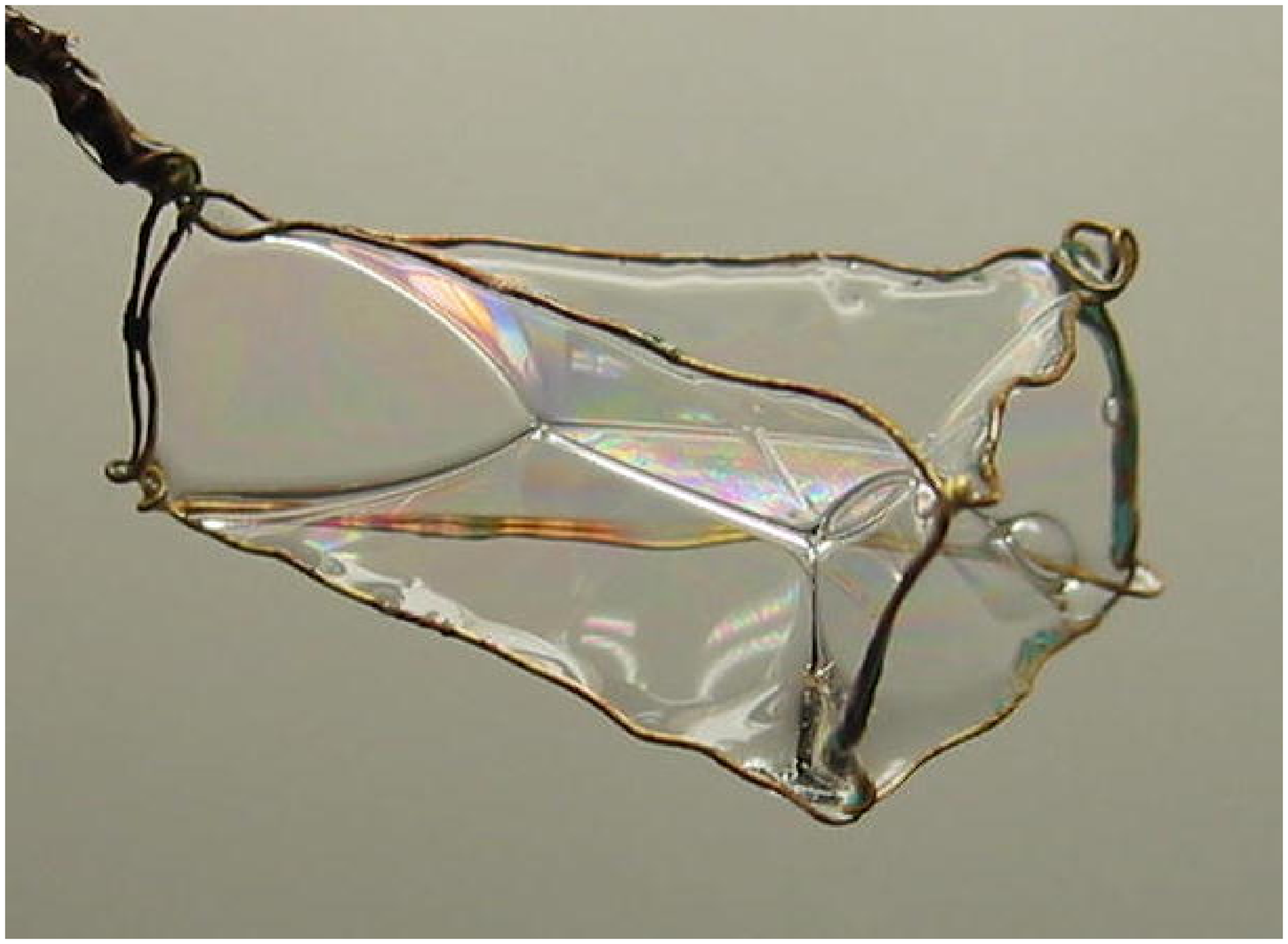}
\includegraphics[width=0.25\linewidth]{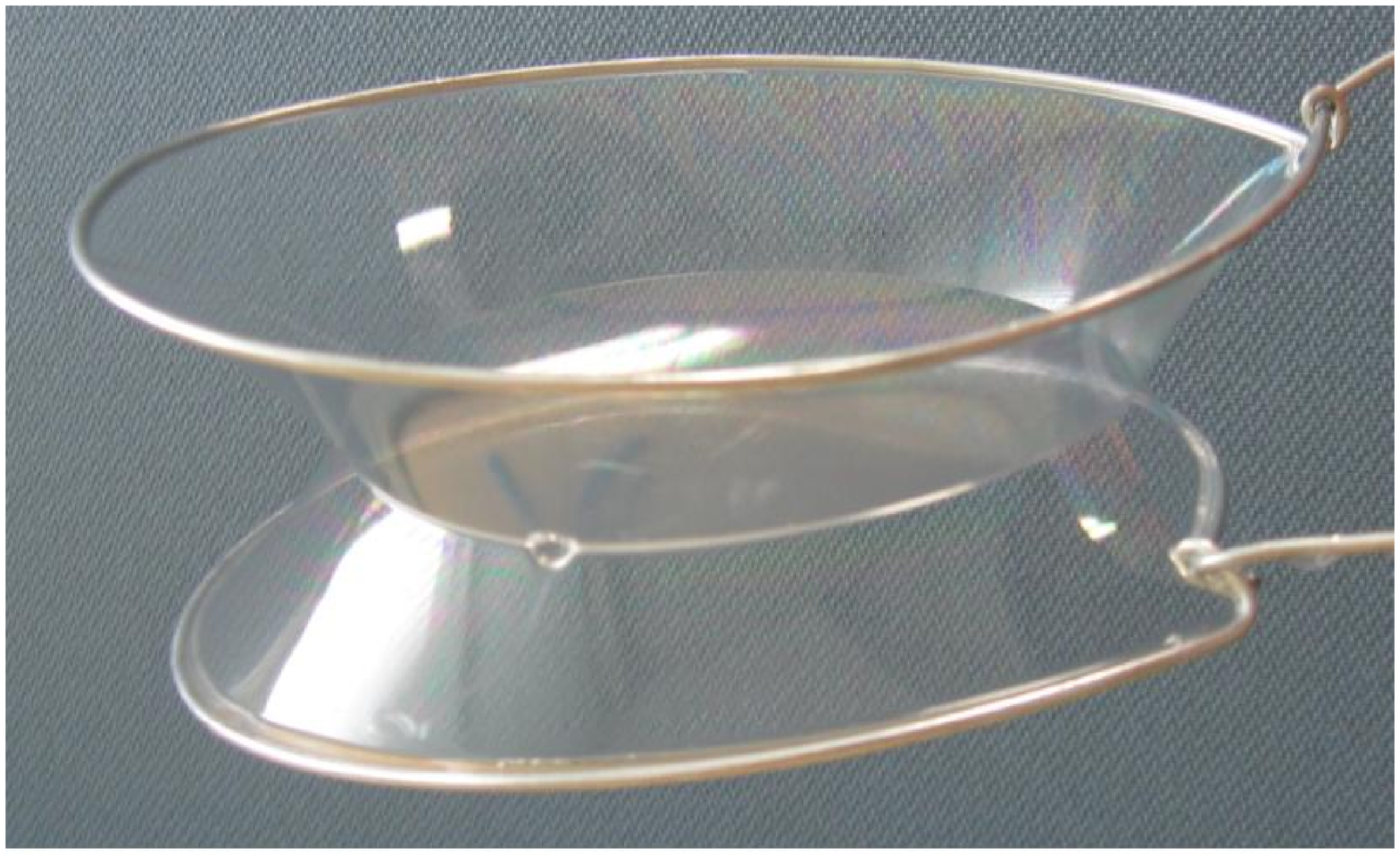}
\includegraphics[width=0.2\linewidth]{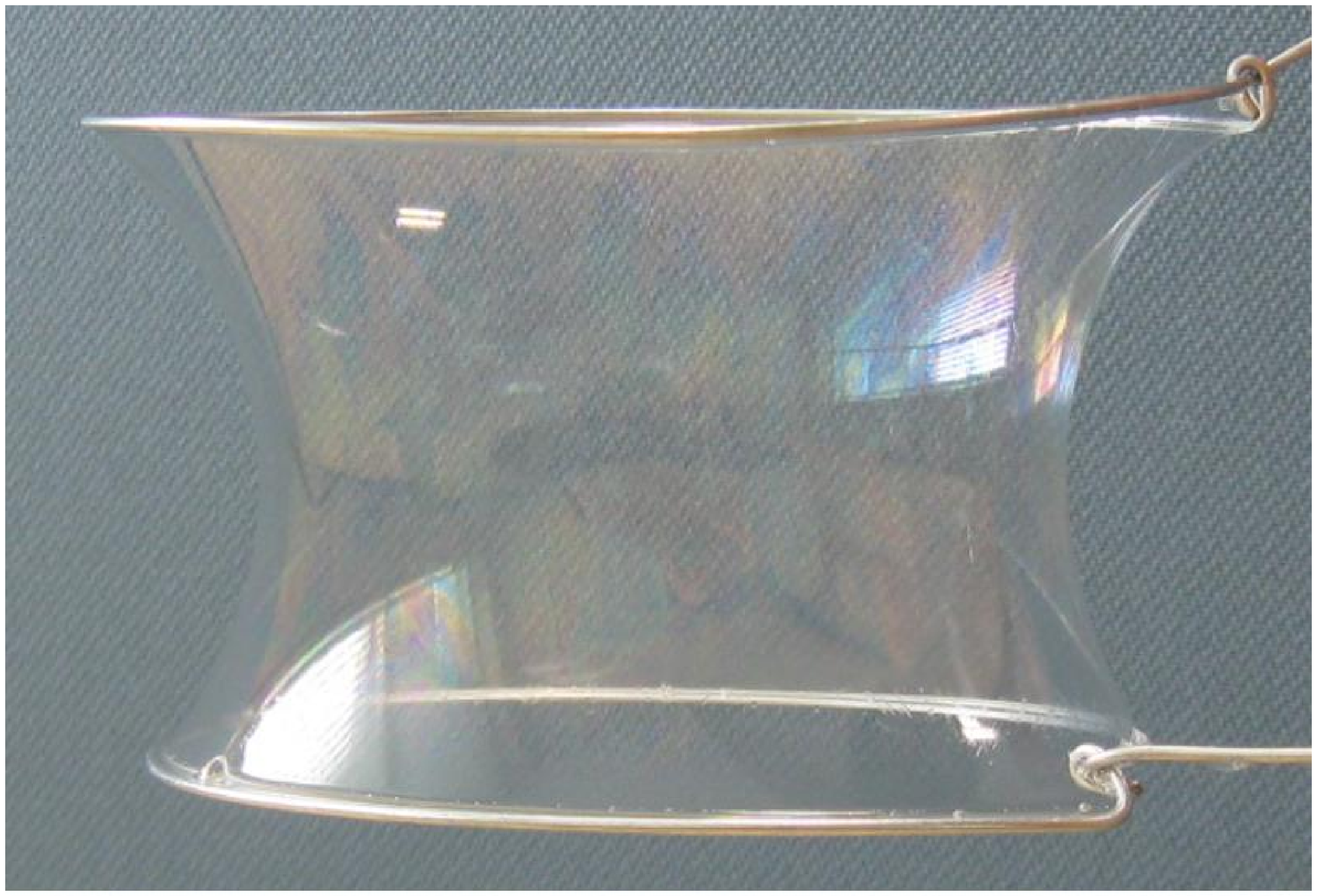}
\includegraphics[width=0.24\linewidth]{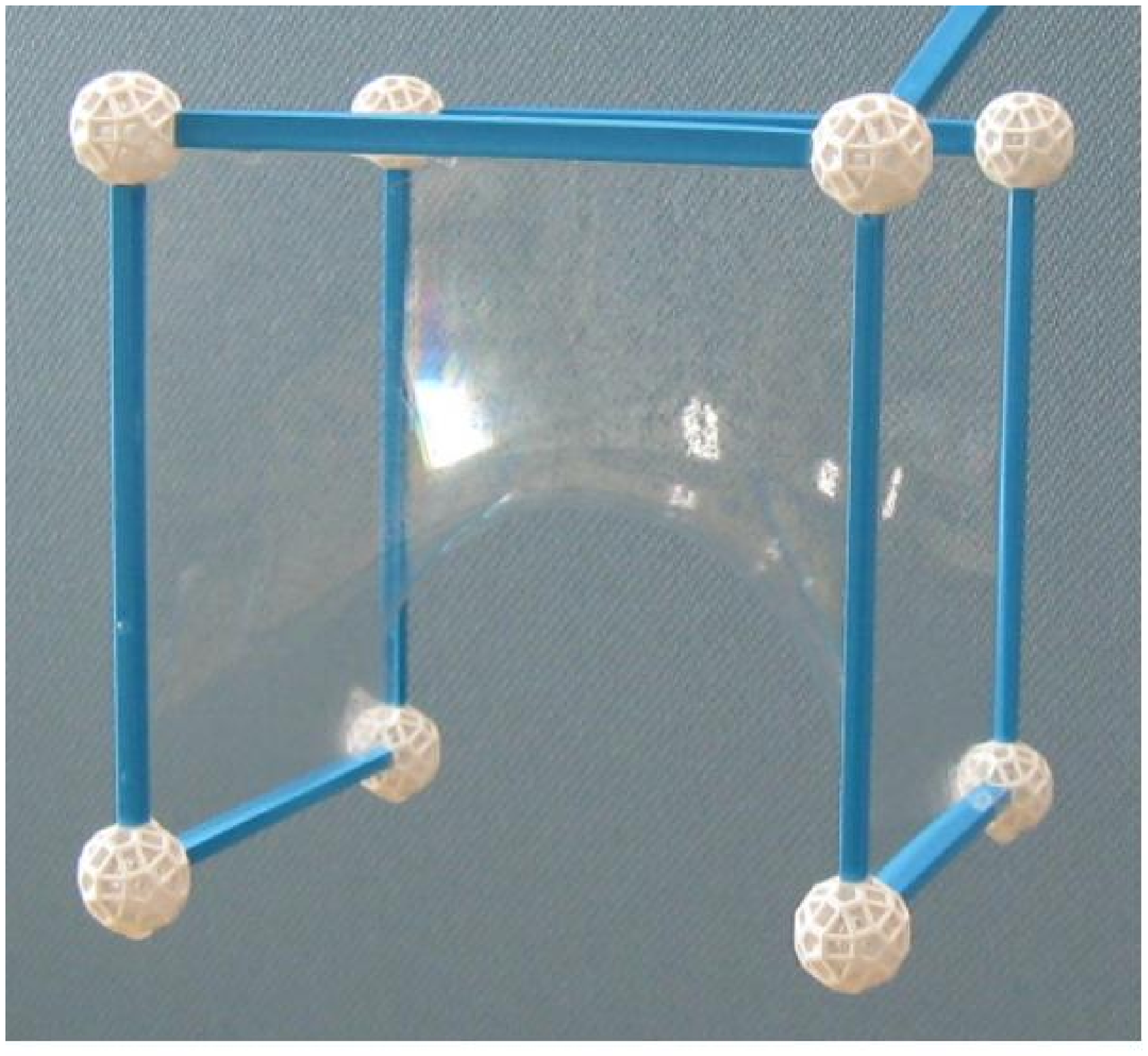}
\includegraphics[width=0.24\linewidth]{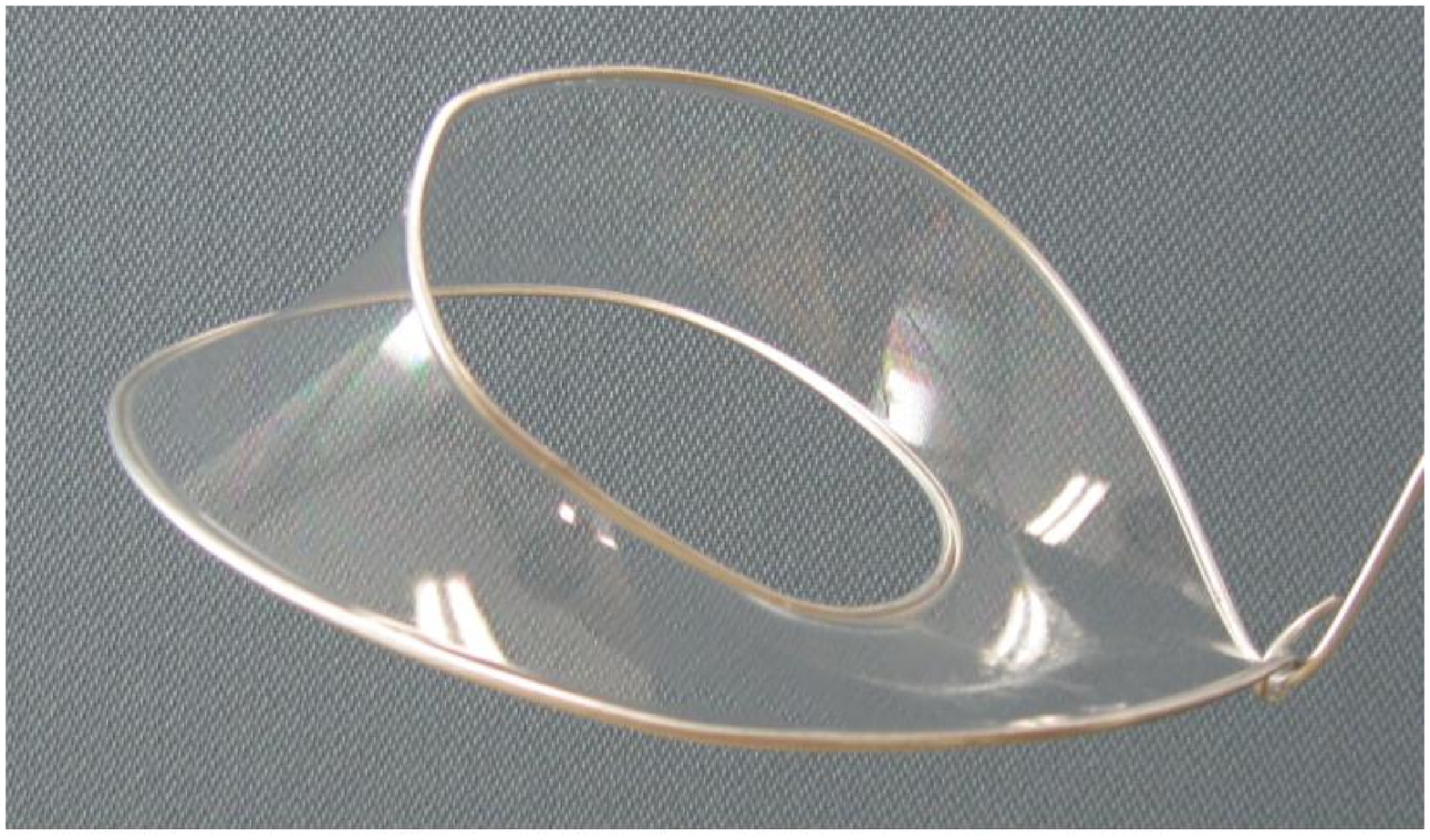}
\includegraphics[width=0.24\linewidth]{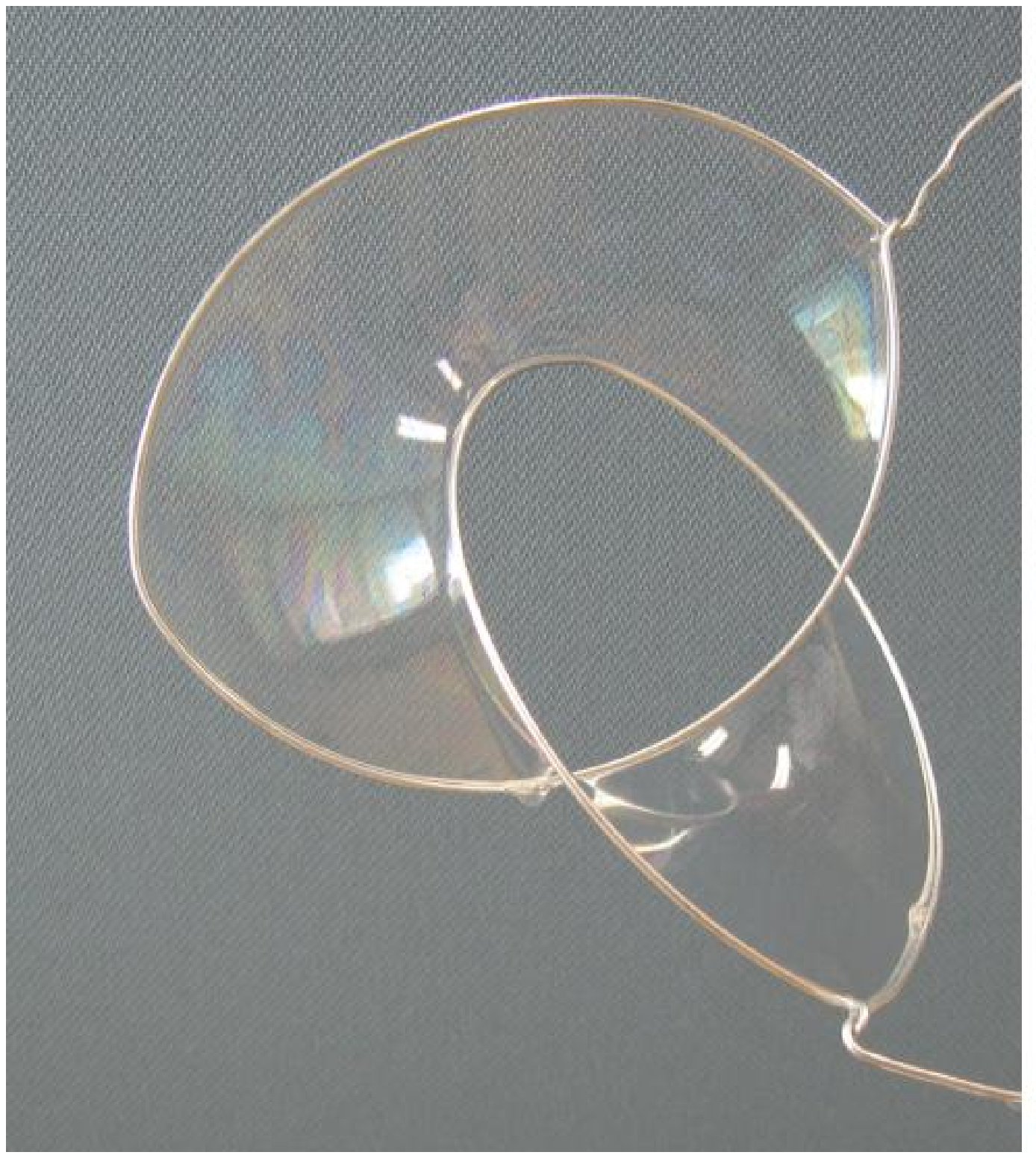}
\includegraphics[width=0.24\linewidth]{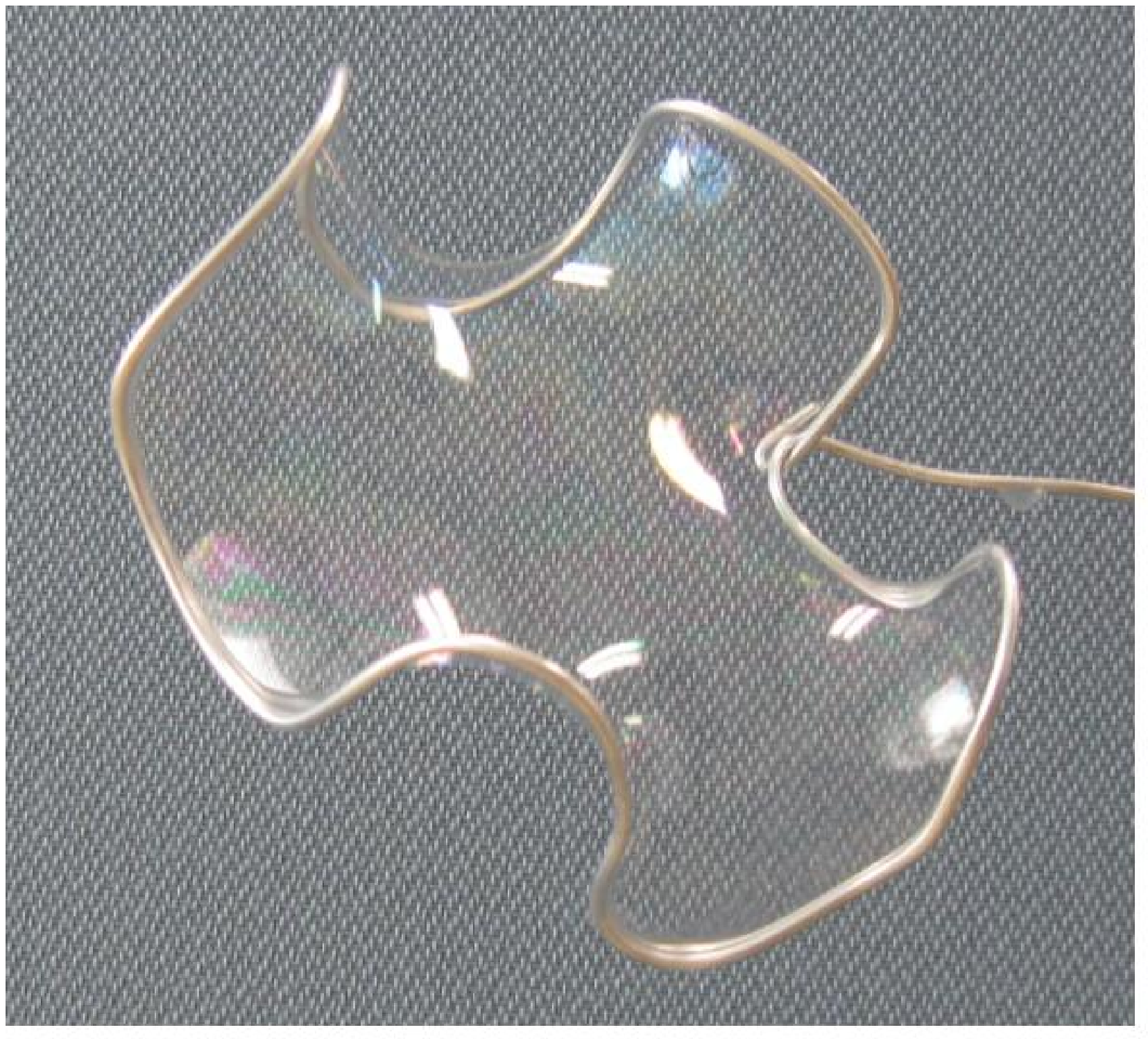}
\includegraphics[width=0.24\linewidth]{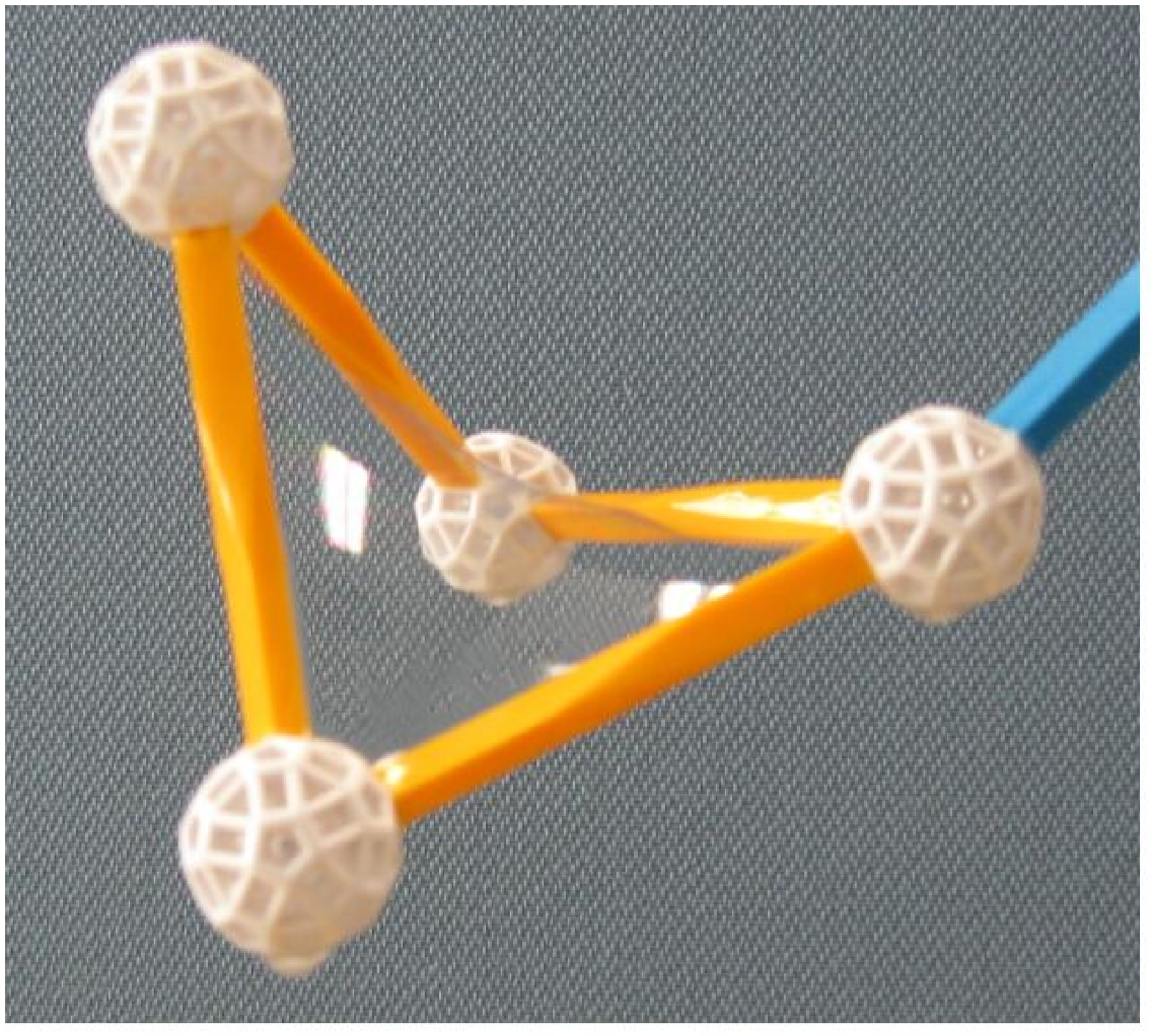}
\caption{Examples of soap films.  Whenever surfaces come together 
along a singular edge, they meet in threes and come together 
at $120$ degree angles, and whenever those singular edges meet at a 
singular vertex, they meet in fours and come together at the 
tetrahedral angle (approximately $109$ degrees).}
\end{center}
\end{figure}

\subsection{Connections with other fields} 
Because CMC surfaces model soap films and 
interfaces between fluids, they have connections to 
physics, chemistry and polymer science.  In fact, sometimes 
new examples of these surfaces 
are discovered by people in these other fields rather than by differential 
geometers.  (One example of this are the minimal surfaces found by Fischer and 
Koch \cite{FischerKoch}, see Figure 3.4.10 in 
\cite{wisky}.)  CMC surfaces have 
connections with biology as well, and an example of this is that some forms 
of coral take shapes resembling the triply periodic Schwarz P minimal surface 
in Figure 3.  CMC surfaces are even sometimes 
connected to architecture, as can be seen by looking at 
the Olympic Stadium in Munich, which has sheets resembling minimal surfaces.  
Thus is it clear that CMC surfaces have connections to fields 
outside of mathematics, 
and this is certainly one of the reasons why we study them.  

\begin{figure}[phbt]
\label{fig-schwarzP}
\includegraphics[width=0.33\linewidth]{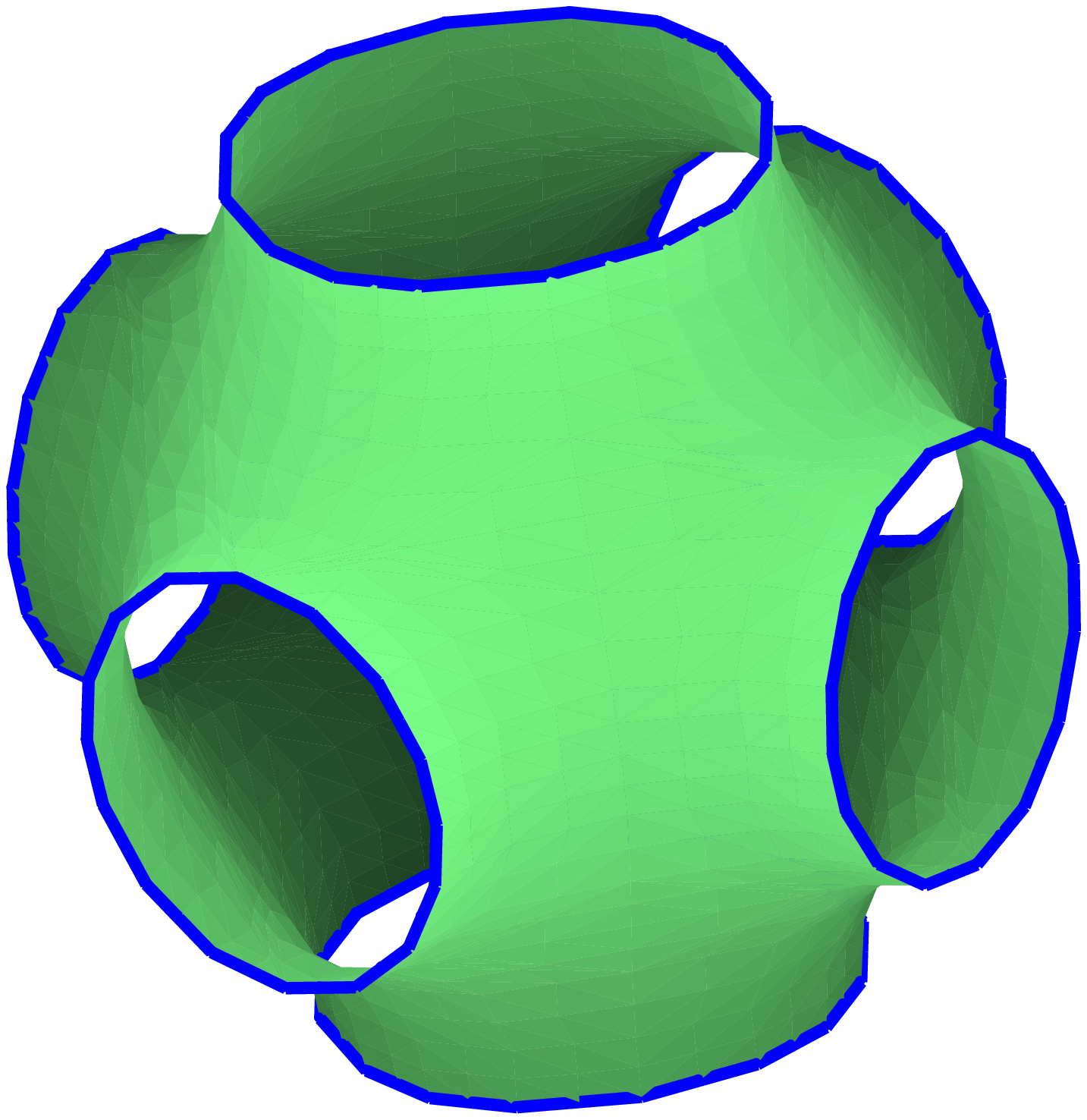}
\includegraphics[width=0.33\linewidth]{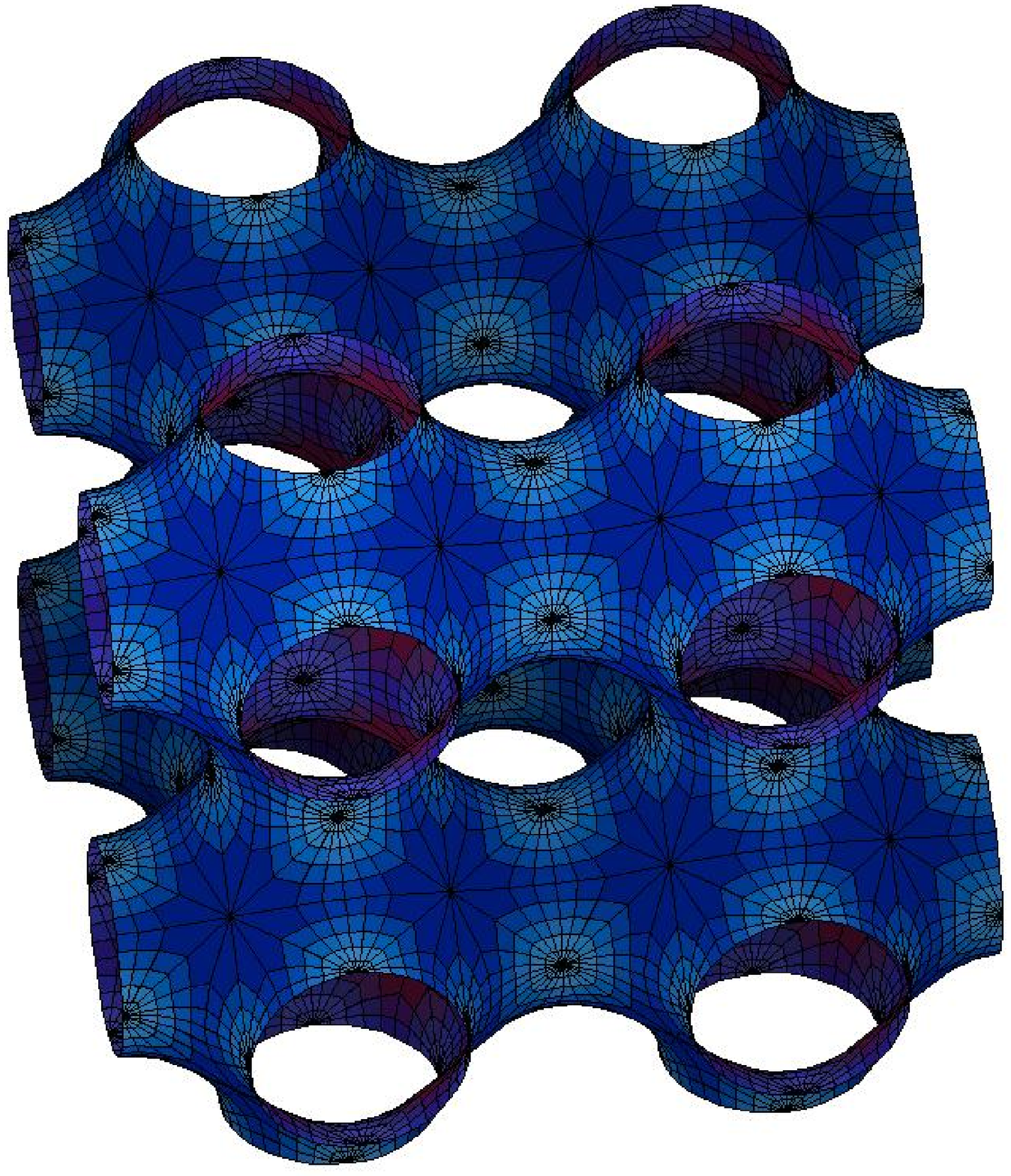}
\caption{The minimal triply-periodic Schwarz P surface.} 
\end{figure}

\subsection{Connections within mathematics} 
Other reasons for studying CMC surfaces are that they have 
a rich mathematical structure and have interesting relations 
to other fields within mathematics.  Although 
minimal and CMC surfaces are topics of geometry, they are also 
fundamental examples in the calculus of variations, as is clear from the 
variational property that we described above.  Thus minimal and 
CMC surfaces are 
closely connected to the calculus of variations (although we will explore this 
connection only briefly in Section \ref{section1.6}).  

Minimal surfaces are also strongly related to the field of complex 
analysis via a theorem 
called the Weierstrass representation (this representation 
was given in \cite{wisky}).  This representation provides a 
way to describe all minimal surfaces using pairs of complex-analytic functions 
defined on Riemann surfaces.  As a result, the theory of minimal surfaces has 
a rich mathematical structure and has many easily accessible examples.  
A number of the simpler examples were described in \cite{wisky}.  

Also, by making use of an additional parameter (called the 
spectral parameter), 
one can describe non-minimal CMC surfaces as well in terms of 
complex-analytic functions defined on Riemann surfaces 
(see \cite{wisky}).  Hence again we have 
a connection to the field of complex analysis.  Furthermore, away 
from isolated special points (umbilics), non-minimal 
CMC surface theory is equivalent to the sinh-Gordon 
equation.  This equation appears prominently in the theory of 
integrable systems, so CMC surfaces are 
also clearly connected to that field.  In fact, the essential 
idea behind the DPW method, which we focused on in 
\cite{wisky}, comes from the theory 
of integrable systems. The DPW method is a method for constructing 
CMC surfaces using 
loop group techniques coming from the theory of integrable systems.  
Finally, we note that both the minimal and non-minimal 
CMC surface equations are well-known partial differential equations, so the 
connection of these surfaces to the field of partial differential equations is 
evident.  

Applying the techniques of these other fields of mathematics to CMC 
surfaces gives these surfaces a rich mathematical structure and gives us the 
means to describe many examples of CMC surfaces, as we saw in \cite{wisky}.  

\subsection{Non-Euclidean ambient spaces} 
When we move to studying CMC surfaces in spaces other than Euclidean $3$-space 
$\R^3$, the connections 
to chemistry, polymer science, biology and architecture certainly largely 
disappear, but connections to physics still remain -- 
and the strong connections to other fields within mathematics remain 
completely intact, 
as we can find other ambient spaces for which the rich mathematical structure 
of CMC surfaces and their connections to other mathematical fields 
carry over.  
In some ways the mathematical structure carries over in an analogous way from 
the case of $\R^3$, but in some ways the structure changes in 
interesting ways.  
The behavior of the direction perpendicular to the surface (the Gauss map) can 
behave quite differently in other 3-dimensional ambient spaces, and the global 
properties of the CMC surfaces can be markedly different.  In this text, we 
will study CMC surfaces (and some other types of surfaces as well) in 
the spaces $\mathbb{S}^3$, $\mathbb{H}^3$ 
and $\mathbb{R}^{2,1}$ that we will define later in this text.  

\subsection{Discrete CMC surfaces} 
Recently, finding discrete analogs of smooth objects has become 
an important theme in mathematics, appearing in a variety 
of places in analysis and geometry.  So it is natural 
to consider discrete analogs of smooth minimal and CMC 
surfaces.  But there is no single definitive approach; the 
definition one chooses depends on which properties of smooth 
minimal and CMC surfaces one wishes to emulate in the 
discrete case.  

One can define a discrete minimal surface in Euclidean 
$3$-space ${\mathbb{R}^{3}}$ to be a piecewise linear 
triangulated surface that is critical for area with respect 
to any compactly-supported boundary-fixing continuous 
piecewise-linear variation (of its vertices) that 
preserves its simplicial structure, see 
\cite{PinkPolt}.  Then one can define 
discrete CMC surfaces the same way, but adding the condition 
that the variations must preserve volume to one side of the 
surface, as in \cite{PR}.  These definitions are clearly 
imitating the variational properties that smooth minimal 
and CMC surfaces have.  This results in discrete surfaces 
with the right variational properties, but without the 
elegant "holomorphic" structure that the corresponding 
smooth surfaces have.  Examples of a discrete 
catenoid and Delaunay surface 
made via this approach are shown on the left-hand side of 
Figure 4.  We will not take this approach in these notes.  

One could instead use discretized versions of integrable systems to 
define discrete minimal and CMC surfaces, in analogy to integrable 
systems properties of smooth minimal and CMC surfaces, as 
Bobenko and Pinkall did 
(\cite{BobPink}, \cite{BobPink2}).  These discrete surfaces 
are formed from planar quadrilaterals.  This approach gives 
discrete minimal and CMC surfaces with "discrete holomorphic" mathematical 
structures corresponding to the "smooth holomorphic" structures of 
the corresponding smooth minimal and CMC surfaces.  This approach has the 
advantage of preserving the rich mathematical structure in the discrete 
case, but it generally does not yield area-critical discrete 
surfaces with respect to vertex variations.  Examples of a discrete 
catenoid and Delaunay surface 
made via this approach are shown on the right-hand side of 
Figure 4.  These discrete surfaces and this 
approach are the central subject of this text.  

\subsection{Prerequisites} 
Before discussing more about CMC surfaces, we need to 
define some mathematical objects that will facilitate the discussion.  
We begin in Section \ref{ambientspaces}, 
as promised above (after a brief introduction 
to variational properties in Section \ref{section1.6}), 
with the ambient spaces that will appear in this text.  

Although we already have defined in \cite{wisky}, or 
will define here, everything that we need to rigorously discuss CMC surfaces, 
in fact it would be hard for the reader to appreciate the signifigance of the 
discussions here without at least a bit of experience with 
differential geometry.  
We assume that the reader is already somewhat familiar with basic differential 
geometry.  There are many good textbooks on basic differential 
geometry and surface theory, for example: \cite{doCarmo1}, \cite{doCarmo2}, 
\cite{Gray}, \cite{Hopf}, \cite{kn}, \cite{La1}, \cite{N}, \cite{O} and \cite{Sp}.  

\begin{figure}[phbt]
\label{catenoids-two-ways}
\begin{center}
\begin{tabular}{ccc}
\includegraphics[width=0.4\linewidth]{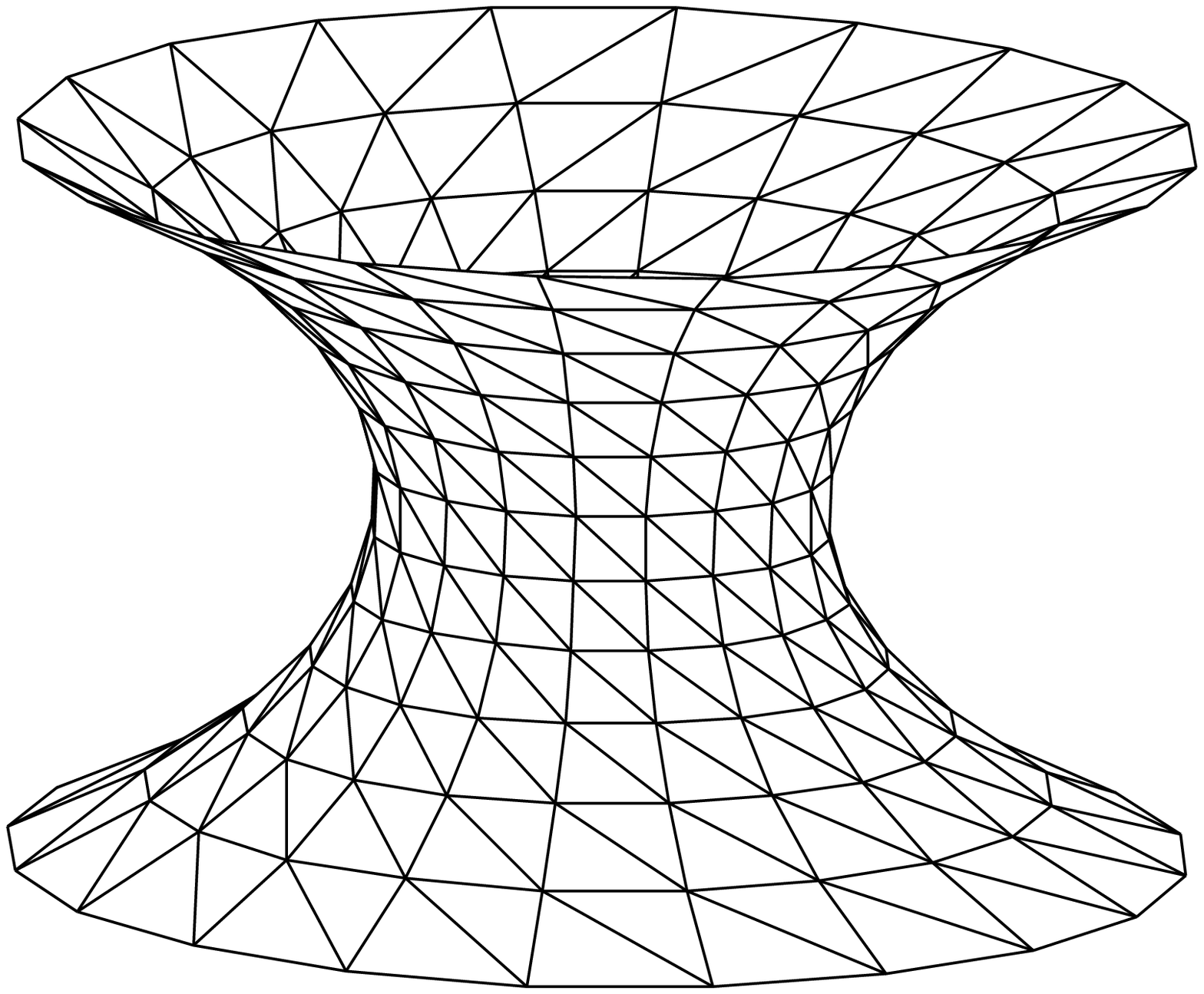} &
\includegraphics[width=0.475\linewidth]{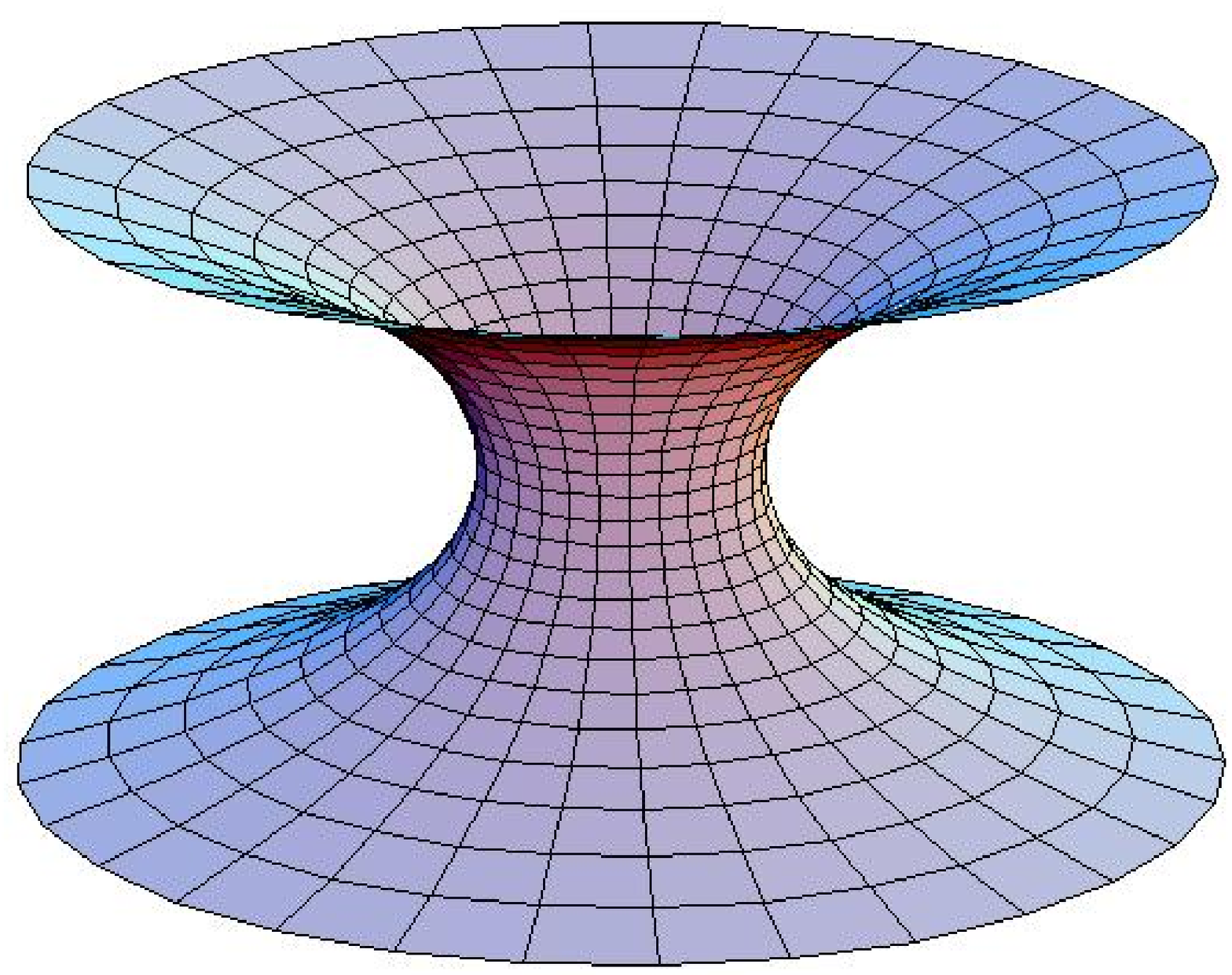} &
\end{tabular}
\begin{tabular}{ccc}
\includegraphics[width=0.8\linewidth]{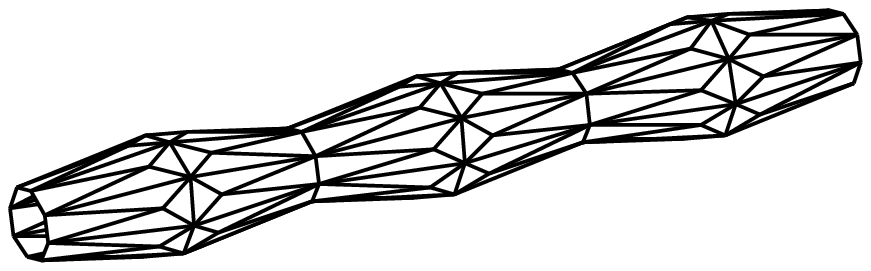} &
\includegraphics[width=0.15\linewidth]{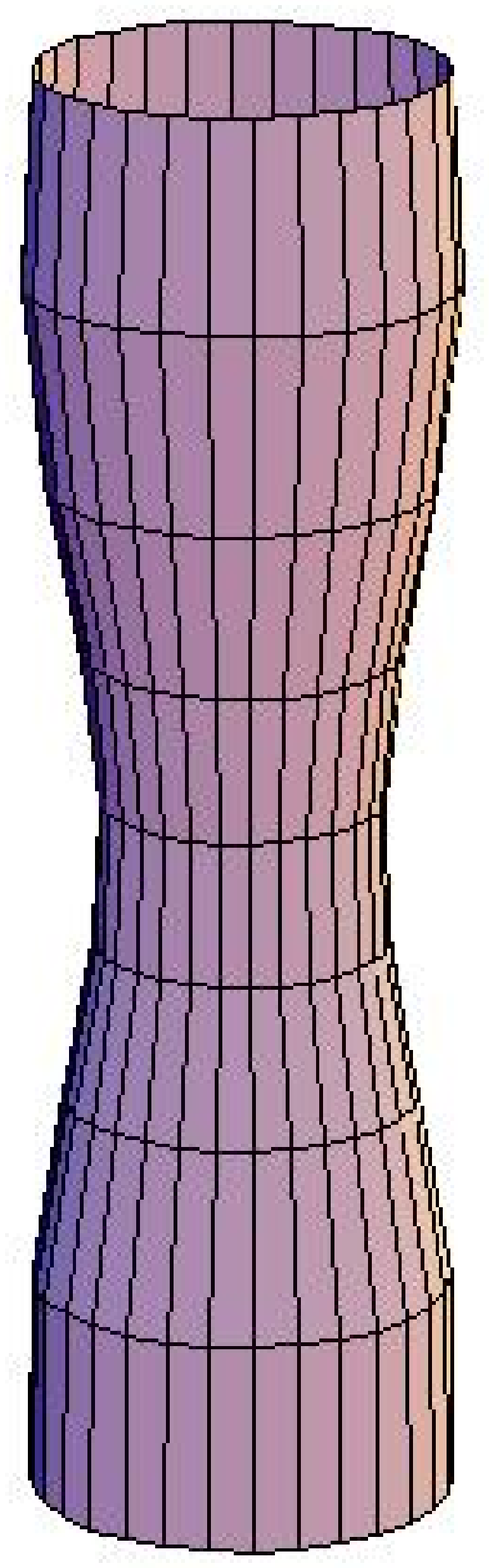} &
\end{tabular}
\caption{Discrete minimal catenoids and Delaunay surfaces.  The example in the 
upper left (resp. lower left) is discrete minimal (resp. discrete CMC) 
with respect to the variational approach.  The example in the 
upper right (resp. lower right) is discrete minimal (resp. discrete CMC) 
with respect to an integrable systems approach.  
}
\end{center}
\end{figure}

\section{Smooth CMC surfaces and their variational 
properties}\label{section1.6}

We defined mean curvature $H$ and CMC surfaces in \cite{wisky}.  The 
definition there states that surfaces for which $H$ is constant are 
CMC surfaces, and that minimal surfaces are those CMC surfaces 
with mean curvature $H=0$.  In this section, we consider why, with these 
definitions, minimal and CMC surfaces are models for soap films.  

The first and second variation formulas here are important for 
understanding how 
CMC and minimal surfaces are models for soap films, and in turn 
for understanding why 
we are interested in such surfaces.  However, since these formulas will not be 
directly used later in this text, we content ourselves with stating them 
without proof, and with stating some other properties without proof as well.  
Furthermore, to simplify the discussion a bit, we restrict 
ourselves in this section 
to the case that the ambient space is $\mathbb{R}^3$.  (Analogous 
properties hold for 
the minimal and CMC surfaces in the other ambient spaces we 
consider, with slightly different formulas.)  

Let \[ f : \Sigma \to \mathbb{R}^3 \] be an immersion of a 
$2$-dimensional domain 
$\Sigma$ in the $(u,v)$-plane $\mathbb{R}^2$ (i.e. the plane 
$\mathbb{R}^2$ with Cartesian coordinates $u$ and $v$) 
into $\mathbb{R}^3$ with induced metric $g$ and with 
unit normal vector $\vec N = \vec N(u,v)$.  We first note that another 
equivalent way to define the mean curvature $H$ at $f(p)$ is 
as the average of the normal curvatures \[ -\langle \vec{v}, 
D_{\vec{v}} \vec{N} \rangle \] 
(intuitively, the 
normal curvature measures the rate at which the surface bends 
toward $\vec{N}$, in the direction $\vec{v}$) 
in all tangent directions \[ \vec{v} \in {\mathcal S} = 
\{ \vec{w} \in T_p \Sigma \, | \, g(\vec{w},\vec{w})=1 \} \; , \] where 
the average is computed by integrating 
$-\langle \vec{v}, D_{\vec{v}} \vec{N} \rangle$ over $\mathcal S$ 
(with respect to an appropriate $1$-dimensional volume form on 
$\mathcal S$, which we do not describe 
explicitly here).  Thus, for example, a minimal surface has 
average normal curvature zero at every point, and this suggests a physical 
interpretation, for which we quote \cite{HM}: 
\begin{quote}
\cite{HM}: 
``Loosely speaking, one imagines the surface as made up of very 
many rubber bands, stretched out in all directions; on a minimal surface the 
forces due to the rubber bands balance out, and the surface does not need to 
move to reduce tension.''  
\end{quote}
To say this more rigorously, suppose $\Sigma$ is a compact domain in 
the $(u,v)$-plane, and 
define a {\em smooth boundary-fixing 
variation} of the immersion $f(\Sigma)$ to be a $C^\infty$ 
map $f_t: (-1,1) \times \Sigma \to \mathbb{R}^3$ with three properties: 
\begin{enumerate}
\item $f_t(\cdot): \Sigma \to \mathbb{R}^3$ is an immersion for all 
$t \in (-1,1)$, 
\item $f_0 = f$ on $\Sigma$, 
\item $f_t|_{\partial \Sigma} = f|_{\partial \Sigma}$ for all 
$t \in (-1,1)$.  
\end{enumerate}
We call \[ \tfrac{d}{dt} f_t |_{t=0} \] the 
{\em variation vector field} of $f_t$ at $t=0$.  

Note that $\mbox{Area}(f_t(\Sigma)) = \int_\Sigma dA_t$, where 
$dA_t = \sqrt{g_{t,11} g_{t,22} - 
g_{t,12}^2} dudv$ is the volume element (the area $2$-form) 
of the metric $g_t=(g_{t,ij})$ induced by the immersion $f_t$ with respect to 
the coordinates $(u,v)$ of $\Sigma$.  It turns out that 
(see, for example, \cite{La1}) the 
first variation formula for smooth boundary-fixing variations is then 
\begin{equation}\label{1rstvar} 
\left. \frac{d}{dt} \mbox{Area}(f_t(\Sigma)) \right|_{t=0} = 
- \int_\Sigma \left\langle H \vec{N}, 
\left. \frac{d}{dt} f_t \right|_{t=0} \right\rangle dA_{0} \; . 
\end{equation}  
In particular, minimal surfaces (with $H \equiv 0$) are critical for area 
amongst all smooth boundary-fixing variations on any compact domain 
$\Sigma$, and we could 
have defined them this way.  Actually, when the subdomain $\bar \Sigma$ 
of $\Sigma$ is small enough, 
not only is $f(\bar \Sigma)$ critical for area, it is also the 
unique least-area surface with boundary $f(\partial \bar \Sigma)$, hence 
"minimal" surface is a natural name for such surfaces.  
Indeed, minimal surfaces are a natural $2$-dimensional generalization of 
$1$-dimensional geodesics, because geodesic segments of 
sufficiently short length are the least-length paths 
from one endpoint of the segment to the other (see Section 1.1 of 
\cite{wisky}).  Furthermore, 
although longer geodesics might not be least-length between their 
endpoints, they 
are still always critical for length amongst all smooth 
variations of the path fixing 
the endpoints (again, see Section 1.1 of 
\cite{wisky}).  This is 
completely analogous to the variational properties of minimal 
surfaces.  

Similarly, a nonminimal CMC surface could be defined 
as an immersion $f:\Sigma \to \mathbb{R}^3$ such that 
$f(\Sigma)$ is critical for area amongst all 
smooth boundary-fixing variations that keep the volume on one side of the 
surface unchanged: the derivative of this volume with respect to $t$, 
at $t=0$, is 
\[ \int_\Sigma \left\langle \vec{N}, \left. \frac{d}{dt} 
f_t \right|_{t=0} \right\rangle dA_t \; , \] so if the volume is unchanging 
with respect to $t$, and hence 
$\int_\Sigma \langle \vec{N}, \left. \frac{d}{dt} f_t 
\right|_{t=0} \rangle dA_t=0$, and if 
$H$ is constant, then Equation \eqref{1rstvar} implies 
(also, see \cite{BdCE}, for example) 
\[ \left. \frac{d}{dt} \mbox{Area}(f_t(\Sigma)) \right|_{t=0} =0 \; . \]  
Variations that preserve volume to one side of $f_t|_\Sigma$ are called 
{\em volume-preserving} variations.  This is a natural restriction to make 
for non-minimal CMC surfaces, as the example in item (2) of 
Section \ref{chapter0} shows.  
If the round sphere soap film described there were 
allowed to deform in a way that did not preserve the volume inside of it, it 
would reduce its area by simply reducing its radius, and shrink 
down to a single point 
with no area.  But clearly this does not happen, and the reason it does not 
happen is because of this volume constraint.  

We conclude that minimal surfaces in 
$\mathbb{R}^3$ are surfaces that are critical for area with respect to smooth 
variations that fix their boundaries, and 
CMC surfaces are critical for area with respect to smooth 
variations that fix their boundaries and fix the volume to one side 
of the surfaces.  This is why minimal and CMC surfaces model 
physical soap films, which always move to minimize area.  
Minimal surfaces model soap films 
not enclosing bounded pockets of air, as such films 
are area minimizing for all boundary-fixing 
variations.  Nonminimal CMC surfaces model soap films enclosing 
bounded pockets of air, as such films are area minimizing only 
for variations that keep the air pockets' volumes fixed.  

These variational properties in the Euclidean case similarly hold 
for other ambient spaces, such as $\mathbb{S}^3$ and $\mathbb{H}^3$ 
(see Section \ref{ambientspaces}, see also \cite{wisky}).  

The second variation formula for volume-preserving variations 
of CMC surfaces (\cite{bc}, \cite{Che}, \cite{Silv}, \cite{La1}) is 
(we may ignore the volume-preserving condition when the CMC surface is 
minimal) 
\begin{equation}\label{2ndvar} 
\left. \frac{d^2}{dt^2} \text{Area}(f_t(\Sigma)) \right|_{t=0} = 
\int_\Sigma h \cdot L(h) dA_0 \; , \end{equation} where \[ L(h) = 
- \triangle h - (4H^2-2K)h \;\; \text{ and } \;\; 
h = \langle \tfrac{d}{dt} f_t |_{t=0} , \vec{N}|_{t=0} \rangle
\; , \] with Gaussian curvature 
$K$ (see \cite{wisky}) and Laplace-Beltrami operator 
\[ \triangle h = 
\frac{1}{\frak{G}} \left( \partial_u (\frak{G} g^{11} \partial_u h) + 
\partial_u (\frak{G} g^{12} \partial_v h) + 
\partial_v (\frak{G} g^{21} \partial_u h) + 
\partial_v (\frak{G} g^{22} \partial_v h) \right) \; , \]
where $\frak{G}=\sqrt{g_{11}g_{22}-g_{12}^2}$, and $g^{-1} = 
(g^{ij})_{i,j=1,2}$ is the inverse matrix of $g=g_0=(g_{ij})_{i,j=1,2}$.  

Since the first derivative $\tfrac{d}{dt} 
\text{Area}(f_t(\Sigma)) |_{t=0}$ is zero for 
CMC surfaces with respect to the appropriate variations, the sign of the 
second derivative \eqref{2ndvar} determines whether a variation increases or 
decreases the area.  If 
there exists a variation $f_t$ so that \eqref{2ndvar} becomes negative, 
then the minimal or CMC surface will not be area minimizing with 
respect to the appropriate 
space of variations.  If, on the other hand, \eqref{2ndvar} is 
positive for every 
nontrivial variation $f_t$ with respect to the appropriate 
variation space, then the minimal or CMC surface will be 
locally area minimizing in the space of variations.  

The four examples of soap films described at the beginning of 
Section \ref{chapter0} are 
examples of minimal and CMC surfaces that are area-minimizing.  If they had 
not been area-minimizing we never would 
have been able to construct them with soap films 
in the first place.  However, not all of these four examples 
extend (analytically) to larger CMC 
surfaces that are area-minimizing (even though any CMC extensions 
are certainly still 
area-critical, by the first variation formula \eqref{1rstvar}).  
The first example, the flat disk, can be extended to a 
complete flat plane, which is a minimal surface.  The complete flat plane is 
area-minimizing in the sense that any compact region $\Sigma$ 
within it is area-minimizing 
(with respect to the compact region's boundary) 
and can be made as a soap film with a planar wire frame in the shape of 
its boundary.  In particular, \eqref{2ndvar} will always be 
positive for any 
nontrivial smooth boundary fixing variation of any such compact region 
$\Sigma$.  The second example, the round sphere, is already complete and 
so cannot be extended at all.

It is the third and fourth examples that extend to surfaces which are 
not area-minimizing.  Let us consider the fourth example first.  
The fourth example is a round cylinder, and, up to a rigid motion 
of $\mathbb{R}^3$, we can represent it by the immersion 
\[ f(u,v) = (r \, \cos u, r \, \sin u, r \, v) \] for 
$(u,v) \in \Sigma 
= [0,2\pi] \times [0,\tfrac{d}{r}] \subset \mathbb{R}^2$ for some constants 
$r,d \in \mathbb{R}^+$.  This is a portion of a cylinder with radius $r$ and 
height $d$.  The induced fundamental forms (see \cite{wisky}) are 
\[ g = r^2 dz d\bar z \;\;\; \text{ and } \;\;\; b = -\tfrac{r}{4} dz^2 
-\tfrac{r}{2} dzd\bar z -\tfrac{r}{4} d\bar z^2 \; , \;\;\; 
\text{with} \;\;\; z = u + i v \; , \;\;\; i = \sqrt{-1} \; . \]  
So $K=0$ and 
$H=\frac{-1}{2r}$, and the right-hand side of \eqref{2ndvar} is 
\begin{equation}\label{cyl2ndff} 
\int_0^{\tfrac{d}{r}} \int_0^{2\pi} h \cdot L_{cyl}(h) dudv \; , 
\;\;\;\;\;\; L_{cyl}(h) = -h_{uu}-h_{vv}-h \; . \end{equation}  
This second derivative 
of area can be negative for some boundary-fixing volume-preserving 
variation if and only 
if $d>2 \pi r$, and there is a 
reason why $2 \pi r$ is the height beyond which the cylinder becomes only 
area-critical instead of area-minimizing.  We will not fully 
explain the reason 
here (we refer the reader to \cite{bc} for a rigorous explanation), 
but we will give a 
clue as to why this is so.  Consider the function 
\[ h=h(v)=\sin \tfrac{2 \pi r v}{d} \; . \]  
It has these properties: 
\begin{itemize}
\item $h|_{v=0}=h|_{v=d/r}=0$ (an infinitesimal "boundary-fixing" property), 
\item $\int_0^{d/r} h dv = 0$ (an infinitesimal "volume-preserving" property), 
\item $L_{cyl}(h) = \mu h$, where 
\[ \mu = \frac{4 \pi^2 r^2 - d^2}{d^2} \; . \]
\end{itemize}
Thus $h$ is an eigenfunction of the operator $L_{cyl}$ with eigenvalue 
$\mu$, and $\mu < 0$ precisely when $d>2 \pi r$.  So if we choose a 
rotationally symmetric variation based on this function $h$ (i.e. a 
rotationally symmetric variation whose 
variation vector field at $t=0$ is $h \cdot \vec N$, where $\vec N = 
(\cos u, \sin u, 0)$ is the unit normal vector to $f=f(u,v)$, 
see \cite{bc}), the integrand in 
the second variation formula \eqref{cyl2ndff} will become 
negative precisely when 
$d>2 \pi r$.  We conclude that a cylindrical tube of radius $r$ and 
height $d>2 \pi r$ cannot be made as a physical film.  

The third example of a soap film from 
Section \ref{chapter0} is a catenoid.  
The profile curve for a catenoid is the hyperbolic cosine function, so a 
catenoid can be parametrized as 
\[ f(u,v) = (\cosh v \cos u , \cosh v \sin u , v ) \; , \] with 
\[ (u,v) \in \Sigma = [0,2 \pi] \times [-d,d] \subset 
\mathbb{R}^2 \] for some $d \in \mathbb{R}^+$.  Here $2 d$ is the 
distance between 
the two boundary circles.  Let $d_0 \approx 1.2$ be the unique 
positive solution to 
$d_0 \sinh d_0 = \cosh d_0$.  Then the catenoid $f$ will be 
area-minimizing if $d < d_0$ 
and will not be area-minimizing (i.e. only area-critical) if $d > d_0$.  Hence 
if we extend the value $d$ past $d_0$, the catenoid will 
no longer be constructable with a soap film.  

\begin{figure}[phbt]
\begin{center}
\includegraphics[width=1.0\linewidth]{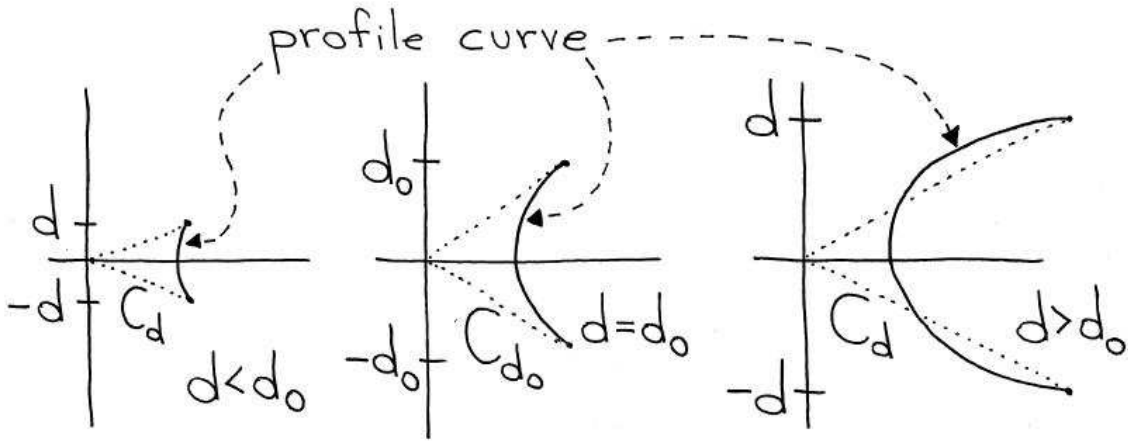}
\caption{The profile curve on the left (resp. middle, right) 
creates a stable (resp. weakly stable, unstable) catenoid.}
\end{center}
\end{figure}

Again, we will not explain here why $d_0$ is the precise value beyond 
which the catenoid becomes 
non-area-minimizing, but, again, we will give a hint why this is so.  The 
value $d_0$ actually has a geometric interpretation, as follows: For 
each $v > 0$, consider the cone 
\[ C_v = \left\{ \left. 
\left( x,y,\pm \frac{v}{\cosh v} \sqrt{x^2+y^2} \right) 
\, \right| \, x,y \in \mathbb{R} \right\} \; . \]  Then the cone $C_v$ 
intersects 
the catenoid tangentially (i.e. a non-transversal non-empty 
intersection) if and only if 
$v = d_0$.  When $d < d_0$, any homothety of $\mathbb{R}^3$ 
centered at the origin 
$(0,0,0)$ will move the catenoid to another catenoid disjoint 
from the first one, 
while this is not the case when $d > d_0$.  
These facts are related to the 
question of whether there exists a boundary-fixing 
variation $f_t$ of the catenoid $f$ that has negative 
second derivative of area (we do not 
need the "volume-preserving" property here, as the catenoid 
is a minimal surface).  For 
a complete explanation of this, a good source is \cite{choe}.  

\subsection{Steiner points}\label{steinerpoints}

Minimal surfaces minimize area (at least locally) with respect to 
their boundary curves, thus, as noted above, 
they model soap films that do not 
surround bounded pockets of air.  One could consider the analogous 
phenonemon, but one dimension lower.  Instead of 
trying to connect $1$-dimensional things like sets of curves 
(i.e. the wire frames that we 
use to make soap bubbles) with area-minimizing surfaces, 
we could try to connect $0$-dimensional things such as finite 
sets of points, 
and instead of connecting them with $2$-dimensional surfaces, we 
would connect them with $1$-dimensional curves, and instead of 
trying to minimize the areas 
of the $2$-dimensional surfaces, we would minimize lengths of 
the $1$-dimensional curves.  As we saw in Figure 2, the 
area-minimizing soap films can have $1$-dimensional singular 
curves where three sheets of a soap film come together at 
equal angles.  When the dimension is reduced by $1$ as above, 
the singular curves are replaced with {\em Steiner points}, which are 
singular points at which three curves (actually straight line 
segments) come together at equal angles.  

\begin{figure}[phbt]
\label{fig:steiner}
\begin{center}
\includegraphics[width=0.6\linewidth]{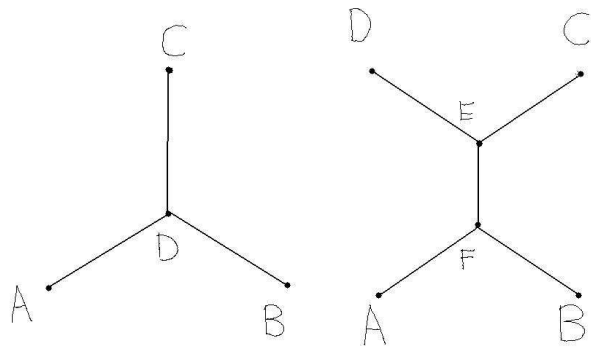} 
\end{center}
\caption{Examples of Steiner points in length-minimizing 
planar graphs.}
\end{figure}

To demonstrate this, let us consider the following examples: 

\begin{example}
Imagine you have two cities, call them 
city $A$ and city $B$, on a flat region of land, where 
no mountains or lakes or other obstructions exist, and you want 
to build a road (or collection of roads) that connects 
the two cities.  Suppose further that you want to 
minimize the total length of the road (or roads).  

You would, of course, just build one road along the straight line from city 
$A$ to city $B$.  (The mathematicial statement would be that the 
shortest path between two points is a straight line.)
\end{example}

\begin{example}
Now imagine that there are three 
cities, city $A$, city $B$ and city $C$, and that those three cities 
lie at the three vertices of an equilateral triangle.  Now you 
want to build roads with minimal total 
length so that all three cities are connected, i.e. so that you can 
drive from any one city to any other of the three.  

Suppose that the length of each side of the triangle is $\ell$.  
If you just build a straight road from city $A$ to city $B$, 
and another straight road from city $A$ to city $C$, then you 
would not need to build any road from city $B$ to city $C$, as 
you could already get from city $B$ to city $C$ by passing through 
city $A$.  The total length of the roads would be $2 \ell$.  

But this is not actually the best solution.  The best way 
is to make a new city $D$ at the center of 
the triangle, and then make three straight-line roads, one from 
each of the cities $A$, $B$ and $C$ directly to city $D$.  Now 
the sum of the three lengths of these roads would be 
$\sqrt{3} \ell$, which is strictly less than $2 \ell$, and this 
is the best way.  See Figure 6.  
\end{example}

The city $D$ in the previous example is what we call a Steiner point.  
It is an added point that is used to minimize total length of roads.  

\begin{example}
Now imagine that you have four cities, cities $A$, $B$, $C$ and $D$,  
at the vertices of a square with sides of length $\ell$, in 
sequencial order around the square.  
To connect these four cities so that the 
road length is minimized, you might first think of building three 
straight-line roads, each of length $\ell$, one from city $A$ to 
city $B$, one from city $B$ to city $C$ and one from city $C$ to 
city $D$.  
(Note that you now do not need a road from city $A$ to city $D$, just 
as in the previous example).  Then the total length of roads is 
$3 \ell$.

But this is not the best way.  A better 
way would be to put a city $E$ at the center of the square 
and draw roads directly from 
each of the four original cities to the new city $E$.  Then the 
roads form an "X" and the length is now $2 \sqrt{2} \ell$, which is 
less.  

But this is still not the best solution.  The best solution 
is to actually have two new cities $E$ and $F$ (i.e. two 
Steiner points), 
and then to draw in roads as in the second picture of Figure 6.  
The two Steiner points are placed in this picture so that 
the angle between any two roads meeting at a Steiner point is always 
exactly 120 degrees.  One can now check the total length of the 
roads is strictly less than $2 \sqrt{2} \ell$, and this is the 
best solution.  Note that there are two different ways to choose a 
least-length solution.  
\end{example}

In the above three examples, we have seen how Steiner points 
help us to find the least-length collection of "1-dimensional" 
curves (i.e. roads) that connects some points (cities) together.  
This is analogous to the way singular points (and singular 
curves) can appear on area-minimizing surfaces.  

\section{Ambient spaces} 
\label{ambientspaces}

CMC surfaces always exist in some larger ambient space.  
In the soap-film examples 
we described in Chapters \ref{chapter0} and \ref{section1.6}, we were 
assuming that the CMC surfaces lie in the Euclidean $3$-space 
$\mathbb{R}^3$.  We encountered CMC surfaces in other 
non-Euclidean ambient spaces in \cite{wisky}.  
Also, there is a description of general Riemannian and Lorentzian manifolds 
in \cite{wisky}.  Here we give two examples of ambient spaces: we 
describe hyperbolic 3-space, like in \cite{wisky}, 
but in a bit more detail; we also briefly describe de Sitter $3$-space.  
Minkowski ($n+1$)-space $\mathbb{R}^{n,1}$ 
and spherical $3$-space $\mathbb{S}^3$ also 
appear in these notes, and we assume the reader is already familiar with 
those spaces (they are described in \cite{wisky}).  

\subsection{Hyperbolic $3$-space $\mathbb{H}^3$}
\label{hyperbolic3space}

Hyperbolic $3$-space $\mathbb{H}^3$ is the unique simply-connected 
$3$-dimensional complete Riemannian manifold with 
constant sectional curvature $-1$.  However, it can be described by a 
variety of models, each with its own advantages: the Minkowski 
space model, the Poincare ball model, the Hermitian 
matrix model, the Klein ball model and the upper-half-space model.  

We define $\mathbb{H}^3$ by way of the Minkowski $4$-space 
$\mathbb{R}^{3,1}$ with its Lorentzian metric $g_{\mathbb{R}^{3,1}}$ 
of signature $(+++-)$, by 
taking the upper sheet of the two-sheeted hyperboloid 
\[ \mathfrak{M} = \left\{(x_1,x_2,x_3,x_0) \in \mathbb{R}^{3,1} \, \left| \, 
x_{0}^2-\sum_{j=1}^{3} x_j^2 = 1 \, , \; x_{0} > 0 \right. \right\} \; , \]  
with metric $g$ given by the restriction of $g_{\mathbb{R}^{3,1}}$ 
to the tangent spaces of this $3$-dimensional upper sheet.  
We call this $\mathfrak{M}$ 
the {\em Minkowski model for hyperbolic $3$-space}.  Although the 
metric $g=g_{\mathbb{R}^{3,1}}$ is Lorentzian and 
therefore not positive definite, 
the restriction of $g$ to this upper sheet is actually positive 
definite, so $\mathfrak{M}$ is a Riemannian manifold.  

The isometry group of 
$\mathfrak{M}$ can be described using the matrix group 
\[ O_+(3,1) = \{ A = (a_{ij})_{i,j=1}^4 \in O(3,1) \, | \, a_{44} > 0 \} 
\; . \] 
For $A \in O_+(3,1)$, the map 
\[ \mathbb{R}^{3,1} \ni \vec x \to (A (\vec x)^t)^t \in \mathbb{R}^{3,1} \] 
is an isometry of $\mathbb{R}^{3,1}$ that preserves $\mathfrak{M}$, 
hence it is an isometry of $\mathfrak{M}$.  In fact, 
all isometries of $\mathfrak{M}$ can be described this way.  

The following lemma tells us that the Minkowski model for hyperbolic $3$-space
is indeed the true hyperbolic $3$-space.  

\begin{lemma}\label{H3hascurvatureminus1}
$\mathfrak{M}$ is a simply-connected 
$3$-dimensional complete Riemannian manifold with constant 
sectional curvature $-1$.  
\end{lemma}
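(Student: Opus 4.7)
The plan is to verify the four asserted properties in turn, leveraging the transitive isometry action of $O_+(3,1)$ to reduce the deepest computation (constant sectional curvature $-1$) to a single-point calculation via the Gauss equation. First I would show that $\mathfrak{M}$ is a smooth $3$-dimensional submanifold of $\mathbb{R}^{3,1}$: the function $F(\vec x) = x_0^2 - x_1^2 - x_2^2 - x_3^2$ has nowhere-vanishing gradient on $F^{-1}(1)$, so that level set is a regular hypersurface, and the open condition $x_0>0$ selects one connected component. Simple-connectedness is then immediate, since $(x_1,x_2,x_3,x_0)\mapsto(x_1,x_2,x_3)$ is a diffeomorphism onto $\mathbb{R}^3$ with explicit inverse $\vec y\mapsto (\vec y,\sqrt{1+|\vec y|^2})$.

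Next I would verify that the induced tensor $g$ is actually Riemannian. For $p\in\mathfrak{M}$ we have $g_{\mathbb{R}^{3,1}}(p,p)=-1$, and $T_p\mathfrak{M}$ is the $g_{\mathbb{R}^{3,1}}$-orthogonal complement of $p$. The standard linear-algebra fact that the orthogonal complement of a timelike vector in a Lorentzian inner product space is spacelike then shows that $g$ restricts to a positive definite metric on every tangent space.

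For the sectional curvature, I would exploit the symmetry group. The matrix group $O_+(3,1)$ acts transitively on $\mathfrak{M}$ by isometries, and the stabilizer of the basepoint $e_0=(0,0,0,1)$ contains the subgroup $O(3)$ acting on the first three coordinates, which acts transitively on $2$-planes of $T_{e_0}\mathfrak{M}$. Homogeneity plus this isotropy forces the sectional curvature to be a constant. To pin down the constant, I would apply the Gauss equation at $e_0$: since the ambient connection on $\mathbb{R}^{3,1}$ is flat and the position vector itself is the unit timelike normal (with $g_{\mathbb{R}^{3,1}}(p,p)=-1$), the shape operator at each $p$ is $S(X)=-X$, so the Gauss equation
\[
K(X,Y) \;=\; \epsilon\bigl(\langle S(X),X\rangle\langle S(Y),Y\rangle - \langle S(X),Y\rangle^2\bigr)
\]
with $\epsilon=g(N,N)=-1$ yields $K\equiv -1$ on orthonormal pairs.

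Finally, completeness follows from the homogeneity already established, since any homogeneous Riemannian manifold is geodesically complete; alternatively, one may check this explicitly by noting that the geodesics are the intersections of $\mathfrak{M}$ with timelike $2$-planes through the origin of $\mathbb{R}^{3,1}$, which admit the global parametrization $t\mapsto \cosh(t)\,p+\sinh(t)\,v$ for $v\in T_p\mathfrak{M}$ with $g(v,v)=1$. The main obstacle I anticipate is the sign-bookkeeping in the Gauss equation: because the normal is timelike the factor $\epsilon=-1$ enters nontrivially, and the final sign of $K$ depends on carrying through this convention consistently with the definition of the shape operator.
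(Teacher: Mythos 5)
Your proof is correct and follows the same overall skeleton as the paper's: simple-connectedness from the graph structure over $\mathbb{R}^3$, transitivity of $O_+(3,1)$ on points together with the isotropy action on $2$-planes at $(0,0,0,1)$ to force the sectional curvature to be constant, and explicit complete geodesics (equivalently, homogeneity) for completeness. The one place you genuinely diverge is in pinning down the value $-1$: the paper defers this to the intrinsic curvature formulas (1.1.10)/(1.1.14) of \cite{wisky} and leaves the single-plane computation to the reader, whereas you compute it extrinsically via the Gauss equation, using that the position vector is a unit timelike normal so that $S=\pm\operatorname{id}$ and $\epsilon=\langle N,N\rangle=-1$ gives $K=\epsilon\bigl(\langle S(X),X\rangle\langle S(Y),Y\rangle-\langle S(X),Y\rangle^2\bigr)=-1$. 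Your route has the advantage of being self-contained and of making the sign mechanism transparent (the $-1$ comes precisely from the timelike normal), at the cost of importing the semi-Riemannian Gauss equation with its $\epsilon$ convention; the paper's route avoids that machinery but leaves the decisive computation unexecuted. You also explicitly verify that the induced metric is positive definite via the orthogonal-complement-of-a-timelike-vector argument, which the paper asserts outside the proof; including it is a small but welcome completion.
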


Since this lemma implies $\mathfrak{M}$ is really the hyperbolic $3$-space 
$\mathbb{H}^3$, we will in fact sometimes refer to this Minkowski space model 
$\mathfrak{M}$ simply as $\mathbb{H}^3$.  

\begin{proof}
It is clear that $\mathfrak{M}$ is simply-connected.
Let us now check that it has constant sectional curvature $-1$.  

For any point $p \in \mathfrak{M}$, there exists a matrix $A \in \text{SO}_3 = 
O(3) \cap \{ A \in M_{3 \times 3} (\mathbb{R}) 
\, | \, \det A = + 1 \}$ such that the $4 \times 4$ matrix 
\[ \begin{pmatrix} 
 &  &  & 0 \\
 & A &  & 0 \\
 &  &  & 0 \\
0 & 0 & 0 & 1
\end{pmatrix} \in O_+(3,1) \]
preserves $\mathfrak{M}$ and
maps $p$ to a point of the form $(0,0,\sinh(s),\cosh(s))$, $s \in \mathbb{R}$.
Then the matrix
\[ 
\begin{pmatrix}
1 & 0 & 0 & 0 \\
0 & 1 & 0 & 0 \\
0 & 0 & \cosh(-s) & \sinh(-s) \\
0 & 0 & \sinh(-s) & \cosh(-s)
\end{pmatrix} \in O_+(3,1) \; \] is an isometry of $\mathbb{R}^{3,1}$ that 
preserves $\mathfrak{M}$ and maps the point 
$(0,0,\sinh(s),\cosh(s))$ to the point
$(0,0,0,1)$.
Thus one can move an arbitrary point of $\mathfrak{M}$ to the 
point $(0,0,0,1)$ by an isometry of $\mathfrak{M}$.  Now, 
if ${\mathcal V}_1, {\mathcal V}_2$ are any two $2$-dimensional subspaces 
of the $3$-dimensional tangent space 
$T_{(0,0,0,1)}(\mathfrak{M})$, there exists a matrix $A \in O_+(3,1)$ 
representing an isometry of $\mathfrak{M}$ fixing $(0,0,0,1)$ such that 
$d\psi_{(0,0,0,1)}({\mathcal V}_1) = {\mathcal V}_2$.  
Therefore this model has constant sectional curvature, by Lemma 
1.1.6 in \cite{wisky}.  
Thus to see that $\mathfrak{M}$ has constant sectional curvature $-1$, 
one need only
check that this is the value of the sectional curvature of a single fixed 
$2$-dimensional 
subspace of $T_{(0,0,0,1)}(\mathfrak{M})$.  This can be done using 
Equation (1.1.10) or Equation (1.1.14) in \cite{wisky}, 
and we leave this computation to the reader.  

Finally, we argue that $\mathfrak{M}$ is complete.  
Intersecting $\mathfrak{M}$ with the plane 
$\{x_1 = x_2 = 0 \}$, we obtain a curve that can be parametrized with 
unit speed by $\alpha(s) = (0,0,\sinh(s),\cosh(s))$, i.e. 
this parametrization is unit speed with respect to the metric 
$g$ of $\mathfrak{M}$.  
Since the domain of $\alpha(s)$ is all $s 
\in \mathbb{R}$, this curve $\alpha(s)$ is complete.  
And since any geodesic segment in $\mathfrak{M}$ 
can be moved by an isometry
to $\alpha(s), \; 0 \leq s \leq a$ for some value of $a$, we know that 
any geodesic 
segment can be extended to a geodesic of infinite length.  
Therefore $\mathfrak{M}$ 
is complete.  This completes the proof of the lemma.  
\end{proof}

\begin{remark}
In fact, the functions $\sinh(s)$ and $\cosh(s)$ can be 
defined by the condition that the curve 
$\alpha(s) = (0,0,\sinh(s),\cosh(s))$ with 
$\alpha(0) = (0,0,0,1)$ and $\frac{d\alpha}{ds}(0) = (0,0,1,0)$ is 
a unit-speed geodesic with respect to the metric $g$ of 
$\mathfrak{M} = \mathbb{H}^3$.  This condition 
implies that the curve $\alpha(s)$ satisfies
$x_3^2 - x_{0}^2 = -1$, so $\cosh^2(s) - \sinh^2(s) = 1$, and by
differentiation of $x_3^2 - x_{0}^2 = -1$ with respect to $s$ we have 
\[ \frac{\tfrac{dx_{0}}{ds}}{\tfrac{dx_3}{ds}} = 
\frac{x_3}{x_{0}} \Longrightarrow
\frac{\tfrac{d}{ds}(\cosh(s))}{\tfrac{d}{ds}(\sinh(s))} = 
\frac{\sinh(s)}{\cosh(s)} \; . \]
Since $|\alpha^\prime(s)|^2 = (\frac{dx_3}{ds})^2 -
(\frac{dx_{0}}{ds})^2 = 1$, it follows that 
\[ \frac{d}{ds}(\sinh(s)) = \cosh(s) \; , 
\;\;\; \frac{d}{ds}(\cosh(s)) = \sinh(s) 
\; . \]
Now that we know how to differentiate $\cosh(s)$ and $\sinh(s)$, we know
the power series expansions of these functions about $s=0$.  Comparing these
series with the power series expansions 
for $e^s$ and $e^{-s}$ about $s=0$, we conclude that 
\[
\cosh(s) = \frac{e^s + e^{-s}}{2} \; ,
\;\;\; 
\sinh(s) = \frac{e^s - e^{-s}}{2} \; ,
\]
which of course are the standard definitions of $\cosh(s)$ and $\sinh(s)$.
(An analogous analysis can be carried out for the sine and
cosine functions on the unit circle in the Euclidean plane, 
viewing that unit circle 
as a geodesic in the unit sphere $\mathbb{S}^2$ in 
the natural extension of $\mathbb{R}^2$ to $\mathbb{R}^3$.) 
\end{remark}

Because the isometry group of $\mathfrak{M}$, which we have 
noted we may call simply $\mathbb{H}^3$, is the matrix group 
$O_+(3,1)$, the image of the geodesic 
$\alpha(t)=(0,0,\cosh t,\sinh t)$ under an 
isometry of $\mathbb{H}^3$ always lies in a $2$-dimensional 
plane of $\mathbb{R}^{3,1}$ 
containing the origin.  Thus we can conclude that the image of any 
geodesic in $\mathbb{H}^3$ is formed by the 
intersection of $\mathbb{H}^3$ with a $2$-dimensional plane in 
$\mathbb{R}^{3,1}$ 
which passes through the origin $(0,0,0,0)$ of $\mathbb{R}^{3,1}$.  

The Minkowski model is perhaps the best model of 
$\mathbb{H}^3$ for understanding 
the isometries and geodesics of $\mathbb{H}^3$.  
However, since the Minkowski model 
lies in the $4$-dimensional space $\mathbb{R}^{3,1}$, 
we cannot use it to view graphics 
of surfaces in $\mathbb{H}^3$.  So we would like to 
have models that can be viewed 
on the printed page.  We would also like to have a 
model that uses $2 \times 2$ matrices 
to describe $\mathbb{H}^3$, as this is more compatible 
with the DPW method described in \cite{wisky}, and the 
discussion in Sections \ref{lastsubsectionA} and 
\ref{lastsubsectionB} here.  With this in mind, 
we now give some other possible models for $\mathbb{H}^3$.  

\subsection{The Klein model} 
Let $\mathcal K$ be the $3$-dimensional ball in $\mathbb{R}^{3,1}$ lying in the
hyperplane $\{ x_0 = 1 \}$ with radius $1$ and center at $(0,0,0,1)$.
By Euclidean stereographic projection from the origin $(0,0,0,0) \in 
\mathbb{R}^{3,1}$ of the Minkowski model $\mathfrak{M}$ for $\mathbb{H}^3$
to $\mathcal K$, one has the Klein model $\mathcal K$ for
$\mathbb{H}^3$.  $\mathcal K$ is 
given the metric that makes this stereographic 
projection an isometry.  Since the 
geodesics of $\mathbb{H}^3$ in the Minkowski model 
are formed by the intersections of 
$\mathbb{H}^3$ with $2$-dimensional planes in 
$\mathbb{R}^{3,1}$ which pass through 
the origin, it is clear that after projection 
to $\mathcal K$, the geodesics become 
Euclidean straight lines in the Klein model, 
and this is the advantage of the Klein model.  
However, the disadvantage of the Klein 
model is that its metric is not conformal to the Euclidean metric (we defined 
conformality in \cite{wisky}, and we also define it here in Definition 
\ref{defn:conformality4.4}).  

\subsection{The Poincare model} 
Let $\mathcal P$ be the $3$-dimensional ball in $\mathbb{R}^{3,1}$ lying
in the hyperplane 
$\{ x_0 = 0 \}$ with radius $1$ and center at the origin $(0,0,0,0)$.
By Euclidean stereographic projection from the point $(0,0,0,-1) \in 
\mathbb{R}^{3,1}$ of the Minkowski model for $\mathbb{H}^3$ to $\mathcal P$, 
one has the Poincare model 
$\mathcal P$ for $\mathbb{H}^3$.  This stereographic 
projection is 
\begin{equation}\label{H3toPoincar} (x_1,x_2,x_3,x_0) \in \mathbb{H}^3 \to 
\left( \frac{x_1}{1+x_0}, \frac{x_2}{1+x_0}, \frac{x_3}{1+x_0}, 0 \right) \in 
\mathcal{P} \; . \end{equation}  
$\mathcal P$ is given the metric $g$ that makes this stereographic 
projection an isomety.  Since the fourth coordinate is identically zero 
in the Poincare model, we 
can simply remove it and view the Poincare model as the Euclidean unit ball 
\[ B^3 = \{(x_1,x_2,x_3) \in \mathbb{R}^3 \; | \; \; 
x_1^2+x_2^2+x_3^2 < 1\} \] in 
$\mathbb{R}^3$.  One can compute that the metric 
\begin{equation}\label{hypmet} 
g = \left( \frac{2}{1 - x_1^2-x_2^2-x_3^2} \right)^2 
(dx_1^2+dx_2^2+dx_3^2) \end{equation}  
is the one that will make the stereographic 
projection \eqref{H3toPoincar} an isometry.  By 
either Equation (1.1.10) or (1.1.14) in \cite{wisky}, the sectional 
curvature is constantly $-1$.  This metric $g$ in 
\eqref{hypmet} is written as a function times the 
Euclidean metric $dx_1^2+dx_2^2+dx_3^2$, and this means 
that the Poincare model's metric is conformal to the Euclidean metric.  From 
this it follows that angles between vectors in the tangent spaces are the 
same from the viewpoints of both the 
hyperbolic and Euclidean metrics, and this 
is why we prefer this model when showing 
graphics of surfaces in hyperbolic $3$-space.  
However, distances are clearly not Euclidean.  In fact, the boundary 
\[ \partial B^3 = \{(x_1,x_2,x_3) \in \mathbb{R}^3 \; | \; \; 
x_1^2+x_2^2+x_3^2 = 1 \} \] of the Poincare model is infinitely far from any 
point in $B^3$ with respect to the hyperbolic metric $g$ in 
\eqref{hypmet}.  For example, consider the curve 
\[ c(t) = (t,0,0) \; , \;\;\; t \in [0,1) \] in the Poincare model.  Its 
length is \[ \int_0^1 
\sqrt{g(c^\prime(t),c^\prime(t))} dt = \int_0^1 \tfrac{2 dt}{1-t^2} = + \infty 
\; . \]  Thus the point $(0,0,0)$ is infinitely far from the boundary point 
$(1,0,0)$ in the Poincare model.  For this reason, 
the boundary $\partial B^3$ is often called 
the {\em ideal boundary at infinity}.  

Unlike the Klein model, geodesics in the 
Poincare model are not Euclidean straight 
lines.  Instead they are segments of 
Euclidean lines and circles that intersect the 
ideal boundary $\partial B^3$ at right angles.  

Important examples of surfaces in 
$\mathbb{H}^3$ are described in \cite{wisky}, 
using the Poincare ball model: totally geodesic hypersurfaces (also called 
hyperbolic planes), hyperspheres, spheres and horospheres.  

\begin{figure}[phbt]
\begin{center}
\includegraphics[width=0.83\linewidth]{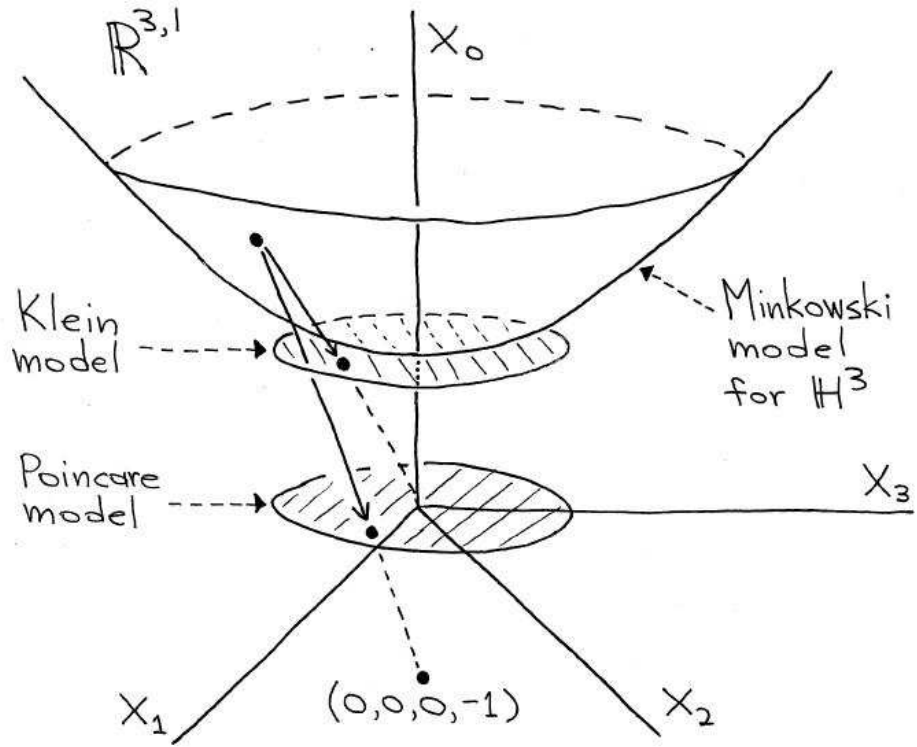}
\caption{The Klein, Poincare and Minkowski space 
models for $\mathbb{H}^3$.}
\end{center}
\end{figure}

\subsection{The upper-half-space model} 
One can obtain the upper-half-space model 
$\mathcal{U}$ for $\mathbb{H}^3$ 
from the Poincare model $\mathcal{P}$ 
by the M\"obius transformation of $\mathbb{R}^3$ 
which maps the unit ball $B^3$ (with the Poincare metric) centered at the 
origin to the upper half $\{ x_3 > 0 \}$ of $\mathbb{R}^3$ 
and maps the origin $(0,0,0)$ 
to $(0,0,1)$ and fixes $\partial B^3 \cap \{x_3 = 0 \}$.  
This map is 
\[ \mathcal{P} \ni (x_1,x_2,x_3) \to 
\frac{(2 x_1,2 x_2,1-x_1^2-x_2^2-x_3^2)}{x_1^2+x_2^2+(x_3-1)^2} \in 
\mathcal{U} \; . \] 
The metric induced on the upper-half-space by this 
transformation is \[ g = \frac{1}{x^2_{3}}(dx_1^2+dx_2^2+dx_3^2) \; , \]  
where we now view $(x_1,x_2,x_3)$ as coordinates 
of the model $\mathcal{U}$, i.e. $x_1,x_2 \in \mathbb{R}$ and $x_3 > 0$.  
Thus, like the Poincare model, the upper-half space model 
$\mathcal{U}$ is again conformal to 
Euclidean space.  And because M\"obius 
transformations preserve angles and also the 
set of circles and lines, again the 
geodesics are Euclidean lines and circles that intersect the 
ideal boundary at infinity $\{ x_3 = 0 \}$ at right angles.  
The isometries of the model $\mathcal{U}$ 
are generated by horizontal Euclidean
translations, Euclidean rotations about vertical axes,
Euclidean dilations about points in the plane 
$\{x_3 = 0\}$, and Euclidean inversions through Euclidean spheres (and
planes) intersecting the plane $\{x_3 = 0\}$ orthogonally.  

\subsection{The Hermitian matrix model} 
The Hermitian matrix model is a convenient model 
for applying the DPW method.  Unlike the other four 
models above, which can be used for hyperbolic spaces of any dimension, the 
Hermitian model can be used 
only when the hyperbolic space is $3$-dimensional.  

We first recall the following definitions: 
The group $\SL_2\!\mathbb{C}$ is all $2 \times 2$ matrices 
with complex 
entries and determinant $1$, with matrix multiplication as the group 
operation.  The vector space $\slg_2\!\mathbb{C}$ consists of all 
$2 \times 2$ complex matrices with trace $0$, with the vector space operations 
being matrix addition and scalar multiplication.  
(In Section \ref{liegroups} we will see that $\SL_2\!\mathbb{C}$ is a Lie 
group.  $\SL_2\!\mathbb{C}$ is $6$-dimensional.  Also, 
$\slg_2\!\mathbb{C}$ is the associated Lie 
algebra, thus is the tangent space of $\SL_2\!\mathbb{C}$ at the identity
matrix.  $\slg_2\!\mathbb{C}$ is also $6$-dimensional.)  
The group $\SU_2$ is the subgroup of matrices 
$F \in \SL_2\!\mathbb{C}$ such that $F \cdot F^*$ is the 
identity matrix, where 
$F^* = \bar{F}^t$.  Equivalently, 
\[ F = \left(
\begin{array}{cc}
p & -\bar{q} \\ q & \bar{p}
\end{array} \right) \; , \]
for some $p$, $q \in \mathbb{C}$ with $|p|^2 + |q|^2 = 1$.  
(We will see that $\SU_2$ is a $3$-dimensional Lie subgroup, 
in Section \ref{liegroups}.)  

Finally, we define 
Hermitian symmetric matrices as matrices of the form 
\[ \left( \begin{array}{cc}
a_{11} & a_{12} \\ \overline{a_{12}} & a_{22} 
\end{array} \right) \; , \] where $a_{12} \in \mathbb{C}$ and $a_{11}, 
a_{22} \in \mathbb{R}$.  Hermitian symmetric matrices with determinant $1$ 
have the additional condition that 
$a_{11} a_{22} - a_{12} \overline{a_{12}} = 1$.  

The Minkowski $4$-space 
$\mathbb{R}^{3,1}$ can be mapped to the space of $2 \times 2$ Hermitian 
symmetric matrices by 
\[ \psi : (x_1,x_2,x_3,x_0) \longrightarrow 
\left(
\begin{array}{cc}
x_0 + x_3 & x_1 + ix_2 \\
x_1 - ix_2 & x_0 - x_3
\end{array}
\right) \; . \]
For $\vec{x} \in \mathbb{R}^{3,1}$, the metric in the Hermitean matrix form 
is given by \[ \langle \vec{x} , 
\vec{x} \rangle_{\mathbb{R}^{3,1}} = -\mbox{det}(\psi(\vec{x})) \; . \]  
Thus $\psi$ maps the Minkowski model for $\mathbb{H}^3$ to the set of 
Hermitian symmetric 
matrices with determinant $1$.  Any Hermitian symmetric 
matrix with determinant $1$ can be written as the product $F F^*$ for some 
$F \in \SL_2\!\mathbb{C}$, and $F$ is determined 
uniquely up to right-multiplication 
by elements in $\SU_2$.  That is, for $F,\hat F \in \SL_2\!\mathbb{C}$, 
we have 
$F F^* = \hat F \hat F^*$ if and only if $F = \hat F \cdot B$ for some 
$B \in \SU_2$.  
Therefore we have the Hermitian model 
\[ {\mathcal H} = \{ F F^* \; | \; F \in \SL_2\!\mathbb{C} \} \; , \;\;\; 
   \;\; F^* := \bar{F}^t \; , \] for $\mathbb{H}^3$, 
and $\mathcal H$ is given the metric so that $\psi$ is an isometry from 
the Minkowski model of $\mathbb{H}^3$ to $\mathcal H$.  

It follows that, when we compare the Hermitean 
matrix and Poincare models $\mathcal{H}$ and 
$\mathcal{P}$ for $\mathbb{H}^3$, 
the mapping \[ \left( \begin{array}{cc}
a_{11} & a_{12} \\ \overline{a_{12}} & a_{22} 
\end{array} \right) \in {\mathcal H} \to 
\left( \frac{a_{12}+\overline{a_{12}}}{2+a_{11}+a_{22}}, 
\frac{i(\overline{a_{12}}-a_{12})}{2+a_{11}+a_{22}}, 
\frac{a_{11}-a_{22}}{2+a_{11}+a_{22}} \right) \in {\mathcal P} \]
is an isometry from $\mathcal H$ to $\mathcal P$.  

The Hermitian model is actually very convenient for describing 
the isometries of 
$\mathbb{H}^3$.  Up to scalar multiplication by $\pm 1$, the 
group $\SL_2\!\mathbb{C}$ represents the isometry group of 
$\mathbb{H}^3$ in the 
Hermitian model $\mathcal H$ in the following way: A matrix 
$h \in \SL_2\!\mathbb{C}$ 
acts isometrically on $\mathbb{H}^3$ in the model $\mathcal H$ by 
\[x \in \mathcal{H} \to h \cdot x := h \, x \, h^* \in \mathcal{H} \; , \]
where $h^* = \bar{h}^t$.  The kernel of this action is $\pm I$, 
hence $\PSL_2\!\mathbb{C} = \SL_2\!\mathbb{C}/\{\pm I\}$ is the 
isometry group of $\mathbb{H}^3$.  

\subsection{De-Sitter $3$-space $\mathbb{S}^{2,1}$}

Finally, we briefly consider another 
ambient space, which will be a Lorentzian manifold, because it also 
has a $2 \times 2$ Hermitian matrix model.  
Consider the $1$-sheeted hyperboloid in $\mathbb{R}^{3,1}$
\[ \mathbb{S}^{2,1} = \left\{ (x_1,x_2,x_3,x_0) \in 
\mathbb{R}^{3,1} \, \left| \, 
\sum_{j=1}^{3} x_j^2 - x_0^2 = 1 \right. \right\} \] with 
the metric $g$ induced on its tangent 
spaces by the restriction of the metric from the Minkowski 
space $\mathbb{R}^{3,1}$.  
This Lorentzian manifold $\mathbb{S}^{2,1}$ is called 
{\em de-Sitter $3$-space}.  

De-Sitter $3$-space $\mathbb{S}^{2,1}$ is homeomorphic to 
$\mathbb{S}^2 \times \mathbb{R}$, 
so it is simply-connected, since both $\mathbb{S}^2$ and $\mathbb{R}$ are 
individually simply-connected.  And this space, 
like hyperbolic space $\mathbb{H}^3$, can also be written with a 
$2 \times 2$ matrix model: 
\[ \mathbb{S}^{2,1} = \left\{ X \in M_{2 \times 2} (\mathbb{C}) 
\, | \, X^*=X,\langle X,X 
\rangle_{\mathbb{R}^{3,1}} = 1 
\right\} = \left\{ \left. F \begin{pmatrix} 1 & 0 \\ 0 & -1 
\end{pmatrix} F^* \, \right| \, F \in \SL_2\!\mathbb{C} \right\} \; , \] 
where $\langle X,X \rangle_{\mathbb{R}^{3,1}} = -\det X$.  
We note that $\mathbb{S}^{2,1}$ has constant sectional curvature $+1$.

\subsection{Lie groups and algebras}\label{liegroups}
We have already seen some Lie groups, 
that are amongst the most basic 
matrix groups, so here we briefly review some basic facts about 
Lie groups and algebras.  

\begin{defn}\label{Liegroup}
A set $G$ is a {\em Lie group} if 
\begin{enumerate}
\item $G$ is a differentiable manifold of class $C^\infty$, 
\item $G$ is a group with respect to some group operation, denoted by $\cdot$, 
\item for each fixed $g_0 \in G$ and each variable $g \in G$, the maps 
$L_{g_0} : g \to g_0 \cdot g$ (left multiplication) and 
$R_{g_0} : g \to g \cdot g_0$ (right multiplication) are $C^\infty$ differentiable.  
\end{enumerate}
\end{defn}

\begin{defn}\label{Lialgebra}
The {\em Lie algebra} $\frak{G}$ associated to a Lie group $G$ is 
the tangent space of (the manifold) $G$ at the identity 
element $e$ of (the group) 
$G$, i.e. $\frak{G} = T_e G$.  The Lie algebra $\frak{G}$ is then a vector 
space under addition and scalar multiplication of vectors in $T_e G$.  
Furthermore, there is a 
bracket operation $\frak{G} \times \frak{G} \to \frak{G}$ defined as 
follows: 
\[ [X,Y](f)=X(Y(f))-Y(X(f)) \; , \] where $X,Y$ are arbitrary elements of 
$\frak{G}$ with canonical left-invariant extensions to vector fields on $G$, 
and $f:G \to \mathbb{R}$ is any smooth map.  
\end{defn}

\begin{remark}
$X$ being a left-invariant vector field means that 
$X$ is given by transportation by the derivative map of 
left multiplication in $G$, i.e. 
\[ X_g = (L_g)_* X_e \; , \] where $L_g:G \to G$ denotes left 
multiplication by $g$, as in part (3) of Definition 
\ref{Liegroup}.  In the case that $G$ is a matrix group, 
then $(L_g)_* X$ becomes simply $(L_g)_* X = g X$, and the 
above equation can be written as $X_g = g X_e$.  
\end{remark}

\begin{remark}
In the definition of the Lie bracket above, $X(f)$ and $Y(f)$ must be 
defined at more than just one point $e$ (in particular, in 
a neighborhood of $e$) in order for $Y(X(f))$ and $X(Y(f))$ to 
be defined.  But because we take the canonical left-invariant 
extensions of $X$ and 
$Y$, in fact $[X,Y]$ is determined by $X|_e$ and $Y|_e$ alone.  
\end{remark}

\begin{remark}\label{remm:bracketsandmatrices}
When $G$ is a matrix group, $X$ and $Y$ in $\frak{G}$ can be 
identified with matrices, and it turns out that $[X,Y]$ can be 
identified with the difference of matrix products $X \cdot Y - 
Y \cdot X$.  We will give an example of this in 
Example \ref{exla:fourthexample}.  
\end{remark}

\begin{example}
The first example we consider is $\text{SO}_3$, defined as follows: 
\[ \text{SO}_3 = 
\{A \in M_{3 \times 3}(\mathbb{R}) \, | A \cdot A^t = I , \, \det A = 1 
\} \; . \]  
The group operation is then matrix multiplication.  
This represents the group of rotations of $\mathbb{R}^3$ that 
fix the origin of $\mathbb{R}^3$, 
and the group operation then represents composition of rotations.  
When considering a conformal immersion $f:\Sigma \to 
\mathbb{R}^3$ defined on a $2$-dimensional Riemann surface 
$\Sigma$ with local coordinate $z=u+i v$, we can consider the 
three vectors (two being tangent to $f$, and the third being the unit 
normal vector to $f$) 
\[ \tfrac{f_u}{||f_u||}|_{f(p)} \; , \;\; 
\tfrac{f_v}{||f_v||}|_{f(p)} \; , \;\; 
\vec{N}|_{f(p)} \] 
to be an orthonormal frame of $T_{f(p)} \mathbb{R}^3$.  We can 
use an element of $\text{SO}_3$ 
to describe this orthonormal frame by choosing the 
unique element of $\text{SO}_3$ that 
rotates $(1,0,0)$ and $(0,1,0)$ and $(0,0,1)$ to 
$\tfrac{f_u}{||f_u||}|_{f(p)}$ and 
$\tfrac{f_v}{||f_v||}|_{f(p)}$ and $\vec{N}|_{f(p)}$, respectively.  
We denote the Lie algebra of $\text{SO}_3$ by $so_3$.  
\end{example}

\begin{example}
The second example we consider is $\SL_2\!\mathbb{C}$, defined as follows: 
\[ \SL_2\!\mathbb{C} = \{A \in M_{2 \times 2}(\C) \, | 
\det A = 1 \} \; . \]  Again the 
group operation is matrix multiplication, 
and the group operation represents composition of linear maps of 
$\mathbb{C}^2$ to itself.  In fact, 
$\SL_2\!\mathbb{C}$ is a double cover of $\text{SO}_3$, as we saw in 
Sections 2.4 and 3.2 in \cite{wisky}.  
We denote the Lie algebra of $\SL_2\!\mathbb{C}$ by $\slg_2\!\mathbb{C}$.  
\end{example}

\begin{example}
Our third example is a subgroup of $\SL_2\!\mathbb{C}$: 
\[ \SU_2 = \{A \in \SL_2\!\mathbb{C} \, | A \cdot \bar{A}^t = I \} \]\[ 
= \left\{ \left. \begin{pmatrix} p & q \\ -\bar q & \bar p \end{pmatrix} \, 
\right| p,q \in \mathbb{C} , p \bar p+q \bar q = 1 \right\} \; . \]  
The corresponding Lie algebra is denoted $\su_2$, 
and we explicitly compute $\su_2$ here:  Consider a curve 
$c(t):(-\epsilon,\epsilon) \to \SU_2$ given by 
\[ c(t) = \begin{pmatrix} p(t) & q(t) \\ -\overline{q}(t) & 
\overline{p}(t) \end{pmatrix} \] with $c(0) = I$.  Then, with $\prime$ 
denoting the derivative with respect to $t$, 
\[ c^\prime(0) = \begin{pmatrix} p^\prime(0) & q^\prime(0) \\ 
-\overline{q^\prime}(0) & \overline{p^\prime}(0) \end{pmatrix} \] 
is an arbitrary element of 
$\su_2 = T_I \SU_2$.  In general, for any square matrix $\mathcal{A}$, we have 
$(\det \mathcal{A})^\prime=\text{trace} 
(\mathcal{A}^\prime \cdot \mathcal{A}^{-1}) \det \mathcal{A}$ 
(see Lemma \ref{matrix-identity-lemma} below), so if 
$\det \mathcal{A}$ is identically $1$, then the trace of 
$\mathcal{A}^\prime \cdot \mathcal{A}^{-1}$ is $0$.  This implies that 
$c^\prime(0)$ is trace-free.  Then, because $p(0)=1$ and $q(0)=0$, 
the derivative with respect to $t$ of $p(t) \overline{p}(t) + 
q(t) \overline{q}(t) = 1$ implies $p^\prime(0) \in i \mathbb{R}$.  We 
conclude that $\su_2$ is the $3$-dimensional vector space 
\[ \su_2 = \left\{ \left.\frac{-i}{2}\begin{pmatrix}
        -x_3 & x_1+i x_2 \\
        x_1-i x_2 & x_3
     \end{pmatrix} \; \right| \; x_1,x_2,x_3 \in \mathbb{R} \right\} \; , \] 
which is isomorphic as a vector space to $\mathbb{R}^3$, and so 
$\su_2$ is a matrix model for $\mathbb{R}^3$.  
\end{example}

\begin{example}\label{exla:fourthexample}
Our fourth example $\SL_2\!\mathbb{R} $ is also a subgroup of 
$\SL_2\!\mathbb{C}$: 
\[ \SL_2\!\mathbb{R} = \{A \in M_{2 \times 2}(\R) \, | 
\det A = 1 \} \; , \] with associated Lie algebra 
\[ \text{sl}_2\mathbb{R} = \{A \in M_{2 \times 2}(\R) \, | 
\text{tr} A = 0 \} \; . \]
We now explicitly describe the bracket operation on 
$\text{sl}_2\mathbb{R}$, 
in order to provide an example for the claim in Remark 
\ref{remm:bracketsandmatrices}.  To determine the bracket operation, 
take the three curves 
\[ c_1(t) = \begin{pmatrix}
1-t & 0 \\ 0 & (1-t)^{-1} 
\end{pmatrix} \; , \;\;\; 
c_2(t) = \begin{pmatrix}
1 & t \\ 0 & 1
\end{pmatrix} \; , \;\;\; 
c_3(t) = \begin{pmatrix}
1 & 0 \\ t & 1
\end{pmatrix} \] 
in $\SL_2\!\mathbb{R}$ through the identity matrix at $t=0$.  
To move these curves to other points of $\SL_2\!\mathbb{R}$, 
we use matrix multiplication on the left, i.e. 
\[ c_{j,a,b,d} = \begin{pmatrix}
a & b \\ d & (1+bd)a^{-1}
\end{pmatrix} \cdot c_j(t) \] for $j=1,2,3$ and $a,b,d \in \mathbb{R}$.  
(For our purposes we may assume $a \neq 0$.)  Now $a,b,d$ represent 
coordinates for a region of $\SL_2\!\mathbb{R}$ considered as a 
$3$-dimensional manifold.  
In fact, we could regard the coordinate chart $\phi$ to be defined by 
\[ \phi^{-1} \left( 
\begin{pmatrix}
a & b \\ d & (1+bd)a^{-1} 
\end{pmatrix}
 \right) = (a,b,d) \; , \] as a map from a region of $\mathbb{R}^3$ 
to a region of $\SL_2\!\mathbb{R}$.  Now, for a function 
\[ f: \SL_2\!\mathbb{R} \to \mathbb{R} \; , \] the composite maps 
\[ f \circ \phi (a(1-t),b(1-t)^{-1},d(1-t)) \; , \]  
\[ f \circ \phi (a,at+b,d) \; , \]  
\[ f \circ \phi (a+bt,b,(1+bd)a^{-1}t+d) \]  
equal, respectively, 
\[ f(c_{j,a,b,d}(t)) \] for $j = 1,2,3$.  
Then, by the chain rule, we have
\[ \left. \frac{d}{dt} f(c_{j,a,b,d}(t)) \right|_{t=0} = 
\vec{v}_{c_{j,a,b,d}} (f \circ \phi (a,b,d)) \; , \]  
for the three resulting left-invariant vector fields 
\[ \vec{v}_{c_{1,a,b,d}}= -a \partial_a+b\partial_b-d\partial_d \; , \] 
\[ \vec{v}_{c_{2,a,b,d}}= a \partial_b \; , \] 
\[ \vec{v}_{c_{3,a,b,d}}= b \partial_a+(1+bd)a^{-1} \partial_d \; . \] 
Thus 
\[ \vec{v}_{c_{1,a,b,d}} \circ \vec{v}_{c_{2,a,b,d}} - \vec{v}_{c_{2,a,b,d}} 
\circ \vec{v}_{c_{1,a,b,d}} = -2 \vec{v}_{c_{2,a,b,d}} \; , \] 
\[ \vec{v}_{c_{1,a,b,d}} \circ \vec{v}_{c_{3,a,b,d}} - \vec{v}_{c_{3,a,b,d}} 
\circ \vec{v}_{c_{1,a,b,d}} = 2 \vec{v}_{c_{3,a,b,d}} \; , \] 
\[ \vec{v}_{c_{2,a,b,d}} \circ \vec{v}_{c_{3,a,b,d}} - \vec{v}_{c_{3,a,b,d}} 
\circ \vec{v}_{c_{2,a,b,d}} = - \vec{v}_{c_{1,a,b,d}} \; . \] 
Correspondingly, 
\[ c^\prime_1(0) = \begin{pmatrix}
-1 & 0 \\ 0 & 1 
\end{pmatrix} \; , \;\;\; 
c^\prime_2(0) = \begin{pmatrix}
0 & 1 \\ 0 & 0 
\end{pmatrix} \; , \;\;\; 
c^\prime_3(0) = \begin{pmatrix}
0 & 0 \\ 1 & 0 
\end{pmatrix} \; , \]
and 
\[ c^\prime_1(0) c^\prime_2(0) - 
c^\prime_2(0) c^\prime_1(0) = -2 c^\prime_2(0) \; , \]  
\[ c^\prime_1(0) c^\prime_3(0) - c^\prime_3(0) 
c^\prime_1(0) = 2 c^\prime_3(0) \; , \]  
\[ c^\prime_2(0) c^\prime_3(0) - c^\prime_3(0) 
c^\prime_2(0) = - c^\prime_1(0) \; . \]  
Thus the behavior of the bracket on vector fields is exactly the same as the 
behavior of commutators of matrices in the Lie algebra.  This is why we can 
use matrix multiplication to define the Lie bracket in the case of 
$\text{sl}_2\mathbb{R}$, and this is true for matrix Lie groups in general.  
\end{example}

We now prove an equation we used in the third example above: 

\begin{lemma}\label{matrix-identity-lemma}
For any square matrix $\mathcal{A} \in M_{n \times n}(\mathbb{C})$ 
with $\det A \neq 0$ 
that depends smoothly on some parameter $t \in \mathbb{R}$, we have 
\begin{equation}\label{matrix-identity-lemma-eqn} 
\tfrac{d}{dt} (\det \mathcal{A})=\text{trace} 
((\tfrac{d}{dt} \mathcal{A}) \cdot 
\mathcal{A}^{-1}) \det \mathcal{A} \; . \end{equation}  
\end{lemma}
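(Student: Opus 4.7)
The plan is to prove Jacobi's formula by directly differentiating from first principles, reducing the general case to understanding the linearization of $\det$ at the identity matrix. The key observation is that since $\det \mathcal{A} \neq 0$, the matrix $\mathcal{A}(t)$ is invertible, so for small $h$ I can factor
\[ \mathcal{A}(t+h) = \mathcal{A}(t) + h \tfrac{d}{dt}\mathcal{A}(t) + O(h^2) = \mathcal{A}(t) \bigl( I + h\, \mathcal{A}(t)^{-1} \tfrac{d}{dt}\mathcal{A}(t) + O(h^2) \bigr). \]
Applying multiplicativity of the determinant gives
\[ \det \mathcal{A}(t+h) = \det \mathcal{A}(t) \cdot \det\bigl( I + h\, \mathcal{A}(t)^{-1}\tfrac{d}{dt}\mathcal{A}(t) + O(h^2) \bigr). \]
So the whole question reduces to expanding $\det(I + h M)$ to first order in $h$, where $M = \mathcal{A}^{-1}\tfrac{d}{dt}\mathcal{A}$.

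Next I would establish the expansion
\[ \det(I + hM) = 1 + h \cdot \text{trace}(M) + O(h^2). \]
To see this, I would expand the determinant as a sum over permutations $\sigma$ of $\{1,\dots,n\}$, giving $\det(I+hM) = \sum_{\sigma} \text{sgn}(\sigma) \prod_{i=1}^n (I+hM)_{i,\sigma(i)}$. The identity permutation contributes $\prod_i (1 + h M_{ii}) = 1 + h\sum_i M_{ii} + O(h^2) = 1 + h\,\text{trace}(M) + O(h^2)$. Any non-identity permutation moves at least two indices, so at least two off-diagonal entries $(I+hM)_{i,\sigma(i)} = h M_{i,\sigma(i)}$ appear in the product, contributing only $O(h^2)$. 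Summing these bounds yields the claimed expansion.

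Combining the two displays, I obtain
\[ \det \mathcal{A}(t+h) - \det \mathcal{A}(t) = h \cdot \text{trace}\bigl( \mathcal{A}(t)^{-1} \tfrac{d}{dt}\mathcal{A}(t) \bigr) \det \mathcal{A}(t) + O(h^2). \]
Dividing by $h$ and letting $h \to 0$ gives the derivative formula. Finally, using the cyclic invariance of trace, $\text{trace}(\mathcal{A}^{-1} \mathcal{A}') = \text{trace}(\mathcal{A}' \mathcal{A}^{-1})$, which matches the form in \eqref{matrix-identity-lemma-eqn}.

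The main obstacle is really the first-order expansion of $\det(I + hM)$; everything else is either bookkeeping or a direct application of the multiplicativity of the determinant. Once that expansion is in hand via the Leibniz formula argument above, the rest follows immediately from the factorization trick and the definition of the derivative.
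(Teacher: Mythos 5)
Your proof is correct and follows essentially the same route as the third of the paper's three arguments: factor $\mathcal{A}(t+h)=\mathcal{A}(t)\bigl(I+h\,\mathcal{A}^{-1}\mathcal{A}'+O(h^2)\bigr)$, use multiplicativity of $\det$, and linearize $\det$ at the identity via $\det(I+hM)=1+h\,\mathrm{trace}(M)+O(h^2)$. The only difference is that the paper states that first-order expansion (its Equation \eqref{eqn:Aexpand}) as an unproven fact, whereas you supply the Leibniz-formula argument for it, which is a welcome completion rather than a deviation.
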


Lemma \ref{matrix-identity-lemma} is easily proven in the 
case that $n=2$.  It is also easily seen 
for general $n$ when $\mathcal{A}$ is upper triangular.  Furthermore, if 
$\mathcal{A}$ satisfies Equation \eqref{matrix-identity-lemma-eqn}, it 
is easily seen that the conjugation $P \cdot \mathcal{A} \cdot P^{-1}$ 
also satisfies Equation \eqref{matrix-identity-lemma-eqn}, 
for any $P \in M_{n \times n}(\mathbb{C})$ with $\det P \neq 0$ 
that depends smoothly on $t$.  Because any square matrix can be conjugated 
into an upper triangular matrix (the Jordan canonical form), this provides a 
proof of Lemma \ref{matrix-identity-lemma}.  

One could also prove Lemma \ref{matrix-identity-lemma} by direct computation: 
Write $\mathcal{A}=(a_{ij})_{i,j=1}^n$.  Then let 
$\mathcal{A}_{i,b_1,...,b_n} := 
\mathcal{A}|_{\{a_{i1} \to b_1,...,a_{in} \to b_n\}}$ be 
the matrix with entries as in $\mathcal{A}$, except that the 
$i$'th row has been 
replaced with the row vector $(b_1 \, ... \, b_n)$.  Then 
\begin{equation}\label{eqn:pf-ofLem-9.9}
\det (\mathcal{A}_{i,b_1,...,b_n}) = \sum_{j=1}^n 
\det (\mathcal{A}_{i,0,...,0,b_j,0,...,0}) = 
\sum_{j=1}^n b_j \tilde a_{ij} \; , 
\end{equation}
where $b_j$ is the value in the $ij$'th position of 
$\mathcal{A}_{i,0,...,0,b_j,0,...,0}$, and where 
\[ \tilde{a}_{ij} = \det (\mathcal{A}_{i,0,...,0,1,0,...,0}) \]
(again, $1$ is the value in the $ij$'th position of 
$\mathcal{A}_{i,0,...,0,1,0,...,0}$).  Then, for any 
$k \in \{ 1,...,n \}$, we have ($\delta_{ki}$ is the 
Kronecker delta function) 
\[ \sum_{j=1}^n a_{kj} \tilde{a}_{ij} = \det 
(\mathcal{A}_{i,a_{k1},...,a_{kn}}) = 
\delta_{ki} \cdot \det (\mathcal{A}) \; . \]  Hence, for 
\[ \tilde{\mathcal{A}} := (\tilde{a}_{ij})_{i,j=1}^n \; , \] 
we have 
\[ \mathcal{A} \cdot \tilde{\mathcal{A}}^t = 
\det(\mathcal{A}) \cdot I_{n \times n} \; . \] 
So if $\mathcal{A}$ is regular, i.e. $\det(\mathcal{A}) \neq 0$, then 
\[ \mathcal{A}^{-1} = \tfrac{1}{\det(\mathcal{A})} 
\tilde{\mathcal{A}}^t \; . \]  
Thus we have 
\[ \text{tr}( (\tfrac{d}{dt} \mathcal{A}) \mathcal{A}^{-1} ) 
\cdot \det(\mathcal{A}) = 
\text{tr}( (\tfrac{d}{dt} \mathcal{A}) \tfrac{1}{\det(\mathcal{A})} 
\tilde{\mathcal{A}}^t ) 
\cdot \det(\mathcal{A}) = \]\[ \text{tr}((\tfrac{d}{dt} 
\mathcal{A}) \tilde{\mathcal{A}}^t) = 
\sum_{i=1}^n \sum_{j=1}^n \frac{da_{ij}}{dt} \tilde a_{ij} = \]\[ 
\sum_{i=1}^n 
\det(\mathcal{A}_{i,a^\prime_{i1},...,a^\prime_{in}}) 
= \frac{d}{dt} (\det (\mathcal{A})) \; , \;\;\; 
a_{ij}^\prime := \frac{da_{ij}}{dt} \; , 
\] where the second to the 
last equality above follows from Equation \eqref{eqn:pf-ofLem-9.9}, 
proving Lemma \ref{matrix-identity-lemma}.  

A third proof of this lemma can be given by using the following 
fact: If $A$ and $X$ are $n \times n$ matrices and $\epsilon$ is 
a real number close to zero, then 
\begin{equation}\label{eqn:Aexpand} 
A=I+\epsilon X + \mathcal{O}(\epsilon^2) \;\; \text{implies} \;\; 
\det(A) = 1 + \epsilon \cdot 
\text{tr} X + \mathcal{O}(\epsilon^2) \; . \end{equation} 
The argument is as follows: 
Write the Taylor expansion of $\mathcal{A}(s)$ at the value 
$s=t$ as 
\[ \mathcal{A}(s) = \mathcal{A}(t) + (s-t) \mathcal{A}^\prime(t) 
+ \mathcal{O}((s-t)^2) \; . \]  
Then 
\[ \mathcal{A}(s) (\mathcal{A}(t))^{-1} = 
I + (s-t) \mathcal{A}^\prime(t) (\mathcal{A}(t))^{-1} 
+ \mathcal{O}((s-t)^2) \; , \] and \eqref{eqn:Aexpand} implies 
\[ \det ( \mathcal{A}(s) (\mathcal{A}(t))^{-1} ) = 
1 + (s-t) \cdot \text{tr}(\mathcal{A}^\prime(t) (\mathcal{A}(t))^{-1})
+ \mathcal{O}((s-t)^2) \; . \]  Taking the derivative of this 
with respect to $s$ and then evaluating at $s=t$, we have 
\[ \frac{(\det (\mathcal{A}(s)))^\prime|_{s=t}}{\det (\mathcal{A}(t))} 
= \left. \left( \text{tr}(\mathcal{A}^\prime(t) (\mathcal{A}(t))^{-1}) 
+ \mathcal{O}(s-t) \right) \right|_{s=t} \; , \] so 
\[ \frac{(\det (\mathcal{A}(t)))^\prime}{\det(\mathcal{A}(t))} 
= \text{trace}(\mathcal{A}^\prime(t) (\mathcal{A}(t))^{-1}) 
\; , \] proving Lemma \ref{matrix-identity-lemma}.  

\section{Riemann surfaces and Hopf's theorem}\label{Riemsurfs}

\subsection{Riemann surfaces}

When the dimension of a differentiable manifold $M$ is two, then we have some 
special properties.  This is because the coordinate charts are maps 
from $\mathbb{R}^2$, 
and $\mathbb{R}^2$ can be thought of as the complex plane $\mathbb{C} \approx 
\mathbb{R}^2$.  Thus we can consider the notion of holomorphic functions on 
$M$.  This leads to the idea of 
Riemann surfaces and the beautiful theory associated with them.  
Part of the beauty 
of this theory is that Riemann surfaces can be described in a variety of 
different ways, but this is outside the scope of this text, and 
for our purposes it suffices to consider just two descriptions of 
Riemann surfaces.  

To distinguish $2$-dimensional manifolds from other manifolds, 
we will often denote 
them by $\Sigma$ instead of $M$.  

Suppose $\Sigma$ 
is a differentiable manifold of dimension $2$ with differentiable structure 
defined by a family 
\[ \{ \, (U_\alpha \, , \, \phi_\alpha : U_\alpha \to \Sigma) \, \} \] 
of coordinate charts.  Let $(u_\alpha,
v_\alpha)$ be the coordinates of $U_\alpha \subseteq \mathbb{R}^2$.  If $W := 
\phi_\alpha(U_\alpha) \cup 
\phi_\beta(U_\beta) \neq \emptyset$, then $u_\beta,v_\beta$ can be viewed as 
functions of 
the variables $u_\alpha,v_\alpha$ on $\phi_\alpha^{-1}(W)$ via the 
transition function 
$f_{\beta \alpha} = \phi_\beta^{-1} \circ \phi_\alpha: 
\phi_\alpha^{-1}(W) \to \phi_\beta^{-1}(W)$.  Associating $U_\alpha \subseteq 
\mathbb{R}^2$ with the corresponding region of $\mathbb{C}$ by defining the 
complex coordinate 
\[ z_\alpha = u_\alpha + i v_\alpha \] for each coordinate chart 
$(U_\alpha,\phi_\alpha)$, 
we can view $z_\beta$ as a function of $z_\alpha$ on $\phi_\alpha^{-1}(W)$.  
When $z_\beta$ is a holomorphic function of $z_\alpha$, we say that the 
transition function $f_{\beta\alpha}$ is holomorphic.  

\begin{defn}\label{defn:Riemannsurfaces}
A differentiable manifold $\Sigma$ of dimension $2$ with 
differentiable structure 
defined by a family $\{(U_\alpha,\phi_\alpha)\}$ of coordinate charts is a 
{\em Riemann surface} if the transition functions $f_{\beta\alpha}$ are 
all holomorphic.  We then say that the family 
$\{(U_\alpha,\phi_\alpha)\}$ forms a 
{\em complex structure} on $\Sigma$.  
\end{defn}

The simplest example of a Riemann surface is 
$\mathbb{C}$ itself.  In this case, 
we can choose a single coordinate 
$(U_\alpha,\phi_\alpha)$ to give the differential 
structure, where $U_\alpha = \mathbb{R}^2$ and 
$\phi_\alpha$ is the identity map.  
Then it is vacuously true that the transition functions are holomorphic.  

Another example is the unit sphere 
$\mathbb{S}^2$ (in $\mathbb{R}^3$).  The differential structure 
can be defined by a pair of stereographic projections, so we 
can use two coordinate neighborhoods $(U_\alpha,\phi_\alpha)$ and 
$(U_\beta,\phi_\beta)$ with $U_\alpha=U_\beta=\mathbb{R}^2$, and 
with $\phi_\alpha$ equal to the inverse of stereographic 
projection from the north pole $(0,0,1)$, and with $\phi_\beta$ 
equal to the inverse of stereographic projection from 
the south pole $(0,0,-1)$ composed with a reflection of 
$\mathbb{S}^2$ across a plane fixing both the north and south poles.  
Then the map $\phi_\beta^{-1} \circ \phi_\alpha$ is 
holomorphic, so $\mathbb{S}^2$ is a 
Riemann surface.  

One property of Riemann surfaces is that they are 
always orientable.  Before proving 
this, we first recall the definition of orientability.  
Given two differentiable 
functions $f,g$ from a 
$2$-dimensional differentiable manifold $\Sigma$ to 
$\mathbb{R}$, we define the 
wedge product of their differentials as follows: For 
a point $p \in \Sigma$ and 
$\vec{v},\vec{w} \in T_p \Sigma$, \[ df_p \wedge dg_p (\vec{v},\vec{w}) = 
\frac{1}{2} (df_p (\vec{v}) dg_p  
(\vec{w})-df_p (\vec{w}) dg_p  (\vec{v})) \; . \]  
(Note that the wedge product defined here is {\em not} 
the same as the symmetric 
product defined in Section 1.1 of \cite{wisky}.)  
Then, for coordinate neighborhoods $(U_\alpha,\phi_\alpha)$ and 
$(U_\beta,\phi_\beta)$ such that $W := 
\phi_\alpha(U_\alpha) \cup \phi_\beta(U_\beta) \neq \emptyset$, and naming the 
coordinates $(u_\alpha,v_\alpha)$ and $(u_\beta,v_\beta)$ on 
$\phi_\alpha^{-1}(W)$ and 
$\phi_\beta^{-1}(W)$, respectively, we say that $(U_\alpha,\phi_\alpha)$ and 
$(U_\beta,\phi_\beta)$ are oriented in the same way if 
\[ du_\alpha \wedge dv_\alpha = h_{\alpha\beta} du_\beta \wedge dv_\beta \] 
for some positive function $h_{\alpha\beta}:\phi_\alpha^{-1}(W) 
\to \mathbb{R}^+$.  

If the coordinate charts $\{(U_\alpha,\phi_\alpha)\}$ that comprise the 
differential structure of $\Sigma$ 
can be chosen so that they are all oriented the same way 
wherever they intersect, 
we say that the manifold $\Sigma$ 
is orientable, and the family $\{(U_\alpha,\phi_\alpha)\}$ 
is said to be oriented.  

\begin{lemma}
Any Riemann surface is orientable.  
\end{lemma}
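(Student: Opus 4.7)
The plan is to show that the orientation-agreement condition $du_\alpha \wedge dv_\alpha = h_{\alpha\beta}\, du_\beta \wedge dv_\beta$ with $h_{\alpha\beta} > 0$ is forced automatically by the holomorphicity of the transition functions, via the Cauchy--Riemann equations.

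First I would fix two overlapping charts $(U_\alpha,\phi_\alpha)$ and $(U_\beta,\phi_\beta)$ with nonempty overlap $W$, writing $z_\alpha = u_\alpha + i v_\alpha$ and $z_\beta = u_\beta + i v_\beta$, and viewing the transition map $f_{\beta\alpha} = \phi_\beta^{-1}\circ \phi_\alpha$ as expressing $(u_\beta, v_\beta)$ as smooth functions of $(u_\alpha, v_\alpha)$. Since $f_{\beta\alpha}$ is a diffeomorphism onto its image, its inverse $f_{\alpha\beta}$ is also holomorphic (by Definition \ref{defn:Riemannsurfaces}), and in particular the complex derivative $dz_\beta/dz_\alpha$ is everywhere nonzero on $\phi_\alpha^{-1}(W)$.

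Next I would compute: holomorphicity of $z_\beta$ as a function of $z_\alpha$ is equivalent to the Cauchy--Riemann equations
\[
\frac{\partial u_\beta}{\partial u_\alpha} = \frac{\partial v_\beta}{\partial v_\alpha}, \qquad \frac{\partial u_\beta}{\partial v_\alpha} = -\frac{\partial v_\beta}{\partial u_\alpha}.
\]
Expanding $du_\beta$ and $dv_\beta$ in the basis $\{du_\alpha, dv_\alpha\}$ and using the antisymmetry of the wedge product gives
\[
du_\beta \wedge dv_\beta = \left(\frac{\partial u_\beta}{\partial u_\alpha}\frac{\partial v_\beta}{\partial v_\alpha} - \frac{\partial u_\beta}{\partial v_\alpha}\frac{\partial v_\beta}{\partial u_\alpha}\right) du_\alpha \wedge dv_\alpha,
\]
and substituting the Cauchy--Riemann equations rewrites the coefficient as
\[
\left(\frac{\partial u_\beta}{\partial u_\alpha}\right)^{\!2} + \left(\frac{\partial u_\beta}{\partial v_\alpha}\right)^{\!2} = \left|\frac{dz_\beta}{dz_\alpha}\right|^{2}.
\]
Since $dz_\beta/dz_\alpha$ is nowhere zero on the overlap, this quantity is strictly positive, so inverting gives $du_\alpha \wedge dv_\alpha = h_{\alpha\beta}\, du_\beta \wedge dv_\beta$ with $h_{\alpha\beta} = |dz_\beta/dz_\alpha|^{-2} > 0$, proving compatibility of orientations for every pair of overlapping charts.

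I do not expect a real obstacle here; the only mildly delicate point is confirming that the complex derivative of the transition function never vanishes, which is where I would invoke the fact that $f_{\beta\alpha}$ is a biholomorphism (its inverse is also holomorphic by Definition \ref{defn:Riemannsurfaces}), so the Jacobian of the underlying real map, being $|dz_\beta/dz_\alpha|^2$, is everywhere nonzero. Once this is noted, the entire complex atlas $\{(U_\alpha,\phi_\alpha)\}$ is automatically oriented, and the orientability of $\Sigma$ follows immediately.
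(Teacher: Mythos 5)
Your proof is correct and takes essentially the same route as the paper: both reduce orientability to showing that the Jacobian of a holomorphic transition function equals $\left|\tfrac{dz_\beta}{dz_\alpha}\right|^{2}>0$, the paper via the complex-form identity $du\wedge dv=\tfrac{i}{2}dz\wedge d\bar z$ and you via the Cauchy--Riemann equations in real coordinates. Your explicit remark that the nonvanishing of $dz_\beta/dz_\alpha$ follows from the transition map being a biholomorphism is a point the paper leaves implicit, and is a welcome addition.
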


\begin{proof}
Let $(U_\alpha,\phi_\alpha)$ and $(U_\beta,\phi_\beta)$ be 
two coordinate charts of a 
Riemann surface $\Sigma$ 
such that $W:=\phi_\alpha(U_\alpha) \cap \phi_\beta(U_\beta) 
\neq \emptyset$.  Let 
$(u_\alpha,v_\alpha)$ and $(u_\beta,v_\beta)$ be the 
coordinates of $\phi_\alpha^{-1}(W) 
\subseteq \mathbb{R}^2$ and $\phi_\beta^{-1}(W) 
\subseteq \mathbb{R}^2$, respectively.  
Noting that the differentials of $z_\alpha$, $\bar z_\alpha$, $z_\beta$ and 
$\bar z_\beta$ satisfy 
\[ dz_\alpha = du_\alpha + i dv_\alpha \; , \;\;\; d\bar z_\alpha = 
du_\alpha - i dv_\alpha 
\; , \]\[ dz_\beta = du_\beta + i dv_\beta \; , \;\;\; d\bar z_\beta = 
du_\beta - i dv_\beta 
\; , \] and also that, because $z_\beta$ is a 
holomorphic function of $z_\alpha$ on 
$\phi_\alpha^{-1}(W)$, the chain rule implies 
\[ dz_\alpha = \frac{dz_\alpha}{dz_\beta} dz_\beta \; , \]  
we have 
\[ du_\alpha \wedge dv_\alpha = \tfrac{i}{2} dz_\alpha \wedge d\bar z_\alpha = 
\tfrac{i}{2} \left| \tfrac{dz_\alpha}{dz_\beta} \right|^2 dz_\beta \wedge 
d\bar z_\beta = \left| \tfrac{dz_\alpha}{dz_\beta} \right|^2 du_\beta 
\wedge dv_\beta 
\; . \]  Since $\left| \tfrac{dz_\alpha}{dz_\beta} \right|^2 > 0$ 
for all $\alpha$ and 
$\beta$, we conclude that $\Sigma$ is an orientable manifold.  
\end{proof}

\begin{remark}
We saw in Remark 1.3.6 of \cite{wisky} that nonminimal CMC surfaces in 
an oriented ambient space 
are always orientable.  So when using Riemann surfaces as 
the domains for nonminimal 
CMC immersions, the fact that the Riemann surfaces are 
orientable is not in any way a restriction on the types of 
CMC immersions we can consider.  
\end{remark}

Riemann surfaces are in a one-to-one 
correspondence with conformal equivalence classes 
of orientable $2$-dimensional Riemannian 
manifolds, giving us a second way to describe 
Riemann surfaces.  In order to explain this we start with a definition.  

\begin{defn}\label{defn:conformality4.4}
Let $\Sigma$ be a $2$-dimensional orientable Riemannian manifold with 
differentiable structure determined by a family $\{ (U_\alpha,\phi_\alpha) \}$ 
of coordinate charts and with positive 
definite metric $g$.  For any coordinate chart 
$(U_\alpha,\phi_\alpha)$ with 
coordinates $(u_\alpha,v_\alpha)$ on $U_\alpha$, suppose that 
the metric $g$ can be written as 
\[ g = \begin{pmatrix} f_\alpha & 0 \\ 0 & f_\alpha \end{pmatrix} \] 
in matrix form 
for some positive function $f_\alpha: U_\alpha \to \mathbb{R}^+$, 
or equivalently, as a 
symmetric $2$-form \[ g= f_\alpha (du_\alpha^2 + dv_\alpha^2) \; . \]  
Then we say that 
$g$ is a {\em conformal} metric and 
the $(U_\alpha,\phi_\alpha)$ are {\em conformal} 
coordinate charts.  
\end{defn}

Generally, for a metric 
\[ g = g_{11} du_\alpha^2 + g_{12} du_\alpha dv_\alpha + 
g_{21} dv_\alpha du_\alpha + 
g_{22} dv_\alpha^2 \] 
written as a symmetric $2$-form using the $1$-forms $du_\alpha$ and 
$dv_\alpha$ (note that $g_{12}=g_{21}$ because the metric is symmetric and 
$g_{11},g_{22} > 0$ because 
the metric is positive definite), we can rewrite the metric 
using the complex $1$-forms $dz_\alpha$ and $d\bar z_\alpha$ instead: 
\begin{equation}\label{precursorcomplexifiedmetric} 
g = A dz_\alpha^2 + 2 B dz_\alpha d\bar z_\alpha + 
\bar A d\bar z_\alpha^2 \; , 
\end{equation} 
\[ A = \frac{g_{11}-g_{22}- 2 i g_{12}}{4} \; , \;\;\; 
B = \frac{g_{11}+g_{22}}{4} \; . 
\] 
If the metric $g$ is conformal, then 
$g_{12}=g_{21}=0$ and $f_\alpha=g_{11}=g_{22}$, 
so the metric becomes 
\[ g = f_\alpha dz_\alpha d\bar z_\alpha \] 
with respect to the complex coordinate 
$z_\alpha = u_\alpha + i v_\alpha$.  Since $f_\alpha$ is a positive function, 
we could also write this as 
\begin{equation}\label{complexconformalformforthemetric} 
g = 4 e^{2 \hat u_\alpha} dz_\alpha d\bar z_\alpha \end{equation} for some 
real-valued function $\hat u_\alpha$ defined on $U_\alpha$, as noted in 
Remark 1.3.1 of \cite{wisky}.  

\begin{theorem}\label{conformalityispossible}
Let $\Sigma$ be a $2$-dimensional orientable manifold with an 
oriented family $\{(U_\alpha,\phi_\alpha)\}$ of 
coordinate charts that determines the 
differentiable structure and with a positive definite 
metric $g$.  Assume further that the transition functions of 
$\{(U_\alpha,\phi_\alpha)\}$ are real-analytic.  
Then there exists another family of coordinate charts 
$\{(V_\beta,\psi_\beta)\}$ 
that determines the same differentiable structure 
and with respect to which the 
metric $g$ is conformal.  Additionally, 
$\{(V_\beta,\psi_\beta)\}$ is oriented and gives a complex 
structure on $\Sigma$, so $\Sigma$ becomes a Riemann surface.  
\end{theorem}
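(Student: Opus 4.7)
The plan is to build the new atlas $\{(V_\beta,\psi_\beta)\}$ chart by chart: for each $p \in \Sigma$, I first produce a local chart near $p$ in which $g$ is conformal, and then show that the transition functions between any two such charts are holomorphic and orientation-preserving. Fix a chart $(U_\alpha,\phi_\alpha)$ containing $p$, with coordinates $(u,v)$, and write $g = E\,du^2 + 2F\,du\,dv + G\,dv^2$ with $E,G>0$ and $EG-F^2>0$. The first key step is the factorization
$$ g = \omega\,\bar\omega, \qquad \omega = \sqrt{E}\,du + \frac{F + i\sqrt{EG-F^2}}{\sqrt{E}}\,dv, $$
which one verifies by direct multiplication. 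Hence if I can find a complex function $w = \tilde u + i\tilde v$ and a nowhere-vanishing complex function $\sigma$ with $\omega = \sigma\,dw$, then $g = |\sigma|^2\,dw\,d\bar w = |\sigma|^2(d\tilde u^2+d\tilde v^2)$, and $(\tilde u,\tilde v)$ furnishes a conformal chart near $p$.

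The main obstacle is producing $w$, which amounts to solving a first-order complex PDE: the condition $dw \wedge \omega = 0$, after writing $\omega = a\,du+b\,dv$, reads $a\,w_v - b\,w_u = 0$, a Beltrami-type equation. Here the real-analyticity hypothesis on the transition functions becomes crucial, since it lets us assume $E,F,G$ (and hence $a,b$) are real-analytic in $(u,v)$. I would solve for $w_v = (b/a)\,w_u$ and apply the Cauchy-Kovalevskaya theorem with the real-analytic non-characteristic initial condition $w|_{v=0} = u$ to obtain a local real-analytic solution near $p$. A short computation at $p$ using the explicit form of $a$ and $b$ gives $\Im(b/a) = \sqrt{EG-F^2}/E > 0$, so the Jacobian matrix of $(\tilde u,\tilde v)$ with respect to $(u,v)$ at $p$ has determinant $\sqrt{EG-F^2}/E > 0$; in particular $dw(p) \neq 0$, the new coordinates are valid, and they are oriented in the same way as $(u,v)$.

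To finish, I would check that the transition function between two such locally constructed charts $(V_\alpha,\psi_\alpha)$ and $(V_\beta,\psi_\beta)$, with complex coordinates $w_\alpha$ and $w_\beta$, is holomorphic. On the overlap we have $\lambda_\alpha |dw_\alpha|^2 = \lambda_\beta |dw_\beta|^2$ for positive functions $\lambda_\alpha,\lambda_\beta$. Expanding
$$ dw_\alpha = \frac{\partial w_\alpha}{\partial w_\beta}\,dw_\beta + \frac{\partial w_\alpha}{\partial \bar w_\beta}\,d\bar w_\beta $$
and forcing the $dw_\beta^2$ coefficient of $|dw_\alpha|^2$ to vanish gives $(\partial w_\alpha/\partial w_\beta)(\partial \bar w_\alpha/\partial w_\beta) = 0$ pointwise, so on each connected component of the overlap $w_\alpha$ is either holomorphic or antiholomorphic in $w_\beta$. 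The antiholomorphic alternative is ruled out by the common orientation inherited from $\{(U_\alpha,\phi_\alpha)\}$ via the Jacobian calculation of the previous paragraph. Hence $\{(V_\beta,\psi_\beta)\}$ is an oriented complex atlas, turning $\Sigma$ into a Riemann surface as claimed.
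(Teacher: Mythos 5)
Your proof is correct and follows essentially the same route as the paper's: factor the metric into a complex $1$-form times its conjugate (equivalently, extract the Beltrami coefficient), solve the resulting Beltrami-type equation by Cauchy--Kovalevskaya using the real-analyticity hypothesis, and deduce holomorphy of the transition functions from conformality of both charts together with preservation of orientation. The only differences are cosmetic: you work with the real coefficients $E,F,G$ and the factorization $g=\omega\,\bar\omega$ where the paper uses the complex coordinate $z_\alpha$ and the Beltrami coefficient $\mu$ with $|\mu|<1$, and you are somewhat more explicit about the non-characteristic initial data for Cauchy--Kovalevskaya and the nonvanishing of $dw$ at $p$.
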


\begin{remark}
The condition in Theorem \ref{conformalityispossible} that 
the transition functions be real-analytic can be weakened, 
but we include this condition to simplify the proof and 
because it is satisfied in all of the applications of this 
theorem later in this text.  
\end{remark}

\begin{proof}
We are given coordinate charts 
$(U_\alpha,\phi_\alpha)$ with complex coordinates 
$z_\alpha=u_\alpha+i v_\alpha$ on the $U_\alpha$.  
We must show that there exists a 
family $\{(V_\beta,\psi_\beta)\}$ of coordinates 
with the given differentiable 
structure so that the metric can be written as in 
Equation \eqref{complexconformalformforthemetric} with respect to the complex 
coordinates $w_\beta = x_\beta + i y_\beta$ of the $V_\beta$.  

The metric $g$ can be written 
as in Equation \eqref{precursorcomplexifiedmetric} 
with respect to the $(U_\alpha,\phi_\alpha)$ coordinate charts, 
and if $A=0$ then $g$ is already conformal and we are finished by 
taking $(U_\alpha,\phi_\alpha)$ and $(V_\beta,\psi_\beta)$ to be equal.  
So without loss of generality 
we can assume $A \neq 0$.  Then we can write $g$ as 
\[ g = s 
(dz_\alpha+\mu d\bar z_\alpha) (d\bar z_\alpha+\bar \mu dz_\alpha) \; , 
\;\;\; s = \frac{2 B}{1+|\mu|^2} > 0 \; , \] where $\mu$ satisfies 
\[ |\mu| = \frac{B-\sqrt{B^2-|A|^2}}{|A|} < 1 \; . \]  
We need to find new coordinates $(x_\beta,y_\beta)$ for $V_\beta$ so that 
$w_\beta = x_\beta + i y_\beta$ satisfies 
\[ dw_\beta = \lambda (dz_\alpha+\mu d\bar z_\alpha) \] for some 
nonzero function $\lambda$.  
Then $g$ is written as $g=s |\lambda|^2 dw_\beta d\bar w_\beta$ and 
we will have that $g$ is a conformal 
metric with respect to the new coordinates $w_\beta$.  

The equation $dw_\beta = 
\lambda (dz_\alpha+\mu d\bar z_\alpha)$ is satisfied by a 
solution $w_\beta$ to the equation 
\[ \frac{\partial w_\beta}{\partial \bar z_\alpha} = \mu 
\frac{\partial w_\beta}{\partial z_\alpha} \; , \]  
and then we can take \[ \lambda = \frac{dw_\beta}{dz_\alpha} \; . \]  
This is the Beltrami equation, and $\mu$ is called 
the Beltrami coefficient.  The fact that the transition 
functions are real-analytic implies there 
exist solutions to this Beltrami equation.  
This can be proven using the 
Cauchy-Kowalewski theorem, but let us trust that such solutions 
exist, and then continue with the proof.  (Such solutions 
exist in more general settings as well, but we do not 
explore that here).  

We conclude that we have a family of coordinate 
charts so that $g$ is conformal, and 
it only remains to show that this new family 
$\{(V_\beta,\psi_\beta)\}$ is oriented on $\Sigma$ and 
determines a complex structure on $\Sigma$.  
This new family is oriented because 
the original family $\{(U_\alpha,\phi_\alpha)\}$ was 
oriented and 
\[ dx_\beta \wedge dy_\beta = \left| 
\frac{\partial w_\beta}{\partial z_\alpha} \right|^2 
(1-|\mu|^2) du_\alpha \wedge dv_\alpha \; , \] 
with $\left| \tfrac{\partial w_\beta}{\partial z_\alpha} \right|^2 
(1-|\mu|^2) > 0$.  

To see that this new family determines a 
complex structure on $\Sigma$, we need to see 
that $w_\beta$ is a holomorphic function of $w_\gamma$ wherever 
$W := \psi_\beta(V_\beta) 
\cap \psi_\gamma(V_\gamma) \neq \emptyset$.  Both coordinates $w_\beta$ and 
$w_\gamma$ are conformal, so 
\begin{equation}\label{eqninReimsurfsection} 
g = 4 e^{2 \hat u_\beta} dw_\beta d\bar w_\beta = 4 e^{2 \hat u_\gamma} 
dw_\gamma d\bar w_\gamma \end{equation} on $W$.  Because of the chain rule 
\[ dw_\beta = \frac{\partial w_\beta}{\partial w_\gamma} dw_\gamma + 
\frac{\partial w_\beta}{\partial \bar w_\gamma} d\bar w_\gamma \; , \;\;\; 
d\bar w_\beta = \frac{\partial \bar w_\beta}{\partial w_\gamma} dw_\gamma + 
\frac{\partial \bar w_\beta}{\partial \bar w_\gamma} d\bar w_\gamma \; , \]
the right-most equality in Equation 
\eqref{eqninReimsurfsection} can hold only if either 
\[ \frac{\partial w_\beta}{\partial \bar w_\gamma} = 0 \;\; \text{ or } \;\; 
\frac{\partial w_\beta}{\partial w_\gamma} = 0 \; . \]  
Since the change of coordinates is 
orientation-preserving, we conclude that the first 
of the two equations holds, and so 
$w_\beta$ is a holomorphic function of $w_\gamma$.  
\end{proof}

\begin{defn}
Let $\Sigma$ be a $2$-dimensional 
orientable differentiable manifold with a given 
differentiable structure.  Suppose that $\Sigma$ 
becomes a Riemannian manifold with respect 
to some metric $g$ and also with respect to 
some other metric $\tilde g$.  
If $g = f \tilde g$ for some positive 
function $f: \Sigma \to \mathbb{R}^+$, we say that 
the two metrics $g$ and $\tilde g$ are {\em conformally equivalent}.  
\end{defn}

Note that if the metric $g$ is a 
conformal metric, then $g$ is conformally equivalent 
to the flat metric $du_\alpha^2+dv_\alpha^2$ on each coordinate chart 
$(U_\alpha,\phi_\alpha)$.  

Conformal equivalence of the 
metrics is clearly an equivalence relation, so we can talk 
about conformal classes of metrics, as in the next corollary.  

\begin{corollary}
Conformal equivalence classes of metrics on an orientable $2$-dimensional 
manifold $\Sigma$ 
are in one-to-one correspondence with the complex structures on $\Sigma$.  
\end{corollary}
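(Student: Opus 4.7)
The plan is to construct a bijection in both directions and verify the two maps are mutually inverse. First, given a Riemannian metric $g$ on $\Sigma$ (with, for Theorem \ref{conformalityispossible} to apply, real-analytic transition data), that theorem produces an oriented family of charts in which $g$ is conformal, and this family is a complex structure. Let $\Phi([g])$ denote that complex structure. I would verify well-definedness on conformal classes: if $\tilde g = fg$ with $f>0$, then any chart in which $g = h\,dz\,d\bar z$ is conformal also presents $\tilde g = (fh)\,dz\,d\bar z$ as conformal, so the same chart system works. I would also check $\Phi([g])$ is independent of which conformal-chart system is picked: if $(z,\bar z)$ and $(w,\bar w)$ are both conformal for $g$, then $e^{2\hat u}\,dz\,d\bar z = e^{2\hat v}\,dw\,d\bar w$ together with orientation-preservation forces $\partial_{\bar w}z=0$, exactly as at the end of the proof of Theorem \ref{conformalityispossible}, so the transition is holomorphic.

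To produce the inverse map, take a complex structure $\{(U_\alpha,\phi_\alpha)\}$ with coordinates $z_\alpha = u_\alpha + iv_\alpha$, fix a partition of unity $\{\rho_\alpha\}$ subordinate to $\{\phi_\alpha(U_\alpha)\}$, and set
\[
g \;=\; \sum_\alpha \rho_\alpha\,(du_\alpha^2 + dv_\alpha^2).
\]
This is a positive-definite symmetric $2$-form, hence a Riemannian metric. The key check is that $g$ is conformal in every holomorphic chart $(U_\beta,\phi_\beta)$: since the transition $z_\alpha(z_\beta)$ is holomorphic on the overlap, the chain rule gives $du_\alpha^2 + dv_\alpha^2 = |dz_\alpha/dz_\beta|^2\,(du_\beta^2 + dv_\beta^2)$, so each summand is a positive multiple of $du_\beta^2 + dv_\beta^2$, and therefore so is $g$. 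Hence $\Phi([g])$ equals the starting complex structure, giving surjectivity of $\Phi$.

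For injectivity of $\Phi$, suppose $g$ and $\tilde g$ produce the same complex structure. Then in each holomorphic chart both are conformal, so $g = e^{2\hat u}\,dz\,d\bar z$ and $\tilde g = e^{2\hat v}\,dz\,d\bar z$, giving $\tilde g = e^{2(\hat v-\hat u)}\,g$ locally; the local factors patch to a global positive function because they are intrinsically the ratio $\tilde g/g$ of two metrics, so $[\tilde g]=[g]$. That $\Phi$ and its inverse are mutually inverse is then immediate from the two checks above. The main obstacle is the surjectivity argument: its content is the recognition that holomorphicity of transitions is precisely the compatibility condition that carries $du^2+dv^2$ into a positive scalar multiple of the analogous expression in any other chart; once this is in hand, the partition-of-unity construction and the remaining verifications are routine applications of Theorem \ref{conformalityispossible}.
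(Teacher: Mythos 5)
Your proof is correct, and it is more complete than the one in the paper. The paper's proof consists only of the forward direction: it observes (via Theorem \ref{conformalityispossible}) that each metric yields a complex structure and that conformally equivalent metrics yield the same one, and then asserts that ``the corollary follows,'' leaving the bijectivity unaddressed. You take the same route for the forward map $\Phi$ (including the well-definedness check via the holomorphicity of transitions between conformal charts, which is exactly the argument at the end of the theorem's proof), but you additionally construct an explicit inverse by patching the flat chart metrics $du_\alpha^2+dv_\alpha^2$ with a partition of unity, and you verify that holomorphicity of the transition functions is precisely what makes each summand a positive multiple of $du_\beta^2+dv_\beta^2$ in any other holomorphic chart, so the resulting global metric is conformal for the given structure. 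Your injectivity argument --- that two metrics conformal in the same charts have locally defined conformal factors that patch because they are intrinsically the ratio of the metrics --- is also sound. What your version buys is an actual two-sided correspondence rather than just a well-defined map; the only caveat, which you already flag, is that the forward direction inherits the real-analyticity (or some weakened regularity) hypothesis needed to invoke Theorem \ref{conformalityispossible}, a point the paper glosses over as well.
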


\begin{proof}
As we saw in the proof of Theorem \ref{conformalityispossible}, 
each positive definite metric on $\Sigma$ 
produces a complex structure on $\Sigma$.  
Following the arguments in that proof, we can also see that 
two conformally equivalent 
metrics will produce the same complex structure, and the 
corollary follows.  
\end{proof}

In this text, we will always be considering smooth 
CMC surfaces as real-analytic immersions of 
$2$-dimensional differentiable (real-analytic) manifolds $\Sigma$.  
Each immersion will determine 
an induced metric $g$ on $\Sigma$ that makes it a 
Riemannian manifold.  Theorem \ref{conformalityispossible} tells us that 
we can choose coordinates 
on $\Sigma$ so that $g$ is conformal.  
Thus without loss of generality we can restrict 
ourselves to those immersions that 
have conformal induced metric, and we will do this 
on every occasion possible.  

\subsection{The Hopf differential and Hopf theorem}
\label{hopf-diff-hopf-thm}

The Hopf differential $Qdz^2$, defined in \cite{wisky}, 
is of central importance.  We have already seen in \cite{wisky} 
that the Hopf differential can be used to decide if a conformal 
immersion parametrized by a complex coordinate $z$ has 
constant mean curvature, because the surface will have 
constant mean curvature if and only if $Q$ is holomorphic.  
The Hopf differential 
can also be used to determine the umbilic points of a surface, as 
we will now see: 

Let us assume that $\Sigma$ is a Riemann surface with a 
coordinate $z = u + i v$ and that $f$ is a conformal immersion 
from $\Sigma$ into 
$\mathbb{R}^3$.  (Theorem \ref{conformalityispossible} has 
told us that we can always assume $\Sigma$ is a Riemann surface 
and the immersion $f$ is conformal.)  Then the 
first and second fundamental forms are 
\begin{equation}\label{eqn:ratsing7} 
g = \begin{pmatrix}
g_{11} & g_{12} \\ g_{21} & g_{22} 
\end{pmatrix}
= 
\begin{pmatrix}
\langle f_u,f_u \rangle & \langle f_u,f_v \rangle \\ 
\langle f_v,f_u \rangle & \langle f_v,f_v \rangle 
\end{pmatrix}
= 4 e^{2 \hat u} 
\begin{pmatrix}
1 & 0 \\ 0 & 1 
\end{pmatrix} \end{equation} 
and 
\[ b = \begin{pmatrix}
b_{11} & b_{12} \\ b_{21} & b_{22} 
\end{pmatrix}
= 
\begin{pmatrix}
\langle b_{uu},N \rangle & \langle b_{uv},N \rangle \\ 
\langle b_{vu},N \rangle & \langle b_{vv},N \rangle 
\end{pmatrix}
\; , \]  
where $N$ is a unit normal vector to $f$.  
The Hopf differential function is 
\[ Q = \frac{1}{4} (b_{11}-b_{22}-ib_{12}-ib_{21}) 
= \langle f_{zz},N \rangle \; , 
\] where $\langle \cdot , \cdot \rangle$ is the 
complex bilinear extension of the metric of $\mathbb{R}^3$, 
and 
\[ \partial_z = \tfrac{1}{2} (\partial_u - i \partial_v)
\; , \;\;\; 
\partial_{\bar z} = \tfrac{1}{2} (\partial_u + i \partial_v) \] 
by definition.  
Then \[ b = Qdz^2+\tfrac{1}{2} (b_{11}+b_{22}) + 
\bar Q d\bar z^2 \; . \]  
Now, the shape operator is 
\[ g^{-1} b = \frac{1}{4 e^{2 \hat u}} \begin{pmatrix} 
\tfrac{1}{2} (b_{11}+b_{22}) +Q+\bar Q & i(Q-\bar Q) \\ 
i(Q-\bar Q) & \tfrac{1}{2} (b_{11}+b_{22}) -Q-\bar Q 
\end{pmatrix} \] with respect to the basis 
$f_u$ and $f_v$ of each tangent space of 
$f(\Sigma)$.  The two principal curvatures are 
then the two eigenvalues of this shape 
operator $g^{-1}b$, which can be computed and seen to be 
\[ \tfrac{1}{2} (b_{11}+b_{22})  + 2 |Q| \; , \;\;\; 
    \tfrac{1}{2} (b_{11}+b_{22})  - 2 |Q| \; . \]  

\begin{defn}
Let $\Sigma$ be a $2$-dimensional manifold.  
The {\em umbilic points} of an immersion 
$f : \Sigma \to \mathbb{R}^3$ are the points 
where the two principal curvatures are equal.  
\end{defn}

So, for example, every point of a flat 
plane or a round sphere is an umbilic point, 
and a cylinder has no umbilic points.  
One can check that a catenoid also has no umbilic points.  

Putting all this together, we have the following lemma: 

\begin{lemma}\label{umbIsQis0}
If $\Sigma$ is a Riemann surface and 
$f : \Sigma \to \mathbb{R}^3$ is a conformal 
immersion, then $p \in \Sigma$ is an umbilic point if and only if 
$Q=0$ at $p$.  
\end{lemma}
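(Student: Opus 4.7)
The plan is to read the conclusion directly off the eigenvalue computation that appears in the paragraph immediately preceding the lemma. That computation already did all the work: it exhibited the shape operator $g^{-1}b$ explicitly in the basis $f_u, f_v$ and determined its two eigenvalues to be
\[ \kappa_\pm = \tfrac{1}{2}(b_{11}+b_{22}) \pm 2|Q| . \]
These eigenvalues are exactly the principal curvatures of $f$ at $p$, so the umbilic condition $\kappa_+ = \kappa_-$ becomes $4|Q| = 0$, i.e. $Q(p) = 0$. Conversely, if $Q(p) = 0$ then $\kappa_+ = \kappa_- = \tfrac{1}{2}(b_{11}+b_{22})$, so $p$ is umbilic.

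So the only thing left to justify is that the two expressions $\tfrac{1}{2}(b_{11}+b_{22}) \pm 2|Q|$ really are the eigenvalues of the matrix displayed for $g^{-1}b$. The plan for that is a two-by-two determinant calculation: the characteristic polynomial of
\[ M = \frac{1}{4 e^{2\hat u}}\begin{pmatrix} \tfrac{1}{2}(b_{11}+b_{22}) + Q + \bar Q & i(Q-\bar Q) \\ i(Q-\bar Q) & \tfrac{1}{2}(b_{11}+b_{22}) - Q - \bar Q \end{pmatrix} \]
has trace $\tfrac{1}{4e^{2\hat u}}(b_{11}+b_{22})$ and a determinant that, after expanding, gives $(\tfrac{1}{4e^{2\hat u}})^2\bigl((\tfrac12(b_{11}+b_{22}))^2 - (Q+\bar Q)^2 + (Q-\bar Q)^2\bigr)$. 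The cross terms combine as $-(Q+\bar Q)^2 + (Q-\bar Q)^2 = -4Q\bar Q = -4|Q|^2$. Hence the eigenvalues are as claimed (one must of course remember that the original $b$ matrix was written without the $g^{-1}$ prefactor, so the factor of $\tfrac{1}{4e^{2\hat u}}$ was already absorbed in the statement quoted in the excerpt — this is just a bookkeeping consistency check).

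The main obstacle is essentially nonexistent: once the eigenvalue formula is in hand, umbilicity is visibly equivalent to $|Q|=0$, and $|Q|=0$ is equivalent to $Q=0$ since $Q$ is a complex number. The only mildly subtle point worth explicitly noting in the write-up is that the characterization is coordinate-invariant in the right sense — the Hopf differential is $Q\,dz^2$, and under a holomorphic change of conformal coordinate $\tilde z = \tilde z(z)$ one has $\tilde Q = Q\,(dz/d\tilde z)^2$, so the vanishing of $Q$ at a point is a property of the point and not of the chosen complex coordinate. This makes the statement of the lemma well-posed on the Riemann surface $\Sigma$.
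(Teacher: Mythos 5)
Your proposal is correct and is essentially the paper's own argument: the paper derives the principal curvatures $\tfrac{1}{2}(b_{11}+b_{22})\pm 2|Q|$ from the displayed shape operator in the paragraph immediately before the lemma and then states the lemma as an immediate consequence, exactly as you do. Your explicit trace/determinant verification and the remark on coordinate invariance of the vanishing of $Q$ are just fillings-in of steps the paper leaves to the reader.
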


Thus the Hopf differential tells us where the umbilic points are.  
When $Q$ is holomorphic, it follows that $Q$ is 
either identically zero or is zero 
only at isolated points.  So, in the case of a 
CMC surface, if there are any points that are not umbilics, then 
all the umbilic points must be isolated.  

If every point is an umbilic, we say that 
the surface is {\em totally umbilic}, and then the 
surface must be a plane or a round sphere.  This is 
proven in \cite{doCarmo1}, for example.  
But let us include a proof here: 

\begin{lemma}\label{lemmaonumbilics}
Let $\Sigma$ be a Riemann surface and $f : \Sigma \to \mathbb{R}^3$ a 
totally umbilic conformal immersion.  Then $f(\Sigma)$ is part of a plane or 
sphere.  
\end{lemma}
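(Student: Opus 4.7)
The plan is to use the umbilic hypothesis to identify the shape operator as a scalar multiple of the identity, then exploit the equality of mixed partial derivatives of the unit normal to show that this scalar is in fact constant, and finally read off the geometric conclusion by constructing an antiderivative.

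First I would invoke Lemma \ref{umbIsQis0}: the total umbilicity of $f$ means $Q \equiv 0$ on $\Sigma$. Combined with the conformal form of $g$ from Equation \eqref{eqn:ratsing7}, the second fundamental form reduces to $b = \lambda\, g$ for some smooth real-valued function $\lambda$ on $\Sigma$ (namely $\lambda = \tfrac{1}{2}(b_{11}+b_{22})/(4 e^{2\hat u})$, the common eigenvalue of the shape operator read off the end of Section \ref{hopf-diff-hopf-thm}). This is equivalent to the pointwise Weingarten relations
\[ N_u = -\lambda f_u, \qquad N_v = -\lambda f_v. \]

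Next I would differentiate: computing $N_{uv}$ in two ways yields
\[ -\lambda_v f_u - \lambda f_{uv} = N_{uv} = N_{vu} = -\lambda_u f_v - \lambda f_{vu}, \]
so $\lambda_v f_u = \lambda_u f_v$. Since $f$ is an immersion, $f_u$ and $f_v$ are linearly independent at every point, forcing $\lambda_u = \lambda_v = 0$ on each coordinate chart. Assuming $\Sigma$ is connected (or passing to a connected component), $\lambda$ is therefore a constant on $\Sigma$.

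Finally, a case analysis closes the argument. If $\lambda = 0$, then the Weingarten relations give that $N$ is constant, and then $\langle f, N\rangle$ has vanishing gradient, so $f(\Sigma)$ lies in a plane perpendicular to $N$. If $\lambda \neq 0$, I would introduce the auxiliary map $c := f + \lambda^{-1} N$; then $c_u = c_v = 0$ by the Weingarten relations, so $c$ is constant, giving $|f - c| \equiv |\lambda|^{-1}$ and placing $f(\Sigma)$ on the round sphere of radius $|\lambda|^{-1}$ centered at $c$. The main obstacle is the middle step: the pointwise identity $b = \lambda g$ is purely algebraic, and one needs the Codazzi-type compatibility $N_{uv} = N_{vu}$ to upgrade it to the global statement that $\lambda$ is constant. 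Once that constancy is in hand, the dichotomy between plane and sphere follows by a direct antidifferentiation.
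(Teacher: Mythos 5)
Your proof is correct, and its overall skeleton matches the paper's: establish that the common principal curvature is constant, then antidifferentiate $f+H^{-1}N$ (or observe $N$ is constant when $H=0$) to land on a sphere or plane. The one genuine difference is the middle step. The paper gets constancy of $H$ by noting that $Q\equiv 0$ is trivially holomorphic and then invoking the Codazzi equation from \cite{wisky}; you instead write the Weingarten relations $N_u=-\lambda f_u$, $N_v=-\lambda f_v$ and extract $\lambda_u=\lambda_v=0$ directly from $N_{uv}=N_{vu}$ and the linear independence of $f_u,f_v$. Your route is more self-contained (it re-derives exactly the piece of Codazzi that is needed rather than citing it) and it also streamlines the final step: once the Weingarten form is in hand, $\partial_u(f+\lambda^{-1}N)=f_u+\lambda^{-1}N_u=0$ is immediate, whereas the paper verifies the vanishing by pairing against the frame $f_u,f_v,N$. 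The only (minor) point worth making explicit is the connectedness of $\Sigma$ needed to pass from $d\lambda=0$ to $\lambda$ constant, which you do flag.
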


\begin{proof}
Because $f$ is totally umbilic, 
the Hopf differential $Q$ is identically zero.  
So $Q$ is clearly holomorphic, and thus $H$ is constant, by the Codazzi 
equation (see Section 1.3 in \cite{wisky}).  
Let $u,v \in \R$ be local conformal coordinates for $f$, and 
$N=N(u,v)$ the unit normal of $f$.  
We first consider the case that $H$ is not zero, and show that 
\begin{equation}\label{eqn:ratsing8} 
\partial_u (f+H^{-1} N) = \partial_v (f+H^{-1} N) = 0 \; . 
\end{equation} 
This can be computed as follows, with $\hat u$ as defined in  
\eqref{eqn:ratsing7}: 
\[ \langle f_u+H^{-1} N_u , f_u \rangle = 4 e^{2\hat u}-H^{-1} 
\langle N , f_{uu} \rangle = 4 e^{2\hat u}-H^{-1} b_{11} = \]\[ = 
4 e^{2\hat u}-H^{-1} 
(\tfrac{1}{2} (b_{11}+b_{22}) + Q + \bar Q) = \]\[ = 4 e^{2\hat u}-
\tfrac{1}{2} H^{-1} (b_{11}+b_{22}) = 
4 e^{2\hat u}-4 e^{2\hat u} = 0 \; . \]  Similarly, 
\[ \langle f_u+H^{-1} N_u , f_v \rangle=0 \; , \;\;\; 
\langle f_v+H^{-1} N_v , f_u \rangle=0 \; , \;\;\; 
\langle f_v+H^{-1} N_v , f_v \rangle=0 \; , \]\[ 
\langle f_u+H^{-1} N_u , N \rangle=0 \; , \;\;\; 
\langle f_v+H^{-1} N_v , N \rangle=0 \; . \]  
($\langle f_u, N_v \rangle=\langle f_v, N_u 
\rangle=0$ because $g^{-1} b$ is diagonal on a conformally 
parametrized totally umbilic surface.)  It follows that 
\eqref{eqn:ratsing8} holds, and so 
$f(\Sigma)$ is part of a round sphere of radius $H^{-1}$ with 
constant center point $f+H^{-1} N$.  

In the case that $H=0$, 
to show that $f(\Sigma)$ is part of a plane, we need only 
show that $N_u=N_v=0$.  Similarly to the previous case 
where $H$ was not zero, one can compute that 
\[ \langle N , N_u \rangle = \langle N , N_v \rangle = 
\langle f_u , N_u \rangle = 
\langle f_u , N_v \rangle = \langle f_v , N_u \rangle = 
\langle f_v , N_v \rangle = 0 \; , \]  and the result follows.  
\end{proof}

\begin{remark}
We stated Lemma \ref{lemmaonumbilics} with the assumption 
that the immersion is conformal, but in fact the conformality condition 
is not required.  
\end{remark}

In the case that $\Sigma$ is a closed Riemann surface (i.e. compact 
without boundary), we can take this even further.  Orientable closed 
Riemann surfaces are classified 
by their genus.  For example, if $\Sigma$ is a 
sphere, then it has genus $0$; if it is a torus, then it has genus $1$.  
So if $\Sigma$ is a closed orientable 
Riemann surface, then it has a genus $\frak{g}$ for some $\frak{g} \in 
\mathbb{Z}^+ \cup \{ 0 \}$.  Since $f$ is a CMC immersion, 
the Hopf differential $Qdz^2$ (written here in terms of local coordinates 
$z$) is a holomorphic $2$-differential defined on 
$\Sigma$.  The order $\ord_p(Qdz^2)$ of 
$Qdz^2$ at each point $p \in 
\Sigma$ is defined to be the order of the function $Q$ at 
$p$ (i.e. if $Q=z^k$, then $Q$ has order $k$ at $z=0$).  
It is then well known, when $Q$ is not identically zero (see \cite{fark-kra}, 
for example), that 
\begin{equation}\label{eqn:ratsing9} 
\sum_{p \in \Sigma} \ord_p(Qdz^2) = 4 \frak{g} - 4 \; . \end{equation} 
Because $Qdz^2$ is holomorphic, we have $\ord_p(Qdz^2) \geq 0$ for all 
$p \in \Sigma$.  We conclude that if $\frak{g} = 0$, then 
either $Q$ is identically 
zero or $0 \leq \sum_{p \in \Sigma} \ord_p(Qdz^2) = - 4$.  
The second case certainly 
cannot hold, so $Q$ is identically zero.  So the surface is 
totally umbilic and must be a round sphere, 
and this proves Hopf's theorem \cite{Hopf}: 

\begin{theorem}\label{thmofHopf} {\bf (The Hopf theorem.)}  
If $\Sigma$ is a closed $2$-dimensional manifold of genus zero and if 
$f : \Sigma \to \mathbb{R}^3$ 
is a nonminimal CMC immersion, then $f(\Sigma)$ is a round sphere.  
\end{theorem}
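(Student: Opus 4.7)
The plan is to combine Theorem \ref{conformalityispossible}, the two lemmas on umbilic points from Section \ref{hopf-diff-hopf-thm}, and the order formula \eqref{eqn:ratsing9} for holomorphic quadratic differentials. First, by Theorem \ref{conformalityispossible} we may equip $\Sigma$ with the complex structure coming from a conformal reparametrization of $f$, turning $\Sigma$ into a closed Riemann surface. Since $H$ is constant, the Codazzi equation (recalled in the introduction to Section \ref{hopf-diff-hopf-thm}) implies that the Hopf differential $Qdz^2$ is a globally defined \emph{holomorphic} quadratic differential on this Riemann surface.

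The decisive topological step is then to apply \eqref{eqn:ratsing9} to this differential. If $Q$ were not identically zero, then $\sum_{p \in \Sigma}\ord_p(Qdz^2) = 4\mathfrak{g}-4 = -4$, which is incompatible with the fact that $\ord_p(Qdz^2) \geq 0$ at every $p \in \Sigma$ for a holomorphic object. The only way to escape this contradiction is to have $Q \equiv 0$ on $\Sigma$.

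Next, Lemma \ref{umbIsQis0} converts $Q \equiv 0$ into the statement that every point of $f(\Sigma)$ is umbilic, so $f$ is totally umbilic. Lemma \ref{lemmaonumbilics} then forces $f(\Sigma)$ to lie in either a plane or a round sphere. The plane is excluded because $f$ is nonminimal, i.e. $H \neq 0$, whereas a plane has $H \equiv 0$. Thus $f(\Sigma)$ is contained in a round sphere of radius $|H|^{-1}$, and since $\Sigma$ is closed and connected and $f$ is an immersion, $f(\Sigma)$ is open in that sphere by the inverse function theorem and closed by compactness, so it must be the entire sphere.

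The main obstacle is really hidden in the order formula \eqref{eqn:ratsing9}: the whole argument rests on having this Riemann--Roch-style global identity for holomorphic quadratic differentials on a genus-zero Riemann surface. Once that is granted (the paper attributes it to \cite{fark-kra}), the rest is simply a matter of stringing together Theorem \ref{conformalityispossible}, Lemma \ref{umbIsQis0}, and Lemma \ref{lemmaonumbilics}, with the minimality/nonminimality dichotomy used to discard the planar case.
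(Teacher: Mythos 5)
Your proposal is correct and follows essentially the same route as the paper: pass to a conformal (Riemann surface) structure, use holomorphicity of the Hopf differential together with the degree formula $\sum_{p}\ord_p(Qdz^2)=4\mathfrak{g}-4$ to force $Q\equiv 0$ in genus zero, and then invoke the totally umbilic classification, discarding the planar case by nonminimality. The only additions beyond the paper's argument are the explicit open-and-closed step showing $f(\Sigma)$ is the entire sphere, which is a welcome bit of extra care.
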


\begin{remark}\label{no-minimal-compact-surfaces}
In fact, there do not exist any compact minimal surfaces without 
boundary in $\mathbb{R}^3$, 
and we will prove this using the maximum principle, in 
the next chapter.  
Therefore, without assuming that $f$ in the above theorem 
in nonminimal, the result 
would still be true.  
\end{remark}

Now let us consider the case that $\Sigma$ is a closed 
Riemann surface of genus 
$\frak{g} \geq 1$ and $f : \Sigma \to \mathbb{R}^3$ 
is a conformal CMC immersion (by Remark 
\ref{no-minimal-compact-surfaces}, because 
there do not exist any closed compact 
minimal surfaces in $\mathbb{R}^3$, $f$ is guaranteed to be nonminimal).  
In this case, $f(\Sigma)$ certainly cannot be a sphere, so 
$Q$ is not identically zero (by Lemma 
\ref{lemmaonumbilics}). It follows from \eqref{eqn:ratsing9} that, 
counted with 
multiplicity, there are exactly $4 \frak{g} -4$ umbilic points on the surface.
We conclude the following: 

\begin{corollary}
A closed CMC surface in $\mathbb{R}^3$ of 
genus $1$ has no umbilic points, and a closed 
CMC surface in $\mathbb{R}^3$ of genus strictly greater than $1$ must 
have umbilic points.  
\end{corollary}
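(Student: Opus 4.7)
The plan is to deduce the corollary directly from the degree formula \eqref{eqn:ratsing9} together with Lemma \ref{umbIsQis0} and Lemma \ref{lemmaonumbilics}, exactly paralleling the proof of Hopf's Theorem \ref{thmofHopf} just given. Since the corollary is stated for closed CMC surfaces in $\mathbb{R}^3$, Remark \ref{no-minimal-compact-surfaces} automatically upgrades any such $f$ to a \emph{nonminimal} CMC immersion, so $Qdz^2$ makes sense as a globally defined holomorphic quadratic differential on $\Sigma$.

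First, I would treat the genus $1$ case. Here \eqref{eqn:ratsing9} gives $\sum_{p\in\Sigma}\ord_p(Qdz^2)=0$. There are two possibilities: either $Q\equiv 0$ or $Q\not\equiv 0$. In the first case, Lemma \ref{lemmaonumbilics} forces $f(\Sigma)$ to be (part of) a plane or round sphere, which is incompatible with $\Sigma$ being a closed surface of genus $1$; so this case is ruled out. In the second case, every local order $\ord_p(Qdz^2)$ is a nonnegative integer (because $Qdz^2$ is holomorphic), and the sum vanishes, forcing $\ord_p(Qdz^2)=0$ for every $p$. By Lemma \ref{umbIsQis0}, this means $f$ has no umbilic points.

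Next, for genus $\mathfrak{g}\geq 2$, I would argue by contradiction: suppose $f$ has no umbilic points. By Lemma \ref{umbIsQis0}, $Q$ is nowhere zero, so in particular $Q\not\equiv 0$ and every $\ord_p(Qdz^2)=0$. Then \eqref{eqn:ratsing9} gives
\begin{equation*}
0 \;=\; \sum_{p\in\Sigma}\ord_p(Qdz^2) \;=\; 4\mathfrak{g}-4 \;\geq\; 4,
\end{equation*}
which is impossible. Hence $f$ must possess at least one umbilic point (in fact, counted with multiplicity, exactly $4\mathfrak{g}-4$ of them).

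I do not foresee a genuinely hard step here; the whole argument is a bookkeeping consequence of the degree formula \eqref{eqn:ratsing9} for holomorphic quadratic differentials on a closed Riemann surface together with Hopf-style facts already established. The only subtle point is making sure to dispose of the $Q\equiv 0$ branch in the genus $1$ case by invoking Lemma \ref{lemmaonumbilics}, which rules out any closed totally umbilic surface other than a sphere; this is the one place where one cannot appeal blindly to \eqref{eqn:ratsing9}, since that identity is stated under the hypothesis $Q\not\equiv 0$.
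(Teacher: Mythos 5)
Your proposal is correct and follows essentially the same route as the paper: invoke Remark \ref{no-minimal-compact-surfaces} to get nonminimality, rule out $Q\equiv 0$ via Lemma \ref{lemmaonumbilics} (a closed surface of genus $\geq 1$ cannot be a plane or sphere), and then read off the count of $4\mathfrak{g}-4$ umbilics, with multiplicity, from \eqref{eqn:ratsing9} together with Lemma \ref{umbIsQis0}. The only cosmetic difference is that you phrase the genus $\geq 2$ case as a contradiction, whereas the paper states the umbilic count directly; the content is identical.
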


\section{The maximum principle for CMC surfaces}
\label{max_princ}

Here we consider the maximum principle for smooth CMC surfaces.  
Roughly, this principle states that 
if one CMC $H$ surface lies locally to one side of another CMC $H$ surface, 
and if they touch tangentially with a common orientation at 
some interior point, then the two surfaces must coincide in a local 
neighborhood of that point.  

The result in the theory of partial differential equations behind this 
principle is the maximum principle for elliptic partial differential 
equations (see, for example, \cite{PW}).  The maximum 
principle for CMC surfaces is relevant to us 
here because it can tell us quite a lot about the kinds of surface one 
can hope (or cannot hope) to construct.  This is because, although it is 
stated locally, the maximum principle can give global results.  It then 
becomes a powerful tool for making global statements about 
CMC surfaces.  For example, one can easily prove the following theorems:  

\begin{theorem}\label{thm:maxprinc1}
Any complete minimal surface in $\mathbb{R}^3$ or $\mathbb{H}^3$ without 
boundary cannot be compact.
\end{theorem}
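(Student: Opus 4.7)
The plan is to argue by contradiction using the maximum principle as advertised earlier in the section. Suppose $f:\Sigma\to\mathbb{R}^3$ is a complete minimal immersion of a boundaryless $\Sigma$ and that $\Sigma$ is compact. Since $f$ is continuous and $\Sigma$ is compact, $f(\Sigma)$ is a compact subset of $\mathbb{R}^3$, hence bounded. Consider the smooth function $\rho:\Sigma\to\mathbb{R}$ given by $\rho(q)=|f(q)|^2$; being continuous on a compact set, it attains a maximum at some interior point $p\in\Sigma$ (there is no boundary). Let $r=|f(p)|$ and let $S_r$ be the Euclidean sphere of radius $r$ centered at the origin. By construction $f(\Sigma)\subset\overline{B_r(0)}$ and $f(p)\in S_r$.

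The next step is to observe that $f(\Sigma)$ and $S_r$ are tangent at $f(p)$, with the inward-pointing unit normal of $S_r$ a permissible choice of unit normal for $f$ at $p$. This is because $d\rho_p=0$ forces $\langle f(p),df_p(X)\rangle=0$ for every tangent vector $X\in T_p\Sigma$, so $df_p(T_p\Sigma)$ coincides with the tangent plane of $S_r$ at $f(p)$. With this common orientation, $f(\Sigma)$ lies locally on the mean-convex (concave) side of $S_r$ near $f(p)$.

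Now I would invoke the maximum principle: comparing a minimal surface with the sphere $S_r$, which has constant mean curvature $H_{\mathrm{sph}}=1/r>0$ with respect to the inward normal, the fact that our surface lies locally to the mean-convex side of $S_r$ at a tangential interior contact forces the mean curvature of $f$ at $p$ to satisfy $H(p)\geq 1/r>0$. This contradicts $H\equiv 0$. Hence no such compact complete minimal surface exists in $\mathbb{R}^3$.

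For the $\mathbb{H}^3$ case the argument is completely analogous after replacing $|f|^2$ by the squared hyperbolic distance $d_{\mathbb{H}^3}(f(\cdot),o)^2$ from any fixed basepoint $o\in\mathbb{H}^3$, which is again smooth and attains its maximum on the compact $\Sigma$. The comparison surface is the geodesic sphere of radius $r$ about $o$, which has constant mean curvature $\coth r>0$ with respect to the inward normal, and the same maximum-principle comparison yields the contradiction. The main conceptual step — and the only place where something nontrivial is used — is the comparison maximum principle for surfaces of different constant mean curvatures at a one-sided tangential contact; once that is in hand, everything else is geometric packaging. The only care needed is to confirm that the squared distance function in $\mathbb{H}^3$ is smooth on all of $\Sigma$ (which it is, since $\mathbb{H}^3$ has no cut locus), and that the inward-pointing normal to the geodesic sphere really does give the stated positive mean curvature.
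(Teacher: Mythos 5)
Your proof is correct, but it takes a genuinely different route from the one in the paper. The paper sweeps a family of parallel totally geodesic planes toward the compact surface $M$ until first (necessarily tangential, one-sided) contact; since the plane is itself minimal, this is a tangency of two surfaces with the \emph{same} mean curvature, so the strong maximum principle (Proposition \ref{prop:maxprinc3}) forces $M$ to coincide locally with the plane, and real-analyticity/unique continuation then forces all of $M$ to lie in that plane, contradicting compactness. You instead touch $f(\Sigma)$ from outside with a round (or geodesic) sphere at a maximum of the squared distance function and derive an immediate pointwise contradiction $0=H(p)\geq 1/r>0$ (resp.\ $\geq \coth r$). What your approach buys is economy: the comparison of mean curvatures at a one-sided interior tangency between surfaces of \emph{different} mean curvatures follows from the second-derivative test on the difference of the two graph functions alone, so you need neither the strong maximum principle nor the analytic-continuation step. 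What it costs is that the fact you invoke is not literally the statement the paper proves (Proposition \ref{prop:maxprinc3} assumes the two graphs have the same $H$), so strictly speaking you are using a variant — the easy ``comparison'' form $H_1(p)\geq H_2(p)$ when $\hat f_1$ lies above $\hat f_2$ with common upward normal — which you should state or prove explicitly rather than cite as ``the maximum principle.'' The remaining details you flag (smoothness of the squared hyperbolic distance since $\mathbb{H}^3$ has no cut locus, the value $\coth r$ for geodesic spheres, and the fact that the tangency argument is local and hence valid for immersions, not just embeddings) are all correct.
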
  

\begin{proof} 
By way of contradiction, suppose that $M$ is 
the image of a compact minimal surface without boundary in $\mathbb{R}^3$ or 
$\mathbb{H}^3$.  Then there exists a geodesic 
plane $P=P_0$ that does not intersect $M$.  Translating $P$ in the direction 
of a geodesic perpendicular to it and toward 
$M$ at unit speed (along the geodesic) to make a family of parallel geodesic 
planes $P_t$, $t \geq 0$, and taking 
the smallest value $t_0$ of $t$ so that $P_{t_0} \cap M \neq \emptyset$, one 
has the first (necessarily tangential) contact of $M$ with $P_{t_0}$.  
Thus one has two minimal surfaces $M$ 
and $P_{t_0}$ each lying to one side of each other and touching 
tangentially at some point $p$.  
The maximum principle then implies that 
in a local neighborhood of $p$, $M$ is contained in the geodesic 
plane $P_{t_0}$.  Once an open set in a 
minimal surface is a geodesic plane, the entire surface must lie within that 
geodesic plane.  (This last sentence follows in the case of 
$\mathbb{R}^3$ from real analyticity of the frame as in Remark 
4.4.2 in \cite{wisky} with $H$ chosen to be 
zero.  It also follows from the fact that the stereographic projection of 
the Gauss map in the Weierstrass representation 
is both holomorphic as in Section 3.4 of \cite{wisky} and is 
constant on an open set, and thus is constant on all of $M$.  
Any surface with a constant Gauss map must lie in a plane.  
An argument along the same lines using an analog of Remark 
4.4.2 in \cite{wisky} applies 
in the case of $\mathbb{H}^3$ as well.)  Since $M$ is complete, we 
conclude that $M$ is an entire geodesic plane, but this contradicts the 
assumed compactness of $M$. 
\end{proof}

\begin{figure}[phbt]
\begin{center}
\includegraphics[width=1.0\linewidth]{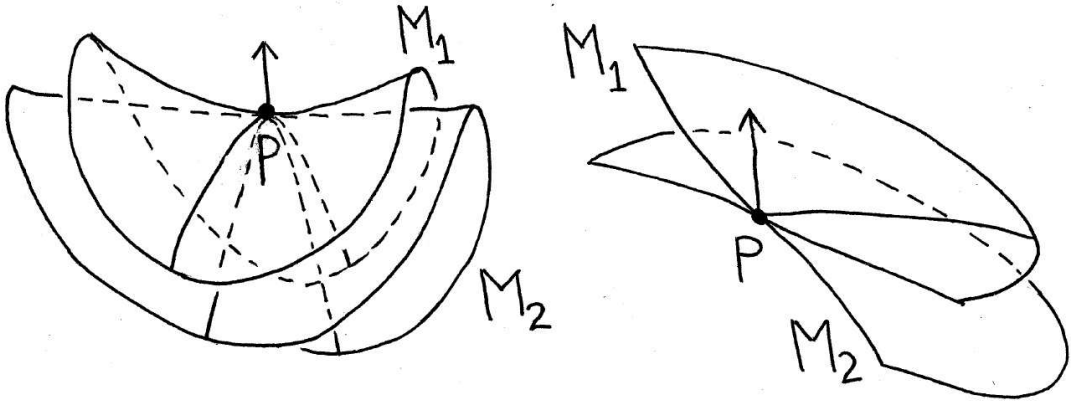}
\caption{The maximum principle (on the left) and the boundary point 
maximum principle (on the right).  In both cases, the surfaces $M_1$ and 
$M_2$ are tangential at $p$ and have the same constant mean curvature with 
respect to the normal direction $\vec{N}$ at $p$, and $M_1$ lies above 
$M_2$ as pictured here.  On the right hand side, the boundaries of $M_1$ and 
$M_2$ have a common tangent line at $p$.  The conclusion in the first 
case (left hand side) is that 
$M_1$ and $M_2$ must coincide in a neighborhood of the 
point $p$.  In the second case (right hand side), $M_1$ and $M_2$ will 
coincide in 
an open set whose closure contains $p$.}  
\end{center}
\end{figure}

\begin{theorem}\label{thm:maxprinc2}
The only embedded compact CMC surfaces in $\mathbb{R}^3$ and $\mathbb{H}^3$ 
are the round spheres.  
\end{theorem}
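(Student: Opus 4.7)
The plan is to use the method of moving planes (Alexandrov reflection), exploiting embeddedness in an essential way. Since $M = f(\Sigma)$ is a compact embedded surface without boundary in the simply-connected space form $\mathbb{R}^{3}$ or $\mathbb{H}^{3}$, it separates the ambient space and bounds a compact domain $\Omega$. The crucial feature that makes the argument work uniformly in both ambient spaces is the abundance of reflective isometries across totally geodesic planes, and the fact that such a reflection sends a CMC-$H$ surface to a CMC-$H$ surface (with a compatible choice of inward-pointing orientation).

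Fix any direction and let $\{P_{t}\}_{t \in \mathbb{R}}$ be a unit-speed family of parallel totally geodesic planes foliating the ambient space, with $\rho_{t}$ the reflection across $P_{t}$ and $H_{t}^{+}$ the closed half-space on one chosen side of $P_{t}$. For $t$ sufficiently large, $P_{t} \cap M = \emptyset$; decrease $t$ until the first value $t_{1}$ at which $P_{t_{1}}$ meets $M$. For $t$ just below $t_{1}$, the cap $M_{t}^{+} := M \cap H_{t}^{+}$ is a small topological disk whose reflected image $M_{t}^{\ast} := \rho_{t}(M_{t}^{+})$ lies strictly inside $\bar{\Omega}$ on the opposite side of $P_{t}$. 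Continue decreasing $t$, and let $t_{0}$ be the first value at which this interior configuration fails. By compactness and embeddedness, one of two things must happen at $t_{0}$: either (a) $M_{t_{0}}^{\ast}$ becomes tangent to $M \setminus H_{t_{0}}^{+}$ at some interior point $p$, or (b) $P_{t_{0}}$ becomes tangent to $M$ at some point $p$, in which case $p$ is a boundary point of $M_{t_{0}}^{\ast}$ and the boundaries of $M_{t_{0}}^{\ast}$ and of $M \setminus H_{t_{0}}^{+}$ share a common tangent line.

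In either case, $M_{t_{0}}^{\ast}$ and $M$ are CMC-$H$ surfaces that lie locally on the same side of each other at $p$, are tangent there, and have matching normals. Applying the interior maximum principle in case (a) and the boundary-point maximum principle in case (b), one concludes that $M_{t_{0}}^{\ast}$ and $M$ agree in an open neighborhood of $p$. By real-analyticity of CMC surfaces (as used in the proof of Theorem \ref{thm:maxprinc1}), this local agreement propagates to all of the connected pieces involved, so $M$ is globally invariant under the reflection $\rho_{t_{0}}$. Thus $M$ has a plane of symmetry perpendicular to the chosen direction.

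Since the direction was arbitrary, $M$ admits a geodesic plane of symmetry in every direction. A short linear-algebra argument shows that all such symmetry planes must pass through a common point $p_{0} \in \Omega$, and invariance under reflection across every plane through $p_{0}$ forces $M$ to be the orbit of one of its points under the full stabilizer of $p_{0}$, hence a geodesic sphere centered at $p_{0}$. The main obstacle is the bookkeeping at the critical parameter $t_{0}$: one must verify that the reflected cap genuinely lies on the correct side of $M$ throughout $t \in (t_{0}, t_{1})$ and that the failure at $t_{0}$ really does fall into exactly one of the two alternatives (a), (b) above. This is where embeddedness is indispensable, via the Jordan--Brouwer separation of the ambient space; without it, Wente-type immersed CMC tori provide counterexamples, so the hypothesis cannot be dropped.
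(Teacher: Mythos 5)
Your proposal is correct and follows essentially the same route as the paper: the Alexandrov moving-planes argument, first tangential contact of the reflected cap (split into the interior case and the boundary case on the critical plane), the interior and boundary-point maximum principles for CMC graphs, real-analytic continuation to global reflective symmetry, and then symmetry in every direction forcing a round sphere. The only differences are cosmetic (you sweep the plane in the opposite direction and spell out the Jordan--Brouwer separation and the final common-center step a bit more explicitly than the paper does).
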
  

This theorem can be proven using the Alexandrov reflection principle, which is an 
immediate consequence of the maximum principle (see, for example, 
\cite{KorKS}).  
Note that the embeddedness condition in Theorem \ref{thm:maxprinc2} is really 
necessary, as the CMC Wente tori show (see Chapter \ref{ashortchapter}).  

\begin{proof}
The Alexandrov reflection principle works in the following way: 
Consider the image of a compact embedded CMC surface $M$ in the ambient 
space $\mathbb{R}^3$ 
or $\mathbb{H}^3$.  Let $q$ be any fixed point in the ambient space, and 
let $\vec{v}$ be any unit 
vector in the tangent space of the ambient space at $q$.  
Let $\alpha_{\vec{v}}(t)$ be a geodesic in the ambient space such that 
$\alpha_{\vec{v}}(0)=0$ and 
$\frac{d}{dt}\alpha_{\vec{v}}(t)|_{t=0}=\vec{v}$.  Let $P_{\vec{v},t}$ 
be the 
uniquely determined geodesic plane containing $\alpha_{\vec{v}}(t)$ and 
perpendicular to $\frac{d}{dt}\alpha_{\vec{v}}(t)$.  Let 
\[ L_{\vec{v},t}^- = \cup_{s \leq t} P_{\vec{v},s} \; , \] 
\[ L_{\vec{v},t}^+ = \cup_{s \geq t} P_{\vec{v},s} \; . \] 
Let $t_0$ be the smallest value of $t$ such that 
$P_{t_0} \cap M \neq \emptyset$.  
Then $P_{t_0}$ lies to one side of $M$ and contacts $M$ tangentially.  
For $t > t_0$ and sufficiently close to $t_0$, the 
interior of the isometric reflection $R_t(M_{\vec{v},t}^-)$ of the portion 
$M_{\vec{v},t}^- = M \cap L_{\vec{v},t}^-$ of $M$ across the plane $P_t$ will 
not make any contact with the portion $M_{\vec{v},t}^+ = M \cap 
L_{\vec{v},t}^+$ of $M$, and nor will 
$R_t(M_{\vec{v},t}^-)$ and $M_{\vec{v},t}^+$ have any tangential contact along 
their common boundary.  One then 
continuously increases $t$ until one arrives at the smallest value $t_1$ where 
the reflection $R_{t_1}(M_{\vec{v},t_1}^-)$ of $M_{\vec{v},t_1}^-$ across 
$P_{t_1}$ and $M_{\vec{v},t_1}^+$ make a tangential contact at 
some point $p$ in $L_{\vec{v},t_1}^+$.  
Let us suppose for the moment that $p$ is in the interior of 
$L_{\vec{v},t_1}^+$.  
Since $t_1$ is the smallest such value, $R_{t_1}(M_{\vec{v},t_1}^-)$ lies 
locally to one side of $M_{\vec{v},t_1}^+$ near $p$.  Also, since 
$M$ is embedded, 
$R_{t_1}(M_{\vec{v},t_1}^-)$ and $M_{\vec{v},t_1}^+$ have the same 
orientation with 
respect to their mean curvature vectors at $p$.  Thus 
$R_{t_1}(M_{\vec{v},t_1}^-)$ and 
$M_{\vec{v},t_1}^+$ coincide in a neighborhood of $p$.  As in the proof of 
Theorem \ref{thm:maxprinc1}, real-analyticity of the frame implies that 
$R_{t_1}(M_{\vec{v},t_1}^-)$ and $M_{\vec{v},t_1}^+$ are globally identical in 
$L_{\vec{v},t}^+$.  Hence $M$ is invariant under isometric 
reflection across the 
geodesic plane $P_{\vec,t_1}$.  

When $p$ is not in the interior of $L_{\vec{v},t_1}^+$, it is in $P_{t_1}$.  
In this case we need a variant of the maximum principle for CMC surfaces, 
called the {\em boundary point} maximum principle for CMC surfaces.  This 
variant will be stated below and gives the same conclusion that $M$ is 
invariant under isometric reflection across the geodesic plane 
$P_{\vec,t_1}$.  

We conclude the proof by noting that the 
direction of $\vec{v}$ was arbitrary, so $M$ has a plane of reflective 
symmetry in every direction, and this is sufficient to conclude that 
$M$ is actually a round sphere.  
\end{proof}

\begin{figure}[phbt]
\begin{center}
\includegraphics[width=1.0\linewidth]{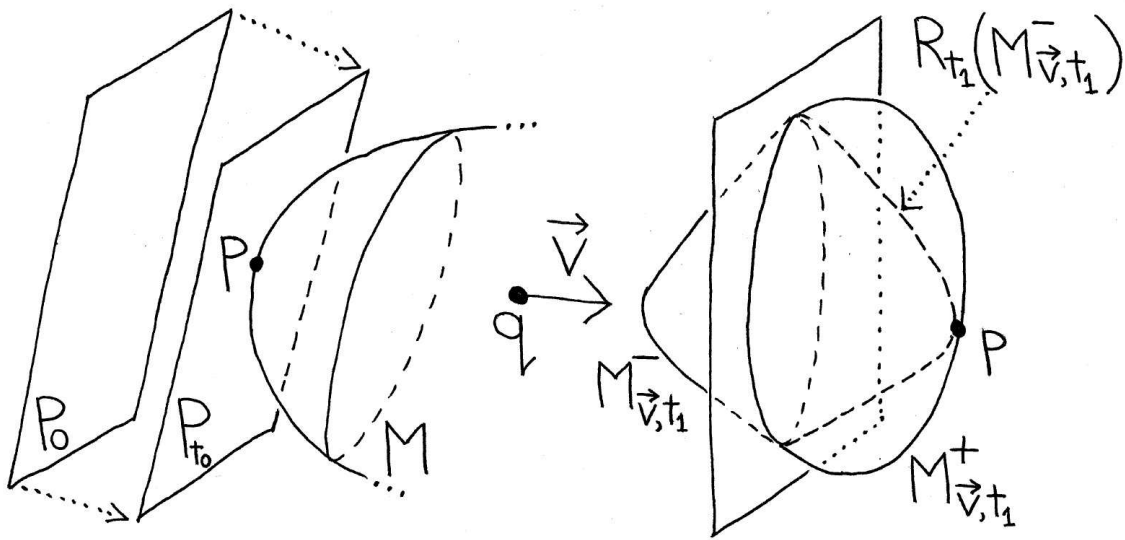}
\caption{The arguments in the proof of Theorem \ref{thm:maxprinc1} (on 
the left) and the proof of Theorem \ref{thm:maxprinc2} (on the right).}
\end{center}
\end{figure}

The maximum principle can also be applied to surfaces with boundary.  
For example, defining the convex hull of a set to be the smallest convex 
set that contains it, only can prove the following result similarly 
to the way Theorem \ref{thm:maxprinc1} was proven: 

\begin{theorem}\label{thm:maxprinc3}
The interior of any compact minimal surface in 
$\mathbb{R}^3$ or $\mathbb{H}^3$ with 
boundary must lie in the interior of the convex hull of its boundary.
\end{theorem}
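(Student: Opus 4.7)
The plan is to mimic the parallel-plane sweep used for Theorem \ref{thm:maxprinc1}, but replacing the initial ``disjoint'' plane with a supporting geodesic plane of the convex hull of $\partial M$. Recall that the convex hull $C$ of $\partial M$ can be characterized as the intersection of all closed geodesic half-spaces containing $\partial M$; this works equally well in $\mathbb{R}^3$ and in $\mathbb{H}^3$, provided we use geodesic planes and geodesic half-spaces (in the Poincar\'e model of $\mathbb{H}^3$ this is visually clear).

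Arguing by contradiction, suppose there is an interior point $p \in M \setminus \partial M$ with $p \notin \mathrm{int}(C)$. First I would treat the case $p \notin C$. Then there is a closed geodesic half-space $H$ with $\partial M \subset H$ and $p \notin H$. Write $P = \partial H$ and let $P_t$, $t \in \mathbb{R}$, be the one-parameter family of geodesic planes parallel to $P$, parametrized by signed geodesic distance from $P$ with $t > 0$ on the side away from $H$. Because $M$ is compact, $t_0 := \max\{\, t : P_t \cap M \neq \emptyset\,\}$ exists and satisfies $t_0 \geq t_p > 0$, where $t_p$ is the $t$-coordinate of $p$. Since $\partial M \subset H$ corresponds to $t \leq 0$, no boundary point of $M$ lies on $P_{t_0}$, so any contact point $q \in P_{t_0} \cap M$ is an interior point.

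At $q$, both $M$ and $P_{t_0}$ are minimal, they are tangent (because $q$ maximizes $t$ on $M$), and $M$ lies locally on the side $\{t \leq t_0\}$ of $P_{t_0}$ with compatible mean curvature orientation. The maximum principle for CMC surfaces then forces $M$ and $P_{t_0}$ to coincide in a neighborhood of $q$, and the real-analytic continuation argument used in the proof of Theorem \ref{thm:maxprinc1} (real-analyticity of the frame, or equivalently the Weierstrass data once the Gauss map is constant on an open set) propagates this agreement to all of $M$. Hence $M \subset P_{t_0}$, which forces $\partial M \subset P_{t_0}$; but $\partial M \subset H$ lies at $t \leq 0 < t_0$, contradiction. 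This proves $p \in C$.

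To upgrade ``in $C$'' to ``in $\mathrm{int}(C)$'', I would repeat the argument assuming $p$ lies on $\partial C$ (still with $p$ an interior point of $M$): choose a supporting geodesic plane $P$ through $p$ with $\partial M \subset H$ on the $t \leq 0$ side. Now $t_0 \geq t_p = 0$. If $t_0 > 0$ the previous paragraph applies verbatim. If $t_0 = 0$, then $p$ itself is the tangential contact point, the maximum principle plus analytic continuation again give $M \subset P$, hence $\partial M \subset P$; but then $M$ is a piece of a geodesic plane and the convex hull of $\partial M$ has empty interior in the ambient $3$-space, so the conclusion of the theorem is vacuous for such $M$ (alternatively, one simply excludes this planar degenerate case). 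The main obstacle I anticipate is verifying the real-analytic continuation step in $\mathbb{H}^3$: one needs the analog of Remark 4.4.2 of \cite{wisky} for hyperbolic ambient space, which is exactly what was invoked in the proof of Theorem \ref{thm:maxprinc1}, so the same justification suffices here.
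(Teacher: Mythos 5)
Your proposal is correct and follows exactly the route the paper intends: the paper gives no detailed proof of this theorem, stating only that it can be proven similarly to Theorem \ref{thm:maxprinc1}, and your sweep of parallel geodesic planes against a supporting half-space of the convex hull, combined with the interior maximum principle at the extremal tangency and the real-analytic continuation step already invoked for Theorem \ref{thm:maxprinc1}, is precisely that argument. Your observation that the totally geodesic case must be set aside --- since for a planar $M$ the convex hull of $\partial M$ has empty ambient interior --- is a genuine caveat to the statement as written, and you handle it appropriately.
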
  

Many other results have been proven with the maximum principle, among them 
that any complete connected minimal surface in $\mathbb{R}^3$ with two 
embedded regular ends is a catenoid, proven by Schoen 
\cite{Sch}.  In addition, 
Korevaar, Kusner, Meeks, and Solomon (\cite{Meeks}, \cite{KorKS}), 
have proven that any complete nonminimal finite-topology embedded CMC surface 
with two ends in $\mathbb{R}^3$ is a Delaunay surface, and any 
surface of this type with three ends has a plane of reflective symmetry.  
Similar results for CMC surfaces in $\mathbb{H}^3$ can be found in 
\cite{KorKMS} and \cite{LeR}.  

We shall now prepare to give a formal statement and proof of the maximum 
principle for CMC surfaces.  For the sake of simplicity we shall at first 
assume that the ambient space is $\mathbb{R}^3$.  However, the 
arguments here will require only minor changes to become applicable for other 
ambient spaces as well.  For example, the arguments when the ambient 
space is $\mathbb{H}^3$ are very similar, and we will make some remarks 
about how to prove the $\mathbb{H}^3$ case in the final 
section of this chapter.  As the results we have given here are for 
$\mathbb{R}^3$ and $\mathbb{H}^3$, we shall restrict ourselves to a discussion 
of only those two cases.  

First we give some preliminaries on the maximum principle for 
elliptic equations in the next two sections.  Much of this material follows 
\cite{PW}.  

\begin{remark}
In this chapter, we choose to 
use $x$ and $x_j$ to represent independent variables, and symbols 
such as $a_{ij},b_j,f,f_j,\hat f,\hat f_j,g,g_j,h,u$ to represent 
dependent functions, 
which is different from the notations in the other chapters of this 
text.  This seems appropriate, however, since this chapter deals 
with objects of 
general diminesion, not just $2$-dimensional surfaces, and these 
notational choices are more 
standard in the general dimensional case.  
\end{remark}

\subsection{The maximum principle for elliptic equations 
of a single variable}

In order to get some intuition about the maximum principle for 
elliptic equations, we state and prove various versions of it 
in the case that there is only one independent variable.  

Let us begin with the simplest possible version of the maximum principle.  
We first consider the case that $u$ is a smooth function 
\[ u(x):[a,b] \rightarrow \mathbb{R} \] defined on the closed 
bounded interval $[a,b] \in \mathbb{R}$, and 
$L$ is the operator \[ L(u) = u^{\prime\prime} + g(x) 
u^{\prime} \] defined on functions $u$ as above, where $g(x)$ is a bounded
smooth function on $[a,b]$, and $\prime$ represents the derivative with 
respect to $x \in [a,b]$.  We now state the simplest possible version of 
the maximum principle: 

\begin{lemma}\label{lem:easymp}
(Simplified $1$-dimensional maximum principle) Let $u$, $g$ and 
$L$ be as above.  
If $L(u) > 0$ on $[a,b]$, then $u$ can attain its maximum value in $[a,b]$ 
only at the points $x=a$ or $x=b$.
\end{lemma}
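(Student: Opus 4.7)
The plan is to proceed by contradiction and exploit the elementary first and second derivative tests from single-variable calculus, which is really all that is at play in this simplified one-dimensional setting. I would suppose that the maximum of $u$ on $[a,b]$ is attained at some interior point $x_0 \in (a,b)$ and then derive a contradiction with the hypothesis $L(u) > 0$.

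First I would record the two standard consequences of $x_0$ being an interior maximum of the smooth function $u$: namely, $u'(x_0) = 0$ (the critical point condition) and $u''(x_0) \le 0$ (the second derivative test for a maximum). Both follow immediately from the definition of maximum and the smoothness of $u$, without using the coefficient $g$ at all.

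Then I would plug these two facts into the definition of $L$ at $x_0$:
\[
L(u)(x_0) = u''(x_0) + g(x_0)\, u'(x_0) = u''(x_0) + g(x_0)\cdot 0 = u''(x_0) \le 0.
\]
This directly contradicts the assumption $L(u) > 0$ on $[a,b]$, proving that the maximum cannot be attained at any interior point and must therefore occur at $x=a$ or $x=b$.

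There is no real obstacle here: boundedness of $g$ is not even needed for this strict-inequality version, and the only role played by the first-order coefficient is that it gets annihilated by $u'(x_0)=0$. The lemma is essentially a one-line calculus argument, and its purpose in the exposition is clearly to set the stage for the harder cases to follow, where the hypothesis $L(u)>0$ must be weakened to $L(u)\ge 0$ (requiring a genuine perturbation argument) and eventually extended to several independent variables.
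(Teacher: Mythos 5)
Your proof is correct and is essentially identical to the one in the paper: both argue by contradiction at an interior maximum point $c$, using $u'(c)=0$ and $u''(c)\le 0$ to conclude $L(u)(c)\le 0$. Your added observation that boundedness of $g$ is not needed for this strict-inequality version is accurate, though the paper does not remark on it.
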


\begin{proof}
Suppose that $u$ attains a local maximum at a point $c \in (a,b)$.  Then 
$u^{\prime}(c) = 0$ and $u^{\prime\prime}(c) \leq 0$, so $L(u)(c) \leq 0$, 
a contradiction.
\end{proof}

The above result was particularly easy, because we made the strong 
assumption that $L > 0$.  But there is a similar result in the case that 
we only assume $L \geq 0$, and then the proof is slightly more subtle 
(and in the application to CMC surfaces we have in mind, we 
will indeed only know that $L \geq 0$).  
In this case, $u$ can attain its maximum in the interior of $[a,b]$, but 
if it does, then $u$ must be a constant function: 

\begin{lemma}\label{lem:1dimlmp}
($1$-dimensional maximum principle) Let $u$, $g$ and $L$ be as above.  
Suppose that $L(u) \geq 0$ on $[a,b]$.  
If $u \leq M$ on $[a,b]$ for some constant $M \in \mathbb{R}$ and if there 
exists some $c \in (a,b)$ such that $u(c)=M$, then $u(x)=M$ for all $x 
\in [a,b]$.
\end{lemma}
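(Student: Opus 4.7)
I would argue by contradiction, following the classical Hopf-style strong maximum principle argument specialized to one dimension: build an exponential barrier function, apply the already-proven simplified maximum principle (Lemma \ref{lem:easymp}) on a suitable subinterval where strict inequality $L(h)>0$ holds, and then exploit the vanishing first derivative at an interior maximum of $u$ to reach a contradiction.

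\textbf{Plan.} Suppose for contradiction that $u \not\equiv M$, so there exists $d \in [a,b]$ with $u(d) < M$; by symmetry about $c$, I may assume $d > c$. Set
\[ x_1 = \sup\{\, x \in [c,d] \, : \, u(x) = M \,\}. \]
Then $c \leq x_1 < d$, continuity forces $u(x_1) = M$, and $u < M$ on $(x_1, d]$. Crucially, $x_1 \in (a,b)$. Fix any $x_0 \in (x_1, d]$, let $R = x_0 - x_1$, set $\bar x = (x_0 + x_1)/2$, and work on $[\bar x, x_1]$.

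\textbf{Barrier construction.} Define
\[ h(x) = e^{-\alpha(x - x_0)^2} - e^{-\alpha R^2}, \]
with parameter $\alpha > 0$ to be chosen. Then $h(x_1) = 0$ and $h > 0$ on $[\bar x, x_1)$. A direct computation gives
\[ L(h) = 2\alpha \, e^{-\alpha(x-x_0)^2}\bigl[\, 2\alpha(x-x_0)^2 - 1 - g(x)(x-x_0) \,\bigr]. \]
Because $g$ is bounded on $[a,b]$ and $|x - x_0| \geq R/2$ throughout $[\bar x, x_1]$, I can choose $\alpha$ large enough (depending on $R$ and $\sup_{[a,b]}|g|$) to make $L(h) > 0$ on $[\bar x, x_1]$.

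\textbf{Contradiction.} Form $w = u + \epsilon h$ with $\epsilon > 0$ to be chosen, so that $L(w) \geq \epsilon \, L(h) > 0$ on $[\bar x, x_1]$. At the endpoints: $w(x_1) = M$, while $w(\bar x) = u(\bar x) + \epsilon h(\bar x) < M$ once $\epsilon$ is taken small enough (since $u(\bar x) < M$). Lemma \ref{lem:easymp} then implies the maximum of $w$ on $[\bar x, x_1]$ is attained only at an endpoint, hence at $x_1$, and equals $M$. On the other hand, $u$ attains its global maximum $M$ at the interior point $x_1 \in (a,b)$, so $u'(x_1) = 0$; combined with $h'(x_1) = -2\alpha R \, e^{-\alpha R^2} < 0$, this yields $w'(x_1) = \epsilon \, h'(x_1) < 0$. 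But then $w(x_1 - \delta) > w(x_1) = M$ for all sufficiently small $\delta > 0$, contradicting that $M$ is the maximum of $w$ on $[\bar x, x_1]$.

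\textbf{Main obstacle.} The delicate point is the quantitative balance in $L(h)$: the first-order term $g(x)h'(x)$ must be dominated by the second-order term $h''(x)$ throughout the working interval. This is precisely why I center the exponential at $x_0$ (outside the zero set of $u - M$) rather than at $x_1$: it keeps $(x - x_0)^2$ bounded below by $R^2/4$ on $[\bar x, x_1]$, which lets the quadratic growth $4\alpha^2 (x-x_0)^2$ absorb the linear term uniformly once $\alpha$ is large. After that, the endpoint derivative comparison is immediate, and Lemma \ref{lem:easymp} closes the argument.
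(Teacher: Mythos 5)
Your overall strategy is sound and genuinely different from the paper's: you run the one-dimensional Hopf boundary-point argument with a Gaussian barrier centered at a point $x_0$ where $u<M$, working only on the short interval between $x_1$ and $\bar x$, whereas the paper uses the monotone barrier $y(x)=e^{\alpha(x-c)}-1$ on all of $[a,d]$ and concludes directly from Lemma \ref{lem:easymp} that $w=u+\epsilon y$ cannot have an interior maximum (no derivative computation at the touching point is needed there). Your setup through the application of Lemma \ref{lem:easymp} is correct: $x_1\in(a,b)$, $u(\bar x)<M$, $L(h)>0$ for large $\alpha$, and the maximum of $w$ on the working interval is $M$, attained at $x_1$.

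The final step, however, fails as written because of a sign error. Since $x_1-x_0=-R$, you have $h'(x_1)=-2\alpha(x_1-x_0)e^{-\alpha R^2}=+2\alpha R\,e^{-\alpha R^2}>0$, not $<0$. Consequently $w'(x_1)=\epsilon h'(x_1)>0$, and your stated conclusion $w(x_1-\delta)>M$ is false; worse, $x_1-\delta$ lies \emph{outside} the interval $[x_1,\bar x]$ on which you proved $w\le M$ (there $h<0$, so in fact $w(x_1-\delta)<M$), so even with your sign the inequality would contradict nothing you established. The repair is immediate and uses the same ingredients: from $u'(x_1)=0$ and $h'(x_1)>0$ you get $w'(x_1)>0$, hence $w(x_1+\delta)>w(x_1)=M$ for small $\delta>0$, and $x_1+\delta$ \emph{does} lie in the working interval where $w\le M$ --- contradiction. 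With that one-line correction (and the interval written as $[x_1,\bar x]$ rather than $[\bar x,x_1]$), your proof is complete and is an acceptable, if slightly heavier, alternative to the paper's argument.
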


\begin{proof}
Suppose there exists a $c \in (a,b)$ such that $u(c)=M$ and there 
exists a $d \in (a,b)$ such that $u(d) < M$.  Assume for now that $d > 
c$.  Because $g$ is bounded, we may choose a constant 
$\alpha >\max_{x \in [a,b]} \{|g(x)|\}$, and then we 
define $y(x) = e^{\alpha (x-c)} -1$.  
Note that $L(y(x)) > 0$.  It is possible to 
choose an $\epsilon$ such that $0 < \epsilon < \frac{M-u(d)}{y(d)}$, 
and then we define $w(x) = u+\epsilon y$.  $y$ is negative on 
$(a,c)$, so $w<M$ on $(a,c)$.  Note that $w(c)=M$ and $w(d)<M$.  So $w$ has 
an interior maximum in $(a,d)$ and $L(w)>0$.  This contradicts Lemma 
\ref{lem:easymp}.

In the case that $d<c$, we may use $y=e^{\alpha (c-x)}-1$ instead of
$y=e^{\alpha (x-c)}-1$ and produce a contradiction to Lemma 
\ref{lem:easymp} in the same way.  
\end{proof}

Now we consider a more general operator of the form 
\[ (L+h)(u) := u^{\prime\prime} + g u^{\prime} + h u \; , \] 
where $h=h(x)$ is a bounded smooth function on $[a,b]$.  Then the 
condition $L(u) \geq 0$ no longer implies that $u$ attains its maximum 
at either $x=a$ or $x=b$.  Here are two counterexamples: 

(1) Let $[a,b]=[0,\pi]$, let $h$ be identically $1$, let $g$ be 
identically $0$, and let 
$u=\sin(x)$.  Then $(L+h)(u)=u^{\prime\prime} + u=0$, and $u$ has an 
interior maximum of value $1$ at $x=\frac{\pi}{2}$ and 
is not maximized at the endpoints $a$ and $b$.  

(2) Let $[a,b]=[-1,1]$, let $h$ be identically $-1$, let $g$ be 
identically $0$, and let 
$u=-\cosh(x)$.  Then $(L+h)(u)=u^{\prime\prime} - u=0$, and $u$ has an 
interior maximum of value $-1$ at $x=\frac{\pi}{2}$ and is not 
maximized at the endpoints $a$ and $b$.  

These two examples show that nonzero $h$ can cause the operator 
$L+h$ to not satisfy the maximum principle, regardless of whether 
$h$ is positive or negative.  However, if 
we assume $h \leq 0$ and $\max_{x \in [a,b]}(u) \geq 0$, then we 
still have a maximum principle, as we now show: 

\begin{lemma}\label{lem:modeasymp}
(Modified simplified $1$-dimensional maximum principle) 
If $h \leq 0$ and $(L+h)(u) > 0$ 
on $[a,b]$, then $u$ cannot have a nonnegative maximum in the interior 
of $[a,b]$.  
\end{lemma}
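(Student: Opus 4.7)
The plan is to mimic the proof of Lemma \ref{lem:easymp}, but with careful bookkeeping of signs to handle the extra zero-order term $hu$. The key observation is that the hypothesis $h \le 0$ combined with nonnegativity of the maximum value of $u$ ensures that the zero-order term contributes with the ``right'' sign, i.e.\ it does not destroy the sign contradiction used in the basic argument.

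First, I would argue by contradiction: suppose $u$ attains a nonnegative maximum at some interior point $c \in (a,b)$, so that $u(c) \ge 0$ and $u(c) \ge u(x)$ for all $x \in [a,b]$. By elementary calculus, since $c$ is an interior local maximum of the smooth function $u$, we have $u'(c) = 0$ and $u''(c) \le 0$. Plugging these into the operator gives
\[
(L+h)(u)(c) \;=\; u''(c) + g(c)\,u'(c) + h(c)\,u(c) \;=\; u''(c) + h(c)\,u(c).
\]
Now the first summand satisfies $u''(c) \le 0$, and the second summand satisfies $h(c)\,u(c) \le 0$ because $h(c) \le 0$ (by hypothesis) and $u(c) \ge 0$ (by the nonnegativity assumption on the maximum). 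Hence $(L+h)(u)(c) \le 0$, which directly contradicts the strict inequality $(L+h)(u) > 0$ on $[a,b]$. This completes the proof.

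There is essentially no obstacle here: the lemma is the natural extension of Lemma \ref{lem:easymp}, and the two sign conditions ($h \le 0$ and maximum value nonnegative) are designed precisely to make the term $h(c)\,u(c)$ nonpositive so that the same first- and second-derivative test at an interior maximum yields the contradiction. The two counterexamples given just before the lemma show that both hypotheses are essential: dropping $h \le 0$ fails via example (1), while dropping nonnegativity of the max fails via example (2). No exponential comparison function (as in the proof of Lemma \ref{lem:1dimlmp}) is needed, because we have strict inequality $(L+h)(u) > 0$ rather than the weaker $(L+h)(u) \ge 0$.
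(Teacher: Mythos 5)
Your proof is correct and follows exactly the same route as the paper: by contradiction, at an interior nonnegative maximum $c$ one has $u'(c)=0$, $u''(c)\le 0$, and $h(c)u(c)\le 0$, so $(L+h)(u)(c)\le 0$, contradicting the strict positivity hypothesis. The paper's own proof is precisely this argument, stated more tersely.
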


\begin{proof}
Suppose that $c$ is an interior point of $[a,b]$ where $u$ has a 
nonnegative local maximum.  Then $u^{\prime}(c) = 0$, 
$u^{\prime\prime}(c) \leq 0$, $h(c)u(c) \leq 0$ imply 
$(L+h)(u) \leq 0$, a contradiction.  
\end{proof}

Again, if we only have $(L+h)(u) \geq 0$ then this statement 
above (Lemma \ref{lem:modeasymp}) is 
not true, but again the only exceptions are when $u$ is constant.  

\begin{lemma}\label{lem:mod1dimlmp}
(Modified $1$-dimensional maximum principle I) 
If $u$ satisfies $(L+h)(u) \geq 0$ with $h \leq 0$ on $[a,b]$, then if 
$u$ assumes a nonnegative maximum value $M$ at an interior point 
$c \in (a,b)$, then $u$ is identically equal to $M$.
\end{lemma}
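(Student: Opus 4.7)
The plan is to mirror the proof of Lemma \ref{lem:1dimlmp}, modifying the auxiliary exponential comparison function so as to accommodate the extra $h u$ term in $(L+h)$, and then invoking Lemma \ref{lem:modeasymp} in place of Lemma \ref{lem:easymp} at the very end. I would argue by contradiction: suppose there exists $d \in (a,b)$ with $u(d) < M$. Without loss of generality assume $d > c$ (the case $d < c$ is handled identically by replacing $e^{\alpha(x-c)}$ with $e^{\alpha(c-x)}$ in the construction below).

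Next I would introduce the barrier function
\[ y(x) = e^{\alpha(x-c)} - 1 \]
for a constant $\alpha > 0$ to be chosen. A direct computation gives
\[ (L+h)(y) = \left(\alpha^2 + g(x)\alpha + h(x)\right) e^{\alpha(x-c)} - h(x). \]
Since $g$ and $h$ are bounded on $[a,b]$, I can fix $\alpha$ large enough that $\alpha^2 + g(x)\alpha + h(x) > 0$ on all of $[a,b]$. Combined with the hypothesis $h \leq 0$, which forces $-h \geq 0$, this gives $(L+h)(y) > 0$ on $[a,b]$. Note also that $y(c) = 0$, that $y < 0$ on $[a,c)$, and that $y(d) > 0$.

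Then I would choose $\epsilon > 0$ small enough that $\epsilon < (M - u(d))/y(d)$, and set $w(x) = u(x) + \epsilon y(x)$. By linearity, $(L+h)(w) = (L+h)(u) + \epsilon(L+h)(y) > 0$ on $[a,b]$. Evaluating $w$ at the relevant points: $w(c) = M \geq 0$; on $[a,c)$, since $y < 0$ and $u \leq M$, we get $w < M$; and at $x=d$ the choice of $\epsilon$ yields $w(d) < M$. Hence $w$ attains a maximum of value at least $M \geq 0$ somewhere in the open interval $(a,d)$, so $w$ has a nonnegative interior maximum in $[a,d]$. But $(L+h)(w) > 0$ there, contradicting Lemma \ref{lem:modeasymp} applied on the subinterval $[a,d]$.

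The main obstacle is designing $y$ so that $(L+h)(y) > 0$ rather than merely $\geq 0$; this is what lets us reuse the strict-inequality maximum principle Lemma \ref{lem:modeasymp}. The delicate point is that the $h$ term cannot be controlled by sign alone, so one must exploit boundedness of $h$ and absorb it into the quadratic growth in $\alpha$, while simultaneously relying on the nonnegativity of $-h$ (which is where the sign hypothesis $h \leq 0$ enters) to handle the constant term $-h$ coming from differentiating the constant $-1$ inside $y$. Everything else in the argument is a bookkeeping exercise inherited from the proof of Lemma \ref{lem:1dimlmp}.
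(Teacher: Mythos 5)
Your proof is correct and follows essentially the same route as the paper's: the same barrier $y(x)=e^{\alpha(x-c)}-1$, the same perturbation $w=u+\epsilon y$ with $0<\epsilon<(M-u(d))/y(d)$, and the same final contradiction with Lemma \ref{lem:modeasymp} on $[a,d]$. The only cosmetic difference is in how $\alpha$ is chosen (you impose $\alpha^2+g\alpha+h>0$ and then invoke $h\leq 0$ to absorb the $-h$ term, whereas the paper's displayed condition is exactly the statement $(L+h)(y)>0$); both choices are available by boundedness of $g$ and $h$, so this is not a substantive departure.
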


\begin{proof}
Assume $M = \max_{x \in [a,b]}\{u\} \geq 0$ on $[a,b]$.  Assume 
there exists an 
interior point $c$ such that $u(c) = M$.  Also, assume there exists an 
interior point $d$ such that $u(d) < M$.  (Suppose for now that $d>c$.)
Because $g$ and $h$ are bounded, we can choose an $\alpha \in \mathbb{R}$ so 
that \[ \alpha^{2} + \alpha g + h 
(1-e^{-\alpha(x-c)}) > 0 \] for all $x \in [a,b]$.  
Then define $y(x) = e^{\alpha(x-c)}-1$, and note that 
$(L+h)(y) > 0$ on $[a,b]$.  Set $w=u+\epsilon y$ for some $\epsilon$ such that 
$0 < \epsilon < \frac{M-u(d)}{y(d)}$.  As 
$w<M$ on $(a,c)$, $w(c) = M$, $w(d) <M$, 
we have that $w$ has an interior maximum point in $(a,d)$.  Then, since 
$(L+h)(w) > 0$, we have a contradiction to Lemma \ref{lem:modeasymp}.  

Again, if $d<c$, we use $y = e^{\alpha(c-x)}-1$ instead 
of $y = e^{\alpha(x-c)}-1$. 
\end{proof}

Now let us consider a different modification of the maximum 
principle.  Here there 
will be no condition on the sign of $h$ (although 
$h$ is still assumed to be smooth and 
bounded).  Instead we will assume 
that $u$ attains 
a maximum value of precisely $0$ in the interior of the domain.  We shall also 
assume that $u$ is a real analytic function of the independent variable $x$.  

\begin{lemma}\label{lem:mod1dimlmpII}
(Modified $1$-dimensional maximum principle II) 
If a real analytic 
function $u \leq 0$ on $[a,b]$ satisfies $(L+h)(u) \geq 0$, and if 
$u(c)=0$ at an interior point 
$c \in (a,b)$, then $u$ is identically equal to $0$.  
\end{lemma}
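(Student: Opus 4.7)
The plan is to reduce to Lemma~\ref{lem:mod1dimlmp}, whose proof already handles the case when the zeroth-order coefficient of the operator is nonpositive. Since the present hypothesis places no sign restriction on $h$, the main step is to replace $h$ by a substitute that is both nonpositive and compatible with the differential inequality once the sign information $u \leq 0$ is used.

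To this end I would introduce
\[ \tilde h(x) := \min\{h(x),0\} \;, \]
which is bounded on $[a,b]$ and satisfies $\tilde h \leq 0$ pointwise. The key computation is then
\[ (L+\tilde h)(u) \;=\; (L+h)(u) \;+\; (\tilde h - h)\,u \;\geq\; 0 \;. \]
Indeed, $(L+h)(u) \geq 0$ by assumption; the factor $\tilde h - h$ is identically nonpositive, since it equals $0$ where $h \leq 0$ and equals $-h \leq 0$ where $h > 0$; and $u \leq 0$ on $[a,b]$, so the product $(\tilde h - h)\,u$ is nonnegative. Thus $u$ satisfies $(L+\tilde h)(u) \geq 0$ with $\tilde h \leq 0$ bounded, which is exactly the setting of Lemma~\ref{lem:mod1dimlmp}.

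Because $u \leq 0$ on $[a,b]$ and $u(c)=0$ at the interior point $c$, the value $M = 0$ is a nonnegative maximum of $u$ on $[a,b]$, attained at an interior point. Applying Lemma~\ref{lem:mod1dimlmp} to the inequality $(L+\tilde h)(u) \geq 0$ then forces $u \equiv 0$ on $[a,b]$, which is the desired conclusion.

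The main obstacle, such as it is, is in spotting the right auxiliary coefficient; after one decides to set $\tilde h = \min(h,0)$, the rest is a one-line check. Note that real-analyticity of $u$ is not actually needed in this argument — $C^2$ regularity suffices — so the hypothesis is somewhat stronger than necessary; the author likely included it to match the regularity available in the CMC application that motivates this section, or in anticipation of an alternate proof that derives $u\equiv 0$ by showing that every Taylor coefficient of $u$ at $c$ must vanish.
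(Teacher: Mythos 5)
Your proof is correct, but it is a genuinely different argument from the one in the text. The proof given here exploits the real-analyticity hypothesis directly: writing the Taylor expansion of $u$ at $c$, the lowest nonvanishing term must have even degree with negative coefficient (since $u\leq 0$ and $u(c)=u'(c)=0$), and then the leading term of $(L+h)(u)$ near $c$ is $(\ell+2)(\ell+1)a_2(x-c)^{\ell}<0$, contradicting $(L+h)(u)\geq 0$. You instead absorb the unsigned part of $h$ using the sign of $u$: since $\tilde h-h\leq 0$ and $u\leq 0$, the product $(\tilde h-h)u$ is nonnegative, so $(L+\tilde h)(u)\geq 0$ with $\tilde h=\min(h,0)\leq 0$, and Lemma \ref{lem:mod1dimlmp} applies at the interior nonnegative maximum $M=0$. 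This is a standard and clean reduction, and as you note it needs only $C^2$ regularity rather than analyticity, so it proves a slightly stronger statement. One small technical remark: $\tilde h=\min(h,0)$ is bounded and continuous but generally not smooth, whereas the operators in this section are set up with smooth coefficients; this is harmless because the proof of Lemma \ref{lem:mod1dimlmp} uses only boundedness of $g$ and $h$, but if you want to stay literally within the stated hypotheses you could instead take $\tilde h = h - C$ for any constant $C\geq \sup_{[a,b]}h$, which is smooth, nonpositive, and satisfies $(L+\tilde h)(u)=(L+h)(u)-Cu\geq 0$ by the same sign argument. The trade-off between the two proofs is that yours is more elementary and more general, while the paper's Taylor-expansion argument is the one-variable model for Theorem \ref{thm:mod2dimlmpII} (the two-dimensional version attributed to Hopf), which is presumably why the author chose to present it and to carry the analyticity hypothesis.
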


\begin{proof}
Suppose that $u$ is not identically zero.  
Because $u(c)=u^\prime(c)=0$, we can expand $u$ at $x=c$ as 
\[ u = \sum_{j \geq 2} a_j (x-c)^{j+\ell} \] for some nonnegative 
integer $\ell$ and 
some $a_2 \neq 0$.  Because $u \leq 0$, we have 
\begin{equation}\label{modIIresult}
\ell \;\; \text{is an even integer, and } \, a_2 < 0 \; . \end{equation}  
Then $L(u)$ expands as 
\[ L(u) = (\ell+2)(\ell+1) a_2 (x-c)^{\ell} (1+\mathcal{O}(x-c)) \; . \]  
But then \eqref{modIIresult} implies $L(u) < 0$ for $x$ close to (but not 
equal to) $c$.  This contradiction proves the lemma.  
\end{proof}

\subsection{The maximum principle for elliptic equations 
in $n$ variables}

Now we consider the $n$-dimensional case, which is entirely analogous to the 
$1$-dimensional case above.  Let $(x_1,...,x_n)$ denote points 
in $\mathbb{R}^n$ 
and let $\mathcalD$ be an open bounded set in $\mathbb{R}^n$ with closure 
$\overline{\mathcalD}$.  We now consider a smooth function 
\[ u(x_1,...,x_n): \overline{\mathcalD} \to \mathbb{R} \; , \]
and we define the operator $L$ by \[ L(u) = \sum_{i,j=1}^n a_{ij}(x_1,...,x_n) 
\frac{\partial^{2}}{\partial x_{i} \partial x_{j}} + 
\sum_{j=1}^n b_{j}(x_1,...,x_n) \frac{\partial}{\partial x_j} \] defined on 
functions $u$ as above, where the coefficient functions 
\[ a_{ij}(x_1,...,x_n) \; , \;\; b_{j}(x_1,...,x_n) \] are bounded 
smooth functions on $\overline{\mathcalD}$, and 
$\frac{\partial}{\partial x_j}$ 
represents the partial derivative with respect to $x_j$.  

\begin{defn}
$L$ is {\em elliptic} in $\mathcalD$ if $(a_{ij})_{i,j=1}^n$ 
is a positive definite $n \times n$ matrix for all $x \in \mathcalD$; that is, 
if at each point in $\mathcalD$, \[ 
(y_1,...,y_n) (a_{ij}) (y_1,...,y_n)^t \geq \mu(x_1,...,x_n) \sum_{j=1}^n 
y_{j}^{2} \] for some positive function $\mu=\mu(x_1,...,x_n)$ 
on $\mathcalD$, and any $y_j \in \mathbb{R}$.  

$L$ is {\em uniformly elliptic} on $\overline{\mathcalD}$ if $\mu(x_1,...,x_n) 
\geq \mu_{0} > 0$ for all points in $\overline{\mathcalD}$, where $\mu_{0}$ is 
a fixed constant.  
\end{defn}

This definition is a natural generalization of the 
Laplacian $\triangle_1 u(x) = u^{\prime\prime}(x)$ in 
the definition of $L$ in the $1$-dimensional case, because of the following 
easily-computed fact: 
$(a_{ij})$ is positive definite at a point $p \in \overline{\mathcalD}$ if and 
only if there exists a linear 
transformation 
$\mathcal{A}:(x_1,...,x_n) \rightarrow (\tilde{x}_1,...,\tilde{x}_n)$ 
such that the second order part 
\[ \sum_{i,j=1}^n a_{ij}(x_1,...,x_n) 
\frac{\partial^{2}}{\partial x_{i} \partial x_{j}} \] 
of $L$ becomes the $n$-dimensional Laplacian \[ 
\triangle_n=
\sum_{j=1}^n \frac{\partial^{2}}{\partial \tilde x_{j}^2} \] at $\mathcal{A}(p)$.  

We state the following two results without proof, and refer the reader to 
\cite{GT}, \cite{PW} for full proofs.  However, we note that the 
ideas behind the 
proofs are like those in the above proofs for $1$ independent variable.  But 
in the case of $n$ independent variables, there is more bookkeeping 
involved in the computations, as expected by the greater number of 
independent variables.  

\begin{theorem}\label{thm:MPfull}
($n$-dimensional maximum principle) 
Let $u$ and $L$ be as above.  Suppose that $L(u) \geq 0$ and that $L$ is 
uniformly elliptic on $\overline{\mathcalD}$.  If $u$ attains a 
maximum value at a point in $\mathcalD$, then $u$ is a constant function.  
\end{theorem}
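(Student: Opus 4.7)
The plan is to reduce Theorem \ref{thm:MPfull} to a strict version and then exploit an exponential barrier function, following the classical Hopf strategy (the $n$-variable analogue of Lemmas \ref{lem:easymp} and \ref{lem:1dimlmp}). I would first record the \emph{strict version}: if $L(u) > 0$ throughout $\mathcalD$, then $u$ admits no interior maximum.  Indeed, at any interior maximum $p$ one has $\nabla u(p) = 0$ and Hessian $(\partial_i \partial_j u(p))$ negative semidefinite. Since $(a_{ij}(p))$ is positive definite we can write it as $P P^t$ for invertible $P$, and then $L(u)(p) = \operatorname{tr}\bigl( P^t (\partial_i \partial_j u(p)) P \bigr) \leq 0$, contradicting $L(u)(p) > 0$.

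For the non-strict hypothesis $L(u) \geq 0$, suppose $u$ attains its maximum value $M$ at an interior point of $\mathcalD$ but is not identically $M$. Then $\mathcalD^- := \{x \in \mathcalD \mid u(x) < M\}$ is nonempty and open, so I can choose $x_1 \in \mathcalD^-$ close enough to $\{u = M\}$ that $R := \operatorname{dist}(x_1, \{u = M\}) < \operatorname{dist}(x_1, \partial \mathcalD)$.  The ball $B_R(x_1)$ then lies inside $\mathcalD^-$, its closure inside $\mathcalD$, and $\partial B_R(x_1)$ contains some point $z$ with $u(z) = M$. Since $u \leq M$ globally and $z$ lies in the interior of $\mathcalD$, the point $z$ is itself an interior maximum of $u$, so $\nabla u(z) = 0$.

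Next I would introduce the classical barrier
\[
v(x) = e^{-\alpha |x - x_1|^2} - e^{-\alpha R^2},
\]
positive on $B_R(x_1)$ and vanishing on $\partial B_R(x_1)$. A direct differentiation gives
\[
L(v) = e^{-\alpha |x - x_1|^2} \left( 4 \alpha^2 \sum_{i,j} a_{ij} (x - x_1)_i (x - x_1)_j - 2\alpha \sum_i a_{ii} - 2\alpha \sum_j b_j (x - x_1)_j \right),
\]
and on the annulus $A := B_R(x_1) \setminus \overline{B_{R/2}(x_1)}$, uniform ellipticity (with constant $\mu_0$) forces the $\alpha^2$-term to dominate once $\alpha$ is chosen large enough, producing $L(v) > 0$ throughout $A$. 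The inner sphere $\partial B_{R/2}(x_1)$ is a compact subset of $\mathcalD^-$, so $u \leq M - \eta$ there for some $\eta > 0$; choosing $\epsilon > 0$ small enough that $\epsilon \max v < \eta$, the function $w := u + \epsilon v$ satisfies $w < M$ on the inner sphere, $w = u \leq M$ on the outer sphere with $w(z) = M$, and $L(w) \geq \epsilon L(v) > 0$ on $A$.

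The contradiction is extracted near $z$. Moving from $z$ a small distance $t > 0$ along the inward normal $-\nu$ into $A$, one has $u = M + O(t^2)$ (because $\nabla u(z) = 0$) while $\epsilon v = 2 \epsilon \alpha R\, t\, e^{-\alpha R^2} + O(t^2)$ (because $\partial_\nu v(z) = -2\alpha R\, e^{-\alpha R^2}$); hence $w > M$ for all sufficiently small $t > 0$. Thus the maximum of $w$ over the compact set $\overline{A}$ strictly exceeds $M$, and since $w \leq M$ on all of $\partial A$, this maximum must be attained at an interior point of $A$. Applying the strict version to $L(w) > 0$ then yields the contradiction. The main obstacle, I expect, is calibrating the constants so the barrier works: the $\alpha$ needed to make $L(v) > 0$ hold uniformly on $A$ must be compatible with the subsequent choice of $\epsilon$, and both depend on the uniform ellipticity constant $\mu_0$ and the bounds on the coefficients $a_{ij}$ and $b_j$.
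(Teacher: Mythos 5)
Your proof is correct, and it fills a genuine gap: the paper does not prove Theorem \ref{thm:MPfull} at all, but merely states it and refers to \cite{GT} and \cite{PW}, remarking that the ideas are those of the one-variable Lemmas \ref{lem:easymp} and \ref{lem:1dimlmp}. What you have written is precisely the classical Hopf argument that those references carry out, i.e.\ the faithful $n$-dimensional analogue of the paper's one-dimensional scheme: the strict version via the second-derivative test (and your trace computation $L(u)(p)=\operatorname{tr}\bigl(P^t(\partial_i\partial_j u(p))P\bigr)\le 0$ is the clean way to see that positive definiteness of $(a_{ij})$ kills the second-order part at a maximum), followed by a comparison function that replaces the paper's $e^{\alpha(x-c)}-1$ by the radial barrier $e^{-\alpha|x-x_1|^2}-e^{-\alpha R^2}$ on an annulus touching the set $\{u=M\}$. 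The constant calibration you flag as the main obstacle is in fact unproblematic, because the choices are made in a fixed order with no circularity: $\alpha$ depends only on $\mu_0$, $R$, and the coefficient bounds, and $\epsilon$ is chosen afterwards from $\eta$ and $\max v$. The one hypothesis you should make explicit is connectedness of $\mathcalD$: the theorem as stated is false on a disconnected domain (take $u$ equal to two different constants on two components), and connectedness is used exactly at the step where you produce $x_1\in\mathcalD^-$ with $\operatorname{dist}(x_1,\{u=M\})<\operatorname{dist}(x_1,\partial\mathcalD)$ --- one obtains such $x_1$ by noting that $\mathcalD^-$ cannot be both open and closed in a connected $\mathcalD$, so it has a boundary point $y$ interior to $\mathcalD$ with $u(y)=M$, and one takes $x_1\in\mathcalD^-$ sufficiently close to $y$. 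With that caveat recorded, the argument is complete.
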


\begin{theorem}\label{thm:MPfullmod}
(Modified $n$-dimensional maximum principle I) 
Let $u$ and $L$ be as above.  Suppose that 
$(L+h)(u) = L(u) + h u \geq 0$ and that 
$L$ is uniformly elliptic on $\overline{\mathcalD}$, where 
$h \leq 0$ is bounded and smooth on $\overline{\mathcalD}$.  If 
$u$ attains a nonnegative maximum value at a 
point in $\mathcalD$, then $u$ is a constant function -- in 
particular, if $h$ is not identically zero, then $u$ must be 
identically zero.  
\end{theorem}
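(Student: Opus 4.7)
The plan is to mirror the ascent from Lemma \ref{lem:modeasymp} to Lemma \ref{lem:mod1dimlmp} in one variable, but replacing the explicit exponential barrier there by the radial Hopf barrier used in the standard $n$-dimensional strong maximum principle. First I would prove the strict-inequality analog: if $(L+h)u > 0$ with $h \leq 0$, then $u$ has no nonnegative interior maximum. At any hypothetical interior maximum $q$ of $u$ with $u(q) \geq 0$, one has $\nabla u(q) = 0$, and the Hessian of $u$ at $q$ is negative semidefinite. Because $(a_{ij}(q))$ is positive definite, simultaneous diagonalization forces $\sum_{i,j} a_{ij}(q)\, u_{x_i x_j}(q) \leq 0$; combined with $h(q) u(q) \leq 0$, this gives $(L+h)(u)(q) \leq 0$, contradicting the hypothesis.

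Next, assume only $(L+h)u \geq 0$ and that $u$ attains a nonnegative maximum $M \geq 0$ at some interior point but $u \not\equiv M$. Set $\mathcalD^- := \{u < M\}$, which is open, nonempty, and has nonempty boundary relative to $\mathcalD$. Pick $y \in \mathcalD^-$ closer to $\{u = M\}$ than to $\partial \mathcalD$, and enlarge a ball $B_R(y)$ until its closure first touches $\{u = M\}$ at some point $p \in \partial B_R(y)$; by construction $p$ is interior to $\mathcalD$, $u < M$ throughout $B_R(y)$, and $u(p) = M$. On the annulus $A := B_R(y) \setminus \overline{B_{R/2}(y)}$, introduce the Hopf barrier
\[ v(x) = e^{-\alpha |x - y|^2} - e^{-\alpha R^2}. \]
A direct computation gives
\[ L(v)(x) = e^{-\alpha |x-y|^2}\Bigl( 4 \alpha^2 \sum_{i,j} a_{ij}(x_i - y_i)(x_j - y_j) - 2\alpha \sum_i a_{ii} - 2\alpha \sum_i b_i(x_i - y_i) \Bigr), \]
and uniform ellipticity bounds the first sum below by $\mu_0 |x - y|^2 \geq \mu_0 R^2 / 4$ on $A$. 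Since $a_{ii}$, $b_i$, $h$, and $v$ are bounded, taking $\alpha$ large (depending on $R$, $\mu_0$, and the $C^0$-bounds on the coefficients) yields $(L+h)v > 0$ throughout $A$.

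Now form $w := (u - M) + \epsilon v$ on $\overline{A}$. Because $M \geq 0$ and $h \leq 0$, we have $-Mh \geq 0$, so
\[ (L+h)w = (L+h)u - Mh + \epsilon (L+h)v \geq \epsilon (L+h)v > 0 \]
on $A$. On $\partial B_R(y)$, $v = 0$ so $w = u - M \leq 0$, with equality at $p$. On $\partial B_{R/2}(y) \subset B_R(y) \subset \mathcalD^-$, compactness yields $u \leq M - \delta$ for some $\delta > 0$, so choosing $\epsilon$ small enough forces $w < 0$ on this inner boundary. Applying the strict-inequality step to $w$ (noting $w(p) = 0 \geq 0$) shows $w$ has no nonnegative interior maximum in $A$; therefore the maximum of $w$ on $\overline{A}$ equals $0$ and is attained at the boundary point $p$.

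But $p$ is an interior maximum of $u$ in $\mathcalD$, so $\nabla u(p) = 0$, and the outward normal derivative of $w$ at $p$ reduces to $\partial_\nu w(p) = \epsilon\, \partial_\nu v(p) = -2 \epsilon \alpha R\, e^{-\alpha R^2} < 0$, contradicting the fact that a function attaining a maximum at a boundary point of a smoothly bounded set must have nonnegative outward normal derivative there. Hence $u \equiv M$. The final clause of the theorem is then automatic: if $u \equiv M$ with $M \geq 0$, then $(L+h)u = hM \geq 0$, and if $h \not\equiv 0$ this forces $M = 0$, i.e.\ $u \equiv 0$. The hardest step is calibrating $\alpha$ and $\epsilon$ so that the three sign inequalities $(L+h)w > 0$ on $A$, $w < 0$ on $\partial B_{R/2}(y)$, and $w \leq 0$ on $\partial B_R(y)$ all hold simultaneously, and then ensuring that the boundary normal derivative calculation at $p$ genuinely produces the final contradiction.
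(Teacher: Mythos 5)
Your proof is correct. Note that the paper does not actually prove Theorem \ref{thm:MPfullmod}: it states it without proof, defers to \cite{GT} and \cite{PW}, and only remarks that the ideas mirror the one-variable lemmas. What you have written is precisely the classical E.~Hopf argument from those references, and it is a faithful $n$-dimensional upgrade of the paper's own proofs of Lemmas \ref{lem:modeasymp} and \ref{lem:mod1dimlmp}: the strict-inequality step (positive definite $(a_{ij})$ against a negative semidefinite Hessian gives $\sum a_{ij}u_{x_ix_j}\leq 0$) generalizes Lemma \ref{lem:modeasymp}, and the barrier $e^{-\alpha|x-y|^2}-e^{-\alpha R^2}$ on an annulus plays the role of $e^{\alpha(x-c)}-1$ on an interval. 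The one genuinely new ingredient you supply, which the one-dimensional template does not force on you, is the endgame: in $1$-D the paper finishes with an interior-maximum contradiction, whereas you correctly switch to the normal-derivative contradiction at the tangency point $p$ ($\partial_\nu w(p)\geq 0$ from maximality versus $\partial_\nu w(p)=\epsilon\,\partial_\nu v(p)<0$ since $\nabla u(p)=0$); your calibration of $\alpha$ then $\epsilon$ is in the right order and works. The only caveat worth recording is that your step ``pick $y\in\mathcalD^-$ closer to $\{u=M\}$ than to $\partial\mathcalD$'' implicitly uses connectedness of $\mathcalD$ (or restricts attention to the component containing the interior maximum); the theorem as stated for a general open bounded $\mathcalD$ is false without that hypothesis, so this is a defect of the statement rather than of your argument, but it deserves a sentence.
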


We also now state (without proof) a 
higher dimensional version of Lemma \ref{lem:mod1dimlmpII}, which could 
also be used to prove the maximum principle for 
CMC surfaces that follows.  We will not actually use it, as other 
forms of the maximum principle given here will suffice, but this next theorem 
is especially useful in proving the maximum principle for CMC surfaces 
when the ambient space is the $3$-sphere $\mathbb{S}^3$.  (We do not 
apply the maximum principle for CMC surfaces in $\mathbb{S}^3$ in this 
text.)  Since we would have two independent variables in the 
application of this theorem to CMC surfaces, we state the result here 
for only that case.  A proof can be found in H. Hopf's book \cite{Hopf}.  

\begin{theorem}\label{thm:mod2dimlmpII}
(Modified $2$-dimensional maximum principle II) 
Consider the operator 
\[ (L+h)(u) := \partial_{x_1} \partial_{x_1} u + \partial_{x_2} 
\partial_{x_2} u + 
g_1 \partial_{x_1} u + g_2 \partial_{x_2} u + h u \] for functions 
$u:\bar \mathcalD 
\to \mathbb{R}$ defined on the closure $\bar \mathcalD$ of an open bounded 
domain $\mathcalD$ of the $2$-dimensional $x_1x_2$-plane, where $g_1$, 
$g_2$ and $h$ are all smooth bounded functions defined on $\bar \mathcalD$.  
If a real analytic function $u \leq 0$ on $\bar \mathcalD$ satisfies 
$(L+h)(u) \geq 0$, and if $u(p)=0$ at a point 
$p \in \mathcalD$, then $u$ is identically equal to $0$.  
\end{theorem}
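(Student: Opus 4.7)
The plan is a direct generalization of the one-dimensional argument in Lemma \ref{lem:mod1dimlmpII}. Arguing by contradiction, I will suppose $u \not\equiv 0$ on $\bar\mathcalD$ and, after translating so that $p = 0$, study the Taylor expansion of $u$ at the origin. Real analyticity together with the connectedness of $\mathcalD$ forces $u$ to be nonzero in every neighborhood of $0$ (otherwise $u$ would vanish identically on $\bar\mathcalD$), so one can write
$$u = P_k + P_{k+1} + P_{k+2} + \cdots,$$
where each $P_n$ is a homogeneous polynomial of degree $n$ in $(x_1,x_2)$ and $k$ is the smallest index with $P_k \not\equiv 0$. Because $u$ attains its maximum value $0$ at the interior point $0$, we have $u(0)=0$ and $\nabla u(0)=0$, so $k\ge 2$.

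The goal is then to extract two structural properties of the leading term $P_k$. First, $P_k \le 0$ on all of $\mathbb{R}^2$: along the ray $x = r\theta$ with $|\theta|=1$ one has $u(r\theta) = r^k P_k(\theta) + O(r^{k+1})$, so the hypothesis $u\le 0$ near $0$ forces $P_k(\theta)\le 0$ for every unit vector, and homogeneity extends this to all of $\mathbb{R}^2$. Second, $\triangle P_k \ge 0$ everywhere. To see this I would expand $g_j(x) = g_j(0) + O(|x|)$ and $h(x) = h(0) + O(|x|)$ and keep track of homogeneous degrees: $\partial_{x_i}^2 P_k$ is homogeneous of degree $k-2$, the first-order terms $g_j\,\partial_{x_j}P_k$ are of order $|x|^{k-1}$, the zeroth-order term $h P_k$ is of order $|x|^k$, and the contributions from $P_{k+1}, P_{k+2}, \ldots$ begin at order $|x|^{k-1}$. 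Hence the homogeneous piece of degree $k-2$ of $(L+h)u$ is exactly $\triangle P_k$. Writing $\triangle P_k(r\theta) = r^{k-2} F(\theta)$, the inequality $(L+h)u \ge 0$ near $0$ forces $F(\theta)\ge 0$ for every $\theta$, for otherwise $(L+h)u$ would be strictly negative along some ray for all sufficiently small $r$; thus $\triangle P_k \ge 0$ on all of $\mathbb{R}^2$.

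Combining these two facts, $P_k$ is a subharmonic polynomial with $P_k \le 0$ and $P_k(0)=0$. Applying the strong maximum principle for subharmonic functions on any disk centered at $0$, the interior maximum $P_k(0)=0$ forces $P_k$ to be constant there, so $P_k\equiv 0$, contradicting the minimality of $k$. Hence every Taylor coefficient of $u$ at $p$ vanishes, and real analyticity together with the connectedness of $\mathcalD$ yields $u\equiv 0$ on $\bar\mathcalD$.

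The main obstacle is the degree-counting step in the second paragraph: one must verify cleanly that neither the smooth-coefficient corrections coming from $g_j$ and $h$ nor the higher Taylor pieces $P_{k+1}, P_{k+2}, \ldots$ can contribute to the homogeneous degree-$(k-2)$ piece of $(L+h)u$, so that the sign of $F(\theta)$ is genuinely controlled by $\triangle P_k$ alone. Once this bookkeeping is done, everything else (the Hopf-style one-variable remainder analysis, the strong maximum principle for subharmonic polynomials, and the final analytic-continuation step) is standard.
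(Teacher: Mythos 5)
Your proof is correct. Note that the paper gives no proof of Theorem \ref{thm:mod2dimlmpII} at all --- it is explicitly stated without proof, with a pointer to Hopf's book \cite{Hopf} --- so the only internal point of comparison is the paper's proof of the one-dimensional version, Lemma \ref{lem:mod1dimlmpII}, of which your argument is the natural two-variable extension. The skeleton is the same (isolate the lowest-order nonvanishing homogeneous Taylor term $P_k$ with $k\ge 2$, use $u\le 0$ to control its sign, and show the second-order part of the operator applied to it would violate $(L+h)(u)\ge 0$), but you correctly supply the one ingredient that is invisible in one variable: there the leading term $a(x-c)^m$ with $a<0$ and $m$ even immediately gives a negative second derivative, whereas in two variables $P_k$ need not have a sign-definite Laplacian a priori, and your replacement --- deduce $P_k\le 0$ and $\triangle P_k\ge 0$ separately by restricting to rays, then invoke the strong maximum principle for subharmonic functions at the interior maximum $P_k(0)=0$ to force $P_k\equiv 0$ --- is exactly right. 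The degree bookkeeping you flag as the main worry does go through: since $g_1,g_2,h$ are merely bounded and $\nabla u=O(|x|^{k-1})$, $u=O(|x|^k)$ near $p$, the first- and zeroth-order terms contribute $O(r^{k-1})$ along rays, so $(L+h)(u)(r\theta)=r^{k-2}\,\triangle P_k(\theta)+O(r^{k-1})$ uniformly in $\theta$, which is all the sign argument needs (no analyticity of the coefficients is required). The only statement worth tightening is the existence of $k$: if every Taylor coefficient of $u$ at $p$ vanished, real analyticity would make $u$ vanish on a neighborhood of $p$ and hence, by connectedness of the domain $\mathcalD$, identically --- which is precisely the contradiction hypothesis, so the expansion $u=P_k+P_{k+1}+\cdots$ is legitimate.
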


\subsection{Proof of the maximum principle for 
CMC surfaces in $\mathbb{R}^3$}  

Letting $\mathcalD$ be an open bounded domain in $\mathbb{R}^2$, 
and letting $f(x_1,x_2): \mathcalD \to \mathbb{R}$ be a smooth bounded 
function, we can consider the graph 
\[ \{ \hat{f}(x_1,x_2)=(x_1,x_2,f(x_1,x_2)) \in \mathbb{R}^3 \, | \, 
    (x_1,x_2) \in \mathcalD \} \] 
to be a smooth immersion $\hat f$ into $\mathbb{R}^3$.  
Choosing the unit normal to $\hat f$ to be the upward-pointing 
unit normal vector, 
we saw how to compute the mean curvature $H$ of this surface 
in Definition 1.3.5 in \cite{wisky}, 
as half the trace of the shape operator $S$.  
Because $\hat f$ is of the form $(x_1,x_2,f(x_1,x_2))$, one can 
easily compute that ($\delta_{ij}$ is the Kronecker delta function) 
\begin{equation}\label{eqn:Euclmeancurvature} 
H = \frac{1}{2}\mbox{trace}(S) = 
\frac{\sum_{i,j=1}^{2} f_{x_ix_j} (\delta_{ij}(1+
(f_{x_1})^2+(f_{x_2})^2)- f_{x_i}f_{x_j})}
{2(1+(f_{x_1})^2+(f_{x_2})^2)^{\frac{3}{2}}} \; , 
\end{equation} where 
$f_{x_i}$ denotes $\partial_{x_i} f$ and $f_{x_ix_j}$ denotes 
$\partial_{x_j}(\partial_{x_i} f)$.  

Now let $\hat{f}_1$ and $\hat{f}_2$ be two smooth oriented surfaces 
with boundary.  Suppose 
that the surface $\hat{f}_j$ can be written as a graph over a closed domain 
$\overline{\mathcalD}$ for $j=1,2$; that is, that 
\[ \hat{f}_j(x_1,x_2)=(x_1,x_2,f_j(x_1,x_2)) \] for 
$(x_1,x_2) \in \overline{\mathcalD}$ for some smooth bounded function 
$f_j: \overline{\mathcalD} \to \mathcal{R}$.  Furthermore, suppose that 
both $\hat{f}_1$ and $\hat{f}_2$ have the same constant mean curvature $H$ 
with respect to the orientations given by their upward pointing normals.  

\begin{defn}\label{defn:commontang}
We say that $\hat{f}_1$ {\em lies above} $\hat{f}_2$ if $f_1 \geq f_2$ for all 
points in $\overline{\mathcalD}$.  Then, if \[ 
p:=(x_1,x_2,f_1(x_1,x_2))=(x_1,x_2,f_2(x_1,x_2)) \] 
(i.e. $f_1(x_1,x_2)=f_2(x_1,x_2)$) for some 
point $(x_1,x_2) \in \overline{\mathcalD}$, and if one of the 
following two conditions 
\begin{enumerate}
\item $(x_1,x_2)$ is in the interior of $\overline{\mathcalD}$, or 
\item $(x_1,x_2)$ is in the boundary of $\overline{\mathcalD}$, and 
      the tangent planes of $\hat{f}_1$ and 
      $\hat{f}_2$ coincide at $p$, and furthermore the tangent lines of the 
      boundaries of $\hat{f}_1$ and $\hat{f}_2$ coincide at $p$
\end{enumerate}
holds, we say that $p$ is a 
{\em point of common tangency} of $\hat{f}_1$ and $\hat{f}_2$.  
\end{defn}

We are now ready to state the maximum principle for CMC 
surfaces in $\mathbb{R}^3$: 

\begin{proposition}\label{prop:maxprinc3} 
(The maximum principle for CMC surfaces in $\mathbb{R}^3$.)  
Let $\hat{f}_1$ and $\hat{f}_2$ be CMC $H$ graphs with respect 
to the orientation 
of upward pointing normals.  In particular, $H$ has 
the same value for both surfaces.  Suppose the following:
\newcounter{num}
\begin{list}%
{\arabic{num})}{\usecounter{num}\setlength{\rightmargin}{\leftmargin}}
\item $\hat{f}_1$ lies above $\hat{f}_2$.
\item $\hat{f}_1$ and $\hat{f}_2$ have a point $p$ of common tangency at which 
the first of the two enumerated items in Definition 
\ref{defn:commontang} holds.
\end{list}
Then $\hat{f}_1$ and $\hat{f}_2$ coincide in a neighborhood of $p$.
\end{proposition}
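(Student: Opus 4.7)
The plan is to reduce the problem to an application of Theorem \ref{thm:MPfull} by setting $u := f_1 - f_2$ and showing that $u$ satisfies a linear uniformly elliptic equation $Lu = 0$ on a small neighborhood of the projected tangency point. Since $\hat{f}_1$ lies above $\hat{f}_2$, we have $u \geq 0$ on $\overline{\mathcal{D}}$, and at the interior tangency point $p = (x_1^*, x_2^*, f_1(x_1^*, x_2^*))$ we have $u(x_1^*, x_2^*) = 0$. Thus $-u$ attains a maximum value $0$ at the interior point $(x_1^*, x_2^*)$, and once we produce the operator $L$, Theorem \ref{thm:MPfull} will force $-u$ (and hence $u$) to be constant on some neighborhood, giving $u \equiv 0$ there, which is the desired coincidence.

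To produce the linear equation, I would exploit the fact that both $f_1$ and $f_2$ satisfy the quasilinear PDE from Equation \eqref{eqn:Euclmeancurvature}, namely $\Phi(Df_j, D^2 f_j) = 2H$, where
\[
\Phi(q, r) = \frac{\sum_{i,j=1}^2 r_{ij}\bigl(\delta_{ij}(1+q_1^2+q_2^2) - q_i q_j\bigr)}{2(1+q_1^2+q_2^2)^{3/2}} \; .
\]
Since $H$ is the same constant for the two surfaces, subtracting gives $\Phi(Df_1, D^2 f_1) - \Phi(Df_2, D^2 f_2) = 0$. I would apply the fundamental theorem of calculus along the linear interpolation $f_t = t f_1 + (1-t) f_2$, $t \in [0,1]$, to rewrite this difference as
\[
\int_0^1 \frac{d}{dt} \Phi(Df_t, D^2 f_t) \, dt
= \sum_{i,j=1}^2 a_{ij}(x_1,x_2) \, u_{x_i x_j} + \sum_{j=1}^2 b_j(x_1,x_2) \, u_{x_j} = 0,
\]
where
\[
a_{ij}(x_1,x_2) = \int_0^1 \frac{\partial \Phi}{\partial r_{ij}}(Df_t, D^2 f_t) \, dt, \qquad
b_j(x_1,x_2) = \int_0^1 \frac{\partial \Phi}{\partial q_j}(Df_t, D^2 f_t) \, dt \; .
\]
Note that no zeroth-order term appears, precisely because $H$ is the same constant on both sides; this is what allows us to use Theorem \ref{thm:MPfull} rather than a sign-constrained variant.

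The key verification will be ellipticity of the operator $L = \sum a_{ij} \partial_{x_i x_j} + \sum b_j \partial_{x_j}$. A direct computation shows that the matrix of second-order coefficients of $\Phi$ at a single value $(q,r)$ is, up to the positive factor $(1+|q|^2)^{-3/2}/2$, the matrix $(\delta_{ij}(1+|q|^2) - q_i q_j)_{i,j=1}^2$, whose eigenvalues are $1$ and $1+|q|^2$, both strictly positive. Hence each integrand $\partial \Phi / \partial r_{ij}$ defines a positive definite $2 \times 2$ matrix, and the average $(a_{ij})$ is positive definite as well. On a sufficiently small neighborhood of $(x_1^*, x_2^*)$, the gradients $Df_t$ stay bounded, so the smallest eigenvalue of $(a_{ij})$ is bounded below by a positive constant, making $L$ uniformly elliptic there; the coefficients $b_j$ are smooth and bounded on this neighborhood as well.

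I expect the main technical obstacle to be the bookkeeping needed to confirm uniform ellipticity and boundedness of the coefficients $a_{ij}, b_j$ on a neighborhood of the tangency point — that is, checking that the interpolation $f_t$ behaves well enough (smoothness of $\Phi$ as a function of $(q,r)$ is automatic, but one must keep the domain small enough to ensure uniform control). Once this is in hand, the conclusion follows cleanly: apply Theorem \ref{thm:MPfull} to the function $-u$, which satisfies $L(-u) = 0 \geq 0$ and attains its maximum value $0$ at the interior point $(x_1^*, x_2^*)$, to conclude that $-u$ is constant, hence identically zero, on the chosen neighborhood. This yields $f_1 \equiv f_2$ near $(x_1^*,x_2^*)$, i.e., $\hat{f}_1$ and $\hat{f}_2$ coincide in a neighborhood of $p$.
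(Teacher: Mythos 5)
Your proposal is correct and follows essentially the same route as the paper's proof: subtract the two mean curvature equations, linearize to obtain a uniformly elliptic operator annihilating the difference $f_1-f_2$ near the tangency point, and invoke the linear maximum principle. The only (cosmetic) differences are that you linearize the full quasilinear operator via the fundamental theorem of calculus along the interpolation $tf_1+(1-t)f_2$, whereas the paper keeps the second-order coefficients at $Df_2$ and applies the mean value theorem only to the first-order part, and you cite Theorem \ref{thm:MPfull} where the paper uses Theorem \ref{thm:MPfullmod} with $h\equiv 0$ — these amount to the same argument.
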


\begin{proposition}\label{prop:maxprinc4} 
(The boundary point maximum principle for CMC surfaces in $\mathbb{R}^3$.)  
Let $\hat{f}_1$ and $\hat{f}_2$ be CMC $H$ graphs with respect to 
the orientation 
of upward pointing normals, just as in Proposition \ref{prop:maxprinc3}.  In 
particular, $H$ has the same value for both surfaces.  Suppose the following:
\begin{list}%
{\arabic{num})}{\usecounter{num}\setlength{\rightmargin}{\leftmargin}}
\item $\hat{f}_1$ lies above $\hat{f}_2$.
\item $\hat{f}_1$ and $\hat{f}_2$ have a point $p$ of common tangency at which 
the second of the two enumerated items in 
Definition \ref{defn:commontang} holds.
\end{list}
Then $\hat{f}_1$ and $\hat{f}_2$ can be extended to surfaces that coincide in 
a neighborhood of $p$.
\end{proposition}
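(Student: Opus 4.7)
The plan is to reduce the boundary-point statement to an interior argument by extending both CMC graphs real-analytically past their common boundary point $p$, then invoking a Hopf boundary-point lemma to bridge the remaining gap. First I would set $u := f_1 - f_2$ on $\overline{\mathcal{D}}$. By hypothesis $u \geq 0$; tangency of $\hat f_1$ and $\hat f_2$ at $p$ gives $u(x_0) = 0$ at the projection $x_0 \in \partial\mathcal{D}$ of $p$, and coincidence of tangent planes there gives $\nabla u(x_0) = 0$. Since $f_1$ and $f_2$ both satisfy the same quasilinear elliptic equation \eqref{eqn:Euclmeancurvature} with the same right-hand side $H$, writing
\[ 0 = \mathcal{H}(f_1) - \mathcal{H}(f_2) = \int_0^1 \frac{d}{dt}\,\mathcal{H}\bigl(t f_1 + (1-t) f_2\bigr)\, dt, \]
where $\mathcal{H}$ denotes the mean-curvature operator of \eqref{eqn:Euclmeancurvature}, produces a linear operator $L = \sum a_{ij}\partial_{x_i}\partial_{x_j} + \sum b_j\partial_{x_j}$ with bounded smooth coefficients satisfying $Lu = 0$ on $\overline{\mathcal{D}}$. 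Uniform ellipticity of $L$ near $x_0$ follows from inspection of \eqref{eqn:Euclmeancurvature}, whose principal part stays positive definite so long as $\nabla f_1, \nabla f_2$ remain bounded.

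Next I would invoke real-analyticity of CMC graphs (via Cauchy--Kowalewski applied to the real-analytic equation \eqref{eqn:Euclmeancurvature}, compare Remark~4.4.2 of \cite{wisky}) to extend each $f_j$ across the boundary arc of $\partial\mathcal{D}$ near $x_0$. The hypothesis that the tangent lines of $\partial\hat f_1$ and $\partial\hat f_2$ coincide at $p$, together with the common tangent plane there, lets me choose a single coordinate chart in which both extensions $\tilde f_1, \tilde f_2$ become real-analytic CMC $H$ graphs over a common enlarged domain $\tilde{\mathcal{D}} \supset \overline{\mathcal{D}}$ with $x_0$ in its interior. The function $\tilde u := \tilde f_1 - \tilde f_2$ is then real-analytic on $\tilde{\mathcal{D}}$, solves $L\tilde u = 0$ there, and still satisfies $\tilde u \geq 0$ on the original side $\overline{\mathcal{D}}$.

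At this point I would apply the classical Hopf boundary-point lemma (the boundary analog of Theorems~\ref{thm:MPfull}, \ref{thm:MPfullmod} and \ref{thm:mod2dimlmpII}, proved in \cite{GT} or \cite{PW} via an exponential barrier on an internally tangent ball) to $u$ on $\mathcal{D}$ at $x_0$: since $Lu = 0$, $u \geq 0$, $L$ is uniformly elliptic with smooth bounded coefficients, $u(x_0)=0$, and $\partial\mathcal{D}$ satisfies an interior-sphere condition at $x_0$, the lemma yields the dichotomy that either $\partial u / \partial \nu(x_0) < 0$ (with $\nu$ the exterior normal) or $u \equiv 0$ in a neighborhood of $x_0$ in $\overline{\mathcal{D}}$. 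The first alternative is excluded by $\nabla u(x_0)=0$, so the second holds. Hence $\tilde u$ vanishes on an open subset of $\tilde{\mathcal{D}}$, and real-analyticity forces $\tilde u \equiv 0$ on the connected component of $\tilde{\mathcal{D}}$ containing $x_0$; the extensions of $\hat f_1$ and $\hat f_2$ therefore coincide in a full neighborhood of $p$, which is the claim.

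The main obstacle is the boundary-point lemma itself: the paper's three stated versions of the maximum principle handle only interior extrema, so a separate barrier-function argument is needed to promote the interior strong maximum principle to the boundary setting. A secondary technical point is arranging the two local real-analytic extensions as graphs over a common $\tilde{\mathcal{D}}$; this is exactly where the hypothesis on coincident boundary tangent lines at $p$ is essentially used, since it lets one align the extension coordinates so that both graphs extend simultaneously across the same curve.
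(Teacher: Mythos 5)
The paper does not actually prove this proposition: immediately after stating Propositions \ref{prop:maxprinc3} and \ref{prop:maxprinc4} it says ``we include a proof of just the first one here'' and refers to \cite{Al}, \cite{Sch}, \cite{ER} for the boundary case. So there is no in-text proof to compare against; your linearization step (writing $\mathcal{H}(f_1)-\mathcal{H}(f_2)$ as $Lu$ with $L$ uniformly elliptic near $x_0$) exactly mirrors the paper's proof of the interior case, and the overall route --- reduce to $Lu=0$, $u\ge 0$, $u(x_0)=0$, $\nabla u(x_0)=0$, then apply the Hopf boundary-point lemma --- is the standard one found in those references. You are also right that the crux is the boundary-point lemma itself, which none of the paper's stated maximum principles (Theorems \ref{thm:MPfull}, \ref{thm:MPfullmod}, \ref{thm:mod2dimlmpII}) covers; invoking it with a citation to \cite{GT} or \cite{PW} is legitimate, though as written your proof contains the barrier argument only as an acknowledged IOU.

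The genuine gap is the real-analytic extension of $f_1$ and $f_2$ across $\partial\mathcal{D}$. Cauchy--Kowalewski needs real-analytic Cauchy data (a real-analytic boundary arc together with real-analytic boundary values and normal derivative), and the hypotheses supply only smooth bounded graphs over a closed domain; interior real-analyticity of CMC graphs says nothing about extendability past the boundary (a hemisphere written as a graph over a disk is the standard obstruction). Fortunately this step is also unnecessary for the substance of the claim: once the Hopf lemma rules out $\partial u/\partial\nu(x_0)<0$, the strong interior maximum principle (Theorem \ref{thm:MPfull}) already forces $u\equiv 0$ on the component of $\mathcal{D}$ whose closure contains $x_0$, which is exactly the conclusion recorded in the figure caption (``coincide in an open set whose closure contains $p$'') and is all that the Alexandrov reflection argument in Theorem \ref{thm:maxprinc2} uses. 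The phrase ``can be extended to surfaces that coincide'' should be read as applying to extensions the surfaces already come equipped with (in the application they are halves of a single real-analytic CMC surface), in which case unique continuation finishes the job; you should not try to manufacture those extensions from the stated hypotheses.
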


These two results are well known \cite{Al}, and we include a proof of 
just the first one here.  Proofs can also be found in \cite{Sch}, \cite{ER}.  

\begin{proof}
Applying a rigid motion of $\mathbb{R}^3$ if necessary, we may assume $p = 
(0,0,0)$ is the origin in $\mathbb{R}^3$ and 
that the common tangent plane of the two surfaces is the $x_1x_2$-plane 
$\{ x_3 = 0 \}$.  Hence $f_j(0,0)= 0$ and 
$(\partial_{x_1} f_j)(0,0) = (\partial_{x_2} f_j)(0,0) = 0$, for $j = 1,2$.  

Equation \eqref{eqn:Euclmeancurvature} and the fact that 
both surfaces have the same mean curvature imply that 
\begin{equation}\label{eqn:ratsing4} 
\sum_{i,j=1}^2 \left( w_{ij} 
\frac{\delta_{ij}(1+|\nabla f_2|^2)-(f_2)_{x_i}(f_2)_{x_j}}
{2 (1+|\nabla f_2|^2)^{\frac{3}{2}}}
+ \right. \end{equation}\[ (f_1)_{x_ix_j} \left( 
\frac{\delta_{ij}(1+|\nabla f_2|^2)-(f_2)_{x_i}(f_2)_{x_j}}
{2 (1+|\nabla f_2|^2)^{\frac{3}{2}}} \right.
- \]\[ \left. 
\left. \frac{\delta_{ij}(1+|\nabla f_1|^2)-(f_1)_{x_i}(f_1)_{x_j}}
{2 (1+|\nabla f_1|^2)^{\frac{3}{2}}} \right) \right) 
= 0  \; \; , \] 
where $|\nabla f_j|^2 = ((f_j)_{x_1})^2+((f_j)_{x_2})^2$ and 
\[ w := f_{2} - f_{1} \leq 0 \] with first derivatives 
$w_{j}=(f_{2})_{x_j} - (f_{1})_{x_j}$ and second derivatives 
$w_{ij}=(f_{2})_{x_ix_j} - (f_{1})_{x_ix_j}$.  
Defining $\beta_{ij}$ by 
\[ \beta_{ij}(u_1,u_2) = 
\frac{\delta_{ij}(1+u_1^2+u_2^2)-u_iu_j}
{2 (1+u_1^2+u_2^2)^{\frac{3}{2}}} \; , \]  
the intermediate value theorem tells us that 
\[ 
\beta_{ij}((f_2)_{x_1},(f_2)_{x_2}) - 
\beta_{ij}((f_1)_{x_1},(f_1)_{x_2}) = 
\]\[ \sum_{k=1}^2 
\left( \left( \frac{\partial}{\partial u_k} \beta_{ij} \right) 
((cf_2+(1-c)f_1)_{x_1},
(cf_2+(1-c)f_1)_{x_2}) \right) \cdot (f_2-f_1)_{x_k} \; \; , 
\] for some $c=c(i,j) \in [0,1]$.  
Equation \eqref{eqn:ratsing4} then has the form 
\begin{equation}\label{eqn:ratsing5} L w := \sum_{i,j=1}^2 
\left( a_{ij} w_{ij} + (f_1)_{x_ix_j} \sum_{k=1}^2 
\tilde{b}_{ijk} w_k \right) = 0 \; , \end{equation} 
where \[ \tilde{b}_{ijk} =
\left( \frac{\partial}{\partial u_k} \beta_{ij} \right)
((cf_2+(1-c)f_1)_{x_1},
(cf_2+(1-c)f_1)_{x_2}) \; . \]  Note that 
$a_{ij},\tilde{b}_{ij},\tilde{b}_{ijk}$ 
are all bounded functions.  Note also that $a_{ij} \approx 
\frac{\delta_{ij}}{2}$ in 
a small neighborhood of the origin $(x_{1},x_{2})=(0,0)$, and thus 
$(a_{ij})$ is a strictly positive definite $2 \times 2$ matrix in 
a small neighborhood of the origin.  

Since $w \leq 0$ in a small open neighborhood of $(x_{1},x_{2})=(0,0)$ and has 
a local maximum $w=0$ at $(0,0)$, it follows from the maximum 
principle Theorem \ref{thm:MPfullmod} (with $L$ as in 
\eqref{eqn:ratsing5} and $h$ identically equal to zero) 
that $w$ is identically 0 in a neighborhood of the origin.  
We conclude that $f_1 = f_2$ near $p$, and thus $\hat{f}_1$ and
$\hat{f}_2$ coincide in a neighborhood of $p$.  
\end{proof}

\subsection{The maximum principle for CMC 
surfaces in $\mathbb{H}^3$} 
One can give essentially the same proof for 
the maximum principle for CMC surfaces in 
other ambient spaces, such as $\mathbb{H}^3$.  (Some references for the 
maximum principle in the hyperbolic case
are \cite{KorKMS}, \cite{CL}, and references therein.)  Here we describe 
how one could prove the maximum principle for CMC surfaces in 
$\mathbb{H}^3$.  The arguments go along the same lines 
as above for $\mathbb{R}^3$, but some 
differences from the Euclidean case are the following: 
\begin{enumerate}
\item obviously the ambient space no longer has a Euclidean metric (here 
we will consider the Poincare model for $\mathbb{H}^3$, which is conformal 
to the Euclidean metric), and 
\item because the ambient space is not Euclidean, the equation for the 
mean curvature $H$ of a graph will change.  
\end{enumerate}
We now remark on each of these two items.  

{\em Regarding the first item:} 
For $\mathbb{H}^3$, we can use the Poincare ball model $\mathcal P$.  
This allows us to once again consider the two surfaces locally as graphs 
over the $x_1 x_2$-plane containing the origin.  Since the consideration 
is only local, the graphs will both lie in the unit ball $ \{ (x_1,x_2,x_3) 
\in \mathbb{R}^3 \, | \, x_1^2+x_2^2+x_3^2<1 \}$ that is the Poincare model.  
The notions of "point of common tangency" and "one surface lying above the 
other" do not change.  
The only difference is that now the ambient space has the metric 
\begin{equation}\label{eqn:ratsing6} 
\lambda^2 (dx_1^2 + dx_2^2 + dx_3^2) \; , \;\;\; \lambda = 
\frac{2}{1-x_{1}^{2}-x_{2}^{2}-x_{3}^{2}} \; , \end{equation} 
like in \eqref{hypmet}.  

{\em Regarding the second item:} 
Although this Poincare metric is not Euclidean, it is 
still conformal to the Euclidean 
metric, and this conformality will simplify the computation of the 
mean curvature $H$ for a graph in the Poincare model: 

\begin{lemma}
For a smooth immersion 
$\hat{f}(x_1,x_2)$ in $\mathcal{P}$ written as a graph 
\[ \hat{f}(x_1,x_2)=(x_1,x_2,f(x_1,x_2)) \in \mathcal{P} \] with 
$(x_1,x_2) \in \mathcalD \subset \mathcal{P} \cup \{x_3=0\}$, 
the mean curvature of $\hat{f}$ with respect to its ambient 
space $\mathcal{P} \approx \mathbb{H}^3$ is 
\begin{equation}\label{eqn:hypmeancurvature} 
\frac{H}{\lambda} - \frac{\lambda_{N}}{\lambda^{2}} \; , 
\end{equation} where 
\begin{enumerate}
\item $H$ is the Euclidean mean curvature as given in Equation 
\eqref{eqn:Euclmeancurvature}, 
\item $\lambda$ is the metric factor of the Poincare metric, as 
   given in \eqref{eqn:ratsing6}, 
\item $\lambda_{N}$ is the derivative of $\lambda$ with respect to 
the direction 
$N$, where $N$ is the unit normal vector to $\hat{f}$ with respect to the 
standard Euclidean space $(\mathbb{R}^3,dx_1^2+dx_2^2+dx_3^2)$.  
\end{enumerate}
\end{lemma}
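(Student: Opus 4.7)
The plan is to derive the formula as a special case of the general transformation of mean curvature under a conformal change of the ambient metric. The Poincare metric on $\mathcal{P}$ is $\tilde g = \lambda^2 g_E$, where $g_E$ is the Euclidean metric on $\mathbb{R}^3$, so the task reduces to tracking how the shape operator of $\hat f$ changes when we replace $g_E$ by $\tilde g$ while keeping the immersion fixed.

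First I would note that the hyperbolic unit normal is $\tilde N = N/\lambda$, since $\tilde N$ must point in the same Euclidean direction as $N$ and satisfy $\tilde g(\tilde N,\tilde N) = 1$. The central ingredient is the standard formula relating the two Levi-Civita connections: writing $\varphi = \log \lambda$, one has for all smooth vector fields $X,Y$ on $\mathbb{R}^3$,
\[
\tilde\nabla_X Y \;=\; \nabla^E_X Y + X(\varphi)\, Y + Y(\varphi)\, X - g_E(X,Y)\, \nabla^E \varphi ,
\]
which follows from Koszul's identity applied to $\tilde g = e^{2\varphi} g_E$. This is the only nontrivial input; the rest of the argument is bookkeeping.

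Then I would compute the hyperbolic second fundamental form of $\hat f$. For vectors $X,Y$ tangent to $\hat f(\mathcalD)$ one has $g_E(X,N) = g_E(Y,N) = 0$, so the middle two terms of the connection formula vanish when paired against $N$, and using $\tilde g(\,\cdot\,,\tilde N) = \lambda\, g_E(\,\cdot\,,N)$ together with $N(\varphi) = \lambda_N/\lambda$ one obtains
\[
\tilde b(X,Y) \;=\; \lambda\, b_E(X,Y) \;-\; g_E(X,Y)\, \lambda_N ,
\]
where $b_E$ denotes the Euclidean second fundamental form. Combining with $\tilde g|_{T\hat f} = \lambda^2\, g_E|_{T\hat f}$ gives the hyperbolic shape operator
\[
\tilde S \;=\; \tilde g^{-1}\tilde b \;=\; \lambda^{-1} S_E \;-\; \lambda^{-2}\,\lambda_N \cdot \mathrm{Id}_{T\hat f} ,
\]
and taking one-half of its trace in two dimensions yields $\tilde H = H/\lambda - \lambda_N/\lambda^2$, as desired.

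The main obstacle is not difficulty but careful bookkeeping: one must consistently distinguish the Euclidean and hyperbolic unit normals, the two first fundamental forms on the tangent plane, and the two Levi-Civita connections, and track the orientation (upward-pointing normal) used in defining $H$ via Equation \eqref{eqn:Euclmeancurvature}. As an alternative, one could compute directly in the graph coordinates $(x_1,x_2)$ using the conformal Christoffel symbols
\[
\tilde\Gamma^k_{ij} \;=\; \Gamma^{E,k}_{ij} + \delta^k_i\,\partial_j\varphi + \delta^k_j\,\partial_i\varphi - (g_E)_{ij}\,(g_E)^{k\ell}\partial_\ell \varphi ,
\]
but the intrinsic derivation above makes the geometric meaning of the correction term $-\lambda_N/\lambda^2$ transparent: it records how the conformal factor $\lambda$ changes in the normal direction.
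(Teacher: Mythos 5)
Your proof is correct, and it takes a genuinely different route from the one in the paper. You work with the conformal transformation of the ambient Levi--Civita connection, $\tilde\nabla_X Y = \nabla^E_X Y + X(\varphi)Y + Y(\varphi)X - g_E(X,Y)\nabla^E\varphi$ for $\varphi=\log\lambda$, pair it against the rescaled normal $\tilde N = N/\lambda$, and read off the transformation of the second fundamental form and hence of the shape operator, $\tilde S = \lambda^{-1}S_E - \lambda^{-2}\lambda_N\,\mathrm{Id}$. The paper instead uses the moving frames method: it rescales the orthonormal coframe to $\hat\omega^i=\lambda\omega^i$, solves the structure equation $d\hat\omega^i=\sum_j\hat\omega^j\wedge\hat\omega^i_j$ subject to skew-symmetry to find the new connection forms $\hat\omega^i_j=(\lambda_j/\lambda)\omega^i-(\lambda_i/\lambda)\omega^j+\omega^i_j$, and then extracts $\hat h_{ij}=h_{ij}/\lambda-(\lambda_3/\lambda^2)\delta_{ij}$. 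The two computations are equivalent in content, and your checks (vanishing of the tangential terms against $N$, the identification $N(\varphi)=\lambda_N/\lambda$, the factor bookkeeping between $\tilde g$ and $g_E$ on the tangent plane) are all sound. What your approach buys is that the correction term is exhibited as a multiple of the identity on the shape operator, so its appearance in the mean curvature (and indeed in every principal curvature) is immediate; what the paper's approach buys is self-containedness, since the connection forms are derived directly from the structure equations already set up in the proof rather than imported from the Koszul identity. Either argument also confirms the remark following the lemma that the formula holds for an arbitrary positive conformal factor $\lambda$, not just the Poincar\'e one.
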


\begin{remark}
In fact, the above formula for the mean curvature of a surface in 
$\mathbb{R}^3$ holds for any positive function $\lambda$, when 
$\mathbb{R}^3$ is given the metric $\lambda^2 (dx_1^2+dx_2^2+dx_3^2)$.  
But because our interest here is specifically 
in $\mathbb{H}^3$, we have 
fixed $\lambda$ to be the metric factor of the Poincare metric.  
\end{remark}

\begin{proof}
We will give this proof using the moving frames method.  

Note that with respect to the usual Euclidean metric, a surface that is 
a graph of the form $(x_1,x_2, f(x_1,x_2))$ has mean curvature $H$ as 
in Equation \eqref{eqn:Euclmeancurvature}.  For such a graph we 
define an orthonormal moving frame of vectors $e_1,e_2$ that is 
an oriented orthonormal frame of vectors for the
tangent space of the surface, and then define $e_3=N$ to be the unit
normal vector to the surface with the upward orientation.  We
define 1-forms $\omega^i$ and $\omega_i^j$ by 
\[ \omega^i(e_j) = \delta_{ij} \; \; , \; \; \; \nabla e_i = 
\sum_{j=1}^3\omega_i^j e_j \; \; . \]  
Note that the $\omega_i^j$ are skew symmetric, 
that is, $\omega_i^j = -\omega_j^i$.  Note also that we have the
structure equation \[ d\omega^i = \sum_{j=1}^3 
\omega^j \wedge \omega_j^i \; . \] 
We can then define the mean curvature as \[ H = \frac{1}{2} 
\sum_{i=1}^2 h_{ii} \; , \] where $h_{ij} = \langle
\nabla_{e_i}e_j,e_3 \rangle = \omega_j^3(e_i)$.  

If we now consider the same hypersurface, but with the
ambient metric $\lambda^2(dx_1^2+dx_2^2+dx_3^2)$, 
we can define an orthonormal moving frame in the
same way as above.  
We denote the orthonormal vectors and 1-forms and
mean curvature in this case by using the symbols 
$\hat{e}_i$ and $\hat{\omega}^i$ and 
$\hat{\omega}_i^j$ and $\hat{h}_{ij}$ and $\hat{H}$.  
Noting that we can take 
$\hat{e}_i = \frac{e_i}{\lambda}$ and $\hat{\omega}^i = 
\lambda \omega^i$, and using that $\hat{\omega}^i \wedge \omega^i = 0$, we
see that (with $\lambda_j = e_j(\lambda) = d\lambda (e_j)$ the 
derivative of $\lambda$ with respect to the direction $e_j$) 
\begin{eqnarray*}
\sum_{j=1}^3\hat{\omega}^j \wedge \hat{\omega}_j^i 
&=& d\hat{\omega}^i = d(\lambda \omega^i) 
 =  d\lambda \wedge \omega^i + \lambda d\omega^i 
 =  \sum_{j=1}^3\left(\lambda_j\omega^j \wedge \omega^i
 +  \lambda \omega^j \wedge \omega_j^i\right) \\
&=& \sum_{j=1}^3\Bigg(\lambda \omega^j \wedge 
    \left(\frac{\lambda_j}{\lambda}\omega^i + \omega_j^i\right)\Bigg) 
 =  \sum_{j=1}^3\Bigg(\hat{\omega}^j 
    \wedge \overbrace{\left(\frac{\lambda_j}{\lambda}\omega^i 
 -  \frac{\lambda_i}{\lambda}\omega^j 
 +  \omega_j^i\right)}^{\mbox{skew symmetric}}\Bigg) \; .  
\end{eqnarray*}
So we have $\hat{\omega}_j^i = (\lambda_j/\lambda)\omega^i - 
(\lambda_i/\lambda)\omega^j + \omega_j^i$.  Thus, for $i,j\le 2$, we have 
\[ \hat{h}_{ij} = \hat{\omega}_j^3(\hat{e}_i) = 
\left(\frac{\lambda_j}{\lambda}\omega^3 - 
\frac{\lambda_3}{\lambda}\omega^j + 
\omega_j^3\right)\left(\frac{e_i}{\lambda}\right) = \frac{h_{ij}}{\lambda}-
\frac{\lambda_3}{\lambda^2}\delta_{ij} 
\Longrightarrow \hat{H} = \frac{H}{\lambda} - 
\frac{\lambda_{3}}{\lambda^{2}} \; , \] 
where $\lambda_3=N(\lambda)=d\lambda (N)$ is the derivative of $\lambda$ with 
respect to $N=e_3$.  
\end{proof}


\section{Further motivations for studying CMC surfaces}\label{ashortchapter}

In Chapter \ref{Riemsurfs}, 
we gave Hopf's theorem showing that any closed CMC surface of genus $0$ in 
$\mathbb{R}^3$ must be a round sphere.  Hopf asked if 
every closed CMC surface must be a 
round sphere, without any initial assumption about 
the genus of the surface.  In effect, he asked if any closed CMC surface in 
$\mathbb{R}^3$ of any genus must in fact 
be of genus $0$ and thus be a round sphere.  

Evidence to support a positive answer to this question came from the maximum 
principle for CMC 
surfaces.  This principle gave a technique for showing that any 
embedded closed CMC 
surface in $\mathbb{R}^3$ must be a round sphere.  We gave a proof of 
this result in Chapter \ref{max_princ}.  
So from this, it follows that a closed CMC surface must be a 
round sphere if it is either of genus $0$ or is embedded.  

So any negative answer to Hopf's question would need to be an 
example that is both of positive genus 
and not embedded.  In 1986, H. Wente \cite {Wen:hopf} 
found exactly such examples, of genus 
$1$.  This discovery of a negative answer to Hopf's question gave 
impetus for further 
research on CMC surfaces.  Following Wente's discovery, 
U. Abresch \cite{Ab} in 1987 then published a more explicit representation, 
using elliptic 
functions, for closed CMC tori which contain a 
continuous family of planar principal 
curves.  (Principal curves in a surface are those 
whose tangent vectors are always a 
principal curvature direction, and planar curves 
are those that lie in some plane in 
$\mathbb{R}^3$.)  R. Walter \cite{Wal} (also published 
in 1987) found an explicit 
representation for those tori that Abresch considered, using special functions 
called the Jacobi sn and cn 
functions.  Walter's representation was developed using the fact that 
if one family of principal curves are all planar, then the perpendicular 
principal curves each lie in a sphere.  
Finally, J. Spruck \cite{Spruck} showed in 1988 
that these CMC tori considered by Abresch and 
Walter are exactly the same surfaces 
that Wente originally found.  

The works mentioned above and the development of 
the theory of integrable systems 
since the 1960's helped lead to the recognition that closed CMC tori could be 
studied by using techniques from the theory of integrable systems, and 
that closed CMC tori are special CMC surfaces in the sense that they 
are of "finite type".  This is what is shown in the works of 
U. Pinkall and I. Sterling \cite{ps} and A. Bobenko \cite{b1} \cite{Bob:tor} 
\cite{Bob:2x2}, from 1989 to 1991, and in these works all closed CMC tori in 
$\mathbb{R}^3$ were classified.  

We mention that also N. Kapouleas, in 1991 and 1995, constructed closed 
CMC surfaces for every genus $\frak{g} > 1$ \cite{Kap2}, \cite{Kap3}.  But 
Kapouleas used very different analytic techniques. Further 
developments in that 
direction have been presented recently by Kusner, Mazzeo, Pacard, 
Pollack and Ratzkin 
as well \cite{KusMazPol}, \cite{MP}, \cite{MP2}, \cite{MPP}, \cite{MPP2}, 
\cite{JRatz}.  

The way that the techniques of integrable systems were used in the 
works of Bobenko 
was to convert the problem of studying CMC surfaces into the 
language of $2 \times 2$ matrices.  The same approach was 
taken by Dorfmeister, Pedit and Wu when they developed the DPW method in their 
paper \cite{DPW} published in 1998.  While the language of 
$2 \times 2$ matrices might not seem 
so natural from the viewpoint of classical differential 
geometry in $\mathbb{R}^3$, it is very natural from the viewpoint of 
integrable systems, 
and is certainly convenient for describing the DPW method.  

The idea behind the DPW method is that 
the needed equations and their solutions 
can be found using holomorphic data and applying a splitting 
called the {\em Iwasawa 
decomposition} to maps from circles (loops) to $2 \times 2 $ 
matrices.  This idea 
dates back at least to I. M. Kri\v cever \cite{kr} (1980) and 
perhaps even earlier, 
and J. Dorfmeister, F. Pedit and 
H. Wu formulated it in a way that made the idea apply 
globally to CMC surfaces \cite{DPW}.  The DPW method was the central 
topic of \cite{wisky}.

Finally, we note that the integral systems approach to 
CMC surfaces also helped lead to notions of discrete CMC 
surfaces.  These notions preserve to a large extent the rich 
mathematical structure associated 
with smooth CMC surfaces, and this is exactly what we shall 
focus on for the remainder of this text, from Chapter 
\ref{sect:lcq-smooth} onward.  

But before that, we briefly give an aside on smooth surfaces in 
indefinite ambient spaces, in Chapter \ref{maximalsurface}.  

\section{Maximal surfaces in $\mathbb{R}^{2,1}$}
\label{maximalsurface}

In later chapters we consider surfaces in positive definite spaces 
such as $\mathbb{R}^3$, $\mathbb{S}^3$ (spherical $3$-space) and 
$\mathbb{H}^3$.  However, in this chapter we consider 
surfaces in a space that is not positive definite.  
We do this because we have not considered such a type of 
space yet, and it 
is informative to see the similarities and differences that occur in the 
indefinite case.  Here we choose maximal 
surfaces in $\mathbb{R}^{2,1}$ (Minkowski $3$-space), and 
because the $\mathbb{R}^{4,1}$ M\"obius geometric approach of 
later chapters does not work here, we investigate 
them in much the same way as we considered minimal surfaces in 
\cite{wisky}.  
It is possible to consider discrete versions of these surfaces 
\cite{KR-ocami}, and we come back to this in Chapter 
\ref{discretemaximalsurface}.  

We note that this chapter depends on Section 3.4 in \cite{wisky}, and 
we recommend that the reader look at that before reading this 
chapter.  We also note that this chapter and Chapter 
\ref{discretemaximalsurface} are independent of the 
other chapters here, so could be skipped over 
without affecting continuity of the text.  

Because the maximal surfaces here lie in a space that is not 
positive definite, they have interesting singularities.  The 
singular points can be cuspidal edges or swallowtails or cuspidal 
cross caps, generically, and could also be conical singularities, 
for example, less generically  (for related material, see, for example, 
\cite{FSUY}, \cite{KRSUY}, \cite{SUY1}, \cite{SUY2}, \cite{SUY3}, 
\cite{SUY4}, \cite{SY} and 
\cite{SY2}).  In fact, conical singularities and 
cuspidal edges and swallowtails exist on the surfaces shown in 
Figures 10, 11 and 12.  We will say more about why this happens 
below.  

\begin{figure}[phbt]
\begin{center}
\includegraphics[width=0.48\linewidth]{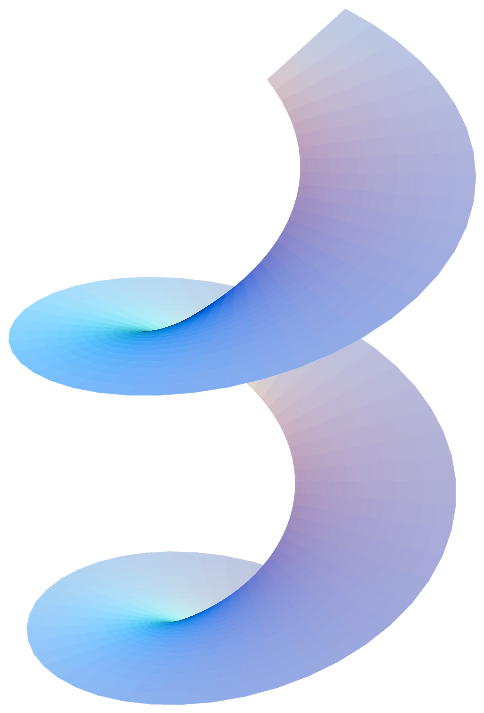}
\raisebox{8ex}{\includegraphics[width=0.48\linewidth]{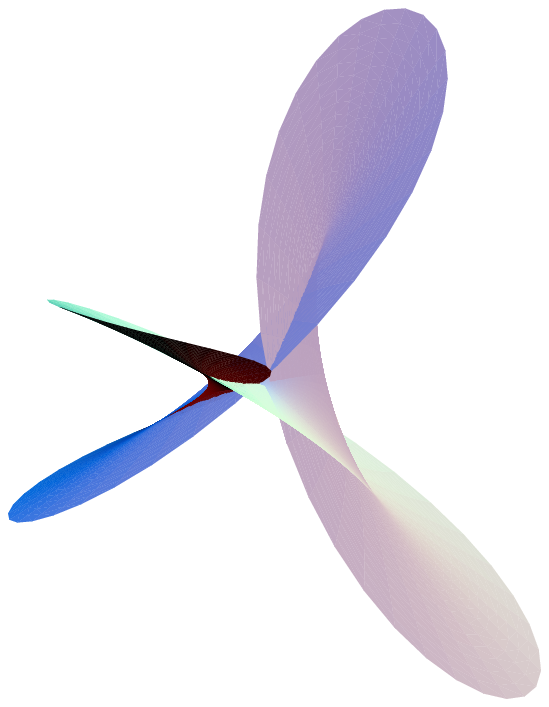}}
\vspace*{-40pt}
\caption{The maximal helicoid (on the left) and Enneper's maximal surface 
(on the right).  The maximal helicoid cousin is given by the representation of 
O.~Kobayashi with $(g,\eta)=(e^z,cie^{-z}dz)$, $c\in\mathbb{R}\setminus\{0\}$ 
on $\Sigma=\mathbb{C}$, like the data for a minimal helicoid in 
$\mathbb{R}^3$.  (This maximal helicoid is in fact contained in the 
image of a minimal helicoid as in Figure 3.4.3 
in \cite{wisky}.  See \cite{KO1} for a 
proof of this.) Enneper's maximal surface is given by the representation of 
O.~Kobayashi with $(g,\eta)=(z,cdz)$, $c\in\mathbb{R}\setminus\{0\}$ on 
$\Sigma=\mathbb{C}$, like the data for an Enneper's minimal surface in 
$\mathbb{R}^3$.  Graphics made by Hitomi Abe and Kouichi Shimose.}
\label{fg:MaxHeliEnn}
\end{center}
\end{figure}
\begin{figure}[phbt]
\begin{center}
\includegraphics[width=0.48\linewidth]{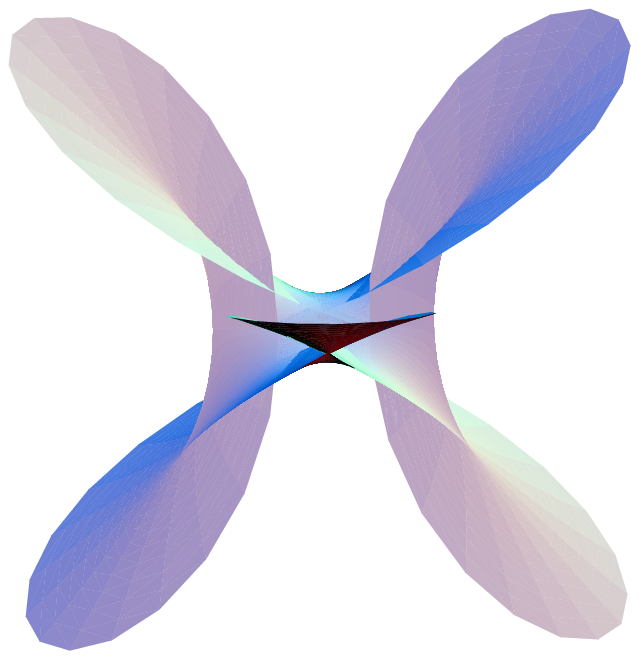}
\includegraphics[width=0.48\linewidth]{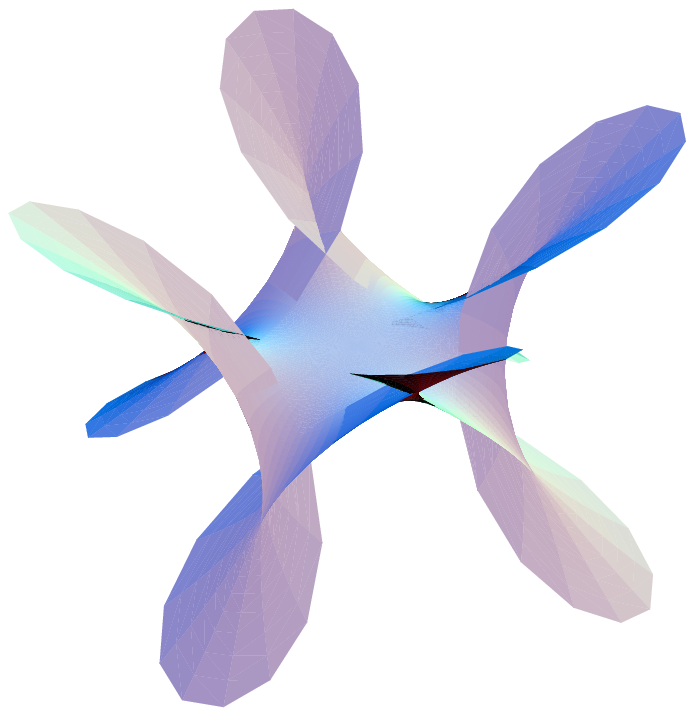}
\vspace*{-20pt}
\caption{The higher-order versions of Enneper's maximal surface are given by 
the representation of O.~Kobayashi with $(g,\eta)=(z^n,cdz)$, 
$c\in\mathbb{R} \setminus\{0\}$ on $\Sigma=\mathbb{C}$, like the data for 
higher-order versions of Enneper's minimal surface in $\mathbb{R}^3$.  
The left-hand side picture is drawn with $n=2$, and 
the right-hand side picture is drawn with $n=3$.  
Graphics made by Hitomi Abe and Kouichi Shimose.}
\label{fg:MaxEnn23}
\end{center}
\end{figure}
\begin{figure}[phbt]
\begin{center}
\includegraphics[width=0.46\linewidth]{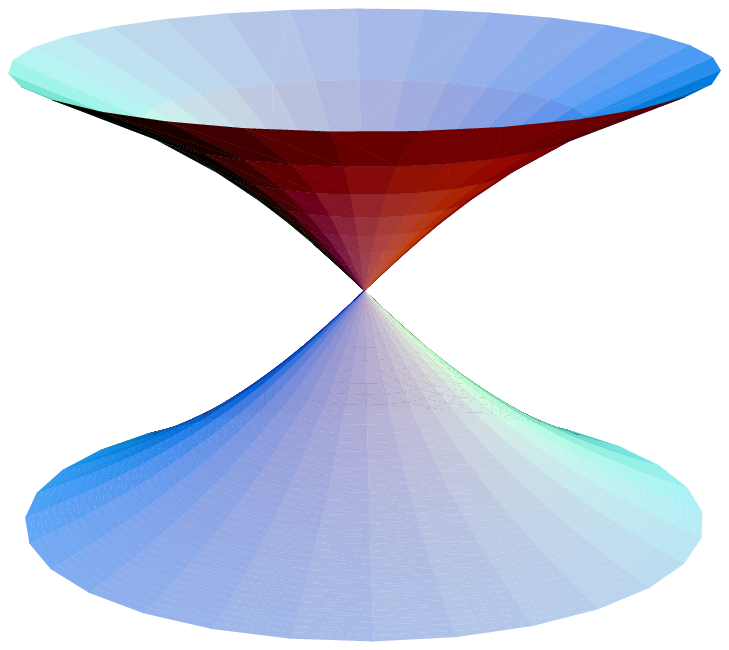}
\raisebox{3ex}{\includegraphics[width=0.52\linewidth]{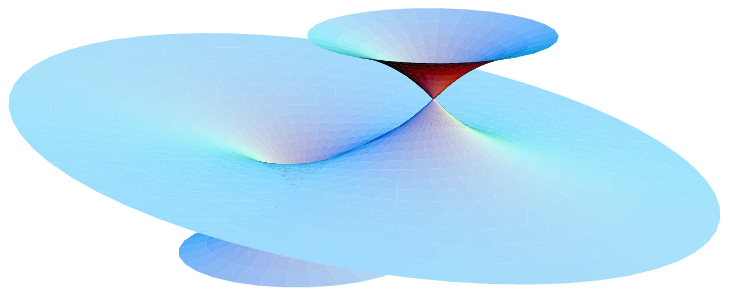}}
\vspace*{-20pt}
\caption{The maximal catenoid (on the left) and the maximal Lopez-Ros surface 
(on the right).  The maximal catenoid is given by the representation of 
O.~Kobayashi with $(g,\eta)=(z,cz^{-2}dz)$, $c\in\mathbb{R}\setminus\{0\}$ on 
$\Sigma=\mathbb{C}\setminus\{0\}$, like the data for a minimal catenoid in 
$\mathbb{R}^3$ (note that $\Sigma$ is not simply-connnected here, 
but the surface is a well-defined map from $\Sigma$ to $\mathbb{R}^{2,1}$).  
The maximal Lopez-Ros surface is given by the 
representation of O.~Kobayashi 
with $(g,\eta)=(\rho (z^2+3)/(z^2-1),\rho^{-1}dz)$, $\rho\in (0,\infty) 
= \mathbb{R}_+$ 
on $\Sigma=\mathbb{C}\setminus\{\pm 1\}$, like the data for a minimal 
Lopez-Ros surface in $\mathbb{R}^3$ (again we have a non-simply-connected 
domain).  Graphics made by Hitomi Abe and Kouichi Shimose}  
\label{fg:MaxCatLR}
\end{center}
\end{figure}

Let $\mathbb{R}^{2,1}=(\{(x_1,x_2,x_0)|x_j\in\mathbb{R}\},
\langle\cdot ,\cdot\rangle_{\mathbb{R}^{2,1}})$ be the $3$-dimensional 
Minkowski space with the Lorentz metric
\[
 \langle (x_1,x_2,x_0),(y_1,y_2,y_0)\rangle_{\mathbb{R}^{2,1}}
=x_1y_1 + x_2y_2 - x_0y_0 \; . 
\]
A surface in $\mathbb{R}^{2,1}$ is called a {\em spacelike} surface if the 
induced metric on the surface is positive definite.  
In this section we study spacelike surfaces in $\mathbb{R}^{2,1}$ whose mean 
curvature is identically zero (maximal surfaces).  
Furthermore, we establish O. Kobayashi's representation \cite{KO1} for these 
surfaces (see also \cite{mcnertney}), 
which is similar to the Weierstrass (Section 
3.4 in \cite{wisky}) and Bryant (Section 5.5 in \cite{wisky}) and 
G\'alvez, Mart\'{\i}nez and Mil\'an (Section 5.6 in \cite{wisky}) 
representations, 
and amongst these three previous representations is most similar to the 
Weierstrass representation.

Let \[ f:\Sigma\to\mathbb{R}^{2,1} \] 
be a conformally immersed spacelike surface, 
where $\Sigma$ is a simply-connected domain in $\mathbb{C}$ with complex 
coordinate $z$.  (Again, by Theorem 
\ref{conformalityispossible}, without loss of generality we may 
assume $f$ is conformal.)  
Then \[ \langle f_z,f_z\rangle =\langle f_{\bar{z}},f_{\bar{z}}\rangle =0 \; , 
\;\;\; \langle f_z,f_{\bar{z}}\rangle =2e^{2\hat u} \; , \] 
where $\hat u:\Sigma\to\mathbb{R}$ is defined this way 
and $\langle\cdot ,\cdot\rangle$ is the complex bilinear extension of 
$\langle\cdot ,\cdot\rangle_{\mathbb{R}^{2,1}}$.  
Let $N$ be a unit normal vector field of $f$.  (Note that $N$ is timelike, 
that is, $\langle N,N\rangle =-1$, since $f$ is spacelike.)
We choose $N$ so that it is future pointing, that is, so that the third 
coordinate of $N$ is positive.  
Then 
\begin{equation}
N:\Sigma\to\mathbb{H}^2:=
\{\vec n=(n_1,n_2,n_0)\in\mathbb{R}^{2,1} \, | 
\, \langle \vec n,\vec n\rangle =-1 \; , \;\;n_0>0\}
\label{eq:GMofL3}
\end{equation}
is the Gauss map of $f$.  

Note that the target space of the Gauss map is now $\mathbb{H}^2$, 
which is not compact (unlike the case of surfaces in 
$\mathbb{R}^3$, where the target of the Gauss map is the 
compact $\mathbb{S}^2$).  Singularities of the maximal 
surfaces occur when the Gauss map reaches the ideal boundary of 
$\mathbb{H}^2$.  

We have the following Gauss-Weingarten equations:
\begin{eqnarray*}
& f_{zz}=2\hat u_zf_z-QN \; ,\quad f_{z\bar z}=-2He^{2\hat u}N \; ,\quad
  f_{\bar z\bar z}=2\hat u_{\bar z}f_{\bar z}-\bar QN \; , & \\
& N_z=-Hf_z-\frac{1}{2}Qe^{-2\hat u}f_{\bar z} \; ,\quad
  N_{\bar z}=-Hf_{\bar z}-\frac{1}{2}\bar Qe^{-2\hat u}f_z \; , &
\end{eqnarray*}
where $Q:=\langle f_{zz},N\rangle$ is the Hopf differential 
function and 
$H=e^{-2\hat u}\langle f_{z\bar z},N\rangle /2$ is the mean curvature.  
Also, $(f_{zz})_{\bar z}=(f_{z\bar z})_z$ implies the following Gauss-Codazzi 
equations:
\begin{equation}\label{eq:GaussformaxinL3}
4 \hat u_{z\bar z}+Q\bar Qe^{-2\hat u}-4H^2e^{2\hat u}=0 \; ,\quad 
Q_{\bar z}=2H_z e^{2\hat u} \; .
\end{equation}
Note that the first equation here has different signs than the corresponding 
equation for minimal 
surfaces in $\mathbb{R}^3$ (see \cite{wisky}), although it is 
otherwise very similar.  

Now we identify $\mathbb{R}^{2,1}$ with the Lie algebra 
\[
\su_{1,1}
=\left\{\left.\begin{pmatrix}ia & b \\ \bar b & -ia\end{pmatrix}
        \right| a\in\mathbb{R},\;\;b\in\mathbb{C}\right\} \; ,
\]
of the Lie group 
\[
\SU_{1,1}
=\left\{\begin{pmatrix}\alpha & \beta \\ \bar\beta & \bar\alpha\end{pmatrix}
\left|\begin{array}{c}\alpha ,\beta\in\mathbb{C} \\ 
       \alpha\bar\alpha-\beta\bar\beta =1\end{array}\right.\right\} \; , 
\]
by identifying $(x_1,x_2,x_0)\in\mathbb{R}^{2,1}$ with the matrix 
\begin{equation}
 x_1\sigma_1+x_2\sigma_2+x_0i\sigma_3
=\begin{pmatrix}ix_0 & x_1-ix_2 \\ x_1+ix_2 & -ix_0\end{pmatrix} 
\in \su_{1,1} \; , 
\label{eq:L3identify}
\end{equation}
where $\sigma_1$, $\sigma_2$, 
$\sigma_3$ are the Pauli matrices defined in the beginning of 
Section 3.2 in \cite{wisky} (but the definition of the Pauli matrices 
is also apparent from \eqref{eq:L3identify} here).  
Note that the metric becomes
\[
\langle X,Y\rangle =\frac{1}{2}\text{trace}(XY) 
\] when considering $\mathbb{R}^{2,1}$ in this $2 \times 2$ 
matrix model.  

The following lemma is immediate:

\begin{lemma}
If $F\in\SU_{1,1}$, then 
$\langle X,Y\rangle =\langle FXF^{-1},FYF^{-1}\rangle$ 
for all $X,Y\in\mathbb{R}^{2,1}$.
\end{lemma}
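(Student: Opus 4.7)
The plan is to establish the lemma in two steps: first verify that conjugation $X \mapsto FXF^{-1}$ actually carries $\su_{1,1}$ into itself (so the right-hand side lies in $\mathbb{R}^{2,1}$ under the identification \eqref{eq:L3identify} and the statement is meaningful), and then verify that it preserves the trace bilinear form $\langle X,Y\rangle = \tfrac{1}{2}\text{trace}(XY)$.

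The preservation of the bilinear form is essentially immediate from the cyclic property of the trace and is the easy part. I would write
\[
\langle FXF^{-1},FYF^{-1}\rangle
= \tfrac{1}{2}\text{trace}(FXF^{-1}FYF^{-1})
= \tfrac{1}{2}\text{trace}(F(XY)F^{-1})
= \tfrac{1}{2}\text{trace}(XY) = \langle X,Y\rangle,
\]
using $F^{-1}F=I$ in the middle and $\text{trace}(ABA^{-1})=\text{trace}(B)$ in the last step. Note that this computation is purely formal and uses nothing beyond invertibility of $F$.

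The part that requires a small computation is showing that $FXF^{-1} \in \su_{1,1}$ when $X \in \su_{1,1}$ and $F \in \SU_{1,1}$, so that the left-hand side's use of $\langle\cdot ,\cdot\rangle_{\mathbb{R}^{2,1}}$ via \eqref{eq:L3identify} is legitimate. The clean way is to introduce $\sigma_3=\mathrm{diag}(1,-1)$ and observe two algebraic characterizations: $F\in\SU_{1,1}$ iff $F^*\sigma_3 F=\sigma_3$ (equivalently $F^{-1}=\sigma_3 F^*\sigma_3$), and $X\in\su_{1,1}$ iff $\sigma_3 X^*\sigma_3 = -X$, both of which can be checked directly from the explicit matrix forms. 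Using these, $(FXF^{-1})^* = (F^*)^{-1}X^*F^* = (\sigma_3 F\sigma_3)X^*(\sigma_3 F^{-1}\sigma_3)$, whence
\[
\sigma_3(FXF^{-1})^*\sigma_3 = F(\sigma_3 X^*\sigma_3)F^{-1} = F(-X)F^{-1} = -(FXF^{-1}),
\]
so $FXF^{-1}\in\su_{1,1}$, as required.

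The main (minor) obstacle is keeping the conjugation-by-$\sigma_3$ bookkeeping straight, but there is nothing deep here. As an alternative one could simply argue that $\text{Ad}(F)$ is automatically a Lie-algebra automorphism of $\su_{1,1}$ because $\SU_{1,1}$ acts on its own Lie algebra by the adjoint representation, and then the trace identity above finishes the proof. Either route yields the lemma in a few lines, and the takeaway is that the inner product on $\mathbb{R}^{2,1}$ induced by $\tfrac12\text{trace}(XY)$ is $\SU_{1,1}$-invariant, exactly as the analogous $\SU_2$-statement holds for the Euclidean trace form on $\su_2$.
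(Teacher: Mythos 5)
Your proof is correct and is exactly the argument the paper intends: the text simply declares this lemma ``immediate,'' with the cyclic invariance of the trace being the whole content, just as in your first display. Your additional verification that conjugation by $F\in\SU_{1,1}$ preserves $\su_{1,1}$ (via $F^{-1}=\sigma_3F^*\sigma_3$ and $\sigma_3X^*\sigma_3=-X$) is a worthwhile bit of care that the paper leaves implicit.
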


We also have the following lemma:
\begin{lemma}\label{lm:rigidmotionL3}
There exists an $F\in\SU_{1,1}$ (unique up to sign $\pm F$) so that 
\[
e_1:=\frac{f_u}{|f_u|}=F\sigma_1F^{-1} \; ,\quad
e_2:=\frac{f_v}{|f_v|}=F\sigma_2F^{-1} \; ,\quad
N=Fi\sigma_3F^{-1} \; ,
\]
where $z=u+iv$ for $u,v\in\mathbb{R}$.
\end{lemma}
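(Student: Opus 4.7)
The plan is to identify $F\mapsto\operatorname{Ad}(F):=(X\mapsto FXF^{-1})$ as the double covering $\SU_{1,1}\to\operatorname{SO}^\uparrow_+(2,1)$ of the identity component of $O(2,1)$, and then invoke the transitivity of $\operatorname{SO}^\uparrow_+(2,1)$ on oriented, time-oriented orthonormal frames of $\mathbb{R}^{2,1}$ to produce the required $F$.

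By the preceding lemma each $\operatorname{Ad}(F)$ preserves $\langle\cdot,\cdot\rangle$, so $\operatorname{Ad}$ lands in $O(2,1)$. The parametrization $F=\begin{pmatrix}\alpha & \beta \\ \bar\beta & \bar\alpha\end{pmatrix}$ with $|\alpha|^2-|\beta|^2=1$ exhibits $\SU_{1,1}$ as $\mathbb{S}^1\times\mathbb{R}^2$ topologically, hence connected, so the image of $\operatorname{Ad}$ is contained in the identity component $\operatorname{SO}^\uparrow_+(2,1)$. If $\operatorname{Ad}(F)=I$ then $F$ commutes with $\sigma_1$, $\sigma_2$ and $i\sigma_3$; together with $I$ these span $M_{2\times 2}(\mathbb{C})$, so $F$ is a scalar matrix, and the only scalars in $\SU_{1,1}$ are $\pm I$; thus $\ker\operatorname{Ad}=\{\pm I\}$. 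Correspondingly the Lie-algebra differential $\operatorname{ad}:\su_{1,1}\to\mathfrak{so}(2,1)$ has trivial kernel, and since both algebras are real $3$-dimensional it is an isomorphism. Hence $\operatorname{Ad}(\SU_{1,1})$ is open in $\operatorname{SO}^\uparrow_+(2,1)$; any open subgroup of a topological group is also closed, and $\operatorname{SO}^\uparrow_+(2,1)$ is connected, so $\operatorname{Ad}(\SU_{1,1})=\operatorname{SO}^\uparrow_+(2,1)$.

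To finish, the assignment $\sigma_1\mapsto e_1$, $\sigma_2\mapsto e_2$, $i\sigma_3\mapsto N$ extends to a linear isometry of $\mathbb{R}^{2,1}$; it preserves the time orientation because $N$ is future-pointing, and the spatial orientation because $(e_1,e_2)$ inherits the orientation of $\Sigma$ from $z=u+iv$, so it lies in $\operatorname{SO}^\uparrow_+(2,1)$. By the double cover, this isometry equals $\operatorname{Ad}(F)$ for some $F\in\SU_{1,1}$, unique up to sign, which is the desired matrix. The main obstacle is the surjectivity of $\operatorname{Ad}$ onto $\operatorname{SO}^\uparrow_+(2,1)$; the dimension/connectedness argument above is the cleanest, but an alternative is to exhibit explicit elements of $\SU_{1,1}$ whose conjugation realizes boosts along $\sigma_1$, $\sigma_2$ and rotations about $i\sigma_3$, which together generate $\operatorname{SO}^\uparrow_+(2,1)$.
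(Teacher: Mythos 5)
Your proof is correct and follows the same overall strategy as the paper: both identify $F \mapsto (X \mapsto FXF^{-1})$ as a two-to-one homomorphism from $\SU_{1,1}$ onto the proper orthochronous Lorentz group and then lift the orthonormal frame $(e_1,e_2,N)$ through it. Where you differ is in how the two key facts are verified. The paper writes out the induced $3\times 3$ matrix $\varphi(F)$ explicitly in terms of $\alpha_1,\alpha_2,\beta_1,\beta_2$ and leaves surjectivity and the kernel computation as an exercise (``one can check''), whereas you supply both by a structural argument: connectedness of $\SU_{1,1}\cong\mathbb{S}^1\times\mathbb{R}^2$ to land in the identity component, the spanning property of $\{I,\sigma_1,\sigma_2,\sigma_3\}$ to show the kernel is $\{\pm I\}$, the resulting Lie-algebra isomorphism to get an open image, and the open--closed--connected argument for surjectivity. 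Your route avoids all computation with the explicit $\varphi$ and transfers verbatim to other real forms; the paper's explicit matrix has the advantage of being directly checkable and reusable later. One point both treatments pass over quickly: for the isometry determined by $\sigma_1\mapsto e_1$, $\sigma_2\mapsto e_2$, $i\sigma_3\mapsto N$ to lie in the identity component one needs the frame matrix $(e_1,e_2,N)$ to have determinant $+1$, i.e.\ compatibility of the orientation induced by $z=u+iv$ with the choice of future-pointing normal (for a parametrization with the opposite orientation no such $F$ exists). You at least state this hypothesis explicitly, while the paper leaves it implicit in asserting that the correspondence maps into $\text{SO}_{2,1}^+$.
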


\begin{proof}
We first define the Lorentz group $\text{O}_{2,1}$ as the set of $3\times 3$ 
real valued matrices $A$ which satisfy $A^t I_{2,1}A=I_{2,1}$, 
and the proper Lorentz group as 
\[
\text{SO}_{2,1}^+:=\left\{\!\!\left.A=\begin{pmatrix}
a_{11} & a_{12} & a_{10} \\
a_{21} & a_{22} & a_{20} \\
a_{01} & a_{02} & a_{00} \end{pmatrix}\right|
\begin{array}{c}A\in\text{O}_{2,1} \; , \\\det A=1 \; , \\ 
a_{00}>0\end{array}\right\} \; , \;\; \text{where} \;\; 
I_{2,1}:=\begin{pmatrix}1 & 0 & 0 \\
                        0 & 1 & 0 \\
                        0 & 0 &-1 \end{pmatrix} \; .
\]
Then the correspondence $F\in\SU_{1,1}$ with 
$(F\sigma_1F^{-1},F\sigma_2F^{-1},Fi\sigma_3F^{-1})$ can be considered as the 
map $\varphi :\SU_{1,1}\to\text{SO}_{2,1}^+$ given by 
\[
\varphi (F)
:=\begin{pmatrix}
\alpha_1^2-\alpha_2^2-\beta_1^2+\beta_2^2 & 2\alpha_1\alpha_2+2\beta_1\beta_2 & 
2\alpha_1\beta_2+2\alpha_2\beta_1 \\ 
-2\alpha_1\alpha_2+2\beta_1\beta_2 & \alpha_1^2-\alpha_2^2+\beta_1^2-\beta_2^2 & 
2\alpha_1\beta_1-2\alpha_2\beta_2 \\
2\alpha_1\beta_2-2\alpha_2\beta_1 & 2\alpha_1\beta_1+2\alpha_2\beta_2 & 
\alpha_1^2+\alpha_2^2+\beta_1^2+\beta_2^2\end{pmatrix}
\]
via the identification \eqref{eq:L3identify}, where 
\[
F=\begin{pmatrix}
\alpha_1+i\alpha_2 & \beta_1+i\beta_2 \\
\beta_1-i\beta_2 & \alpha_1-i\alpha_2 \end{pmatrix} \; , \;\;\; 
\alpha_1,\alpha_2,\beta_1,\beta_2\in\mathbb{R} \; , \;\;\;
\alpha_1^2+\alpha_2^2-\beta_1^2-\beta_2^2=1 \; . 
\]
One can check that $\varphi$ is a surjective homomorphism, and that 
$\varphi (F_1)=\varphi (F_2)$ if and only if $F_1=\pm F_2$.  
This completes the proof.
\end{proof}

Therefore, choosing $F$ as in Lemma \ref{lm:rigidmotionL3}, we have 
$f_u=2e^{\hat u}F\sigma_1F^{-1}$ and $f_v=2e^{\hat u}F\sigma_2F^{-1}$, and so 
\[
f_z=2e^{\hat u}F\begin{pmatrix}0&0\\1&0\end{pmatrix}F^{-1} \; , \quad
f_{\bar z}=2e^{\hat u}F\begin{pmatrix}0&1\\0&0\end{pmatrix}F^{-1} \; .
\]
We define $U$ and $V$ by
\[
F_z=FU \; , \quad F_{\bar z}=FV \; .
\]
Then, similar to the computation in Section 
3.2 of \cite{wisky}, we have 
\[
U=\frac{1}{2}\begin{pmatrix}-\hat u_z&-iQe^{-\hat u}\\2iHe^{\hat u}&\hat u_z\end{pmatrix} \; ,\quad
V=\frac{1}{2}\begin{pmatrix}\hat u_{\bar z}&-2iHe^{\hat u}\\i\bar Qe^{-\hat u}&-\hat u_{\bar z}
             \end{pmatrix} \; . \]

Now we consider the case when $f$ is a {\em maximal} surface, that is, 
the mean curvature $H$ is identically zero.  
(Sufficiently small portions of $H=0$ surfaces in $\mathbb{R}^{2,1}$ actually 
locally maximize area with respect to arbitrary smooth boundary-fixing 
variations, rather than minimize area as they would in $\mathbb{R}^3$.  
Hence they are called maximal surfaces rather than minimal surfaces.  
See \cite{Ca} and \cite{ChengYau}, for example.)


Defining functions $a,b:\Sigma\to\mathbb{C}$ so that
\[ F= \begin{pmatrix} e^{-\hat u/2} \bar a & e^{-\hat u/2} b \\ 
e^{-\hat u/2} \bar b & e^{-\hat u/2} a \end{pmatrix} \] 
holds, then $a\bar{a}-b\bar{b}=e^{\hat u}$, because $F \in \SU_{1,1}$.  
Since $V=F^{-1}F_{\bar z}$, we have 
\[
\frac{1}{2}\begin{pmatrix}\hat u_{\bar z}&0\\i\bar Qe^{-\hat u}&-\hat u_{\bar z} \end{pmatrix}
=\frac{1}{2}\begin{pmatrix}
-\hat u_{\bar z}+2e^{-\hat u}(a\bar{a}_{\bar{z}}-b\bar{b}_{\bar{z}})&
2e^{-\hat u}(ab_{\bar{z}}-ba_{\bar{z}})\\2e^{-\hat u}(\bar{a}\bar{b}_{\bar{z}}-
\bar{b}\bar{a}_{\bar{z}})&-\hat u_{\bar{z}}+2e^{-\hat u}(\bar{a}a_{\bar{z}}-
\bar{b}b_{\bar{z}})
\end{pmatrix} \; .  \]
It follows that 
\[ \begin{pmatrix} \bar a & -\bar b \\ -b & a \end{pmatrix}
   \begin{pmatrix} a_{\bar z} \\ b_{\bar z} \end{pmatrix}
  =\begin{pmatrix} 0 \\ 0 \end{pmatrix}
\]
and so $a_{\bar{z}}=b_{\bar{z}}=0$; that is, $a$ and $b$ are 
holomorphic.  

Note that 
\[ f_z=2e^{\hat u} F \begin{pmatrix}0&0\\1&0\end{pmatrix} F^{-1}
= 2 \begin{pmatrix} ab&-b^2\\a^2&-ab \end{pmatrix} \; , \] 
which is holomorphic and is written as 
\[ f_z=(a^2-b^2,-i(a^2+b^2),-2iab) \] 
in the (complexification of the) 
standard $\mathbb{R}^{2,1}$ coordinates via the identification 
\eqref{eq:L3identify}.  
Since $f$ is real-valued, by Remark 3.4.2 in \cite{wisky}, we have 
\[ \Re \int f_z dz = \frac{1}{2} f + (c_1, c_2, c_0) \]
for some constant $(c_1,c_2,c_0)\in\mathbb{R}^{2,1}$.  
So, up to a translation, 
\begin{equation}\label{eq:Koba-rep}
\begin{array}{r@{=}l}
f=2 \Re \displaystyle\int f_z dz 
& 2\Re \displaystyle\int\limits_{\!\!\!} (a^2-b^2,-i(a^2+b^2),-2iab) dz \\
& \Re \displaystyle\int (1+g^2,i(1-g^2),-2g) \eta \; , \end{array}
\end{equation}
where $g=-ia/b$ and $\eta=-2b^2 dz$.
This is the Weierstrass-type representation for maximal surfaces as in 
\cite{KO1}.  We have just shown the following:

\begin{theorem} (The representation of O. Kobayashi \cite{KO1}) 
Any maximal immersion from a simply-connected domain $\Sigma$ into 
$\mathbb{R}^{2,1}$ can be given the parametrization 
\eqref{eq:Koba-rep}, using a meromorphic function 
$g:\Sigma\to\mathbb{C}$ and holomorphic 1-form $\eta$ on $\Sigma$.  
\end{theorem}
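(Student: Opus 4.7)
The plan is to follow the same moving-frames calculation that has already been set up prior to the theorem, but to organize it as a clear derivation of the representation formula. First I would use Theorem \ref{conformalityispossible} to assume without loss of generality that the immersion $f$ is conformal, so that $\langle f_z,f_z\rangle = 0$ and $\langle f_z,f_{\bar z}\rangle = 2e^{2\hat u}$. Then I would invoke Lemma \ref{lm:rigidmotionL3} to lift the orthonormal frame $(e_1,e_2,N)$ to a map $F:\Sigma\to\SU_{1,1}$, and write
\[
F = e^{-\hat u/2}\begin{pmatrix} \bar a & b \\ \bar b & a \end{pmatrix},
\]
so that the $\SU_{1,1}$ condition becomes $a\bar a - b\bar b = e^{\hat u}$. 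The key point is that the geometric data of $f$ is now encoded in the pair of functions $(a,b)$.

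Next I would use the structure equation $F_{\bar z} = FV$, where $V$ was computed above the theorem. When $H\equiv 0$, the off-diagonal entries of $V$ involving $H$ vanish, and matching the entries of $F^{-1}F_{\bar z}$ against the simplified $V$ produces the linear system
\[
\begin{pmatrix} \bar a & -\bar b \\ -b & a \end{pmatrix}
\begin{pmatrix} a_{\bar z} \\ b_{\bar z} \end{pmatrix}
= \begin{pmatrix} 0 \\ 0 \end{pmatrix}.
\]
Since $\det\!\begin{pmatrix}\bar a & -\bar b\\ -b & a\end{pmatrix} = a\bar a - b\bar b = e^{\hat u}\neq 0$, this forces $a_{\bar z} = b_{\bar z} = 0$, so both $a$ and $b$ are holomorphic on $\Sigma$. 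This is the heart of the proof: maximality converts the frame equation into a Cauchy--Riemann condition on the entries of $F$.

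With $a,b$ holomorphic in hand, I would compute
\[
f_z = 2e^{\hat u} F\begin{pmatrix}0&0\\1&0\end{pmatrix}F^{-1}
    = 2\begin{pmatrix} ab & -b^2 \\ a^2 & -ab \end{pmatrix},
\]
and read off its components in $\mathbb{R}^{2,1}$ via the identification \eqref{eq:L3identify} as $f_z = (a^2-b^2,\,-i(a^2+b^2),\,-2iab)$. Since $f_z$ is holomorphic and $f$ is real-valued, Remark 3.4.2 in \cite{wisky} gives $f = 2\Re\int f_z\,dz$ up to a constant translation; simple-connectedness of $\Sigma$ guarantees the integral is well defined independent of path. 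Finally, the substitution $g = -ia/b$ and $\eta = -2b^2\,dz$ converts the integrand into $(1+g^2,\,i(1-g^2),\,-2g)\eta$, yielding \eqref{eq:Koba-rep}; here $g$ is meromorphic (poles occur where $b=0$) and $\eta$ is a holomorphic $1$-form on $\Sigma$.

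The main obstacle I anticipate is a minor but genuine one: verifying that $g$ and $\eta$ are really the right types of objects despite the denominators introduced by dividing by $b$. One needs to check that the poles of $g$ are cancelled by zeros of $\eta$ of twice the order, so that the integrand in \eqref{eq:Koba-rep} remains a holomorphic $\mathbb{C}^3$-valued $1$-form; this follows because $\eta = -2b^2\,dz$ vanishes to order $2k$ wherever $b$ vanishes to order $k$, and $g=-ia/b$ has a pole of order at most $k$ there. Everything else is a routine translation between the frame picture and the Weierstrass-type formula.
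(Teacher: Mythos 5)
Your proposal is correct and follows essentially the same route as the paper: conformality via Theorem \ref{conformalityispossible}, the $\SU_{1,1}$ frame of Lemma \ref{lm:rigidmotionL3}, holomorphicity of $a,b$ from the $F_{\bar z}=FV$ equation with $H\equiv 0$, and integration of $f_z$ followed by the substitution $g=-ia/b$, $\eta=-2b^2\,dz$. Your closing remark on the cancellation of poles of $g$ by the order-$2k$ zeros of $\eta$ is a worthwhile detail that the paper leaves implicit.
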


Also, the metric of the maximal surface is expressed as 
\[
(1-g\bar g)^2\eta\bar\eta =4e^{2\hat u}dzd\bar z \; . 
\]
Note that $g\bar g>1$, 
since $a\bar a-b\bar b>0$.  
Note also that we have one minus sign on the left side, unlike the plus 
sign we would have in the case of minimal surfaces in $\mathbb{R}^3$, 
see \cite{wisky}.  This minus sign here allows for singularities, because 
when $|g|$ approaches $1$ (i.e. the Gauss map approaches the ideal 
boundary of $\mathbb{H}^2$), the metric degenerates and singularities 
occur.  

The normal is 
\[
N=Fi\sigma_3F^{-1}=ie^{-\hat u}
\begin{pmatrix}a\bar a+b\bar b & -2\bar ab \\ 2a\bar b & -(a\bar a+b\bar b)
\end{pmatrix} \; , 
\]
which is written as 
\begin{eqnarray*}
N &=& 
e^{-\hat u}\left( i(a\bar b-\bar a b),a\bar b+\bar a b,a\bar a+b\bar b\right) \\
  &=& 
\left( \frac{-g-\bar g}{g\bar g -1},i\frac{g-\bar g}{g\bar g -1},
       \frac{g\bar g +1}{g\bar g -1}\right)
\end{eqnarray*}
in the standard $\mathbb{R}^{2,1}$ coordinates via the identification 
\eqref{eq:L3identify}.  
Thus the function 
\[
g:\Sigma\to\mathcal{C}
 :=(\mathbb{C}\cup\{\infty\})\setminus\{z\in\mathbb{C}\;|\;|z|\le 1\}
\]
is the composition of the Gauss map with stereographic projection from 
$\mathbb{H}^2$ (as in \eqref{eq:GMofL3}) to $\mathcal{C}$, and, as noted 
above, singularities of the surface occur when $|g|$ becomes $1$.  

\begin{remark}
If we assume that the mean curvature $H$ is a non-zero constant, 
then we have the Sym-Bobenko type formula (see \cite{BRS}) 
\[ f(z,\bar z)= \left. \frac{-i}{2H}\left(F 
\begin{pmatrix}
1 & 0 \\ 0 & -1 
\end{pmatrix}
             F^{-1}
             +2\lambda(\partial_\lambda F)F^{-1}\right) 
             \right|_{\lambda = 1} \; . 
\]
\end{remark}

\begin{remark}
For the purpose of considering 
spacelike CMC surfaces in $\mathbb{R}^{2,1}$ via the DPW method, 
we make this remark: Birkhoff splitting for the $\mathbb{R}^{2,1}$ case 
is the same as in Theorem 4.2.4 in \cite{wisky}, because $\SU_{1,1}$ and 
$\SU_2$ are both real forms of $\SL_2\!\mathbb{C}$.  
($\SU_{1,1}$ is a noncompact 
real form and $\SU_2$ is the compact real form of $\SL_2\!\mathbb{C}$.) 
However, the product of loop groups 
$\Lambda\SU_{1,1}\times\Lambda_+^\mathbb{R}\SL_2\!\mathbb{C}$ 
is now only an open dense subset of $\Lambda\SL_2\!\mathbb{C}$, 
so we do not have a global Iwasawa splitting available for 
a DPW style construction of spacelike surfaces in $\mathbb{R}^{2,1}$, 
see \cite{BRS}, \cite{In} and \cite{Kell}.  (When the ambient space 
is $\mathbb{R}^3$, i.e. in the $\SU_2$ case, 
there is a global Iwasawa splitting.)

The non-globalness of the Iwasawa splitting is directly related to 
singularities on the surfaces, because singularities occur 
precisely where the Iwasawa splittings associated with the surfaces 
leave 
$\Lambda\SU_{1,1}\times\Lambda_+^\mathbb{R}\SL_2\!\mathbb{C}$ 
(see \cite{BRS}).  
\end{remark}

\begin{remark}
$SU_{1,1}$ is isomorphic as a group to $SL_2\mathbb{R}$.  For example, 
\[
\text{SU}_{1,1} \ni 
\begin{pmatrix}
p_1+ip_2 & q_1+iq_2 \\ q_1-iq_2 & p_1-ip_2
\end{pmatrix}
\leftrightarrow
\begin{pmatrix}
p_1- q_1 & p_2 + q_2 \\ -p_2+q_2 & p_1+ q_1
\end{pmatrix}
\in \text{SL}_2\mathbb{R} 
\]
is one such isomorphism, where $p_j,q_j \in \mathbb{R}$ satisfy 
$p_1^2+p_2^2-q_1^2-q_2^2 = 1$.  As a result, either 
$\text{su}_{1,1}$ or $\text{sl}_2\mathbb{R}$ can be used to 
represent $\mathbb{R}^{2,1}$.  (Both ways of representing 
$\mathbb{R}^{2,1}$ can be found in the references 
\cite{Ino-next3}, \cite{Ino-next1}, \cite{Ino-next2} and 
\cite{In-toda}.)
\end{remark}

\section{Linear conserved quantities for 
smooth CMC surfaces}\label{sect:lcq-smooth}
\label{chaponsmoothCMCsurf}

In the next chapter we introduce an approach to discrete CMC surfaces 
coming from \cite{BHRS}.  
But to motivate that discussion, in this chapter 
we first explain a result of Burstall and Calderbank \cite{BurstCald} 
for the case of smooth CMC surfaces.  We begin by describing 
the $3$-dimensional space forms using the $5$-dimensional 
Minkowski space $R^{4,1}$.  

\subsection{Minkowski $5$-space}\label{subsec:Mink3ps} 
We give  a $2 \times 2$ matrix formulation for Minkowski $5$-space.  
Let $H$ denote the quaternions and $\Im H$ the imaginary quaternions.  
(We use $H$ to denote both the quaternions and the 
mean curvature of surfaces, but this should not create any confusion, as 
it will always be clear from context which meaning $H$ has in 
each case.)  
\begin{equation}\label{star8point1} 
\mathbb{R}^{4,1} = \left\{ \left. X=\begin{pmatrix} x & x_\infty \\ 
           x_0 & -x \end{pmatrix} \, \right| \, x \in \Im H , 
           x_0, x_\infty \in \mathbb{R} \right\} \end{equation} 
with Minkowski metric $\langle X,Y \rangle$ such that 
\begin{equation}\label{star8point2} 
\langle X,Y \rangle \cdot I = -\tfrac{1}{2} (XY+YX) \; , 
\end{equation} 
$I$ = identity matrix.  
This metric has signature 
$(+,+,+,+,-)$ with respect to the (orthonormal) basis 
\[ \begin{pmatrix} i & 0 \\ 0 & -i \end{pmatrix} \; , \;\; 
\begin{pmatrix} j & 0 \\ 0 & -j \end{pmatrix} \; , \;\;
\begin{pmatrix} k & 0 \\ 0 & -k \end{pmatrix} \; , \;\;
\begin{pmatrix} 0 & 1 \\ -1 & 0 \end{pmatrix} \; , \;\;
\begin{pmatrix} 0 & 1 \\ 1 & 0 \end{pmatrix} \; . \]
If we set $x_4=\tfrac{1}{2} (x_\infty-x_0)$, 
$x_5=\tfrac{1}{2} (x_\infty+x_0)$, we can write $X$ as 
\[ X= x_1 \begin{pmatrix} i & 0 \\ 0 & -i \end{pmatrix} + x_2 
\begin{pmatrix} j & 0 \\ 0 & -j \end{pmatrix} + x_3 
\begin{pmatrix} k & 0 \\ 0 & -k \end{pmatrix} + x_4 
\begin{pmatrix} 0 & 1 \\ -1 & 0 \end{pmatrix} + x_5 
\begin{pmatrix} 0 & 1 \\ 1 & 0 \end{pmatrix} \; , \] 
where $x=x_1i+x_2j+x_3k$, and then we have the correspondence 
$X \leftrightarrow (x_1,x_2,x_3,x_4,x_5)$ to the more usual way 
\[ \{ \xi=(x_1,x_2,x_3,x_4,x_5) \in \mathbb{R}^5 \, | \, 
||\xi||=\text{sgn}(\delta) \sqrt{|\delta|}, \, 
\delta = x_1^2+ x_2^2+ x_3^2+ x_4^2 - x_5^2 \} \]
of denoting $\mathbb{R}^{4,1}$.  The $4$-dimensional light cone is 
\[ L^4=\{ X \in \mathbb{R}^{4,1} \, | \, ||X||=0 \} \; . \]  
We can make the $3$-dimensional space forms as follows: 
A space form $M$ is \[ M = \{ X \in L^4 \, | \, 
\langle X,Q \rangle=-1 \} \] for any nonzero $Q \in R^{4,1}$.  
It will turn out that $M$ has constant sectional 
curvature $\kappa$, where 
$Q^2=\kappa \cdot I$, so without loss of generality we can obtain any 
space form by choosing 
\begin{equation}\label{choiceofQ} Q = \begin{pmatrix} 0 & 1 \\ 
\kappa & 0 \end{pmatrix} \; , \end{equation}
and then, after appropriately scaling $x$, 
and letting $\Im H \cup \{ \infty \}$ denote the one point 
compactification of $\Im H$, we can write 
\begin{equation}\label{star8point4} 
M = \left\{ \left. X=\frac{2}{1-\kappa x^2} \cdot 
\begin{pmatrix} x & -x^2 \\ 1 & -x \end{pmatrix} \, \right| \, 
x=x_1 i + x_2 j +x_3 k \in \Im H \cup \{ \infty \} \, , \; x^2 \neq 
\kappa^{-1} \right\} \, , \end{equation}
which is equivalent to $\{ (x_1,x_2,x_3) \in \mathbb{R}^3 \cup 
\{ \infty \} \, | x_1^2 + x_2^2 + x_3^2 \neq -\kappa^{-1}\}$.  
Note that when $\kappa<0$, $M$ 
becomes two copies of hyperbolic 3-space with sectional curvature 
$\kappa$.  Also, note the following property:
\[
 1-\kappa x^2 \;\; \text{is never zero for points in $M$.}
\]
$M$ is called a {\em quadric}, because it is determined by a quadratic
equation (for the light cone $L^4$) and a linear equation (for the 
hyperplane slicing through $L^4$ that produces $M$).  

\begin{remark}\label{rem:scaling}
Given any \[ \alpha \begin{pmatrix} x & -x^2 \\ 1 & -x 
\end{pmatrix} \] living in the projectivized light cone $PL^4$, 
for any real scalar $\alpha$, we can uniquely choose $\alpha$ 
so that we get a point in $M$, and so sometimes we can 
neglect the real scalar 
$\alpha$, or simply freely choose any $\alpha$ we like.  
\end{remark}

\begin{remark}
We have also used the letter $Q$ to denote the Hopf differential 
function.  Wherever we think this might cause confusion, 
we change the notation for the Hopf differential function to $\hat Q$.  
\end{remark}

The tangent space of $M$ at $X$ is 
\[ T_XM = \left\{ \mathcal{T}_a=\frac{2}{(1-\kappa x^2)^2} \cdot 
\begin{pmatrix} a+\kappa x a x & -x a-a x \\ 
\kappa (x a+ a x) & -a-\kappa x a x \end{pmatrix} \right\} \; , \] 
for $a \in \Im H$.  When $X=X(t) \in M$ is a smooth 
function of a real variable $t$, and when $\prime$ denotes 
differentiation with respect to $t$, we have 
\[ X^\prime = \mathcal{T}_{x^\prime} \; . \]
A computation gives 
\begin{equation}\label{conf-equivalence} 
\langle \mathcal{T}_{a} , \mathcal{T}_{b} \rangle = 
\frac{-4}{(1-\kappa x^2)^2} \text{Re}(ab) \; , \end{equation} 
\[ || \mathcal{T}_{a} || = 1 \Leftrightarrow |a|=\tfrac{1}{2} 
|1-\kappa x^2| \; . \]  Also, 
\begin{equation}\label{Xprimeprime-eqn} 
X^{\prime\prime} = \mathcal{T}_{\tfrac{2 \kappa 
(x x^\prime+x^\prime x)}{1 - \kappa x^2} 
\cdot x^\prime+x^{\prime\prime}} + 
\frac{4 (x^\prime)^2}{(1-\kappa x^2)^2} \cdot 
\begin{pmatrix} \kappa x & -1 \\ 
\kappa & -\kappa x \end{pmatrix} \; . \end{equation} 
Note that generally $X^{\prime\prime}$ is not contained in $T_X M$.  

The following lemma follows from \eqref{conf-equivalence}.  

\begin{figure}[phbt]
\begin{center}
\includegraphics[width=1.0\linewidth]{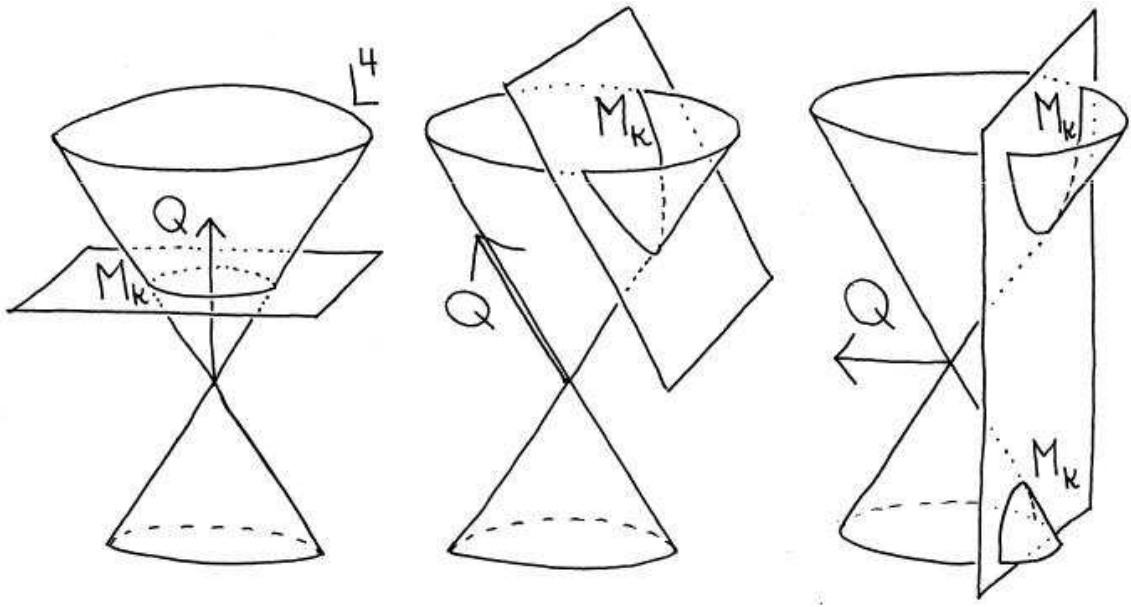}
\caption{Three choices of $\kappa$ ($\kappa>0$, 
$\kappa=0$, $\kappa<0$) giving the space forms $\mathbb{S}^3$, 
$\mathbb{R}^3$ and (two copies of) $\mathbb{H}^3$.}
\end{center}
\end{figure}

\begin{lemma}
The $M$ determined by the $Q$ in \eqref{choiceofQ} has 
constant sectional curvature $\kappa$.  
\end{lemma}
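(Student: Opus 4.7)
The plan is to pull back the induced metric on $M$ along the explicit parametrization $x \mapsto X$ of \eqref{star8point4} and recognize the resulting metric on (an open subset of) $\Im H \cong \mathbb{R}^3$ as a standard conformally flat model of constant sectional curvature $\kappa$. For the pullback, I would apply \eqref{conf-equivalence}: writing $a = a_1 i + a_2 j + a_3 k$ and $b = b_1 i + b_2 j + b_3 k$, direct quaternion multiplication yields $\mathrm{Re}(ab) = -(a_1 b_1 + a_2 b_2 + a_3 b_3)$, and since $x \in \Im H$ satisfies $x^2 = -|x|^2$, the formula \eqref{conf-equivalence} collapses to
\[ g = \frac{4}{(1+\kappa|x|^2)^2}\,(dx_1^2 + dx_2^2 + dx_3^2) \]
on the open set $\{x \in \Im H \, | \, 1 + \kappa|x|^2 \neq 0\}$.

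Next I would identify $g$ with the appropriate standard model. For $\kappa = 0$, $g$ is a constant multiple of the Euclidean metric and is therefore flat. For $\kappa > 0$, $g$ agrees with the pullback under inverse stereographic projection of the round metric on the $3$-sphere of radius $1/\sqrt{\kappa}$, which has constant sectional curvature $\kappa$. For $\kappa < 0$, restricting to $|x|^2 < |\kappa|^{-1}$ and rescaling $y = \sqrt{|\kappa|}\,x$ turns $g$ into a constant multiple of the Poincar\'e ball metric on $\mathbb{H}^3$ (of curvature $-1$), and the rescaling multiplies sectional curvatures by $|\kappa|$, yielding $\kappa$. The complementary region $|x|^2 > |\kappa|^{-1}$, together with the point at infinity, furnishes the second copy of $\mathbb{H}^3$ mentioned after \eqref{star8point4}.

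The main obstacle is primarily bookkeeping: carefully tracking the conformal factor, the sign of $\kappa$, and the scalings needed to match the standard normalizations of the spherical and Poincar\'e models. If one prefers to avoid invoking standard models, an alternative route is to observe that the stabilizer of $Q$ in the Lorentz group of $\mathbb{R}^{4,1}$ acts isometrically on $M$ and is transitive on orthonormal $2$-frames of its tangent bundle; constancy of sectional curvature then follows on general grounds, and the value $\kappa$ can be extracted at the single point $x = 0$ by using \eqref{Xprimeprime-eqn} to read off the normal part of the second fundamental form of $M \subset \mathbb{R}^{4,1}$ and then applying the Gauss equation.
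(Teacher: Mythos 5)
Your proof is correct and follows the same route the paper intends: the text's entire justification is the one-line remark that the lemma ``follows from \eqref{conf-equivalence},'' i.e.\ from recognizing the induced metric as $\tfrac{4}{(1+\kappa|x|^2)^2}(dx_1^2+dx_2^2+dx_3^2)$, which is exactly the computation you carry out and then match to the standard conformally flat constant-curvature models. Your alternative homogeneity argument is a bonus that mirrors the method the paper uses for Lemma \ref{H3hascurvatureminus1}, but the main line of your proposal coincides with the paper's.
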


We see from \eqref{conf-equivalence} that the collection of 
$M$ given by the above choice \eqref{choiceofQ} 
for $Q$, for various $\kappa$, are 
all conformally equivalent (or M\"obius equivalent).  
In fact, the map $M \ni X \to x \in \text{Im}H \approx 
\mathbb{R}^3$ is 
stereographic projection when $\kappa \neq 0$.  (See Figure 15.)  

\subsection{Smooth surfaces in space forms} 

We now consider surfaces in the space forms.  Let 
\[ x=x_1(u,v)i+x_2(u,v)j+x_3(u,v)k \leftrightarrow X = X(u,v) \in M 
\] be a surface in $M$.  
(In this chapter we will use $x$ and $X$ to denote surfaces.)  
Assume $(u,v)$ is a conformal curvature-line coordinate system 
(every CMC surface can be parametrized this way, away from 
umbilic points).  We call 
such coordinates {\em isothermic} coordinates.  

Note that $x_1$, $x_2$ and $x_3$ can be chosen before the space 
form $M$ is chosen, and only once $M$ (and hence $\kappa$) is chosen 
do we know the form of $X$.  In particular, the surface can be defined
before the space form is chosen.  

\begin{remark}
The phrase "isothermic coordinates" means simply conformal 
curvature-line coordinates.  However, the phrase "isothermic 
surface" will mean for us any surface for which isothermic 
coordinates exist, even if those isothermic coordinates have 
not been determined yet.   
\end{remark}

{\bf Notation:} Because we will always choose $Q$ 
as in \eqref{choiceofQ}, we will indicate this by denoting $M$ as 
$M_\kappa$, with the subscript $\kappa$.  
We let $n$ denote the unit normal vector for $x$, once $M_\kappa$ 
is chosen.  $n_0$ denotes the unit normal with respect to Euclidean 
$3$-space $M_0$, where 
$\kappa = 0$.  We sometimes write $H_\kappa$ for the mean curvature 
of the surface $x$ with respect to the 
space form $M_\kappa$, to indicate that the mean 
curvature depends on the choice of space form.  $H_0$ is the 
mean curvature in the case of Euclidean $3$-space $M_0$. 

\begin{lemma}\label{lem-meancurv}
The mean curvature $H_\kappa$ of $x$ with respect to the 
space form $M_\kappa$ given by $Q$ as in \eqref{choiceofQ}, with 
$\triangle x = \partial_u \partial_u x + 
\partial_v \partial_v x$, is 
\[ H_\kappa = \tfrac{-1}{2} |x_u|^{-2} \Re\{ \triangle x \cdot n \} - 
\frac{\kappa}{1-\kappa x^2} (xn+nx) = \]\[ 
= \tfrac{-1}{2} (1-\kappa x^2) |x_u|^{-2} \Re\{ \triangle x \cdot n_0 \} - 
\kappa (x n_0+n_0 x) = 
\]\[ (1-\kappa x^2) H_0 - \kappa (x n_0+n_0 x) \; . \]  
Then $H_\kappa$ is constant exactly when 
$\partial_u H_\kappa=\partial_v H_\kappa=0$, which is equivalent to 
\begin{equation}\label{derivofCMC} 
(\partial_u H_0) \cdot (1-\kappa x^2) = \kappa \tfrac{k_2-k_1}{2} 
\partial_u(x^2) \; , \;\;\; 
(\partial_v H_0) \cdot (1-\kappa x^2) = \kappa \tfrac{k_1-k_2}{2} 
\partial_v(x^2) \; , 
\end{equation} where the $k_j \in \mathbb{R}$ are 
the principal curvatures with respect to the Euclidean space form 
$M_0$, i.e. $\partial_u n_0 = - k_1 \partial_u x$ and 
$\partial_v n_0 = - k_2 \partial_v x$.  
\end{lemma}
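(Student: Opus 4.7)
The plan is to compute $H_\kappa$ directly in the $2\times 2$ matrix model, first obtaining the first equality by projecting $X_{uu}+X_{vv}$ onto the normal to $M_\kappa$ using \eqref{Xprimeprime-eqn} and the conformal relation \eqref{conf-equivalence}. The second and third equalities then follow by recasting the expression in terms of the Euclidean unit normal $n_0$. For the constancy criterion \eqref{derivofCMC}, I differentiate the third equality and exploit the curvature-line structure of the isothermic coordinates $(u,v)$.

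For the first equality: in conformal coordinates the mean curvature satisfies $H_\kappa=\frac{1}{2|X_u|^2}\langle X_{uu}+X_{vv},N\rangle$, where $N$ is the $M_\kappa$-unit normal. Applying \eqref{Xprimeprime-eqn} with ${}'=\partial_u$ and with ${}'=\partial_v$ and summing gives $X_{uu}+X_{vv}=\mathcal{T}_{\triangle x+\Xi}+\mathcal{R}$, where $\Xi=\frac{2\kappa}{1-\kappa x^2}[(xx_u+x_ux)x_u+(xx_v+x_vx)x_v]$ and $\mathcal{R}$ denotes the extra matrix term from \eqref{Xprimeprime-eqn}. Since $xx_u+x_ux=-2\,x\cdot x_u$ is a real scalar, $\Xi$ is a real-linear combination of $x_u,x_v$ and therefore Euclidean-tangent to the surface, so its pairing against $N$ vanishes. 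Decomposing $\mathcal{R}$ into its $M_\kappa$-tangential and its $\mathbb{R}^{4,1}$-normal-to-$M_\kappa$ parts (the latter spanned by $X$ and $Q$), the normal part drops out of the pairing, while the tangential part is proportional to $\mathcal{T}_x$ and so contributes a multiple of $xn+nx$. Evaluating both contributions via \eqref{conf-equivalence} using $a\cdot b=-\Re(ab)$ for $a,b\in\Im H$ yields the first equality.

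The second equality follows by writing $n$ in terms of the Euclidean unit normal $n_0$ via the rescaling dictated by the metric comparison between $M_\kappa$ and Euclidean $\Im H$; this pulls the factor $1-\kappa x^2$ out of $-\tfrac12|x_u|^{-2}\Re(\triangle x\cdot n)$ and converts the coefficient $\frac{\kappa}{1-\kappa x^2}$ in front of $xn+nx$ to $\kappa$ in front of $xn_0+n_0x$. The third equality is then immediate from the standard Euclidean formula $H_0=-\tfrac12|x_u|^{-2}\Re(\triangle x\cdot n_0)$, obtained from the classical conformal-coordinate expression $H_0=\tfrac12|x_u|^{-2}(x_{uu}+x_{vv})\cdot n_0$ via $a\cdot b=-\Re(ab)$.

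For the constancy equations \eqref{derivofCMC}, differentiate the third equality with respect to $u$. The key identity is $\partial_u(xn_0+n_0x)=(x_un_0+n_0x_u)+x(\partial_un_0)+(\partial_un_0)x=-k_1\partial_u(x^2)$, using $x_u\cdot n_0=0$ to obtain $x_un_0+n_0x_u=-2(x_u\cdot n_0)=0$, together with the curvature-line condition $\partial_un_0=-k_1x_u$. Combining with the product-rule expansion of $(1-\kappa x^2)H_0$ and using $H_0=(k_1+k_2)/2$ to rewrite $k_1-H_0=(k_1-k_2)/2$ gives $\partial_uH_\kappa=(1-\kappa x^2)\partial_uH_0+\kappa\tfrac{k_1-k_2}{2}\partial_u(x^2)$, and setting this equal to zero is the first equation of \eqref{derivofCMC}. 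The analogous computation for $\partial_v$, using $\partial_vn_0=-k_2x_v$, yields the second equation. The main obstacle I anticipate is the bookkeeping of the $M_\kappa$-tangential-versus-normal split of $\mathcal{R}$ in \eqref{Xprimeprime-eqn} and pinning down the precise rescaling relating $n$ to $n_0$; once those are settled, all remaining identities are algebraic consequences of the quaternion product rules.
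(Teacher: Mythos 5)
Your proposal is correct and follows essentially the same route as the paper: both compute the normal components of the second derivatives via \eqref{Xprimeprime-eqn}, pair against the unit normal $\mathcal{T}_n$ using \eqref{conf-equivalence}, convert to $n_0$ through $n=(1-\kappa x^2)n_0$, and then obtain \eqref{derivofCMC} by differentiating $(1-\kappa x^2)H_0-\kappa(xn_0+n_0x)$ with the curvature-line relations $\partial_u n_0=-k_1x_u$, $\partial_v n_0=-k_2x_v$. Your tangential/normal splitting of the extra matrix term in \eqref{Xprimeprime-eqn} is a mild repackaging of the paper's direct computation of $b_{11}$ and $b_{22}$, and your explicit derivation of \eqref{derivofCMC} fills in a step the paper leaves to the reader.
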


\begin{proof}
Letting $x_{1u}$ denote $\tfrac{d}{du} (x_1)$, and 
similarly taking other notations, 
the unit normal vector to the surface is $\mathcal{T}_n$, where $n = 
(1-\kappa x^2) n_0$ and 
\[ n_0 = \frac{1}{2} \cdot \frac{(x_{2u} x_{3v}-x_{3u} x_{2v})i+ 
    (x_{3u} x_{1v}-x_{1u} x_{3v})j+ 
    (x_{1u} x_{2v}-x_{2u} x_{1v})k}{\sqrt{(x_{2u} x_{3v}-x_{3u} x_{2v})^2+
    (x_{3u} x_{1v}-x_{1u} x_{3v})^2+(x_{1u} x_{2v}-x_{2u} x_{1v})^2}} \; . \]
The first fundamental form $(g_{ij})$ satisfies 
$\langle \mathcal{T}_{x_u} , \mathcal{T}_{x_v} \rangle = 0 = g_{12} 
= g_{21}$, and 
\[ g_{11} = \langle \mathcal{T}_{x_u} , \mathcal{T}_{x_u} \rangle = 
\frac{4 |x_u|^2}{(1-\kappa x^2)^2} = \frac{4 |x_v|^2}{(1-\kappa x^2)^2}
 = \langle \mathcal{T}_{x_v} , \mathcal{T}_{x_v} \rangle = g_{22} \; . \]
Then using \eqref{Xprimeprime-eqn}, 
with the symbol $^\prime$ denoting either $\partial_u$ or $\partial_v$, 
we have 
(where the superscript "$T$" denotes the part of a 
vector tangent to $T_X M$)
\[ b_{11} = \langle X_{uu}^T, \mathcal{T}_n \rangle = 
\langle X_{uu}, \mathcal{T}_n \rangle = 
\frac{-4}{(1-\kappa x^2)^2} \Re\{ x_{uu} \cdot n \} + 
\frac{4 \kappa x_u^2}{(1-\kappa x^2)^3} (xn+nx) \; , \]
\[ b_{12} = b_{21} = \langle X_{uv}^T, \mathcal{T}_n \rangle = 
\langle X_{uv}, \mathcal{T}_n \rangle = 0 \; , \] \[ b_{22} 
 = \langle X_{vv}^T, \mathcal{T}_n \rangle = 
\langle X_{vv}, \mathcal{T}_n \rangle = 
\frac{-4}{(1-\kappa x^2)^2} \Re\{ x_{vv} \cdot n \} + 
\frac{4 \kappa x_v^2}{(1-\kappa x^2)^3} (xn+nx) \; . \]  
The result follows, using $H_0 = (k_1+k_2)/2$.  
\end{proof}

\begin{remark}
Thomsen proved in the 1920's that isothermic Willmore 
surfaces $x$ in the conformal $3$-sphere (i.e. surfaces that 
are critical with respect to the functional 
$\int (H^2-K) dA$) have a $Q \in \mathbb{R}^{4,1}$ so that $x$ 
becomes minimal in the space form represented by $Q$.  (See the 
third volume of Blaschke's texts \cite{Blaschke}.)
\end{remark}

\subsection{Spheres}
The spheres in any of the space forms $M_\kappa$ are the surfaces 
$x$ such that $|x-C_0|$ is constant for some constant 
$C_0 \in \text{Im}H$.  In the case that $\kappa=0$, if the 
sphere has radius $r_0$, then $r_0H_0=1$ (in particular, 
$H_0$ is positive with respect to the orientation of $n_0$ in 
the above proof).  
Thus the sphere can be written as $x=(-1/H_0) n_0+C_0$ for some 
constant $C_0$.  Then the equation $H_\kappa = (1-\kappa x^2) 
H_0 - \kappa (x n_0+n_0 x)$ gives the following formula 
\begin{equation}\label{Hkappa-vs-H0}
  H_\kappa = H_0 -\frac{\kappa}{4 H_0}-H_0 \kappa C_0^2 
\end{equation} for the relationships between the different 
mean curvatures for a sphere considered in the different 
space forms $M_\kappa$.  

A point 
\[ \mathcal{S} = \begin{pmatrix}
z & z_\infty \\ z_0 & -z
\end{pmatrix}
\] in $\mathbb{R}^{4,1}$ with positive norm 
\[ ||\mathcal{S}|| = \sqrt{-z^2-z_0 z_\infty} > 0 \] 
determines a sphere $\tilde{\mathcal{S}}$ in the 
space form $M_\kappa$ as follows (see Figure 14): Set 
\begin{equation}\label{eq:S-tilde-sphere} 
\tilde{\mathcal{S}} = \{ Y \in M_\kappa \, | \, 
\langle Y,\mathcal{S} \rangle = 0 \} \; . \end{equation} 

Note that $Y \in \tilde{\mathcal{S}}$ implies 
$Y$ is perpendicular to $\mathcal{S}-Y$, so $\tilde{\mathcal{S}}$ 
is the base of the tangent cone from $\mathcal{S}$ to $PL^4$, as 
pictured in Figure 14.  

So we have now seen how both points {\em and} spheres in 
the space forms can be described by just points in the single 
space $\mathbb{R}^{4,1}$, which is a valuable property, from the viewpoint 
of M\"obius geometry.  Note that $\tilde{\mathcal{S}}$ is invariant 
under real scalings of $\mathcal{S}$, and that if $\mathcal{S}$ 
satisfies $z_0=-z_\infty$, then $\tilde{\mathcal{S}}$ is a great 
hypersphere in $M_1= \mathbb{S}^3$.  Also, note that if 
$||\mathcal{S}|| = 0$, then $\mathcal{S}$ is a point in 
$\mathbb{S}^3$ and $\tilde{\mathcal{S}}$ 
is just a real scalar multiple of $\mathcal{S}$, hence
$\tilde{\mathcal{S}}$ simply gives back the same point $\mathcal{S}$.  

Let $\ell$ be the horizontal line segment from $\mathcal{S}$ to 
the timelike axis $\{(0,0,0,0,t) \, | \, t \in \mathbb{R} \}$.  Then $m = 
\ell \cap L^4$ is a single point, which, when considered as being in
the projectivized light cone 
$PL^4$, gives the center of $\tilde{\mathcal{S}}$ in the space form 
$M_1 = \mathbb{S}^3$.  

\begin{lemma}\label{lem:anglebetweenspheres}
Let $\tilde{\mathcal{S}}_1,\tilde{\mathcal{S}}_2$ be two intersecting 
spheres in $\mathbb{S}^3$ produced from $\mathcal{S}_1,\mathcal{S}_2$, 
respectively, and suppose $||\mathcal{S}_1||=||\mathcal{S}_2||=1$.  Let 
$\alpha$ be the intersection 
angle between $\tilde{\mathcal{S}}_1$ and $\tilde{\mathcal{S}}_2$.  
Then $\cos \alpha = \pm \langle \mathcal{S}_1,\mathcal{S}_2 \rangle$, 
where the sign on the right hand side depends on the orientations of 
$\tilde{\mathcal{S}}_1$ and $\tilde{\mathcal{S}}_2$.  
\end{lemma}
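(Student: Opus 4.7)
The plan is to translate the condition that the two hyperplanes $\mathcal{S}_i^\perp$ cut $M_1=\mathbb{S}^3$ into information about tangent planes at a common point $p\in\tilde{\mathcal{S}}_1\cap\tilde{\mathcal{S}}_2$, and then compute a simple $\mathbb{R}^{4,1}$ inner product. Throughout I will use the bilinear form $\langle\cdot,\cdot\rangle$ on $\mathbb{R}^{4,1}$ defined by \eqref{star8point2}, together with the facts $\langle p,p\rangle=0$ (since $p\in L^4$), $\langle p,Q\rangle=-1$ (since $p\in M_\kappa$), and $\langle p,\mathcal{S}_i\rangle=0$ (since $p\in\tilde{\mathcal{S}}_i$, see \eqref{eq:S-tilde-sphere}).

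The first step is to identify the tangent space to $M_\kappa$ at $p$ as a subspace of $\mathbb{R}^{4,1}$. Since $M_\kappa=L^4\cap\{\langle\cdot,Q\rangle=-1\}$, a vector $v\in\mathbb{R}^{4,1}$ lies in $T_pM_\kappa$ iff $\langle v,p\rangle=\langle v,Q\rangle=0$. Thus $T_pM_\kappa=p^\perp\cap Q^\perp$. Similarly, $T_p\tilde{\mathcal{S}}_i=T_pM_\kappa\cap\mathcal{S}_i^\perp$, so within the $3$-dimensional space $T_pM_\kappa$ the unit normal to $\tilde{\mathcal{S}}_i$ at $p$ is (up to sign) the orthogonal projection of $\mathcal{S}_i$ onto $T_pM_\kappa$.

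Next I would carry out that projection explicitly. Writing $N_i=\mathcal{S}_i+a_ip+b_iQ$ and imposing $\langle N_i,p\rangle=0$ and $\langle N_i,Q\rangle=0$, a short calculation using $\langle p,p\rangle=0$, $\langle p,Q\rangle=-1$, $\langle\mathcal{S}_i,p\rangle=0$ and (in the $\mathbb{S}^3$ case) $\langle Q,Q\rangle=-\kappa=-1$, yields $b_i=0$ and $a_i=\langle\mathcal{S}_i,Q\rangle$. Thus
\[
N_i=\mathcal{S}_i+\langle\mathcal{S}_i,Q\rangle\,p\in T_pM_\kappa.
\]
Now one computes
\[
\langle N_1,N_2\rangle=\langle\mathcal{S}_1,\mathcal{S}_2\rangle+\langle\mathcal{S}_1,Q\rangle\langle\mathcal{S}_2,p\rangle+\langle\mathcal{S}_2,Q\rangle\langle\mathcal{S}_1,p\rangle+\langle\mathcal{S}_1,Q\rangle\langle\mathcal{S}_2,Q\rangle\langle p,p\rangle=\langle\mathcal{S}_1,\mathcal{S}_2\rangle,
\]
and by the analogous computation $\langle N_i,N_i\rangle=\|\mathcal{S}_i\|^2=1$, so the $N_i$ are already unit normals in the induced Riemannian metric on $T_pM_\kappa$.

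Finally, the intersection angle $\alpha$ between the two oriented tangent $2$-planes of $\tilde{\mathcal{S}}_1$ and $\tilde{\mathcal{S}}_2$ at $p$ equals the angle between their unit normals within $T_pM_\kappa$, up to a sign that depends on which of $\pm N_i$ one chooses as the outward normal. This sign is exactly the orientation ambiguity mentioned in the statement, and it yields $\cos\alpha=\pm\langle\mathcal{S}_1,\mathcal{S}_2\rangle$. The only potentially subtle point is verifying that the induced metric on $T_pM_\kappa$ (inherited from $\mathbb{R}^{4,1}$) really coincides with the Riemannian metric of $\mathbb{S}^3$ used to measure $\alpha$; this follows from the computation in \eqref{conf-equivalence} identifying the pullback of the ambient form with the metric on $M_\kappa$. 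I expect that single consistency check to be the main obstacle, since everything else is a direct projection calculation.
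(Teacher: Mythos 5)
Your proof is correct and is essentially the paper's argument: both identify the normal to $\tilde{\mathcal{S}}_i$ at a common point $p$ as $\mathcal{S}_i$ minus its component along $p$ (your $N_i=\mathcal{S}_i+\langle\mathcal{S}_i,Q\rangle p$ reduces to the paper's $\mathcal{S}_i-p$ under the paper's normalization $\langle\mathcal{S}_i,Q\rangle=-1$), and then the cross terms in $\langle N_1,N_2\rangle$ vanish because $\langle\mathcal{S}_i,p\rangle=\langle p,p\rangle=0$. Your version is marginally cleaner in that it keeps $\|\mathcal{S}_i\|=1$ throughout instead of rescaling to fix the $x_5$-coordinate and undoing it at the end, and it works verbatim for general $\kappa$.
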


\begin{proof}
As $\kappa = 1$, any $p \in \mathbb{S}^3 = M_1$ has 
$x_5$ coordinate equal 
to $1$.  Take $p \in \tilde{\mathcal{S}}_1 \cap \tilde{\mathcal{S}}_2 
\subset M_1$, so $x_5(p)=1$.  Scale $\mathcal{S}_1$ and $\mathcal{S}_2$ 
so that $x_5(\mathcal{S}_1) = x_5(\mathcal{S}_2) = 1$.  Then 
$\mathcal{S}_1-p$ and $\mathcal{S}_2-p$ are normals 
(in the tangent space of $\mathbb{S}^3$) to 
$\tilde{\mathcal{S}}_1$ and $\tilde{\mathcal{S}}_2$, respectively, 
at $p$.  So 
\[ \cos \alpha = \left\langle \frac{\mathcal{S}_1-p}{||\mathcal{S}_1-p||} 
, \frac{\mathcal{S}_2-p}{||\mathcal{S}_2-p||} \right\rangle = \]\[ 
\frac{1}{||\mathcal{S}_1-p||} \frac{1}{||\mathcal{S}_2-p||} 
(\langle \mathcal{S}_1 , \mathcal{S}_2 \rangle
-\langle \mathcal{S}_2 , p \rangle - \langle \mathcal{S}_1 , p \rangle
+\langle p , p \rangle) = \]\[ 
\frac{1}{||\mathcal{S}_1-p||} \frac{1}{||\mathcal{S}_2-p||} 
(\langle \mathcal{S}_1 , \mathcal{S}_2 \rangle-0-0+0) = 
\frac{1}{||\mathcal{S}_1||} \frac{1}{||\mathcal{S}_2||} 
\langle \mathcal{S}_1 , \mathcal{S}_2 \rangle \; . \]
Returning to the scalings for $\mathcal{S}_1$ and $\mathcal{S}_2$ 
so that $||\mathcal{S}_1||= ||\mathcal{S}_2||=1$,  
the lemma is proved.  
\end{proof}

\begin{remark}\label{rem:tangentspheres}
Lemma \ref{lem:anglebetweenspheres} 
implies that if $\mathcal S$ gives a sphere $\tilde{\mathcal{S}}$ 
containing 
$Y \in M_\kappa$, then $\{ \mathcal{S}+t Y \, | \, t \in R \}$ gives
a pencil of spheres at $Y$, i.e. the collection of spheres of 
arbitrary radius through $Y$ and tangent to $\tilde{\mathcal{S}}$.  
\end{remark}

\begin{lemma}\label{lem:inversionthruspheres}
Inversion through $\tilde{\mathcal{S}}$ is the map 
$f: p \to p - 2\langle p,\mathcal{S}\rangle \mathcal{S}$, 
when $||\mathcal{S}||=1$.  
\end{lemma}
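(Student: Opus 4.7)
The plan is the standard M\"obius-geometric one: realize inversion as a linear reflection in $\mathbb{R}^{4,1}$, then descend to the space form via the projectivized light cone. First I would verify that $f(p) := p - 2\langle p,\mathcal{S}\rangle \mathcal{S}$ is a linear isometry of $(\mathbb{R}^{4,1}, \langle\cdot,\cdot\rangle)$: expanding
\[
\langle f(p), f(q)\rangle = \langle p,q\rangle - 2\langle p,\mathcal{S}\rangle\langle q,\mathcal{S}\rangle - 2\langle q,\mathcal{S}\rangle\langle p,\mathcal{S}\rangle + 4\langle p,\mathcal{S}\rangle\langle q,\mathcal{S}\rangle \|\mathcal{S}\|^2
\]
and using $\|\mathcal{S}\|^2 = 1$ gives $\langle f(p), f(q)\rangle = \langle p,q\rangle$. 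Geometrically, $f$ is the reflection across the hyperplane $\mathcal{S}^\perp \subset \mathbb{R}^{4,1}$.

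Since $f$ is a linear isometry, it preserves the light cone $L^4$ and therefore descends to an automorphism of the projectivized light cone $PL^4$; by the identification of $PL^4$ with the conformal space form, this induces a conformal transformation of $M_\kappa$ (with the overall real scalar re-normalized as in Remark~\ref{rem:scaling}). Next, I would check the two properties that characterize $f$ as a reflection-type involution: (i) if $p \in \tilde{\mathcal{S}}$ then by \eqref{eq:S-tilde-sphere} we have $\langle p,\mathcal{S}\rangle = 0$, hence $f(p) = p$, so $\tilde{\mathcal{S}}$ is fixed pointwise; and (ii) a direct computation gives
\[
f(f(p)) = p - 2\langle p,\mathcal{S}\rangle\mathcal{S} - 2\bigl(\langle p,\mathcal{S}\rangle - 2\langle p,\mathcal{S}\rangle\|\mathcal{S}\|^2\bigr)\mathcal{S} = p,
\]
so $f$ is a non-trivial involution.

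It remains to identify this conformal involution with the classical inversion through $\tilde{\mathcal{S}}$. One way is a uniqueness argument: a conformal involution of the space form that fixes a hypersphere pointwise and is not the identity must swap the two components of the complement and hence is exactly inversion through that hypersphere. Alternatively, in the Euclidean model $M_0$ one can write $\mathcal{S}$ explicitly as $(C_0, r, \ldots)$ (center $C_0$, radius $r$), substitute the light-cone lift from \eqref{star8point4}, apply $f$, and read off the $\Im H$-component after re-normalizing so the last coordinate matches \eqref{star8point4}; this should reproduce $p \mapsto C_0 + r^2(p-C_0)/|p-C_0|^2$.

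The only real obstacle is this last step: neither route is deep, but a coordinate verification is bookkeeping-heavy, and the uniqueness route requires invoking a classical fact about conformal involutions of spheres. I would prefer the uniqueness argument, reducing the verification to the three structural properties (linear isometry $\Rightarrow$ M\"obius, fixes $\tilde{\mathcal{S}}$, involution), all of which are one-line computations using $\|\mathcal{S}\|^2 = 1$.
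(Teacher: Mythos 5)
Your proof is correct, but it takes a different route from the paper's. The paper verifies the characterizing property of inversion directly in the $\mathbb{R}^{4,1}$ model: writing a circle $C$ perpendicular to $\tilde{\mathcal{S}}$ as $\tilde{\mathcal{S}}_1\cap\tilde{\mathcal{S}}_2$, it uses Lemma \ref{lem:anglebetweenspheres} to get $\langle\mathcal{S},\mathcal{S}_j\rangle=0$ and then checks that $\langle f(p),\mathcal{S}_j\rangle=0$, so $f$ maps every circle perpendicular to $\tilde{\mathcal{S}}$ into itself; this pins $f(p)$ down as the classical inverse point of $p$ (the second intersection of all such circles through $p$). You instead establish that $f$ is the linear reflection in $\mathcal{S}^\perp$, hence an element of $O(4,1)$ inducing a M\"obius involution fixing $\tilde{\mathcal{S}}$ pointwise, and then appeal to the uniqueness of such an involution. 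Both arguments ultimately lean on some classical characterization of inversion; the paper's has the advantage of using only the preceding lemma and one-line orthogonality computations, while yours makes the reflection structure of $f$ transparent and generalizes verbatim to $\mathbb{R}^{n+1,1}$. One small caveat: your step ``swaps the two components of the complement and hence is exactly inversion'' is not by itself a valid deduction (many maps swap the components); the clean version of the uniqueness you need is that a M\"obius transformation fixing a hypersphere pointwise is either the identity or the inversion, which in this model follows because the light-like lifts of points of $\tilde{\mathcal{S}}$ span the $4$-dimensional subspace $\mathcal{S}^\perp$, forcing the corresponding element of $O(4,1)$ to be $\pm\operatorname{id}$ on $\mathcal{S}^\perp$ and $\pm\operatorname{id}$ on $\mathbb{R}\mathcal{S}$.
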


\begin{proof}
First note that $p \in L^4$ implies $p - 
2\langle p,\mathcal{S}\rangle \mathcal{S} \in L^4$.  
Now let $C$ be a circle that intersects $\tilde{\mathcal{S}}$ 
perpendicularly.  We wish to show that $p \in C$ implies 
$f(p) \in C$.  Note that $C = \tilde{\mathcal{S}}_1 
\cap \tilde{\mathcal{S}}_2$ for some spheres $\tilde{\mathcal{S}}_1$ 
and $\tilde{\mathcal{S}}_2$.  
Then $\tilde{\mathcal{S}}_1 \perp \tilde{\mathcal{S}}$ and 
$\tilde{\mathcal{S}}_2 \perp \tilde{\mathcal{S}}$, and so 
$\langle \mathcal{S},\mathcal{S}_1 \rangle = 
\langle \mathcal{S},\mathcal{S}_2 \rangle = 0$, by the previous 
lemma.  Then $p \in C$ implies $p \in 
\tilde{\mathcal{S}}_1 \cap \tilde{\mathcal{S}}_2$, which implies 
$\langle p,\mathcal{S}_1 \rangle = \langle p,\mathcal{S}_2 \rangle
= 0$.  Thus 
$\langle p-2 \langle p,\mathcal{S} \rangle \mathcal{S},
\mathcal{S}_1 \rangle = 
\langle p-2 \langle p,\mathcal{S} \rangle \mathcal{S},
\mathcal{S}_2 \rangle = 0$, and so $f(p) \in C$.
\end{proof}

For further explanation of all this, see \cite{Udo-bk}.  

\begin{lemma}\label{sphere-lemma}
$\tilde{\mathcal{S}}$ is a sphere with 
\[ \text{mean curvature} \;\; H_0 = \frac{|z_0|}{2 ||\mathcal{S}||} 
\;\; \text{and center} \;\; \frac{z}{z_0} \] in $M_0$, and is a 
sphere with mean curvature $H_\kappa$ in $M_\kappa$, 
where $H_\kappa$ is as given in Equation \eqref{Hkappa-vs-H0}.  
\end{lemma}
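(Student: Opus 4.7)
The plan is to reduce both assertions to a single explicit computation of $\langle X,\mathcal S\rangle=0$ for $X\in M_\kappa$ and then invoke the already-derived formula \eqref{Hkappa-vs-H0}. Parametrizing $X\in M_\kappa$ by \eqref{star8point4} and using the matrix form \eqref{star8point2} of the metric, $\langle X,\mathcal S\rangle$ is read off as the common diagonal entry of $-\tfrac12(X\mathcal S+\mathcal S X)$. Because $x,z\in\Im H$ satisfy $xz+zx=-2\,x\cdot z$ and $x^2=-|x|^2$, while $z_0,z_\infty\in\mathbb R$ commute with everything, the off-diagonal entries of $X\mathcal S+\mathcal S X$ cancel and the diagonal value equals $-2x\cdot z+|x|^2z_0+z_\infty$, up to the overall scalar factor $2/(1-\kappa x^2)$. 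Since this factor is nonzero on $M_\kappa$ (as emphasized right after \eqref{star8point4}), the equation $\langle X,\mathcal S\rangle=0$ is equivalent to the $\kappa$-independent scalar equation $|x|^2 z_0 - 2x\cdot z + z_\infty = 0$.

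Assuming $z_0\neq 0$, completing the square rewrites this as
\[
\bigl|x-\tfrac{z}{z_0}\bigr|^{2} \;=\; \frac{|z|^{2} - z_0 z_\infty}{z_0^{\,2}} \;=\; \frac{\|\mathcal S\|^{\,2}}{z_0^{\,2}} \; ,
\]
using $z^2=-|z|^2$ and the definition $\|\mathcal S\|^{\,2}=-z^2-z_0 z_\infty$. Thus $\tilde{\mathcal S}$, viewed in the $\Im H$-coordinate $x$, is the Euclidean round sphere of center $C_0=z/z_0$ and radius $\|\mathcal S\|/|z_0|$, which (with the convention $r_0 H_0=1$ recalled in the paragraph preceding the lemma) yields the stated center and value of $H_0$ in $M_0$.

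For the $M_\kappa$ part, the crucial observation is that the scalar equation above is independent of $\kappa$, so the same subset of $\Im H$ corresponds to $\tilde{\mathcal S}$ in every space form. Since the correspondences $x\leftrightarrow X$ give M\"obius equivalences between the quadrics $M_\kappa$ (visible from \eqref{conf-equivalence}), a Euclidean round sphere transports to a round sphere in each $M_\kappa$, so $\tilde{\mathcal S}$ really is a sphere in $M_\kappa$; substituting $H_0=|z_0|/(2\|\mathcal S\|)$ and $C_0=z/z_0$ into \eqref{Hkappa-vs-H0} then yields the claimed $H_\kappa$. I expect the main obstacle to be bookkeeping: executing the quaternionic products $X\mathcal S$ and $\mathcal S X$ without sign slips, and matching the sign and normalization of $H_0$ to those used when deriving \eqref{Hkappa-vs-H0}. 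The degenerate case $z_0=0$, in which $\tilde{\mathcal S}$ becomes a Euclidean plane, can be treated as a limit of the same computation (a sphere of infinite radius through $\infty$).
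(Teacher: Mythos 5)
Your proposal follows essentially the same route as the paper: impose $Y\mathcal{S}+\mathcal{S}Y=0$ for a lift $Y$ of a point of $M_0$, reduce to the scalar equation $z_0|y|^2-2\,y\cdot z+z_\infty=0$, complete the square to read off the center $z/z_0$ and the coordinate radius $\|\mathcal{S}\|/|z_0|$, and then feed the result into Equation \eqref{Hkappa-vs-H0} for the $M_\kappa$ statement. The one place your write-up would produce the wrong constant is the final normalization: applying $r_0H_0=1$ directly to the Euclidean coordinate radius $\|\mathcal{S}\|/|z_0|$ gives $H_0=|z_0|/\|\mathcal{S}\|$, not the claimed $|z_0|/(2\|\mathcal{S}\|)$. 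The missing factor of $2$ comes from the induced metric on $M_0$: by \eqref{conf-equivalence} at $\kappa=0$ one has $\|\mathcal{T}_a\|=1$ exactly when $|a|=\tfrac12$, so the metric on $M_0$ is $4$ times the Euclidean metric in the coordinate $x$, and the intrinsic radius of $\tilde{\mathcal{S}}$ is $2\|\mathcal{S}\|/|z_0|$ (this is the radius the paper quotes). You flagged the normalization as a bookkeeping risk but resolved it in the direction that loses the factor of $2$; once that is corrected, the rest of your argument, including the observation that the defining scalar equation is $\kappa$-independent so that \eqref{Hkappa-vs-H0} applies verbatim, matches the paper's proof.
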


\begin{proof}
Take $z=z_1 i + z_2 j + z_3 k$, and consider the case $\kappa = 0$.
Take \[ Y = 2 \begin{pmatrix} y & -y^2 \\ 1 & -y 
\end{pmatrix} \in \tilde{S} \; . \] Then $YS+SY=0$ implies 
\[ \sum_{j=1}^3 (z_0 y_j - z_j)^2 = ||\mathcal{S}||^2 \; , \] 
and thus $\tilde{\mathcal{S}}$ is a sphere of radius 
$2 ||\mathcal{S}||/|z_0|$.  Hence $H_0 = |z_0|/(2 ||\mathcal{S}||)$.  
The final statement of the lemma now follows from Equation 
\eqref{Hkappa-vs-H0} itself.  
\end{proof}

\subsection{Christoffel transformations}
We now define the Christoffel transformation $x^*$, which for a 
CMC surface in $\mathbb{R}^3$ gives the parallel CMC surface.  
Let $x$ be a surface in $\mathbb{R}^3$ with mean curvature $H_0$ and unit 
normal $n_0$.  The Christoffel transformation $x^*$ satisfies that 
\begin{itemize}
\item $x^*$ is defined on the same domain as $x$, 
\item $x^*$ has the same conformal structure as $x$, 
\item $x$ and $x^*$ have opposite orientations, 
\item and $x$ and $x^*$ have parallel tangent planes at corresponding 
points.  
\end{itemize}
One can check that it automatically follows that the principal curvature 
directions at corresponding points of $x$ and $x^*$ will themselves 
also be parallel.  

\begin{remark}\label{remarkonorientation}
Let us be careful about what we regard as ``opposite 
orientations'' here.  With respect to a common orientation for 
the two parallel tangent planes at a point on $x$ and its 
corresponding point on $x^*$, the two surfaces will have 
opposite orientations.  But if the 
two surfaces both envelop a common sphere congruence, 
for which each of the corresponding pairs of points of $x$ and $x^*$ 
tangentially touch the same sphere in the congruence, then $x$ and 
$x^*$ will have the same orientation with respect to 
the orientation given by a sphere in the sphere congruence.  
(The surfaces $x$ and $x^*$ generally do not envelop a common 
sphere congruence, but they will when $x$ is CMC and $x^*$ is 
positioned to be the parallel CMC surface of $x$.)  
The first perspective might be more natural for parallel 
CMC surfaces, since one moves a constant distance along 
a normal line to get from one surface to the other, so that 
normal line provides a common orientation of the 
two surfaces' tangent planes, by which the surfaces have oppositie 
orientation.  However, the second perspective might be regarded as 
more natural for the Darboux transformations that we consider 
later (since a surface and a Darboux transform of it will always 
envelop a common sphere congruence).  
\end{remark}

This description above of the Christoffel transformations 
turns out to be equivalent to the following 
definition, and the existence of the integrating factor $\rho$ 
below is equivalent to the existence of isothermic coordinates.  
Then, we will see that we can choose $x^*$ so that 
$dx^* = x_u^{-1} du-x_v^{-1} dv$.  

\begin{defn}\label{defn:ChristTrans}
A Christoffel transformation $x^*$ of an umbilic-free surface 
$x$ in $\mathbb{R}^3$ is a surface that satisfies 
$dx^* = \rho (dn_0+ H_0 dx)$ for some 
nonzero real-valued function $\rho$ on 
the surface $x$ (here $x^*$ is determined only up to 
translations and homotheties).  
\end{defn}

\begin{remark}
The Christoffel transform is also sometimes called the 
``dual surface'', and ``taking the Christoffel transform" 
can be called ``dualizing''.  
\end{remark}

\begin{remark}
We did not allow umbilic points on $x$ in the above definition, 
because they can be troublesome.  In particular, the case that $x$ 
is a round sphere (i.e. is completely umbilic) is very special.  
\end{remark}

\begin{lemma}\label{lemma8ptpt15}
Away from umbilics of $x$, the Christoffel transform 
$x^*$ exists if and only if $x$ is isothermic.  
\end{lemma}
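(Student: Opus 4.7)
The plan is to work locally in curvature-line coordinates $(u,v)$, which exist on any neighborhood of a non-umbilic point of $x$, and to convert the existence of a Christoffel transform into a closedness condition that turns out to be exactly the isothermicity condition.

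In such coordinates $\langle x_u,x_v\rangle=0$, $(n_0)_u=-k_1 x_u$, $(n_0)_v=-k_2 x_v$ with $k_1\neq k_2$, and the off-diagonal shape-operator entry $b_{12}$ vanishes. A direct computation gives
\[
dn_0+H_0\,dx=\tfrac{k_2-k_1}{2}\bigl(x_u\,du-x_v\,dv\bigr),
\]
so writing $\tau:=\rho(k_2-k_1)/2$ (nonzero iff $\rho$ is, thanks to $k_1\neq k_2$) the existence of $x^*$ amounts to finding a nonzero real $\tau$ with $(\tau x_u)_v+(\tau x_v)_u=0$. I would decompose this vector equation along the adapted frame $\{x_u,x_v,n_0\}$: the $n_0$-component is $2\tau b_{12}=0$ automatically, while using $\langle x_{uv},x_u\rangle=\tfrac12(|x_u|^2)_v$ and $\langle x_{uv},x_v\rangle=\tfrac12(|x_v|^2)_u$ together with $\langle x_u,x_v\rangle=0$, the $x_u$- and $x_v$-components collapse to the two decoupled equations
\[
\bigl(\tau|x_u|^2\bigr)_v=0,\qquad\bigl(\tau|x_v|^2\bigr)_u=0.
\]

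From here both directions fall out. If such $\tau$ exists, then $\tau|x_u|^2=f(u)$ and $\tau|x_v|^2=g(v)$ with $f,g$ nonvanishing, hence $|x_u|^2/|x_v|^2=f(u)/g(v)$ separates in the two variables; the reparametrization $\tilde u=U(u),\tilde v=V(v)$ with $(U')^2=f$ and $(V')^2=g$ then produces conformal curvature-line coordinates, i.e.\ $x$ is isothermic. Conversely, if $x$ is already isothermic with conformal factor $e^{2\hat u}$, set $\rho=2ce^{-2\hat u}/(k_2-k_1)$ for any nonzero real constant $c$; the corresponding $\tau=ce^{-2\hat u}$ makes both separated identities trivial, so $\rho(dn_0+H_0\,dx)$ is closed and integrates to $x^*$ on any simply-connected patch.

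The only real work is the clean algebraic reduction in the middle step. No analytic obstacle arises, since closedness gives local existence of $x^*$ by the Poincar\'e lemma, and the umbilic-free hypothesis is used in exactly one place, to guarantee that $\rho$ can be chosen nonzero at every point.
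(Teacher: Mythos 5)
Your proof is correct, and for the harder direction it takes a genuinely more direct route than the paper. Both arguments agree on the setup (curvature-line coordinates near a non-umbilic point, and the identity $dn_0+H_0\,dx=\tfrac{k_2-k_1}{2}(x_u\,du-x_v\,dv)$, which the paper records as Lemma \ref{minor-technical-lemma}), and your forward direction is essentially the paper's computation $d(x_u^{-1}du-x_v^{-1}dv)=0$ rewritten without quaternions. The real difference is in the converse. The paper assumes $dx^*=\rho(dn_0+H_0\,dx)$ is closed, derives a first-order system \eqref{rho-u-rho-v-eqn} for $\rho$, imposes the compatibility $\rho_{uv}=\rho_{vu}$, and then has to invoke the Codazzi equations \eqref{eq:Codazzi-iso-Christoffel} to massage the result into $\bigl(\log(g_{11}/g_{22})\bigr)_{uv}=0$, from which the separated form $g_{11}/g_{22}=f(u)/g(v)$ follows by integration. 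You instead decompose the closedness condition $(\tau x_u)_v+(\tau x_v)_u=0$ along the adapted frame and obtain the separated \emph{first-order} identities $(\tau g_{11})_v=0$, $(\tau g_{22})_u=0$ directly, which give $g_{11}/g_{22}=f(u)/g(v)$ immediately and bypass Codazzi entirely; these identities are strictly stronger than the paper's second-order condition and make the reparametrization step transparent. The only point worth making explicit is that $f=\tau g_{11}$ and $g=\tau g_{22}$ share the sign of $\tau$, which is locally constant, so $(U')^2=f$ and $(V')^2=g$ are solvable after possibly replacing $\tau$ by $-\tau$; with that noted, your argument is complete and arguably cleaner than the one in the text.
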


\begin{proof}
First we prove one direction, by assuming $x$ is isothermic 
and then showing $x^*$ exists.  

Take $x$ to be isothermic, and take 
isothermic coordinates $u,v$ for $x$, 
so $x_{uv} = A x_u + B x_v$ for some $A,B$.  Then 
\[ d(x_u^{-1} du-x_v^{-1} dv) = 16 g_{11}^{-2} 
(x_u x_{uv} x_u+x_v x_{uv} x_v) du \wedge dv = 0 \; . \]  This implies 
that there exists an $x^*$ such that 
\[ dx^* = x_u^{-1} du-x_v^{-1} dv \; . \]  Also, 
\[ dn_0 + H_0 dx = \tfrac{1}{8} (b_{11}-b_{22}) (x_u^{-1} du-x_v^{-1} dv) 
\; , \] implying that $x^*$ is a Christoffel transform, since 
$b_{11}-b_{22} \neq 0$ at non-umbilic points.  

Now we prove the other direction, by assuming $x^*$ exists 
and then showing that $x$ has isothermic coordinates.  

For any choice of coordinates $u,v$ for $x=x(u,v)$, 
the Codazzi equations are
\[ (b_{11})_v - (b_{12})_u = 
\Gamma_{12}^1 b_{11} + (\Gamma_{12}^2 - \Gamma_{11}^1)  b_{12}
- \Gamma_{11}^2 b_{22} \; , \]
\[ (b_{12})_v - (b_{22})_u = 
\Gamma_{22}^1 b_{11} + (\Gamma_{22}^2 - \Gamma_{21}^1)  b_{12}
- \Gamma_{21}^2 b_{22} \; . \]
(See, for example, page 97 of \cite{Hopf}.)  Here the Christoffel 
symbols are 
\[ \Gamma_{ij}^h = \frac{1}{2} \sum_{k=1}^2 g^{hk} 
(\partial_{u_j} g_{ik}+\partial_{u_i} g_{jk}-\partial_{u_k} g_{ij})
\; , \] where $u_1=u$ and $u_2=v$.  Because we are avoiding 
any umbilic points of $x$, we may assume that $u$ and $v$ are 
curvature line coordinates for $x$ (see, for example, Appendix 
B-5 of \cite{UYbook}), and so 
$g_{12}=b_{12}=0$.  It follows that 
\[ \Gamma_{11}^1 = \frac{\partial_u g_{11}}{2 g_{11}} \; , \;\; 
\Gamma_{22}^2 = \frac{\partial_v g_{22}}{2 g_{22}} \; , \;\; 
\Gamma_{11}^2 = - \frac{\partial_v g_{11}}{2 g_{22}} \; , \]\[ 
\Gamma_{22}^1 = - \frac{\partial_u g_{22}}{2 g_{11}} \; , \;\; 
\Gamma_{12}^1 = 
\Gamma_{21}^1 = \frac{\partial_v g_{11}}{2 g_{11}} \; , \;\; 
\Gamma_{12}^2 = 
\Gamma_{21}^2 = \frac{\partial_u g_{22}}{2 g_{22}} \; . \]
Denoting the principal curvatures by $k_j$, the Codazzi equations 
simplify to 
\begin{equation}\label{eq:Codazzi-iso-Christoffel}
2 (k_1)_v = \frac{\partial_v g_{11}}{g_{11}} 
\cdot (k_2-k_1) \; , \;\;\; 
2 (k_2)_u = \frac{\partial_u g_{22}}{g_{22}} 
\cdot (k_1-k_2) \; . \end{equation}
Then existence of $x^*$ gives 
\[ d (\rho dn_0 + \rho H_0 dx) = 0 \; , \] from which it follows that 
\[ \begin{pmatrix}
0 & \frac{b_{11}}{g_{11}} - \frac{b_{22}}{g_{22}} \\ 
\frac{b_{22}}{g_{22}} - \frac{b_{11}}{g_{11}}  & 0 
\end{pmatrix}
\begin{pmatrix}
\rho_u \\ \rho_v
\end{pmatrix} = 
\rho \cdot 
\begin{pmatrix}
\left( \frac{b_{11}}{g_{11}} + \frac{b_{22}}{g_{22}} \right)_v \\ 
\left( \frac{b_{11}}{g_{11}} + \frac{b_{22}}{g_{22}} \right)_u
\end{pmatrix} \; . 
\] Then because $\rho_{uv} = \rho_{vu}$ (i.e. it does not matter which 
order we take mixed derivatives in), we have 
\[ \left( \frac{(k_2+k_1)_v}{k_1-k_2} \right)_u
= \left( \frac{(k_1+
k_2)_u}{k_2-k_1} \right)_v \; , \]  which implies
\[ 
\frac{2(((k_1)_v)_u+((k_2)_u)_v)}{k_1-k_2} 
+ 2 (k_2-k_1)^{-2} \left( 
(k_1)_v (k_2 - k_1)_u
+ (k_2)_u (k_2 - k_1)_v \right) = 0 \; . \]  
Using the Codazzi equations 
\eqref{eq:Codazzi-iso-Christoffel}, we have 
\[ \left( \log \frac{g_{11}}{g_{22}} \right)_{uv} = 0 \; . \]  
In particular, there exist positive functions $f_1(u)$ and $f_2(v)$ 
depending only on $u$ and $v$, respectively, so that 
\[ (f_1(u))^2 g_{11} = (f_2(v))^2 g_{22} \; . \]
Writing $u = u(\hat u)$ and $v = v(\hat v)$ for new curvature line 
coordinates $\hat u$ and $\hat v$, we have $\hat g_{12} = 
\hat b_{12} = 0$ and $\hat g_{11} = (u_{\hat u})^2 g_{11}$ and 
$\hat g_{22} = (v_{\hat v})^2 g_{22}$, for the fundamental 
form entries $\hat g_{ij}$ and $\hat b_{ij}$ 
in terms of $\hat u$ and $\hat v$.  We can choose $\hat u$ and 
$\hat v$ so that $u_{\hat u} = f_1(u(\hat u))$ and 
$v_{\hat v} = f_2(v(\hat v))$ hold.  Then $\hat g_{11} = \hat g_{22}$ 
and so $\hat u,\hat v$ are isothermic coordinates.
\end{proof}

\begin{figure}[phbt]
\begin{center}
\includegraphics[width=1.0\linewidth]{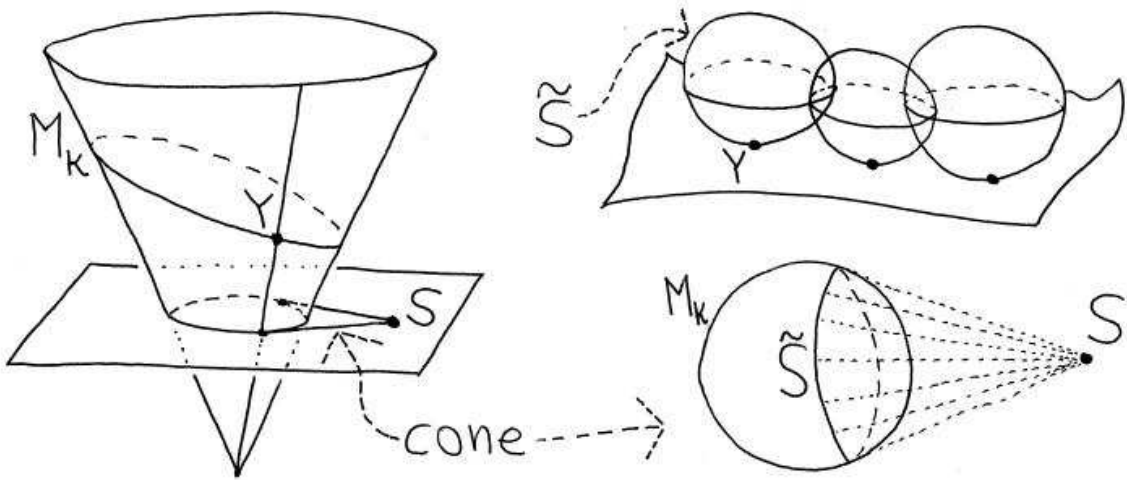}
\caption{A typical picture of an envelope 
on the right, and the corresponding 
picture in the $\mathbb{R}^{4,1}$ model on the left.} 
\end{center}\label{fig:envelop}
\end{figure}

\begin{corollary}\label{Cor:Christoffel}
Away from umbilic points, one Christoffel transformation $x^*$ of 
an isothermic surface $x=x(u,v)$ can be taken as a solution 
of $dx^* = x_u^{-1} du-x_v^{-1} dv$.  
\end{corollary}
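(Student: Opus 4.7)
The plan is to observe that the corollary is essentially an immediate consequence of the forward direction of Lemma \ref{lemma8ptpt15}, so I would simply organize that argument to make the explicit formula $dx^* = x_u^{-1}\,du - x_v^{-1}\,dv$ the centerpiece. First, since $x$ is isothermic, pick isothermic coordinates $(u,v)$, i.e.\ conformal curvature-line coordinates, so that $g_{11}=g_{22}$, $g_{12}=0$, and the second fundamental form is diagonal. From the Gauss equations in these coordinates one obtains $x_{uv} = A\,x_u + B\,x_v$ for some real functions $A,B$ (the normal component vanishes because $b_{12}=0$).

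Next, I would verify closedness of the candidate $1$-form. Using the quaternionic identity $(w^{-1})' = -w^{-1} w' w^{-1}$, compute
\[
d(x_u^{-1}\,du - x_v^{-1}\,dv) = -\bigl(x_u^{-1} x_{uv} x_u^{-1} + x_v^{-1} x_{vu} x_v^{-1}\bigr)\,du\wedge dv .
\]
Substituting $x_{uv} = A x_u + B x_v$ and using $|x_u|^2 = |x_v|^2$ together with $x_u^{-1} = -x_u/|x_u|^2$ and $x_v^{-1} = -x_v/|x_v|^2$ (since $x_u, x_v \in \Im H$), the two terms cancel and the form is closed. On the simply-connected domain (or locally in general) this produces a primitive $x^*$ with $dx^* = x_u^{-1}\,du - x_v^{-1}\,dv$.

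Finally, I would check this $x^*$ really satisfies Definition \ref{defn:ChristTrans}. Since $u,v$ are curvature lines, the Weingarten relations read $n_{0,u} = -k_1 x_u$, $n_{0,v} = -k_2 x_v$, with $k_j = b_{jj}/g_{jj}$ and $H_0 = (k_1+k_2)/2$. A direct computation then gives
\[
dn_0 + H_0\,dx = \tfrac{1}{2}(k_1 - k_2)\bigl(x_u\,du - x_v\,dv\bigr) = \tfrac{1}{8}(b_{11}-b_{22})\bigl(x_u^{-1}\,du - x_v^{-1}\,dv\bigr),
\]
where in the last equality one uses $x_u = -|x_u|^2 x_u^{-1}$ and that $g_{11} = |x_u|^2 = |x_v|^2$ is the conformal factor. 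At a non-umbilic point $b_{11} - b_{22} \neq 0$, so $\rho := 8/(b_{11}-b_{22})$ is a well-defined nonzero real-valued function, and thus $dx^* = \rho(dn_0 + H_0\,dx)$ as required.

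The only mild obstacle is bookkeeping with quaternionic inverses of imaginary quaternions and verifying that the closedness computation really reduces to a symmetric expression in $u$ and $v$; but this is exactly the computation already indicated in the proof of Lemma \ref{lemma8ptpt15}, so nothing new is needed beyond isolating it as the explicit solution asserted by the corollary.
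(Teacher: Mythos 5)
Your proposal is correct and follows essentially the same route as the paper: the corollary is extracted directly from the forward direction of Lemma \ref{lemma8ptpt15}, namely closedness of $x_u^{-1}\,du - x_v^{-1}\,dv$ via $x_{uv}=Ax_u+Bx_v$, followed by the identification $dn_0+H_0\,dx=\tfrac{1}{8}(b_{11}-b_{22})(x_u^{-1}\,du-x_v^{-1}\,dv)$ at non-umbilic points. (Your intermediate display $\tfrac{1}{2}(k_1-k_2)(x_u\,du-x_v\,dv)$ carries a stray sign, but it cancels against the sign in $x_u=-|x_u|^2x_u^{-1}$, so the final formula and conclusion are unaffected.)
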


Because of $dx^* = \rho (dn_0+H_0 dx)$, we have 
\[
 0=d^2x^* = d\rho \wedge (dn_0+H_0 dx) + \rho \cdot dH_0 \wedge dx \; ,
\] which gives, with respect to isothermic coordinates $(u,v)$, that 
\begin{equation}\label{rho-u-rho-v-eqn}
 \rho_u = - \frac{g_{11} \partial_u H_0}{g_{11} 
          H_0-b_{22}} \cdot \rho \; , \;\;\; 
 \rho_v = - \frac{g_{11} \partial_v H_0}{g_{11} 
          H_0-b_{11}} \cdot \rho \; .
\end{equation}
The existence of $x^*$ then automatically implies the 
compatibility condition $(\rho_u)_v = (\rho_v)_u$, with $\rho_u$ and 
$\rho_v$ as just above.  This pair of equations tells us that $\rho$ is 
uniquely determined once its value is chosen at a single point, and 
thus the solution $\rho$ is unique up to scalar multiplication by a 
constant factor.  Thus the Christoffel transformation in Corollary 
\ref{Cor:Christoffel} is essentially the unique choice, up to 
homothety and translation in $\mathbb{R}^3$.  
As a result of this, with essentially no 
loss of generality, we can now simply 
take the definition of $x^*$ as follows: 

\begin{defn}\label{defn:ChristTrans2}
The Christoffel transformation of a surface $x$ with isothermic 
coordinates $(u,v)$ is any $x^*$ (defined in 
$\mathbb{R}^3$ up to translation) such that $dx^* = x_u^{-1} du - x_v^{-1} dv$.  
\end{defn}

Definition \ref{defn:ChristTrans2} is slightly more general than 
Definition \ref{defn:ChristTrans}, because it can allow umbilic 
points in some cases.  

\begin{remark}
The function $\rho$ in Definition 
\ref{defn:ChristTrans} is generally a constant scalar multiple of the 
multiplicative inverse of the mean curvature of $x^*$, seen as follows: 
The Christoffel transform of the Christoffel transform $(x^*)^*$, 
with respect to Definition \ref{defn:ChristTrans2}, 
satisfies that \[ d((x^*)^*) = (x_u^*)^{-1} du - (x_v^*)^{-1} dv = 
(x_u^{-1})^{-1} du - (-x_v^{-1})^{-1} dv = x_u du + x_v dv = dx 
\; , \] so $(x^*)^*$ should be 
the original surface $x$, up to translation and homothety, 
with respect to Definition \ref{defn:ChristTrans}.  Thus, by 
scaling and translating appropriately, we may assume $(x^*)^* = x$.  Also, 
if the normal of $x$ is $n$, then the normal of $x^*$ is $-n$.  We have 
\[ dx = d((x^*)^*)=\rho^* (dn_0^*+H_0^* dx^*) = 
\rho^* (-dn_0+ H_0^* \rho (dn_0 + H_0 dx)) \; , \] and so 
\[ (1-\rho \rho^* H_0 H_0^*) dx = (H_0^* \rho \rho^* - \rho^*) dn_0 \; . \]  
Since $dx$ and $dn_0$ are linearly independent away from umbilic points, 
it follows that \[ \rho H_0^* = \rho^* H_0 = 1 \; . \]  
\end{remark}

\begin{remark}\label{rem:CMCparallelsurf} 
When $H_0$ is constant and we have isothermic coordinates, 
the equations in \eqref{rho-u-rho-v-eqn} tell us that $\rho$ 
is constant.  Thus if $x^{||}=x+H_0^{-1} n_0$ is the parallel 
CMC surface, then $x^*$ and $x^{||}$ differ 
by only a homothety and translation of $\mathbb{R}^3$.  Thus the 
Christoffel transformation is essentially the same as the 
parallel CMC surface to $x$, as expected.  
\end{remark}

\begin{remark}
The round cylinder gives one simple example of a 
Christoffel transform's orientation reversing property.  
For the cylinder $x(u,v) = (\cos u, \sin u, v)$ in $\mathbb{R}^3$, 
the normal vector is $n = (-\cos u, -\sin u, 0)$, and the 
Christoffel transform is $x^*(u,v) = (-\cos u, -\sin u, v)$ 
with its normal vector $n^* = (\cos u, \sin u, 0)$.  Thus 
$n^* = -n$.  (Note the comments in Remark \ref{remarkonorientation}.)  
\end{remark}

\begin{lemma}\label{minor-technical-lemma} 
\[ dx^* = \frac{2}{(k_1-k_2) |x_u|^2} (dn_0+H_0 dx) \; . \]  
\end{lemma}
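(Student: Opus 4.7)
The plan is to combine the two descriptions of the Christoffel transformation already established, namely Corollary \ref{Cor:Christoffel} / Definition \ref{defn:ChristTrans2} giving $dx^* = x_u^{-1}du - x_v^{-1}dv$, and Definition \ref{defn:ChristTrans} giving $dx^* = \rho(dn_0 + H_0\,dx)$, and to identify the scalar factor $\rho$ explicitly. Since $(u,v)$ are isothermic (i.e.\ conformal curvature line) coordinates, I have $|x_u|^2 = |x_v|^2$ and $x_u \perp x_v$, and the principal curvature relations $\partial_u n_0 = -k_1 x_u$, $\partial_v n_0 = -k_2 x_v$ hold.

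First I would rewrite the right hand side of $dx^* = x_u^{-1}du - x_v^{-1}dv$ using the quaternionic model: since $x \in \Im H$ and hence $x_u, x_v \in \Im H$, any imaginary quaternion $q$ satisfies $q^2 = -|q|^2$, so $q^{-1} = -q/|q|^2$. Applying this with $|x_u|^2 = |x_v|^2$ yields
\begin{equation*}
dx^* \;=\; \frac{1}{|x_u|^2}\bigl(-x_u\,du + x_v\,dv\bigr).
\end{equation*}

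Next I would expand the other expression. Using $dx = x_u\,du + x_v\,dv$, $dn_0 = -k_1 x_u\,du - k_2 x_v\,dv$, and $H_0 = \tfrac12(k_1 + k_2)$, I get
\begin{equation*}
dn_0 + H_0\,dx \;=\; (H_0 - k_1)x_u\,du + (H_0 - k_2)x_v\,dv \;=\; \tfrac{k_1-k_2}{2}\bigl(-x_u\,du + x_v\,dv\bigr).
\end{equation*}

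Comparing the two displays, the common vector factor $(-x_u\,du + x_v\,dv)$ cancels, and solving for the scalar identifies $\rho = 2/((k_1-k_2)|x_u|^2)$, giving the claimed formula. Essentially no real obstacle arises; the only thing one must be careful about is the sign convention for quaternionic inversion of imaginary quaternions, which is where the minus sign in $q^{-1} = -q/|q|^2$ matters and matches the sign of $(k_1-k_2)$ in the denominator. (If one wished to avoid the quaternionic shortcut, one could equivalently verify that $x_u \cdot (dn_0+H_0 dx)|_{du} = \tfrac{k_2-k_1}{2}|x_u|^2$ using $\langle x_u, n_0\rangle = 0$, and similarly in $v$, obtaining the same scalar.)
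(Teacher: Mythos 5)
Your proof is correct and is essentially the paper's own argument: the paper likewise verifies the identity by direct computation, using $x_u^{-1}=-x_u/|x_u|^2$, $dn_0=-k_1x_u\,du-k_2x_v\,dv$, $H_0=\tfrac12(k_1+k_2)$, and $|x_u|=|x_v|$, merely organized as showing the difference of the two sides vanishes after multiplying by $|x_u|^2$ rather than expanding each side separately. No substantive difference.
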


\begin{proof}
\[ \left( \frac{2}{(k_1-k_2) |x_u|^2} (dn_0+H_0 dx) - x_u^{-1} du + 
x_v^{-1} dv \right) |x_u|^2 = \]  
\[ = \frac{2}{k_1-k_2} (-k_1 x_u du -k_2 x_v dv +\tfrac{k_1+k_2}{2} (x_u du + 
x_v dv)) + x_u du - x_v dv = 0 \; . \]
\end{proof}

We have already defined the Hopf differential here and in \cite{wisky}, for a 
surface in $\mathbb{R}^3$, as 
\[
  \hat Q dz^2 \; , \;\;\; \hat Q = \langle n_0,x_{zz} \rangle 
  \;\;\;\;\; (z=u+i v) \; . 
\]

\begin{corollary}\label{lem:Qiscte}
If $H_0$ is constant for the surface $x=x(u,v)$ in $\mathbb{R}^3$ with 
isothermic coordinates $(u,v)$, then the factor $\hat Q$ of the 
Hopf differential is a real constant.  
\end{corollary}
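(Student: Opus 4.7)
The plan is to show that $\hat Q$ is simultaneously real-valued and holomorphic in $z$, and then deduce that it is constant. First I would compute $\hat Q$ explicitly in the given isothermic coordinates. Since $z = u+iv$ gives $\partial_z = \tfrac{1}{2}(\partial_u - i\partial_v)$, we have
\[ \hat Q = \langle n_0, x_{zz}\rangle = \tfrac{1}{4}\bigl(\langle n_0,x_{uu}\rangle - 2i\langle n_0, x_{uv}\rangle - \langle n_0, x_{vv}\rangle\bigr) = \tfrac{1}{4}(b_{11} - 2i\, b_{12} - b_{22}). \]
Because $(u,v)$ are curvature-line coordinates (part of what it means to be isothermic), we have $b_{12} = 0$, and hence $\hat Q = \tfrac{1}{4}(b_{11} - b_{22})$ is a real-valued function.

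Next, I would invoke the criterion recalled in Section \ref{hopf-diff-hopf-thm} (originally from \cite{wisky}): a conformal immersion has constant mean curvature if and only if $\hat Q$ is holomorphic in $z$. Since $H_0$ is assumed constant, $\hat Q_{\bar z} = 0$. A real-valued holomorphic function on a connected open domain is constant, for instance by the Cauchy--Riemann equations applied to the identically vanishing imaginary part.

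The only delicate point in this argument is the verification that $\hat Q$ really is real-valued in the isothermic setting, as opposed to a generic conformal parametrization where the term $-\tfrac{i}{2}b_{12}$ would survive; but this reality is immediate from $b_{12} = 0$. If one preferred not to quote the holomorphicity criterion at all, an alternative direct proof would be to combine the Codazzi equations \eqref{eq:Codazzi-iso-Christoffel} with the identity $b_{11} - b_{22} = g_{11}(k_1 - k_2)$ and the constancy of $H_0 = (k_1+k_2)/2$ (which forces $(k_1)_u = -(k_2)_u$ and $(k_1)_v = -(k_2)_v$) to show after a brief manipulation that $\partial_u(b_{11} - b_{22}) = \partial_v(b_{11} - b_{22}) = 0$. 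Either route gives the corollary, and since everything needed is already in place in the preceding sections, there is no serious obstacle.
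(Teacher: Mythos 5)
Your proof is correct, but it takes a genuinely different route from the paper's. The paper proves the corollary as a byproduct of its Christoffel-transform machinery: it writes $\hat Q = \tfrac14\langle n_0, x_{uu}-x_{vv}\rangle$ as a multiple of $(k_1-k_2)|x_u|^2$, observes via Lemma \ref{minor-technical-lemma} that this quantity is (up to a constant) the reciprocal of the integrating factor $\rho$ in $dx^* = \rho(dn_0+H_0\,dx)$, and then invokes Remark \ref{rem:CMCparallelsurf} (i.e.\ Equations \eqref{rho-u-rho-v-eqn} with $dH_0=0$) to conclude $\rho$, and hence $\hat Q$, is constant; realness is noted as immediate. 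You instead use the classical two-step argument: $b_{12}=0$ in curvature-line coordinates makes $\hat Q$ real, the Codazzi/holomorphicity criterion recalled at the start of Section \ref{hopf-diff-hopf-thm} makes $\hat Q_{\bar z}=0$ when $H_0$ is constant, and a real holomorphic function on a connected domain is constant by Cauchy--Riemann. Both arguments are sound and both rest on material already established in the text. Your route is the more elementary and self-contained one, and your fallback computation directly from the Codazzi equations \eqref{eq:Codazzi-iso-Christoffel} together with $(k_1)_u=-(k_2)_u$, $(k_1)_v=-(k_2)_v$ also checks out. What the paper's route buys is thematic: it exhibits $\hat Q$ as essentially $1/\rho$, which is exactly the link to the parallel CMC surface that the surrounding discussion of Christoffel transforms is building toward, whereas your argument treats the corollary as a standalone fact of classical surface theory. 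The only point worth flagging is the implicit connectedness assumption in the last step, which you correctly noted.
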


\begin{proof}
\[ \hat Q = \tfrac{1}{4} 
\langle n_0, x_{uu}-x_{vv} \rangle = (k_1-k_2) |x_u|^2 \; , \] 
which is constant by Lemma \ref{minor-technical-lemma} 
and Remark \ref{rem:CMCparallelsurf}.  It is clearly also real.  
\end{proof}

\subsection{Conserved quantities and CMC surfaces}  
In the next definition, we are once again considering 
general space forms $M$, so the normalization \eqref{choiceofQ} 
is not assumed.  

\begin{defn}\label{first-lcq-defn}
We set 
\[ \tau = \begin{pmatrix} x dx^* & -x dx^* x \\ dx^* & -dx^* x 
\end{pmatrix} \; . \]  If there exist smooth $Q$ 
and $Z$ in $\mathbb{R}^{4,1}$ depending on $(u,v)$ such that 
\begin{equation}\label{rossman-first-equation} 
 d(Q+\lambda Z) = (Q+\lambda Z) \lambda \tau - 
\lambda \tau (Q+\lambda Z) \end{equation} holds for 
all $\lambda \in \mathbb{R}$, then we call $Q+\lambda Z$ a 
{\em linear conserved quantity} of $x$.  
\end{defn}

We will describe geometric meanings of $Q$, $Z$ and $\tau$ 
later in this text.  

\begin{figure}[phbt]
\label{fig-stero-proj-in-light-cone}
\begin{center}
\includegraphics[width=0.7\linewidth]{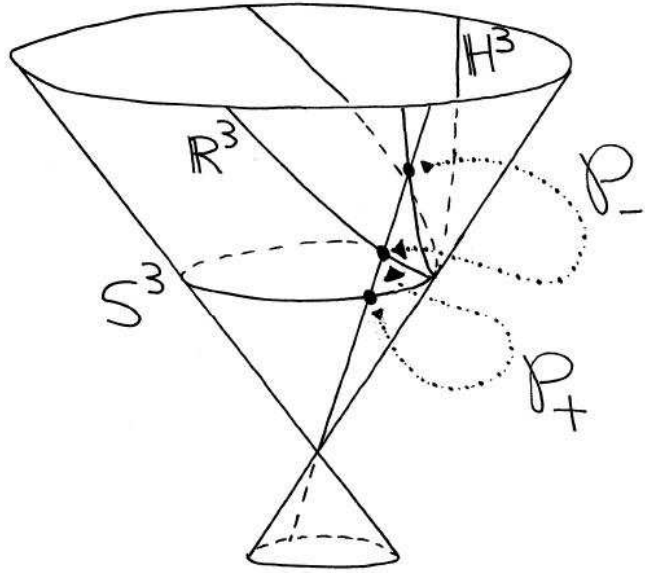}
\caption{${\mathcal P}_+$ and ${\mathcal P}_-$ are conformal 
maps from $\mathbb{S}^3$ and 
$\mathbb{H}^3$ to $\mathbb{R}^3$, showing that 
$\mathbb{S}^3$, 
$\mathbb{R}^3$ and $\mathbb{H}^3$ are M\"obius equivalent.}
\end{center}
\end{figure}

Some properties of linear conserved quantities are immediate.  
For example, $Q$ and $Z^2$ are constant, 
$X \tau = \tau X = 0$, $X \perp Z$ and $X \perp dZ$.  We now 
show these properties: 

\begin{lemma}
$Q$ is constant.  
\end{lemma}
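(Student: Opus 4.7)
The plan is to exploit the fact that the defining equation \eqref{rossman-first-equation} is required to hold for all real values of the spectral parameter $\lambda$, and then to compare coefficients of like powers of $\lambda$ on the two sides. Since $Q$ and $Z$ are taken to be independent of $\lambda$ (by Definition \ref{first-lcq-defn} they are smooth functions of $(u,v)$ only), both sides are polynomials in $\lambda$, and each coefficient must vanish separately.

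First I would compute the left-hand side of \eqref{rossman-first-equation}:
\[
d(Q+\lambda Z) = dQ + \lambda\, dZ,
\]
which is affine in $\lambda$ with constant term $dQ$ and linear term $dZ$. Next I would expand the right-hand side, noting the explicit factor $\lambda$ attached to $\tau$ in both terms:
\[
(Q+\lambda Z)\,\lambda\tau - \lambda\tau\,(Q+\lambda Z) = \lambda\,(Q\tau-\tau Q) + \lambda^{2}\,(Z\tau-\tau Z),
\]
which is a polynomial in $\lambda$ with \emph{no} constant term. Comparing the $\lambda^{0}$-coefficients on the two sides then forces
\[
dQ = 0,
\]
so $Q$ is constant, as claimed.

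The argument is essentially immediate once one accepts the polynomial-matching move; there is no real obstacle here. As a side benefit, matching the $\lambda^{1}$ and $\lambda^{2}$ coefficients yields the further identities $dZ = Q\tau - \tau Q$ and $Z\tau = \tau Z$, which will presumably be useful for the subsequent properties listed just before this lemma (namely that $Z^{2}$ is constant, that $X\tau = \tau X = 0$, and that $X\perp Z$ and $X\perp dZ$). These auxiliary identities are therefore worth recording in the course of the proof even though the statement of the lemma only requires the vanishing of the zeroth-order term.
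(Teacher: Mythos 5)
Your proof is correct and takes essentially the same route as the paper: the paper simply sets $\lambda=0$ in \eqref{rossman-first-equation}, which is exactly your comparison of the $\lambda^{0}$-coefficients, since the right-hand side carries an overall factor of $\lambda$. The additional identities you extract from the $\lambda^{1}$ and $\lambda^{2}$ coefficients are indeed the ones the paper uses in the subsequent lemmas.
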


\begin{proof}
Set $\lambda=0$ in the conserved quantity equation 
\eqref{rossman-first-equation}.  
\end{proof}

\begin{lemma}
$X \tau = \tau X = 0$.  
\end{lemma}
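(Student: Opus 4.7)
The plan is to verify both identities by a direct block-matrix computation, exploiting the factored structure of $X$ and $\tau$. Recall that (ignoring the overall scalar $\tfrac{2}{1-\kappa x^2}$, which is nonzero on $M$) we have
\[ X \;\propto\; \begin{pmatrix} x & -x^2 \\ 1 & -x \end{pmatrix}, \qquad
   \tau = \begin{pmatrix} x\,dx^* & -x\,dx^* x \\ dx^* & -dx^* x \end{pmatrix}. \]
The key observation is that each of these $2\times 2$ matrices is ``rank one'' in a quaternionic sense: the first row of $X$ is $x$ times the second row (on the left), and its second column is the first column multiplied by $-x$ (on the right). The same pattern holds for $\tau$, with $dx^*$ in place of $1$.

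First I will compute $X\tau$ entry by entry. The $(1,1)$ entry equals $x\cdot(x\,dx^*)+(-x^2)\cdot dx^* = x^2\,dx^* - x^2\,dx^* = 0$, and the $(2,1)$ entry equals $1\cdot(x\,dx^*)+(-x)\cdot dx^* = 0$. The $(1,2)$ and $(2,2)$ entries follow by multiplying the already-vanishing first column on the right by $-x$: indeed, each column of $\tau$ is just the preceding column multiplied on the right by $-x$, so the corresponding column of $X\tau$ vanishes as well. Hence $X\tau = 0$.

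The computation of $\tau X = 0$ is entirely analogous, using instead the ``row factorization.'' The $(2,1)$ entry of $\tau X$ equals $dx^*\cdot x + (-dx^*x)\cdot 1 = 0$, and the $(2,2)$ entry equals $dx^*\cdot(-x^2)+(-dx^*x)\cdot(-x) = 0$. Since the first row of $\tau$ is $x$ times its second row (on the left), each row of the product $\tau X$ is $x$ times a row already shown to vanish.

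There is no real obstacle here: the lemma is purely an algebraic consequence of the factored forms of $X$ and $\tau$, and the vanishing reflects the fact that $X$ spans an isotropic line in $\mathbb{R}^{4,1}$ that is already built into the columns (resp.\ rows) of $\tau$ through the factor $dx^*$ paired with the same quaternionic direction $x$. After restoring the scalar $\tfrac{2}{1-\kappa x^2}$, which as noted earlier is never zero on $M$, we obtain $X\tau = \tau X = 0$ on all of $M$.
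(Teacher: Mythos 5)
Your proof is correct and rests on exactly the same mechanism as the paper's: the paper writes $X$ and $\tau$ as the rank-one products $\begin{pmatrix} x \\ 1 \end{pmatrix}\begin{pmatrix} 1 & -x \end{pmatrix}$ and $\begin{pmatrix} x \\ 1 \end{pmatrix} dx^* \begin{pmatrix} 1 & -x \end{pmatrix}$ and observes that the middle contraction $\begin{pmatrix} 1 & -x \end{pmatrix}\begin{pmatrix} x \\ 1 \end{pmatrix} = x - x = 0$ kills the product, which is precisely the cancellation your entry-by-entry computation exhibits in expanded form. Both arguments are valid; yours is just the unfactored version of the paper's.
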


\begin{proof}
\[
 X \tau = \frac{2}{1-\kappa x^2} 
\begin{pmatrix}
x \\ 1 \end{pmatrix} 
\begin{pmatrix} 1 & -x \end{pmatrix} 
\begin{pmatrix} 
x \\ 1 \end{pmatrix} dx^* \begin{pmatrix} 
1 & -x \end{pmatrix} = 0 \; , 
\] since 
\[
 \begin{pmatrix}
1 & -x \end{pmatrix} \begin{pmatrix} 
x \\ 1 \end{pmatrix} = 0 \; . 
\] 
Similarly one can show $\tau X = 0$.  
\end{proof}

\begin{lemma}\label{lem-smooth:zsquared-is-cte}
If $Q+\lambda Z$ is a linear conserved quantity, then $Z^2$ is constant.  
\end{lemma}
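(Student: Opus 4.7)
The plan is to extract information from the conserved quantity equation \eqref{rossman-first-equation} by matching coefficients in $\lambda$. Writing out $d(Q+\lambda Z) = (Q+\lambda Z)\lambda\tau - \lambda\tau(Q+\lambda Z)$ and collecting terms of each degree in $\lambda$, one obtains three relations:
\begin{align*}
\lambda^0 &: \quad dQ = 0, \\
\lambda^1 &: \quad dZ = Q\tau - \tau Q, \\
\lambda^2 &: \quad Z\tau - \tau Z = 0.
\end{align*}
The first reproves the previous lemma, while the second and third are the key inputs: $dZ$ is a commutator of $Q$ with $\tau$, and $Z$ commutes with $\tau$.

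Next I would use the Clifford relation built into the $\mathbb{R}^{4,1}$ model. From \eqref{star8point2}, for any $X,Y \in \mathbb{R}^{4,1}$ one has $XY + YX = -2\langle X,Y\rangle I$, so in particular $Z^2 = -\langle Z,Z\rangle I$ is a scalar multiple of the identity. Therefore proving $Z^2$ is constant amounts to showing $d(Z^2) = Z\, dZ + dZ\cdot Z = 0$.

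Substituting $dZ = Q\tau - \tau Q$:
\begin{equation*}
Z\,dZ + dZ\cdot Z = Z(Q\tau - \tau Q) + (Q\tau - \tau Q)Z = ZQ\tau - Z\tau Q + Q\tau Z - \tau Q Z.
\end{equation*}
Now apply $Z\tau = \tau Z$ in the second and third terms to rearrange:
\begin{equation*}
ZQ\tau - Z\tau Q + Q\tau Z - \tau QZ = ZQ\tau - \tau Z Q + Q Z \tau - \tau Q Z = (ZQ + QZ)\tau - \tau(ZQ + QZ).
\end{equation*}
But $ZQ + QZ = -2\langle Z,Q\rangle I$ is a real scalar times the identity, so it commutes with $\tau$, and the expression above vanishes. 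Hence $d(Z^2) = 0$, so $Z^2$ (equivalently $\langle Z,Z\rangle$) is constant.

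The only subtle step is the second one: recognizing that $Z^2$ being a scalar multiple of $I$ is built into the ambient Clifford-like structure \eqref{star8point2}, so constancy of $Z^2$ reduces to a simple commutator computation. Everything else is bookkeeping: matching powers of $\lambda$ and using the commutation $[Z,\tau]=0$ to push $Z$ past $\tau$ in the Leibniz expansion of $d(Z^2)$.
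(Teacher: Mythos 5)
Your proposal is correct and follows essentially the same route as the paper: extract $dZ = Q\tau - \tau Q$ and $Z\tau = \tau Z$ from the conserved quantity equation, compute $d(Z^2) = Z\,dZ + dZ\cdot Z = (QZ+ZQ)\tau - \tau(QZ+ZQ)$, and conclude since $QZ+ZQ$ is a real multiple of the identity. The only difference is that you spell out the $\lambda$-coefficient matching explicitly, which the paper defers to Remark \ref{rem:without-tau}.
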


\begin{proof}
First note that $d(Z^2)=Z \cdot dZ+dZ \cdot Z = 
Z (Q \tau - \tau Q)+(Q \tau - \tau Q) Z = (Q Z + Z Q) \tau - \tau 
(Q Z + Z Q)$, since $Z \tau = \tau Z$.  Because $Q Z + Z Q$ is real, 
we have $d(Z^2)=0$.  
\end{proof}

\begin{lemma}\label{lem:X-perp-to-Z-and-dZ}
$X$ is perpendicular to both $Z$ and $dZ$.  
\end{lemma}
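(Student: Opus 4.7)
The plan is to use the identities $X\tau = \tau X = 0$ (already established) together with the consequences of expanding $d(Q+\lambda Z) = \lambda(Q+\lambda Z)\tau - \lambda\tau(Q+\lambda Z)$ in powers of $\lambda$: the $\lambda^2$-coefficient yields the commutation $Z\tau = \tau Z$, while the $\lambda^1$-coefficient yields $dZ = Q\tau - \tau Q$. These are the only structural inputs beyond $X\tau = \tau X = 0$.

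For $\langle X, Z\rangle = 0$, I would begin from $XZ\tau = X\tau Z = 0$, obtained by sliding $Z$ past $\tau$. Let $v = (x,\,1)^t$ and $w = (1,\,-x)$; then $X = \tfrac{2}{1-\kappa x^2}\,vw$, $\tau = v\,dx^*\,w$, and $wv = 0$. A short calculation shows that for any $Y \in \mathbb{R}^{4,1}$ the expression $wYv$ is a real scalar — the anticommutator $yx+xy$ of imaginary quaternions is real, and $x^2$, $y_0$, $y_\infty$ are all real — so $XZ\tau$ collapses to $\tfrac{2(wZv)}{1-\kappa x^2}\,\tau$. Since $\tau$ does not vanish generically, $wZv = 0$. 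A parallel direct computation of $XZ + ZX$ shows that its off-diagonal entries vanish (real scalars commute with $x$, and $x^2$ is central), leaving $XZ + ZX = \tfrac{2(wZv)}{1-\kappa x^2}\,I$; hence $\langle X, Z\rangle = 0$.

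For $\langle X, dZ\rangle = 0$, I would substitute $dZ = Q\tau - \tau Q$ and use $X\tau = \tau X = 0$ to collapse
\[ X\,dZ + dZ\,X \;=\; XQ\tau - \tau Q X \; . \]
By the same reality principle, $wQv$ is a real scalar and therefore commutes past the imaginary-quaternion-valued form $dx^*$; both $XQ\tau$ and $\tau QX$ then reduce to the common expression $\tfrac{2(wQv)}{1-\kappa x^2}\,\tau$, and their difference vanishes. This gives $\langle X, dZ\rangle = 0$.

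The main obstacle is the bookkeeping with non-commutativity of quaternions inside matrix products — confirming that scalars genuinely pass through $dx^*$, and that the apparent off-diagonal contributions in $XZ + ZX$ actually cancel. The saving grace is the matching rank-one factorizations of $X$ and $\tau$ through the common pair $(v, w)$ with $wv = 0$, together with the reality of $wYv$ for every $Y \in \mathbb{R}^{4,1}$; once these two observations are recorded, both perpendicularities reduce to essentially the same one-line manipulation.
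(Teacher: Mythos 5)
Your proposal is correct and uses essentially the same ingredients as the paper's proof: the relations $X\tau=\tau X=0$, $Z\tau=\tau Z$, $dZ=Q\tau-\tau Q$, and the fact that anticommutators of elements of $\mathbb{R}^{4,1}$ are real multiples of $I$. Your rank-one factorization $X\propto vw$, $\tau=v\,dx^*\,w$ with $wv=0$ is just a more explicit rendering of the paper's shorter manipulation (which multiplies $XZ+ZX$ by $\tau$ directly and, for the second claim, uses $XQ=-QX-2\langle X,Q\rangle I$ in place of your reality of $wQv$), so the two arguments are the same in substance.
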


\begin{proof}
$X Z + Z X$ is a real multiple of the identity, and is zero because 
$\tau (X Z+Z X) = \tau Z X = Z \tau X = Z \cdot 0 = 
0$.  Thus, $X \perp Z$.  Next, 
$X \cdot dZ + dZ \cdot X = X (Q \tau-\tau Q) + (Q \tau-\tau Q) X = 
X Q \tau - \tau Q X = (-Q X - 2 \langle X,Q \rangle I) \tau - \tau (-X Q 
- 2 \langle X,Q \rangle I) = (2 \tau - 2 \tau) \langle X,Q \rangle 
= 0$.  Thus $X \perp dZ$.  
\end{proof}

\begin{corollary}\label{getsusedinch11}
We have $Z^2 \leq 0$ (i.e. $Z^2=\alpha I$ for 
some $\alpha \leq 0$), and if $Z^2 = 0$, then $Z$ is parallel to $X$.  
\end{corollary}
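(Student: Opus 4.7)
The plan is to deduce both statements from Lemma \ref{lem:X-perp-to-Z-and-dZ} together with the fact that $X$ is (nonzero) lightlike, i.e., $X \in L^4$, and then invoke the standard Lorentzian linear algebra fact about the orthogonal complement of a null vector. Recall from \eqref{star8point2} that $Z^2 = -\langle Z,Z\rangle I$, so the claim $Z^2 \le 0$ is equivalent to $\langle Z,Z\rangle \ge 0$, and the claim ``$Z^2 = 0 \Rightarrow Z\parallel X$'' is the claim that null vectors in $X^\perp$ are proportional to $X$.

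First I would work out the linear algebra once and for all: for any nonzero null vector $v\in\mathbb{R}^{4,1}$, its orthogonal complement $v^\perp$ is a $4$-dimensional degenerate hyperplane containing $v$, and the induced bilinear form on $v^\perp$ is positive semidefinite with $1$-dimensional radical spanned by $v$. One sees this cleanly by choosing a Minkowski basis $\{v, w, e_1, e_2, e_3\}$ in which $\langle v,v\rangle = \langle w,w\rangle = 0$, $\langle v,w\rangle = 1$, and $e_1,e_2,e_3$ are orthonormal spacelike vectors perpendicular to both $v$ and $w$; then $v^\perp = \mathrm{span}\{v,e_1,e_2,e_3\}$, so every $Y \in v^\perp$ has $\langle Y,Y\rangle = e_1\text{-}\, \text{coeff}^2 + e_2\text{-}\, \text{coeff}^2 + e_3\text{-}\, \text{coeff}^2 \ge 0$, with equality precisely when $Y$ is a multiple of $v$.

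Applying this with $v=X$ (which is nonzero and lightlike, since by construction $X\in L^4$ represents a point of the space form $M_\kappa$) and with $Y=Z$, Lemma \ref{lem:X-perp-to-Z-and-dZ} gives $X\perp Z$ and therefore $\langle Z,Z\rangle\ge 0$, hence $Z^2 = -\langle Z,Z\rangle I \le 0$. If equality $Z^2 = 0$ holds, then $\langle Z,Z\rangle = 0$, and the linear algebra above forces $Z$ to be a scalar multiple of $X$, which is the second assertion. The only subtlety, and really the only thing one needs to watch, is that $X$ is truly nonzero (so that it is a genuine null vector rather than the zero vector) — but this is built into the definition \eqref{star8point4} of $M_\kappa$. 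No other input is needed, so the corollary is essentially a purely algebraic consequence of the two orthogonality facts already established in Lemma \ref{lem:X-perp-to-Z-and-dZ}, combined with $X$ lying on the light cone.
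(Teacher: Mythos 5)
Your proposal is correct and follows essentially the same route as the paper: the paper's proof also deduces from $X\perp Z$ and $X$ lightlike that $Z$ is either spacelike or a scalar multiple of $X$, whence $-Z^2\ge 0$ with equality exactly when $Z\parallel X$. The only difference is that you spell out the supporting Lorentzian linear algebra (the null basis computation showing the induced form on $X^\perp$ is positive semidefinite with radical $\mathbb{R}X$), which the paper simply asserts.
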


\begin{proof}
Because $Z$ is perpendicular to $X$, and because $X$ is lightlike, 
$Z$ is either spacelike, or is a scalar multiple of $X$.  So 
$-Z^2 \geq 0$, and $-Z^2=0$ if and only if $Z$ is parallel to $X$.  
\end{proof}

Furthermore, 
when $Z \neq 0$, we will see that $Z^2 < 0$, see Equation 
\eqref{eq:norm-of-Z-is-positive}.  (That is, $Z^2$ cannot be 
zero.)  

\begin{remark}\label{rem:without-tau}
Necessary and sufficient conditions for existence of a 
linear conserved quantity can be stated without ever 
referring to $\tau$ if we wish, as follows: By definition, a linear 
conserved quantity exists if and only if there exist 
$Q=Q(u,v)$ and $Z=Z(u,v)$ in $\mathbb{R}^{4,1}$ such that the 
following three conditions hold:
\begin{enumerate}
\item $Q$ is constant, 
\item $dZ=Q \tau - \tau Q$, 
\item $Z \tau = \tau Z$.  
\end{enumerate}
Note that 
\[ XdX = \frac{4}{(1-\kappa x^2)^2}
\begin{pmatrix}
x dx & -x dx \cdot x \\ dx & -dx \cdot x
\end{pmatrix}
\] and that $(x^*)_u = x_u^{-1}$ and 
$(x^*)_v = -x_v^{-1}$ and $x_u^2=x_v^2$, so 
\[ X \cdot X_u = \frac{4 x_u^2}{(1-\kappa x^2)^2} \tau(\partial_u) 
\; , \;\;\; 
X \cdot X_v = \frac{-4 x_u^2}{(1-\kappa x^2)^2} \tau(\partial_v)
\; . \]  
Then from 
\[ (XQ+QX)dX-X(dX \cdot Q+Q \cdot dX)=Q \cdot XdX -XdX \cdot Q \] 
we have 
\[ (XQ+QX)X_u-X(X_u Q+Q X_u)
= \frac{4 x_u^2}{(1-\kappa x^2)^2} (Q \cdot \tau(\partial_u)-
\tau(\partial_u) \cdot Q) \] 
and 
\[ (XQ+QX)X_v-X(X_v Q+Q X_v)
= \frac{-4 x_u^2}{(1-\kappa x^2)^2} (Q \cdot \tau(\partial_v)-
\tau(\partial_v) \cdot Q) \; . \] 
We similarly have that the third condition above becomes 
\[ Z X X_u = X X_u Z \; , \;\;\; Z X X_v = X X_v Z \; . \]  
So we can now rewrite the 
three conditions above, without using $\tau$, as 
\begin{enumerate}
\item $Q$ is constant, 
\item $(XQ+QX)X_u-X(X_uQ+QX_u)=\frac{4 x_u^2}{(1-\kappa x^2)^2} Z_u$, 
\item $(XQ+QX)X_v-X(X_vQ+QX_v)=\frac{-4 x_u^2}{(1-\kappa x^2)^2} Z_v$, 
\item $Z X X_u = X X_u Z$, $Z X X_v = X X_v Z$.  
\end{enumerate}
\end{remark}

Properties like these will be utilized to prove Theorems 
\ref{cq-sphere-thm} and \ref{theBCtheorem} below.  
The first of these two theorems takes care of the 
special case that $x$ is a piece of a sphere.  

\begin{theorem}\label{cq-sphere-thm}
The surface $x$ in any space form is a part of a sphere if and only if 
it has a linear conserved quantity that is constant with respect to $\lambda$, 
that is, of the form $Q + \lambda \cdot 0$.  
\end{theorem}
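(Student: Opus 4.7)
The plan is to show that having a linear conserved quantity of the form $Q+\lambda\cdot 0$ is pointwise equivalent to the orthogonality condition $\langle X,Q\rangle\equiv 0$, and then invoke \eqref{eq:S-tilde-sphere} to identify this with lying on a M\"obius sphere.

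First I would unpack \eqref{rossman-first-equation} with $Z\equiv 0$. Comparing coefficients of $\lambda$ in the expansion $d(Q)=\lambda(Q\tau-\tau Q)+\lambda^{2}(Z\tau-\tau Z)$ forces $dQ=0$ (so $Q$ is a constant element of $\mathbb{R}^{4,1}$) together with $Q\tau=\tau Q$ at every point. This is exactly the reformulation from Remark \ref{rem:without-tau} specialised to $Z=0$, so the theorem reduces to characterising those $x$ which admit a nonzero constant $Q$ satisfying $Q\tau=\tau Q$.

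The technical heart of the argument is to convert the matrix identity $Q\tau=\tau Q$ into a scalar condition. Writing out conditions (2) and (3) of Remark \ref{rem:without-tau} with $Z_u=Z_v=0$ yields
\[
(XQ+QX)X_u=X(X_uQ+QX_u), \qquad (XQ+QX)X_v=X(X_vQ+QX_v).
\]
Applying the Clifford-type identity $YW+WY=-2\langle Y,W\rangle I$ coming from \eqref{star8point2} to each pairing, these collapse to
\[
\langle X,Q\rangle\,X_u=\langle X_u,Q\rangle\,X, \qquad \langle X,Q\rangle\,X_v=\langle X_v,Q\rangle\,X.
\]
Now $X$ is lightlike while $X_u,X_v$ are spacelike tangent vectors to the immersion, each orthogonal to $X$ and nonzero, so $X$ is linearly independent from $X_u$ and from $X_v$. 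Equating coefficients therefore forces $\langle X,Q\rangle\equiv 0$ (the auxiliary conclusions $\langle X_u,Q\rangle=\langle X_v,Q\rangle=0$ also follow automatically by differentiating $\langle X,Q\rangle=0$ with $Q$ constant). Conversely, if $\langle X,Q\rangle\equiv 0$ for a constant $Q$, then differentiation and the reverse Clifford computation immediately recover the two matrix equations above.

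Thus $x$ admits a $\lambda$-constant linear conserved quantity precisely when there is a nonzero constant $Q\in\mathbb{R}^{4,1}$ with $\langle X,Q\rangle\equiv 0$, which by \eqref{eq:S-tilde-sphere} means the image of $x$ is contained in $\tilde Q$. A short case analysis forces $\|Q\|^{2}>0$: if instead $\|Q\|=0$ with $Q\neq 0$ (respectively $\|Q\|^{2}<0$), a direct check in coordinates shows that $\tilde Q$ consists of at most a single point (respectively is empty), so it cannot contain a genuine $2$-dimensional surface. Hence Lemma \ref{sphere-lemma} applies and $\tilde Q$ is a round sphere in $M_\kappa$, while the converse direction is immediate: if $x$ lies on a sphere $\tilde{\mathcal S}$, set $Q:=\mathcal S$ and $Z:=0$ and read the Clifford computation backwards. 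The main obstacle I anticipate is simply bookkeeping in the reduction of $(XQ+QX)X_u=X(X_uQ+QX_u)$ to $\langle X,Q\rangle X_u=\langle X_u,Q\rangle X$: the cancellation works only because $\langle X,Q\rangle I$ and $\langle X_u,Q\rangle I$ are central and can be slid past the adjacent $X$ or $X_u$, and one must be careful which of the two factors sits on which side.
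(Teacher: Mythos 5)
Your proof is correct and follows essentially the same route as the paper: reduce the order-zero conserved quantity condition to $Q\tau=\tau Q$ with $Q$ constant, deduce $\langle X,Q\rangle\equiv 0$, rule out lightlike and timelike $Q$ because a genuine surface cannot sit inside a point or the empty set, and identify the image with the sphere $\tilde{Q}$ of \eqref{eq:S-tilde-sphere}. The only cosmetic difference is that you extract $\langle X,Q\rangle=0$ by a direct Clifford-algebra computation from the reformulation in Remark \ref{rem:without-tau}, whereas the paper obtains it by noting that $Z:=\alpha Q$ also yields a linear conserved quantity and then invoking Lemma \ref{lem:X-perp-to-Z-and-dZ} and Corollary \ref{getsusedinch11}.
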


\begin{proof}
Suppose that $x$ has a conserved quantity $Q$ of order $0$.  
(Here we are not assuming $Q$ is of the special form in 
\eqref{choiceofQ}.)  Then 
$Q + \lambda Z$ will also be a conserved quantity if $Z = \alpha Q$, 
for some constant $\alpha \in R$.  It follows from the above lemmas that 
$Q$ is constant and $Z$ is either spacelike or parallel to $X$, and $Z$ 
is perpendicular to $X$.  In particular, $Q$ is constant and perpendicular 
to $X$, and is therefore 
either spacelike or parallel to $X$.  Thus $X$ lies in the 
sphere given by $\mathcal{S}=Q$, as in \eqref{eq:S-tilde-sphere}.  
If $Q$ is lightlike, then $X$ would be a 
single point, and hence not a surface, so $Q$ must be spacelike 
(so the curvature $\kappa$ of the space form $M$ given by $Q$ 
is strictly negative).  Thus, in fact, 
$X$ is part of the virtual boundary sphere 
at infinity of the spaceform given by $Q$.  
It follows that $X$ will be part of a finite 
sphere in other choices for the space form.  

Computationally, this can be seen as follows: 
$Q + \lambda \cdot 0$ is a 
linear conserved quantity, and the equation for 
linear conserved quantities 
implies $Q \tau = \tau Q$.  With $Q$ in the form 
\eqref{choiceofQ}, it follows that the two equations 
\[ dx^* = -\kappa x dx^* x \; , \;\;\; x dx^* = - dx^* x 
\] hold.  This in turn implies $dx^* = \kappa x^2 dx^*$, and so 
one of 
\[ 1-\kappa x^2 = 0 \;\;\; \text{or} \;\;\; dx^* = 0 \] 
must hold.  However, because $dx^* = x_u^{-1} du - x_v^{-1} dv$ is 
never zero, we know that the first of these two 
equations must hold, and so $x$ is a portion of a sphere 
(and $\kappa < 0$).  

Conversely, in the case that $x$ is part of a sphere, then 
there exists a constant $\mathcal{S}=Q$ that is perpendicular to 
$X$, and it follows that $Q=Q+\lambda \cdot 0$ itself is a linear 
conserved quantity, by the four conditions at the end of 
Remark \ref{rem:without-tau}.  (Note that differentiation of 
$XQ+QX=0$ gives $dX \cdot Q + Q \cdot dX = 0$, because $Q$ is 
constant.)  
\end{proof}

\begin{theorem}\label{theBCtheorem} \cite{BurstCald}
An isothermic immersion $x=x(u,v)$ without umbilic points 
has constant mean curvature in a space form $M$ (produced 
by $Q \neq 0$) if and only if there exists (for that $Q$) a 
linear conserved quantity $Q + \lambda Z$.  
\end{theorem}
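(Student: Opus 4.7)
The plan is to extract pointwise constraints from the defining equation and then identify them geometrically. Collecting powers of $\lambda$ in $d(Q+\lambda Z)=\lambda(Q+\lambda Z)\tau-\lambda\tau(Q+\lambda Z)$ yields three conditions: $dQ=0$; $dZ=Q\tau-\tau Q$; and $Z\tau=\tau Z$. Writing $Q$ as in \eqref{choiceofQ} and using the rank-one factorization $\tau=\begin{pmatrix} x \\ 1\end{pmatrix} dx^*\begin{pmatrix}1 & -x\end{pmatrix}$, a block computation yields $Q\tau-\tau Q=\tfrac12(1-\kappa x^2)^2\,\mathcal{T}_{dx^*}$, i.e.\ the tangent 1-form at $X$ on $M_\kappa$ corresponding to the Christoffel derivative $dx^*$.

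First I would show that $Z$ encodes an enveloping sphere congruence. Lemma \ref{lem:X-perp-to-Z-and-dZ} gives $Z\perp X$ and $X\perp dZ$; differentiating $\langle X,Z\rangle=0$ then yields $Z\perp dX$. Together with $Z^2<0$ in the nondegenerate case (the case $Z\equiv 0$ reduces to Theorem \ref{cq-sphere-thm}, and $Z\parallel X$ is inconsistent with $dZ$ being a nonzero tangent 1-form to $M_\kappa$ because $dx$ and $dx^*$ are $\mathbb{R}$-linearly independent as quaternion-valued 1-forms), this is precisely the condition that $Z$ represents, at each point, a sphere through $X$ tangent to $x$. In the Euclidean model $(\kappa=0)$ such a $Z$ may be taken in the form $Z=\begin{pmatrix} c & |c|^2-R^2 \\ 1 & -c \end{pmatrix}$ with centre $c=x+R\,n_0$ and signed radius $R=R(u,v)$; a short calculation exploiting the rank-one structure of $\tau$ together with $dx^*\perp n_0$ (which follows from $dx^*=\rho(dn_0+H_0\,dx)$) shows that $Z\tau=\tau Z$ is automatic for every such enveloping sphere, and therefore encodes no additional restriction.

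Next I would exploit $dZ=Q\tau-\tau Q$ to pin down $R$ and $H_0$. For $\kappa=0$, matching the entries of the matrix identity reduces to $dc=dx^*$ together with $dz_\infty=2\langle x,dx^*\rangle$. Substituting $dx^*=\rho(dn_0+H_0\,dx)$ in the first equation gives $(1-\rho H_0)\,dx+(R-\rho)\,dn_0+dR\cdot n_0=0$; its normal component yields $dR=0$, and its tangent component, expanded in curvature-line directions using $n_{0,u}=-k_1x_u$ and $n_{0,v}=-k_2x_v$, produces the algebraic pair $1+\rho(k_j-H_0)=k_jR$, $j=1,2$. Adding and subtracting these forces $R=\rho=H_0^{-1}$, and since $R$ is constant so is $H_0$. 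The remaining equation $dz_\infty=2\langle x,dx^*\rangle$ follows directly from $z_\infty=|x|^2+2R\langle x,n_0\rangle$ and $\langle dx,n_0\rangle=0$. Conversely, if $H_0$ is constant then \eqref{rho-u-rho-v-eqn} makes $\rho$ constant, so one can rescale $x^*$ to achieve $\rho=H_0^{-1}$ globally; the mean-curvature sphere then provides the desired $Z$. The minimal case $H_0=0$ corresponds to the degeneration $z_0\to 0$ of this parametrization, giving a tangent-plane congruence.

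Finally, the general $\kappa$ case is handled by an analogous calculation using the $M_\kappa$-mean-curvature sphere together with Lemma \ref{lem-meancurv}, or by reducing to the Euclidean case via Möbius equivalence of the space forms. The main obstacle is carrying the Euclidean step-by-step analysis through uniformly for $\kappa\ne 0$: the prefactor $(1-\kappa x^2)^2$ and the nonvanishing lower-left entry $\kappa(xdx^*+dx^*x)$ of $Q\tau-\tau Q$ enter both sides of the matching identity, so the tangent/normal decomposition of $dZ=Q\tau-\tau Q$ acquires $\kappa$-dependent correction terms that must be shown to reduce exactly to the space-form CMC condition recorded in \eqref{derivofCMC}.
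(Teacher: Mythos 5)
Your extraction of the three conditions ($dQ=0$, $dZ=Q\tau-\tau Q$, $Z\tau=\tau Z$), the identity $Q\tau-\tau Q=\tfrac12(1-\kappa x^2)^2\,\mathcal{T}_{dx^*}$, and the observation that $Z\tau=\tau Z$ is automatic for an enveloping sphere congruence all match the paper's proof. Your Euclidean computation is a correct, slightly more geometric reparametrization of the paper's step in which $z=z_0x+hn_0$ with $h$ constant and $z_0=hH_0$: your centre--radius pair $(c,R)$ is $(z/z_0,\,h/z_0)$, and the conclusion $R=\rho=H_0^{-1}$ constant is the same. Two small points there: writing the lower-left entry of $Z$ as $1$ is only legitimate after noting that for $\kappa=0$ the lower-left entry of $Q\tau-\tau Q$ vanishes, so $dz_0=0$ and $z_0$ is a constant (which cannot be rescaled away without changing $Q$, but can be carried through harmlessly); and the step ``adding the two equations forces $R=H_0^{-1}$'' divides by $H_0$, so the minimal case must be treated as a separate branch ($z_0=0$), not as a limiting degeneration of your parametrization.

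The genuine gap is the case $\kappa\neq 0$, which is the real content of the theorem: there the quantity that must be shown constant is $H_\kappa=(1-\kappa x^2)H_0-\kappa(xn_0+n_0x)$, not $H_0$, and the equivalence passes through \eqref{derivofCMC}. Of your two proposed completions, the reduction ``to the Euclidean case via M\"obius equivalence'' cannot work: a M\"obius transformation acts on $\mathbb{R}^{4,1}$ as an isometry, so it carries $Q$ to a vector of the same norm, and $||Q||^2=-\kappa$; hence the causal type of $Q$ (equivalently the sign of $\kappa$) is a M\"obius invariant, and no transformation converts the $\kappa\neq0$ problem into the $\kappa=0$ one (mean curvature is of course not M\"obius invariant either). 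Your other route --- the direct computation you flag as ``the main obstacle'' --- is exactly what the paper carries out: from $XZ+ZX=0$ and $X\perp dZ$ one gets $z=z_0x+hn_0$ with $h$ constant (from $Z^2$ constant); the entries of $dZ=Q\tau-\tau Q$ then give $z_0=hH_0$ together with $dz_0=\kappa(x\,dx^*+dx^*\,x)$, and substituting $dx^*=x_u^{-1}du-x_v^{-1}dv$ turns $h\,dH_0=\kappa(x\,dx^*+dx^*\,x)$ precisely into \eqref{derivofCMC}, i.e.\ $dH_\kappa=0$. The converse for $\kappa\neq0$ likewise requires the constancy of the Hopf differential to show $h$ is constant before writing down $Z$ explicitly. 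As it stands, your proposal proves the theorem only for $M_0=\mathbb{R}^3$.
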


\begin{proof}
Assume that $x$ has a linear conserved quantity.  
We can take $Q$ as in \eqref{choiceofQ}, and 
denote the components of $Z$ by 
\[ Z = \begin{pmatrix} z & z_\infty \\ z_0 & -z \end{pmatrix} 
\in \mathbb{R}^{4,1} \; . \]  

The above lemmas tell us that $XZ+ZX=0$ and $X \perp dZ$, which, 
respectively, imply 
\[ x z- x^2 z_0+z x+z_\infty = 0 \;\;\; \text{and} \;\;\; 
x \, dz- x^2 dz_0+dz \, x+dz_\infty = 0 \; . \]  
Differentiating the first of these two equations, and then applying 
the second one, we have 
\[ dx \cdot z- (x dx+dx \cdot x) z_0+z dx = 0 \; , \] which implies 
$z$ must be of the form 
\[ z = z_0 \cdot x + h \cdot n_0 \] for some real-valued function $h$.  
Then 
\[ x (z_0 x + h n_0)-x^2z_0+(z_0 x + h n_0)x+
z_\infty=hxn_0+z_0x^2+hn_0x+z_\infty = 0 \; , \] so 
\[ z_\infty = - h (xn_0+n_0x)-z_0x^2 \; . \]  Thus 
\[ Z = z_0 \begin{pmatrix} x & -x^2 \\ 1 & -x \end{pmatrix} + 
h \begin{pmatrix} n_0 & -n_0x-xn_0 \\ 0 & -n_0 \end{pmatrix} \; . \]  
Because $Z^2$ is constant, \[ 
(z_0x+hn_0)^2-z_0h (xn_0+n_0x)-z_0^2 x^2 = - h^2/4 \] is constant, 
and so $h$ is constant, and then also $||Z||$ is constant and nonnegative.  
A direct computation, using 
$n_0 \, dx^* + dx^* \, n_0 = 0$, now shows that $Z \tau = \tau Z$, 
so the condition $Z \tau = \tau Z$ coming from Equation 
\eqref{rossman-first-equation} provides no extra information.  
The relation $dZ = Q \tau - \tau Q$ from 
\eqref{rossman-first-equation} gives that 
\[ dz_0 = \kappa (x \cdot dx^*+dx^* \cdot x) \; \text{and} \; 
dz_0 \cdot x + z_0 dx + h dn_0 = dx^*+\kappa x dx^* x \; . \]  
These two equations give us a pair of (real) equations that are 
linear with respect to both $h$ and $z_0$.  Solving 
simultaneously for $h$ and $z_0$ tells us that 
\begin{equation}\label{h-eqn} 
h = \frac{2 (1-\kappa x^2)}{x_u^2 (k_2-k_1)} \; , 
\end{equation} which we know to be constant, and that 
\begin{equation}\label{h-eqn2} 
z_0 = \tfrac{1}{2} h (k_2+k_1) = h \cdot H_0 \; . \end{equation} 
Equations \eqref{h-eqn}, \eqref{h-eqn2} and $h$ being 
constant then imply 
\[
 dz_0=h dH_0 = \frac{2 (1-\kappa x^2)}{x_u^2 (k_2-k_1)} dH_0 
  \; . \]
Then using that $dz_0 = \kappa (x dx^*+dx^* x)$, and that 
$dx^*=x_u^{-1} du-x_v^{-1}dv$, we find that \eqref{derivofCMC} holds, 
and so $H_\kappa$ is constant.  One direction of the theorem now follows.  

To prove the converse direction, assume that $x$ is a CMC surface 
with isothermic coordinate $z=u+iv$, 
then the Hopf differential is a constant multiple of $dz^2$ (see 
Corollary \ref{lem:Qiscte} here for the case when the space form is 
$\mathbb{R}^3$ and Equations (5.1.1) and (5.2.1) in \cite{wisky} for 
other space forms).  Thus, looking at the end of the proof of Lemma 
\ref{lem-meancurv}, we see that  
\[
 b_{11}-b_{22} = \frac{4 x_u^2 (k_2-k_1)}{1-\kappa x^2}
\] is constant, and so 
\[ h = \frac{2 (1-\kappa x^2)}{x_u^2 (k_2-k_1)} 
\] is also constant.  Take $Q$ as in \eqref{choiceofQ}, 
and then take 
\[
 Z = h H_0 \begin{pmatrix} x & -x^2 \\ 1 & -x 
\end{pmatrix} + h \begin{pmatrix} n_0 & -x n_0-n_0 x \\ 0 & -n_0 
\end{pmatrix} \; . 
\]  Then set the candidate for the conserved quantity to be 
$P=Q+\lambda Z$.  Noting that $dx^* = x_u^{-1} du- x_v^{-1} dv$, 
and $\tau$ is as in Definition \ref{first-lcq-defn}, 
a computation gives $dP+\lambda \tau P - P \lambda \tau = 0$, 
by Equation \eqref{derivofCMC}, so $P$ is indeed a linear conserved 
quantity.  
\end{proof}

In Theorem \ref{theBCtheorem}, for given $Q$, when $x$ is 
constant mean curvature and not totally umbilic, then $Z$ is unique.  
In fact, in the proof above we saw that $Z$ has the unique form 
\[ Z = h H_0 \begin{pmatrix}
x+H_0^{-1} n_0 & -x^2-H_0^{-1} (n_0 x + x n_0) \\ 1 & -x-H_0^{-1} n_0
\end{pmatrix} \; , \] where $h$ is the constant as in \eqref{h-eqn}.  
Furthermore, because $1-\kappa x^2$ is never zero, $h$ cannot be 
zero, so the norm of $Z$ satisfies 
\begin{equation}\label{eq:norm-of-Z-is-positive}
||Z||=\tfrac{1}{2} |h|>0 \; . \end{equation}  
In particular, $||Z|| \neq 0$.  

Also, by Lemma \ref{lem-meancurv}, the mean curvature satisfies 
\begin{equation}\label{eqn:Hwithscalarfactor}
 H_\kappa = -2 h^{-1} \langle Z,Q \rangle = 
 - \text{sgn}(h) \frac{1}{||Z||} \langle Z,Q \rangle \; . 
\end{equation}
In particular, if $||Z||=1$, then the mean curvature is 
$\pm \langle Z,Q \rangle$.  
Note that any constant scaling of the linear conserved quantity 
is still a linear conserved quantity, and will change 
the mean curvature by a constant multiple.  

Next, noting that $z_0 = h H_0$, Lemma \ref{sphere-lemma} tells 
us that $Z$ determines a sphere, as in \eqref{eq:S-tilde-sphere}, 
in $M_0$ with mean curvature 
\[ \pm \frac{|z_0|}{2 ||Z||} = \pm \frac{|h| |H_0|}{2 \cdot 
\tfrac{1}{2} |h|} = \pm |H_0| \; , \] so the mean curvature 
of this sphere is the same as the mean curvature of the surface.  

In particular, once we know that the surface and the sphere are 
tangent, Lemma \ref{lem-meancurv} implies that $Z$ 
determines a sphere congruence for which each sphere has the same 
mean curvature as the mean curvature at 
the corresponding point on the surface, regardless 
of the choice of space form (i.e. the choice of value $\kappa$).  
Thus $Z$ is the {\em mean curvature sphere congruence} 
(perhaps first defined by Sophie Germain in the first half of 
the 19'th century), once we 
know that the spheres determined by $Z$ contain the corresponding 
points $X$ in the surface and are tangent to the surface, which 
follow from the next lemma.  (In particular, it is not necessary 
that the surface be of constant mean curvature.  We also note 
one must check that $Z$ and $X$ have common orientation as well, 
and we leave that to the reader.)  In fact, the 
next lemma reconfirms Lemma \ref{lem:X-perp-to-Z-and-dZ}:  

\begin{lemma}
$\langle X,Z \rangle = \langle dX,Z \rangle = 0$.  
\end{lemma}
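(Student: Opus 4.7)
The plan is to give a direct verification that exhibits the geometric content of the lemma, namely that the mean curvature sphere determined by $Z$ at the point $X$ passes through $X$ and is tangent to the surface there. The abstract argument of Lemma \ref{lem:X-perp-to-Z-and-dZ} already applies, but here I would use the explicit form of $Z$ obtained in the proof of Theorem \ref{theBCtheorem}:
\[
Z = z_0 \begin{pmatrix} x & -x^2 \\ 1 & -x \end{pmatrix} + h \begin{pmatrix} n_0 & -n_0 x - x n_0 \\ 0 & -n_0 \end{pmatrix} \; .
\]

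First I would verify $\langle X,Z\rangle = 0$ by showing $XZ+ZX=0$. The first summand of $Z$ is a real scalar multiple of $X$, so it contributes a multiple of $X^2$ to $XZ+ZX$, and this vanishes because $X$ lies in the light cone. It therefore suffices to compute $XZ+ZX$ for the $h$-term, which is a short $2\times 2$ matrix calculation whose four entries cancel pairwise; for example the $(1,1)$-entry of $XZ$ equals $\frac{2h\,x n_0}{1-\kappa x^2}$ while the $(1,1)$-entry of $ZX$ is its negative, and similarly for the other three entries, using only that $z_0$ is a real scalar commuting with $x$ and $n_0$. Then $\langle X,Z\rangle\cdot I = -\tfrac{1}{2}(XZ+ZX)=0$ by \eqref{star8point2}.

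For the second equality I would differentiate the identity $\langle X,Z\rangle=0$ to obtain $\langle dX,Z\rangle = -\langle X,dZ\rangle$ and then show the right-hand side vanishes using the conserved-quantity relation. Extracting the coefficient of $\lambda$ in \eqref{rossman-first-equation} gives $dZ = Q\tau - \tau Q$. Combined with $X\tau = \tau X = 0$ (which was already established) and the identity $XQ+QX = -2\langle X,Q\rangle I = 2I$ (since $X\in M$ so $\langle X,Q\rangle=-1$), one computes
\[
X\,dZ + dZ\cdot X \;=\; X(Q\tau-\tau Q) + (Q\tau-\tau Q)X \;=\; XQ\tau - \tau QX \;=\; 2\tau - 2\tau \;=\; 0,
\]
so $\langle X,dZ\rangle=0$ and hence $\langle dX,Z\rangle=0$ as well.

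The only technical step is the explicit matrix cancellation verifying $XZ+ZX=0$; everything else is a clean application of identities already available from \eqref{star8point1}--\eqref{star8point2}, the properties of $\tau$, and the explicit expression for $Z$ from the proof of Theorem \ref{theBCtheorem}. There is no serious obstacle.
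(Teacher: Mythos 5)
Your verification of $\langle X,Z\rangle=0$ is fine and matches what the paper treats as immediate: the $z_0$-summand of $Z$ is a real multiple of $X$ and contributes $X^2=0$, and the four entries of $XZ+ZX$ for the $h$-summand do cancel exactly as you describe. Your argument for the second equality is also internally correct, but it takes a genuinely different route from the paper's, and the difference matters. You differentiate $\langle X,Z\rangle=0$ and then kill $\langle X,dZ\rangle$ using $dZ=Q\tau-\tau Q$ together with $X\tau=\tau X=0$ and $XQ+QX=-2\langle X,Q\rangle I$; this is precisely the computation already carried out in Lemma \ref{lem:X-perp-to-Z-and-dZ}, and indeed the text says the present lemma ``reconfirms'' that one. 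The paper's own proof instead evaluates $dX\cdot Z+Z\cdot dX$ directly from the explicit formula for $Z$, using only the pointwise identity $dx\cdot n_0+n_0\cdot dx=0$ (perpendicularity of $dx$ and $n_0$); no conserved quantity enters.

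What the paper's route buys, and yours does not, is generality: the relation $dZ=Q\tau-\tau Q$ comes from \eqref{rossman-first-equation}, which by Theorem \ref{theBCtheorem} is available exactly when $x$ is CMC in the space form given by $Q$, whereas the surrounding discussion explicitly asserts that tangency of the sphere congruence $Z$ to the surface does \emph{not} require constant mean curvature --- this is what lets $Z$ be interpreted as the mean curvature sphere congruence of an arbitrary surface, and it feeds into the subsequent characterizations (conformal Gauss map, central sphere congruence). So as a proof of the lemma in the CMC context your argument is complete, but if you want the statement in the generality the text uses it, you should replace the conserved-quantity step for $\langle dX,Z\rangle=0$ by the direct computation from the explicit form of $Z$, which reduces to $\Re(dx\cdot n_0)=0$.
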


\begin{proof}
$\langle X,Z \rangle \cdot I = -(XZ+ZX)/2 = 0$ is now immediate.  
$\langle dX,Z \rangle \cdot I = -(dX \cdot Z+Z \cdot dX)/2 = 0$ 
follows from $dx \cdot n_0 + n_0 \cdot dx = 0$, i.e. 
$dx$ and $n_0$ are perpendicular.  
\end{proof}

\begin{lemma} The mean curvature sphere congruence 
$Z$ can be characterized as the {\em conformal Gauss map} of the surface 
$X$ (a notion introduced by Robert Bryant), i.e. the unique 
enveloped sphere congruence 
that induces the same conformal structure as $X$.  
\end{lemma}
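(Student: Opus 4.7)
The plan is to establish three things in order: that $Z$ is an enveloped sphere congruence, that its pullback metric lies in the conformal class of the one induced by $X$, and that among enveloped sphere congruences, this conformal condition determines $Z$ uniquely.

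First, the envelope property. The lemma immediately preceding the statement shows $\langle X,Z\rangle = 0$ and $\langle dX,Z\rangle = 0$. By \eqref{eq:S-tilde-sphere}, the first of these says $X(p)$ lies on the sphere $\tilde{Z}(p)$ for every $p$, and the second says that every tangent vector $dX(v)$ is perpendicular (in $\mathbb{R}^{4,1}$) to $\mathcal{S}=Z(p)$, hence tangent to $\tilde Z(p)$. Combined with the identification of tangent spaces given in the proof of Lemma \ref{lem:anglebetweenspheres}, this exhibits $X$ as enveloping $\tilde Z$ at each point, with orientations compatible because the coefficient $hH_0$ in the explicit formula for $Z$ is real and nonzero (recall $h$ is the constant from \eqref{h-eqn} and $\|Z\|=|h|/2\neq 0$ by \eqref{eq:norm-of-Z-is-positive}).

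Next I would verify that $\langle dZ,dZ\rangle$ is conformal to $\langle dX,dX\rangle$. Since $(u,v)$ is isothermic for $X$, it suffices to show $\langle Z_u,Z_v\rangle=0$ and $\langle Z_u,Z_u\rangle=\langle Z_v,Z_v\rangle$. Using the explicit form
\[
Z = hH_0\begin{pmatrix} x+H_0^{-1}n_0 & -x^2-H_0^{-1}(xn_0+n_0x) \\ 1 & -x-H_0^{-1}n_0\end{pmatrix}
\]
together with $h$ constant, the Codazzi/CMC condition \eqref{derivofCMC} controlling the variation of $H_0$, the Weingarten identities $\partial_u n_0=-k_1 x_u$, $\partial_v n_0=-k_2 x_v$ (valid because $u,v$ are curvature-line), and the isothermic identity $x_u^2=x_v^2$, one computes $Z_u$ and $Z_v$ and reads off the required equalities. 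For uniqueness of the enveloped sphere congruence with this property, Remark \ref{rem:tangentspheres} says that any other enveloped sphere congruence has the form $Y=Z+tX$ for a scalar function $t$ on the domain. A short computation using $\langle X,X\rangle=\langle X,X_u\rangle=\langle X,X_v\rangle=0$ and (differentiating $\langle X,Z\rangle=\langle X_u,Z\rangle=\langle X_v,Z\rangle=0$) $\langle X,Z_u\rangle=\langle X,Z_v\rangle=0$ shows
\[
\langle Y_i,Y_j\rangle = \langle Z_i,Z_j\rangle + t\bigl(\langle X_i,Z_j\rangle+\langle X_j,Z_i\rangle\bigr) + t^2\langle X_i,X_j\rangle.
\]
Imposing conformality of $Y$ and using conformality of both $X$ and $Z$ collapses this to the two scalar conditions $t\cdot(\langle X_u,Z_u\rangle-\langle X_v,Z_v\rangle)=0$ and $t\cdot(\langle X_u,Z_v\rangle+\langle X_v,Z_u\rangle)=0$. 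A direct evaluation of these coefficients (again using curvature-line coordinates and the explicit formula for $Z$) shows that the first factor equals, up to a nonvanishing scalar, the difference $k_2-k_1$ of principal curvatures, which is nonzero at non-umbilic points; hence $t\equiv 0$, proving $Y=Z$.

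The main obstacle is the middle computation: showing directly that $\langle Z_u,Z_u\rangle=\langle Z_v,Z_v\rangle$ and $\langle Z_u,Z_v\rangle=0$. Within the $2\times 2$ matrix model for $\mathbb{R}^{4,1}$, the derivatives $Z_u$ and $Z_v$ involve the cross terms $xn_0+n_0x$, $x\,x_u$, and derivatives of $x^2$, and the cancellations that produce conformality rely simultaneously on $h$ being constant, on the CMC condition \eqref{derivofCMC} linking $H_0$ to $\kappa$ and $x^2$, and on the diagonal nature of the first and second fundamental forms of $X$ in isothermic coordinates. Once this bookkeeping is carried out, the uniqueness step is a comparatively mechanical linear algebra argument within the pencil of tangent spheres at each point.
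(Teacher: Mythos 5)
Your proposal is correct, and its central computation is the same one the paper performs: writing $dZ$ in terms of $h(H_0\,dx+dn_0)$ via the explicit matrix formula for $Z$, then invoking the Weingarten relations $\partial_u n_0=-k_1x_u$, $\partial_v n_0=-k_2x_v$ and $x_u^2=x_v^2$ to land on $\langle dZ,dZ\rangle=-\tfrac14 h^2x_u^2(k_1-k_2)^2(du^2+dv^2)$, which is manifestly conformal to the induced metric. The difference is one of completeness rather than of method. The paper's proof consists \emph{only} of that three-line computation and explicitly states ``we do not show uniqueness here''; it also leaves the enveloping property to the surrounding discussion. You, by contrast, (i) derive the envelope property cleanly from the preceding lemma $\langle X,Z\rangle=\langle dX,Z\rangle=0$, and (ii) supply the omitted uniqueness argument: parametrizing the pencil of tangent spheres as $Y=Z+tX$ via Remark \ref{rem:tangentspheres}, noting that the derivative terms $t_iX$ drop out of $\langle Y_i,Y_j\rangle$ because $X$ is null and orthogonal to $X_j$ and $Z_j$, and observing that conformality of $Y$ forces $t\cdot(\langle X_u,Z_u\rangle-\langle X_v,Z_v\rangle)=0$, where the second factor is a nonzero multiple of $k_2-k_1$ away from umbilics. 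That is a sound and genuinely useful addition, since it is exactly the piece needed to justify the word ``unique'' in the statement. The only caveat is that, like the paper, you describe rather than fully execute the middle bookkeeping (the verification that $\langle Z_u,Z_v\rangle=0$ and $\langle Z_u,Z_u\rangle=\langle Z_v,Z_v\rangle$); but your list of the identities it rests on ($h$ constant, the Weingarten equations in curvature-line coordinates, and the diagonal fundamental forms) matches what the paper's displayed computation actually uses, so there is no gap in substance.
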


\begin{proof}
That $Z$ is 
the conformal Gauss map can be seen from the following computation 
(we do not show uniqueness here): 
\[
 \langle dZ,dZ \rangle = -h^2 (H_0^2 dx^2+H_0 
     (dxdn_0+dn_0dx)+dn_0^2)
\]\[
 = -h^2 x_u^2 (\tfrac{1}{4} (k_1+k_2)^2 (du^2+dv^2)-(k_1+k_2) 
    (k_1du^2+k_2dv^2)+k_1^2 du^2+k_2^2 dv^2)
\]\[
 = - \tfrac{1}{4} h^2 x_u^2 (k_1-k_2)^2 (du^2+dv^2) \; . \]
\end{proof}

\begin{lemma} The mean curvature sphere congruence $Z$ can also 
      be characterized as the {\em central sphere congruence} 
      (a notion perhaps first defined by Darboux), i.e. the 
      sphere congruence whose spheres exchange the principal curvature 
      spheres via inversion.
\end{lemma}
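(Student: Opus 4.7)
The plan is to work inside the pencil of spheres tangent to the surface at a given point, realize both the mean curvature sphere and the two principal curvature spheres as elements of that pencil, and then apply the inversion formula of Lemma \ref{lem:inversionthruspheres} directly.

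First I would parametrize the tangent pencil at the point $X$. Because $Z$ is already tangent to the surface at $X$, Remark \ref{rem:tangentspheres} says the pencil of spheres tangent at $X$ is $\{S(t):=Z+tX \mid t\in\mathbb{R}\}$. Two features of this family are central: using $\langle X,Z\rangle=0$ and $X^2=0$ one gets $\|S(t)\|^2=\|Z\|^2$, so the norm is independent of $t$; and using $\langle X,Q\rangle=-1$ (since $X\in M_\kappa$) one gets $\langle S(t),Q\rangle=\langle Z,Q\rangle-t$. Feeding these into Lemma \ref{sphere-lemma} (equivalently into the identity behind Equation \eqref{eqn:Hwithscalarfactor}), the mean curvature of $S(t)$ as a sphere in $M_\kappa$ is an affine function of $t$, taking the value $H_\kappa$ at $t=0$ by the mean-curvature-sphere property from Lemma \ref{lem-meancurv}.

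Next I would locate the two principal curvature spheres $\tilde S_1,\tilde S_2$ inside this pencil. By definition, $\tilde S_j$ is the tangent sphere at $X$ with mean curvature equal to the principal curvature $k_j$. Writing $S_j=Z+t_jX$, the affine dependence above determines each $t_j$ by a linear equation. Since $H_\kappa=\tfrac{1}{2}(k_1+k_2)$, the two solutions are symmetric about $t=0$, i.e.\ $t_2=-t_1$. So the two principal curvature spheres are represented by $Z+t_1X$ and $Z-t_1X$ for a single value $t_1\in\mathbb{R}$.

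Finally I would apply the inversion. After rescaling so that $\|Z\|=1$, Lemma \ref{lem:inversionthruspheres} says inversion through $\tilde Z$ is $p\mapsto p-2\langle p,Z\rangle Z$, a formula which extends linearly to all of $\mathbb{R}^{4,1}$ as reflection in the hyperplane $Z^\perp$ and which therefore carries sphere representatives to sphere representatives, inducing M\"obius inversion through $\tilde Z$ on the space of spheres. For $S_1=Z+t_1X$ one has $\langle S_1,Z\rangle=1+t_1\cdot 0=1$, and hence $S_1\mapsto S_1-2Z=-Z+t_1X=-(Z-t_1X)=-S_2$, which represents the same sphere $\tilde S_2$. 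Thus inversion through the mean curvature sphere interchanges the two principal curvature spheres at every point, identifying $Z$ with the central sphere congruence. The main obstacle is the second step: verifying that $t\mapsto \text{(mean curvature of }S(t)\text{)}$ is affine with value $H_\kappa$ at $t=0$, so that the symmetry $t_2=-t_1$ is forced by $H_\kappa=\tfrac{1}{2}(k_1+k_2)$; once that symmetry is in hand, the inversion identity $S_1-2\langle S_1,Z\rangle Z=-S_2$ is a one-line computation.
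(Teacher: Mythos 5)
Your argument is correct and follows essentially the same route as the paper's proof: both parametrize the pencil of tangent spheres at a point affinely, note that the mean curvature of a sphere in that pencil is an affine function of the pencil parameter (via Lemma \ref{sphere-lemma} and Equation \eqref{Hkappa-vs-H0}), locate the two principal curvature spheres in the pencil, and apply the reflection formula of Lemma \ref{lem:inversionthruspheres}, including the projective sign flip $-S_{k_2}$. The only difference is organizational: the paper bases the pencil at a normalized $T\perp Q$ and solves the inversion identity for the parameter, finding $t=\tfrac{1}{2}(k_1+k_2)$, whereas you base it at $Z$ itself so that the principal spheres sit symmetrically at $\pm t_1$ and the exchange becomes a one-line computation.
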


\begin{proof}
Let $X=X(u,v)$ be a surface.  Take 
\[ T = T(u,v) = \begin{pmatrix}
\ell & \ell_\infty \\ \ell_0 & -\ell 
\end{pmatrix} \in \mathbb{R}^{4,1} \] such that 
$||T||=1$ (i.e. $T$ lies in the de Sitter space $\mathbb{S}^{3,1}$) and 
\[ \langle T, Q \rangle = \langle T, X \rangle = 
\langle T, dX \rangle = 0 \; , \] 
with $Q$ as in \eqref{choiceofQ}.  
These conditions are equivalent 
to 
\begin{itemize}
\item $\ell^2+\ell_0 \ell_\infty = -1$, 
\item $\ell_\infty \kappa + \ell_0 = 0$, 
\item $\ell x + x \ell + \ell_\infty - x^2 \ell_0 = 0$, 
\item $\ell \cdot dx + dx \cdot \ell - 
(x \cdot dx + dx \cdot x) \ell_0 = 0$.  
\end{itemize}
Set \[ S_t = T + t X = \begin{pmatrix}
\ell & \ell_\infty \\ \ell_0 & -\ell 
\end{pmatrix} + \frac{2 t}{1-\kappa x^2} \begin{pmatrix}
x & -x^2 \\ 1 & -x 
\end{pmatrix} =: \begin{pmatrix}
z & z_\infty \\ z_0 & -z 
\end{pmatrix} \; . \]  Then $S_t$ also lies in 
$\mathbb{S}^{3,1}$ and 
is perpendicular to both $X$ and $dX$.  By Remark 
\ref{rem:tangentspheres}, the 
$S_t$ represent all of the tangent spheres to $X$.  Then, by 
Equation \eqref{Hkappa-vs-H0} and Lemma 
\ref{sphere-lemma} and a direct computation, 
the mean curvature of the sphere $S_t$ with respect to the 
space form $M_\kappa$ is 
\[ \frac{|z_0|}{2} - \frac{\kappa}{2 |z_0|} - 
\kappa \frac{|z_0|}{2} \frac{z^2}{z_0^2} = \pm t \; . \]  
Then, if $k_j$ are the principal curvatures of $X$, 
$S_{k_1}$ and $S_{k_2}$ are the principal curvature spheres.  
By Lemma \ref{lem:inversionthruspheres}, 
when $Z$ is the central sphere congruence, 
we should have that 
\[ 
S_{k_2} = S_{k_1} - 2 \langle S_{k_1} , Z \rangle \cdot Z 
\; . \] However, 
as we wish to have an inversion that preserves orientation 
rather than reversing it, we change $S_{k_2}$ to $-S_{k_2}$.  
This does not change the sphere itself, as $S_{k_2}$ is defined 
only projectively anyways.  Thus the equation becomes 
\begin{equation}\label{eqn:applicationofsphereinversion} 
- S_{k_2} = S_{k_1} - 2 \langle S_{k_1} , Z \rangle \cdot Z 
\; . \end{equation}  
Now the image of $S_{k_1}$ under inversion and $S_{k_2}$ itself 
will have the same orientation.  

We have that $Z=S_t$ for some $t$, and so we can now 
compute from 
\eqref{eqn:applicationofsphereinversion} that 
\[  t = \frac{1}{2} (k_1+k_2) \; , \] 
i.e. $t$ is the mean curvature. 
Thus the central sphere congruence is the same as the mean 
curvature sphere congruence.  
\end{proof}

\begin{lemma} The mean curvature sphere congruence 
      $Z$ can be characterized as the 
      sphere congruence that has second order contact with 
      the surface in orthogonal directions. 
\end{lemma}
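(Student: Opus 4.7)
The approach is to compare, at each point, the second fundamental forms of $X$ and of the sphere $\tilde Z$ in the ambient space form $M_\kappa$. The preceding lemma already gives $\langle X,Z\rangle=\langle dX,Z\rangle=0$, so $\tilde Z$ meets $X$ with first-order contact at every point. A round sphere has constant normal curvature in every tangent direction equal to its scalar mean curvature, and the mean curvature of $\tilde Z$ in $M_\kappa$ coincides with the mean curvature $H_\kappa$ of $X$ at the corresponding point (by Lemma~\ref{sphere-lemma} together with \eqref{Hkappa-vs-H0} and Lemma~\ref{lem-meancurv}). The problem therefore reduces to identifying the tangent directions of $X$ in which its normal curvature equals $H_\kappa$.

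First I would introduce isothermic coordinates $(u,v)$ on $X$; since these are curvature-line coordinates, the principal curvatures $k_1$ (in the $\partial_u$ direction) and $k_2$ (in the $\partial_v$ direction) diagonalize the second fundamental form. By Euler's formula, the normal curvature of $X$ in a unit tangent direction at angle $\theta$ from $\partial_u/|\partial_u|$ is $k_1\cos^2\theta+k_2\sin^2\theta$. Setting this equal to $H_\kappa=(k_1+k_2)/2$ gives
\[ (k_1-k_2)\bigl(\cos^2\theta-\tfrac12\bigr)=0, \]
so away from umbilics $\theta=\pm\pi/4$. These two directions differ by $\pi/2$ and are therefore orthogonal, proving that $Z$ has second-order contact with $X$ along two orthogonal tangent directions.

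For the converse characterization, suppose $S$ is any sphere congruence tangent to $X$ with second-order contact in two orthogonal tangent directions $\theta_0$ and $\theta_0+\pi/2$. Writing $H_S$ for the mean curvature of $\tilde S$ in $M_\kappa$ and applying Euler's formula in each direction yields
\[ k_1\cos^2\theta_0+k_2\sin^2\theta_0=H_S=k_1\sin^2\theta_0+k_2\cos^2\theta_0; \]
adding gives $H_S=(k_1+k_2)/2=H_\kappa$. Since a sphere tangent to $X$ at a point is uniquely determined (modulo orientation) by the value of its mean curvature, we conclude $\tilde S=\tilde Z$ pointwise, hence $S=Z$. At umbilic points every direction has normal curvature $k_1=H_\kappa$, so the second-order contact condition holds along every direction and the characterization degenerates harmlessly.

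The main technical point requiring care is that the mean-curvature comparison must be carried out in an arbitrary space form rather than just $M_0$. By Lemma~\ref{lem-meancurv}, the correction term $\kappa(xn_0+n_0x)$ relating Euclidean and $M_\kappa$ mean curvatures depends only on the position $x$ and the Euclidean normal $n_0$, which agree for $X$ and $\tilde S$ at a point of tangency. Hence equality of mean curvatures in $M_0$ and in $M_\kappa$ is equivalent at tangency points, and no special handling of $\kappa$ is needed; the rest of the argument is just Euler's formula applied twice.
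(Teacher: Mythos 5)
Your proof is correct, and it reaches the conclusion by a different route than the paper. The paper's proof is a Möbius-geometric normalization: it observes that second order contact, orthogonality and the principal curvature spheres are all Möbius invariant, reduces to $M_0=\mathbb{R}^3$, and then inverts through a sphere centered at a point of $Z$ so that $Z$ becomes a flat plane and the surface becomes minimal at the contact point; the contact directions are then the asymptotic directions of a minimal point, which are orthogonal. Your argument instead stays in the space form and computes directly with Euler's formula, using that the totally umbilic sphere $\tilde Z$ has all normal curvatures equal to $H_\kappa$, so the contact directions are exactly those where $k_1\cos^2\theta+k_2\sin^2\theta=(k_1+k_2)/2$, i.e.\ $\theta=\pm\pi/4$. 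The two arguments rest on the same underlying linear algebra (the paper's ``asymptotic directions are orthogonal at a minimal point'' is itself Euler's formula with $k_1=-k_2$), but yours buys two things: it avoids the inversion construction entirely, and it explicitly proves the converse/uniqueness half of the characterization --- that any tangent sphere congruence with second order contact in two orthogonal directions must have mean curvature $(k_1+k_2)/2$ and hence coincide with $Z$, via the parametrization of the tangent pencil $S_t$ by mean curvature $t$ --- a direction the paper's proof leaves implicit. Your closing remark about the affine relation $H_\kappa=(1-\kappa x^2)H_0-\kappa(xn_0+n_0x)$ correctly disposes of the space-form dependence, though it is not strictly needed since Euler's formula and total umbilicity of $\tilde Z$ already hold intrinsically in $M_\kappa$.
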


\begin{proof}
Principal curvature spheres, second order contact 
and orthogonality are examples of 
M\"obius invariant notions, because they are invariant under 
M\"obius transformations (such as mapping from one space form 
to another, as in Figure 15, 
or inverting through a sphere).  Because only 
M\"obius invariant notions appear in this proof, without 
loss of generality we may assume that the surface $X(u,v)$ lies 
in $M_0=\mathbb{R}^3$.  

Let $Z$ be the mean curvature sphere at a point 
$X(u_0,v_0)$ of the surface.  Then 
$X(u_0,v_0)$ is one point of the sphere $Z$.
Let $p$ be a different point in $Z$ and let $S$ be 
a sphere with center $p$ that intersects $Z$ transversally.  
We apply inversion $f_S$ of 
$\mathbb{R}^3$ through the sphere $S$, so that the point $p$ is mapped to 
infinity and the sphere $Z$ is thus mapped to a flat plane 
$f_S(Z)$.  The image $f_S(X(u,v))$ 
of $X(u,v)$ under inversion will satisfy $H=0$ at the 
point $f_S(X(u_0,v_0))$.  Thus the asymptotic directions 
of $f_S(X(u,v))$ 
at that point are perpendicular to each other, and are also the 
directions of second order contact with $f_S(Z)$.  
This completes the proof.  
\end{proof}

We now explain the conserved quantity equation in terms of the 
Calapso transformation, in order to motivate a definition used 
in the discrete setting.  But first, we consider: 

\subsection{Inverses of quaternionic matrices and $\text{Mob}(3)$} 
The M\"obius transformations are the maps from $\mathbb{S}^3$ 
to $\mathbb{S}^3$ that take $2$-spheres to $2$-spheres.  We now 
describe them algebraically, using quaternionic matrices. 

First we need the following lemma, which follows from the 
properties $\text{Re}(x) = \text{Re}(\bar x)$ and 
$\text{Re}(xy) = \text{Re}(yx)$, where $x$ and $y$ are quaternions: 

\begin{lemma}\label{lem:abcd}
For $a,b,c,d \in H$, we have \[ 
\text{Re}(abcd)=\text{Re}(\bar d \bar c \bar b \bar a)=
\text{Re}(\bar b \bar a \bar d \bar c)=\text{Re}(bcda) \; . \]
\end{lemma}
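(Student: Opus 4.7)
The plan is to derive each of the three equalities from the two stated properties, together with the fact that conjugation reverses the order in a product: $\overline{xy} = \bar y \bar x$ for quaternions $x,y$.

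First I would dispatch the equality $\operatorname{Re}(abcd) = \operatorname{Re}(\bar d\bar c\bar b\bar a)$ by applying $\operatorname{Re}(x) = \operatorname{Re}(\bar x)$ to $x = abcd$ and then computing $\overline{abcd} = \bar d\bar c\bar b\bar a$ by iterating the order-reversing property of conjugation. Next, for $\operatorname{Re}(\bar d\bar c\bar b\bar a) = \operatorname{Re}(\bar b\bar a\bar d\bar c)$, I would apply $\operatorname{Re}(xy) = \operatorname{Re}(yx)$ with the grouping $x = \bar d\bar c$ and $y = \bar b\bar a$. Finally, for $\operatorname{Re}(\bar b\bar a\bar d\bar c) = \operatorname{Re}(bcda)$, I would apply $\operatorname{Re}(x) = \operatorname{Re}(\bar x)$ once more to obtain $\operatorname{Re}(\bar b\bar a\bar d\bar c) = \operatorname{Re}(cdab)$, and then use the cyclic consequence of $\operatorname{Re}(xy) = \operatorname{Re}(yx)$ (applied with $x = c$ and $y = dab$, or equivalently iterated) to rewrite $\operatorname{Re}(cdab) = \operatorname{Re}(bcda)$.

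There is no real obstacle here; the only thing to be careful about is that when conjugating a four-fold product one must reverse the order of all four factors, so that $\overline{abcd} = \bar d\bar c\bar b\bar a$ rather than $\bar a\bar b\bar c\bar d$ (the latter would make the first equality trivially false in the noncommutative setting). The lemma is essentially a bookkeeping consequence of the two stated identities and will be used later only through the invariance of $\operatorname{Re}$ under cyclic permutations and reversals of quaternionic products.
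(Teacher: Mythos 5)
Your proof is correct and follows exactly the route the paper indicates: the paper states the lemma without proof, remarking only that it "follows from the properties $\operatorname{Re}(x)=\operatorname{Re}(\bar x)$ and $\operatorname{Re}(xy)=\operatorname{Re}(yx)$," and your argument is precisely the straightforward elaboration of that remark (conjugation reverses the four-fold product, then group and cycle). Nothing further is needed.
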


For later use, we also give the following lemma: 

\begin{lemma}\label{lem:forlateruse}
Suppose that $p,q,r,s \in \text{Im} H$ and 
$(p-q)(q-r)(r-s)(s-p) \in \mathbb{R}$.  Then 
\[ (p-q)(q-r)(r-s)(s-p) = 
(s-p)(r-s)(q-r)(p-q) = 
\]
\[ (q-r)(p-q)(s-p)(r-s) = 
(q-r)(r-s)(s-p)(p-q) \; . 
\]
\end{lemma}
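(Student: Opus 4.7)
\emph{Proof plan.} Set $a = p-q$, $b = q-r$, $c = r-s$, $d = s-p$; each is a difference of imaginary quaternions, hence itself imaginary, so $\bar a = -a$ and similarly for $b,c,d$. With this shorthand the four expressions appearing in the lemma are precisely $abcd$, $dcba$, $badc$, and $bcda$, and the hypothesis is that $abcd$ is real. I would first dispose of the degenerate case in which one of $a,b,c,d$ vanishes: then all four products are zero and the conclusion is trivial. So I may assume that $a,b,c,d$ are all nonzero and hence invertible in $H$.

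The equality $abcd = dcba$ is immediate from quaternion conjugation, which reverses the order of factors and negates each imaginary one: $\overline{abcd} = \bar d\bar c\bar b\bar a = (-d)(-c)(-b)(-a) = dcba$, and this coincides with $abcd$ because $abcd$ is real by hypothesis.

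For the two remaining equalities I would invoke the centrality of real scalars in $H$. Writing $k := abcd \in \mathbb{R}$, cyclic conjugation by $a$ gives
\[
bcda \;=\; a^{-1}(abcd)\,a \;=\; a^{-1}(ka) \;=\; a^{-1}(ak) \;=\; k \;=\; abcd,
\]
which establishes $abcd = bcda$. Applying the same trick to the already-established identity $dcba = k$ yields $cbad = d^{-1}(dcba)d = k$, and one further cyclic step gives $badc = c^{-1}(cbad)c = k$; thus $abcd = badc$.

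The argument is essentially formal, relying on only three ingredients: (i) imaginariness of the four differences, so that conjugation acts as negation; (ii) the anti-involution property of quaternion conjugation; and (iii) the fact that real scalars commute with everything in $H$. There is no hard step to overcome; the only genuine subtlety is that the cyclic-shift step passes through $a^{-1}$, $c^{-1}$, and $d^{-1}$, which is why the zero case must be separated at the outset.
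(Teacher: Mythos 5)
Your proof is correct. Note that the paper states this lemma without proof, immediately after Lemma \ref{lem:abcd} (which only asserts equality of \emph{real parts} under reversal and cyclic permutation); your argument is the natural completion, and you correctly identify the one genuine subtlety: equality of real parts is not enough, so the hypothesis that $abcd$ is real must be used together with the centrality of $\mathbb{R}$ in $H$ to upgrade $a^{-1}(abcd)a$, etc., to honest equalities of quaternions. Your identification of the four products as $abcd$, $dcba$, $badc$, $bcda$, the conjugation step $\overline{abcd}=\bar d\bar c\bar b\bar a=dcba$ using that the four differences are imaginary, and the separate treatment of the case where some difference vanishes (so that inverses exist in the remaining case) are all exactly what is needed.
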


Take a quaternionic $2 \times 2$ matrix 
\[ T = \begin{pmatrix}
a & b \\ c & d
\end{pmatrix} \; . \]  
Then the Study determinant $[T]$ of $T$ is the determinant of 
$T \cdot \bar T^t$, i.e. 
\[
 [T] = (a \bar a+b \bar b)
 (c \bar c+d \bar d)
- (a \bar c+b \bar d)
 (c \bar a+d \bar b) = 
|a|^2|d|^2+|b|^2|c|^2- b \bar d c \bar a - a \bar c d \bar b \; , 
\] and this is clearly a real number.  (Note that $\det T$ 
itself is not a well defined notion, as quaternions do not commute.)  
When $[T] \neq 0$, we can 
define the inverse of $T$ as 
\[ T^{-1} = \frac{1}{[T]} 
\begin{pmatrix}
|d|^2 \bar a-\bar c d \bar b & |b|^2 \bar c-\bar a b \bar d \\
|c|^2 \bar b-\bar d c \bar a & |a|^2 \bar d-\bar b a \bar c 
\end{pmatrix} \; . \]  
One can check that $T^{-1} T = T T^{-1} = I$, by using Lemma \ref{lem:abcd}.  

In general, for 
$A \in \mathbb{R}^{4,1}$, $T A T^{-1}$ might not lie in 
$\mathbb{R}^{4,1}$, i.e. 
we might not have 
\[
  T A T^{-1} = \begin{pmatrix}
x & x_\infty \\ x_0 & -x
\end{pmatrix} \; , \;\;\; x_0 , x_\infty \in \mathbb{R} \; , \;\;\; x \in 
\Im H \; . \]  To avoid such a problem, 
we define a set, call in $G$, as all $2 \times 2$ 
quaternionic matrices $T$ so that 
\begin{equation}\label{eqn:Mob3}
  \bar b d + \bar d b = \bar a c + \bar c a = 0 \; , 
\;\;\; \bar a d + \bar c b \in \mathbb{R} 
\end{equation}
and 
\begin{equation}\label{eqn:Mob3-nonsing}
  \bar a d+ \bar c b \neq 0 \; . \end{equation} 
Then \eqref{eqn:Mob3-nonsing} implies 
\[ [T] = (\bar a d+\bar c b) \overline{(\bar a d+\bar c b)}> 0 \] 
and $T \in G$ will have an inverse.  By 
Equation \eqref{eqn:Mob3} and the fact that 
$\bar a d+ \bar c b \neq 0$, we find, when 
\[ A = \alpha \begin{pmatrix}
x & x_\infty \\ x_0 & -x 
\end{pmatrix} \; , \;\;\; 
x \in \Im H \; , \;\;\; \alpha, x_0, x_\infty \in \mathbb{R} \; , 
\] that 
\[ [T] \cdot T A T^{-1} = \]\[ 
= \alpha (\bar a d+\bar c b) \cdot \left[
\begin{pmatrix}
a x \bar d - b x \bar c & a x \bar b - b x \bar a \\
c x \bar d - d x \bar c & c x \bar b -d x \bar a
\end{pmatrix}
+ x_0 \begin{pmatrix}
b \bar d & b \bar b \\
d \bar d & d \bar b
\end{pmatrix}
+ x_\infty \begin{pmatrix}
a \bar c & a \bar a \\
c \bar c & c \bar a 
\end{pmatrix} \right] \in \mathbb{R}^{4,1} \; . \]  
In particular, we have $T A T^{-1} \in \mathbb{R}^{4,1}$.  If, 
furthermore, $x_0=1$ and $x_\infty=-x^2$, i.e. $A \in L^4$, 
then we also have 
that $T A T^{-1}$ is in $L^4$, and this follows from the property 
\[ [ T_1 T_2 ] = [ T_1 ] \cdot [ T_2 ] \; , \;\;\; T_j \in 
G \; , \] for Study determinants.  

The set $G$ is a group.  For example, if 
$T_1$ and $T_2$ are in $G$, then so is $T_1T_2$.  

When $A$ is an element of $L^4$, and consequently $TAT^{-1}$ 
is as well, the ratio of the upper left and lower left 
entries of $T A T^{-1}$ will be 
$(a x + b)(c x + d)^{-1} \in \Im H$.  In this way, 
the $T \in G$ are related to M\"obius transformations by 
\begin{equation}\label{eqn:tmpmap} \left( T = \begin{pmatrix} a & b \\ c & d
\end{pmatrix} \right) * x = (a x+b)(c x+d)^{-1} \; . \end{equation} 
Thus, when $A \in L^4$, then $T A T^{-1}$ is 
essentially the same map as 
\eqref{eqn:tmpmap}.  

Note that, in Equation \eqref{eqn:tmpmap}, we place 
the inverse $(c x + d)^{-1}$ to the right side of $(ax+b)$.  
(Because quaternions do not commute, placing this term on the 
other side would not give the same result.)  

\begin{remark}
In fact, Equation \eqref{eqn:tmpmap} gives both orientation 
preserving and orientation reversing M\"obius transformations.  
For example, $\text{Im} H \ni x \to 
(0 \cdot x + 1)(1 \cdot x + 0)^{-1}=x^{-1}=-x/|x|^2 \in 
\text{Im} H$ is orientation preserving, while 
$\text{Im} H \ni x \to 
(0 \cdot x - 1)(1 \cdot x + 0)^{-1}=-x^{-1}=x/|x|^2 \in 
\text{Im} H$ is orientation reversing.  
\end{remark}

Furthermore, because 
\[
  \langle TAT^{-1}, TBT^{-1} \rangle = 
  \langle A,B \rangle
\]
for any $A,B \in \mathbb{R}^{4,1}$, the map 
$A \to TAT^{-1}$ is an isometry of $\mathbb{R}^{4,1}$.  

When $\kappa \neq 0$, i.e. when $Q$ as in 
\eqref{choiceofQ} is not null, $M_\kappa$ 
has a particular M\"obius transformation called the antipodal map, 
which we now describe: 
A point $X$ in $M_\kappa$ can be decomposed as 
\[ X = \frac{2}{1-\kappa x^2} \begin{pmatrix}
x & -x^2 \\ 1 & -x 
\end{pmatrix} = \mathcal{A} + \kappa^{-1} Q \; , 
\]
where 
\[ \mathcal{A} = \frac{2}{1-\kappa x^2} \begin{pmatrix}
x & -x^2 \\ 1 & -x 
\end{pmatrix} -\kappa^{-1} Q 
\] is perpendicular to $Q$.  The antipodal map is 
\[ \mathcal{A} + \kappa^{-1} Q  \to - \mathcal{A} + \kappa^{-1} Q \; , \] 
that is, we are moving the point $X$ to another point in $M_\kappa$ that is 
on the opposite side of $Q$.  Since 
\[ - \mathcal{A} + \kappa^{-1} Q = 
\frac{2}{1-\kappa (\kappa^{-1} x^{-1})^2} \begin{pmatrix}
\kappa^{-1} x^{-1} & -(\kappa^{-1} x^{-1})^2 \\ 1 & - \kappa^{-1} x^{-1} 
\end{pmatrix} \; , \]  
the antipodal map is the M\"obius transformation $x \to \kappa^{-1} x^{-1}$.  

\begin{remark}
M\"obius transformations of the ambient space preserve the 
conformal structure of 
the space, so will preserve the conformal structure of 
any surface inside the space as well.  Furthermore, 
M\"obius transformations 
will preserve contact order of any spheres tangent to the 
surface, and so will preserve the principal curvature 
spheres.  It follows that if $x(u,v)$ is an isothermic 
parametrization of a surface, it will remain an isothermic 
parametrization 
even after a M\"obius transformation is applied.  
\end{remark}

\begin{defn}
$\text{Mob(3)}$ is the collection of M\"obius transformations 
$L^4 \ni A \to TAT^{-1} \in L^4$, for $T \in G$.  
\end{defn}

\begin{remark}
$\text{Mob(3)}$ is a 10-dimensional object, while $G$ is an 
11-dimensional object.  
\end{remark}

Now we make some comments about M\"obius transformations in 
general dimension.  
M\"obius transformations of the $n$-dimensional sphere 
$\mathbb{S}^n$, $n \geq 2$, 
are maps that take points in $\mathbb{S}^n$ to points 
in $\mathbb{S}^n$ and also hyperspheres in $\mathbb{S}^n$ to 
hyperspheres in $\mathbb{S}^n$.  We denote 
the collection of M\"obius transformations of 
$\mathbb{S}^n$ by Mob($n$).  We have the following facts: 

\smallskip

{\bf Fact:} 
Let Conf$_g$($n$) denote the global conformal transformations of all of 
$\mathbb{S}^n$, and let Conf$_\ell$($n$) denote the 
local conformal transformations of local domains of $\mathbb{S}^n$.  Then 
\[ \text{Mob}(2)=\text{Conf}_g(2) \subset Conf_\ell(2) \; , 
\;\;\; \text{Conf}_g(2) \neq Conf_\ell(2) \]
and \[ \text{Mob}(n)=\text{Conf}_g(n) = Conf_\ell(n) \] for $n \geq 3$.  

\smallskip

The reason the case $n=2$ is different is that holomorphic functions 
from $\mathbb{C}$ to $\mathbb{C}$ can be pulled back by the inverse of 
stereographic projection to maps from $\mathbb{S}^2$ to $\mathbb{S}^2$, 
and those maps are generally in $\text{Conf}_\ell(2)$ but not in 
$\text{Conf}_g(2)$.  This occurs only in the case $n=2$.  

\smallskip 

{\bf Fact:} 
Let $f \in \text{Conf}_\ell(n)$ with $n \geq 3$.  Let $M 
\subset \mathbb{S}^n$ be a smooth hypersurface.  Then $p \in M$ 
is an umbilic point if and only if $f(p) \subset f(M)$ 
is an umbilic point as well.  

\smallskip

\begin{remark}
$O(n+1,1)$ is the set of orthogonal transformations of 
$\mathbb{R}^{n+1,1}$, and 
these transformations preserve the set of lines in the light cone, as well 
as the set of spacelike lines, and 
\[ \frac{O(n+1,1)}{ \{ \pm I \} } \subset \text{Mob}(n) \; , 
\]
and in fact, these two sets are equal.  
\end{remark}

\begin{remark}
Projective transformations are maps from 
projective space $P\mathbb{R}^{n+2}$ to $P\mathbb{R}^{n+2}$, 
i.e. from lines in $\mathbb{R}^{n+2}$ through the origin to lines in 
$\mathbb{R}^{n+2}$ through the origin, so that "lines of lines", which can 
generically be represented by lines in $\{x_{n+2}=1\}$ ($x_{n+2}$ is the 
final Cartesian coordinate of points 
$(x_1,...,x_{n+2}) \in \mathbb{R}^{n+2}$), get mapped to "lines of lines".  
The fundamental theorem of projective geometry, a nontrivial result, 
is this: {\em Any projective transformation comes from a linear map of 
$\mathbb{R}^{n+2}$.}  
Then, regarding $\mathbb{R}^{n+2}$ as $\mathbb{R}^{n+1,1}$ (i.e. 
changing the Euclidean metric to the Minkowski metric), 
those projective transformations that preserve the light cone are 
equivalent to Mob($n$).  
\end{remark}

\subsection{Calapso transformations}

In the following definition, the surface $x$ lies in some 
space form $M$, but since we are dealing with a M\"obius 
geometric notion, the choice of space form will not matter.  

\begin{defn}\label{defnthatis8pt43}
Let $x=x(u,v)$, with associated $X=X(u,v) \in M$, 
be an immersed surface with isothermic coordinates $u,v$.  A 
{\em Calapso transformation} $T \in \text{Mob}(3)$ is a 
solution of \[ T^{-1} dT = \lambda \tau \; . \]  Then the transform 
$\text{Im} H \ni x \to T * x \in \text{Im} H$, where $*$ denotes 
the M\"obius transformation as in \eqref{eqn:tmpmap}, or 
equivalently $L^4 \ni X \to T X T^{-1} \in L^4$, 
is a Calapso transform.  (We can also 
call it a {\em $T$-transform} or {\em conformal deformation}.)  
\end{defn}

\begin{remark}\label{rem:stayinMob3}
If we write $T$ as 
\[ T = \begin{pmatrix} 
a & b \\ c & d \end{pmatrix} \; , \] then the equation 
$dT=T \cdot \lambda \tau$ gives 
\[ (\bar b d + \bar d b)_u = 
(\bar b d + \bar d b)_v = 
(\bar a c + \bar c a)_u = 
(\bar a c + \bar c a)_v = 
(\bar a d + \bar c b)_u = 
(\bar a d + \bar c b)_v = 0 \; , \] 
and so if the initial condition for the solution $T$ lies 
in $\text{Mob}(3)$, then $T$ will always lie in 
$\text{Mob}(3)$.  
\end{remark}

\begin{remark}
Although we will not be particularly precise about this, we will 
generally use the word "transformation" when the object under consideration 
is a procedure toward a separate goal, and the word "transform" 
for the desired goal.  
\end{remark}

The Calapso transformation is classical, and was studied by 
Calapso, Bianchi and Cartan.  It preserves the conformal structure 
and is thus of interest in M\"obius geometry.  In the case that the 
starting surface is CMC, it is the same as 
the Lawson correspondence (see Remark 
\ref{rem:9point27}), which is an important transformation 
in the differential geometry of CMC surfaces.  

\begin{lemma}\label{lem:Calapexist}
If $x$ is isothermic, then Calapso transformations exist.  
\end{lemma}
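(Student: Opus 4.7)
The plan is to reduce existence of $T$ to the Maurer-Cartan integrability condition for the linear system $dT = T\cdot\lambda\tau$: differentiating both sides and using $d^{2}T = 0$ shows that a solution exists on any simply connected chart of $\Sigma$ exactly when
\[
  d\tau + \lambda\,\tau\wedge\tau = 0 \quad\text{for every } \lambda\in\mathbb{R},
\]
which splits into the two $\lambda$-independent conditions $d\tau = 0$ and $\tau\wedge\tau = 0$. Granted these, the standard ODE theorem furnishes a local solution $T$ with any prescribed initial value; choosing $T(p_{0}) = I$, which lies in $G$ and corresponds to the identity in $\mathrm{Mob}(3)$, and appealing to Remark~\ref{rem:stayinMob3}, one concludes that $T$ stays in $\mathrm{Mob}(3)$ throughout, which is what the lemma asserts.

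The condition $\tau\wedge\tau = 0$ will be handled by the factorization
\[
  \tau \;=\; \begin{pmatrix} x \\ 1 \end{pmatrix}\,dx^{*}\,\begin{pmatrix} 1 & -x \end{pmatrix},
\]
already exploited earlier in the chapter to prove $X\tau = \tau X = 0$. Any product of two copies of $\tau$ contains the inner factor $\begin{pmatrix} 1 & -x \end{pmatrix}\begin{pmatrix} x \\ 1 \end{pmatrix} = x - x = 0$, so $\tau\wedge\tau = 0$ holds on purely algebraic grounds, without yet invoking isothermicity.

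The closedness $d\tau = 0$ is where the isothermic hypothesis is actually used. By Lemma~\ref{lemma8ptpt15}, isothermicity of $x$ produces the Christoffel transform $x^{*}$ with $dx^{*} = x_{u}^{-1}du - x_{v}^{-1}dv$, and in particular $d(dx^{*}) = 0$. Each entry of $d\tau$ then reduces to a linear combination of $d(dx^{*})$, $dx\wedge dx^{*}$, and $dx^{*}\wedge dx$, so it remains to show that the last two wedge products vanish. Expanded in coordinates, both reduce to the identity
\[
  x_{u}x_{v}^{-1} + x_{v}x_{u}^{-1} = 0,
\]
which, using $a^{-1} = -a/|a|^{2}$ for $a\in\Im H$ together with $|x_{u}| = |x_{v}|$, is equivalent to $x_{u}x_{v} + x_{v}x_{u} = -2\langle x_{u},x_{v}\rangle = 0$ --- exactly the conformal curvature-line content of isothermic coordinates. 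The main (though mild) obstacle is just the quaternionic bookkeeping in this last identity; geometrically it is nothing more than the classical fact that in isothermic coordinates the induced metric is a conformal multiple of $du^{2}+dv^{2}$ with $x_{u}\perp x_{v}$.
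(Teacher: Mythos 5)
Your proof is correct and follows essentially the same route as the paper: both verify the compatibility (Maurer--Cartan) condition for $dT = T\cdot\lambda\tau$, observe that the quadratic term $\tau\wedge\tau$ vanishes algebraically from the rank-one factorization $\begin{pmatrix}1&-x\end{pmatrix}\begin{pmatrix}x\\1\end{pmatrix}=0$, and reduce $d\tau=0$ to the isothermicity identities $x_u^2=x_v^2$, $x_ux_v+x_vx_u=0$, $x_{uv}=Ax_u+Bx_v$. Your only cosmetic difference is routing part of the $d\tau=0$ check through the closedness of $dx^*$ from Lemma~\ref{lemma8ptpt15}, which the paper's proof absorbs into the same list of identities.
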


\begin{proof}
The compatibility condition for the system 
\[ T^{-1} T_u = \lambda U \; , \;\;\; U = 
\begin{pmatrix} x \\ 1 \end{pmatrix} x_u^{-1} 
\begin{pmatrix} 1 & -x \end{pmatrix} \; , \] 
\[ T^{-1} T_v = \lambda V \; , \;\;\; V = - 
\begin{pmatrix} x \\ 1 \end{pmatrix} x_v^{-1} 
\begin{pmatrix} 1 & -x \end{pmatrix} \] 
to have a solution $T$ is 
\[ \lambda (U V - V U) + V_u - U_v = 0 \; , \]  and this 
condition holds 
precisely because of the conditions for isothermicity, that is 
\begin{equation}\label{eqn:isothermicityproperties}
x_u^2=x_v^2 \; , \;\; x_ux_v+x_vx_u=0 \; , \;\; 
x_{uv}=A x_u+B x_v \end{equation} for some functions $A,B$.  
By Remark \ref{rem:stayinMob3}, we have that $T$ always 
lies in $\text{Mob}(3)$ if it does at any one point, 
i.e. if the initial condition for $T$ is chosen to be in 
$\text{Mob}(3)$, completing the proof.  
\end{proof}

If the surface $x$ has a linear conserved quantity 
$P=Q+\lambda Z$, then 
\[ dP + \lambda \tau P - P \lambda \tau = 0 \] 
holds, i.e. 
$dP + T^{-1} dT \cdot P - P \cdot T^{-1} dT = 0$, which is equivalent 
to \begin{equation}\label{eqn:forpg68} d (T P T^{-1}) = 0 
\; , \end{equation}
that is to say, $T P T^{-1}$ is constant. It is 
$T P T^{-1}$ being constant that we will 
use to define discrete CMC surfaces, just as it defines 
smooth CMC surfaces, by Theorem \ref{theBCtheorem}.  

In M\"obius geometry (in the space $\mathbb{R}^{4,1}$), isothermic 
surfaces 
are deformable (Calapso transformations), and this deformation preserves 
second order invariants in M\"obius geometry, such as conformal class, 
and conformal class of the trace-free second fundamental form.  
(Note that for surfaces 
in Euclidean geometry, a nontrivial deformation will never 
preserve the second order invariants, i.e. the first and second 
fundamental forms, of Euclidean geometry.)  

\begin{remark}
Because \[ \lambda \tau = T^{-1} dT \; , \] 
$\lambda \tau$ 
can be thought of as the logarithmic derivative of the Calapso 
transformation.  
\end{remark}

\subsection{Darboux transformations} 
For smooth surfaces, a Darboux transform is one such that 
\begin{itemize}
\item there exists a sphere congruence enveloped by the original surface
      and the transform, 
\item the correspondence, given by the sphere congruence, from 
      the original surface to the other enveloping surface (i.e. 
      the transform), preserves curvature lines, 
\item this correspondence preserves conformality.  
\end{itemize}

However, we will define Darboux transformations in a different way, 
as in the following definition: 

\begin{defn}\label{defn:smoothDarbTrans}
Let $T$ be a Calapso transformation of $X$.  Then $\hat X$ in $PL^4$ is a 
Darboux transformation of $X$ if $T \cdot \hat{X} := T \hat{X} T^{-1}$ is 
constant in $PL^4$ for some choice of $\lambda$.  
\end{defn}

We can refer to the equation that $T \cdot \hat{X}$ is constant as 
{\em Darboux's linear system}.  

Here \[ T \hat{X} T^{-1} = 
T \left( \hat \alpha \begin{pmatrix} \hat x & -\hat x^2 \\ 1 & -\hat x 
\end{pmatrix} \right) T^{-1} \] being constant in $PL^4$ means that 
\begin{equation}\label{eqn-pre-riccati} 
d (r T \begin{pmatrix} \hat x & -\hat x^2 \\ 1 & -\hat x 
\end{pmatrix} T^{-1}) = 0 \end{equation} 
for some function $r \in \mathbb{R}$.  This is equivalent to the equation 
\begin{equation}\label{eqn-riccati} 
d\hat{x} = \lambda (\hat{x}-x) dx^* (\hat{x}-x) \; , \end{equation}
as we now show: 

\begin{lemma}\label{8ptpt50} 
Equations \eqref{eqn-pre-riccati} and \eqref{eqn-riccati} are equivalent.  
\end{lemma}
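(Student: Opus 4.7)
The plan is to use $T^{-1}dT = \lambda \tau$ to differentiate \eqref{eqn-pre-riccati}, conjugate by $T^{\pm 1}$, and extract \eqref{eqn-riccati} as the single surviving content of the resulting $2\times 2$ matrix identity. Setting $\hat X = \left(\begin{smallmatrix} \hat x & -\hat x^2 \\ 1 & -\hat x \end{smallmatrix}\right)$, the product rule together with $dT = \lambda T\tau$ and $dT^{-1} = -\lambda\tau T^{-1}$ gives
\[
d(rT\hat X T^{-1}) = T\bigl(\, d\hat X + \lambda(\tau\hat X - \hat X\tau) + d(\log r)\,\hat X \,\bigr)T^{-1},
\]
so \eqref{eqn-pre-riccati} is equivalent to the matrix equation
\[
d\hat X + \lambda(\tau\hat X - \hat X\tau) + d(\log r)\,\hat X = 0.
\]

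To analyze this cleanly I would exploit the rank-one factorizations $\tau = \left(\begin{smallmatrix} x \\ 1 \end{smallmatrix}\right) dx^* \left(\begin{smallmatrix} 1 & -x \end{smallmatrix}\right)$ and $\hat X = \left(\begin{smallmatrix} \hat x \\ 1 \end{smallmatrix}\right)\left(\begin{smallmatrix} 1 & -\hat x \end{smallmatrix}\right)$, together with the scalar identities $\left(\begin{smallmatrix} 1 & -x \end{smallmatrix}\right)\left(\begin{smallmatrix} \hat x \\ 1 \end{smallmatrix}\right) = \hat x - x$ and $\left(\begin{smallmatrix} 1 & -\hat x \end{smallmatrix}\right)\left(\begin{smallmatrix} x \\ 1 \end{smallmatrix}\right) = -(\hat x - x)$. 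These yield the compact form
\[
\tau\hat X - \hat X\tau = \left(\begin{smallmatrix} x \\ 1 \end{smallmatrix}\right)dx^*(\hat x - x)\left(\begin{smallmatrix} 1 & -\hat x \end{smallmatrix}\right) + \left(\begin{smallmatrix} \hat x \\ 1 \end{smallmatrix}\right)(\hat x - x)dx^*\left(\begin{smallmatrix} 1 & -x \end{smallmatrix}\right),
\]
from which all four block entries of the matrix equation become transparent.

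Next I would read off the lower-left entry, which determines
\[
d(\log r) = -\lambda\bigl(dx^*(\hat x - x) + (\hat x - x)\,dx^*\bigr);
\]
this is legitimate because $dx^*, \hat x - x \in \Im H$ anticommute to a real scalar. Substituting this formula into the lower-right entry, the $d(\log r)\cdot\hat x$ contribution cancels against matching terms of the commutator and one is left with precisely $d\hat x = \lambda(\hat x - x)\,dx^*\,(\hat x - x)$, which is \eqref{eqn-riccati}. The two upper-block entries hold automatically, since each is obtained from its corresponding lower-block entry by left-multiplication by $\hat x$, contributing only the identity $-d(\hat x^2) = -d\hat x\cdot\hat x - \hat x\cdot d\hat x$.

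For the converse, assume \eqref{eqn-riccati} and define the real $1$-form $\omega := -\lambda\bigl(dx^*(\hat x - x) + (\hat x - x)\,dx^*\bigr)$. The task is to verify $d\omega = 0$, which uses the isothermicity relations \eqref{eqn:isothermicityproperties} together with \eqref{eqn-riccati}; once $r := \exp\int\omega$ is defined locally, the calculation above runs in reverse and recovers \eqref{eqn-pre-riccati}. The main obstacle is purely computational --- keeping quaternionic orderings straight in the commutator --- while the conceptual point is that the scalar $r$ is forced by the projective normalization encoded in the lower-left block, so the full matrix identity collapses to a single Riccati equation for $\hat x$.
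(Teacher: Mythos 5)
Your proposal is correct and follows essentially the same route as the paper: expand $d(rT\hat X T^{-1})=0$ into its four quaternionic entries, let the lower-left entry define $r$ (legitimate because $dx^*(\hat x-x)+(\hat x-x)dx^*$ is real), substitute into a diagonal entry to extract the Riccati equation, and observe that the remaining entries are then automatic. The rank-one factorizations $\tau=\left(\begin{smallmatrix}x\\1\end{smallmatrix}\right)dx^*\left(\begin{smallmatrix}1&-x\end{smallmatrix}\right)$ and $\hat X=\left(\begin{smallmatrix}\hat x\\1\end{smallmatrix}\right)\left(\begin{smallmatrix}1&-\hat x\end{smallmatrix}\right)$ are a clean way to organize the same entrywise computation the paper does by brute force, and your explicit remark that the converse requires $d\omega=0$ before $r$ can be integrated is a point the paper glosses over.

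One step as stated is not right, though the conclusion survives. The upper-row entries are \emph{not} obtained from the lower-row entries by left-multiplication by $\hat x$: in $\tau\hat X=\left(\begin{smallmatrix}x\\1\end{smallmatrix}\right)dx^*(\hat x-x)\left(\begin{smallmatrix}1&-\hat x\end{smallmatrix}\right)$ the top row is $x$ times the bottom row, while in $\hat X\tau$ it is $\hat x$ times the bottom row, and in $d\hat X$ the $(1,1)$ entry is $d\hat x$ whereas the $(2,1)$ entry is $0$. So the vanishing of the top row is not a formal consequence of the bottom row; it requires a short additional computation. For instance, after substituting $d(\log r)$ and the Riccati equation, the $(1,1)$ entry reduces to $-\,\delta\, dx^*x + x\,dx^*\delta - dx^*\delta\,\hat x + \hat x\,\delta\, dx^*$ with $\delta=\hat x-x$, which vanishes only after writing $\hat x=\delta+x$ and using that $\delta^2$, $\delta\,dx^*+dx^*\delta$ and $x\,dx^*+dx^*x$ are all real scalars --- exactly the reality facts the paper invokes to dispose of its second and fourth equations. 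With that repair your argument is complete and coincides with the paper's.
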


\begin{proof}
Equation \eqref{eqn-pre-riccati} is equivalent to the following 
four equations: 
\[ dr + r \lambda (dx^* (\hat x - x) + (\hat x - x) dx^*) = 0 \; , \] 
\[ (x dx^*+dx^* x) \hat x - \hat x (x dx^*+dx^* x) = 0 \; , \]  
\[ dr \cdot \hat x + r \lambda (x dx^* \hat x - x dx^* x - 
\hat x x dx^* + \hat x^2 dx^*) + r d\hat x = 0 \; , \]  
\[ -dr \cdot \hat x^2 + r \lambda (-x dx^* \hat x^2+x dx^* x \hat x+
\hat x x dx^* x - \hat x^2 dx^* x) - r \hat x d\hat x - 
r d\hat x \cdot \hat x= 0 \; . \]  
Note that $dx^* (\hat x - x) + (\hat x - x) dx^*$ is real-valued, 
so the first equation will define the real-valued function $r$.  
Also, note that $x dx^*+dx^* x$ is real as well, so the second equation 
automatically holds.  Substituting $dr$ from the first equation into 
the third equation, one arrives at Equation \eqref{eqn-riccati}.  
The fourth equation is then automatically true, again using that 
$x dx^*+dx^* x$ is real.  
\end{proof}

An equation of the form $y^\prime=f(y)$, where $f(y)$ is a quadratic 
polynomial, is called a Riccati equation, so Equation 
\eqref{eqn-riccati} is a Riccati-type partial differential equation 
(where $y$ becomes $\hat x$).  Because of this, at the end of this 
chapter we include a short appendix containing some well-known 
facts about the Riccati equation.  

Equation \eqref{eqn-riccati} is in turn equivalent to the matrix product 
\[ T \begin{pmatrix} \hat{x} \\ 1 \end{pmatrix} \] being constant, 
which means that 
\[ d (T \begin{pmatrix} \hat x \\ 1 \end{pmatrix} h) = 0 \] for some 
quaternionic-valued function $h \in H$.  

\begin{remark}
Note that we could rescale $\hat X$ in Definition 
\ref{defn:smoothDarbTrans} so that $T \cdot \hat X$ is not 
only constant in $PL^4$, but is constant in $L^4$ as well, 
if we wish.  
\end{remark}

\begin{remark}\label{rem:7point48}
When the surface $x$ has a linear conserved quantity $Q+\lambda Z$, 
one possibility for a Darboux transform 
is to take $\hat X = Q + \lambda_0 Z$ with 
$\lambda=\lambda_0$ 
chosen so that $||\hat X||=0$.  This would be a 
special case of a Baecklund 
transform (called a "complementary surface", and 
we will come back to this in Chapter \ref{section:p.c.q.s}, 
after we have defined polynomial conserved quantities).  
\end{remark}

\begin{remark}\label{rem:7point49}
We do not define Baecklund transforms until after we have defined 
polynomial conserved quantities in Chapter 
\ref{section:p.c.q.s}.  However, for 
now, let us just mention that 
more general Baecklund transforms can be obtained by this recipe: 
\begin{itemize}
\item we take a surface $x$ with a linear conserved 
quantity $P = Q + \lambda Z$, 
\item we pick a value $\lambda = \mu$, 
\item we pick an initial condition $\hat x_p$ for a possible surface 
$\hat x$, at some point $p$ in the domain of $x$, 
such that \[ \begin{pmatrix}
\hat x_p & -\hat x_p^2 \\ 1 & -\hat x_p \end{pmatrix} \perp P(\mu)_p \; , \]
\item we solve the Riccati equation \eqref{eqn-riccati} for $\hat x$.  
\end{itemize}
Actually, we can choose either $\mu$ or $\hat x_p$ first, and then choose the 
other.  
This gives a $3$-parameter family of Baecklund transformations, generally not 
preserving topology of the surface $x$ of course (when $x$ is not simply 
connected).  
\end{remark}

We now give a characterization of CMC surfaces in 
terms of Christoffel and Darboux transformations, see Theorem 
\ref{lem:characterizationOfCMC} below.
First we give some preliminary results.  

\begin{lemma}\label{vecAlemma}
If $v_1,v_2 \in \text{Im} H$ and $|v_1|=|v_2|$, then there exists 
$\vec a \in H$ such that $\vec a v_1 \vec a^{-1} = v_2$.  
\end{lemma}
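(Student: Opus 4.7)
The plan is to exhibit such an $\vec{a}$ explicitly, and the natural first guess is $\vec{a} = v_1 + v_2$. Using the identity $v^2 = -|v|^2$ for any $v \in \operatorname{Im} H$, I would compute
\[
\vec{a} \, v_1 = v_1^2 + v_2 v_1 = -|v_1|^2 + v_2 v_1, \qquad v_2 \, \vec{a} = v_2 v_1 + v_2^2 = v_2 v_1 - |v_2|^2,
\]
and the hypothesis $|v_1| = |v_2|$ forces these two expressions to coincide. Hence $\vec{a}\, v_1 = v_2\, \vec{a}$, and multiplying on the right by $\vec{a}^{-1}$ gives the desired relation $\vec{a}\, v_1 \, \vec{a}^{-1} = v_2$. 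This argument is valid as long as $\vec{a}$ is invertible, i.e.\ $\vec{a} = v_1 + v_2 \neq 0$.

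The one case the above does not cover is $v_2 = -v_1$, and this is where I expect the only real subtlety to lie: the candidate $v_1 + v_2$ dies exactly when we are asked to perform a $180^\circ$ rotation, which cannot be realized by a conjugator lying in the plane spanned by $v_1$ and $v_2$. To handle this case (assuming $v_1 \neq 0$, since otherwise $v_1 = v_2 = 0$ and $\vec{a} = 1$ works), I would pick any nonzero $w \in \operatorname{Im} H$ with $w \perp v_1$; such $w$ exists because $\operatorname{Im} H \cong \mathbb{R}^3$ is three-dimensional. For purely imaginary quaternions one has the identity $wv_1 + v_1 w = -2\langle w, v_1\rangle$, so orthogonality gives $w v_1 = -v_1 w$, whence $w v_1 w^{-1} = -v_1 = v_2$, and $\vec{a} := w$ finishes the proof.

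Conceptually, the lemma is just the statement that conjugation by unit quaternions realizes all of $\mathrm{SO}(3)$ acting on $\operatorname{Im} H \cong \mathbb{R}^3$, and that $\mathrm{SO}(3)$ is transitive on each sphere of fixed radius; the two-case split above is simply the explicit (Euler--Rodrigues style) implementation of that fact, with the antipodal case singled out for the reason indicated.
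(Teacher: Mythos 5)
Your proof is correct, and it takes a genuinely different (and more self-contained) route than the paper's. The paper argues geometrically: it writes $\vec a = \cos\theta + v\sin\theta$ for a unit $v \in \operatorname{Im}H$, checks that $w \mapsto \vec a w \vec a^{-1}$ fixes the axis $v$ and rotates the orthogonal plane by $2\theta$, concludes that conjugation realizes an arbitrary rotation of $\mathbb{R}^3$, and then leaves implicit the final step that some rotation carries $v_1$ to $v_2$ when $|v_1|=|v_2|$. Your argument instead produces the conjugator explicitly: the identity $v^2=-|v|^2$ on $\operatorname{Im}H$ makes $(v_1+v_2)v_1 = v_2(v_1+v_2)$ an immediate two-line computation, and your treatment of the degenerate case $v_2=-v_1$ via an orthogonal $w$ (using $wv_1+v_1w=-2\langle w,v_1\rangle=0$) is exactly right, as is the trivial subcase $v_1=v_2=0$. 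What the paper's approach buys is the stronger, reusable fact that unit-quaternion conjugation surjects onto $\mathrm{SO}(3)$ (which the author indeed reuses in spirit elsewhere); what yours buys is a complete, purely algebraic proof of the stated lemma with no appeal to transitivity of the rotation group, and with the exceptional antipodal case isolated and handled rather than absorbed into a geometric assertion. Either proof is acceptable; yours is, if anything, the tighter one for this particular statement.
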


\begin{proof}
The idea is to show that any rotation of $\mathbb{R}^3 \approx 
\text{Im}H$ can be 
written as $\text{Im} H \ni w \to \vec a w \vec a^{-1} \in 
\text{Im} H$ for some $\vec a \in H$.  Set 
$\vec a = \cos \theta + v \cdot \sin \theta$, for some arbitrary 
$v \in \text{Im} H$, $|v|=1$, 
and then $\vec a^{-1} = \cos \theta - v \cdot \sin \theta$.  If $w$ is 
parallel to $v$, then $w = \lambda v$ for some $\lambda \in 
\mathbb{R}$ and 
$\vec a w \vec a^{-1}=w$.  If $w$ is perpendicular to $v$, 
then $\vec a w \vec a^{-1} = 
\cos (2 \theta) w + \sin (2 \theta) (v \times w)$, which is a 
rotated image by angle $2 \theta$ of $w$ about $v$.  So $w \to 
\vec a w \vec a^{-1}$ represents an arbitrary rotation of $\mathbb{R}^3$.  
\end{proof}

The following is easily shown: 

\begin{lemma}\label{lem:beforecharofCMC}
The $\vec a$ in Lemma \ref{vecAlemma} is unique up to choices 
$r_1 \vec a + r_2 \vec a v_1$ for $r_1,r_2 \in \mathbb{R}$.  
\end{lemma}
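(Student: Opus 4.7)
The plan is to reduce the uniqueness question to the description of the centralizer of $v_1$ in $H$, and then to observe that for $v_1 \in \Im H$ this centralizer is exactly the two-dimensional real subspace spanned by $1$ and $v_1$. Given this, the conclusion falls out in one line.

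First I would set up the reduction. Suppose $\vec b \in H$ is any other element with $\vec b v_1 \vec b^{-1} = v_2$. Since also $\vec a v_1 \vec a^{-1} = v_2$, we define $\vec c := \vec a^{-1} \vec b$ and rewrite the equation $\vec b v_1 \vec b^{-1} = \vec a v_1 \vec a^{-1}$ as
\[
\vec c \, v_1 = v_1 \, \vec c \; ,
\]
so that the problem becomes: \emph{characterize all quaternions $\vec c$ that commute with the imaginary quaternion $v_1$}. Conversely, for any such $\vec c$, the element $\vec b := \vec a \vec c$ will satisfy $\vec b v_1 \vec b^{-1} = v_2$ (provided $\vec c \ne 0$, so that $\vec b$ is invertible). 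Hence the full set of admissible $\vec b$ is $\vec a \cdot \mathrm{Cent}_H(v_1)$.

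Next I would identify the centralizer. Writing $\vec c = c_0 + \vec c_1$ with $c_0 \in \mathbb{R}$ and $\vec c_1 \in \Im H$, the condition $\vec c v_1 = v_1 \vec c$ reduces to $\vec c_1 v_1 = v_1 \vec c_1$. Using the identity $pq + qp = -2\langle p,q\rangle$ (in $\R^3 \approx \Im H$) for $p,q \in \Im H$, together with $pq - qp = 2 \, p \times q$, the condition becomes $\vec c_1 \times v_1 = 0$, i.e.\ $\vec c_1$ is parallel (as a vector in $\R^3$) to $v_1$. Thus $\vec c_1 = s \, v_1$ for some $s \in \R$, so
\[
\mathrm{Cent}_H(v_1) = \{\, r_1 + s \, v_1 \mid r_1, s \in \R \,\} \; .
\]

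Combining the two steps, every admissible $\vec b$ has the form $\vec b = \vec a(r_1 + s v_1) = r_1 \vec a + s \, \vec a v_1$, which is the claimed uniqueness statement with $r_2 = s$. The main (and only) point requiring real care is the centralizer computation, but this is routine given the quaternionic multiplication rules; no nontrivial obstacle is expected.
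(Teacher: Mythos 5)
Your proof is correct: reducing to the centralizer of $v_1$ in $H$ and computing that it equals $\operatorname{span}_{\mathbb{R}}\{1,v_1\}$ is exactly the argument the paper has in mind (the paper states only that the lemma "is easily shown" and omits the proof entirely), with the implicit standing assumption $v_1\neq 0$. Your parenthetical caveat that $\vec c\neq 0$ is needed for invertibility is also the right observation.
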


We now come to that characterization of CMC surfaces: 

\begin{theorem}\label{lem:characterizationOfCMC}
A smooth surface $x$ in 
$\mathbb{R}^3$ has constant mean curvature if and only if 
some scaling and translation of the Christoffel transform $x^*$ equals 
a Darboux transform $\hat x$ (given by some specific value of $\lambda$).  
\end{theorem}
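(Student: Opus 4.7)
The plan is to exploit the quaternionic identity $v w v = |v|^2 w - 2\langle v, w\rangle v$ valid for $v, w \in \Im H$, together with the Christoffel transform formula $dx^* = \rho(dn_0 + H_0 dx)$ in which, by Remark \ref{rem:CMCparallelsurf}, the scalar $\rho$ is constant precisely when $H_0$ is.

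For the forward direction, I assume $H_0$ is constant, so $\rho$ is a nonzero real constant and integration gives $x^* = \rho n_0 + \rho H_0 x + C$ for some $C \in \Im H$. The candidate Darboux transform will be the parallel CMC surface $\hat x := x + H_0^{-1} n_0$, which coincides with $(\rho H_0)^{-1}(x^* - C)$ and is therefore a real scaling and translation of $x^*$. Taking $v = \hat x - x = H_0^{-1} n_0$, the quaternionic identity together with $n_0 \perp dn_0$ and $n_0 \perp dx$ inside $\Im H$ collapses $v\, dx^*\, v$ to $H_0^{-2}\, dx^*$, so the Riccati equation \eqref{eqn-riccati} will be satisfied with $\lambda = H_0/\rho$. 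By Lemma \ref{8ptpt50}, $\hat x$ is then a Darboux transform.

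For the reverse direction, I assume $\hat x = \alpha x^* + \beta$ satisfies Riccati for some nonzero real $\alpha$, $\lambda$ and some $\beta \in \Im H$. Setting $v := \hat x - x \in \Im H$ and splitting $\alpha\, dx^* = \lambda\, v\, dx^*\, v$ along the isothermic directions via $dx^* = x_u^{-1} du - x_v^{-1} dv$ reduces, through the quaternionic identity, to the pair
\[ (\alpha - \lambda |v|^2)\, x_u^{-1} + 2\lambda \langle v, x_u^{-1}\rangle v = 0, \qquad (\alpha - \lambda |v|^2)\, x_v^{-1} + 2\lambda \langle v, x_v^{-1}\rangle v = 0. \]
A case analysis comparing these two equations and using the linear independence of $x_u$ and $x_v$ in isothermic coordinates will force $v$ to be normal to the surface and will force $\alpha = \lambda |v|^2$. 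Writing $v = \mu n_0$ then gives $\mu^2 = \alpha/\lambda$, so $\mu$ is a nonzero real constant on each connected component. Finally, differentiating $\hat x = x + \mu n_0$ and matching the resulting identity $\alpha\, dx^* = dx + \mu\, dn_0$ against $\alpha\, dx^* = \alpha \rho(dn_0 + H_0 dx)$ via the linear independence of $dx$ and $dn_0$ yields $\alpha \rho H_0 = 1$ and $\alpha \rho = \mu$, hence $H_0 = \mu^{-1}$ is constant.

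The step I expect to be the main obstacle is the elimination of a possible tangential component of $v$ in the reverse direction. If $v$ had a nonzero tangential part, the cross-terms $2\lambda\langle v, x_u^{-1}\rangle v$ and $2\lambda\langle v, x_v^{-1}\rangle v$ in the two perpendicular isothermic directions would produce two expressions for $\alpha$ with opposite signs, ultimately forcing $\alpha = 0$ against hypothesis; carrying out this bookkeeping cleanly is the main technical piece of the argument, but no genuine obstacle is expected beyond it.
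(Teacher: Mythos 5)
Your proposal is correct and follows essentially the same route as the paper: the forward direction exhibits the parallel surface $x + H_0^{-1} n_0$ as a solution of the Riccati equation \eqref{eqn-riccati}, and the reverse direction forces $v = \hat x - x$ to be a constant multiple of $n_0$ and then reads off constancy of $H_0$ by matching $\alpha\, dx^* = dx + \mu\, dn_0$ against $dx^* = \rho(dn_0 + H_0\, dx)$. The only difference is cosmetic: where you use the identity $vwv = |v|^2 w - 2\langle v,w\rangle v$ together with linear independence of $x_u^{-1}$ and $x_v^{-1}$ to eliminate the tangential part of $v$, the paper interprets $w \mapsto v w v^{-1}$ as a rotation and invokes Lemmas \ref{vecAlemma} and \ref{lem:beforecharofCMC} -- the same elimination, packaged differently.
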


\begin{proof}
Assume $x$ is a CMC surface.  
Then $x^*$ is the parallel CMC surface, by Remark \ref{rem:CMCparallelsurf}.  
To show $x^*$ is a Darboux transformation, we must show, by Definition 
\ref{defn:smoothDarbTrans} and 
Equation \eqref{eqn-riccati}, that $dx^* = \lambda (x^*-x) dx^* (x^*-x)$ 
for some $\lambda \in \mathbb{R}$.  Because $x^*$ is the parallel 
CMC surface, we have 
$x^* = x + H_0^{-1} n_0$, and then taking $\lambda = -H_0^2/n_0^2$ 
gives that $x^*$ is a Darboux transform.  

Now we show the converse direction, proven 
by Udo Hertrich-Jeromin and Franz Pedit in the paper \cite{HP}.  
Assume $\hat x$ is a Darboux transform of $x$, 
and that $\hat x = a \cdot x^* + \vec b$ for some constants 
$a \in \mathbb{R} \setminus \{ 0 \}$ and $\vec b \in \text{Im}H$.  
So there exists $\lambda$ such that $d\hat x = 
\lambda (\hat x - x)dx^*(\hat x - x)$, that is, 
\[ a dx^* = \lambda (ax^*+\vec b - x)dx^*(ax^*+\vec b-x) \; . \]  
Thus 
\[ a dx^* = - \lambda 
(ax^*+\vec b - x)dx^*(ax^*+\vec b - x)^{-1}|ax^*+\vec b-x|^2 \; . 
\]  
Because $(ax^*+\vec b-x)dx^*(ax^*+\vec b-x)^{-1}$ has the same 
norm as that of $dx^*$, we have that $|\hat x - x|^2 = 
\pm a \lambda^{-1}$ is constant.  

Suppose \[ |\hat x - x|^2 = - a \lambda^{-1} \; . \]
Lemma \ref{lem:beforecharofCMC} implies 
\[ ax^*+\vec b - x = r_1 \cdot 1 + r_2 \cdot 1 \cdot x_u^{-1} = 
r_3 \cdot 1 + r_4 \cdot 1 \cdot x_v^{-1} \] for some 
$r_j \in \mathbb{R}$, so linear independence of $x_u^{-1}$ and $x_v^{-1}$ 
gives 
\[ \text{Im}H \ni a x^*+\vec b - x = r \in \mathbb{R} \; , \] for some 
real constant $r$.  Thus $r=0$ and $x=\hat x=a x^*+ \vec{b}$, which is a 
contradiction.  

Thus we have \[ |\hat x - x|^2 = + a \lambda^{-1} \]
is constant.  Now again, Lemma \ref{lem:beforecharofCMC} implies 
\[ a x^*+\vec b - x = r_1 n_0+r_2 n_0 x_u^{-1} = 
r_3 n_0 + r_4 n_0 x_v^{-1} \; . \]  
So $ax^*+\vec b - x = r \cdot n_0$ for some constant $r \in 
\mathbb{R}$.  So $dx^* = a^{-1} dx + r a^{-1} dn_0$.  Definition 
\ref{defn:ChristTrans} implies $x$ has CMC $H_0 = \pm r^{-1}$.  
\end{proof}

\begin{corollary}
Let $x$ be a CMC surface in $\mathbb{R}^3$.  
Let $\hat x$ be both a Christoffel 
and Darboux transform, as in Theorem \ref{lem:characterizationOfCMC}.
Then, $|\hat x -x|^2$ is constant, and $\hat x - x$ is perpendicular 
to $x$, and $\hat x $ is a parallel surface of $x$ up to scaling and 
translation.  
\end{corollary}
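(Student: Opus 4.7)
The plan is to extract all three claims directly from the computations already carried out in the proof of Theorem \ref{lem:characterizationOfCMC}, since that proof derives exactly the structural information we need but does not package it as a separate statement. We start from the two descriptions of $\hat x$ that are simultaneously in force: on the one hand, $\hat x = a \cdot x^* + \vec b$ for constants $a \in \mathbb{R} \setminus \{0\}$ and $\vec b \in \Im H$ (so that $d\hat x = a\, dx^*$), and on the other hand, $\hat x$ satisfies Darboux's Riccati equation
\[ d\hat x = \lambda (\hat x - x)\, dx^* (\hat x - x) \]
from Equation \eqref{eqn-riccati}, for some fixed $\lambda \in \mathbb{R}$.

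For the first claim, I equate the two expressions for $d\hat x$ to obtain $a\, dx^* = \lambda (\hat x - x)\, dx^* (\hat x - x)$, and then rewrite the right side as $-\lambda (\hat x - x)\, dx^* (\hat x - x)^{-1} |\hat x - x|^2$. Since conjugation by a quaternion preserves the quaternionic norm, taking norms yields $|a| = |\lambda|\, |\hat x - x|^2$ (pointwise), so $|\hat x - x|^2 = \pm a \lambda^{-1}$ is indeed constant. The sign analysis in the proof of Theorem \ref{lem:characterizationOfCMC} already ruled out the $-a\lambda^{-1}$ case by showing it forces $\hat x = x$, which contradicts $\hat x$ being a genuine Darboux transform; so $|\hat x - x|^2 = a\lambda^{-1}$ is a positive constant.

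For the second claim, I invoke Lemma \ref{lem:beforecharofCMC}. The equation $a\, dx^* = \lambda (\hat x - x)\, dx^* (\hat x - x)$ says precisely that conjugation by $\hat x - x$ sends $x_u^{-1}$ to a real scalar multiple of $x_u^{-1}$ and $x_v^{-1}$ to a real scalar multiple of $x_v^{-1}$, using that $dx^* = x_u^{-1} du - x_v^{-1} dv$. Lemma \ref{lem:beforecharofCMC} then forces $\hat x - x$ to be a real linear combination of $n_0$ and $n_0 x_u^{-1}$ (from the $u$-equation) and simultaneously of $n_0$ and $n_0 x_v^{-1}$ (from the $v$-equation); by linear independence of $x_u^{-1}$ and $x_v^{-1}$ the common solution is $\hat x - x = r\, n_0$ for some $r \in \mathbb{R}$, and the first claim shows $r$ is constant. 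In particular $\hat x - x$ is normal to the tangent space of $x$.

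For the third claim, the relation $\hat x = x + r\, n_0$ is by definition the parallel surface of $x$ at signed distance $r$. Combining this with $\hat x = a x^* + \vec b$ gives $a x^* + \vec b = x + r n_0$, so $x^*$ itself is an affine rescaling of the parallel surface; this is exactly the content of Remark \ref{rem:CMCparallelsurf}, and it says that $\hat x$ agrees with the parallel surface of $x$ up to a homothety and translation, as required. The main obstacle in turning this sketch into a proof is already handled inside Theorem \ref{lem:characterizationOfCMC} (the sign ruling in the norm computation); once that is taken as given, the corollary is essentially a bookkeeping extraction.
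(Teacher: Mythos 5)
Your proposal is correct and follows exactly the route the paper intends: the corollary carries no separate proof there because all three claims are read off from the converse direction of the proof of Theorem \ref{lem:characterizationOfCMC} (the norm computation giving $|\hat x - x|^2 = a\lambda^{-1}$ constant, the application of Lemma \ref{lem:beforecharofCMC} forcing $\hat x - x = r\, n_0$, and Remark \ref{rem:CMCparallelsurf} identifying $x^*$ with the parallel surface up to homothety and translation). Your extraction, including the elimination of the $-a\lambda^{-1}$ case, matches the paper's argument.
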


\subsection{Other transformations} 
Here we make some brief remarks about two other transformations.  
The interested reader 
can consult other sources for more complete information about them.  

If one disregards some degenerate cases, 
Ribaucour transforms (like Darboux transforms) preserve curvature lines, 
but (unlike Darboux transforms) they do not necessarily preserve 
the conformal structure.  A simple example of a Ribaucour transform of 
a surface in $\mathbb{R}^3$ is its reflection across a plane, which is not a 
Darboux transform.  So Ribaucour transformations are more general than 
Darboux transforms.  

In the case of a CMC $H \neq 0$ surface, a Goursat transformation is 
the composition of three transformations, first a Christoffel transformation, 
second a M\"obius transformation, and third another Christoffel 
transformation.  

In the case of a minimal surface, a Goursat transformation is as follows: 
lift a minimal surface to a null curve in $\mathbb{C}^3$, apply a complex 
orthogonal transformation to that null curve, and then project back 
to $\mathbb{R}^3$.  It is a 
M\"obius transformation for the Gauss map.  One example of this is a 
catenoid being transformed into a minimal surface that is defined on the 
universal cover of the annulus, and a picture of this can be 
found in Section 5.3 of \cite{Udo-bk}.  

\subsection{Appendix: comments on the Riccati equation}
As promised before when we discussed Darboux transformations, 
we include some basic facts here about the Riccati equation
\[ y^\prime(x)=a(x) (y(x))^2 + b(x) y(x) + c(x) \; , \;\;\; 
a(x) \neq 0 \; . \]
Set $v = a \cdot y$, and then 
\[ v^\prime = v^2 + R v + S \; , \;\;\; 
R = a^{-1} a^\prime + b \; , \;\; S = a c \; . \]  
Let $u$ satisfy $v = -u^\prime/u$.  Then 
\[ u^{\prime\prime} - R u^\prime + S u = 0 \; , \] 
which is a linear second order ordinary differential equation, 
so there is a method for finding all solutions $u$.  
Taking any such solution $u$, we have one solution 
\[ y_0 = \frac{-u^\prime}{a u} \] to the Riccati equation.  
From $y_0$ we can obtain all solutions $y$ to the Riccati 
equation as follows: Let $y$ be any solution, and define 
$z$ by $y = y_0 +z^{-1}$.  Then $(y_0+1/z)^\prime = 
a (y_0+1/z)^2+ b (y_0+1/z) + c$, and because $y_0$ itself 
is also a solution, we have 
\[ z^\prime + (2 a y_0 + b) z + a = 0 \; . \]  
This is a linear first order ordinary differential equation, 
so again all solutions $z$ can be found.  These solutions 
$z$ then give the general solutions $y = y_0+1/z$ of the 
Riccati equation.  

\begin{remark}
The Schwarzian derivative $S(w)$ of a function $w$ is 
\[ S(w) := \left( \frac{w^{\prime\prime}}{w^\prime} 
\right)^\prime 
- \frac{1}{2} \left( \frac{w^{\prime\prime}}{w^\prime} 
\right)^2 \; . \]  
It has the property that it is invariant under M\"obius 
transformations of $w$.  It is also related to CMC surface 
theory, and, in particular, it is very useful in the study 
of CMC $1$ surfaces in hyperbolic $3$-space 
$\mathbb{H}^3$ via the Weierstrass representation found by 
Bryant \cite{Bry} and developed further by 
Umehara-Yamada \cite{UY1}.  Consider the equation 
\[ S(w(x)) = f(x) \; . \]  
We wish to find a solution $w$.  Setting $y = 
w^{\prime\prime}/w^\prime$, we have the Riccati equation 
\[ y^\prime = \tfrac{1}{2} y^2+f \; . \]  We take 
$u$ as above solving 
\begin{equation}\label{eqnforRiccatieqn}
u^{\prime\prime}-R u^\prime+S u = u^{\prime\prime} 
+ \tfrac{1}{2} f u = 0 \; . \end{equation}  
We have $y = -2 u^\prime/u$.  
We then integrate $(w^{\prime\prime}/w^\prime) = 
-2 (u^\prime/u)$ to see that $w^\prime = c u^{-2}$ for 
some constant $c$.  Any other solution $\tilde u$ of 
\eqref{eqnforRiccatieqn} will give that $\tilde u ^\prime u 
- \tilde u u^\prime$ is constant, so we can take 
\[ w^\prime = \frac{\tilde u^\prime u 
- \tilde u u^\prime}{u^2} = \left( \frac{\tilde u}{u} 
\right)^\prime \; . \]  This implies that we can take 
the solution $w$ to be $w = \tilde u/u$.  
\end{remark}

\section{A conserved quantities approach to 
discrete CMC surfaces}
\label{chapondiscreteCMCsurfs}

Our purpose in this chapter is to present a definition for 
discrete constant mean curvature (CMC) $H$ surfaces in any of 
the three space forms Euclidean 3-space $\mathbb{R}^3$, spherical 
3-space $\mathbb{S}^3$ and hyperbolic 3-space $\mathbb{H}^3$.  
This new definition is equivalent to the previously known 
definitions \cite{BobPink} in the case of 
$\mathbb{R}^3$ (and we will show this 
in this text as well, in Lemmas \ref{oldsenselemma1} and 
\ref{lemmalemma9pt13}).  It 
also satisfies a Calapso transformation relation (the Lawson 
correspondence), suggesting the 
definition is also natural for the space form $\mathbb{S}^3$, and 
for CMC surfaces with $H \geq 1$ in $\mathbb{H}^3$.  The 
definition is the 
first one known for CMC surfaces with $-1 < H < 1$ in 
$\mathbb{H}^3$.  

This chapter falls under the category of 
``discrete differential geometry'', which is sometimes 
abbreviated as ``DDG'', and many researchers now work in this 
and related fields.  Here we list some of those researchers, but 
we first note that this list includes only people whose work 
is in some way related to the viewpoint presented in this text -- 
and even with this restriction is by no means a complete list: 
Sergey Agafonov, 
Andreas Asperl, 
Alexander Bobenko, 
Christoph Bohle, 
Folkmar Bornemann, 
Ulrike Buecking, 
Fran Burstall, 
Adam Doliwa, 
Charles Gunn, 
Udo Hertrich-Jeromin, 
Michael Hofer, 
Tim Hoffmann, 
Ivan Izmestiev, 
Michael Joswig, 
Axel Kilian, 
Yang Liu, 
Vladimir Matveev, 
Christian Mercat, 
Franz Pedit, 
Paul Peters, 
Ulrich Pinkall, 
Konrad Polthier, 
Helmut Pottmann, 
Jurgen Richter-Gebert, 
Wolfgang Schief, 
Jean-Marc Schlenkev, 
Nicholas Schmitt, 
Oded Schramm, 
Peter Schroeder, 
Boris Springborn, 
John Sullivan, 
Yuri Suris, 
Johannes Wallner, 
Wenping Wang, 
Max Wardetzky. 

\subsection{Discrete isothermic surfaces}
Consider a discrete surface $\ef_p \in \text{Im} H$ (recall that 
$\text{Im} H$ is the imaginary quaternions), which we can 
consider to be a discrete surface in Euclidean $3$-space, since 
$\text{Im} H$ is equivalent to $\mathbb{R}^3$ as a vector space (and we 
sometimes say this by writing 
$\text{Im} H \approx \mathbb{R}^3$).  Here $p$ is any 
point in a discrete lattice domain (locally always a subdomain of 
$\mathbb{Z}^2$).  Consider any quadrilateral in the lattice with vertices 
$p$, $q$, $r$, $s$ (i.e. the points $(m,n)$, $(m+1,n)$, 
$(m+1,n+1)$, $(m,n+1)$, respectively, for some $m,n \in 
\mathbb{Z}$) ordered counterclockwise about the quadrilateral.  

We change the notation "$x$" for surfaces in 
the previous chapter to "$\ef$" here.  This is for 
distinguishing between smooth surfaces, always denoted by "$x$", 
and discrete surfaces, always denoted by "$\ef$".  

It would be natural to 
assume that the points $\ef_p$, $\ef_q$, $\ef_r$ and $\ef_s$ are coplanar, 
so that they are the vertices of a planar quadrilateral in $\mathbb{R}^3$, 
and thus the surface is comprized of planar quadrilaterals connecting 
continuously along edges.  It is even better if the 
points $\ef_p$, $\ef_q$, $\ef_r$ and $\ef_s$ are concircular (i.e. 
all lie in one circle), because then we could extend the notion of 
a surface comprized of planar quadrilaterals to the cases that the 
ambient space is $\mathbb{S}^3$ or 
$\mathbb{H}^3$, cases which we will consider later in 
this chapter.  In fact, once the vertices are concircular, there is 
actually no 
further need to think about "planar faces", as all the 
necessary information 
is encoded in the circle itself. We will soon restrict to the 
concircular case, but for the moment we make no assumptions about the 
positioning of $\ef_p$, $\ef_q$, $\ef_r$ and $\ef_s$.  

We define the cross ratio of this quadrilateral as 
\[ q_{pqrs} = (\ef_q-\ef_p)(\ef_r-\ef_q)^{-1}
(\ef_s-\ef_r)(\ef_p-\ef_s)^{-1} \; . \]
(We are using $q$ to denote both the cross ratio and 
one vertex of the quadrilateral, but this will 
not cause confusion, since it will always be clear from 
context which meaning $q$ has in each case.)  

This cross ratio is not invariant with respect to 
conformal transformations of $\mathbb{R}^3$, but such an invariance 
{\em almost} holds, in the sense that we can produce a conformally 
invariant version of the cross ratio by changing it into a 
complex valued object, defined up to conjugation, as follows: 
\[ \hat{q}_{pqrs} = \text{Re}(q_{pqrs}) \pm i 
||\text{Im}(q_{pqrs})|| \; . \]  

\begin{lemma}
$\hat{q}_{pqrs}$ is a M\"obius invariant.  
\end{lemma}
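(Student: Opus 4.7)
The plan is to reduce M\"obius invariance of $\hat q_{pqrs}$ to the transformation law of $q_{pqrs}$ under a set of generators of $\text{Mob}(3)$.  Realizing each M\"obius transformation as $\ef \to (a\ef+b)(c\ef+d)^{-1}$ for some $T \in G$, one checks using the defining conditions \eqref{eqn:Mob3}--\eqref{eqn:Mob3-nonsing} that $\text{Mob}(3)$ is generated by translations $\ef \to \ef+c$ (with $c \in \Im H$), real dilations $\ef \to \lambda\ef$ (with $\lambda \in \mathbb{R}^{\times}$), rotations $\ef \to u\ef u^{-1}$ (with $u \in H\setminus\{0\}$), and the standard inversion $\ef \to \ef^{-1}$.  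The central claim to establish is that under each generator the cross ratio transforms by quaternionic conjugation: $q_{pqrs} \to a\, q_{pqrs}\, a^{-1}$ for some $a \in H\setminus\{0\}$, possibly depending on the generator and on one of the vertices.

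For translations the differences are unchanged, so $q_{pqrs}$ is preserved ($a=1$).  For real dilations the four factors scale by $\lambda,\lambda^{-1},\lambda,\lambda^{-1}$, and since $\lambda \in \mathbb{R}$ is central in $H$ these scalars cancel.  For rotations each difference transforms by $u$-conjugation, and the product of four such conjugations collapses to $q_{pqrs} \to u\, q_{pqrs}\, u^{-1}$.  The only substantive case is inversion: applying the identity $b^{-1}-a^{-1} = b^{-1}(a-b)a^{-1}$ to each of the four factors of the new $q'$ and tracking the four resulting sign flips (which multiply to $+1$), the transformed cross ratio takes the form (with $u=\ef_p$, $v=\ef_q$, $w=\ef_r$, $x=\ef_s$)
\[ q' \;=\; \bigl[v^{-1}(v-u)u^{-1}v\bigr]\,(w-v)^{-1}\,\bigl[wx^{-1}(x-w)w^{-1}x\bigr]\,(u-x)^{-1}u. \]
I would then invoke the two short quaternionic identities $v^{-1}(v-u)u^{-1}v = u^{-1}(v-u)$ and $wx^{-1}(x-w)w^{-1}x = x-w$, each a one-line consequence of associativity, to collapse the bracketed blocks and conclude $q' = \ef_p^{-1}\, q_{pqrs}\, \ef_p$.

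With the four cases in hand, M\"obius invariance of $\hat q_{pqrs}$ follows from the general observation that the assignment $z \mapsto \Re(z) \pm i|\Im(z)|$ depends only on the $H^{\times}$-conjugacy class of $z$.  Indeed, decomposing $z = \alpha + \beta$ with $\alpha \in \mathbb{R}$ and $\beta \in \Im H$, one has $aza^{-1} = \alpha + a\beta a^{-1}$, and $\beta \mapsto a\beta a^{-1}$ is precisely the rotation of $\Im H \cong \mathbb{R}^3$ associated to $a$; hence both $\Re(aza^{-1}) = \alpha$ and $|\Im(aza^{-1})| = |\beta|$ are preserved.

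The principal obstacle will be the bookkeeping in the inversion step.  Noncommutativity of $H$ forces careful attention to the order of factors, and one must recognize that the ``stray'' central factors $u^{-1}v$, $wx^{-1}$, $w^{-1}x$ which arise between the main differences in the expanded $q'$ combine, via the two telescoping identities above, with the outer $v^{-1}$ and $u$ to yield the clean conjugation by $\ef_p$.  Once those identities are verified, the remainder of the argument is mechanical.
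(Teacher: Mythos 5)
Your proof is correct, and it follows the same basic strategy as the paper's: verify invariance on a set of generators of the M\"obius group. The difference lies in the intermediate claim. The paper lists seven explicit maps (a dilation, a translation, a reflection, three coordinate rotations, and the inversion $x \to x/|x|^2$) and simply asserts, without displaying any computation, that $\Re(q)$ and $||\Im(q)||^2$ are preserved in each case. You instead prove the sharper statement that $q_{pqrs}$ itself transforms by quaternionic conjugation, $q \mapsto a\,q\,a^{-1}$, under each generator, and then note that $z \mapsto \Re(z) \pm i|\Im(z)|$ depends only on the conjugacy class. This buys two things: it isolates the single genuinely noncommutative computation (the inversion), which you carry out correctly --- both telescoping identities $v^{-1}(v-u)u^{-1}v = u^{-1}(v-u)$ and $wx^{-1}(x-w)w^{-1}x = x-w$ check out, and the four signs arising from $b^{-1}-a^{-1}=b^{-1}(a-b)a^{-1}$ do cancel --- and it records the stronger, reusable fact that the quaternionic cross ratio is a conjugation covariant of the configuration, of which $\hat q$ is exactly the induced invariant. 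Your generator set differs slightly from the paper's (quaternion conjugations $u(\cdot)u^{-1}$ and signed dilations in place of coordinate rotations plus an explicit reflection), but since $\lambda=-1$ gives the antipodal map and $\det(-I)=-1$ in dimension three, your generators still reach the orientation-reversing half of $\text{Mob}(3)$, so the generation claim is on the same footing as the paper's unproved assertion that its seven maps generate.
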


\begin{proof}
Applying the following maps to the space $\text{Im}H$: 
\[ ai+bj+ck \to rai+rbj+rck \; , \] 
\[ ai+bj+ck \to ai+bj+ck + (a_0i+b_0j+c_0k) \; , \] 
\[ ai+bj+ck \to -ai+bj+ck \; , \] 
\[ ai+bj+ck \to (\cos(\theta) a-\sin(\theta) b)i+
(\sin(\theta) a+\cos(\theta) b)j+ck \; , \] 
\[ ai+bj+ck \to (\cos(\theta) a-\sin(\theta) c)i+bj+
(\sin(\theta) a+\cos(\theta) c)k \; , \] 
\[ ai+bj+ck \to ai+(\cos(\theta) b-\sin(\theta) c)j+
(\sin(\theta) b+\cos(\theta) c)k \; , \] 
\[ ai+bj+ck \to (ai+bj+ck)/(a^2+b^2+c^2) \; , \] 
where $\theta,r,a_0,b_0,c_0$ are any real constants, 
and $a,b,c$ represent coordinates of 
$\text{Im}H \approx \mathbb{R}^3$, we find that 
both $\text{Re}(q)$ and $||\text{Im}(q)||^2$ are preserved 
in all seven cases.  
These seven maps are a dilation, a translation, a 
reflection, three rotations, and an inversion, 
respectively, that generate the full M\"obius group 
(including orientation reversing transformations).  
It follows that $\hat q$ is a M\"obius invariant.  
\end{proof}

For $p_j, p_k \in \text{Im}H$, taking the corresponding $P_j,P_k 
\in M_\kappa$ as in \eqref{star8point1} and \eqref{star8point4}, 
we have the $\mathbb{R}^{4,1}$ inner product 
\begin{equation}\label{eqn:PjPk}
 \langle P_j,P_k \rangle = 
 \frac{2(p_j-p_k)^2}{(1-\kappa p_j^2)(1-\kappa p_k^2)} \; , 
\end{equation}
as in \eqref{star8point2}.  
As in Remark \ref{rem:scaling}, we can freely scale $P_j$ and 
$P_k$ to $\alpha_j P_j$ and $\alpha_k P_k$, and then 
$\langle P_j,P_k \rangle$ will scale to $\alpha_j\alpha_k \langle P_j, 
P_k \rangle$.  However, writing the cross ratio in terms of such inner 
products, we find it is invariant under such scalings.  A direct 
computation gives the following general formula for the cross ratio: 

\begin{lemma}\label{lem:generalformulaforcrossratio}
For $p_1,p_2,p_3,p_4 \in \text{Im}H$, we have 
$\hat q_{p_1p_2p_3p_4} =$ 
\[ = \frac{\langle P_1,P_2 \rangle 
\langle P_3,P_4 \rangle - \langle P_1,P_3 \rangle \langle P_2,P_4 \rangle
+ \langle P_1,P_4 \rangle \langle P_2,P_3 \rangle
\pm  \sqrt{\det (\langle P_i,P_j \rangle_{i,j=1,2,3,4} )}
}{2 \langle P_1,P_4 \rangle \langle P_2,P_3 \rangle} \; . 
\]
In particular, setting $s_{ij} = \langle P_i,P_j \rangle$, then 
\[ \hat q = \frac{s_{12}s_{34}-s_{13}s_{24}+s_{14}s_{23} \pm 
\sqrt{\mathcal{E}}}{2s_{14}s_{23}} \; , 
\]
where $\mathcal{E} = s_{12}^2s_{34}^2+s_{13}^2s_{24}^2+s_{14}^2s_{23}^2 
-2 s_{13}s_{14}s_{23}s_{24} - 2 s_{12}s_{14}s_{23}s_{34}
-2 s_{12}s_{13}s_{24}s_{34}$.  
\end{lemma}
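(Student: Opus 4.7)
The plan is to verify the formula by direct computation, using M\"obius invariance to pass to the Euclidean model. Both sides of the claimed identity are invariant under M\"obius transformations: the left-hand side by the preceding lemma, and the right-hand side because the Gram entries $s_{ij} = \langle P_i, P_j\rangle$ are preserved by isometries of $\mathbb{R}^{4,1}$, while the rescaling freedom $P_j \to \alpha_j P_j$ sends $s_{ij} \to \alpha_i\alpha_j s_{ij}$ and scales both $N$ and $\sqrt{\mathcal{E}}$ by the common factor $\alpha_1\alpha_2\alpha_3\alpha_4$, which cancels against the denominator $2 s_{14}s_{23}$. Hence it suffices to check the identity in the model $\kappa = 0$, where \eqref{eqn:PjPk} gives $s_{ij} = -2|p_i - p_j|^2$. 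Setting $a = p_2 - p_1$, $b = p_3 - p_2$, $c = p_4 - p_3$, $d = p_1 - p_4$ (so $a,b,c,d \in \Im H$ with $a+b+c+d = 0$) and using $x^{-1} = -x/|x|^2$ for imaginary $x$, the cross ratio reduces to $q = abcd/(|b|^2|d|^2)$, whence $|q|^2 = |a|^2|c|^2/(|b|^2|d|^2)$ and $\Re(q) = \Re(abcd)/(|b|^2|d|^2)$.

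Next I would match the moduli. Writing $A = s_{12}s_{34}$, $B = s_{13}s_{24}$, $C = s_{14}s_{23}$, one has $N = A - B + C$ and $\mathcal{E} = A^2 + B^2 + C^2 - 2(AB + AC + BC)$, so $N^2 - \mathcal{E} = 4AC$. Since the two choices of sign in the displayed formula produce complex conjugate values $\hat q_{+}$ and $\hat q_{-}$, this yields
\[
|\hat q|^2 \;=\; \hat q_{+}\hat q_{-} \;=\; \frac{N^2 - \mathcal{E}}{4 s_{14}^2 s_{23}^2} \;=\; \frac{s_{12}s_{34}}{s_{14}s_{23}} \;=\; \frac{|a|^2|c|^2}{|b|^2|d|^2} \;=\; |q|^2,
\]
so the moduli of $\hat q$ and $q$ agree.

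The heart of the argument is matching the real parts, which reduces to the Ptolemy-type identity
\[
2\,\Re(abcd) \;=\; |a|^2|c|^2 - |a+b|^2|b+c|^2 + |b|^2|d|^2.
\]
To handle the left-hand side, I would apply $\Re(XY) = \Re(X)\Re(Y) - \langle \Im(X), \Im(Y)\rangle$ to $X = ab$ and $Y = cd$, combined with $xy = -\langle x, y\rangle + x\times y$ for imaginary $x,y$ and the Binet--Cauchy identity $\langle a\times b, c\times d\rangle = \langle a, c\rangle\langle b, d\rangle - \langle a, d\rangle\langle b, c\rangle$, to obtain the coordinate-free expression
\[
\Re(abcd) \;=\; \langle a, b\rangle\langle c, d\rangle - \langle a, c\rangle\langle b, d\rangle + \langle a, d\rangle\langle b, c\rangle.
\]
Eliminating $d$ via $d = -(a+b+c)$ and expanding the right-hand side of the target identity in the six independent scalars $|a|^2, |b|^2, |c|^2, \langle a,b\rangle, \langle a,c\rangle, \langle b,c\rangle$ reduces the claim to a symbolic polynomial identity. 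This final check is the main obstacle: the computation is elementary but must be organised carefully, since the desired match arises only after substantial cancellation between the cross-terms in $|a+b|^2|b+c|^2$ and $|b|^2|a+b+c|^2$. All other steps are structural and immediate.
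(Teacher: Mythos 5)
The paper offers no proof of this lemma beyond the phrase ``a direct computation gives,'' so your proposal is supplying exactly the omitted content, and it does so correctly. The reduction to $\kappa=0$ needs only the scaling invariance $s_{ij}\to\alpha_i\alpha_j s_{ij}$ (the factors $\alpha_j=(1-\kappa p_j^2)^{-1}$ cancel between numerator and denominator), your modulus computation $N^2-\mathcal{E}=4s_{12}s_{34}s_{14}s_{23}$ is right, and the identity $\Re(abcd)=\langle a,b\rangle\langle c,d\rangle-\langle a,c\rangle\langle b,d\rangle+\langle a,d\rangle\langle b,c\rangle$ follows from $\Re(XY)=\Re X\,\Re Y-\langle\Im X,\Im Y\rangle$ together with the Lagrange identity, as you say. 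The final step you flag as the main obstacle in fact closes cleanly: writing $\alpha=|a|^2$, $\beta=|b|^2$, $\gamma=|c|^2$, $P=\langle a,b\rangle$, $Q=\langle a,c\rangle$, $R=\langle b,c\rangle$ and eliminating $d=-(a+b+c)$, both sides of the Ptolemy-type identity reduce to $-2P\gamma+2Q\beta-2\alpha R-4PR$, so no hidden cancellation fails. The argument is complete.
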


Because 
\[ \mathcal{E} = \tfrac{1}{2} (s_{12} s_{34} - s_{14} s_{23})^2 
+ \tfrac{1}{2} (s_{12} s_{34} - s_{13} s_{24})^2 
+ \]\[ \tfrac{1}{2} (s_{13} s_{24} - s_{14} s_{23})^2 
- s_{12} s_{23} s_{34} s_{14} - 
s_{12} s_{24} s_{13} s_{34} - s_{13} s_{14} s_{23} s_{24} \; , 
\] it is not clear from straightforward algebraic considerations 
that $\mathcal{E} \leq 0$.  However, this does indeed hold, for 
geometric reasons: 

\begin{lemma}
$\mathcal{E} \leq 0$.  
\end{lemma}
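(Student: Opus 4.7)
The plan is to identify $\mathcal{E}$ with the Gram determinant of the four lightlike vectors $P_1,P_2,P_3,P_4 \in L^4 \subset \mathbb{R}^{4,1}$ and then extract the sign from the signature of the ambient Minkowski form. By inspection of the formula for $\hat{q}$ in Lemma \ref{lem:generalformulaforcrossratio}, the quantity under the radical is precisely
\[
\mathcal{E} \;=\; \det\bigl(\langle P_i,P_j\rangle\bigr)_{i,j=1,2,3,4},
\]
where the diagonal entries $s_{ii}$ vanish because each $P_i$ lies on the light cone. So the problem reduces to showing that this $4 \times 4$ Gram determinant is nonpositive.

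First I would form the $5 \times 4$ matrix $A$ whose columns are $P_1,\dots,P_4$ written in the standard orthonormal basis of $\mathbb{R}^{4,1}$; then $(\langle P_i,P_j\rangle) = A^t I_{4,1} A$, where $I_{4,1} = \operatorname{diag}(1,1,1,1,-1)$. If the $P_i$ are linearly dependent, the Gram matrix is singular and $\mathcal{E} = 0$, so we can assume they are linearly independent, i.e.\ they span a $4$-dimensional subspace $V \subset \mathbb{R}^{4,1}$. Sylvester's law of inertia then tells us that the signature of the Gram matrix coincides with the signature of the restriction of $\langle\cdot,\cdot\rangle_{\mathbb{R}^{4,1}}$ to $V$, and for a nondegenerate $4 \times 4$ symmetric matrix the sign of the determinant is $(-1)^{n_-}$ where $n_-$ is the negative index.

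Next I would pin down the signature of $V$. Because the ambient form has index $1$, the restricted form to $V$ has negative index at most $1$. On the other hand $V$ contains the nonzero lightlike vector $P_1$, so the restriction cannot be positive definite; hence its negative index is exactly $1$ (assuming $V$ is nondegenerate). Therefore $\det(A^tI_{4,1}A) < 0$, and combined with the degenerate case this yields $\mathcal{E} \leq 0$, with equality precisely when $P_1,\dots,P_4$ fail to span a $4$-dimensional subspace. The only delicate point is the possibility that $V$ itself is degenerate — that is, contains a nonzero vector in its own null cone that is $\langle\cdot,\cdot\rangle$-perpendicular to all of $V$. The hardest part of the argument will be checking that such degeneracy also forces $\det G = 0$, which follows immediately from the observation that a nonzero radical vector, expressed as a linear combination of $P_1,\dots,P_4$, gives a nonzero null vector in the kernel of $G$; so again $\mathcal{E} = 0$ in this borderline case. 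This exhausts all possibilities and establishes $\mathcal{E} \leq 0$. Geometrically the equality case corresponds exactly to the four points $p_1,\dots,p_4$ being concircular in $\operatorname{Im} H$, consistent with the remark preceding the lemma that $\hat q$ is real precisely when the quadrilateral vertices lie on a common circle.
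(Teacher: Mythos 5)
Your proof is correct and takes essentially the same route as the paper's: both identify $\mathcal{E}$ with the Gram determinant $\det(\langle P_i,P_j\rangle)$ of the four lightlike vectors and read off the sign from the Lorentzian signature of their span, factoring the Gram matrix as $A^t\,\mathrm{diag}(1,1,1,-1)\,A$. You are in fact somewhat more careful than the paper, which tacitly assumes the span is a four-dimensional nondegenerate Minkowski subspace, whereas you explicitly dispose of the linearly dependent and degenerate cases (where $\mathcal{E}=0$).
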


\begin{proof}
Because the $P_j$ all lie in the light cone, 
$\text{span}\{P_1,P_2,P_3,P_4\}$ is a Minkowski space (i.e. the 
induced metric on this vector subspace is not positive definite).  
Therefore, we can choose a basis $e_1,e_2,e_3,e_4$ of this space 
so that 
\[ ||e_1||^2=||e_2||^2=||e_3||^2=-||e_4||^2=1 \;\; \text{and} \;\; 
\langle e_i,e_j \rangle = 0 \] for $i \neq j$.  Writing $P_j = 
a_{1j} e_1 + a_{2j} e_2 + a_{3j} e_3 + a_{4j} e_4$ in terms 
of the basis $e_1,e_2,e_3,e_4$, we have that 
\[ \mathcal{E} = \det (\langle P_i,P_j \rangle_{i,j=1}^4) = 
\]\[ = \det \left( \begin{pmatrix}
a_{11} & a_{12} & a_{13} & a_{14} \\ 
a_{21} & a_{22} & a_{23} & a_{24} \\ 
a_{31} & a_{32} & a_{33} & a_{34} \\ 
a_{41} & a_{42} & a_{43} & a_{44} 
\end{pmatrix}^t
\begin{pmatrix}
1 & 0 & 0 & 0 \\ 
0 & 1 & 0 & 0 \\ 
0 & 0 & 1 & 0 \\ 
0 & 0 & 0 & -1
\end{pmatrix}
\begin{pmatrix}
a_{11} & a_{12} & a_{13} & a_{14} \\ 
a_{21} & a_{22} & a_{23} & a_{24} \\ 
a_{31} & a_{32} & a_{33} & a_{34} \\ 
a_{41} & a_{42} & a_{43} & a_{44} 
\end{pmatrix} \right) 
\; . \] The lemma follows.  
\end{proof}

Now let us assume that for every quadrilateral with vertices 
$p,q,r,s$, the image points $\ef_p,\ef_q,\ef_r,\ef_s$ are 
concircular, with corresponding $F_p,F_q,F_r,F_s \in M_\kappa$.  
This makes the cross ratios all real-valued.  
In fact, once the cross ratio is real, then the value $q$ of 
the cross ratio, along with the values of $F_p$ and 
$F_q$ and $F_s$, determine that $F_r$ is 
\begin{equation}\label{eqn:circleparam} 
F_r = \alpha \left( F_p + \frac{1}{\langle F_q ,F_s \rangle} 
\{      (q-1) \langle F_p , F_s \rangle F_q + 
   (q^{-1}-1) \langle F_p , F_q \rangle F_s 
\} \right) \end{equation} 
for some real scalar $\alpha$, by 
Lemma \ref{lem:generalformulaforcrossratio}.  
In this way, the cross ratio gives a parametrization of the 
circle containing $\ef_p$, $\ef_q$ and $\ef_s$.  

\begin{remark}\label{rem:4ptsincircle}
If $\ef_p$, $\ef_q$, $\ef_r$ and $\ef_s$ all 
lie in the circle determined by the intersection of two distinct 
spheres $\tilde{\mathcal{S}}_1$ and $\tilde{\mathcal{S}}_2$ 
given by spacelike vectors $\mathcal{S}_1$ 
and $\mathcal{S}_2$, see \eqref{eq:S-tilde-sphere}, then 
$\ef_p, \ef_q, \ef_r, \ef_s \in \tilde{\mathcal{S}}_1 \cap 
\tilde{\mathcal{S}}_2$, or equivalently, 
\[ F_p, F_q, F_r, F_s \perp \text{span}\{ \mathcal{S}_1, 
\mathcal{S}_2 \} \; . \]
This implies that $F_p$, $F_q$, $F_r$ and $F_s$ all lie in a 
$3$-dimensional space.   
\end{remark}

Furthermore, we consider the following additional condition: 

\begin{defn}\label{defn:crossratiofactorizingfct}
When, for every quadrilateral, we can write the cross ratio as 
\[ q_{pqrs} = a_{pq}/a_{ps} \in \mathbb{R} \] so that the 
cross ratio factorizing function $a_{**}$ 
defined on the edges of $\ef$ satisfies 
\[ a_{pq}=a_{sr}  \in \mathbb{R} \; \; \text{and} \;\; a_{ps}=a_{qr} 
\in \mathbb{R} \; , \] 
then we say that $\ef$ is {\em discrete isothermic}.  
\end{defn}

Note that the $a_{**}$ are symmetric, 
i.e. $a_{pq}=a_{qp}$ for any adjacent $p$ and $q$.  

Definition \ref{defn:crossratiofactorizingfct} is equivalent to the 
Toda equation 
\[ q_{(m-1,n-1)}q_{(m,n)} = q_{(m,n-1)}q_{(m-1,n)} \] 
being satisfied, where the cross ratios 
\[ q_{(\hat m, \hat n)} := 
q_{(\hat m, \hat n),(\hat m+1, \hat n),(\hat m+1, \hat n+1),(\hat m, 
\hat n+1)} \] are all real.  

\begin{figure}[phbt]
\begin{center}
\includegraphics[width=0.6\linewidth]{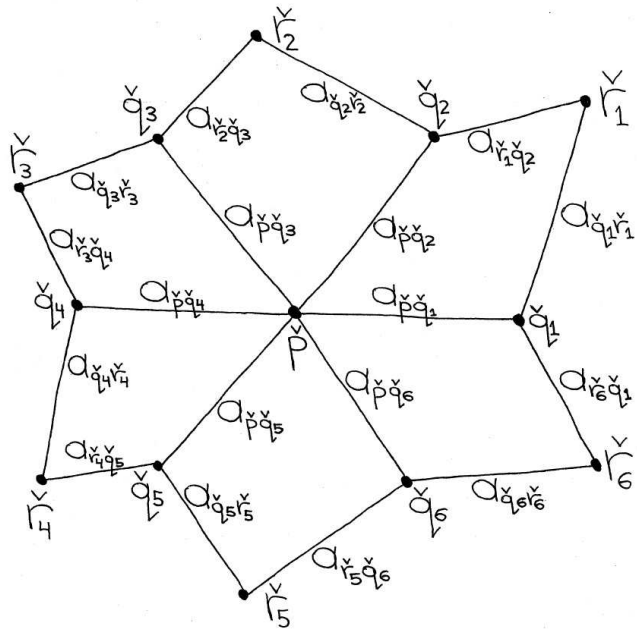}
\end{center}
\caption{Although we will not consider umbilics on 
discrete surfaces in this text, it is possible to define 
umbilics on discrete isothermic surfaces, 
as follows: We now do not consider the discrete surface 
as a map from a domain in the 
integer lattice $\mathbb{Z}^2$ (it will not be).  Let $\breve p$ be 
a vertex of a discrete surface consisting entirely of quadrilateral 
faces, with each face having concircular vertices.  
Thus all cross ratios on the faces are real, and we have 
a real cross ratio factorizing function $a$.  Suppose that 
$\breve p$ is a vertex of some even number of faces, and 
at least six faces, of the surface.  If the cross ratio 
factorizing condition in Definition 
\ref{defn:crossratiofactorizingfct} is satisfied, then we have a 
discrete isothermic surface with umbilic point $\breve p$.  
For example, if $\breve p$ has six adjacent faces as in the 
figure here, then we require that 
$a_{{\breve q}_{j-1} {\breve r}_{j-1}} = 
a_{{\breve p} {\breve q}_{j}} = 
a_{{\breve r}_{j} {\breve q}_{j+1}}$ for $j = 2,3,4,5$, 
and also $a_{{\breve q}_{6} {\breve r}_{6}} = 
a_{{\breve p} {\breve q}_{1}} = 
a_{{\breve r}_{1} {\breve q}_{2}}$ and 
$a_{{\breve q}_{5} {\breve r}_{5}} = 
a_{{\breve p} {\breve q}_{6}} = 
a_{{\breve r}_{6} {\breve q}_{1}}$.  Furthermore, this 
surface with an umbilic is then also discrete CMC if there exists a 
linear conserved quantity as in Definition \ref{defn:disclqcCMC}.  
}
\label{fig:discreteumbilics}
\end{figure}

\subsection{Isothermicity from the perspective of smooth surfaces}
One viewpoint on what a "discrete isothermic surface" is, as 
in Definition \ref{defn:crossratiofactorizingfct}, 
is as follows: Take a smooth surface $x$.  
Give it curvature line coordinates $x=x(u,v)$, so 
$x_u \perp x_v$.  (Curvature line 
coordinates always exist away from umbilics.)  Then the 
first and second fundamental forms are 
\[ I = \begin{pmatrix} g_{11} & 0 \\ 0 & g_{22} \end{pmatrix} \; , \;\;\; 
II = \begin{pmatrix} b_{11} & 0 \\ 0 & b_{22} \end{pmatrix} \; . \]  
One can always stretch the coordinates, so that $x = x(u,v) = 
x(\tilde u(u),\tilde v(v))$ for any monotonic functions 
$\tilde u$ depending 
only on $u$, and $\tilde v$ depending only on $v$.  
Note that $\langle x_{\tilde u} , x_{\tilde v} \rangle = 0$, and 
$x_{\tilde u \tilde v} = x_{uv} \tfrac{du}{d\tilde u} 
\tfrac{dv}{d\tilde v}$ implies $\langle x_{\tilde u \tilde v} , 
\vec N \rangle = 0$, so $(\tilde u, \tilde v)$ are also 
curvature line coordinates.  The surface is then isothermic 
if and only if there exist $\tilde u$, $\tilde v$ such that the 
metric becomes conformal, i.e. $\langle x_{\tilde u} , 
x_{\tilde u} \rangle = \langle x_{\tilde v} , 
x_{\tilde v} \rangle$, and this is equivalent to 
\[ \frac{g_{11}}{g_{22}} = \frac{a(u)}{b(v)} \; , \] 
where the function $a$ depends only on $u$, and $b$ 
depends only on $v$.  

Now consider the cross ratio $q_\epsilon$ of the four 
points $x(u,v)$, $x(u+\epsilon,v)$, $x(u+\epsilon,v+\epsilon)$ 
and $x(u,v+\epsilon)$.  Using that $x_u \perp x_v$ implies 
$x_u x_v^{-1} = - x_v^{-1} x_u$, we see that 
\begin{equation}\label{qepsilon} 
\lim_{\epsilon \to 0} q_\epsilon = - \frac{g_{11}}{g_{22}} \; . 
\end{equation}  So $x$ is isothermic if and only if 
\begin{equation}\label{star9point4} 
\lim_{\epsilon \to 0} q_\epsilon = - \frac{a(u)}{b(v)} \; , 
\end{equation} 
where again $a$ is some function that 
depends only on $u$, and $b$ depends only on $v$.  
This description of isothermicity does not involve any 
stretching by $\tilde u$ or $\tilde v$, which we would not 
be able to do in the discrete case anyways, and now Definition 
\ref{defn:crossratiofactorizingfct} is a natural discretization 
of \eqref{star9point4}:  The corresponding statement for discrete 
surfaces, where stretching of coordinates is no longer possible, is 
that the surface is discrete isothermic if and only if 
the cross ratio factorizing function can be chosen so that 
$a_{pq} = 
a_{rs}$ and $a_{ps}=a_{qr}$ for vertices $p,q,r,s$ (in order) about a 
given quadrilateral.  

There is another perspective on isothermicity, coming from 
a lemma proven by Bobenko and Pinkall \cite{BobPink2}: 

\begin{lemma}\label{lemmadiagonalBobPink}
Let $x(u,v)$ be a smooth surface in $\mathbb{R}^3$, and 
define the diagonal cross ratio \[ q_\epsilon^d = 
(x(u+\epsilon,v-\epsilon)-x(u-\epsilon,v-\epsilon)) 
(x(u+\epsilon,v+\epsilon)-x(u+\epsilon,v-\epsilon))^{-1} 
\;\; \times \]\[ 
(x(u-\epsilon,v+\epsilon)-x(u+\epsilon,v+\epsilon)) 
(x(u-\epsilon,v-\epsilon)-x(u-\epsilon,v+\epsilon))^{-1} \; . \] 
Then
\[ q_\epsilon^d = -1 + \mathcal{O}(\epsilon) \] if and only if 
$(u,v)$ are conformal coordinates for $x$, and 
\[ q_\epsilon^d = -1 + \mathcal{O}(\epsilon^2) \] if and only if 
$(u,v)$ are isothermic coordinates for $x$.  
\end{lemma}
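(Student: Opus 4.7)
The plan is to Taylor-expand $q_\epsilon^d$ about $\epsilon = 0$ to first order in $\epsilon$ and read off the vanishing conditions order by order. Writing
\[
x(u+s\epsilon, v+t\epsilon) = x + s\epsilon\, x_u + t\epsilon\, x_v + \tfrac{\epsilon^2}{2}\bigl(x_{uu} + 2st\, x_{uv} + x_{vv}\bigr) + O(\epsilon^3)
\]
for $s,t \in \{\pm 1\}$, the four vectorial differences appearing in $q_\epsilon^d$ equal $2\epsilon(x_u - \epsilon x_{uv})$, $2\epsilon(x_v + \epsilon x_{uv})$, $-2\epsilon(x_u + \epsilon x_{uv})$, $-2\epsilon(x_v - \epsilon x_{uv})$ modulo $O(\epsilon^3)$. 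The scalar prefactors cancel in the product, and applying the quaternionic identity $(A + \epsilon B)^{-1} = A^{-1} - \epsilon A^{-1} B A^{-1} + O(\epsilon^2)$ gives
\[
q_\epsilon^d = P^2 + \epsilon\,[P, R] + O(\epsilon^2),
\qquad P := x_u x_v^{-1}, \quad R := P\, x_{uv} x_v^{-1} + x_{uv} x_v^{-1}.
\]

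For the first equivalence, I would use $x_v^{-1} = -x_v/|x_v|^2$ and the quaternion identity $ab = -\langle a, b\rangle + a \times b$ for $a, b \in \Im H$ to write $P = g_{12}/g_{22} - (x_u \times x_v)/g_{22}$, so that the real part of $P$ is $g_{12}/g_{22}$ and $|P|^2 = g_{11}/g_{22}$. A quaternion $P$ satisfies $P^2 = -1$ exactly when it is purely imaginary of unit norm, equivalently $g_{12} = 0$ and $g_{11} = g_{22}$, which is the conformal condition. Hence $q_\epsilon^d = -1 + O(\epsilon)$ if and only if $(u,v)$ is conformal.

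Assume now that $(u,v)$ is conformal, with $\rho := |x_u|^2 = |x_v|^2$, and set $\hat u := x_u/\sqrt\rho$, $\hat v := x_v/\sqrt\rho$. The orthonormal frame $\{\hat u, \hat v, N\}$ of $\mathbb{R}^3 \cong \Im H$ satisfies the quaternion products $\hat u \hat v = N$, $N \hat u = \hat v$, $N \hat v = -\hat u$, $\hat u N = -\hat v$, $\hat v N = \hat u$; in particular $x_u x_v = \rho N$, so $P = -N$. Decomposing $x_{uv} = A x_u + B x_v + C N$ along the frame and substituting, a direct computation gives
\[
R = (B - A) - (A + B)\, N + \frac{C}{\rho}(x_u - x_v),
\]
whose first two summands commute with $N$. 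Using the product rules above,
\[
[P, R] = -[N, R] = -\frac{C}{\rho}\bigl(N(x_u - x_v) - (x_u - x_v)N\bigr) = -\frac{2C}{\rho}(x_u + x_v).
\]
Since $x_u + x_v \neq 0$, the vanishing $[P, R] = 0$ is equivalent to $C = 0$, i.e.\ to $\langle x_{uv}, N\rangle = 0$; combined with conformality this is precisely the isothermic condition.

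The main obstacle is purely notational: the quaternionic non-commutativity must be tracked carefully through the expansion of the inverse and through the commutator $[P, R]$. It is essential to establish the identification $P = -N$ together with the six quaternionic products among $\hat u, \hat v, N$ before attempting to simplify $R$, since then the scalar-valued coefficients $B - A$ and $-(A+B)$ visibly drop out of the bracket, leaving only the term involving the normal component $C$.
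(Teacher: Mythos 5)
Your proof is correct and follows essentially the same route as the paper's: a Taylor expansion of the four edge vectors, identification of the zeroth-order term $x_u x_v^{-1} x_u x_v^{-1}$ with the conformality condition, and vanishing of the first-order term via the decomposition of $x_{uv}$ in the tangent frame. Your packaging of the first-order term as the commutator $[P,R]$ with $P=-N$ in an orthonormal quaternionic frame is a clean reorganization that, as a bonus, makes the converse implications (which the paper's proof states only in the ``if'' direction) explicit.
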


The superscript "$d$" in $q_\epsilon^d$ stands for 
"diagonal", because we are using diagonal elements to 
define this cross ratio, unlike with the previous $q_\epsilon$.  
Also, $\mathcal{O}(\epsilon^k)$ denotes any function $f = 
f(\epsilon)$ such that the limit of $f(\epsilon) \epsilon^{-k}$, 
as $\epsilon$ approaches $0$, exists and is finite.  

\begin{proof}
Without loss of generality, we may assume $x(u,v)=\vec{0}$, 
and then for $\rho_u, \rho_v \in \{ \pm 1 \}$, we have 
\[ x(u + \rho_u \epsilon,v + \rho_v \epsilon) = 
\epsilon \rho_u x_u + \epsilon \rho_v x_v + 
\tfrac{1}{2} \epsilon^2 (x_{uu} + x_{vv}+2 \rho_u \rho_v x_{uv}) 
+ \mathcal{O}(\epsilon^3) \; , \] 
so 
\[ q_\epsilon^d = x_u x_v^{-1} x_u x_v^{-1} + \]\[ \epsilon 
(x_u x_v^{-1} x_{uv} x_v^{-1} + 
x_u x_v^{-1} x_u x_v^{-1} x_{uv} x_v^{-1} - 
x_{uv} x_v^{-1} x_u x_v^{-1} - x_u x_v^{-1} 
x_{uv} x_v^{-1} x_u x_v^{-1}) + \mathcal{O}(\epsilon^2) \; . \] 
If the coordinates are conformal, then $x_u x_v^{-1} x_u x_v^{-1} 
= -1$, and we have 
\[ q_\epsilon^d = -1 + \epsilon x_u^{-4} 
(x_u x_v x_{uv} (x_u + x_v) + x_u^2 x_{uv} (x_u-x_v)) 
+ \mathcal{O}(\epsilon^2) \; . \] 
Now, if the coordinates are isothermic, then $b_{12}=0$, and 
so there exist scalar functions $A$ and $B$ so that 
\[ x_{uv} = A x_u + B x_v \; . \]  From this it follows 
that $q_\epsilon^d = -1 + \epsilon \cdot 0 + 
\mathcal{O}(\epsilon^2)$.  
\end{proof}

This lemma leads to the following definition 
for discrete isothermic surfaces in the narrow sense: 
$\ef$ is discrete isothermic if \[ q_{pqrs} = -1 \] 
for all quadrilaterals, with vertices $\ef_p,\ef_q,\ef_r,\ef_s$.  

However, with this definition, transformations, such as the 
Calapso transform, of isothermic surfaces will not remain 
isothermic.  (Lemma \ref{lem:changeof-a-underCalapso} 
demonstrates this.)  
Hence the broader definition given in Definition 
\ref{defn:crossratiofactorizingfct} has been found to be more 
suitable.  

One could think of a discrete surface $x$ with cross ratios 
exactly $-1$ as being "isothermically parametrized", while a discrete 
surface $\ef$ with cross ratios satisfying Definition 
\ref{defn:crossratiofactorizingfct} is one that could 
have its coordinates stretched so that it becomes isothermic, 
were it a smooth surface.  

\subsection{Moutard lifts for smooth surfaces} 
Given a smooth immersion $x(u,v)$ so that 
$x_u \perp x_v$, the light cone lift 
\[ X = X(u,v) = \begin{pmatrix} x & -x^2 \\ 1 & -x 
\end{pmatrix} \in PL^4 \]
could also be represented by $\alpha \cdot X$ for any choice 
of nonzero real-valued function $\alpha=\alpha(u,v)$.  
If we choose $\alpha$ so that 
\begin{equation}\label{moutardlifteqn} 
\partial_u \partial_v (\alpha X) || X \; , 
\end{equation} or equivalently
$\alpha_u x_v + \alpha_v x_u + \alpha x_{uv} = 0$, 
then we say that $\alpha X$ is a {\em Moutard lift}.  

\begin{lemma}\label{lem:moutardliftsalwaysexist}
Moutard lifts always exist for any smooth isothermic immersion.  
\end{lemma}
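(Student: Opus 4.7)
The plan is to work directly with isothermic coordinates $(u,v)$ on $x$, in which $x_u \perp x_v$, $|x_u| = |x_v|$, and the second fundamental form has $b_{12} = 0$. I will first translate the condition $\partial_u \partial_v(\alpha X) \parallel X$ into a scalar PDE for $\alpha$, and then exhibit an explicit solution.

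First I would verify that $\partial_u \partial_v(\alpha X) \parallel X$ is equivalent to the vector equation
\[ \alpha_u x_v + \alpha_v x_u + \alpha x_{uv} = 0 \]
in $\Im H$. Writing $X = \begin{pmatrix} x & -x^2 \\ 1 & -x \end{pmatrix}$ and expanding $(\alpha X)_{uv} = \alpha_{uv} X + \alpha_u X_v + \alpha_v X_u + \alpha X_{uv}$, the $(2,1)$-entry of $X$ is $1$ while the $(2,1)$-entries of $X_u, X_v, X_{uv}$ all vanish, so the proportionality constant must be $\alpha_{uv}$ and hence $\alpha_u X_v + \alpha_v X_u + \alpha X_{uv} = 0$. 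The $(1,1)$-entry of this identity is the displayed equation above, while the $(1,2)$-entry reduces to $\alpha (x_u x_v + x_v x_u) = 0$, which holds automatically because $x_u \perp x_v$ in $\Im H$ forces $x_u x_v + x_v x_u = -2\langle x_u, x_v\rangle = 0$. So the displayed equation is the only condition to impose.

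Next, I would use isothermicity to solve this equation. Write the conformal factor as $|x_u|^2 = |x_v|^2 = e^{2w}$ for some real function $w=w(u,v)$. From $b_{12} = 0$ (i.e., $x_{uv} \perp N$), $x_{uv}$ lies in $\mathrm{span}\{x_u, x_v\}$, and a direct Christoffel-symbol calculation in conformal coordinates gives
\[ x_{uv} = w_v\, x_u + w_u\, x_v. \]
Substituting into the displayed equation and using linear independence of $x_u$ and $x_v$ splits it into the two scalar equations
\[ \alpha_v + \alpha\, w_v = 0, \qquad \alpha_u + \alpha\, w_u = 0, \]
equivalently $(\log\alpha)_u = -w_u$ and $(\log\alpha)_v = -w_v$.

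The compatibility condition for this overdetermined first-order system is $(-w_u)_v = (-w_v)_u$, which is automatic by equality of mixed partials. Therefore the system admits a global solution on any simply-connected patch of the domain, given up to a multiplicative constant by $\alpha = C e^{-w} = C/|x_u|$. This proves the existence of the Moutard lift. There is no genuine obstacle; the essential content of the lemma is that isothermicity gives exactly the identification $x_{uv} = w_v x_u + w_u x_v$ which in turn makes the Moutard system a trivially compatible pair of $d\log$ equations.
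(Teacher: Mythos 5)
Your proof is correct and follows essentially the same route as the paper's: both reduce the Moutard condition to $\alpha_u x_v + \alpha_v x_u + \alpha x_{uv}=0$, use isothermicity to write $x_{uv}=A x_u + B x_v$ with $A=\hat u_v$, $B=\hat u_u$ (your $w_v$, $w_u$), and observe that the resulting first-order system for $\alpha$ is compatible because mixed partials commute. Your added verification of the matrix-to-scalar equivalence and the explicit solution $\alpha = C e^{-w}$ are nice touches (the latter is exactly what the paper records in Remark \ref{rem:anotherproof}), but they do not change the argument.
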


\begin{proof}
Let $x(u,v)$ be a smooth isothermic immersion with isothermic 
coordinates $u,v$.  As we saw in the proofs of Lemma 
\ref{lem:Calapexist} and Lemma 
\ref{lemmadiagonalBobPink}, there 
exist real-valued functions $A,B$ so that 
\[
  x_{uv}=A x_u+Bx_v \; .
 \]
Taking the inner product of this with $x_u$ and with 
$x_v$, and using that $\langle x_u , x_u \rangle = 
\langle x_v , x_v \rangle$ and $\langle x_u , x_v \rangle=0$, 
we find that 
\[
 A=\partial_v (\tfrac{1}{2} \log \langle x_u , x_u \rangle)
 \; , \;\;\; 
 B=\partial_u (\tfrac{1}{2} \log \langle x_u , x_u \rangle) \; , 
\] and thus it follows that $A_u = B_v$.  
The existence of a solution $\alpha$ to the equation 
$\alpha_u x_v + \alpha_v x_u + \alpha x_{uv} = 0$ is equivalent 
to solving the system 
\[ \alpha_u = - \alpha B \; , \;\;\; \alpha_v = - \alpha A \; , \]  
because $x_{uv} = A x_u + B x_v$.  
The compatibility condition of this system 
is $A_u = B_v$, seen as follows: 
\[ \alpha_{uv} = \alpha_{vu} \] 
if and only if $(-\alpha B)_v = (-\alpha A)_u$, if and only if 
\[ \alpha_v B + \alpha B_v = \alpha_u A + \alpha A_u \; , \]  
if and only if $(-\alpha A) B + \alpha B_v = (-\alpha B) A + 
\alpha A_u$, if and only if 
\[ B_v = A_u \: . \]  This proves the lemma.  
\end{proof}

\begin{remark}
\label{rem:anotherproof}
Here is a hint of another way to prove Lemma 
\ref{lem:moutardliftsalwaysexist}: $x$ has isothermic coordinates, 
and so $e^{2 \hat u} = \langle x_u , x_u \rangle = 
\langle x_v , x_v \rangle$ for some real-valued function 
$\hat u=\hat u(u,v)$, which 
implies $2 \hat u_u e^{2 \hat u} = 2 \langle x_{uv} , 
x_v \rangle$, so 
$x_{uv} = *_1 \cdot x_u + \hat u_u x_v + *_2 \cdot \vec N$ 
for some functions $*_j$.  Similarly, 
now taking the derivative of $e^{2 \hat u}$ with respect 
to $v$, we have 
$x_{uv} = \hat u_v x_u + \hat u_u x_v + *_2 \cdot 
\vec N$.  Then $\langle x_{uv} , \vec N \rangle = - 
\langle x_u , \vec N_v \rangle 
= - \langle x_u , *_3 \cdot x_v \rangle = 0$ implies 
$x_{uv} = \hat u_v x_u + \hat u_u x_v$.  Then, taking the lift 
\[ X_1 = X_1(u,v) = \begin{pmatrix}
x & -x^2 \\ 1 & -x \end{pmatrix} \] of $x = x(u,v)$ into $L^4$, 
we have $(X_1)_{uv} = \hat u_v (X_1)_u + 
\hat u_u (X_1)_v$.  We then rescale $X_1$ to $X_2 := e^{-\hat u} 
X_1$.  A computation gives $(X_2)_{uv} = 
\lambda \cdot X_2$ with $\lambda = \hat u_u \hat u_v - 
\hat u_{uv}$.  $(X_2)_{uv} = 
\lambda X_2$ is the condition for a Moutard lift.  
This argument would still hold 
if $(u,v)$ were just curvature line coordinates, but not 
necessarily isothermic coordinates, for 
the isothermic surface $x$.  
In other words, even if just $e^{2 \hat u} = \langle x_u , 
x_u \rangle \cdot \alpha 
= \langle x_v , x_v \rangle \cdot \beta$, with functions 
$\alpha$ and $\beta$ such that $\alpha_v = 
\beta_u = 0$, this is enough to say there 
exists a Moutard lift $X_2$, i.e. $(X_2)_{uv} = \lambda X_2$.  
\end{remark}

\subsection{Moutard lifts for discrete surfaces} 
Recall that for each point $\ef \in \text{Im}H$, there is a 
unique lift $F \in M_\kappa$ (not necessarily Moutard).  
However, as noted in Remark \ref{rem:scaling} and in the previous 
section, we can freely multiply 
$F$ by any nonzero real scalar, giving a 
scalar freedom in the choice 
of lift.  Here, in the case of discrete surfaces, 
we describe particular choices for the lift $F$ 
that are convenient for computations, again 
called Moutard lifts, 
analogous to Moutard lifts for smooth surfaces.  

\begin{defn}\label{def:discrete-moutard}
We say that $F$ is a {\em Moutard lift} if, for 
the four vertices $p,q,r,s$ listed in 
counterclockwise order about any quadrilateral, we have 
\[ 
  (F_r-F_p) || (F_q - F_s) \; , 
\]
meaning that 
\begin{equation}\label{Fparallelity}
F_r -F_p = \mu (F_q-F_s)
\end{equation} for some real scalar $\mu$.  
\end{defn}

The discrete Moutard equation as in Definition 
\ref{def:discrete-moutard} can be justified as follows: 
consider a quadrilateral with lifts $F_p$, $F_q$, $F_r$ and 
$F_s$ at the vertices.  The discrete second derivative 
of $F$ (analogous to $X_{uv}$ in the smooth case) is 
\[ (F_r - F_s) - (F_q - F_p) \; , \] so the Moutard equation, 
i.e. the discrete version of Equation \eqref{moutardlifteqn}, 
can naturally be considered to be 
\[ F_r - F_s - F_q + F_p = \lambda_1 \tfrac{1}{4} (F_p+
F_q+F_r+F_s) \; , \;\;\; \lambda_1 \in \mathbb{R} \; , \] and the 
$\tfrac{1}{4}$ can be absorbed into the $\lambda_1$ as 
$\lambda_2 = \tfrac{1}{4} \lambda_1$.  
Then \[ F_p+F_r = \lambda (F_q+F_s) \; , \] 
where we have defined $\lambda$ by $\lambda = 
\tfrac{1+\lambda_2}{1-\lambda_2}$, i.e. 
$(F_p+F_r)||(F_q+F_s)$.  Since 
$F_*$ is only projectively defined and thus signs of 
any of the $F_*$ 
can always be switched (i.e. $F_* \to -F_*$), 
we could also write \[ (F_r-F_p) || (F_q-F_s) \] as 
in Definition \ref{def:discrete-moutard}.  

\begin{remark}
By a consideration similar to the one just above, we have 
discrete conjugate nets: 
A {\em conjugate net} for a smooth surface $x$ in 
$\mathbb{R}^3$ is 
coordinates so that the second fundamental form is diagonal 
(not necessarily conformal, nor necessarily curvature line 
coordinates), i.e. $x_{uv} \in \text{span}\{ x_u,x_v \}$.  
This last condition would be 
$(\ef_r-\ef_s)-(\ef_q-\ef_p) \in \text{span}\{ 
\ef_q-\ef_p,\ef_s-\ef_p \}$ 
for a discrete surface in $\mathbb{R}^3$, implying 
$\ef_r-\ef_p \in \text{span}\{ \ef_q-\ef_p,\ef_s-\ef_p \}$, 
and so $\ef_p$, $\ef_q$, $\ef_r$ and $\ef_s$ are coplanar.  
This is why we define {\em discrete conjugate nets} to be 
those discrete surfaces that have planar faces.  
\end{remark}

\begin{lemma}\label{lem:Moutardliftproperty}
For a Moutard lift $F$ of a discrete isothermic surface 
$\ef$, the cross ratios $q_{pqrs} = 
\frac{a_{pq}}{a_{ps}}$ satisfy 
\[ q_{pqrs} = \frac{a_{pq}}{a_{ps}} = 
 \frac{\langle F_p,F_q \rangle}{\langle F_p,F_s \rangle} \; . \]
\end{lemma}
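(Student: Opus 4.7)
The plan is to extract the claim by computing all six pairwise inner products $s_{ij}:=\langle F_i,F_j\rangle$ among $F_p,F_q,F_r,F_s$ in terms of just three of them, then substitute into the general cross--ratio formula of Lemma \ref{lem:generalformulaforcrossratio}, and finally match with the isothermic factorization $q_{pqrs}=a_{pq}/a_{ps}$.

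First I would exploit the Moutard relation $F_r-F_p=\mu(F_q-F_s)$ together with the light--cone conditions $\langle F_i,F_i\rangle=0$. Expanding $\langle F_r,F_r\rangle=0$ gives a quadratic in $\mu$ with an obvious root $\mu=0$; dividing that out yields
\[
\mu=\frac{s_{pq}-s_{ps}}{s_{qs}}.
\]
Taking the inner product of the Moutard equation with $F_q$, $F_s$, and $F_p$ respectively, and repeatedly using light--cone vanishing, produces the three clean identities
\[
s_{qr}=s_{ps},\qquad s_{rs}=s_{pq},\qquad s_{pr}s_{qs}=(s_{pq}-s_{ps})^{2}.
\]
These are the workhorse formulas.

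Second, I would note that the four vertices lie on a common circle (each face of $\ef$ has concircular vertices), so by Remark \ref{rem:4ptsincircle} the four lifts span only a three--dimensional subspace of $\mathbb{R}^{4,1}$; hence the Gram determinant $\mathcal{E}=\det(s_{ij})$ vanishes. Consequently the $\pm\sqrt{\mathcal{E}}$ branch in Lemma \ref{lem:generalformulaforcrossratio} disappears, and the cross ratio reduces to
\[
\hat q_{pqrs}=\frac{s_{pq}s_{rs}-s_{pr}s_{qs}+s_{ps}s_{qr}}{2\,s_{ps}s_{qr}}.
\]
Substituting the three Moutard identities, the numerator collapses to $s_{pq}^{2}-(s_{pq}-s_{ps})^{2}+s_{ps}^{2}=2\,s_{pq}s_{ps}$, while the denominator is $2\,s_{ps}^{2}$. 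Therefore $\hat q_{pqrs}=s_{pq}/s_{ps}=\langle F_p,F_q\rangle/\langle F_p,F_s\rangle$.

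Third, since $\ef$ is discrete isothermic, Definition \ref{defn:crossratiofactorizingfct} gives $q_{pqrs}=a_{pq}/a_{ps}$, and because this cross ratio is real it coincides with $\hat q_{pqrs}$. Combining yields the asserted equality of ratios. The only subtle point is that the $F_i$ are a priori only projectively defined, but the ratio $s_{pq}/s_{ps}$ is invariant under independent real scalings of the $F_i$, so the conclusion is unambiguous; the Moutard normalization is used merely to bring the three key identities into the clean form above. The main obstacle I anticipate is bookkeeping: one has to keep track of which ordering of vertices goes with which subscript in the symmetric formula for $\hat q$, but once the Moutard identities are in hand the computation is essentially mechanical.
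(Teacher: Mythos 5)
Your proposal is correct and follows essentially the same route as the paper: both derive the identities $\langle F_q,F_r\rangle=\langle F_p,F_s\rangle$, $\langle F_r,F_s\rangle=\langle F_p,F_q\rangle$ and $\langle F_p,F_r\rangle\langle F_q,F_s\rangle=(\langle F_p,F_q\rangle-\langle F_p,F_s\rangle)^2$ from the Moutard relation together with the light-cone conditions, and then substitute into Lemma \ref{lem:generalformulaforcrossratio} with $\mathcal{E}=0$. The only cosmetic difference is that you obtain the identities by pairing the Moutard equation directly with $F_p$, $F_q$, $F_s$ (after solving for $\mu$), whereas the paper phrases the same computation via the perpendicularity statements $(F_r\pm F_p)\perp(F_q\mp F_s)$; your explicit justification of $\mathcal{E}=0$ via concircularity is a welcome addition the paper leaves implicit.
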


\begin{proof}
For Moutard lifts, since 
$||F_r||=||F_p||=0$, we have \[
	 0 = \langle F_r+F_p,F_r-F_p \rangle = 
\mu \langle F_r+F_p,F_q-F_s \rangle     
\; , 	     \]
and so $(F_r+F_p) \perp (F_q-F_s)$.  Similarly, 
$(F_r-F_p) \perp (F_q+F_s)$.  So 
\[
  \langle F_p,F_r \rangle \cdot \langle F_q,F_s \rangle
= 
  \langle F_p,F_r-F_p \rangle \cdot \langle F_q-F_s,F_s \rangle
= \]\[
  \langle F_p,\mu (F_q-F_s) \rangle \cdot \langle 
  \mu^{-1}(F_r-F_p),F_s \rangle = 
  \langle F_p,F_q-F_s \rangle \cdot \langle F_r-F_p,F_s \rangle
= \]\[
  (\langle F_p,F_q \rangle - \langle F_p,F_s \rangle) \cdot 
  (\langle F_r,F_s \rangle - \langle F_p,F_s \rangle)
= 
  (\langle F_p,F_q \rangle - \langle F_p,F_s \rangle)^2 \; , 
\] since 
$\langle F_p,F_q \rangle = \langle F_r,F_s \rangle$, by 
$(F_r+\epsilon F_p) \perp (F_q-\epsilon F_s)$ for 
$\epsilon = \pm 1$.  Also, 
$\langle F_q,F_r \rangle = \langle F_p,F_s \rangle$.  
By Lemma \ref{lem:generalformulaforcrossratio} with the 
$p_*$ there being the projections of the $F_*$ 
here to $\text{Im}H$, 
and using that $\mathcal{E} = 0$, we have 
proven the lemma.  
\end{proof}

\begin{remark}\label{Moutardliftnotunique}
The Moutard lift is not completely unique, and it has more than 
just the freedom of a constant scalar multiple.  For example, 
if points $p$ corresponds to $(m,n)$ in the 
domain lattice in $\mathbb{Z}^2$, 
we could change a Moutard lift $F_p$ 
to $\alpha F_p$ when $m+n$ is even and $\beta F_p$ 
when $m+n$ is odd, 
for any nonzero constants $\alpha, \beta \in 
\mathbb{R}$, and this gives another Moutard lift.  
\end{remark}

Lemma \ref{lem:Moutardliftproperty} and 
Remark \ref{Moutardliftnotunique} imply that, by 
multiplying all $F_p$ by an appropriate constant real 
scalar, we may assume 
\begin{equation}\label{eqn:Moutard} 
F_p F_q + F_q F_p = a_{pq} \cdot I \end{equation} on all edges. 
Furthermore, any lift satisfying \eqref{eqn:Moutard} is Moutard, 
and all Moutard lifts satisfy \eqref{eqn:Moutard} up to the 
freedom given in Remark \ref{Moutardliftnotunique}.  

\begin{remark}\label{rem:matrixtoscalar}
Because $F_pF_q+F_qF_p$ is a scalar multiple of the identity, we
sometimes ignore that it is a matrix, and simply consider it as that
scalar $a_{pq}$.  
\end{remark}

\begin{lemma}
Let $\ef \in \mathbb{R}^3 \approx 
\text{Im} H$ be a discrete surface with 
concircular quadrilaterals.  Then there exists a Moutard 
lift if and only if $\ef$ is isothermic.  
In particular, we can then choose the Moutard lift so that 
Equation \eqref{eqn:Moutard} holds.  
\end{lemma}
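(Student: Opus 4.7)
The plan is to prove both directions using the formula for the cross ratio of a concircular quadrilateral in terms of Minkowski inner products of light-cone lifts. For the forward direction, assume $F$ is a Moutard lift. Revisiting the proof of Lemma \ref{lem:Moutardliftproperty}, the only inputs used there are the Moutard relation \eqref{Fparallelity} and the null condition $\|F_*\|^2=0$; from these one extracts the identities $\langle F_p,F_q\rangle=\langle F_r,F_s\rangle$ and $\langle F_p,F_s\rangle=\langle F_q,F_r\rangle$, together with $q_{pqrs}=\langle F_p,F_q\rangle/\langle F_p,F_s\rangle\in\mathbb{R}$. Setting $a_{pq}:=\langle F_p,F_q\rangle$ on each edge therefore gives a real cross-ratio factorizing function with the symmetries $a_{pq}=a_{rs}$ and $a_{ps}=a_{qr}$, so $\ef$ is discrete isothermic in the sense of Definition \ref{defn:crossratiofactorizingfct}.

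For the reverse direction, start from the canonical lifts $F_p^{(0)}\in L^4$ given by \eqref{star8point4} and look for real nonzero scalars $\lambda_p$, one per vertex, such that $F_p:=\lambda_p F_p^{(0)}$ satisfies \eqref{eqn:Moutard}. Since $F_pF_q+F_qF_p=-2\langle F_p,F_q\rangle\,I$ by \eqref{star8point2}, this reduces to the edge condition $\lambda_p\lambda_q=\mu_{pq}$ where
\begin{equation*}
\mu_{pq}\;:=\;-\,\frac{a_{pq}}{2\,\langle F_p^{(0)},F_q^{(0)}\rangle}.
\end{equation*}
On a simply-connected lattice domain, such a multiplicative edge cocycle can be integrated to a vertex function $\lambda_p$ exactly when around every face $pqrs$
\begin{equation*}
\mu_{pq}\mu_{rs}=\mu_{qr}\mu_{ps}\qquad\text{and}\qquad \mu_{pq}\mu_{qr}\mu_{rs}\mu_{sp}>0.
\end{equation*}
Using $a_{pq}=a_{rs}$ and $a_{ps}=a_{qr}$, the first equality reduces to the geometric identity
\begin{equation*}
q_{pqrs}^{\,2}\;=\;\frac{s_{pq}\,s_{rs}}{s_{ps}\,s_{qr}},\qquad s_{ij}:=\langle F_i^{(0)},F_j^{(0)}\rangle,
\end{equation*}
which is precisely what one obtains by squaring the concircular cross-ratio formula from Lemma \ref{lem:generalformulaforcrossratio}, because $\mathcal{E}=0$ is algebraically equivalent to $(s_{pq}s_{rs}-s_{pr}s_{qs}+s_{ps}s_{qr})^2=4\,s_{pq}s_{rs}s_{ps}s_{qr}$. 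The positivity condition is automatic: \eqref{eqn:PjPk} together with $(p-q)^2=-|p-q|^2$ for $p,q\in\Im H$ shows that each $s_{ij}<0$, while $a_{pq}a_{qr}a_{rs}a_{sp}=(a_{pq}a_{ps})^2>0$, so the product $\mu_{pq}\mu_{qr}\mu_{rs}\mu_{sp}$ around any face is positive.

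The main obstacle is the first compatibility identity; once one recognises that the concircularity condition $\mathcal{E}=0$ is exactly the perfect-square relation above, everything else is routine bookkeeping. Once the $\lambda_p$ are obtained by propagation from a basepoint (path-independence being guaranteed by face consistency on the simply-connected domain), the lifts $F_p=\lambda_pF_p^{(0)}$ are Moutard and satisfy \eqref{eqn:Moutard} by construction, which simultaneously proves existence and the ``in particular'' clause of the lemma, with the residual one-parameter freedom accounting for the rescaling allowed by Remark \ref{Moutardliftnotunique}.
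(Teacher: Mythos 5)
Your argument is correct, and the forward direction (Moutard lift $\Rightarrow$ isothermic) is essentially the paper's: both extract $\langle F_p,F_q\rangle=\langle F_r,F_s\rangle$ and $\langle F_p,F_s\rangle=\langle F_q,F_r\rangle$ from nullity plus the Moutard relation and read off the factorization of the cross ratio. Where you genuinely diverge is the converse. The paper works quadrilateral by quadrilateral: starting from a lift normalized by \eqref{eqn:Moutard} on the edges $pq$ and $ps$, it uses the circle parametrization \eqref{eqn:circleparam} to write $F_r=F_p+\tfrac{1}{2}\langle F_q,F_s\rangle^{-1}\bigl((a_{ps}-a_{pq})F_q+(a_{pq}-a_{ps})F_s\bigr)$ and checks \eqref{eqn:Moutard} on the remaining edges. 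You instead pose the whole problem as integrating the multiplicative edge data $\lambda_p\lambda_q=\mu_{pq}$ and isolate the face-consistency condition, which you correctly reduce to $q_{pqrs}^2=s_{pq}s_{rs}/(s_{ps}s_{qr})$ via $\mathcal{E}=0$; your identification of $\mathcal{E}$ with $(s_{12}s_{34}-s_{13}s_{24}+s_{14}s_{23})^2-4s_{12}s_{34}s_{14}s_{23}$ checks out, and this identity is exactly \eqref{qqq} in the paper. Your route makes the global propagation over the lattice explicit (the paper leaves that implicit) and cleanly shows where concircularity and the symmetry $a_{pq}=a_{rs}$, $a_{ps}=a_{qr}$ each enter. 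Two small remarks. First, your positivity condition $\mu_{pq}\mu_{qr}\mu_{rs}\mu_{sp}>0$ is redundant: given the face equality it equals $(\mu_{pq}\mu_{rs})^2$, and real nonzero scalars propagate with no sign obstruction anyway, so you need not verify it. Second, your construction directly delivers \eqref{eqn:Moutard} rather than the parallel-diagonal condition \eqref{Fparallelity}; you are implicitly invoking the paper's assertion, stated just before the lemma, that any lift satisfying \eqref{eqn:Moutard} is Moutard. That is legitimate to cite here, but note that the paper's explicit formula for $F_r$ exhibits $F_r-F_p$ as a visible multiple of $F_q-F_s$ and so gets the Moutard property for free; if you want your proof self-contained you should add the short verification that the null lift of $\ef_r$ with the prescribed inner products against $F_q$ and $F_s$ coincides with that formula.
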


\begin{proof}
First we assume $\ef$ is isothermic, and show that a Moutard 
lift exists.  Choose a particular quadrilateral $pqrs$, and 
assume a lift $F$ is chosen so that 
\eqref{eqn:Moutard} holds for both of the two edges $pq$ 
and $ps$ in that quadrilateral $pqrs$.  Then Equation 
\eqref{eqn:circleparam} implies we can choose $F_r$ to be 
(note that we are not requiring any condition like $F_r \in 
M_\kappa$ here) 
\[ F_r = F_p + \tfrac{1}{2} (\langle F_q,F_s \rangle)^{-1} 
((a_{ps}-a_{pq}) F_q + (a_{pq}-a_{ps}) F_s) \; . \]  
Noting that isothermicity implies $a_{pq}=a_{rs}$ and 
$a_{ps}=a_{qr}$, a computation gives that \eqref{eqn:Moutard} 
also holds on the edges $sr$ and $qr$.  It follows that a 
Moutard lift exists.  

We now assume that a Moutard lift 
$F$ exists, and then prove the surface $\ef$ is isothermic.  
Let $\ef_p$, $\ef_q$, $\ef_r$ and $\ef_s$ be the vertices 
of one quadrilateral of $\ef$ with cross ratio $q \in \mathbb{R}$.  
The assumption of concircularity implies that 
$F_r \in \text{span}\{ F_p, F_q, F_s \}$, 
by Remark \ref{rem:4ptsincircle}.  

Now recall that a point 
\[ p \in \mathbb{R}^3 \approx 
\text{Im} H \] has lift \[ (x_1,x_2,x_3,x_4,x_5) = 
(2 p_j,-(1-|p_j|^2),1+|p_j|^2) \approx 
P_j = 2 \begin{pmatrix}
p_j & -p_j^2 \\ 1 & -p_j 
\end{pmatrix} \in M_0 \subseteq L^4 \; , \] 
where 
$\R^3 = M_0$ is given by the $Q$ in \eqref{choiceofQ}
with $\kappa = 0$.  

We saw in \eqref{eqn:PjPk} that for $p_1,p_2 \in \R^3 
\approx \text{Im} H$, 
we have 
\[ \langle P_1,P_2 \rangle = 2 (p_1-p_2)^2 \; . \]  
We have $\ef_p,\ef_q,\ef_r,\ef_s \in \text{Im} H$, 
and we can find $\alpha_* \in \mathbb{R} \setminus \{ 0 \}$ 
so that $\alpha_p F_p$, $\alpha_q F_q$, $\alpha_r F_r$ and 
$\alpha_s F_s$ all lie in $M_0$, and then 
Lemma \ref{lem:forlateruse} gives that $q$ satisfies 
\begin{equation}\label{qqq} q^2 = 
\left( (\ef_p-\ef_q)\frac{(\ef_q-\ef_r)}{(\ef_q-\ef_r)^2}
(\ef_r-\ef_s)\frac{(\ef_s-\ef_p)}{(\ef_s-\ef_p)^2} 
\right)^2 = \end{equation} \[ 
= \frac{(\ef_p-\ef_q)^2(\ef_r-\ef_s)^2}{(\ef_q-\ef_r)^2(\ef_s-\ef_p)^2} = 
\frac{\langle \alpha_p F_p,\alpha_q F_q \rangle 
\langle \alpha_r F_r,\alpha_s F_s \rangle}
{\langle \alpha_q F_q,\alpha_r F_r \rangle \langle \alpha_s F_s,\alpha_p 
F_p \rangle} = \frac{\langle F_p,F_q \rangle \langle F_r,F_s \rangle}
{\langle F_q,F_r \rangle \langle F_s,F_p \rangle} \; . 
\]  A condition for $F_r$ to be in $L^4$ is, from 
Equation \eqref{Fparallelity}, 
\[ 0 = \langle F_r, F_r \rangle = 
\langle \mu (F_q-F_s) + F_p , \mu (F_q-F_s) + F_p \rangle = 
\mu^2 \langle F_q-F_s, F_q-F_s \rangle 
+ 2 \mu \langle F_q-F_s, F_p \rangle \; , \] 
which implies
\[ \mu = \frac{-2\langle F_q-F_s,F_p \rangle}{\langle 
F_q-F_s,F_q-F_s \rangle} \; , \] and so 
\[ F_r = \frac{-2\langle F_q-F_s,F_p \rangle}{\langle 
F_q-F_s,F_q-F_s \rangle} (F_q-F_s) + F_p \; , \] which implies 
\[ \langle F_r,F_s \rangle = \langle F_p,F_q \rangle 
\;\;\; \text{and} \;\;\; \langle F_r,F_q \rangle = 
\langle F_p,F_s \rangle \; . \]  This shows that the cross ratios 
of $\ef$ satisfy the condition in Definition 
\ref{defn:crossratiofactorizingfct}, completing the proof.  
\end{proof}

\begin{remark}
When the discrete surface is isothermic in the narrow sense, i.e. 
when the cross ratios are identically $-1$, there is 
a way to describe real values defined at the vertices 
so that they can be thought of as 
the "scalar factor" or "stretching factor" for the discrete 
"conformal metric", as follows: For a smooth surface $x(u,v)$ 
with isothermic coordinates $u,v$, we have as in 
Remark \ref{rem:anotherproof} 
that \[ X_2 = e^{-\hat u} X_1 \] is a Moutard lift, 
where $e^{2 \hat u}$ 
is the metric factor.  Now, in the case of a discrete isothermic surface 
$\ef$, one lift is 
\[ F_* = \begin{pmatrix} 
f_* & -f_*^2 \\ 1 & -f_* 
\end{pmatrix} \] 
($*$ now denotes vertices in the domain of $\ef$), 
and we can take a Moutard lift 
\[ \tilde F_* = s_* F_* \]  
satisfying \eqref{eqn:Moutard}.  Here $s_*$ will be the 
"discrete metric".  We can take $a_{pq} = \pm 1$, and then 
\[ |a_{pq}| = 2 | \langle \tilde F_p , \tilde F_q \rangle | \] 
(i.e. $\tilde F_*$ is a Moutard lift satisfying 
\eqref{eqn:Moutard}) implies 
\[ \tfrac{1}{2} = |s_p| \cdot |s_q| \cdot |\langle F_p , F_q \rangle| 
= \tfrac{1}{2} |s_p| \cdot |s_q| \cdot 
|\ef_p-\ef_q|^2 \; . \]  So $|s_*|$ behaves just like 
$e^{-\hat u}$ would in the case of a smooth isothermic surface.  
\end{remark}

We now give an application of Moutard lifts.  Suppose that 
$(0,0)$, $(\pm 1,0)$, $(0,\pm 1)$, $\pm (1,1)$ and $\pm (1,-1)$ 
are all in the lattice domain of a discrete surface $\ef$.  Then 
the {\em diagonal vertex star} of $\ef_{(0,0)}$ consists of the 
images $\ef_{(0,0)}$, $\ef_{(1,-1)}$, $\ef_{(1,1)}$, 
$\ef_{(-1,1)}$ and $\ef_{(-1,-1)}$ of the points 
$(0,0)$, $(1,-1)$, $(1,1)$, $(-1,1)$ and $(-1,-1)$.  The 
proof of the next lemma applies Moutard lifts.  

\begin{lemma}\label{lem:diag-vertex-star}
The five vertices of any diagonal vertex star on a discrete 
isothermic surface are cospherical.  
\end{lemma}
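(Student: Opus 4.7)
The plan is to exhibit an explicit linear dependence among the five $\mathbb{R}^{4,1}$-lifts of the vertices of the diagonal vertex star, and then read off a sphere from the resulting orthogonal direction. Using the discrete isothermic hypothesis, I would first choose a Moutard lift $F$ defined consistently on the $3 \times 3$ patch of vertices $(i,j)$ with $|i|, |j| \le 1$, normalized so that $F_p F_q + F_q F_p = a_{pq} I$ on every edge as in \eqref{eqn:Moutard}. Such a lift exists by the cross-ratio-factorizing property and the fact that the normalization \eqref{eqn:Moutard} is precisely the Moutard condition up to the sign freedom of Remark \ref{Moutardliftnotunique}; the compatibility across the four quadrilaterals around $\ef_{(0,0)}$ follows because the lift is built by propagating Equation \eqref{eqn:circleparam} consistently, and this is where the isothermic assumption is essential.

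Next, I would apply the Moutard relation $F_r - F_p = \mu (F_q - F_s)$ from Definition \ref{def:discrete-moutard} to each of the four quadrilaterals meeting at $(0,0)$, taking the counterclockwise ordering so that $p = (0,0)$ in each one. This yields four scalars $\mu_1, \mu_2, \mu_3, \mu_4 \in \mathbb{R}$ and the identities
\begin{align*}
F_{(1,1)} - F_{(0,0)} &= \mu_1\bigl(F_{(1,0)} - F_{(0,1)}\bigr), &
F_{(-1,1)} - F_{(0,0)} &= \mu_2\bigl(F_{(0,1)} - F_{(-1,0)}\bigr), \\
F_{(-1,-1)} - F_{(0,0)} &= \mu_3\bigl(F_{(-1,0)} - F_{(0,-1)}\bigr), &
F_{(1,-1)} - F_{(0,0)} &= \mu_4\bigl(F_{(0,-1)} - F_{(1,0)}\bigr).
\end{align*}
Dividing the $j$-th equation by $\mu_j$ (nonzero generically, since $\mu_j = 0$ would force a diagonal vertex to coincide with $\ef_{(0,0)}$) and summing, the right-hand sides telescope to $0$. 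This produces the linear dependence
\[
\sum_{j=1}^{4} \frac{1}{\mu_j}\bigl(F_{\mathrm{diag},j} - F_{(0,0)}\bigr) = 0
\]
among the five lifts $F_{(0,0)}, F_{(1,1)}, F_{(-1,1)}, F_{(-1,-1)}, F_{(1,-1)}$, showing they span a subspace of $\mathbb{R}^{4,1}$ of dimension at most four.

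Finally, I would pick any nonzero $\mathcal{S}$ in the orthogonal complement of this span inside the five-dimensional $\mathbb{R}^{4,1}$. Because the five lifts sit in the light cone $L^4$ and represent five distinct points (for a non-degenerate diagonal vertex star), $\mathcal{S}$ can be neither timelike (its orthogonal hyperplane would meet $L^4$ only at the origin) nor lightlike (its orthogonal hyperplane would meet $L^4$ in a single ray, forcing all five points to coincide). Hence $\mathcal{S}$ is spacelike, and by the construction \eqref{eq:S-tilde-sphere} the sphere $\tilde{\mathcal{S}}$ it determines contains all five vertices of the diagonal vertex star, proving cosphericality. The principal obstacle is the bookkeeping in setting up a single Moutard lift over the whole $3 \times 3$ patch that satisfies \eqref{eqn:Moutard} on every edge, so that the $\mu_j$'s are well-defined scalars and the telescoping sum on the axis-vertex lifts collapses exactly; the signs must be chosen coherently, and this ultimately reduces to the factorization $q_{pqrs} = a_{pq}/a_{ps}$ of Definition \ref{defn:crossratiofactorizingfct}.
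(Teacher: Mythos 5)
Your proposal is correct and follows essentially the same route as the paper's proof: take a Moutard lift normalized as in \eqref{eqn:Moutard}, apply the Moutard parallelity relation on each of the four quadrilaterals around the central vertex, telescope the axis-vertex differences to obtain a linear dependence among the five lifts of the diagonal vertex star, and then extract a spacelike vector orthogonal to their (at most four-dimensional) span via \eqref{eq:S-tilde-sphere}. The only cosmetic difference is that the paper computes the proportionality constants $\mu_j$ explicitly as $-\tfrac{1}{2}(a_{pq}-a_{ps})/\langle F_q,F_s\rangle$ while you keep them abstract, and you add a short (correct) justification that the orthogonal vector must be spacelike, which the paper merely asserts.
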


\begin{proof}
We can take the image $\ef_{(0,0)}$ of the 
point $(0,0)$ in the lattice domain to be the 
center of the diagonal vertex star.  
Let $F_{(i,j)}$ be a Moutard lift of $\ef_{(i,j)}$ satisfying 
Equation \eqref{eqn:Moutard}.  

Our goal is to show \[ 
\text{dim}(F_{(0,0)}, F_{(1,-1)}, F_{(1,1)}, F_{(-1,1)}, 
F_{(-1,-1)}) \leq 4 \; . \]  Then there exists a spacelike 
vector $\mathcal{S} \in \mathbb{R}^{4,1}$ which 
produces the sphere 
$\tilde{\mathcal{S}}$, via \eqref{eq:S-tilde-sphere}, that 
contains all five points $F_{(0,0)}, F_{(1,-1)}, 
F_{(1,1)}, F_{(-1,1)}, F_{(-1,-1)}$, and the proof would be 
completed.  

In the following computation, for the sake of simplicity, we ignore 
cases where some coefficients might be zero (those other 
cases can be dealt with separately).  

Because we chose a Moutard lift, we have 
$\langle F_q , F_r \rangle = \langle F_p , F_s \rangle$ 
on any quadrilateral, implying 
\[ \langle F_q , F_p - F_r \rangle = 
\langle F_p , F_q - F_s \rangle \; , \]
so 
\[ \langle F_q , F_s - F_q \rangle (F_r-F_p) = 
   \langle F_q , F_r - F_p \rangle (F_s-F_q) = 
\langle F_p , F_q - F_s \rangle (F_q-F_s)  \; , \]
and so 
\[ \langle F_q , F_s \rangle (F_r-F_p) = - \frac{1}{2} 
(a_{pq}-a_{ps}) (F_q-F_s)  \; . \]
This implies 
\[ \langle F_{(1,-1)} , F_{(0,0)} \rangle (F_{(1,0)}-F_{(0,-1)}) = 
- \frac{1}{2} (a_{(0,0)(1,0)}-a_{(0,-1)(0,0)}) (F_{(1,-1)}-F_{(0,0)}) \; , \]
\[ \langle F_{(0,-1)} , F_{(-1,0)} \rangle (F_{(0,0)}-F_{(-1,-1)}) = 
- \frac{1}{2} (a_{(-1,0)(0,0)}-a_{(0,-1)(0,0)}) (F_{(0,-1)}-F_{(-1,0)}) \; , \]
\[ \langle F_{(0,0)} , F_{(-1,1)} \rangle (F_{(0,1)}-F_{(-1,0)}) = 
- \frac{1}{2} (a_{(-1,0)(0,0)}-a_{(0,0)(0,1)}) (F_{(0,0)}-F_{(-1,1)}) \; , \]
\[ \langle F_{(1,0)} , F_{(0,1)} \rangle (F_{(1,1)}-F_{(0,0)}) = 
- \frac{1}{2} (a_{(0,0)(1,0)}-a_{(0,0)(0,1)}) (F_{(1,0)}-F_{(0,1)}) \; , \]
and then 
\[ F_{(1,0)}-F_{(0,-1)} = - 
\frac{a_{(0,0)(1,0)}-a_{(0,-1)(0,0)}}{2 \langle F_{(1,-1)} , F_{(0,0)} \rangle} 
(F_{(1,-1)}-F_{(0,0)}) \; , \]
\[ F_{(0,-1)}-F_{(-1,0)} = 
\frac{-2 \langle F_{(0,-1)} , F_{(-1,0)} \rangle}{a_{(-1,0)(0,0)}-a_{(0,-1)(0,0)}} 
(F_{(0,0)}-F_{(-1,-1)}) \; , \]
\[ F_{(-1,0)}-F_{(0,1)} = 
\frac{a_{(-1,0)(0,0)}-a_{(0,0)(0,1)}}{2 \langle F_{(-1,1)} , F_{(0,0)} \rangle} 
(F_{(0,0)}-F_{(-1,1)}) \; , \]
\[ F_{(0,1)}-F_{(1,0)} = 
\frac{2 \langle F_{(1,0)} , F_{(0,1)} \rangle}{a_{(0,0)(1,0)}-a_{(0,0)(0,1)}} 
(F_{(1,1)}-F_{(0,0)}) \; . \]
Adding these last four equations, we see that a linear combination of 
those five points 
$F_{(0,0)}, F_{(1,-1)}, F_{(1,1)}, F_{(-1,1)}, F_{(-1,-1)}$ equals zero, 
proving the result.  
\end{proof}

The conclusion of Lemma \ref{lem:diag-vertex-star} is in fact 
equivalent to the discrete surface being isothermic, and this then makes 
it obvious that discrete isothermicity is 
invariant under M\"obius transformations.  

\begin{defn}\label{defnofcentralS}
We say that the sphere (containing the vertex star) 
in Lemma \ref{lem:diag-vertex-star} 
is the {\em central sphere} of the discrete 
isothermic surface at the central vertex 
of the diagonal vertex star.  
\end{defn}

\subsection{Christoffel transforms}  When $\ef$ is a 
discrete isothermic surface in 
$\mathbb{R}^3 \approx \text{Im}H$, we 
can define the Christoffel transform $\ef^*$ (also 
in $\mathbb{R}^3$) of $\ef$ as follows: 

\begin{defn}\label{discreteChristoffeltransform}
Let $\ef$ be a discrete isothermic surface in $\R^3$.  Then 
the {\em Christoffel transform} $\ef^*$ of $\ef$ satisfies 
\begin{equation}\label{eqn:cristoffeltransform}
 d\ef_{pq}^* d\ef_{pq} = a_{pq} \; . \end{equation}
\end{defn}
 
Here, for any object $\frak{F}$ defined on vertices, 
$d\frak{F}_{pq}$ denotes the difference 
\[
  d\frak{F}_{pq} := \frak{F}_q - \frak{F}_p \]
of the values of $\frak{F}$ at the vertices $q$ and $p$.  

To see that this definition is natural, we consider the 
Christoffel transform $x^*$ of a smooth surface $x$ in 
$\mathbb{R}^3$ 
with isothermic coordinates $u,v$.  In the smooth case, 
we may assume $x$ and $x^*$ satisfy 
\[ dx = x_u du+x_v dv \; , \;\;\; dx^* = 
x_u^{-1} du-x_v^{-1} dv \; , \] as seen in the previous 
chapter.  So 
\[ dx^*(\partial_u) dx(\partial_u)=1 \;\;\; 
\text{and} \;\;\; 
dx^*(\partial_v) dx(\partial_v)=-1 \; . 
\]
We also have \[ \lim_{\epsilon \to 0} q_\epsilon=-1 = 
\frac{dx^*(\partial_u) dx(\partial_u)}{dx^*(\partial_v) dx(\partial_v)}
\; , \] by 
Equation \eqref{qepsilon}.  In the discrete case, we loosened 
the $-1$ in the right-hand side of 
Equation \eqref{qepsilon} to the $a_{pq}/a_{ps}$ in the 
right-hand side of $q_{pqrs} = a_{pq}/a_{ps}$, as 
in Definition \ref{defn:crossratiofactorizingfct}.  
Because of this, it is natural to consider that 
\[ \frac{a_{pq}}{a_{ps}} = 
\frac{d\ef^*_{pq} d\ef_{pq}}{d\ef^*_{ps} d\ef_{ps}} \; , 
\]
where $d\ef_{pq}$, $d\ef^*_{pq}$, $d\ef_{ps}$, $d\ef^*_{ps}$ now 
represent discrete analogs of 
$dx(\partial_u)$, $dx^*(\partial_u)$, $dx(\partial_v)$, 
$dx^*(\partial_v)$, and so 
Definition \ref{discreteChristoffeltransform} becomes natural.  

We can then prove the following: 

\begin{lemma} \cite{BobPink} 
If $\ef$ is a discrete isothermic surface, then 
there exists a Christoffel transform $\ef^*$ of $\ef$.  
\end{lemma}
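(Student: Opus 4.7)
The plan is to define $\ef^*$ as the ``integral'' of the prescribed discrete one-form on the edges and then show that this one-form is closed around every face, so that $\ef^*$ is well-defined up to translation. Concretely, on any edge $pq$ of the quad-graph, write
\[
d\ef^*_{pq} = a_{pq}\,(\ef_q-\ef_p)^{-1},
\]
using the quaternion inverse (which makes sense because $\ef_q - \ef_p \in \Im H$ is nonzero); this is exactly Definition \ref{discreteChristoffeltransform}, since $d\ef^*_{pq}\,d\ef_{pq} = a_{pq}$. Existence of $\ef^*$ will follow provided
\[
d\ef^*_{pq} + d\ef^*_{qr} + d\ef^*_{rs} + d\ef^*_{sp} = 0
\]
on every quadrilateral $pqrs$ of $\ef$, because then one may integrate along any path in the lattice starting from an arbitrarily chosen initial value of $\ef^*$.

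The core of the proof is verifying this closure identity, and this is where I expect the main obstacle: quaternionic non-commutativity prevents any naive term-by-term cancellation, so one must exploit the concircularity of $\ef_p,\ef_q,\ef_r,\ef_s$ and the isothermic condition carefully. Set $A=\ef_q-\ef_p$, $B=\ef_r-\ef_q$, $C=\ef_s-\ef_r$, $D=\ef_p-\ef_s$, so $A+B+C+D=0$. By Definition \ref{defn:crossratiofactorizingfct} we have $a_{pq}=a_{rs}$ and $a_{ps}=a_{qr}$ and the (real) cross ratio is $q_{pqrs}=AB^{-1}CD^{-1}=a_{pq}/a_{ps}$; write this value simply as $q\in\mathbb{R}$. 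The sum above becomes
\[
a_{ps}\bigl[\,q(A^{-1}+C^{-1}) + (B^{-1}+D^{-1})\,\bigr],
\]
so I need to show the bracketed expression vanishes.

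Because $q\in\mathbb{R}$ it commutes with everything, and $AB^{-1}CD^{-1}=q$ rearranges to $C=q\,BA^{-1}D$. Taking the complex conjugate of the cross ratio relation (or equivalently using Lemma \ref{lem:forlateruse} applied to $A,B,C,D$, noting $\overline{X}=-X$ on $\Im H$) gives the companion identity $D^{-1}CB^{-1}A=q$, which rearranges to $C=q\,DA^{-1}B$. The reality of the cross ratio therefore forces the key symmetry
\[
BA^{-1}D = DA^{-1}B.
\]
Substituting $qC^{-1}=D^{-1}AB^{-1}$ (from $C=qBA^{-1}D$) and using $A+B=-(C+D)$ converts the bracket into $qA^{-1}-D^{-1}CB^{-1}$, and the symmetry $BA^{-1}D=DA^{-1}B$ then yields $D^{-1}CB^{-1}=qA^{-1}$. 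Hence the bracket vanishes and the one-form closes.

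Having established closure, one defines $\ef^*$ at any chosen base vertex and propagates the definition by summing $d\ef^*$ along any lattice path; the closure just proved guarantees path-independence, so $\ef^*$ exists and is unique up to an additive constant in $\Im H$. The rest is bookkeeping: $\ef^*$ takes values in $\Im H$ because each $d\ef^*_{pq}=-a_{pq}(\ef_q-\ef_p)/|\ef_q-\ef_p|^2$ is imaginary, and the defining relation $d\ef^*_{pq}d\ef_{pq}=a_{pq}$ holds by construction. The only genuinely non-routine ingredient is the quaternionic identity $BA^{-1}D=DA^{-1}B$ coming from reality of the cross ratio, which I would cite from (or reprove via) Lemma \ref{lem:forlateruse}.
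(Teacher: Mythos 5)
Your proof is correct and follows essentially the same route as the paper: define $d\ef^*$ edge-wise by $d\ef^*_{pq}=a_{pq}\,d\ef_{pq}^{-1}$, verify closure around each face using the reality of the cross ratio (equivalently the rearrangement identities of Lemma \ref{lem:forlateruse}), and reduce everything to $d\ef_{pq}+d\ef_{qr}+d\ef_{rs}+d\ef_{sp}=0$. The only difference is cosmetic: you package the rearrangement invariance as the single symmetry $BA^{-1}D=DA^{-1}B$ obtained by quaternionic conjugation, whereas the paper cites the rearranged product forms directly.
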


\begin{proof}
$\ef^*$ exists if and only if the compatibility 
condition 
\begin{equation}\label{eqn:fstarcompatibility}
 d\ef^*_{pq}+d\ef^*_{qr}=d\ef^*_{ps}+d\ef^*_{sr} 
\end{equation}
holds, that is to say, we can apply ``discrete integration'' of 
$d\ef^*$ to obtain $\ef^*$.  

We now prove that Equation \eqref{eqn:fstarcompatibility} holds with 
$d\ef^*$ defined as in Equation \eqref{eqn:cristoffeltransform}.  
By Equation \eqref{eqn:cristoffeltransform}, Equation 
\eqref{eqn:fstarcompatibility} is equivalent to 
\[ a_{pq} d\ef_{pq}^{-1} + a_{qr} d\ef_{qr}^{-1} = 
a_{ps} d\ef_{ps}^{-1} + a_{sr} d\ef_{sr}^{-1} \; . \] 
Because $a_{pq}=a_{sr}$ and $a_{ps}=a_{qr}$ (by isothermicity), 
this equation is equivalent to 
\[ \frac{a_{pq}}{a_{ps}} ( d\ef_{pq}^{-1} - d\ef_{sr}^{-1}) = 
d\ef_{ps}^{-1} - d\ef_{qr}^{-1} \; . \] 
By Lemma \ref{lem:forlateruse}, the cross 
ratio is $a_{pq} a_{ps}^{-1} = 
d\ef_{pq} d\ef_{qr}^{-1} d\ef_{rs} d\ef_{sp}^{-1} = 
d\ef_{qr}^{-1} d\ef_{rs} d\ef_{sp}^{-1} d\ef_{pq} = 
d\ef_{qr}^{-1} d\ef_{pq} d\ef_{sp}^{-1} d\ef_{rs}$, and so the 
equation becomes 
\[ d\ef_{qr}^{-1} d\ef_{rs} d\ef_{sp}^{-1} + 
d\ef_{qr}^{-1} d\ef_{pq} d\ef_{sp}^{-1} = d\ef_{ps}^{-1} - 
d\ef_{qr}^{-1} \; , \]  that is, 
$d\ef^{-1}_{qr} (d\ef_{rs}+d\ef_{pq}) 
d\ef^{-1}_{sp} = d\ef^{-1}_{ps} - d\ef^{-1}_{qr}$, i.e. 
\[ d\ef_{rs} + d\ef_{pq} + d\ef_{qr} + d\ef_{sp} = 0 \; , \]  
and this follows from the fact that $\ef$ exists and 
so $d\ef$ is closed. 
\end{proof}

\begin{lemma}\label{lemmalemma8point21}
Let $\ef$ be a discrete isothermic surface.  Then 
the Christoffel transform $\ef^*$ of $\ef$ is isothermic with 
the same cross ratios as $\ef$.  
\end{lemma}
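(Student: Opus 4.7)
My plan is to compute the cross ratio of $\ef^*$ on a generic quadrilateral $pqrs$ directly and check that it coincides with the cross ratio of $\ef$. The starting observation is that both $d\ef_{pq}$ and $d\ef^*_{pq}$ are imaginary quaternions, and the product of two imaginary quaternions is real if and only if they are parallel. Hence the defining relation $d\ef^*_{pq} d\ef_{pq} = a_{pq} \in \mathbb{R}$ from Definition \ref{discreteChristoffeltransform} forces $d\ef^*_{pq} = \lambda_{pq} d\ef_{pq}$ for some real scalar $\lambda_{pq}$; a direct computation then gives $\lambda_{pq} = -a_{pq}/|d\ef_{pq}|^2$.

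Because real scalars commute past quaternions, substituting $d\ef^*_{\ast\ast} = \lambda_{\ast\ast} d\ef_{\ast\ast}$ into the cross ratio for $\ef^*$ yields
\[
q^*_{pqrs} \;=\; \frac{\lambda_{pq}\,\lambda_{rs}}{\lambda_{qr}\,\lambda_{sp}}\cdot q_{pqrs} \;=\; \frac{a_{pq}\,a_{rs}}{a_{qr}\,a_{sp}}\cdot\frac{|d\ef_{qr}|^{2}\,|d\ef_{sp}|^{2}}{|d\ef_{pq}|^{2}\,|d\ef_{rs}|^{2}}\cdot q_{pqrs}.
\]
The isothermic conditions $a_{pq}=a_{sr}$ and $a_{ps}=a_{qr}$ (together with symmetry of $a_{\ast\ast}$) make the first fraction equal to $1$. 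For the second fraction, I would use that $d\ef_{pq}d\ef_{qr}d\ef_{rs}d\ef_{sp} = q_{pqrs}|d\ef_{qr}|^{2}|d\ef_{sp}|^{2}$ is real, and invoke Lemma \ref{lem:forlateruse} to conclude it is invariant under cyclic rotation of its four factors. Rewriting $q_{qrsp}$ using the rotated product yields
\[
\frac{|d\ef_{pq}|^{2}\,|d\ef_{rs}|^{2}}{|d\ef_{qr}|^{2}\,|d\ef_{sp}|^{2}} \;=\; \frac{q_{pqrs}}{q_{qrsp}} \;=\; \frac{a_{pq}/a_{qr}}{a_{qr}/a_{pq}} \;=\; \left(\frac{a_{pq}}{a_{qr}}\right)^{\!2},
\]
where the middle equality uses the isothermic factorization for $\ef$ (at vertices $p$ and $q$ respectively).

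Combining these pieces, the prefactor collapses to $1$ and one obtains $q^*_{pqrs} = q_{pqrs}$. Since $\ef$ is discrete isothermic, the cross ratios factorize as $q_{pqrs} = a_{pq}/a_{ps}$ with opposite edges sharing the same $a$-value, so the very same function $a_{\ast\ast}$ witnesses $\ef^*$ as discrete isothermic. The only place that requires genuine care is the second step: since quaternions do not commute, it is essential that the scalars $\lambda_{pq}$ pulled out of the product be real, which is precisely what the Christoffel relation guarantees. The remaining manipulations are bookkeeping with the cyclic-rotation identity in Lemma \ref{lem:forlateruse} and the symmetry/isothermic conditions on $a_{\ast\ast}$, so I do not expect any substantive obstacle.
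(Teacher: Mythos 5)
Your overall strategy is sound and, after unwinding $d\ef_{pq}^{-1}=-\,d\ef_{pq}/|d\ef_{pq}|^{2}$, it is essentially the paper's computation: pull the real scalars out of the quaternionic product and use Lemma \ref{lem:forlateruse} to handle the reordering. The final identity $q^{*}_{pqrs}=q_{pqrs}$ is correct. However, one step in your chain is wrong as stated: the ``first fraction'' $\tfrac{a_{pq}a_{rs}}{a_{qr}a_{sp}}$ is \emph{not} equal to $1$. The isothermic conditions give $a_{rs}=a_{sr}=a_{pq}$ and $a_{sp}=a_{ps}=a_{qr}$, so this fraction equals $(a_{pq}/a_{qr})^{2}=q_{pqrs}^{2}$. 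Your own evaluation of the second fraction gives its value as $(a_{qr}/a_{pq})^{2}=q_{pqrs}^{-2}$, so if the first fraction really were $1$ the total prefactor would be $q_{pqrs}^{-2}$ and you would conclude $q^{*}=q^{-1}$, which is false in general. The argument closes only because the two fractions are $q^{2}$ and $q^{-2}$ and cancel; as written, your two intermediate claims are inconsistent with your conclusion that the prefactor collapses to $1$. A secondary, smaller point: the equality $q_{qrsp}=a_{qr}/a_{pq}$ is not literally part of Definition \ref{defn:crossratiofactorizingfct} (which factorizes the cross ratio based at $p$); it follows from $q_{qrsp}=q_{pqrs}^{-1}$, which itself needs either multiplicativity of the quaternionic norm or another appeal to Lemma \ref{lem:forlateruse}, so it deserves a sentence.

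With the first fraction corrected to $q^{2}$, your proof is complete and coincides with the paper's: there one writes $d\ef^{*}_{pq}=a_{pq}\,d\ef_{pq}^{-1}$, extracts the real factor $(a_{pq}/a_{qr})(a_{rs}/a_{sp})=q^{2}$, and then identifies the remaining quaternionic product with $q^{-1}$ via Lemma \ref{lem:forlateruse}, giving $q^{*}=q^{2}\cdot q^{-1}=q$. Your version merely splits the same $q^{2}\cdot q^{-1}$ into a real prefactor $q^{2}\cdot q^{-2}=1$ times the original cross ratio $q$.
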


\begin{proof}
Let $q,q^*$ be the cross ratios of $\ef$, $\ef^*$ respectively.  Then 
\[ q^* = d\ef_{pq}^* (d\ef_{qr}^*)^{-1} d\ef_{rs}^* (d\ef_{sp}^*)^{-1} 
= a_{pq} d\ef_{pq}^{-1} (a_{qr} d\ef_{qr}^{-1})^{-1} 
a_{rs} d\ef_{rs}^{-1} (a_{sp} d\ef_{sp}^{-1})^{-1} = \]\[ 
(a_{pq}/a_{qr}) (a_{rs}/a_{sp}) d\ef_{pq}^{-1} (d\ef_{qr}^{-1})^{-1} 
d\ef_{rs}^{-1} (d\ef_{sp}^{-1})^{-1} = q^2 
(d\ef_{sp}^{-1} d\ef_{rs} d\ef_{qr}^{-1} d\ef_{pq})^{-1} \; . \]  
Then Lemma \ref{lem:forlateruse} implies 
\[ q^* = q^2 (d\ef_{pq} d\ef_{qr}^{-1} d\ef_{rs} d\ef_{sp}^{-1})^{-1} = 
q^2 \cdot q^{-1} = q \; . \]
\end{proof}

\subsection{Calapso transforms} 
Like in the smooth case, we can define Calapso 
transformations $T$ in the discrete case.  
We first define $\tau$ as 
\[
 \tau_{pq} = \begin{pmatrix} \ef_p \\ 1 \end{pmatrix} 
(\ef_q^* - \ef_p^*) 
\begin{pmatrix} 1 & -\ef_q \end{pmatrix} \; . \]  
Note that $\tau_{pq}$ does not have symmetry with respect to 
$p$ and $q$, and this was just a choice that was made, 
and there is no particular geometric motivation for choosing 
$\ef_p$ in the leftward vector and $\ef_q$ in the 
rightward vector.  Then taking any lift 
\[   F_p = \alpha_p \begin{pmatrix}
\ef_p & -\ef_p^2 \\ 1 & -\ef_p 
\end{pmatrix} \] at all $p$, 
a short computation gives 
\begin{equation}\label{gen-eqn-for-tau} \tau_{pq} = \begin{pmatrix}
\ef_p d\ef_{pq}^* & -\ef_p d\ef_{pq}^* \ef_q \\ 
d\ef_{pq}^* & -d\ef_{pq}^* \ef_q
\end{pmatrix} = 
-a_{pq} \frac{F_p F_q}{F_p F_q+F_qF_p} \; . \end{equation}  
Note that, although $F_p F_q+F_qF_p$ is a matrix, we are regarding 
it as a scalar here, like in Remark \ref{rem:matrixtoscalar}.  

If $F$ is a Moutard lift, then we can assume \eqref{eqn:Moutard}, 
and so we have 
\begin{equation}\label{eqn:Moutard-tau} 
\tau_{pq} = -F_p F_q \; . \end{equation}

For adjacent vertices $p,q$,  we define $T=T^\lambda$ by 
\begin{equation}\label{eqn:TpTq} 
T_q = T_p (I+\lambda \tau_{pq}) \; . \end{equation}  

This is not a commutative operation, as we will see in the proof of 
the next lemma, i.e. we cannot switch $p$ and $q$ 
and expect this equation to still hold.  So we must decide on a 
direction for each edge.  Let us do this by fixing one vertex p and then 
for any edge $\hat p \hat q$, where $\hat q$ is 
farther from $p$ than $\hat p$ is, apply the above equation to define 
$T_{\hat q} = T_{\hat p} (I+\lambda \tau_{\hat p \hat q})$.  It 
will turn out that this noncommutativity will not 
affect the Calapso transform (see the definition of the Calapso 
transform below), 
because $T$ is in fact defined up to real scalar factors 
even without this normalization of directions, 
so it is not a problem, 
but let us normalize these directions that we use in 
\eqref{eqn:TpTq} in order to 
choose a particular $T$.  (We will also use this normalization 
in the proofs of Lemmas \ref{lem:changeof-a-underCalapso} 
and \ref{nextlem:changeof-a-underCalapso}.)  

A direct computation shows that $I + \lambda \tau_{pq}$ 
(with $a,b,c,d$ now regarded as the entries in the matrix $I + 
\lambda \tau_{pq}$) 
satisfies \eqref{eqn:Mob3} and \eqref{eqn:Mob3-nonsing} when 
$1-\lambda a_{pq} \neq 0$, so $I + \lambda \tau_{pq} 
\in \text{Mob}(3)$ and then $T$ is as well, when the 
initial condition chosen for the solution $T$ is taken in 
$\text{Mob}(3)$.  

\begin{defn}
We say that $T F T^{-1}$ is a {\em Calapso transform}.  
\end{defn}

We can write $T F T^{-1}$ as $T_p F_p T_p^{-1}$ when we wish to 
specify which vertex $p$ is being used, and as $T^\lambda F 
(T^\lambda)^{-1}$ when we wish to 
specify which value of $\lambda$ has been chosen.  

We will see in the proof of the next lemma that 
$T$ is only defined up to real scalar factors, i.e. the $T$ are actually 
multivalued, and become well defined only when considered 
in a projectivized space.  
But, as noted above, this freedom does not affect the resulting 
Calapso transform $T F T^{-1}$.  

\begin{lemma}
If $\ef$ is a discrete isothermic surface, then a solution $T \in 
\text{Mob}(3)$ to \eqref{eqn:TpTq} exists.  
\end{lemma}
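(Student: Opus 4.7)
The plan is to show that the only thing that could obstruct existence is compatibility around each quadrilateral, and then to verify this compatibility by a direct computation in the Moutard lift. Fix a base vertex $p_0$ and any initial $T_{p_0}$ in the group $G$ (representing an element of $\text{Mob}(3)$). For any other vertex $p$ choose a path from $p_0$ to $p$ in the lattice, and define $T$ along the path by iterating \eqref{eqn:TpTq} (inverting the right factor whenever an edge is traversed against its distinguished direction). Since any two lattice paths with the same endpoints differ by a finite number of flips across a single quadrilateral face, it suffices to check that
\[
(I+\lambda\tau_{pq})(I+\lambda\tau_{qr}) \;=\; (I+\lambda\tau_{ps})(I+\lambda\tau_{sr})
\]
(up to a real scalar, which is all Mob$(3)$ requires) for each quadrilateral $pqrs$ of $\ef$.

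The key computational simplification is to replace the ambient formula \eqref{gen-eqn-for-tau} by its Moutard version. Because $\ef$ is discrete isothermic, a Moutard lift $F$ exists and may be normalized as in \eqref{eqn:Moutard}, so that by \eqref{eqn:Moutard-tau} we have $\tau_{pq}=-F_pF_q$ on each edge. Moreover, every $F_p$ lies in the light cone, so $\langle F_p,F_p\rangle=0$ gives $F_p^{2}=0$ by \eqref{star8point2}. Expanding a product around the quadrilateral and discarding the $F_q^{2}$ term yields
\[
(I+\lambda\tau_{pq})(I+\lambda\tau_{qr}) \;=\; I-\lambda F_pF_q-\lambda F_qF_r,
\]
and similarly for the other product. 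Their difference is $-\lambda\bigl[F_p(F_q-F_s)+(F_q-F_s)F_r\bigr]$.

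Now I would invoke the Moutard parallelism \eqref{Fparallelity}, $F_r-F_p=\mu(F_q-F_s)$, to rewrite this bracket as $\mu^{-1}\bigl[F_p(F_r-F_p)+(F_r-F_p)F_r\bigr]$. Using $F_p^{2}=F_r^{2}=0$ the two terms simplify to $F_pF_r$ and $-F_pF_r$ respectively, and cancel. Hence the two quadrilateral products agree exactly, so $T$ is independent of the chosen lattice path and is therefore well defined at every vertex. Finally, the remark after \eqref{eqn:TpTq} gives $I+\lambda\tau_{pq}\in G$ whenever $1-\lambda a_{pq}\neq 0$, and since $G$ is closed under multiplication and inversion, the constructed $T$ lies in $G$ throughout, i.e.\ represents a map into $\text{Mob}(3)$.

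The only real obstacle is the quadrilateral identity, and I expect the main technical care to be in bookkeeping the noncommutativity: $\tau_{qp}\neq-\tau_{pq}$, so the computation must consistently orient each edge (as described just before the lemma). Once Moutard lifts are used, the identity $F_p^2=0$ together with the parallelism condition produce an exact, not merely projective, cancellation, which is a bit stronger than one might initially expect and makes the consistency of $T$ (rather than just of $TFT^{-1}$) immediate.
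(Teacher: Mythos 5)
Your proposal is correct and follows essentially the same route as the paper's proof: reduce existence to the quadrilateral compatibility $(I+\lambda\tau_{pq})(I+\lambda\tau_{qr})=(I+\lambda\tau_{ps})(I+\lambda\tau_{sr})$, pass to a Moutard lift with $\tau_{pq}=-F_pF_q$, and verify the identity using the parallelism $F_r-F_p=\mu(F_q-F_s)$ together with $F_p^2=F_r^2=0$ (and $F_q^2=0$ to kill the cross terms). Your observation that the cancellation is exact rather than merely projective is consistent with the paper's computation.
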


\begin{proof}
First we note that 
\[ (I+\lambda \tau_{pq})(I+\lambda \tau_{qp}) \]
is a real scalar multiple of $I$, so that $T$ is defined 
up to a real scalar factor when applying \eqref{eqn:TpTq} 
back and forth along a single edge.  
To see this, we need to see that 
\[ \tau_{pq}+\tau_{qp} \] is a real scalar multiple of $I$.  
Taking a Moutard lift 
$F$ of $\ef$ so that \eqref{eqn:Moutard} holds, 
$\tau_{pq} + \tau_{qp} = - F_p F_q - F_q F_p= 
-a_{pq} I$ is a real scalar multiple of $I$.  

For a quadrilateral with vertices $p,q,r,s$ 
in counterclockwise order, we have, if $T$ exists, that  
\[ 
T_r = T_q (I+\lambda \tau_{qr}) = 
T_p (I+\lambda \tau_{pq}) (I+\lambda \tau_{qr}) = \]
\[ T_p (I+\lambda \tau_{ps}) (I+\lambda \tau_{sr}) \; .
\]
So existence of $T$ would be implied by 
\begin{equation}\label{eqn:tau-relation} 
   (I+\lambda \tau_{pq}) (I+\lambda \tau_{qr}) = 
   (I+\lambda \tau_{ps}) (I+\lambda \tau_{sr}) \; , 
\end{equation}
that is to say, we want to show 
\[ \tau_{pq} \tau_{qr} = \tau_{ps} \tau_{sr} \] and 
\[ \tau_{pq} + \tau_{qr} - \tau_{sr} - \tau_{ps} = 
0 \; . \]  

The first of these two equations follows immediately from 
\[
  \begin{pmatrix} 1 & -\ef_q \end{pmatrix}
  \begin{pmatrix} \ef_q \\ 1 \end{pmatrix} = 0 \; , 
\] and the second one is not difficult to show if we 
use a Moutard lift satisfying 
\eqref{eqn:Moutard-tau}: Using such a lift $F$ means 
that we need only show 
\[ F_pF_q + F_qF_r = F_pF_s + F_sF_r \; , \]  
i.e. that 
\[ F_p (F_q-F_s)+(F_q-F_s)F_r = 0 \; . \]  
But by definition of the Moutard lift, $F_q-F_s$ and $F_p-F_r$ 
are parallel, so we need only show 
\[ F_p (F_p-F_r)+(F_p-F_r)F_r = 0 \; . \]  
This is clearly true, since $F_p^2=F_r^2 = 0$.  

Finally, as noted before, if $T_{p_0} \in \text{Mob(3)}$ at 
one vertex $p_0$, and $1-\lambda a_{pq}$ is never zero, then 
$T_p \in \text{Mob}(3)$ for all vertices $p$.  
\end{proof}

\begin{lemma}\label{lem:changeof-a-underCalapso}
Let $\ef$ be a discrete isothermic surface with lift $F$.  
The Calapso transform 
$F_p \to F_p^\mu := T_p^\mu F_p (T_p^\mu)^{-1}$ gives another 
isothermic surface $\ef^\mu$, and 
the cross ratio factorizing function $a_{pq}$ changes 
from $\ef$ to $\ef^\mu$ as follows: 
\[ a_{pq} \to a_{pq}^\mu = \frac{a_{pq}}{1-\mu a_{pq}} \; . \]  
\end{lemma}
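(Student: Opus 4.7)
The plan is to work with a Moutard lift $F$ of $\ef$ normalized so that $F_pF_q + F_qF_p = a_{pq}\,I$ holds on every edge, as in Equation \eqref{eqn:Moutard}. Under this normalization, Equation \eqref{eqn:Moutard-tau} gives $\tau_{pq} = -F_pF_q$, so the Calapso transport reduces to $T_q = T_p(I - \mu F_pF_q)$. The key algebraic fact I will exploit is that $F_p^2 = F_q^2 = 0$, since $F_p$ and $F_q$ are null in $\mathbb{R}^{4,1}$, which combined with $F_qF_p = a_{pq}I - F_pF_q$ gives $(F_pF_q)^2 = a_{pq}\,F_pF_q$ and symmetrically $(F_qF_p)^2 = a_{pq}\,F_qF_p$.

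First I will derive the inverse of $I - \mu F_pF_q$. Using the ansatz $(I - \mu F_pF_q)^{-1} = I + \alpha F_pF_q$ together with $(F_pF_q)^2 = a_{pq}F_pF_q$, I will solve $\alpha(1 - \mu a_{pq}) = \mu$ to obtain
\[ (I - \mu F_pF_q)^{-1} = I + \tfrac{\mu}{1 - \mu a_{pq}}\,F_pF_q, \]
valid whenever $1 - \mu a_{pq} \neq 0$, which is precisely the nonsingularity condition keeping $T_q$ in $\mathrm{Mob}(3)$. Next I will compute $F_p^\mu F_q^\mu$ and $F_q^\mu F_p^\mu$ from $F_*^\mu = T_* F_* T_*^{-1}$. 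Using $T_p^{-1}T_q = I - \mu F_pF_q$, the nilpotency $F_p^2 = F_q^2 = 0$ collapses the middle factors, and the inversion formula above then shows that both products become scalar multiples of the same conjugated quantity:
\[ F_p^\mu F_q^\mu = \tfrac{1}{1-\mu a_{pq}}\,T_p(F_pF_q)T_p^{-1}, \qquad F_q^\mu F_p^\mu = \tfrac{1}{1-\mu a_{pq}}\,T_p(F_qF_p)T_p^{-1}. \]

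Adding these and invoking $F_pF_q + F_qF_p = a_{pq}I$ will give $F_p^\mu F_q^\mu + F_q^\mu F_p^\mu = \tfrac{a_{pq}}{1-\mu a_{pq}}\,I$, which is the claimed formula $a_{pq}^\mu = a_{pq}/(1 - \mu a_{pq})$. The identity $F_p^\mu F_q^\mu + F_q^\mu F_p^\mu = a_{pq}^\mu I$ also shows, by the remark following Equation \eqref{eqn:Moutard}, that $F^\mu$ is itself a Moutard lift, so Lemma \ref{lem:Moutardliftproperty} identifies $a^\mu$ as a cross ratio factorizing function of $\ef^\mu$. Finally, because the map $a \mapsto a/(1 - \mu a)$ preserves equalities, the isothermicity relations $a_{pq} = a_{rs}$ and $a_{ps} = a_{qr}$ transfer directly to $a^\mu$, confirming that $\ef^\mu$ is again discrete isothermic.

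The main obstacle is purely algebraic bookkeeping in a noncommutative setting: one must track carefully which transport $T_p$ or $T_q$ sits on which side of each product, and iteratively apply $F_p^2 = F_q^2 = 0$ and $(F_pF_q)^2 = a_{pq}F_pF_q$ to reduce expressions. Once the inverse of $I - \mu F_pF_q$ is in hand, the computations of $F_p^\mu F_q^\mu$ and $F_q^\mu F_p^\mu$ are symmetric and essentially mechanical, and the isothermicity conclusion is immediate from the algebraic form of the transformation $a \mapsto a/(1-\mu a)$.
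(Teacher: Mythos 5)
Your proposal is correct and follows essentially the same route as the paper: normalize to a Moutard lift with $F_pF_q+F_qF_p=a_{pq}I$ so that $\tau_{pq}=-F_pF_q$, then use $F_p^2=F_q^2=0$ and the resulting inversion formula for $I+\mu\tau_{pq}$ (the paper's Equation \eqref{eqn:invert1plustau}) to show that the edge inner products scale by $(1-\mu a_{pq})^{-1}$ under conjugation by $T$. The only difference is cosmetic: you conclude via Lemma \ref{lem:Moutardliftproperty} after observing that $F^\mu$ again satisfies \eqref{eqn:Moutard} (which the paper records separately as Corollary \ref{cor:Mout-to-Mout}), whereas the paper also computes the diagonal inner products $\langle F_p^\lambda,F_r^\lambda\rangle$, $\langle F_q^\lambda,F_s^\lambda\rangle$ and invokes the general cross-ratio formula of Lemma \ref{lem:generalformulaforcrossratio}.
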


\begin{proof}
Let $F$ be a Moutard lift satisfying \eqref{eqn:Moutard}.  
For a quadrilateral with vertices $p$, $q$, $r$ and $s$ listed in 
counterclockwise order around the quadrilateral, and noting that
\[ (I + \lambda \tau_{pq})(I + \lambda \tau_{qp}) = 
(1- \lambda a_{pq}) I \; , \] we have (assume $pq$ is directed from 
$p$ to $q$) 
\[ \langle F_p^\lambda , F_q^\lambda \rangle = 
\langle T_pF_pT_p^{-1}, T_qF_qT_q^{-1} \rangle = \]\[ \tfrac{-1}{2} 
[ T_p F_p (I+\lambda \tau_{pq}) F_q T_q^{-1} + \frac{1}{1-\lambda a_{pq}}
  T_q F_q (I+\lambda \tau_{qp}) F_p T_p^{-1} ] = \]
\[ 
\tfrac{-1}{2} 
[ T_p F_p F_q T_q^{-1} + \frac{1}{1-\lambda a_{pq}}
  T_q F_q F_p T_p^{-1} ] = \]\[ \tfrac{-1}{2} T_p 
[ F_p F_q 
  \frac{1}{1-\lambda a_{pq}} \cdot I + \frac{1}{1-\lambda a_{pq}} 
  I \cdot F_q F_p ] T_p^{-1} = 
  \frac{1}{1-\lambda a_{pq}} \langle F_p,F_q \rangle \; . \]
Also (assume as well that $qr$ is directed from $q$ to $r$), 
\[ \langle F_p^\lambda , F_r^\lambda \rangle = 
\langle T_pF_pT_p^{-1}, T_rF_rT_r^{-1} \rangle = \tfrac{-1}{2} 
\left[ T_pF_pT_p^{-1} T_rF_rT_r^{-1} + T_rF_rT_r^{-1} T_pF_pT_p^{-1} 
\right] = \]\[
\tfrac{-1}{2} \left[ T_pF_pT_p^{-1} T_qT_q^{-1} T_rF_rT_r^{-1} + 
T_rF_rT_r^{-1} T_qT_q^{-1} T_pF_pT_p^{-1} \right] = \]\[ 
\tfrac{-1}{2} \left[ T_pF_p \cdot I \cdot I \cdot F_rT_r^{-1} + 
\frac{1}{1-\lambda a_{pq}} \frac{1}{1-\lambda a_{qr}} T_rF_rF_pT_p^{-1} 
\right] = \]\[ \tfrac{-1}{2} 
T_q \left[ T_q^{-1} T_pF_p F_rT_r^{-1} T_q + 
\frac{1}{1-\lambda a_{pq}} 
\frac{1}{1-\lambda a_{qr}} T_q^{-1}T_rF_rF_pT_p^{-1}T_q \right] 
T_q^{-1} = \]\[ 
\frac{1}{1-\lambda a_{pq}} \frac{1}{1-\lambda a_{qr}} 
\langle F_p,F_r \rangle \; . \]  Similarly, 
\[ \langle F_p^\lambda , F_s^\lambda \rangle = 
\frac{1}{1-\lambda a_{ps}} \langle F_p,F_s \rangle 
\; , \]
\[ \langle F_q^\lambda , F_r^\lambda \rangle = 
\frac{1}{1-\lambda a_{qr}} \langle F_q,F_r \rangle 
\; , \]
\[ \langle F_q^\lambda , F_s^\lambda \rangle = 
\frac{1}{1-\lambda a_{pq}} 
\frac{1}{1-\lambda a_{qr}} \langle F_q,F_s \rangle 
\; . \]
We now renotate the subscripts $p,q,r,s$ by $p_1,p_2,p_3,p_4$, 
respectively.  Then, 
using the lift $F$ satisfying \eqref{eqn:Moutard} chosen here in 
Lemma \ref{lem:generalformulaforcrossratio} (note that we do not need to 
require $F \in M_\kappa$), and noting that we have 
$s_{12}=s_{34}=-\tfrac{1}{2} a_{p_1p_2}$ and $s_{14}=s_{23}= 
-\tfrac{1}{2} a_{p_1p_4}$, and because 
$q = a_{p_1p_2}/a_{p_1p_4}$, Lemma \ref{lem:generalformulaforcrossratio} 
implies $s_{13}s_{24} = \tfrac{1}{4} (a_{p_1p_2}-a_{p_1p_4})^2$.  
A computation, again using Lemma 
\ref{lem:generalformulaforcrossratio}, then shows that the 
corresponding cross ratio on the 
Christoffel transform $\ef^\mu$ is 
\[ a_{pq}^\mu/a_{ps}^\mu \; , \] where 
\[ a_{pq}^\mu = \frac{a_{pq}}{1-\mu a_{pq}} \; , \;\;\; 
a_{ps}^\mu = \frac{a_{ps}}{1-\mu a_{ps}} \; . \]  
Thus $\ef^\mu$ is an isothermic surface, and the lemma is proven.  
\end{proof}

In the above proof we saw that 
$\langle F_p^\mu,F_q^\mu \rangle = (1-\mu a_{pq})^{-1} 
\langle F_p,F_q \rangle = - \tfrac{1}{2} a_{pq}^\mu$ and 
$\langle F_p^\mu,F_s^\mu \rangle = (1-\mu a_{ps})^{-1} 
\langle F_p,F_s \rangle = - \tfrac{1}{2} a_{ps}^\mu$, 
so this corollary follows: 

\begin{corollary}\label{cor:Mout-to-Mout}
If $F_p$ is a Moutard lift of a discrete isothermic surface 
$\ef$ satisfying 
\eqref{eqn:Moutard}, then so is $F_p^\mu$, for any $\mu \in 
\mathbb{R} \setminus \{ 0 \}$.  
\end{corollary}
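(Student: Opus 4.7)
The plan is to observe that the Moutard normalization \eqref{eqn:Moutard} is equivalent, via the defining relation \eqref{star8point2} of the Minkowski inner product, to the scalar condition
\[ \langle F_p, F_q \rangle = -\tfrac{1}{2} a_{pq} \]
on every edge $pq$. Thus the content of being a ``normalized Moutard lift'' is captured entirely by edge inner products, and the Corollary reduces to tracking how these inner products transform under the Calapso operation.

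First I would recall the identity
\[ \langle F_p^\mu, F_q^\mu \rangle = \frac{1}{1-\mu a_{pq}} \, \langle F_p, F_q \rangle \]
that was derived explicitly in the course of proving Lemma \ref{lem:changeof-a-underCalapso}; it is obtained by inserting $T_q T_q^{-1} = I$ between $F_p$ and $F_q$ and using the relation $(I+\mu\tau_{pq})(I+\mu\tau_{qp}) = (1 - \mu a_{pq}) I$. Next, since $F$ is assumed to be a Moutard lift satisfying \eqref{eqn:Moutard}, substitute $\langle F_p, F_q \rangle = -\tfrac{1}{2} a_{pq}$ to get
\[ \langle F_p^\mu, F_q^\mu \rangle = -\tfrac{1}{2} \cdot \frac{a_{pq}}{1-\mu a_{pq}} = -\tfrac{1}{2}\, a_{pq}^\mu, \]
where the last equality invokes the transformation rule $a_{pq}^\mu = a_{pq}/(1-\mu a_{pq})$ for the cross ratio factorizing function, also established in Lemma \ref{lem:changeof-a-underCalapso}.

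Finally, unpacking the inner product via \eqref{star8point2} translates $\langle F_p^\mu, F_q^\mu \rangle = -\tfrac{1}{2} a_{pq}^\mu$ back into the matrix identity
\[ F_p^\mu F_q^\mu + F_q^\mu F_p^\mu = a_{pq}^\mu \cdot I, \]
which is exactly \eqref{eqn:Moutard} for the transformed lift $F^\mu$ paired with the transformed cross ratio factorizing function $a^\mu$ of the Calapso transform $\ef^\mu$. Hence $F^\mu$ is itself a Moutard lift satisfying the normalization, as required.

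There is no real obstacle here: the proof is a one-line bookkeeping consequence of the two intermediate formulas already computed inside Lemma \ref{lem:changeof-a-underCalapso}. If anything needs care, it is only the observation that \eqref{eqn:Moutard} and the scalar edge relation $\langle F_p, F_q \rangle = -\tfrac{1}{2} a_{pq}$ are genuinely equivalent, so that passing between matrix and scalar formulations is justified; this is immediate from \eqref{star8point2}.
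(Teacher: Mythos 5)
Your proposal is correct and follows essentially the same route as the paper: the text immediately preceding the corollary derives it by observing that the computations in the proof of Lemma \ref{lem:changeof-a-underCalapso} give $\langle F_p^\mu,F_q^\mu\rangle = (1-\mu a_{pq})^{-1}\langle F_p,F_q\rangle = -\tfrac{1}{2}a_{pq}^\mu$, exactly as you do. Your added remark that \eqref{eqn:Moutard} and the scalar edge relation are equivalent via \eqref{star8point2} is a correct and harmless piece of bookkeeping that the paper leaves implicit.
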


In order to state the next lemma, 
we define $T^{\lambda,\mu}$ by 
\[ T^{\lambda,\mu}_q = T^{\lambda,\mu}_p (I+\mu \tau_{pq}^\lambda) \; , \]  
where 
\[ \tau_{pq}^\lambda = \frac{-a_{pq}^\lambda F_p^\lambda 
F_q^\lambda}{F_p^\lambda F_q^\lambda+F_q^\lambda F_p^\lambda} \; , 
\;\;\; a_{pq}^\lambda = \frac{a_{pq}}{1-\lambda a_{pq}} \; , \;\;\; 
F_p^\lambda = T_p^\lambda F_p (T_p^\lambda)^{-1} \; . \]

\begin{lemma}\label{nextlem:changeof-a-underCalapso}
Let $\ef$ be a discrete isothermic surface with associated $T$.  Then 
$T$ is a 1-parameter group, that is, we can choose 
$T^{\lambda,\mu}$ so that \[ T^{\mu+\lambda} = 
T^{\lambda,\mu} T^\lambda \] for any $\lambda, \mu \in \mathbb{R}$.  
\end{lemma}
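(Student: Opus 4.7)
The plan is to show that both sides satisfy the same edge recursion (up to the choice of initial condition at one base vertex), so that equality propagates along every path in the lattice. Pick a Moutard lift $F$ normalised so that $\tau_{pq}=-F_pF_q$, and recall that this property is preserved by Calapso transformation (Corollary \ref{cor:Mout-to-Mout}), so $\tau^\lambda_{pq}=-F^\lambda_pF^\lambda_q$ with $F^\lambda_p=T^\lambda_pF_p(T^\lambda_p)^{-1}$. Fix a base vertex $p_0$ and choose the initial data $T^{\mu+\lambda}_{p_0}=T^{\lambda,\mu}_{p_0}T^\lambda_{p_0}$; this is permitted because $T$ in Definition \ref{defnthatis8pt43} is determined by its initial value and is only defined projectively anyway (cf.\ Remark \ref{rem:stayinMob3}). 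It remains to verify that both sides satisfy the same edge update for any directed edge $pq$.

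The central identity is a short telescoping computation. From $(T^\lambda_p)^{-1}T^\lambda_q=I+\lambda\tau_{pq}$ and the lightlike property $F_p^2=F_q^2=0$ of the Moutard lift, one gets
\[
F_p(I+\lambda\tau_{pq})F_q \;=\; F_pF_q-\lambda F_p^2F_q^2 \;=\; F_pF_q,
\]
so that
\[
F^\lambda_pF^\lambda_q \;=\; T^\lambda_p\,F_pF_q\,(I+\lambda\tau_{pq})^{-1}(T^\lambda_p)^{-1}.
\]
Consequently
\[
(I+\mu\tau^\lambda_{pq})\,T^\lambda_p \;=\; T^\lambda_p\bigl(I+\mu\tau_{pq}(I+\lambda\tau_{pq})^{-1}\bigr),
\]
and multiplying on the right by $I+\lambda\tau_{pq}$ collapses the inner factor to give
\[
(I+\mu\tau^\lambda_{pq})\,T^\lambda_p\,(I+\lambda\tau_{pq}) \;=\; T^\lambda_p\,\bigl(I+(\lambda+\mu)\tau_{pq}\bigr).
\]

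Armed with this, the induction along edges is immediate. Suppose $T^{\mu+\lambda}_p=T^{\lambda,\mu}_pT^\lambda_p$ at some vertex $p$. Then for an adjacent vertex $q$,
\[
T^{\lambda,\mu}_q T^\lambda_q \;=\; T^{\lambda,\mu}_p(I+\mu\tau^\lambda_{pq})\,T^\lambda_p(I+\lambda\tau_{pq}) \;=\; T^{\lambda,\mu}_pT^\lambda_p\,\bigl(I+(\lambda+\mu)\tau_{pq}\bigr) \;=\; T^{\mu+\lambda}_p(I+(\lambda+\mu)\tau_{pq}) \;=\; T^{\mu+\lambda}_q,
\]
which is the recursion defining $T^{\mu+\lambda}$. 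Since the lattice is connected and the identity already holds at $p_0$, it holds everywhere, proving $T^{\mu+\lambda}=T^{\lambda,\mu}T^\lambda$. (Path-independence is not an issue because each $T$ separately has been shown to be well-defined in the previous lemma.)

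The main obstacle, which is dispatched by the identity $F_p(I+\lambda\tau_{pq})F_q=F_pF_q$, is the algebraic collapse needed to identify $I+\mu\tau^\lambda_{pq}$ transported from the transformed surface back to the original surface with the simple shift $\tau_{pq}\mapsto(\lambda+\mu)\tau_{pq}$; everything else is bookkeeping. A secondary point requiring a brief remark is the scalar/projective ambiguity in $T$: the equality $T^{\mu+\lambda}=T^{\lambda,\mu}T^\lambda$ must be read modulo real scalar factors, and the freedom to absorb such factors is exactly what lets us pick the base-point initial condition to launch the induction.
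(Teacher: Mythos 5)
Your proof is correct and follows essentially the same route as the paper's: both rest on the Moutard normalization $\tau_{pq}=-F_pF_q$, the collapse $F_p(I+\lambda\tau_{pq})F_q=F_pF_q$ coming from $F_p^2=F_q^2=0$, and the edge relation $(T_p^\lambda)^{-1}T_q^\lambda=I+\lambda\tau_{pq}$, arriving at the same identity $T^{\lambda,\mu}_qT^\lambda_q=T^{\lambda,\mu}_pT^\lambda_p(I+(\lambda+\mu)\tau_{pq})$. The only difference is presentational: you isolate the transport identity for $I+\mu\tau^\lambda_{pq}$ as a standalone step and make the base-point/initial-condition bookkeeping explicit, which the paper leaves implicit.
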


\begin{proof}
Without loss of generality, assume $F$ is a Moutard lift 
satisfying \eqref{eqn:Moutard}, and so Corollary \ref{cor:Mout-to-Mout}
implies $\tau_{pq}^\lambda = - F_p^\lambda F_q^\lambda$.  
First note that $T_q^\lambda = T_p^\lambda 
(I+\lambda \tau_{pq})$.  
We wish to show $T^{\mu+\lambda} = T^{\lambda,\mu} T^\lambda$, i.e. 
\begin{equation}\label{eqn:first-1param-gp-eqn}
 T^{\lambda,\mu}_q T_q^\lambda = T^{\lambda,\mu}_p T^\lambda_p 
(I+(\mu+\lambda) \tau_{pq}) \; , \end{equation} 
where the edge $pq$ is directed from $p$ to $q$.  Note that 
\begin{equation}\label{eqn:Teqn1} 
(T_p^\lambda)^{-1} T_q^\lambda=I+\lambda \tau_{pq} \; , 
\end{equation} and inverting gives 
\begin{equation}\label{eqn:Teqn2}
(T_q^\lambda)^{-1} T_p^\lambda=\frac{1}{1-\lambda a_{pq}} 
(I+\lambda \tau_{qp}) \; , 
\end{equation} since $F$ is a Moutard lift satisfying 
\eqref{eqn:Moutard}.  Then 
\[ T^{\lambda,\mu}_q T_q^\lambda = T^{\lambda,\mu}_p (I- \mu 
F_p^\lambda F_q^\lambda) T_q^\lambda = \]\[ 
= T^{\lambda,\mu}_p (I-\mu T_p^\lambda F_p 
(T_p^\lambda)^{-1} T_q^\lambda F_q (T_q^\lambda)^{-1}) 
T_q^\lambda \; . \]  Then, using the properties 
$F_p \tau_{pq} = \tau_{pq} F_q = 0$ and \eqref{eqn:Teqn1}, we have 
\[ T^{\lambda,\mu}_q T_q^\lambda = T_p^{\lambda,\mu} (I+\mu 
T_p^\lambda \tau_{pq} (T_q^\lambda)^{-1}) T_q^\lambda = \]\[ 
= T_p^{\lambda,\mu} T_p^\lambda ((T_p^\lambda)^{-1} T_q^\lambda + \mu 
\tau_{pq}) = 
T_p^{\lambda,\mu} T_p^\lambda ((I + \lambda \tau_{pq}) + \mu 
\tau_{pq}) = \]\[ = 
T_p^{\lambda,\mu} T_p^\lambda (I + (\lambda+\mu) \tau_{pq}) \; . \]  
Thus we have shown \eqref{eqn:first-1param-gp-eqn}.  
\end{proof}

Now we recall that, 
for general lifts $F$ that are not necessarily Moutard, we have 
\[ \tau_{pq} = \frac{-a_{pq} F_p F_q}{F_p F_q+F_q F_p} \; , \;\;\; 
\tau_{pq}^\mu = \frac{-a_{pq}^\mu F_p^\mu F_q^\mu}{F_p^\mu F_q^\mu 
+ F_q^\mu F_p^\mu} \; , \] and, by Equations 
\eqref{eqn:Teqn1} and \eqref{eqn:Teqn2}, we have  
\begin{equation}\label{eqn:invert1plustau} 
  (I+\mu \tau_{pq})^{-1} = \frac{1}{1-\mu a_{pq}} (I + 
  \mu \tau_{qp}) \; . \end{equation} 

\begin{remark}
Equation \eqref{eqn:invert1plustau} 
is not symmetric in $p$ and $q$.  In fact, as noted before, $\tau$ 
itself is not symmetric in $p$ and $q$.  
However, the most essential object, the family of 
flat connections $\Gamma_{pq}^\lambda$, is symmetric in 
$p$ and $q$ (see Remark \ref{symmetricityofGamma}).  We will discuss 
flat connections in the next Section \ref{flatconnections}.  
\end{remark}

Furthermore, if $F$ is Moutard satisfying \eqref{eqn:Moutard}, this 
is true of $F^\mu$ as well, by Corollary 
\ref{cor:Mout-to-Mout}, and we have 
$\tau_{pq}^\mu = -F_p^\mu F_q^\mu = - T_p^\mu F_p (T_p^\mu)^{-1} 
T_q^\mu F_q (T_q^\mu)^{-1} = -T_p^\mu F_p (I+\mu \tau_{pq}) 
F_q (T_q^\mu)^{-1} = T_p^\mu (-F_p F_q) (T_q^\mu)^{-1}$, so we have 
\begin{equation}\label{eqn:Teqn3} 
\tau_{pq}^\mu = T_p^\mu \tau_{pq} (T_q^\mu)^{-1} \; . \end{equation} 
This equation will be used later, when we show that if $\ef$ 
has a polynomial conserved quantity of type $n$, then so do 
its Calapso transformations (see Lemma 
\ref{lem:Calapso-preserves-type}).  
In particular, if $\ef$ is a discrete isothermic 
CMC surface in some space form, then so are its Calapso 
transformations (in different space forms in general).  But 
since we have not defined the notions 
of polynomial conserved quantities and discrete CMC surfaces 
yet, we come back to this later.  

\subsection{Flat connections}\label{flatconnections}
Let us first review 
what a connection is in the smooth case.  We will see how 
isothermic surfaces have a 1-parameter family of flat connections.  
Although we do not show it here (see 
\cite{BursPinkPed} for such an argument), 
the converse is also true: existence of a family 
of flat connections implies that the surface is isothermic.  

Recall that the Riemannian connection of a 
Riemannian manifold is the unique connection satisfying 
\begin{equation}\label{eqn:flatconnection1} 
\nabla_{fX+Y}Z=f \nabla_X Z+\nabla_Y Z \; , \end{equation} 
\begin{equation}\label{eqn:flatconnection2} 
\nabla_{X}(fY+Z)=X(f) Y+f \nabla_X Y+\nabla_X Z \; , \end{equation} 
\begin{equation}\label{eqn:flatconnection3} 
\nabla_X Y - \nabla_Y X=[X,Y] \; , \end{equation} 
\begin{equation}\label{eqn:flatconnection4} 
X \langle Y,Z \rangle =\langle \nabla_X Y,Z \rangle + \langle Y, 
                           \nabla_X Z \rangle \; , \end{equation} 
where $X,Y,Z$ are any smooth tangent vector fields of the manifold, and 
$f$ is any smooth function from the manifold to $\mathbb{R}$.  
The first two relations \eqref{eqn:flatconnection1}, 
\eqref{eqn:flatconnection2} define general affine connections, 
and adding in the last two 
conditions \eqref{eqn:flatconnection3}, \eqref{eqn:flatconnection4} 
makes the connection a Riemannian connection.  

Taking an $n$-dimensional manifold $M^n$ with affine connection 
$\nabla$, and taking a basis $X_1$, $X_2$, ... , $X_n$ of vector 
fields for the tangent spaces, we define $\Gamma_{ij}^k$ and 
$R_{lij}^k$ by \[ 
\nabla_{X_i} X_j = \sum_{k=1}^n \Gamma_{ij}^k X_k \; , \] 
\begin{equation}\label{eqn:curvaturetensor} 
\nabla_{X_i} \nabla_{X_j} X_l - \nabla_{X_j} \nabla_{X_i} X_l 
- \nabla_{[X_i,X_j]} X_l = \sum_{k=1}^n R_{lij}^k X_k \; . 
\end{equation}  
We define the one forms $\omega^i$ and $\omega_j^i$ by 
(here $\delta_j^i$ is the Kronecker delta function) 
\[ \omega^i(X_j) = \delta_j^i \; , \;\;\; \omega_j^i = 
\sum_{k=1}^n \Gamma_{kj}^i \omega^k \; . \]  
The one forms $\omega_j^i$ are called the connection 
one forms.  Then 
\[ d\omega_l^i + \sum_{p=1}^n \omega_p^i \wedge 
\omega_l^p = \tfrac{1}{2} \sum_{j,k=1}^n R_{ljk}^i \omega^j \wedge 
\omega^k \; . \]  
When the connection is the Riemannian connection, the 
$R_{ljk}^i$ give the Riemannian curvature tensor.  
When, for an affine connection, all of the $R_{ljk}^i$ are zero, 
then we say that $\nabla$ is a flat connection.  
For a more thorough explanation of the above equations, there are 
many textbooks one could look at, for example \cite{Helgason}.  

For a 
smooth isothermic surface $x$, we can regard $\mathbb{R}^{4,1}$ 
as $5$-dimensional fibers of a trivial vector bundle defined on $x$.  
We now 
define $\nabla = d + \lambda \tau$ for any choice of $\lambda 
\in \mathbb{R}$, i.e. \begin{equation}\label{ourconnection} 
\nabla_Z Y = d_Z Y + \lambda (\tau(Z) \cdot Y - Y \cdot \tau(Z)) \; , 
\end{equation}
where $Y \in \mathbb{R}^{4,1}$ depends on the parameters $u,v$ for 
the isothermic surface $x$, and $Z$ lies in the tangent space 
of the surface.  This is a bit different than the 
considerations above, because now the bundle is not 
the tangent bundle of the surface $x$, and so $Y$ is not necessarily 
tangent to $x$.  But in any case, $\tau(Z)$ is defined, because 
$Z$ lies in the tangent space of $x$.  We note that 
\eqref{eqn:flatconnection1} and \eqref{eqn:flatconnection2} hold, 
and so this $\nabla$ is an affine connection.  

We wish to see that this 
$\nabla$ in \eqref{ourconnection} is a flat connection for all 
$\lambda \in \mathbb{R}$.  That is, we wish to have 
\begin{equation}\label{flateqn1}
\nabla_{\partial_u} \nabla_{\partial_v} Y - 
\nabla_{\partial_v} \nabla_{\partial_u} Y - 
\nabla_{[\partial_u,\partial_v]} Y = 0 
\end{equation}
for any $Y \in \mathbb{R}^{4,1}$ depending on $u$ and $v$, 
and for any $\lambda \in 
\mathbb{R}$.  Because $[\partial_u,\partial_v]=0$, 
a computation shows that 
\eqref{flateqn1} will hold if 
\[ d(\lambda \tau) + (\lambda \tau) \wedge 
(\lambda \tau) = 0 \] holds for all 
$\lambda \in \mathbb{R}$, i.e. 
\begin{equation}\label{flateqn2}
\partial_u (\tau(\partial_v)) - \partial_v (\tau(\partial_u)) = 
\tau(\partial_u) \tau(\partial_v) - 
\tau(\partial_v) \tau(\partial_u) = 0 \; . 
\end{equation}

Then $\nabla$ is a family 
of flat connections parametrized by $\lambda$.  
Let us now confirm that $\nabla$ is flat: 

\begin{lemma}
Equation \eqref{flateqn2} holds.  
\end{lemma}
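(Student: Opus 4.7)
The plan is to verify the two equalities in \eqref{flateqn2} separately, using the explicit description of $\tau$ from Definition \ref{first-lcq-defn} together with the fact, guaranteed by Corollary \ref{Cor:Christoffel}, that in isothermic coordinates we may take
\[ dx^{*} = x_{u}^{-1}\,du - x_{v}^{-1}\,dv \; . \]
Writing $P(x) = \begin{pmatrix} x \\ 1 \end{pmatrix}$ and $R(x) = \begin{pmatrix} 1 & -x \end{pmatrix}$, the matrix $\tau$ factors as $\tau = P\cdot dx^{*}\cdot R$, so that
\[ \tau(\partial_{u}) = P\,x_{u}^{-1}\,R \; , \qquad \tau(\partial_{v}) = -\,P\,x_{v}^{-1}\,R \; . \]

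For the vanishing of the commutator $\tau(\partial_{u})\tau(\partial_{v}) - \tau(\partial_{v})\tau(\partial_{u})$, the observation is that
\[ R(x)\,P(x) = \begin{pmatrix} 1 & -x \end{pmatrix}\begin{pmatrix} x \\ 1 \end{pmatrix} = x - x = 0 \; , \]
so both products $\tau(\partial_{u})\tau(\partial_{v})$ and $\tau(\partial_{v})\tau(\partial_{u})$ contain the factor $RP$ in their middles and therefore vanish identically. This is an immediate, purely algebraic step and requires no use of isothermicity beyond the definition of $\tau$.

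For the vanishing of $\partial_{u}\tau(\partial_{v}) - \partial_{v}\tau(\partial_{u})$, I apply the Leibniz rule to $\tau = P\cdot dx^{*}\cdot R$, using that $d(dx^{*})=0$ since $x^{*}$ exists as a function (by Lemma \ref{lemma8ptpt15}). This gives
\[ d\tau = dP \wedge dx^{*} \cdot R \; - \; P \cdot dx^{*} \wedge dR \; , \]
with $dP = \begin{pmatrix} dx \\ 0 \end{pmatrix}$ and $dR = \begin{pmatrix} 0 & -dx \end{pmatrix}$. Both contributions are then controlled by the quaternionic wedge products $dx\wedge dx^{*}$ and $dx^{*}\wedge dx$. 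Using the formula $(\alpha_1 du+\alpha_2 dv)\wedge(\beta_1 du+\beta_2 dv) = (\alpha_1\beta_2-\alpha_2\beta_1)\,du\wedge dv$ for noncommuting coefficients, the coefficient of $du\wedge dv$ in $dx\wedge dx^{*}$ is $-(x_{u}x_{v}^{-1}+x_{v}x_{u}^{-1})$, and similarly for $dx^{*}\wedge dx$ it is $x_{u}^{-1}x_{v}+x_{v}^{-1}x_{u}$.

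The main (and only nontrivial) step is then to show both of these expressions are zero. Here the isothermic conditions \eqref{eqn:isothermicityproperties} enter: conformality gives $|x_{u}|=|x_{v}|$, and the curvature-line orthogonality gives $x_{u}x_{v}+x_{v}x_{u}=0$. From $x \in \operatorname{Im} H$ one has $x_{u}^{-1} = -x_{u}/|x_{u}|^{2}$ and likewise for $x_{v}^{-1}$, and substituting these together with $x_{v}x_{u} = -x_{u}x_{v}$ and $|x_{u}|=|x_{v}|$ cancels each pair of terms. Hence $dx\wedge dx^{*} = dx^{*}\wedge dx = 0$, so $d\tau=0$, proving the remaining identity. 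The only obstacle worth flagging is keeping the noncommutative quaternion arithmetic and the signs straight in these last cancellations; once the factorization $\tau = P\,dx^{*}\,R$ is in hand, everything else is a short computation.
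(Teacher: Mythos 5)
Your proof is correct and is essentially the direct computation that the paper's one-line proof leaves to the reader: the commutator vanishes because $R(x)P(x)=0$, and the exterior-derivative part reduces, via $d(dx^{*})=0$ and the identities $x_u^2=x_v^2$, $x_ux_v+x_vx_u=0$, to the vanishing of $dx\wedge dx^{*}$ and $dx^{*}\wedge dx$. The factorization $\tau = P\,dx^{*}\,R$ organizes the bookkeeping nicely, and all three isothermicity properties in \eqref{eqn:isothermicityproperties} are used exactly where the paper says they should be (the third one implicitly, through the closedness of $dx^{*}$ from Lemma \ref{lemma8ptpt15}).
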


\begin{proof}
The proof is a direct computation using the properties in 
\eqref{eqn:isothermicityproperties}.  
\end{proof}

Connections are equivalent to having a notion of parallel transport 
along each given curve in the surface, and a connection is 
flat if and only if the parallel transport map depends only 
on the homotopy class of each curve (with fixed endpoints). 
In particular, if the surface $x$ is simply connected, parallel 
transport is independent of path if and only if the connection 
is flat, which can be seen as follows:  One direction is 
immediately clear from Equation 
\eqref{eqn:curvaturetensor}, by choosing the $X_i$ 
there to be constant vector fields (that is, by choosing $X_i$ 
by using parallel translation, i.e. $\nabla_* X_i = 0$), 
and then all $R_{l i j}^k$ become 
$0$.  To see the other direction, 
suppose that the connection is flat.  Then Equation 
\eqref{eqn:curvaturetensor} implies 
\[ \nabla_{\partial_u} \nabla_{\partial_v} Y = 
\nabla_{\partial_v} \nabla_{\partial_u} Y \] for any 
vector field $Y$.  Then we can apply an argument like in the proof of 
Proposition 3.1.2 in \cite{wisky} to conclude that if 
$Y$ is constructed so that $\nabla_{\partial_u} 
Y = 0$ along one curve where $v=v_0$ is constant and so that 
$\nabla_{\partial_v} Y = 0$ everywhere, then also 
$\nabla_{\partial_u} Y = 0$ everywhere, and so $Y$ is a 
vector field that is parallel on any curve in $x$.  

Thus, because the connection in Equation \eqref{ourconnection} 
is flat, every vector at one point of a simply-connected 
$x$ can be extended to a 
parallel vector field defined over all of $x$ that 
is independent of choice of path.  Let us denote such a vector 
field by 
\[ Y = \phi^{-1} \cdot Y_0 := \phi^{-1} Y_0 \phi \; , \] 
where $\phi$ is a map from the domain of $x$ (with 
isothermic coordinates $u,v$) to Mob(3), and 
$Y_0$ is any fixed vector in $\mathbb{R}^{4,1}$.  
The condition that $Y$ is parallel is 
\begin{equation}\label{eqn:phiparalleltransport} 
0 = \nabla_Z (\phi^{-1} \cdot Y_0) \end{equation} 
for all vectors $Z$ tangent to the surface $x$, at any point of 
$x$.  Equation 
\eqref{eqn:phiparalleltransport} holds if and only if 
\[ 0 = d_Z (\phi^{-1} Y_0 \phi) + 
\lambda (\tau(Z) \cdot \phi^{-1} Y_0 \phi - 
\phi^{-1} Y_0 \phi \cdot \tau(Z)) \] 
for all $Z$, which then holds if and only if 
\[ [ \mathcal{R}(Z) , Y_0 ] = 0 \] for all $Z$, where 
\[ \mathcal{R} = (d\phi) \cdot \phi^{-1} - \lambda \phi 
\tau \phi^{-1} \; . \] 
This is true for all $Z$ tangent to 
$x$, and for any choice of $Y_0 \in \mathbb{R}^{4,1}$.  
It would certainly suffice to have $\mathcal{R} = 0$, i.e. 
\begin{equation}\label{eqn:phiequalslambdaphitau} 
d\phi = \lambda \phi \tau \; . \end{equation}  
So we can take $\phi$ to be the Calapso transformation $T$, 
as in Definition 
\ref{defnthatis8pt43} and Lemma \ref{lem:Calapexist}.  

\begin{remark}\label{rem:TvsPhi}
Note that when $Y_0 \in L^4$, then $Y$ is actually 
a Darboux transform of the surface.  
\end{remark}

Equation \eqref{eqn:phiequalslambdaphitau} 
is how we can describe parallel transportation in 
terms of $\tau$.  

To get a connection for a discrete isothermic surface $\ef$, 
it is not the 
connection $\nabla$ that we will discretize, but rather the notion 
of parallel transport and Equation 
\eqref{eqn:phiequalslambdaphitau}: the discrete version of 
Equation \eqref{eqn:phiequalslambdaphitau} is 
\[ \phi_q-\phi_p= \lambda \phi_p \tau_{pq} \] 
along edges $pq$ directed from $p$ to $q$, 
i.e. \begin{equation}\label{eqnstarsubstar} 
\phi_p^{-1} \phi_q = I + \lambda \tau_{pq} \; . \end{equation} 
Note that this is exactly the same equation as \eqref{eqn:TpTq}.  

Let $Y_0$ be a fixed vector in $\mathbb{R}^{4,1}$.  
Analogous to the smooth case as above, for a solution $\phi$ to 
\eqref{eqnstarsubstar}, we form 
a vector field defined on the vertices of $\ef$ by 
\[ Y_p = \phi_p^{-1} \cdot Y_0 = \phi_p^{-1} Y_0 \phi_p \; , \] 
we then have the following: obviously 
$Y_0 = \phi_q (\phi_q^{-1} Y_0 \phi_q) \phi_q^{-1} = 
\phi_p (\phi_p^{-1} Y_0 \phi_p) \phi_p^{-1}$, and so 
\[ 
\phi_q Y_q \phi_q^{-1} = \phi_p Y_p \phi_p^{-1} 
\;\; \text{implies} \;\; 
\phi_p^{-1} \phi_q Y_q (\phi_p^{-1} \phi_q)^{-1} = Y_p 
\; , \] thus $(I+\lambda \tau_{pq}) Y_q (I+\lambda \tau_{pq})^{-1} 
= (1-\lambda a_{pq})^{-1} (I+\lambda \tau_{pq}) Y_q 
(I+\lambda \tau_{qp})$ by \eqref{eqn:invert1plustau}.  Thus 
\begin{equation}\label{gammaconnexion} 
\Gamma_{pq} \cdot Y_q = Y_p \; , \end{equation} 
where we define $\Gamma_{pq}=\Gamma_{pq}^\lambda$, 
as long as $\lambda a_{pq} \neq 1$, 
by (the symbol $\Gamma$ now plays a different role than 
it did at the beginning of this section) 
\begin{equation}\label{eqn:star-sylvia} 
\Gamma_{pq} \cdot Y_q = (1-\lambda a_{pq})^{-1} 
(I + \lambda \tau_{pq}) Y_q (I + \lambda \tau_{qp}) \; . \end{equation} 
Equation \eqref{gammaconnexion} defines parallel 
transport along edges, and thus provides a connection for the 
surface.  We conclude that $\Gamma_{pq}$ is a 
{\em flat $\text{Mob}(3)$-connection} on the discrete isothermic 
net, with the solution $\phi$ being a {\em gauge transformation} 
identifying this connection with the trivial connection.  

\begin{remark}\label{symmetricityofGamma}
The connection $\Gamma_{pq}$ is symmetric 
in the following sense: If, instead, $pq$ had been 
directed from $q$ to $p$, then $(\phi_q^{-1} \phi_p)^{-1} 
Y_q \phi_q^{-1} \phi_p = Y_p$ implies $(I+\lambda \tau_{qp})^{-1} 
Y_q (I + \lambda \tau_{qp}) = (1-\lambda a_{pq})^{-1} 
(I+\lambda \tau_{pq}) Y_q (I+\lambda \tau_{qp})$, 
and the definition of $\Gamma_{pq}$ 
in \eqref{eqn:star-sylvia} would not change; that is, $\Gamma_{pq}$ 
is independent of choice of direction along the edge $pq$.  
\end{remark}

Now, parallel sections $Y \in \mathbb{R}^{4,1}$ are those that 
satisfy \eqref{gammaconnexion} 
for some $\lambda \in \mathbb{R}$, and then 
$Y_q \to Y_p$ is parallel transport along edges.  

Note that $\Gamma_{pq}^\lambda 
\Gamma_{qp}^\lambda = 1$, by \eqref{eqn:invert1plustau}, 
and for this reason we call $\Gamma^\lambda$ 
a {\em connection}.  By 
\eqref{eqn:tau-relation} and \eqref{eqn:invert1plustau}, we have 
\[
 \Gamma_{pq}^\lambda \Gamma_{qr}^\lambda 
 \Gamma_{rs}^\lambda \Gamma_{sp}^\lambda = 1 \; , 
\]and for this reason we call it a {\em flat} connection.  
Finally, we call the $\Gamma_{pq}^\lambda$ 
as in \eqref{eqn:star-sylvia} the {\em isothermic family of 
connections} of $\ef$.  

\subsection{Linear conserved quantities} 
We can now discretize \eqref{rossman-first-equation} 
as follows: We say that $\ef$ is CMC (in the appropriate 
space form) if there exists a linear conserved 
quantity $P=Q+\lambda Z$ so that $T P T^{-1}$ is 
constant with respect to vertices in the domain of $\ef$.  
Here, $Q$ and $Z$ are maps defined on the lattice domain 
and taking values in $\mathbb{R}^{4,1}$.  (See Definition 
\ref{defn:disclqcCMC} below.)  We have proven that this 
holds in the smooth case (see Equation \eqref{eqn:forpg68}), and 
we take it as a definition in the discrete case.  
We will see in Lemma 
\ref{lemmalemma9pt13} the equivalence of this definition with 
previous definitions of discrete CMC surfaces.  
That $T P T^{-1}$ is constant is equivalent to 
\[ T_q P_q T_q^{-1} = T_p P_p T_p^{-1} \] for all 
adjacent vertices $p$ and $q$, which is equivalent to 
\[ (I+\lambda \tau_{pq}) P_q = P_p (I+\lambda \tau_{pq}) \; , \] 
which becomes the equation 
\begin{equation}\label{linearconservedquantity-rossman} 
(I+\lambda \tau_{pq}) (Q+\lambda Z)_q = (Q+\lambda Z)_p (I+\lambda 
\tau_{pq}) \; . 
\end{equation} 

\begin{remark}
Note that \eqref{linearconservedquantity-rossman} is 
equivalent to saying that $P$ is a parallel section of the 
flat connection $\Gamma_{pq}^\lambda$, for all $\lambda$.  
\end{remark}

Looking at the coefficients in front of the $\lambda^k$ in Equation 
\eqref{linearconservedquantity-rossman} 
for $k=0,1,2$, we immediately have the following lemma: 

\begin{lemma}\label{lem:lcq-in-parts}
Equation \eqref{linearconservedquantity-rossman} is equivalent to 
$dQ_{pq}=0$ and $dZ_{pq}=Q_p \tau_{pq}-\tau_{pq} Q_q$ and 
$\tau_{pq} Z_q=Z_p \tau_{pq}$.  
\end{lemma}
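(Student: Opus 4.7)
The plan is simply to expand both sides of \eqref{linearconservedquantity-rossman} as polynomials in the formal parameter $\lambda$ and equate coefficients. Since the equation is required to hold for all $\lambda \in \mathbb{R}$, and both sides are quadratic polynomials in $\lambda$ with coefficients in $\mathbb{R}^{4,1}$ (viewed inside the $2\times 2$ matrix algebra), the equivalence will be immediate from the identity principle for polynomials.

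Concretely, I would expand the left-hand side as
\[
(I+\lambda \tau_{pq})(Q_q+\lambda Z_q) = Q_q + \lambda(Z_q + \tau_{pq} Q_q) + \lambda^2 \tau_{pq} Z_q,
\]
and the right-hand side as
\[
(Q_p+\lambda Z_p)(I+\lambda \tau_{pq}) = Q_p + \lambda(Z_p + Q_p \tau_{pq}) + \lambda^2 Z_p \tau_{pq}.
\]
Then matching the coefficients of $\lambda^0$, $\lambda^1$, and $\lambda^2$ yields, respectively,
\[
Q_q = Q_p, \qquad Z_q - Z_p = Q_p \tau_{pq} - \tau_{pq} Q_q, \qquad \tau_{pq} Z_q = Z_p \tau_{pq},
\]
which are exactly the three stated conditions $dQ_{pq}=0$, $dZ_{pq}=Q_p\tau_{pq}-\tau_{pq}Q_q$, and $\tau_{pq}Z_q = Z_p\tau_{pq}$.

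For the converse direction, assuming all three conditions hold, one simply reassembles the polynomial $(I+\lambda\tau_{pq})(Q_q+\lambda Z_q)$ term by term and uses each identity to rewrite it as $(Q_p+\lambda Z_p)(I+\lambda\tau_{pq})$. There is no genuine obstacle here: the only point worth remarking is that $\lambda$ is a free real parameter, so that equality of the two quadratic polynomials in $\lambda$ really is equivalent to equality of all three coefficients, rather than merely to equality at a single value of $\lambda$. This is the observation that makes the lemma work, but it is not an obstacle so much as the whole content of the argument.
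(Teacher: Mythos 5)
Your proof is correct and is exactly the argument the paper uses: the paper simply notes that comparing the coefficients of $\lambda^k$ for $k=0,1,2$ in Equation \eqref{linearconservedquantity-rossman} yields the three conditions immediately. Your expansion and coefficient-matching makes that explicit, and the converse is indeed just reassembly of the polynomial.
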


Noting that $Q$ is constant, we now come to a formal definition: 

\begin{defn}\label{defn:disclqcCMC}
If a linear conserved quantity $Q+\lambda Z$, $Q \neq 0$, exists for an 
isothermic discrete surface $\ef$, we say that $\ef$ is of 
constant mean curvature (CMC) in the space form $M$ determined by $Q$.  
\end{defn}

The first fact we give about these linear conserved quantities is this: 

\begin{lemma}\label{Zisctecte}
$||Z||$ is constant, that is, $||Z_p||$ does not depend on the choice 
of vertex $p$.  
\end{lemma}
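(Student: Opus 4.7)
The plan is to show that $P_p$ and $P_q$ are related by conjugation by an element of $G$ (equivalently, by a M\"obius transformation in $\mathrm{Mob}(3)$), and hence have equal Minkowski norm, then extract $||Z||^2$ as a coefficient in a polynomial identity in $\lambda$.

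First I would rewrite the linear conserved quantity equation \eqref{linearconservedquantity-rossman} as
\[
  P_p = (I+\lambda\tau_{pq}) \, P_q \, (I+\lambda\tau_{pq})^{-1} \; ,
\]
valid for all $\lambda \in \mathbb{R}$ with $1-\lambda a_{pq}\neq 0$ (so that $I+\lambda\tau_{pq}$ is invertible). The earlier analysis established that $I+\lambda\tau_{pq}$ lies in $G$, the group of quaternionic $2\times 2$ matrices satisfying \eqref{eqn:Mob3} and \eqref{eqn:Mob3-nonsing}, and we observed that conjugation by any element $T\in G$ is an isometry of $\mathbb{R}^{4,1}$, i.e.\ $\langle TAT^{-1}, TBT^{-1}\rangle = \langle A, B\rangle$. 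Since $P_p, P_q \in \mathbb{R}^{4,1}$, the relation above immediately yields
\[
  ||P_p||^2 = ||P_q||^2
\]
for every $\lambda$ in a cofinite subset of $\mathbb{R}$, and hence (by continuity/polynomiality) for every $\lambda \in \mathbb{R}$.

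Next I would expand both sides as polynomials in $\lambda$: since $P = Q + \lambda Z$ with $Q, Z \in \mathbb{R}^{4,1}$,
\[
  ||Q+\lambda Z||^2 \;=\; ||Q||^2 \;+\; 2\lambda \langle Q, Z\rangle \;+\; \lambda^2 ||Z||^2 \; .
\]
The identity $||P_p||^2 = ||P_q||^2$, viewed as an equality of polynomials in $\lambda$, forces equality of the coefficients at each power of $\lambda$. The constant term gives $||Q_p||^2 = ||Q_q||^2$ (already known, since $Q$ itself is constant by Lemma \ref{lem:lcq-in-parts}), the linear term yields that $\langle Q, Z\rangle$ is constant on vertices, and the quadratic term gives exactly $||Z_p||^2 = ||Z_q||^2$ for all adjacent vertices $p,q$. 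Since the lattice is connected, $||Z||$ is globally constant.

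The main obstacle, and really the only nontrivial point, is the fact that conjugation by $I + \lambda \tau_{pq}$ acts isometrically on $\mathbb{R}^{4,1}$; but this is precisely the content of $I + \lambda\tau_{pq} \in G \subset \mathrm{Mob}(3)$ together with the isometry property of $G$-conjugation, both of which have already been established in the preceding discussion. In particular this discrete argument mirrors the smooth computation in Lemma \ref{lem-smooth:zsquared-is-cte}, where constancy of $Z^2$ followed from $QZ+ZQ$ being a real multiple of $I$; in the discrete setting, that algebraic identity is replaced by the geometric fact that $\mathrm{Mob}(3)$ preserves the Minkowski metric.
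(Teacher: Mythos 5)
Your proof is correct, but it takes a genuinely different route from the one in the text. The proof given there is a direct algebraic computation modeled on the smooth Lemma \ref{lem-smooth:zsquared-is-cte}: one writes $Z_q^2-Z_p^2=(Z_q-Z_p)Z_q+Z_p(Z_q-Z_p)$, substitutes $dZ_{pq}=Q\tau_{pq}-\tau_{pq}Q$ and $Z_p\tau_{pq}=\tau_{pq}Z_q$ from Lemma \ref{lem:lcq-in-parts} to reduce the problem to showing that the real scalars $QZ_p+Z_pQ$ and $QZ_q+Z_qQ$ agree, and then checks this by multiplying their difference by $\tau_{pq}$ and using that $Q$ is constant. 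Your argument instead reads the conserved quantity equation as saying $P_p$ and $P_q$ are conjugate by $I+\lambda\tau_{pq}\in G$ and invokes the isometry property of $G$-conjugation on $\mathbb{R}^{4,1}$, then compares coefficients in the polynomial identity $||P_p||^2=||P_q||^2$. Both are sound; the ingredients you use (that $I+\lambda\tau_{pq}$ satisfies \eqref{eqn:Mob3} and \eqref{eqn:Mob3-nonsing} when $1-\lambda a_{pq}\neq 0$, and that such conjugation preserves the Minkowski product) are established earlier in the text. Your version is more conceptual and buys two things: constancy of $\langle Q,Z\rangle$ falls out of the linear coefficient at no extra cost (compare part (6) of Lemma \ref{lem:sixfacts}), and the argument generalizes verbatim to polynomial conserved quantities of arbitrary order --- which is precisely how the text later deduces that $||P_p||^2$ is independent of $p$ in the subsection on Calapso transformations preceding Lemma \ref{lem:sixfacts}. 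The text's computation has the modest advantage of using only the three component equations of Lemma \ref{lem:lcq-in-parts}, with no appeal to the $\text{Mob}(3)$ machinery. One terminological slip: $G$ is the group of matrices while $\text{Mob}(3)$ is the set of conjugation maps, so writing $G\subset\text{Mob}(3)$ is not quite right, though this does not affect the argument.
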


\begin{proof}
We give an argument similar to the argument in the proof of Lemma 
\ref{lem-smooth:zsquared-is-cte}.  
Let $p$ and $q$ be adjacent vertices.  Then (with $\tau = \tau_{pq}$) 
\[ Z_q^2-Z_p^2=(Z_q-Z_p)Z_q + Z_p (Z_q-Z_p) = 
(Q \tau - \tau Q) Z_q + Z_p (Q \tau - \tau Q) = \]\[ 
= Q Z_p \tau - \tau Q Z_q + Z_p Q \tau - \tau Z_q Q = 
(Q Z_p + Z_p Q) \tau - \tau (Q Z_q+Z_q Q) \; . \]
We know that $Q Z_p + Z_p Q$ and $Q Z_q+Z_q Q$ are real 
multiples of the identity matrix, so it will suffice 
to prove $Q Z_p + Z_p Q = Q Z_q+Z_q Q$, which we do as follows: 
\[ (Q Z_p + Z_p Q - Q Z_q - Z_q Q) \tau = - (Q dZ_{pq} + dZ_{pq} Q) 
\tau = \]\[ = - (Q (Q\tau - \tau Q) + (Q \tau - \tau Q) Q) \tau 
= - (Q^2 \tau - \tau Q^2) \tau = 0\; . \]  
\end{proof}

Then, in analogy to \eqref{eqn:Hwithscalarfactor}, we define the 
mean curvature to be 
\[
  H = - \langle Z,Q \rangle 
\]
when we have normalized the conserved quantity by a scalar factor 
so that $||Z||=1$, which we can do because we know from the above 
lemma that $||Z||$ is constant.  This normalization also changes 
$Q$ by a scalar factor, thus potentially 
changing the curvature of the ambient 
space.  Even if we do not normalize the 
linear conserved quantity, we can still define the mean cruvature, 
like as in \eqref{eqn:Hwithscalarfactor}.  

\begin{remark}
One can see, in the case of $M_0 = 
\mathbb{R}^3$, that the above definition is equivalent 
to the definition found by Bobenko and Pinkall \cite{BobPink}: 
$\ef$ is CMC if $|\ef_p-\ef_p^*|^2$ is constant, and then that 
constant is $H_0^{-2}$.  This is proven in 
\cite{BHRS}.  Also, the property of being discrete 
CMC is preserved by Calapso transformations 
(see Lemma \ref{lem:Calapso-preserves-type} below), so the definition 
here is the right one for the space form $M_1=
\mathbb{S}^3$, and also for the space form $M_{-1} = 
\mathbb{H}^3$ when the mean curvature $H_{-1}$ has absolute 
value at least $1$.  
\end{remark}

\begin{remark}
Unlike the case of smooth surfaces, $Z$ will not be called the 
{\em central sphere congruence}.  We will call it the 
{\em mean curvature sphere congruence}, for any space form.  
In the discrete case, the central sphere congruence 
and mean curvature sphere congruence are generally not the same.  
(See Definition \ref{defnofcentralS}.)  
\end{remark}

Lemma \ref{lem:lcq-in-parts} gives the following two corollaries.  
The proofs are 
not hard.  One just needs to note that there exists an 
imaginary quaternion $n_p$ such that we can write 
\[ Z_p = \begin{pmatrix}
C_p \ef_p + n_p & B_p \\ C_p & - C_p \ef_p - n_p 
\end{pmatrix} \; , \;\;\; B_p, C_p \in \mathbb{R} \; , \] 
and then use Lemma \ref{lem:lcq-in-parts} to compute $B_p$.  Here 
we have defined $C_p$ as the lower left entry of $Z_p$ and then 
chosen $n_p$ to be the upper left entry minus 
$C_p$ times $\ef_p$.  

\begin{corollary}\label{cor:AinMarch2008}
Assume $\ef$ has a linear conserved quantity.  If $\kappa = 0$ 
and $Q$ is as in \eqref{choiceofQ}, then 
\[ Z_p = \begin{pmatrix}
H \ef_p + n_p & -n_p \ef_p-\ef_p n_p-H \ef_p^2 \\ H & - H \ef_p - n_p 
\end{pmatrix} \; , \] 
for some constant $H \in \mathbb{R}$.  Furthermore, $|n_p|^2$ is constant 
(because $||Z_p||$ is constant), and 
\[ d\ef^*_{pq} = d(H \ef+n)_{pq} \; , \;\;\; d\ef_{pq} n_q + n_p d\ef_{pq} = 0 \]
and 
\[ H \ef_q^2-H \ef_p^2+n_q \ef_q+\ef_q n_q-n_p \ef_p-\ef_p n_p = 
d\ef^* \ef_q + \ef_p d\ef^* \; . \]
\end{corollary}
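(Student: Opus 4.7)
The plan is to unfold the three conditions of Lemma \ref{lem:lcq-in-parts} with the explicit parametrization of $Z_p$ suggested in the hint, using that the special choice of $Q$ with $\kappa=0$ is extremely sparse. First I would write
\[ Z_p = \begin{pmatrix} C_p \ef_p + n_p & B_p \\ C_p & -C_p \ef_p - n_p \end{pmatrix}, \]
where $C_p$ is defined as the lower-left entry (necessarily real since $Z_p \in \mathbb{R}^{4,1}$), $n_p \in \Im H$ is chosen so that $C_p \ef_p + n_p$ is the upper-left entry (which is in $\Im H$), and $B_p\in\mathbb{R}$ is the upper-right entry. The only nonzero entry of $Q = \begin{pmatrix} 0 & 1 \\ 0 & 0\end{pmatrix}$ then makes $Q\tau_{pq} - \tau_{pq}Q$ very simple; using the formula \eqref{gen-eqn-for-tau} for $\tau_{pq}$ with $d^* := d\ef^*_{pq}$, I would compute
\[ Q\tau_{pq} - \tau_{pq} Q = \begin{pmatrix} d^* & -d^*\ef_q - \ef_p d^* \\ 0 & -d^* \end{pmatrix}. \]

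Then I would read off the four entries of $dZ_{pq} = Q\tau_{pq}-\tau_{pq} Q$. The $(2,1)$-entry immediately gives $C_q - C_p = 0$, so $C_p = H$ is a real constant. The $(1,1)$-entry yields $d(H\ef+n)_{pq} = d\ef^*_{pq}$, and the $(1,2)$-entry, after substituting the formula for $B_p$ obtained in the next step, gives the last relation in the statement. Next I would exploit the condition $\tau_{pq} Z_q = Z_p \tau_{pq}$: multiplying out both sides, the $(2,1)$-entries yield $d^* n_q + n_p d^* = 0$, and since $d\ef^*_{pq}$ is a nonzero real scalar multiple of $d\ef_{pq}$ (by Definition \ref{discreteChristoffeltransform} together with the fact that $d\ef_{pq}$ is an imaginary quaternion, hence $(d\ef_{pq})^{-1}$ is a real multiple of $d\ef_{pq}$), this is equivalent to $d\ef_{pq} n_q + n_p d\ef_{pq} = 0$. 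Comparing the $(1,1)$-entries and cancelling $d^*$ on the right then forces $B_p = -n_p \ef_p - \ef_p n_p - H\ef_p^2$, confirming the claimed form of $Z_p$.

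Finally, to see that $|n_p|^2$ is constant, I would compute $||Z_p||^2$ directly from the matrix form. Using the identity $\langle X,X\rangle \cdot I = -X^2$ together with the block structure, the square of the upper-left entry works out to
\[ (H\ef_p + n_p)^2 + H B_p = H^2\ef_p^2 + H(\ef_p n_p + n_p\ef_p) + n_p^2 + H B_p = n_p^2, \]
so $||Z_p||^2 = -n_p^2 = |n_p|^2$, since $n_p\in\Im H$. Now Lemma \ref{Zisctecte} gives $|n_p|^2 = \mathrm{const}$. The main obstacle is really only bookkeeping: one must keep track of which entries involve $\ef_p$ versus $\ef_q$ (since quaternions do not commute) and verify that the $\tau_{pq} Z_q = Z_p \tau_{pq}$ relation does not accidentally over-determine the system. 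Everything else follows from the extreme simplicity of $Q$ in the Euclidean normalization.
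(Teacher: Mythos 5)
Your proposal is correct and follows exactly the route the paper sketches in the paragraph preceding the corollary: parametrize $Z_p$ by its lower-left entry $C_p$, an imaginary quaternion $n_p$, and a real $B_p$, then unfold the three conditions of Lemma \ref{lem:lcq-in-parts} using the sparse form of $Q$ at $\kappa=0$ to extract $C_p=H$, the three displayed relations, and $B_p$; the norm computation $\langle Z_p,Z_p\rangle=|n_p|^2$ combined with Lemma \ref{Zisctecte} is likewise the intended argument. Your write-up in fact supplies more detail than the paper, which leaves the entry-by-entry bookkeeping to the reader.
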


We note that the equation $d\ef_{pq} n_q + n_p d\ef_{pq} = 0$ 
could have been replaced with the equivalent equation 
$d\ef^*_{pq} n_q + n_p d\ef^*_{pq} = 0$ in the above corollary.  

\begin{corollary}\label{cor:BinMarch2008}
Assume $\ef$ has a linear conserved quantity and $Q$ is as in 
\eqref{choiceofQ} for some $\kappa$.  Then 
\[ Z_p = \begin{pmatrix}
H_p \ef_p + n_p & -n_p \ef_p-\ef_p n_p-H_p \ef_p^2 \\ H_p & - H_p \ef_p - n_p 
\end{pmatrix} \; , \;\;\; H_p \in \mathbb{R} \; , \] 
for some function $H_p$ from the lattice domain of $\ef$ 
to $\mathbb{R}$.  Furthermore, $|n_p|^2$ is constant.
\end{corollary}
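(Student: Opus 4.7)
The plan is to fix the general parametrization of $Z_p\in\mathbb{R}^{4,1}$, establish the discrete analogue of Lemma \ref{lem:X-perp-to-Z-and-dZ} (namely $\langle F_p,Z_p\rangle=0$) in order to identify the upper-right entry, and then apply Lemma \ref{Zisctecte} for the last claim. By \eqref{star8point1} we may always write
\[
Z_p = \begin{pmatrix} z_p & B_p \\ H_p & -z_p \end{pmatrix},\qquad z_p\in\Im H,\ B_p,H_p\in\mathbb{R}.
\]
Setting $n_p:=z_p-H_p\ef_p\in\Im H$ puts $Z_p$ into the shape claimed by the corollary, with $B_p$ the only quantity still to identify.

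The key observation for identifying $B_p$ is that $F_p^2=0$ as a $2\times 2$ matrix (a direct check from \eqref{star8point4}, equivalent to $F_p\in L^4$), from which \eqref{gen-eqn-for-tau} immediately gives $F_p\tau_{pq}=0$. Combined with $\tau_{pq}Z_q=Z_p\tau_{pq}$ from Lemma \ref{lem:lcq-in-parts}, this yields
\[
(F_pZ_p+Z_pF_p)\,\tau_{pq}=F_p\,\tau_{pq}\,Z_q+Z_p\,F_p\,\tau_{pq}=0.
\]
Since $F_pZ_p+Z_pF_p=-2\langle F_p,Z_p\rangle I$ is a real multiple of the identity, and $\tau_{pq}\ne 0$ (as distinct vertices force $a_{pq}\ne 0$), we conclude $\langle F_p,Z_p\rangle=0$.

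Expanding $F_pZ_p+Z_pF_p=0$ block-wise, and using $\ef_p^2=-|\ef_p|^2\in\mathbb{R}$ together with $H_p,B_p\in\mathbb{R}$, the off-diagonal entries vanish automatically while the diagonal entries both reduce to $B_p=H_p\ef_p^2-(\ef_p z_p+z_p\ef_p)$; substituting $z_p=H_p\ef_p+n_p$ gives the stated formula $B_p=-n_p\ef_p-\ef_p n_p-H_p\ef_p^2$. For constancy of $|n_p|^2$, I would compute $Z_p^2$ directly: because $z_p$ is imaginary and $H_p,B_p$ real, off-diagonal entries cancel and $Z_p^2=(-|z_p|^2+H_pB_p)I$, so $\|Z_p\|^2=|z_p|^2-H_pB_p$; substituting the derived expressions, the $H_p^2|\ef_p|^2$ and $H_p(\ef_p n_p+n_p\ef_p)$ contributions cancel exactly, leaving $\|Z_p\|^2=|n_p|^2$, at which point Lemma \ref{Zisctecte} finishes the proof. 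The main obstacle is purely organizational: keeping track of noncommutativity in the expansion of $F_pZ_p+Z_pF_p$; what makes everything collapse to a single real equation is precisely that $\ef_p^2$, $H_p$ and $B_p$ are all real and hence central.
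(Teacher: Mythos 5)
Your proof is correct and follows essentially the same route as the paper's (very terse) argument: parametrize $Z_p$ as a general element of $\mathbb{R}^{4,1}$, set $n_p$ equal to the upper-left entry minus $H_p\ef_p$, determine the upper-right entry from the orthogonality $F_p\perp Z_p$ forced by $\tau_{pq}Z_q=Z_p\tau_{pq}$ of Lemma \ref{lem:lcq-in-parts}, and observe $\|Z_p\|^2=|n_p|^2$ so that Lemma \ref{Zisctecte} gives the constancy. The block computations you supply are exactly the details the paper omits, and they check out.
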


In light of Lemma \ref{lem:lcq-in-parts}, we now give three properties 
of linear conserved quantities: 

\begin{lemma}\label{useful-lemma1} 
Let $F$ be a Moutard lift of a discrete isothermic surface 
$\ef$ having a linear conserved quantity $Z+\lambda Q$.  
Suppose further that $F$ satisfies \eqref{eqn:Moutard-tau}.  
Then $dZ_{pq}=Q \tau_{pq} - \tau_{pq} Q$ is equivalent to 
\begin{equation}\label{eqn:threestars} 
Z_q = Z_p - (Q F_p+F_p Q) F_q + (Q F_q+F_q Q) F_p 
\end{equation}
for all adjacent $p,q$.  
\end{lemma}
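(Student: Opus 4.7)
The plan is to verify the equivalence by direct algebraic manipulation, exploiting the fundamental property of $\mathbb{R}^{4,1}$ vectors that their anticommutator is always a real scalar multiple of the identity (cf. Equation \eqref{star8point2}, which gives $XY+YX = -2\langle X,Y\rangle I$ for all $X,Y \in \mathbb{R}^{4,1}$).

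First, I would use the Moutard hypothesis $\tau_{pq} = -F_p F_q$ to rewrite the left-hand side of the equation $dZ_{pq} = Q\tau_{pq} - \tau_{pq}Q$ as
\[
Z_q - Z_p = -QF_pF_q + F_pF_qQ.
\]
Next, I would expand the proposed right-hand side of \eqref{eqn:threestars}:
\[
-(QF_p+F_pQ)F_q + (QF_q+F_qQ)F_p = -QF_pF_q - F_pQF_q + QF_qF_p + F_qQF_p.
\]
Subtracting these two expressions, the equivalence reduces to showing that
\[
F_pF_qQ + F_pQF_q - QF_qF_p - F_qQF_p = 0,
\]
which regroups as
\[
F_p(F_qQ + QF_q) = (F_qQ + QF_q)F_p.
\]

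The key observation that closes the argument is that $F_qQ + QF_q = -2\langle F_q,Q\rangle \, I$ is a real scalar multiple of the identity, so it commutes with $F_p$ trivially; this is exactly the property noted just after Lemma \ref{lem-smooth:zsquared-is-cte} and used repeatedly in the smooth case. Since this identity holds unconditionally (it uses only that $F_q,Q \in \mathbb{R}^{4,1}$), the two sides agree step by step, giving the equivalence in both directions. No obstacle is anticipated; the whole point of working with a Moutard lift satisfying \eqref{eqn:Moutard-tau} is precisely that $\tau_{pq}$ factors cleanly as $-F_pF_q$, so that the relation $dZ_{pq} = Q\tau_{pq}-\tau_{pq}Q$ becomes a quadratic expression in $F_p,F_q$ whose reorganization by scalar anticommutators yields \eqref{eqn:threestars}.
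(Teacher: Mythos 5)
Your proof is correct and is essentially the paper's own argument: both hinge on writing $\tau_{pq}=-F_pF_q$ and observing that the anticommutator $QF_q+F_qQ$ is a real scalar multiple of $I$ and therefore commutes with $F_p$. The paper just transforms the right-hand side of \eqref{eqn:threestars} directly into $Q\tau_{pq}-\tau_{pq}Q$ rather than subtracting the two sides, which is a cosmetic difference.
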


\begin{proof}
Because $Q F_q+F_q Q$ is a real scalar multiple of $I$ for any $p$, 
we have $-(Q F_p+F_p Q) F_q + (Q F_q+F_q Q) F_p 
= -(Q F_p+F_p Q) F_q + F_p (Q F_q+F_q Q)
= F_p F_q Q-Q F_p F_q = Q \tau - \tau Q$. 
\end{proof}

\begin{corollary}
Once $Z$ is determined at one vertex $p$, it is uniquely 
determined via \eqref{eqn:threestars} at all vertices.  
\end{corollary}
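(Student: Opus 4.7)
The plan is straightforward because the work is essentially done by Lemma~\ref{useful-lemma1}. That lemma rewrites the equation $dZ_{pq}=Q\tau_{pq}-\tau_{pq}Q$ as the explicit recursion \eqref{eqn:threestars}. Observe that, when $F$ is a Moutard lift satisfying \eqref{eqn:Moutard-tau}, the right-hand side of \eqref{eqn:threestars} expresses the increment $Z_q-Z_p$ purely in terms of $F_p$, $F_q$, and the constant $Q$; the value $Z_p$ itself does not appear on the right. Thus \eqref{eqn:threestars} defines an $\mathbb{R}^{4,1}$-valued $1$-form on the edges of the lattice that is computable from the surface data alone.

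First I would carry out the propagation step. Given the value of $Z$ at a single vertex $p_0$ and any adjacent vertex $q$, formula \eqref{eqn:threestars} returns $Z_q$ outright. Iterating this along an arbitrary edge path from $p_0$ to a vertex $p_n$ of the (connected) lattice domain then yields a candidate value of $Z_{p_n}$, namely $Z_{p_0}$ plus the sum of the edge increments along the path.

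The only remaining point is well-definedness of this propagation. This is the step I would treat with the most care, but it requires no computation: by hypothesis a linear conserved quantity already exists on the whole domain, so by Lemma~\ref{useful-lemma1} the recursion \eqref{eqn:threestars} is satisfied on \emph{every} edge simultaneously. Consequently, traversing the four edges of any elementary quadrilateral and summing the increments returns $Z$ to its starting value, and by concatenating quadrilaterals, so does traversing any closed edge loop. Hence the sum of edge increments depends only on the endpoints of the path, and the value propagated from $Z_{p_0}$ to $p_n$ is independent of the path chosen. This gives uniqueness, and the corollary follows. There is no genuine obstacle; the statement is just the observation that the ``discrete exact differential'' produced by Lemma~\ref{useful-lemma1} can be integrated from a single initial value.
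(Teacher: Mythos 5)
Your proposal is correct and matches the paper's (implicit) reasoning: the paper states this corollary without proof precisely because the right-hand side of \eqref{eqn:threestars} expresses $Z_q$ in terms of $Z_p$ and the surface data $F_p$, $F_q$, $Q$ alone, so the value at one vertex propagates uniquely along any edge path in the connected domain. Your extra care about path-independence is sound (the existing global $Z$ is itself a potential for the edge increments, so loop sums vanish), though for pure uniqueness one could note even more simply that two solutions of the recursion agreeing at $p_0$ must agree at every neighbour and hence, by induction along paths, everywhere.
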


\begin{lemma}\label{useful-lemma2} 
Assume the conditions in Lemma \ref{useful-lemma1}.  Then 
$Z_p \tau_{pq}=\tau_{pq} Z_q$ for all adjacent $p,q$ 
is equivalent to $Z_p F_p + F_p Z_p = 0$ for all $p$.  
\end{lemma}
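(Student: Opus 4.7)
The plan is to translate both sides of $Z_p \tau_{pq} = \tau_{pq} Z_q$ into a statement about inner products in $\mathbb{R}^{4,1}$, using the Moutard normalization $\tau_{pq} = -F_p F_q$ from \eqref{eqn:Moutard-tau} together with the basic identity $\langle X,Y\rangle I = -\tfrac{1}{2}(XY+YX)$ of \eqref{star8point2}. Writing $Z_p F_p = -F_p Z_p - 2\langle Z_p,F_p\rangle I$ and $Z_p F_q = -F_q Z_p - 2\langle Z_p,F_q\rangle I$, one computes
\[
 Z_p F_p F_q = F_p F_q Z_p + 2\langle Z_p,F_q\rangle F_p - 2\langle Z_p,F_p\rangle F_q.
\]

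Next I would plug \eqref{eqn:threestars} into $F_p F_q Z_q$. Since $QF_p+F_pQ = -2\langle Q,F_p\rangle I$ and similarly for $F_q$, equation \eqref{eqn:threestars} reads $Z_q = Z_p + 2\langle Q,F_p\rangle F_q - 2\langle Q,F_q\rangle F_p$. Using $F_q^2 = 0$ together with the Moutard identity $F_p F_q F_p = a_{pq} F_p$ (which follows from $F_qF_p = a_{pq}I - F_pF_q$ and $F_p^2=0$), this gives
\[
 F_p F_q Z_q = F_p F_q Z_p - 2\langle Q,F_q\rangle a_{pq}\, F_p.
\]
Subtracting, the equation $Z_p \tau_{pq} = \tau_{pq} Z_q$ becomes the single linear relation
\[
 \bigl(\langle Z_p,F_q\rangle + a_{pq}\langle Q,F_q\rangle\bigr) F_p \;=\; \langle Z_p,F_p\rangle \, F_q.
\]

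Because $\langle F_p,F_q\rangle = -\tfrac12 a_{pq} \neq 0$ (so $F_p$ and $F_q$ are linearly independent lightlike vectors), this relation splits into the two scalar conditions
\[
 \langle Z_p,F_p\rangle = 0, \qquad \langle Z_p,F_q\rangle = -a_{pq}\langle Q,F_q\rangle.
\]
The first is precisely $Z_p F_p + F_p Z_p = 0$. For the forward direction, assuming $Z_r F_r + F_r Z_r = 0$ for \emph{all} $r$, pairing the explicit formula $Z_q - Z_p = 2\langle Q,F_p\rangle F_q - 2\langle Q,F_q\rangle F_p$ with $F_q$ (and using $\langle F_q,F_q\rangle = 0$, $\langle F_p,F_q\rangle = -\tfrac12 a_{pq}$, and $\langle Z_q,F_q\rangle = 0$) yields exactly the second scalar condition, so both hold and $Z_p \tau_{pq} = \tau_{pq} Z_q$ follows. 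Conversely, if $Z_p \tau_{pq} = \tau_{pq} Z_q$ holds along every edge, the first scalar condition $\langle Z_p,F_p\rangle = 0$ holds at every vertex $p$ that has a neighbor, i.e.\ at every vertex, which is $Z_p F_p + F_p Z_p = 0$.

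The only mildly delicate point is the independence of $F_p$ and $F_q$, which is guaranteed since $\langle F_p,F_q\rangle = -\tfrac12 a_{pq}$ and $a_{pq}\neq 0$ on a genuine edge of the discrete isothermic surface; once that is in hand, the rest is bookkeeping with the Moutard relations. The slight subtlety is that the matrix equation encodes two scalar conditions but the lemma asserts equivalence with only one of them — the resolution, as above, is that the second condition is automatically propagated by \eqref{eqn:threestars} once $\langle Z,F\rangle$ vanishes vertex-by-vertex.
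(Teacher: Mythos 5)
Your proof is correct, but it is organized differently from the one in the text. The proof given here never decomposes anything into the basis $\{F_p,F_q\}$: for the forward direction it simply right-multiplies $Z_p\tau=\tau Z_q$ appropriately and uses the annihilation relations $Z_pF_p\tau = 0$ (from $F_p^2=0$) and $F_p\tau=0$ to get $(F_pZ_p+Z_pF_p)\tau=0$, whence the scalar $F_pZ_p+Z_pF_p$ vanishes; for the converse it forms $(F_pZ_p+Z_pF_p)F_q-F_p(F_qZ_q+Z_qF_q)=0$ and collapses it to $Z_pF_pF_q=F_pF_qZ_q$ using Lemma \ref{useful-lemma1} and $F_q^2=0$. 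Your route instead rewrites $Z_p\tau_{pq}=\tau_{pq}Z_q$ entirely in the $F_p,F_q$ coordinates, which exposes that the edge relation is really \emph{two} scalar conditions, $\langle Z_p,F_p\rangle=0$ and $\langle Z_p,F_q\rangle=-a_{pq}\langle Q,F_q\rangle$, and that the second is automatically propagated from the first by \eqref{eqn:threestars}. That extra structural information is the payoff of your version; the cost is that you must separately justify the linear independence of $F_p$ and $F_q$ (which you do, via $\langle F_p,F_q\rangle=-\tfrac12 a_{pq}\neq 0$ and nullity of the lifts), a point the paper's argument sidesteps entirely. Both proofs use the same inputs (the Moutard normalization \eqref{eqn:Moutard-tau} and the propagation formula of Lemma \ref{useful-lemma1}), and all of your intermediate identities check out, including $F_pF_qF_p=a_{pq}F_p$ and the pairing computation $\langle Z_q,F_q\rangle-\langle Z_p,F_q\rangle=a_{pq}\langle Q,F_q\rangle$.
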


\begin{proof}
Setting $\tau = \tau_{pq}$, 
$Z_p \tau = \tau Z_q$ implies $(F_p Z_p+Z_p F_p) \tau = 
F_p Z_p \tau = F_p \tau Z_q = 0 \cdot Z_q = 0$.  So 
$F_p Z_p+Z_p F_p=0$.  Conversely, $(F_p Z_p+Z_p F_p) F_q - F_p 
(F_q Z_q+Z_q F_q) = 0 \cdot F_q - F_p \cdot 0 = 0$, implying 
$Z_p F_p F_q = F_p F_q Z_q$ by Lemma 
\ref{useful-lemma1}, and then $Z_p \tau = \tau Z_q$.  
\end{proof}

\begin{remark}
Suppose that $\ef$ has a conserved quantity $P = 
Q + \lambda \cdot 0$ of order $0$ 
with $||P||^2$ not equal to zero.  Then $\ef$ is contained in 
a sphere, like for the case of smooth surfaces (Theorem 
\ref{cq-sphere-thm}), and this can be seen as follows: 
$P=Z=Q$ (i.e. $Q$ is both the 
highest and lowest coefficient of $P$) 
is constant in the case of order $0$, 
with $||Z||^2 \neq 0$ by assumption.  Thus the upcoming Lemma 
\ref{lem:sixfacts} 
tells us $||Z||^2 > 0$ and $Z \perp F_p$ for all $p$.  So $Z$ 
gives a sphere via \eqref{eq:S-tilde-sphere} and $\ef_p$ lies in that 
sphere for all $p$.  
\end{remark}

\subsection{On uniqueness of linear conserved quantities} 

When the domain of $\ef$ is 
\[ \{ (m,n) \in \mathbb{Z}^2 \, | \, 1 \leq m,n \leq k \} \; , \]  
or any translation of that domain, we say $\ef$ is a $k$ by 
$k$ net.  The vertex star of a vertex $\ef_{(m,n)}$ consists 
of it and its four neighboring vertices $\ef_{(m+1,n)}$, 
$\ef_{(m,n+1)}$, $\ef_{(m-1,n)}$, $\ef_{(m,n-1)}$.  When all 
five points in a vertex star are contained in a single 
sphere, as say that the vertex star is spherical.  

\begin{lemma}\label{possiblenewentry}
(\cite{BHRS}) 
Any $5$ by $5$ isothermic net whose centermost 
vertex star is not spherical 
has a linear conserved quantity.
\end{lemma}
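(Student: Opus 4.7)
The plan is to reduce the existence of a linear conserved quantity to a linear algebra problem on a $10$-dimensional space, then use the non-spherical hypothesis on the center vertex star to isolate the relevant constraint structure, and finally exploit the rigidity of discrete isothermicity to produce the needed dependency.

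First, fix a Moutard lift $F$ of $\ef$ satisfying $F_pF_q + F_qF_p = a_{pq}$ as in \eqref{eqn:Moutard}. By Lemmas \ref{lem:lcq-in-parts}, \ref{useful-lemma1} and \ref{useful-lemma2}, a linear conserved quantity on $\ef$ amounts to a pair $(Q, Z_{p_c}) \in \mathbb{R}^{4,1}\oplus\mathbb{R}^{4,1}$, where $p_c = (3,3)$ denotes the centermost vertex, together with the values
\[
 Z_q = Z_p + 2\langle Q, F_p\rangle F_q - 2\langle Q, F_q\rangle F_p
\]
propagated along edges (path-independent by \eqref{eqn:tau-relation}), subject to the orthogonality conditions $\langle Z_p, F_p\rangle = 0$ at each of the $25$ vertices. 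These $25$ scalar conditions assemble into a linear map $\Psi : \mathbb{R}^{10}\to\mathbb{R}^{25}$, and existence of a linear conserved quantity is precisely the statement that $\ker \Psi$ contains an element with $Q\neq 0$.

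Second, examine the five conditions coming from the centermost vertex star $\{p_c,(2,3),(4,3),(3,2),(3,4)\}$. At $p_c$ the condition is $\langle Z_{p_c}, F_{p_c}\rangle = 0$, and at each neighbor $q$ of $p_c$ it reduces, after one step of propagation, to $\langle Z_{p_c}, F_q\rangle = -a_{p_cq}\langle Q, F_q\rangle$. Under the non-spherical hypothesis the five lifts $F_{p_c}, F_{(2,3)}, F_{(4,3)}, F_{(3,2)}, F_{(3,4)}$ are linearly independent in $\mathbb{R}^{4,1}$, so the corresponding five functionals on $\mathbb{R}^{10}$ have rank exactly five and determine $Z_{p_c}$ uniquely as a linear function $L(Q)$ of $Q \in \mathbb{R}^{4,1}$. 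Existence of a linear conserved quantity thus reduces to showing that the remaining twenty functionals, restricted to the five-parameter family $\{(Q, L(Q)) : Q\in\mathbb{R}^{4,1}\}$, share a common zero in $Q$.

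Third and most substantial, produce this common zero using the rigid combinatorial structure of discrete isothermicity. The key tools are the Moutard identity $F_pF_q+F_qF_p = a_{pq}$, the cross-ratio factorizations $a_{pq} = a_{sr}$ and $a_{ps} = a_{qr}$ on every face, and Lemma \ref{lem:diag-vertex-star}, which furnishes one linear relation among five specific lift vectors at each of the nine interior vertices of the $5\times 5$ grid. Propagating $Z$ outward from $p_c$ ring by ring and systematically applying these relations to the expressions $\langle Z_p, F_p\rangle$, one should be able to collapse the twenty outer orthogonalities to at most four independent linear constraints on $Q$, leaving a one-parameter family of $Q$ still free. I expect this algebraic bookkeeping — essentially a careful induction on the four rings of vertices surrounding $p_c$ — to be the principal difficulty of the proof. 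Finally, any kernel element with $Q = 0$ forces $Z$ to be a constant vector orthogonal to every $F_p$; but the center star alone spans $\mathbb{R}^{4,1}$ under the non-spherical hypothesis, so such a $Z$ must vanish, and thus every nontrivial element of $\ker \Psi$ has $Q\neq 0$ and yields a genuine linear conserved quantity $Q + \lambda Z$.
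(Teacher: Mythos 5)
Your setup coincides with the paper's: fix a Moutard lift normalized so that $F_pF_q+F_qF_p=a_{pq}$, reduce a linear conserved quantity to the data $(Q,Z_{p_c})\in\mathbb{R}^{4,1}\oplus\mathbb{R}^{4,1}$ propagated by $Z_q=Z_p+2\langle Q,F_p\rangle F_q-2\langle Q,F_q\rangle F_p$, use the non-spherical hypothesis to make the Gram matrix of the five center-star lifts invertible and hence solve the five central orthogonality conditions for $Z_{p_c}=L(Q)$, and then argue that the remaining twenty conditions impose at most four further constraints on $Q$. This is exactly the paper's strategy: its relation $A\vec c=2BA\vec q$ encodes your map $L$, and its $4\times 5$ homogeneous system $(EA^{-1}BA+CG-CE-D\tilde A)\vec q=0$ is precisely your ``at most four remaining constraints,'' which always admits a nonzero $\vec q$ and hence a nonzero $Q$.

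The gap is in your third step. You assert that the twenty outer orthogonalities collapse to at most four constraints, correctly flag this as the principal difficulty, and point to Lemma \ref{lem:diag-vertex-star} and the cross-ratio factorization as the tools --- but you never supply the mechanism, and the lemma you cite is not the relevant one (it concerns cosphericity of diagonal vertex stars and plays no role here). The identity that actually does the work is a per-quadrilateral propagation statement: on a single concircular quadrilateral $pqrs$ with Moutard lift, if $\langle Z,F\rangle=0$ holds at $p$, $q$ and $s$ and $Z$ is propagated by your formula, then $\langle Z_r,F_r\rangle=0$ holds automatically at the fourth vertex; this follows from $F_r-F_p=\alpha(F_q-F_s)$ with $\alpha=\langle F_q-F_s,F_p\rangle/\langle F_q,F_s\rangle$, and is the computation the paper carries out in detail in the proof of the subsequent $3\times 3$ lemma. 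Granting it, an outward induction shows that every outer vertex except the four axis vertices at distance two from the center is the fourth vertex of a quadrilateral whose other three vertices are already handled, so exactly those four conditions must be imposed; they yield the $4\times 5$ system above, and your closing observation that $Q=0$ would force $Z\equiv 0$ (since the center star spans $\mathbb{R}^{4,1}$) then guarantees $Q\neq 0$. Without this quadrilateral propagation identity your ring induction does not close up, and the count ``at most four'' remains unsubstantiated.
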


\begin{proof}
We take a Moutard lift $F$ such that $\tau_{pq}=-F_pF_q$.  
We need to find a constant $Q$ and a variable $Z$ so that 
\begin{equation}\label{one}
Z_q=Z_p - (Q F_p+F_p Q) F_q+F_p (Q F_q + F_q Q)
\end{equation}
and 
\begin{equation}\label{two}
Z_p \tau_{pq} = \tau_{pq} Z_q
\end{equation}
hold.  Let us take the 
domain of the mesh to be $\{(m,n) \, | \, |m|, |n| \leq 2 \}$.  
By assumption, the centermost vertex star is nonspherical, so 
\begin{equation}\label{three}
  \dim \text{span} \{ F_{0,0},F_{1,0},F_{0,1},F_{-1,0},F_{0,-1} \} = 5 \; . 
\end{equation}
(Note that we have abbreviated the notation $F_{(i,j)}$ to 
$F_{i,j}$ here, because that will be convenient in this proof.)  
Set
\begin{equation}\label{eqn-star-3}
  Q = q_{0,0} F_{0,0} + q_{1,0} F_{1,0} + q_{0,1} F_{0,1} + q_{-1,0} 
  F_{-1,0} + q_{0,-1} F_{0,-1} 
\end{equation}
and 
\begin{equation}\label{eqn-star-4}
  Z_{0,0} = 
  c_{0,0} F_{0,0} + c_{1,0} F_{1,0} + c_{0,1} F_{0,1} + c_{-1,0} 
  F_{-1,0} + c_{0,-1} F_{0,-1} \; . 
\end{equation}
Define $\vec q = (q_{0,0},q_{1,0},q_{0,1},q_{-1,0},q_{0,-1})^t$ 
and $\vec c = (c_{0,0},c_{1,0},c_{0,1},c_{-1,0},c_{0,-1})^t$ and 
\[
 A = \begin{pmatrix}
     \langle F_{0,0},F_{0,0} \rangle & \langle F_{1,0},F_{0,0} \rangle &
      \langle F_{0,1},F_{0,0} \rangle & \langle F_{-1,0},F_{0,0} \rangle
      & \langle F_{0,-1},F_{0,0} \rangle \\ 
     \langle F_{0,0},F_{1,0} \rangle & \langle F_{1,0},F_{1,0} \rangle &
      \langle F_{0,1},F_{1,0} \rangle & \langle F_{-1,0},F_{1,0} \rangle
      & \langle F_{0,-1},F_{1,0} \rangle \\ 
     \langle F_{0,0},F_{0,1} \rangle & \langle F_{1,0},F_{0,1} \rangle &
      \langle F_{0,1},F_{0,1} \rangle & \langle F_{-1,0},F_{0,1} \rangle
      & \langle F_{0,-1},F_{0,1} \rangle \\ 
     \langle F_{0,0},F_{-1,0} \rangle & \langle F_{1,0},F_{-1,0} \rangle &
      \langle F_{0,1},F_{-1,0} \rangle & \langle F_{-1,0},F_{-1,0} \rangle
      & \langle F_{0,-1},F_{-1,0} \rangle \\ 
     \langle F_{0,0},F_{0,-1} \rangle & \langle F_{1,0},F_{0,-1} \rangle &
      \langle F_{0,1},F_{0,-1} \rangle & \langle F_{-1,0},F_{0,-1} \rangle
      & \langle F_{0,-1},F_{0,-1} \rangle 
 \end{pmatrix} \; , 
\]
\[
 \tilde A = \begin{pmatrix}
     \langle F_{0,0},F_{1,0} \rangle & \langle F_{1,0},F_{1,0} \rangle &
      \langle F_{0,1},F_{1,0} \rangle & \langle F_{-1,0},F_{1,0} \rangle
      & \langle F_{0,-1},F_{1,0} \rangle \\ 
     \langle F_{0,0},F_{0,1} \rangle & \langle F_{1,0},F_{0,1} \rangle &
      \langle F_{0,1},F_{0,1} \rangle & \langle F_{-1,0},F_{0,1} \rangle
      & \langle F_{0,-1},F_{0,1} \rangle \\ 
     \langle F_{0,0},F_{-1,0} \rangle & \langle F_{1,0},F_{-1,0} \rangle &
      \langle F_{0,1},F_{-1,0} \rangle & \langle F_{-1,0},F_{-1,0} \rangle
      & \langle F_{0,-1},F_{-1,0} \rangle \\ 
     \langle F_{0,0},F_{0,-1} \rangle & \langle F_{1,0},F_{0,-1} \rangle &
      \langle F_{0,1},F_{0,-1} \rangle & \langle F_{-1,0},F_{0,-1} \rangle
      & \langle F_{0,-1},F_{0,-1} \rangle 
 \end{pmatrix} \; , 
\]
\[
 E = \begin{pmatrix}
     \langle F_{0,0},F_{2,0} \rangle & \langle F_{1,0},F_{2,0} \rangle &
      \langle F_{0,1},F_{2,0} \rangle & \langle F_{-1,0},F_{2,0} \rangle
      & \langle F_{0,-1},F_{2,0} \rangle \\ 
     \langle F_{0,0},F_{0,2} \rangle & \langle F_{1,0},F_{0,2} \rangle &
      \langle F_{0,1},F_{0,2} \rangle & \langle F_{-1,0},F_{0,2} \rangle
      & \langle F_{0,-1},F_{0,2} \rangle \\ 
     \langle F_{0,0},F_{-2,0} \rangle & \langle F_{1,0},F_{-2,0} \rangle &
      \langle F_{0,1},F_{-2,0} \rangle & \langle F_{-1,0},F_{-2,0} \rangle
      & \langle F_{0,-1},F_{-2,0} \rangle \\ 
     \langle F_{0,0},F_{0,-2} \rangle & \langle F_{1,0},F_{0,-2} \rangle &
      \langle F_{0,1},F_{0,-2} \rangle & \langle F_{-1,0},F_{0,-2} \rangle
      & \langle F_{0,-1},F_{0,-2} \rangle 
 \end{pmatrix} \; , 
\]
\[
 G = \begin{pmatrix}
     \langle F_{0,0},F_{0,0} \rangle & \langle F_{0,0},F_{1,0} \rangle &
      \langle F_{0,0},F_{0,1} \rangle & \langle F_{0,0},F_{-1,0} \rangle
      & \langle F_{0,0},F_{0,-1} \rangle \\ 
     \langle F_{0,0},F_{0,0} \rangle & \langle F_{0,0},F_{1,0} \rangle &
      \langle F_{0,0},F_{0,1} \rangle & \langle F_{0,0},F_{-1,0} \rangle
      & \langle F_{0,0},F_{0,-1} \rangle \\ 
     \langle F_{0,0},F_{0,0} \rangle & \langle F_{0,0},F_{1,0} \rangle &
      \langle F_{0,0},F_{0,1} \rangle & \langle F_{0,0},F_{-1,0} \rangle
      & \langle F_{0,0},F_{0,-1} \rangle \\ 
     \langle F_{0,0},F_{0,0} \rangle & \langle F_{0,0},F_{1,0} \rangle &
      \langle F_{0,0},F_{0,1} \rangle & \langle F_{0,0},F_{-1,0} \rangle
      & \langle F_{0,0},F_{0,-1} \rangle 
 \end{pmatrix} \; , 
\]
\[
 B = \begin{pmatrix}
     \langle F_{0,0},F_{0,0} \rangle & 0 &
      0 & 0
      & 0 \\ 
     0 & \langle F_{0,0},F_{1,0} \rangle & 0 &
      0 & 0 \\ 
     0 & 0 & \langle F_{0,0},F_{0,1} \rangle & 0 
      & 0 \\ 
     0 & 0 & 0 & \langle F_{0,0},F_{-1,0} \rangle & 0  \\ 
     0 & 0 & 0 & 0 & \langle F_{0,0},F_{0,-1} \rangle 
 \end{pmatrix} \; , 
\]
\[
 C = \begin{pmatrix}
     \langle F_{1,0},F_{2,0} \rangle & 0 &
      0 & 0 \\ 
     0 & \langle F_{0,1},F_{0,2} \rangle & 0 &
      0 \\ 
     0 & 0 & \langle F_{-1,0},F_{-2,0} \rangle & 0 \\ 
     0 & 0 & 0 & \langle F_{0,-1},F_{0,-2} \rangle  \end{pmatrix} \; , 
\]
\[
 D = \begin{pmatrix}
     \langle F_{0,0},F_{2,0} \rangle & 0 &
      0 & 0 \\ 
     0 & \langle F_{0,0},F_{0,2} \rangle & 0 &
      0 \\ 
     0 & 0 & \langle F_{0,0},F_{-2,0} \rangle & 0 \\ 
     0 & 0 & 0 & \langle F_{0,0},F_{0,-2} \rangle
 \end{pmatrix} \; . 
\]

We need to know that $A$ is invertible, which follows from 
\eqref{three}, in this way: We can write $A$ as
\[ A = 
\begin{pmatrix}
F_{0,0} & F_{1,0} & 
F_{0,1} & 
F_{-1,0} & 
F_{0,-1} \\
\end{pmatrix}^t \cdot 
\begin{pmatrix}
1 & 0 & 0 & 0 & 0 \\
0 & 1 & 0 & 0 & 0 \\
0 & 0 & 1 & 0 & 0 \\
0 & 0 & 0 & 1 & 0 \\
0 & 0 & 0 & 0 & -1 \\
\end{pmatrix} \cdot 
\begin{pmatrix}
F_{0,0} & F_{1,0} & 
F_{0,1} & 
F_{-1,0} & 
F_{0,-1} \\
\end{pmatrix} 
\] 
(here we are regarding $F_{0,0}$, $F_{\pm 1,0}$, $F_{0,\pm 1}$ 
as $5$-vectors, not 2 by 2 quaternionic matrices), 
so $\det A \neq 0$ if and only if 
\[ \det 
\begin{pmatrix}
F_{0,0} & F_{1,0} & 
F_{0,1} & 
F_{-1,0} & 
F_{0,-1} \\
\end{pmatrix} \neq 0 \; , 
\]
but this last condition follows from \eqref{three}.  

Because of 
\[
 \langle F_{0,0}, Z_{0,0} \rangle = 0 \; , 
\]
\[
  \langle F_{1,0}, Z_{0,0} \rangle = 2
          \langle F_{1,0}, Q \rangle \langle F_{0,0}, F_{1,0} \rangle
 \; , 
\]
\[
  \langle F_{2,0}, Z_{0,0} \rangle = 2
          \langle F_{2,0}, Q \rangle \langle F_{1,0}, F_{2,0} \rangle - 2
          \langle F_{0,0}, Q \rangle \langle F_{1,0}, F_{2,0} \rangle + 2
          \langle F_{1,0}, Q \rangle \langle F_{0,0}, F_{2,0} \rangle
 \; , 
\] and other similar equations, we then have the two equations
\[
 A \vec c = 2 B A \vec q \; , \;\;\; 
 E \vec c = (-2 C G+ 2 C E+2 D \tilde A) \vec q \; , 
\]
which give in turn that 
\[
 (E A^{-1} B A+C G-C E-D \tilde A) \vec q = 0 \; . 
\]
Because $E A^{-1} B A+C G-C E-D \tilde A$ is a 4 by 5 matrix, 
this linear system has a nonzero solution $\vec q$.  We then 
define $Q$ using that solution $\vec q$, as in \eqref{eqn-star-3}.  
Then $A \vec c = 2 B A \vec q$ determines $\vec c$, which we 
use to define $Z_{0,0}$, as in \eqref{eqn-star-4}.  
We then propagate $Z$ using Equation \eqref{one}.  

It only remains to check that Equations \eqref{one} 
and \eqref{two} hold everywhere.  By Lemma 
\ref{useful-lemma2}, Equation \eqref{two} is equivalent to 
showing $F \perp Z$, which is now a property on 
vertices.  We leave out the details of computing $F \perp Z$ 
here, but note that such types of computations will be shown in 
detail in the proof of the next lemma.  
\end{proof}

\begin{remark}
In Lemma \ref{possiblenewentry}, we showed existence but 
not uniqueness of the linear conserved quantity.  It would be interesting 
to find natural geometric conditions 
that would make the linear conserved quantity unique.  
\end{remark}

\begin{lemma}
(\cite{BHRS}) 
For any nonspherical $3$ by $3$ isothermic net and any $Q \in 
\mathbb{R}^{4,1} \setminus \{ 0 \}$, 
there exists a $Z$ so that $\lambda Z + Q$ is a linear conserved 
quantity of the net.  
\end{lemma}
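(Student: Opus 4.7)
The plan is to adapt the setup from Lemma \ref{possiblenewentry} but exploit the fact that here $Q$ is prescribed, so we need only solve for $Z$. I would work with a Moutard lift $F$ satisfying \eqref{eqn:Moutard-tau}, so $\tau_{pq}=-F_pF_q$. Using the identity $QF_p+F_pQ=-2\langle Q,F_p\rangle I$, the propagation equation from Lemma \ref{useful-lemma1} collapses to the simple form
\begin{equation*}
Z_q \;=\; Z_p - \alpha_p F_q + \alpha_q F_p\,, \qquad \alpha_p := -2\langle Q, F_p\rangle.
\end{equation*}
This is the key computational simplification that will drive everything.

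Next I would use the nonsphericity hypothesis: the five vectors $F_{0,0},F_{\pm 1,0},F_{0,\pm 1}$ at the central vertex star form a basis of $\mathbb{R}^{4,1}$. Since the Minkowski inner product is nondegenerate, the corresponding Gram matrix is nonsingular, so the five conditions
\begin{equation*}
\langle F_{0,0}, Z_{0,0}\rangle = 0, \qquad \langle F_p, Z_{0,0}\rangle = -\alpha_p\,\langle F_p,F_{0,0}\rangle \quad (p=(\pm 1,0),(0,\pm 1))
\end{equation*}
uniquely determine $Z_{0,0}\in\mathbb{R}^{4,1}$. By construction, the four non-trivial conditions above are exactly what is needed to make $\langle F_p,Z_p\rangle=0$ at the four neighbors once $Z$ is propagated via the displayed formula. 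Propagation from $(0,0)$ by this formula extends $Z$ to all nine vertices; the compatibility check around each quadrilateral reduces, after cancellation, to
\begin{equation*}
(\alpha_r-\alpha_p)(F_q-F_s) = -\mu(\alpha_q-\alpha_s)(F_q-F_s),
\end{equation*}
which holds automatically because the Moutard relation $F_r-F_p=\mu(F_q-F_s)$ paired against the constant $Q$ gives $\alpha_r-\alpha_p=\mu(\alpha_q-\alpha_s)$.

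It then remains to verify $\langle F_p,Z_p\rangle=0$ at the four corner vertices $(\pm 1,\pm 1)$. I expect this to be the genuine obstacle of the proof, and precisely the computation that was suppressed at the end of the proof of Lemma \ref{possiblenewentry}. Consider $(1,1)$: using $F_{1,1}=F_{0,0}+\mu(F_{1,0}-F_{0,1})$, one expands $\langle F_{1,1},Z_{0,0}\rangle$ via the five determining conditions from Step 2, and then expands $\langle F_{1,1},Z_{1,1}\rangle$ using the propagation formula $Z_{1,1}=Z_{0,0}+(\alpha_{1,1}-\alpha_{0,0})F_{1,0}+\alpha_{1,0}(F_{0,0}-F_{1,1})$ (along the path through $(1,0)$). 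After collecting terms the expression becomes
\begin{equation*}
\mu\,(\alpha_{1,0}-\alpha_{0,1})\bigl[\langle F_{1,0},F_{0,0}\rangle-\langle F_{0,1},F_{0,0}\rangle-\mu\langle F_{0,1},F_{1,0}\rangle\bigr],
\end{equation*}
and the bracket vanishes by the Moutard inner-product identity $\langle F_{0,0},F_{1,0}\rangle=\langle F_{1,1},F_{0,1}\rangle$, which when $F_{1,1}$ is expanded yields exactly $\langle F_{0,0},F_{1,0}\rangle-\langle F_{0,0},F_{0,1}\rangle=\mu\langle F_{1,0},F_{0,1}\rangle$. The same argument applied to the other three quadrilaterals handles the remaining corners.

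Having thus obtained $\langle F_p,Z_p\rangle=0$ at every vertex, Lemma \ref{useful-lemma2} gives $\tau_{pq}Z_q=Z_p\tau_{pq}$, and the propagation formula is exactly $dZ_{pq}=Q\tau_{pq}-\tau_{pq}Q$ by Lemma \ref{useful-lemma1}. Together with the constancy of $Q$, Lemma \ref{lem:lcq-in-parts} then certifies that $Q+\lambda Z$ is a linear conserved quantity of the net, completing the proof. The essential content of the argument is therefore the interplay between the five-dimensional nondegenerate linear algebra at the central star (providing uniqueness for $Z_{0,0}$) and the Moutard identities at each quadrilateral (providing the corner verifications gratis).
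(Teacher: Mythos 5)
Your proof is correct and follows essentially the same route as the paper's: nonsphericity makes the five lifts of the central vertex star a basis of $\mathbb{R}^{4,1}$ (you should add the one-line observation that a spherical central star would force the whole $3\times 3$ net to be spherical, since the hypothesis is on the net), the resulting nonsingular Gram system determines $Z_{0,0}$ from the prescribed $Q$, propagation via Lemma \ref{useful-lemma1} extends $Z$, and the corner conditions $\langle F_r,Z_r\rangle=0$ collapse to a multiple of the Moutard lightlike identity $\langle F_p,F_q\rangle-\langle F_p,F_s\rangle-\mu\langle F_q,F_s\rangle=0$, exactly as in the paper's computation. The only blemish is a sign slip in your displayed compatibility condition, which should read $(\alpha_r-\alpha_p)(F_q-F_s)=\mu(\alpha_q-\alpha_s)(F_q-F_s)$, consistent with the identity $\alpha_r-\alpha_p=\mu(\alpha_q-\alpha_s)$ that you then correctly invoke.
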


\begin{proof}
This proof will follow along the same lines as the previous proof, 
but now will be simpler because the size of the net is smaller.  

Let us take the domain mesh to be $\{ (m,n) \, | \, |m|,|n| \leq 1 
\}$.  Like in the previous proof, we take a Moutard lift $F$ such 
that $\tau_{pq} = - F_pF_q$.  
Using the notation in the previous proof, $\vec q$ is now 
given by the given choice of $Q$.  If the centermost vertex star 
$F_{(0,0)}$, $F_{(1,0)}$, $F_{(0,1)}$, $F_{(-1,0)}$, 
$F_{(0,-1)}$ would 
be spherical, then the whole 3 by 3 net would be spherical as well.  
Since this is not so, the central vertex star is not spherical, and 
thus the matrix $A$ in the previous proof is invertible.  
We can then solve $A \vec c = 2 B A \vec q$ for $\vec c$.  
Setting $p=(0,0)$, we have $Z_p$ defined by this $\vec c$, 
and we can propagate $Z$ like in the previous proof so that 
$Z_q$ and $Z_s$ are defined, where $q=(1,0)$ and $s=(0,1)$.  
The fact that $A \vec c = 2 B A \vec q$ holds implies that 
\[ \langle F_q,Z_q \rangle = 0 \;\;\; \text{i.e.} \;\;\; 
\langle Z_p,F_q \rangle = 2 \langle Q , F_q \rangle \langle F_p , F_q 
\rangle \; , \]
and 
\[ \langle F_s , Z_s \rangle = 0 \;\;\; \text{i.e.} \;\;\; 
\langle Z_p,F_s \rangle = 2 \langle Q , F_s \rangle \langle F_p , 
F_s \rangle \; . \]  
We also set $r=(1,1)$ and propagate $Z$ to $Z_r$.  Then, because 
$F_{\hat p}F_{\hat q}+F_{\hat q}F_{\hat p} =a_{{\hat p}{\hat q}} 
\cdot I$ for any edge ${\hat p}{\hat q}$ (i.e. $\langle 
F_{\hat p},F_{\hat q} \rangle = (-1/2) a_{{\hat p}{\hat q}}$), and 
because $(F_r-F_p)||(F_q-F_s)$, i.e. $F_r-F_p=
\alpha (F_q-F_s)$ for some scalar $\alpha$, we have 
that \begin{equation}\label{eqn:startwostars} 
F_r-F_p = \alpha (F_q-F_s) \; , \;\;\; 
\alpha = \frac{\langle F_q-F_s,F_p \rangle}{\langle 
F_q,F_s \rangle} \; , \end{equation} seen as follows: 
$\langle F_q,F_r \rangle = \langle F_p, F_s \rangle$, 
$\langle F_q,F_p-F_r \rangle = \langle F_p, F_q-F_s \rangle$, 
$\langle F_q,F_r-F_p \rangle (F_s-F_q) = 
\langle F_p, F_q-F_s \rangle (F_q-F_s)$, 
$\langle F_q,F_s-F_q \rangle (F_r-F_p) = 
\langle F_p, F_q-F_s \rangle (F_q-F_s)$, implying 
\eqref{eqn:startwostars}.  

We now wish to show $\langle F_r,Z_r \rangle = 0$.  First, 
we find an expression for $Z_r$: 
\[ Z_r = Z_q+2 \langle Q,F_q \rangle F_r - 2 \langle Q, 
F_r \rangle F_q = \] 
\[ = Z_p+2 \langle Q,F_p \rangle F_q - 2 \langle Q, 
F_q \rangle F_p + 2 \langle Q,F_q \rangle F_r - 2 \langle Q, 
F_r \rangle F_q = \]\[ 
 = Z_p + 2 \langle Q,F_q \rangle (F_r-F_p) - 2 \langle Q, 
F_r-F_p \rangle F_q \; , \]
thus, by \eqref{eqn:startwostars}, 
\[ Z_r = Z_p+2 \alpha \langle Q,F_q \rangle (F_q-F_s) - 2 
\alpha \langle Q, 
F_q-F_s \rangle F_q = \]\[ 
 = Z_p-2 \alpha 
\langle Q,F_q \rangle F_s + 2 \alpha \langle Q, 
F_s \rangle F_q \; . \]  
Then \eqref{eqn:startwostars} gives 
\[ \langle Z_r,F_r \rangle = \langle Z_p - 2 \alpha 
\langle Q,F_q \rangle F_s + 2 \alpha \langle Q,F_s 
\rangle F_q, \alpha F_q - \alpha F_s+F_p \rangle = \]\[ 
\alpha \langle Z_p,F_q \rangle - \alpha \langle 
Z_p,F_s \rangle - 2 \alpha^2 
\langle Q,F_q \rangle \langle F_q,F_s \rangle - 
\]\[ 2 \alpha 
\langle Q,F_q \rangle \langle F_s,F_p \rangle 
- 2 \alpha^2 
\langle Q,F_s \rangle \langle F_s,F_q \rangle + 2 
\alpha \langle Q,F_s \rangle \langle F_p,F_q \rangle = 
\]\[ 
\alpha \langle Z_p,F_q \rangle - 2 \alpha^2 
\langle Q,F_q+F_s \rangle \langle F_q,F_s \rangle -\alpha 
\langle Z_p,F_s \rangle - 2 \alpha 
\langle Q,F_q \rangle \langle F_s,F_p \rangle + 2 
\alpha \langle Q,F_s \rangle \langle F_p,F_q \rangle \; . 
\]
Thus, by \eqref{eqn:threestars}, 
\[ 
\langle Z_r, F_r \rangle = 
2 \alpha \langle Q,F_q \rangle \langle F_p,F_q \rangle - 2 \alpha 
\langle Q,F_s \rangle \langle F_p,F_s \rangle + 2 \alpha 
\langle Q,F_s \rangle \langle F_p,F_q \rangle - 
\]\[ 2 \alpha 
\langle Q,F_q \rangle \langle F_p,F_s \rangle - 2 
\alpha^2 \langle Q,F_q+F_s \rangle \langle F_q,F_s 
\rangle = \]\[ 
2 \alpha \langle Q,F_q+F_s \rangle \langle F_p,F_q \rangle - 
2 \alpha 
\langle Q,F_s+F_q \rangle \langle F_p,F_s \rangle - 
2 \alpha^2 
\langle Q,F_q+F_s \rangle \langle F_q,F_s \rangle = \]\[ 
2 \alpha \langle Q,F_q+F_s \rangle \langle F_p,F_q-F_s 
\rangle - 2 \alpha^2 
\langle Q,F_q+F_s \rangle \langle F_q,F_s \rangle = \]\[ 
2 \alpha \langle Q,F_q+F_s \rangle (\langle F_p,F_q-F_s 
\rangle - \alpha 
\langle F_q,F_s \rangle) = \]\[ 
2 \alpha \langle Q,F_q+F_s \rangle (\langle F_p,F_q-F_s 
\rangle - \frac{\langle F_p,F_q-F_s \rangle}{\langle 
F_q,F_s \rangle} \langle F_q,F_s \rangle) = 0 \; . \]
Repeating similar computations on the other three quadrilaterals 
completes the proof.  
\end{proof}

\begin{figure}[phbt]
\begin{center}
\includegraphics[width=0.52\linewidth]{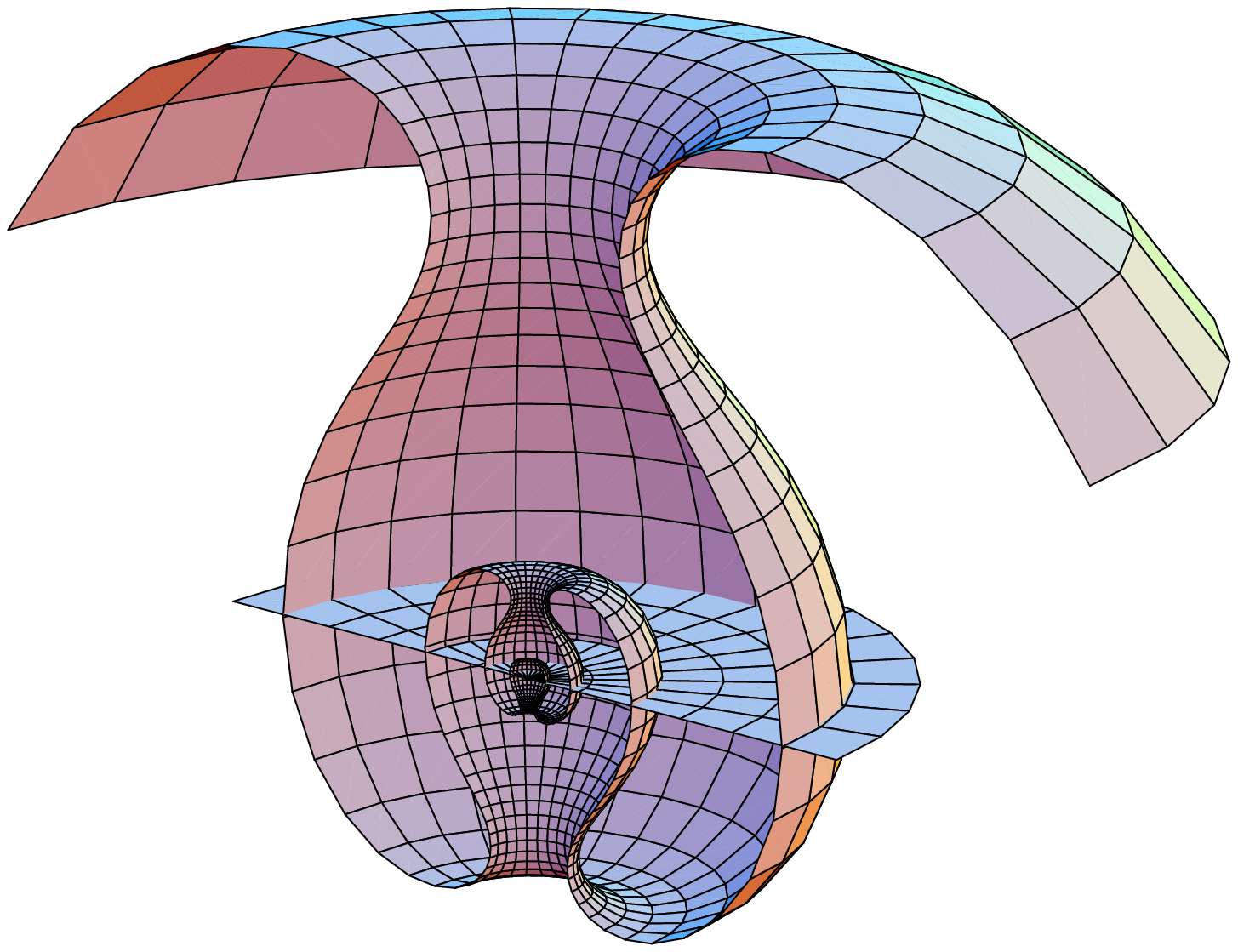}
\includegraphics[width=0.46\linewidth]{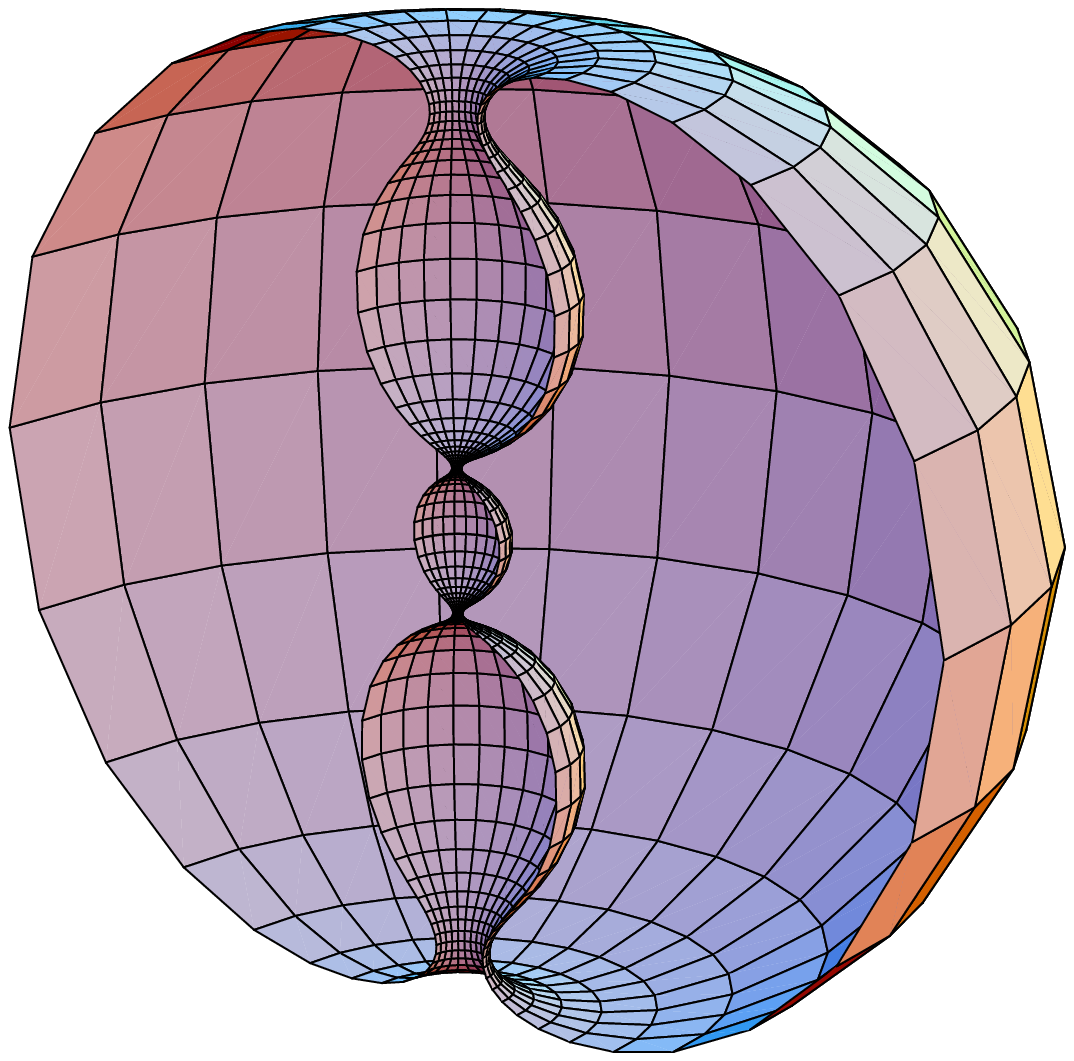}
\caption{A discrete minimal surface of revolution in 
$\mathbb{H}^3$ (in 
two copies of the upper-half space model -- one above the central 
plane, and one below), and a discrete minimal 
surface of revolution in $\mathbb{S}^3$ (where 
$\mathbb{S}^3$ has been stereographically projected to 
$\mathbb{R}^3$).}
\end{center}
\label{fig:babo}
\end{figure}

\subsection{Discrete CMC surfaces of revolution}
We take $Q$ as in 
\eqref{choiceofQ}.  
Let us first make the following assumption about the 
vertices of the discrete surface, implying we have a 
discrete surface of revolution: 
\[
 \text{Assumption 1:} \;\; \ef_{(m,n)} = r_m (c_n i + s_n j) 
     + h_m k \; , 
\] where $c_n = \cos(2 \pi \theta_n/N)$, 
$s_n = \sin(2 \pi \theta_n/N)$ and $r_m, h_m \in 
\mathbb{R}$, with 
$N$ a natural number and $\theta_n \in \mathbb{R}$.  

The cross ratio for the quadrilateral with vertices 
coming from 
$(m,n)$, $(m+1,n)$, $(m+1,n+1)$ and $(m,n+1)$ is 
\[ q = q_{m,n} = 
   \frac{-dh_{m,m+1}^2-dr_{m,m+1}^2}{4 r_mr_{m+1} \sin^2 (\pi 
   d\theta_{n,n+1}/N)} \; , \] 
where $dr_{m,m+1}=r_{m+1}-r_m$, $dh_{m,m+1}=h_{m+1}-h_m$ 
and $d\theta_{n,n+1}=\theta_{n+1}-\theta_n$.  So we can take 
\[ a_{(m,n),(m+1,n)} = 
     - \alpha \frac{dh_{m,m+1}^2+dr_{m,m+1}^2}{r_m r_{m+1}} \; , \] 
\[ a_{(m,n),(m,n+1)} = 4 \alpha \sin^2 (\pi d\theta_{n,n+1}/N) \] 
for any choice of $\alpha \in \mathbb{R} \setminus \{ 0 \}$.  
Because $d\ef_{pq}^* d\ef_{pq} = a_{pq}$, we have 
\[ d\ef^*_{pq}=\pm \frac{\alpha}{r_pr_q} d\ef_{pq} \; , \] where 
"$+$" is used for $m$-edges and "$-$" for $n$-edges.  
The $m$-edges are those between $\ef_{(m,n)}$ and 
$\ef_{(m+1,n)}$, and 
the $n$-edges are those between $\ef_{(m,n)}$ and 
$\ef_{(m,n+1)}$.  

We then have 
\[ \tau_{pq} = \pm \frac{\alpha}{r_pr_q} \begin{pmatrix}
\ef_p d\ef_{pq} & -\ef_p d\ef_{pq} \ef_q \\ 
d\ef_{pq} & -d\ef_{pq} \ef_q
\end{pmatrix} \; . \] 

Now assume $\ef$ is a discrete CMC surface, that is: 
\[
\text{Assumption 2:} \;\; \ef \; \text{has a linear 
conserved quantity} \;\; Q+\lambda Z \; . 
\]
Then, by Corollary \ref{cor:BinMarch2008}, we have 
\[ Z_p = \begin{pmatrix}
n_p+H_p\ef_p & -\ef_pn_p-n_p\ef_p-H_p\ef_p^2 \\
H_p & - n_p - H_p \ef_p
\end{pmatrix} \; . \]

\begin{defn}
We say that the surface of revolution $\ef_{m,n}$ has a 
{\em constant hyperbolic speed parametrization} if 
the cross ratio $q_{m,n}$
is a constant (i.e. indep of $m$ and $n$).  
\end{defn}

We now restrict to the case in the above definition: 
\[
 \text{Assumption 3:} \;\; \ef \; \text{is a constant hyperbolic 
speed parametrization} \; . 
\]
This third assumption is not so essential for the arguments 
here, but we include it as it is geometrically natural.  

The last two equations in Lemma \ref{lem:lcq-in-parts} 
now give the four equations 
\[
 d\ef_{pq} n_q + n_p d\ef_{pq} = 0 \; , \]
\[
 H_q = H_p + (\kappa \ef_p d\ef_{pq} + 
              \kappa d\ef_{pq} \ef_q) \cdot \frac{\pm \alpha}{r_p r_q} 
              \; , \]
\[
 n_q+H_q \ef_q = n_p+H_p \ef_p + (d\ef_{pq} + \kappa 
          \ef_p d\ef_{pq} \ef_q) \cdot 
          \frac{\pm \alpha}{r_p r_q} \; , \]
\[
 \ef_qn_q+n_q\ef_q+H_q\ef_q^2=\ef_pn_p+n_p\ef_p+H_p\ef_p^2 
    + (d\ef_{pq} \ef_q+\ef_p d\ef_{pq}) 
    \cdot \frac{\pm \alpha}{r_p r_q} \; . \]

We now assume that $n_p$, and hence the conserved quantity 
as well, has the same rotation symmetry as the surface itself:
\[
 \text{Assumption 4:} \;\; n_p = \rho_p (c_n i + s_n j) + \eta_p k \;\; 
   \text{and} \;\; \rho_p, \eta_p \in \mathbb{R} 
   \; \text{depend only on} \; m \; . \]
The fact that $||Z_p||^2$ is constant implies $|n_p|^2$ is constant, 
and thus $\rho_p^2+\eta_p^2$ is also constant.  We have the following 
further facts:
\[
  \ef_pn_p+n_p\ef_p+H_p\ef_p^2 = 
           -2 (r_p\rho_p+h_p\eta_p) - H_p (r_p^2+h_p^2) \; , 
\]
and when $p=(m,n)$ and $q=(m,n+1)$, we have 
\[
 \ef_pd\ef_{pq}+d\ef_{pq}\ef_q = 0 \; , \;\;\; 
 d\ef_{pq}+\kappa \ef_p d\ef_{pq} \ef_q = 
 r_p (1+\kappa (r_p^2+h_p^2)) (dc_{n,n+1}i+ds_{n,n+1}j) \]
for $dc_{n,n+1}=c_{n+1}-c_n$ and $ds_{n,n+1}=s_{n+1}-s_n$.  
When $p=(m,n)$ and $q=(m+1,n)$, we have 
\[ 
 \ef_pd\ef_{pq}+d\ef_{pq} \ef_q = r_m^2+h_m^2-r_{m+1}^2-h_{m+1}^2 \]
and 
\[
 d\ef_{pq}+\kappa \ef_p d\ef_{pq} \ef_q = 
 (dr_{m,m+1}+\kappa (r_{m+1}(r_m^2+h_m^2)-r_m(r_{m+1}^2+
 h_{m+1}^2)))(c_ni+s_nj)+ \]\[ + 
(dh_{m,m+1}+\kappa (h_{m+1}(r_m^2+h_m^2)-h_m(r_{m+1}^2+h_{m+1}^2)))k \; . \]  
Now the full list of equations becomes:
\begin{enumerate}
\item $\rho_m^2+\eta_m^2$ is constant, where we now denote $\rho_p$ and 
$\eta_p$ by $\rho_m$ and $\eta_m$, respectively, 
\item $H_p$ depends only on $m$, 
\item $\rho_m+H_m r_m = -\alpha r_m^{-1} (1+\kappa (r_m^2+h_m^2))$, 
\item $(\rho_{m+1}+\rho_m)dr_{m,m+1}+(\eta_{m+1}+\eta_m) dh_{m,m+1}=0$, 
\item $dr_{m,m+1} d\eta_{m,m+1} - dh_{m,m+1} d\rho_{m,m+1} =0$, 
\item $H_{m+1}-H_m = \alpha \kappa r_m^{-1} r_{m+1}^{-1} (r_m^2+
                h_m^2-r_{m+1}^2-h_{m+1}^2)$, 
\item $d\rho_{m,m+1}+H_{m+1}r_{m+1}-H_mr_m=\alpha r_m^{-1}r_{m+1}^{-1} 
                (dr_{m,m+1}+\kappa (r_{m+1}(r_m^2+h_m^2)-
                 r_m(r_{m+1}^2+h_{m+1}^2)))$, 
\item $d\eta_{m,m+1}+H_{m+1}h_{m+1}-H_mh_m=\alpha r_m^{-1}r_{m+1}^{-1} 
                (dh_{m,m+1}+\kappa (h_{m+1}(r_m^2+h_m^2)-
                 h_m(r_{m+1}^2+h_{m+1}^2)))$, 
\item $2 (r_m\rho_m+h_m\eta_m-r_{m+1}\rho_{m+1}-h_{m+1}\eta_{m+1})+
      H_m(r_m^2+h_m^2)-H_{m+1}(r_{m+1}^2+h_{m+1}^2)=
      \alpha r_m^{-1}r_{m+1}^{-1} (r_m^2+h_m^2-r_{m+1}^2-h_{m+1}^2)$. 
\end{enumerate}

The first condition above should follow from the other equations, 
and we can just assume the second condition.  We then have the system
\begin{equation}\label{eqn:bigmatrixeqn} P \cdot 
 \begin{pmatrix}
 H_m \\ H_{m+1} \\ \eta_m \\ \eta_{m+1} \\ \rho_m \\ \rho_{m+1} 
 \\ H_\kappa \\ 1 
 \end{pmatrix}
 = 
 \begin{pmatrix}
 0 \\ 0 \\ 0 \\ 0 \\ 0 \\ 0 \\ 0 \\ 0 
 \end{pmatrix} \; , 
\end{equation} 
\[ P = 
 \begin{pmatrix}
 r_m^2 & 0 & 0 & 0 & r_m & 0 & 0 & A \\
 0 & r_{m+1}^2 & 0 & 0 & 0 & r_{m+1} & 0 & B \\
 0 & 0 & dh_{m,m+1} & dh_{m,m+1} & dr_{m,m+1} & dr_{m,m+1} & 0 & 0 \\
 0 & 0 & -dr_{m,m+1} & dr_{m,m+1} & dh_{m,m+1} & -dh_{m,m+1} & 0 & 0 \\
 -1 & 1 & 0 & 0 & 0 & 0 & 0 & \kappa C \\
 -r_m & r_{m+1} & 0 & 0 & -1 & 1 & 0 & D \\
 -h_m & h_{m+1} & -1 & 1 & 0 & 0 & 0 & E \\
 r_m^2+h_m^2 & -r_{m+1}^2-h_{m+1}^2 & 2h_m & -2h_{m+1} & 
           2r_m & -2r_{m+1} & 0 & C 
 \end{pmatrix} \; , 
\]
where
\[
 A = \alpha (1+\kappa (r_m^2+h_m^2)) \; , 
\]
\[
 B = \alpha (1+\kappa (r_{m+1}^2+h_{m+1}^2)) \; , 
\]
\[
 C = \alpha r_m^{-1} r_{m+1}^{-1} (r_{m+1}^2+
          h_{m+1}^2-r_m^2-h_m^2) \; , 
\]
\[
 D = - \alpha r_m^{-1} r_{m+1}^{-1} (dr_{m,m+1}+
          \kappa (r_{m+1}(r_m^2+h_m^2)-r_m(r_{m+1}^2+h_{m+1}^2))) \; , 
\]
\[
 E = - \alpha r_m^{-1} r_{m+1}^{-1} (dh_{m,m+1}+
      \kappa (h_{m+1}(r_m^2+h_m^2)-h_m(r_{m+1}^2+h_{m+1}^2))) \; . 
\]
The fifth and eighth rows of the product on the left-hand side 
of \eqref{eqn:bigmatrixeqn} being zero implies that 
\[
 H_{m+1}-H_m = 2 \kappa (r_m \rho_m+h_m \eta_m-r_{m+1} \rho_{m+1} 
               - h_{m+1} \eta_{m+1}) + \]\[ + H_m (r_m^2+h_m^2) \kappa 
               - H_{m+1} (r_{m+1}^2+h_{m+1}^2) \kappa \; , 
\]
and so 
\[
 2 H_\kappa := H_{m+1}(1+\kappa (r_{m+1}^2+h_{m+1}^2)) + 2 \kappa 
     (r_{m+1} \rho_{m+1}+h_{m+1} \eta_{m+1}) = \]\[ = 
     H_m(1+\kappa (r_m^2+h_m^2)) + 2 \kappa 
     (r_m \rho_m+h_m \eta_m) \]
is constant.  

We can then choose the constant $\alpha$ so that $\eta_m^2+\rho_m^2=1$, 
and then start with some initial conditions and 
propagate through values of $m$ via 
\eqref{eqn:bigmatrixeqn} to produce the vertex 
data for a discrete CMC surface of revolution.  

\begin{figure}[phbt]
\begin{center}
\includegraphics[width=0.014\linewidth]{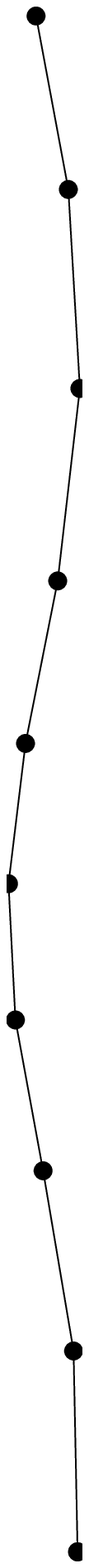}
\includegraphics[width=0.13\linewidth]{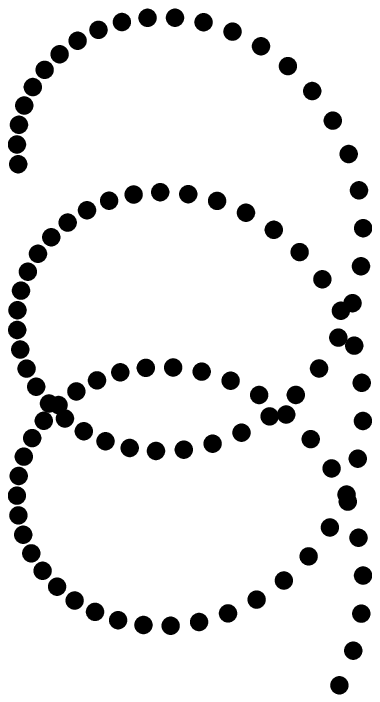}
\includegraphics[width=0.234\linewidth]{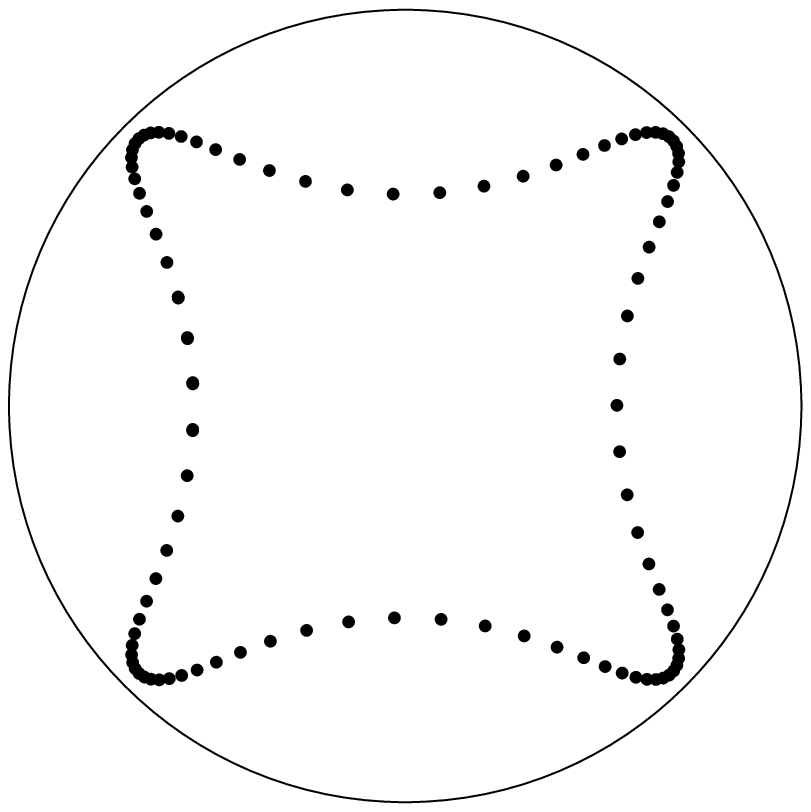}
\includegraphics[width=0.234\linewidth]{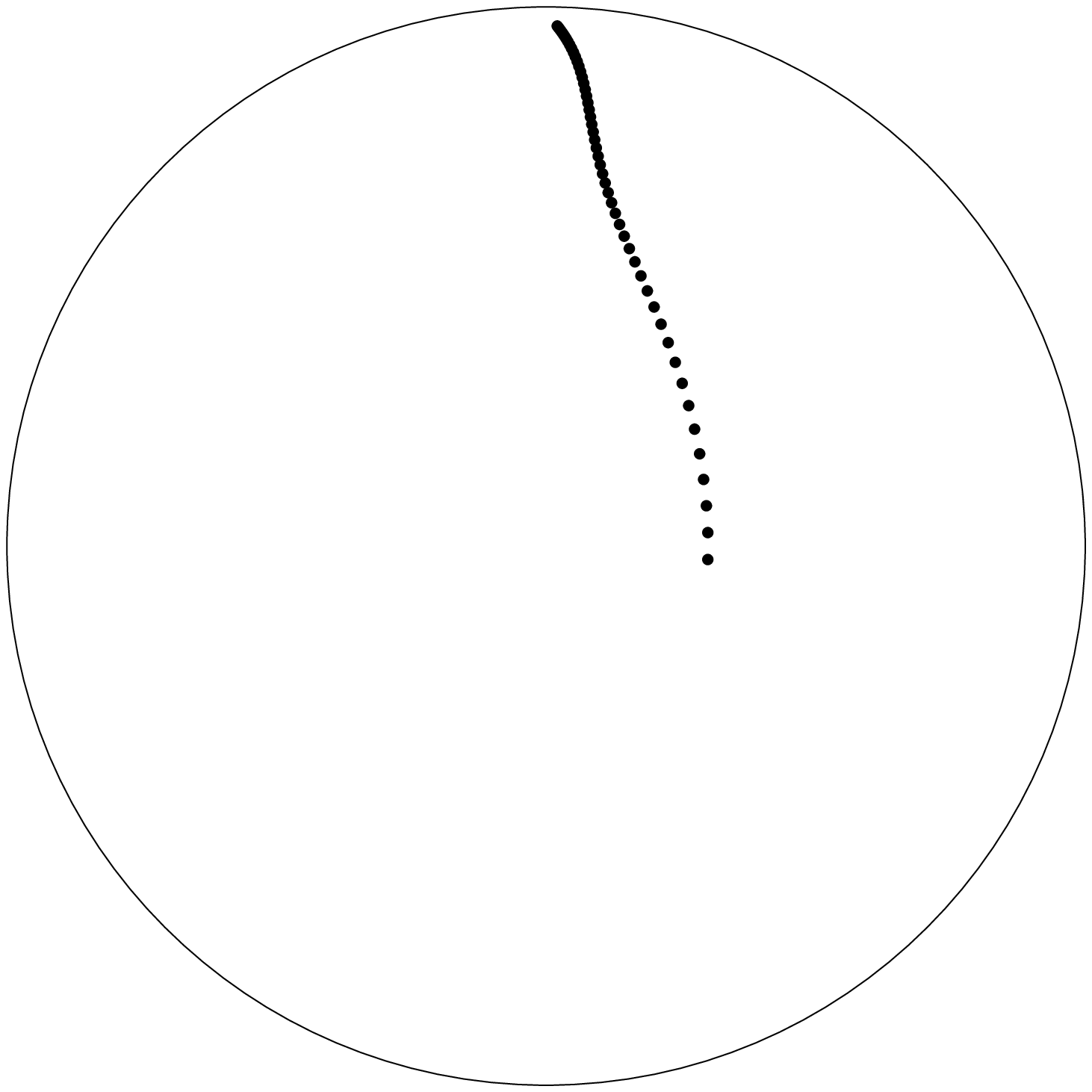}
\includegraphics[width=0.234\linewidth]{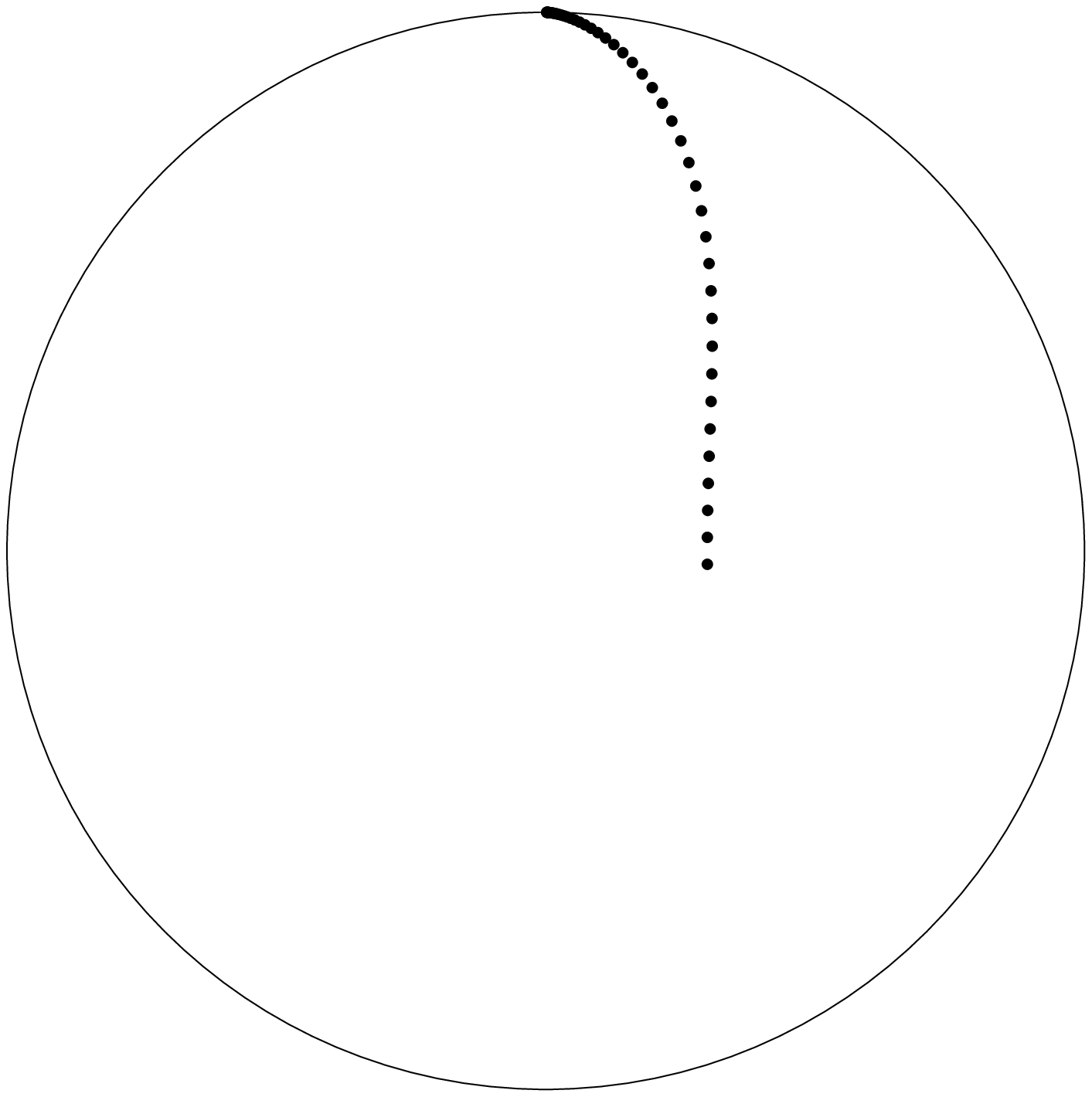}
\caption{Discrete profile curves for discrete CMC surfaces of 
revolution.  The meanings of these graphics are explained in 
Example \ref{discrete-examples}.}
\end{center}\label{fig:6pics}
\end{figure}

\begin{example}\label{discrete-examples} 
{\em In Figure 18, we show discrete CMC surfaces of revolution.  
The first two curves are profile curves for discrete 
nonminimal CMC surfaces of revolution in $\mathbb{R}^3$, 
the first being unduloidal 
and the second nodoidal.  (For each of these two curves, the axis 
of rotation producing the surface is a vertical line drawn to the left 
of the curve, and is not shown in the figure.)  The third picture 
shows the profile curve 
for a discrete CMC surface of revolution in $\mathbb{S}^3$, 
where $\mathbb{S}^3$ is 
stereographically projected to 
$\mathbb{R}^3$, and the circle shown is a geodesic of 
$\mathbb{S}^3$ that is also the axis of the surface -- and 
furthermore, this example has a periodicity that causes it to close 
on itself and form a 
torus.  Half of this surface in 
$\mathbb{S}^3$ is shown on the right-hand side 
of Figure 17 as well, under a 
different stereographic projection to $\mathbb{R}^3$.  
The final two pictures in Figure 18 
show profile curves for discrete CMC surfaces of revolution in 
$\mathbb{H}^3$.  These surfaces, with $H>1$ and $H=1$ 
respectively, are shown in the Poincare model, and 
the first is unduloidal 
while the second looks similar to a smooth embedded catenoid cousin.  
(For these two curves, the corresponding axis of revolution is the 
vertical line between the uppermost and lowermost points of the 
circle shown, 
and this circle lies in the boundary sphere at infinity of 
$\mathbb{H}^3$.)  
Also, on the left-hand side of Figure 17, 
we see a minimal surface that lies in both copies of 
$M_{-1} = \mathbb{H}^3 \cup \mathbb{H}^3$, 
and the horizontal plane shown there is the 
virtual boundary at infinity of two copies of the 
halfspace model for $\mathbb{H}^3$.  This 
example was first known in \cite{BHRS}, 
because the notion of discrete CMC for this case was not 
defined before then.}
\end{example}

\section{Discrete spacelike CMC surfaces in $\mathbb{R}^{2,1}$}
\label{discretemaximalsurface}

In Chapter \ref{maximalsurface}, we looked at smooth maximal surfaces in 
Minkowski $3$-space.  In this chapter, we consider one way to 
define discrete versions of them, and more generally, 
to define discrete spacelike CMC surfaces in $\mathbb{R}^{2,1}$.  
We start by reviewing the smooth case.  

\subsection{Smooth CMC surfaces in $\mathbb{R}^3$ and 
$\mathbb{R}^{2,1}$, 
without quaternions}

Consider a smooth surface 
\[ x(u,v) = (x_1(u,v),x_2(u,v),x_3(u,v)) \] 
in $\mathbb{R}^3$ or $\mathbb{R}^{2,1}$, with unit normal $n$.  Suppose the 
surface is spacelike, in the case of $\mathbb{R}^{2,1}$.  
Also, suppose that the 
coordinates $u,v$ are isothermic.  Conformality implies the first 
fundamental form is 
\[ I = \begin{pmatrix} 
E & 0 \\ 0 & E \end{pmatrix} \] with $E = \langle x_u , x_u \rangle$, 
where $\langle \cdot , \cdot \rangle$ denotes the inner product 
associated with $\mathbb{R}^3$ or $\mathbb{R}^{2,1}$.  Then 
the second fundamental form is 
\[ II = \begin{pmatrix} 
\langle n , x_{uu} \rangle & \langle n , x_{vu} \rangle \\ 
\langle n , x_{uv} \rangle & \langle n , x_{vv} \rangle \end{pmatrix} 
= \begin{pmatrix} 
b_{11} & b_{12} \\ 
b_{21} & b_{22} \end{pmatrix} \; , \] 
and having isothermic coordinates implies $n_u = - k_1 x_u$ and 
$n_v = - k_2 x_v$, where $k_1$ and $k_2$ are the principal 
curvatures, and so 
\[ II = \begin{pmatrix}
k_1 E & 0 \\ 0 & k_2 E \end{pmatrix} \; . \] 
The Hopf differential function is, with $z=u+iv$,  
\[ \hat{Q} = \langle n , x_{zz} \rangle = 
\frac{1}{4} \langle n , x_{uu}-x_{vv}-2 i x_{uv} \rangle = 
\frac{1}{4} \langle n , x_{uu}-x_{vv} \rangle \]\[ = 
\frac{1}{4} (b_{11}-b_{22}) = \frac{E}{4} (k_1-k_2) \; . \]  

If the mean curvature $H$ is constant, then 
Corollary \ref{lem:Qiscte} and the second equation in 
\eqref{eq:GaussformaxinL3} imply $\hat{Q}_{\bar z}=0$, 
so $\hat{Q} = (E/4) (k_1-k_2) \in \mathbb{R}$ is constant.  

\begin{lemma}\label{hereislem10pt1}
If $x$ is isothermic in $\mathbb{R}^3$ or 
$\mathbb{R}^{2,1}$ with isothermic 
coordinates $u,v$, then $x^*$ exists, 
solving $dx^* = - \frac{x_u}{E} du + \frac{x_v}{E} dv$.  
\end{lemma}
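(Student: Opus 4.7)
The plan is to reduce the assertion to a closedness (compatibility) condition on a 1-form, and then to verify that condition using the standard Codazzi/Gauss-type identity for isothermic coordinates, noting that the argument is insensitive to the signature of the ambient metric.

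First I would define the candidate 1-form
\[
\omega \;=\; -\tfrac{x_u}{E}\,du \;+\; \tfrac{x_v}{E}\,dv,
\]
so that $x^*$ is obtained (on any simply connected subdomain) as an antiderivative of $\omega$, i.e.\ $dx^* = \omega$, provided $d\omega = 0$. A direct computation gives
\[
d\omega \;=\; \Bigl(\bigl(\tfrac{x_v}{E}\bigr)_u + \bigl(\tfrac{x_u}{E}\bigr)_v\Bigr)\,du\wedge dv
\;=\; \tfrac{1}{E^2}\bigl(2E\,x_{uv} - E_u x_v - E_v x_u\bigr)\,du\wedge dv,
\]
so existence of $x^*$ reduces to the pointwise identity
\begin{equation}\label{eq:pl-identity}
2E\, x_{uv} \;=\; E_u\, x_v + E_v\, x_u .
\end{equation}

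Next I would prove \eqref{eq:pl-identity} using the isothermic hypothesis. Since $b_{12}=\langle n,x_{uv}\rangle$ is the off-diagonal entry of $II$ and is zero in isothermic coordinates, $x_{uv}$ has no component along $n$ and therefore lies in the tangent plane. Write $x_{uv}=\alpha\,x_u+\beta\,x_v$. Taking the ambient inner product with $x_u$ and using $\langle x_u,x_u\rangle=E$ and $\langle x_u,x_v\rangle=0$ gives
\[
\langle x_{uv},x_u\rangle \;=\; \tfrac{1}{2}\partial_v\langle x_u,x_u\rangle \;=\; \tfrac{E_v}{2} \;=\; \alpha E,
\]
hence $\alpha = E_v/(2E)$. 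By the same computation with the roles of $u$ and $v$ swapped, using $\langle x_v,x_v\rangle=E$, one gets $\beta = E_u/(2E)$. Multiplying $x_{uv}=(E_v/2E)x_u+(E_u/2E)x_v$ by $2E$ yields \eqref{eq:pl-identity}.

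Finally, I would observe that the whole derivation is signature-blind: it uses only that $\langle\cdot,\cdot\rangle$ is a nondegenerate symmetric bilinear form, that $E:=\langle x_u,x_u\rangle=\langle x_v,x_v\rangle$ is nonzero (which holds because $x$ is spacelike in the $\mathbb{R}^{2,1}$ case and immersed in either case), that $\langle x_u,x_v\rangle=0$, and that $b_{12}=0$. The fact that $n$ is timelike in $\mathbb{R}^{2,1}$ (so $\langle n,n\rangle=-1$) never enters because $n$ has been projected away. Thus the same formula $dx^*=-\tfrac{x_u}{E}du+\tfrac{x_v}{E}dv$ defines a Christoffel transform in both $\mathbb{R}^3$ and $\mathbb{R}^{2,1}$, as claimed. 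I expect no real obstacle here; the only care needed is to record the sign conventions carefully and to note that the identity \eqref{eq:pl-identity} is exactly the tangential part of the compatibility condition already used in the quaternionic proof of Lemma~\ref{lemma8ptpt15}, recast in a form that makes no reference to the ambient signature.
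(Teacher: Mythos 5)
Your proposal is correct and follows essentially the same route as the paper: reduce existence of $x^*$ to the closedness condition $2E\,x_{uv}=E_u x_v+E_v x_u$, and verify it by noting that $b_{12}=0$ forces $x_{uv}$ into the tangent plane, where the coefficients are pinned down by $\langle x_u,x_v\rangle=0$ and $\langle x_u,x_u\rangle=\langle x_v,x_v\rangle=E$. You are somewhat more explicit than the paper (which merely cites $x_{uv}=Ax_u+Bx_v$ without computing $A,B$), and your closing remark that the argument is signature-blind is exactly the point of restating the quaternionic Lemma~\ref{lemma8ptpt15} in this form.
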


\begin{proof}
This was already proven in the case of $\mathbb{R}^3$ in 
Lemma \ref{lemma8ptpt15}, so let us be brief here: 
We want to show "$d^2x^*=0$", i.e. 
\[ d (-\frac{x_u}{E} du + \frac{x_v}{E} dv) = \; 0 , \] 
i.e. $2 x_{uv} E - x_u E_v - x_v E_u = 0$.  We can see this 
by noting that $b_{12}=0$ implies $x_{uv} = A x_u+B x_v$ for some 
reals $A$ and $B$, and that $\langle x_u , x_v \rangle = 0$.  
\end{proof}

The $x^*$ in Lemma \ref{hereislem10pt1} is the same as 
the $x^*$ in Definition \ref{defn:ChristTrans2}, 
but scaled by a factor of $1/4$.  This is a non-essential change.  

\begin{proposition}\label{lem:smoothcase-relatedtoR21}
Let $x$ be an isothermic immersion in 
$\mathbb{R}^3$ or $\mathbb{R}^{2,1}$, with $x^*$ 
as in the previous lemma.  
Then $x$ is CMC $H$ if and only if $dx^* = h (Hdx+dn)$ for some constant 
$h$.  
\end{proposition}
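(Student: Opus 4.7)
The plan is to prove both directions by directly expanding both sides of the equation $dx^* = h(H\,dx + dn)$ in the basis $\{x_u\,du, x_v\,dv\}$ and reading off the condition.

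First I would set up the basic pieces. Since $u,v$ are isothermic coordinates, $\langle x_u,x_u\rangle = \langle x_v,x_v\rangle = E$ and $\langle x_u,x_v\rangle = 0$; since they are also curvature line coordinates, $n_u = -k_1 x_u$ and $n_v = -k_2 x_v$ (this sign convention is the same in both $\mathbb{R}^3$ and $\mathbb{R}^{2,1}$, as $\langle n_u,x_u\rangle = -\langle n,x_{uu}\rangle = -b_{11}$ regardless of the sign of $\langle n,n\rangle$). Therefore
\[
H\,dx + dn \;=\; (H-k_1)\,x_u\,du \;+\; (H-k_2)\,x_v\,dv,
\]
while from Lemma \ref{hereislem10pt1},
\[
dx^* \;=\; -\tfrac{1}{E}\,x_u\,du \;+\; \tfrac{1}{E}\,x_v\,dv.
\]
Since $x_u\,du$ and $x_v\,dv$ are pointwise linearly independent, the equation $dx^* = h(H\,dx + dn)$ is equivalent to the pair of scalar equations
\[
-\tfrac{1}{E} \;=\; h(H-k_1), \qquad \tfrac{1}{E} \;=\; h(H-k_2).
\]

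For the ``only if'' direction, assume $x$ is CMC with constant mean curvature $H = (k_1+k_2)/2$. Then $H - k_1 = -(H - k_2) = (k_2 - k_1)/2$, so $H\,dx + dn = \tfrac{k_2-k_1}{2}(x_u\,du - x_v\,dv)$, and the equation collapses to the single condition $h = \tfrac{2}{E(k_1-k_2)} = \tfrac{1}{2\hat Q}$. By Corollary \ref{lem:Qiscte} (for $\mathbb{R}^3$) and the analogous statement in the Minkowski case (constancy of $\hat Q$ follows from the Codazzi equation $Q_{\bar z} = 2H_z e^{2\hat u}$ in \eqref{eq:GaussformaxinL3} once $H$ is constant), $\hat Q$ is a real constant, so $h = 1/(2\hat Q)$ is a legitimate constant.

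For the ``if'' direction, assume $dx^* = h(H\,dx + dn)$ with $h$ a nonzero constant and some fixed real number $H$. Adding the two scalar equations above yields $0 = h(2H - k_1 - k_2)$, which forces $(k_1+k_2)/2 = H$ at every point; hence the mean curvature of $x$ is identically equal to the constant $H$. The main ``obstacle'' is really just a bookkeeping one: checking that the sign convention $n_u = -k_1 x_u$, $n_v = -k_2 x_v$ is indeed valid in the Lorentzian case as well, but this is immediate from $\langle n_u, x_u\rangle = -\langle n, x_{uu}\rangle$, which holds irrespective of whether $\langle n,n\rangle$ is $+1$ or $-1$. Everything else reduces to the linear algebra above.
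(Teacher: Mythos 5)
Your proof is correct and takes essentially the same route as the paper: both reduce the equation $dx^* = h(H\,dx + dn)$, via the expansion in the basis $x_u\,du,\ x_v\,dv$, to the conditions $k_1+k_2=2H$ and $h = 2E^{-1}(k_1-k_2)^{-1}$ constant, the latter being equivalent to constancy of $\hat Q$ and hence to $x$ being CMC. Your treatment is if anything slightly more explicit than the paper's (e.g.\ noting that $h\neq 0$ is automatic since $dx^*\neq 0$, which justifies the converse direction), but the underlying argument is identical.
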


\begin{proof}
Let us again be brief, because the $\mathbb{R}^3$ case 
was already dealt with in Remark \ref{rem:CMCparallelsurf}: 
\[ -\frac{x_u}{E}du+\frac{x_v}{E}dv= h (Hdx+dn) \; , \;\;\; 
h \;\, \text{constant} \] is equivalent to 
\[ k_1+k_2 = 2 H \; , \;\; \text{and} \;\; 
h = 2 E^{-1} (k_1-k_2)^{-1} \; \text{is constant} \; . \]  
The first of these is clearly true, and $h$ is constant if and only if 
the Hopf differential function $\hat Q$ is constant, which is true 
if and only if $x$ is CMC.  
\end{proof}

\begin{corollary}\label{cor:fortheR21case}
An isothermic immersion $x$ in 
$\mathbb{R}^3$ or $\mathbb{R}^{2,1}$ is CMC if and only if 
\[ -\frac{x_u}{E}du+\frac{x_v}{E}dv= h (Hdx+dn) \] for some 
real constants $h$ and $H$.  
\end{corollary}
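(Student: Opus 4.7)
The plan is to read the corollary as a direct consequence of Lemma \ref{hereislem10pt1} combined with Proposition \ref{lem:smoothcase-relatedtoR21}. By Lemma \ref{hereislem10pt1}, the one-form on the left-hand side of the stated identity is exactly $dx^*$, where $x^*$ is the Christoffel transform of $x$. So the condition asserted in the corollary is equivalent to the existence of constants $h$ and $H$ with $dx^* = h(H\,dx + dn)$.

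For the forward direction, suppose $x$ is CMC with constant mean curvature $H_0 \in \mathbb{R}$. Then Proposition \ref{lem:smoothcase-relatedtoR21} applied with $H = H_0$ immediately produces a constant $h$ such that $dx^* = h(H_0\,dx + dn)$. Replacing $dx^*$ by $-\frac{x_u}{E}du + \frac{x_v}{E}dv$ via Lemma \ref{hereislem10pt1} yields the asserted identity with $H := H_0$ and this $h$.

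For the reverse direction, suppose $-\frac{x_u}{E}du + \frac{x_v}{E}dv = h(H\,dx + dn)$ for some real constants $h$ and $H$. By Lemma \ref{hereislem10pt1} again, this rewrites as $dx^* = h(H\,dx + dn)$. Inspecting the argument of Proposition \ref{lem:smoothcase-relatedtoR21} shows that this equation is equivalent to the two scalar conditions $(k_1 + k_2)/2 = H$ and $h = 2E^{-1}(k_1 - k_2)^{-1}$. Since $H$ is assumed constant, the mean curvature of $x$ equals the constant $H$, so $x$ is CMC.

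There is no genuine obstacle here — the content of the corollary is already packaged inside the proposition and lemma. The only point worth pausing over is that Proposition \ref{lem:smoothcase-relatedtoR21} is stated as ``$x$ is CMC $H$ iff $dx^* = h(H\,dx + dn)$ for some constant $h$,'' which could be misread as requiring $H$ to be singled out in advance as the mean curvature of $x$; but its proof in fact derives that identification from the mere assumption that both $h$ and $H$ are constants, which is precisely what the corollary needs.
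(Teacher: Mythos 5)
Your proposal is correct and matches the paper's intent exactly: the corollary is stated there without proof as an immediate consequence of Lemma \ref{hereislem10pt1} (identifying the left-hand side as $dx^*$) and Proposition \ref{lem:smoothcase-relatedtoR21}, whose proof indeed derives $(k_1+k_2)/2 = H$ and the constancy of $\hat Q$ from the assumption that $h$ and $H$ are constants. Your remark about the reverse direction correctly captures the only subtlety.
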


\subsection{Discrete isothermic CMC surfaces in $\mathbb{R}^3$, 
without quaternions}

Let $\ef$ be a discrete isothermic surface in $\text{Im} H \approx 
\mathbb{R}^3$ as in Chapter 
\ref{chapondiscreteCMCsurfs}, with cross ratio factorizing function 
$a_{pq}$.  Starting with the equation 
\[ d\ef_{pq}^* = a_{pq} \frac{-d\ef_{pq}}{|d\ef_{pq}|^2} \] 
for the Christoffel transformation, we have the following lemma, 
which follows from Corollary \ref{cor:AinMarch2008}: 

\begin{lemma}
A discrete isothermic surface $\ef$ in 
$\mathbb{R}^3$ is CMC if and only if there 
exist constants $h,H \in \mathbb{R}$ and $n_p$ with $|n_p|^2=1$ and 
$d\ef_{pq} n_q+n_p d\ef_{pq} = 0$ so that 
\[ h (dn_{pq} + H d\ef_{pq}) = \frac{-a_{pq} d\ef_{pq}}{|d\ef_{pq}|^2} \; . \]
\end{lemma}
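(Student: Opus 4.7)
The plan is to derive this lemma directly from Corollary \ref{cor:AinMarch2008}, using the Christoffel identity $d\ef^*_{pq} = -a_{pq} d\ef_{pq}/|d\ef_{pq}|^2$ from Definition \ref{discreteChristoffeltransform} together with Lemma \ref{lem:lcq-in-parts} for the converse.

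For the forward direction, assume $\ef$ is CMC. By Definition \ref{defn:disclqcCMC}, with $\kappa = 0$ and $Q$ as in \eqref{choiceofQ}, there is a linear conserved quantity $Q + \lambda Z$. Corollary \ref{cor:AinMarch2008} then supplies a constant $H \in \mathbb{R}$ and a vertex function $n_p$ (imaginary quaternion valued) with $|n_p|^2$ constant, satisfying $d\ef^*_{pq} = d(H\ef + n)_{pq} = H\, d\ef_{pq} + dn_{pq}$ and $d\ef_{pq}\, n_q + n_p\, d\ef_{pq} = 0$. Combining with $d\ef^*_{pq} = -a_{pq}\, d\ef_{pq}/|d\ef_{pq}|^2$ yields $Hd\ef_{pq} + dn_{pq} = -a_{pq} d\ef_{pq}/|d\ef_{pq}|^2$. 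To meet the normalization $|n_p|^2 = 1$, set $c := |n_p|$ (a nonzero constant in the non-degenerate case; the case $c = 0$ gives a plane, which can be handled separately). Putting $\tilde n_p := n_p/c$ and $\tilde H := H/c$, multiplying through by $c$ gives the claimed relation with $h = c$, $\tilde H$ in place of $H$, and $\tilde n_p$ in place of $n_p$. The anticommutation $d\ef_{pq}\tilde n_q + \tilde n_p\, d\ef_{pq} = 0$ is preserved under the constant rescaling, completing this direction.

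For the converse, given $h$, $H$ and $n_p$ as in the statement, put $\hat n_p := h\, n_p$ and $\hat H := h H$, so that $d\hat n_{pq} + \hat H\, d\ef_{pq} = d\ef^*_{pq}$, $|\hat n_p|^2 = h^2$ is constant, and $d\ef_{pq}\,\hat n_q + \hat n_p\, d\ef_{pq} = 0$. Now define
\[
Z_p := \begin{pmatrix} \hat H \ef_p + \hat n_p & -\hat n_p \ef_p - \ef_p \hat n_p - \hat H \ef_p^2 \\ \hat H & -\hat H \ef_p - \hat n_p \end{pmatrix},
\]
take $Q$ as in \eqref{choiceofQ} with $\kappa = 0$, and verify via Lemma \ref{lem:lcq-in-parts} that $P := Q + \lambda Z$ is a linear conserved quantity. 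The condition $dQ_{pq} = 0$ is immediate since $Q$ is constant. The conditions $dZ_{pq} = Q\tau_{pq} - \tau_{pq} Q$ and $\tau_{pq} Z_q = Z_p \tau_{pq}$ are each equalities of $2\times 2$ quaternionic matrices that, using the explicit form of $\tau_{pq}$ in \eqref{gen-eqn-for-tau} and the block form of $Z_p$, reduce entry by entry to the two hypothesized identities $d\hat n_{pq} + \hat H\, d\ef_{pq} = d\ef^*_{pq}$ and $d\ef_{pq}\,\hat n_q + \hat n_p\, d\ef_{pq} = 0$, together with the derived symmetric identity
\[
\hat H\ef_q^2 - \hat H\ef_p^2 + \hat n_q \ef_q + \ef_q \hat n_q - \hat n_p \ef_p - \ef_p \hat n_p = d\ef^*_{pq}\,\ef_q + \ef_p\, d\ef^*_{pq},
\]
which follows directly by multiplying the first relation on the left by $\ef_p$ and on the right by $\ef_q$ and invoking the second.

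The main obstacle is bookkeeping: carefully checking the matrix identities (ii) and (iii) of Lemma \ref{lem:lcq-in-parts} in the $2 \times 2$ quaternionic block format. Since $Z_p$ is chosen to match the normal form forced by Corollary \ref{cor:AinMarch2008}, these identities are not genuinely new content but reorganizations of the assumed anticommutation and Christoffel relations, and no subtle integrability condition needs to be discovered.
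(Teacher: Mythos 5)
Your proof is correct and follows essentially the same route as the paper, which simply asserts that the lemma ``follows from Corollary \ref{cor:AinMarch2008}'' together with the Christoffel equation $d\ef^*_{pq}=-a_{pq}\,d\ef_{pq}/|d\ef_{pq}|^2$; your forward direction is exactly that deduction plus the harmless rescaling to $|n_p|^2=1$. Your converse, verifying the conditions of Lemma \ref{lem:lcq-in-parts} for the block-form $Z_p$ and noting that the symmetric identity is a consequence of the other two relations, is the intended reversal (it mirrors the computation in the proof of Lemma \ref{oldsenselemma1}) and checks out.
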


However, 
$d\ef_{pq} n_q+n_p d\ef_{pq} = 0$ is still a quaternionic equation.  But this 
equation is equivalent to the pair of equations 
$d\ef_{pq} \wedge n_q + n_p \wedge d\ef_{pq} = 0$ 
and $\langle d\ef_{pq} , n_p+n_q \rangle_{\mathbb{R}^3} 
= 0$.  Then we can restate the previous 
lemma, without any use of quaternions, as: 

\begin{theorem}\label{thm10ptpt5}
A discrete isothermic surface $\ef$ in 
$\mathbb{R}^3$ is CMC if and only if there 
exist constants $h,H \in 
\mathbb{R}$ and vectors $n_p$ so that 
\begin{itemize}
\item $|n_p|^2=1$, 
\item $d\ef_{pq} \wedge n_q + n_p \wedge d\ef_{pq} = 0$, 
\item $\langle d\ef_{pq} , n_p+n_q \rangle_{\mathbb{R}^3} = 0$, and 
\item $h (dn_{pq} + H d\ef_{pq}) = \frac{-a_{pq} d\ef_{pq}}{|d\ef_{pq}|^2}$.
\end{itemize}
\end{theorem}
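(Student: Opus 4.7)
The plan is to reduce the theorem immediately to the quaternionic lemma stated just above it, so the entire task is to verify that the single quaternionic equation
\[
d\ef_{pq}\, n_q + n_p\, d\ef_{pq} = 0
\]
is equivalent to the pair of real equations
\[
\langle d\ef_{pq}, n_p + n_q\rangle_{\mathbb{R}^3} = 0,
\qquad
d\ef_{pq}\wedge n_q + n_p\wedge d\ef_{pq} = 0.
\]
Since $d\ef_{pq}$, $n_p$ and $n_q$ all lie in $\Im H$, and the fundamental identity for imaginary quaternions $v,w\in\Im H$ is $vw = -\langle v,w\rangle_{\mathbb{R}^3} + v\times w$, the quaternionic product splits canonically into a real (scalar) part and an imaginary (vector) part, and the same will be true of the sum $d\ef_{pq}n_q + n_pd\ef_{pq}$.

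First I would carry out this decomposition directly:
\[
d\ef_{pq}\, n_q = -\langle d\ef_{pq}, n_q\rangle + d\ef_{pq}\times n_q,
\qquad
n_p\, d\ef_{pq} = -\langle n_p, d\ef_{pq}\rangle + n_p\times d\ef_{pq}.
\]
Adding these and using bilinearity of the inner product together with the antisymmetry $n_p\times d\ef_{pq} = -d\ef_{pq}\times n_p$, one obtains
\[
d\ef_{pq}\, n_q + n_p\, d\ef_{pq}
= -\langle d\ef_{pq}, n_p+n_q\rangle + d\ef_{pq}\times(n_q - n_p),
\]
which vanishes as a quaternion if and only if both the real part and the imaginary part vanish separately. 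The real-part condition is precisely $\langle d\ef_{pq}, n_p+n_q\rangle_{\mathbb{R}^3} = 0$. For the imaginary-part condition, interpreting the wedge of imaginary quaternions in the usual way as $v\wedge w = \tfrac12(vw-wv) = v\times w$, we have
\[
d\ef_{pq}\wedge n_q + n_p\wedge d\ef_{pq}
= d\ef_{pq}\times n_q - d\ef_{pq}\times n_p
= d\ef_{pq}\times(n_q - n_p),
\]
so the vanishing of this bivector is the same as the vanishing of the imaginary part above.

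Having established this equivalence, I would conclude the proof by simply invoking the preceding lemma: a discrete isothermic surface $\ef$ in $\mathbb{R}^3$ is CMC if and only if there exist constants $h,H\in\mathbb{R}$ and unit vectors $n_p$ satisfying $d\ef_{pq}\,n_q + n_p\,d\ef_{pq} = 0$ and $h(dn_{pq} + H\,d\ef_{pq}) = -a_{pq}\,d\ef_{pq}/|d\ef_{pq}|^2$. Replacing the quaternionic equation by the equivalent real pair then yields exactly the four bulleted conditions in the statement, finishing the proof. The only delicate point — and the main place one must be careful — is fixing a consistent convention for the symbol $\wedge$ so that the imaginary part of the quaternionic equation matches the stated wedge equation on the nose; once that is pinned down the argument is a one-line algebraic identity.
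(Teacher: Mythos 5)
Your proposal is correct and follows exactly the route the paper takes: the text preceding Theorem \ref{thm10ptpt5} simply asserts that the quaternionic equation $d\ef_{pq}\,n_q + n_p\,d\ef_{pq}=0$ splits into the real-part condition $\langle d\ef_{pq},n_p+n_q\rangle_{\mathbb{R}^3}=0$ and the imaginary-part condition $d\ef_{pq}\wedge n_q + n_p\wedge d\ef_{pq}=0$, and then invokes the preceding lemma. Your explicit computation via $vw=-\langle v,w\rangle+v\times w$ correctly supplies the decomposition the paper leaves unwritten.
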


Not all four items in the above theorem are independent of each other.  
For example, the second item follows from the fourth item, 
because the second item is just telling us that $d\ef_{pq}$ is parallel 
to $dn_{pq}$.  

\subsection{Discrete CMC surfaces in $\mathbb{R}^{2,1}$}

We now propose possible definitions for discrete isothermic 
surfaces and discrete spacelike CMC surfaces in $\mathbb{R}^{2,1}$.  

Let $\ef$ be a map from a domain in $\mathbb{Z}^2$ to $\mathbb{R}^{2,1}$.  
Let $p=(m,n)$, 
$q=(m+1,n)$, 
$r=(m+1,n+1)$ and $s=(m,n+1)$ be four vertices in the domain of $\ef$, 
for some $m,n \in \mathbb{Z}$.  Let $\ef_p$, $\ef_q$, $\ef_r$ and 
$\ef_s$ be the images of $p$, $q$, $r$ and $s$ under $\ef$.

To define the cross ratio factorizing function 
$a_{pq}$ in the case of $\mathbb{R}^{2,1}$, we need to define some 
analogue of the cross ratio, call it $q=q_{pqrs}$.  Then 
we can define the $a_{pq}$ in the usual way.  

We now consider how to define the cross ratio on quadrilaterals.  We could 
consider quadrilaterals in spacelike planes, without rotating those 
planes to horizontal.  However, in the argument below we choose to 
rotate the planes to 
horizontal, so that the metric will be exactly the Euclidean metric 
that is so familiar to us.  

We assume that the points $\ef_p, \ef_q, \ef_r, \ef_s$ lie in a "circle" in 
a spacelike plane of $\mathbb{R}^{2,1}$.  In general, such a circle is 
\[ \left\{ \left. 
\begin{pmatrix}
\cos \beta & \sin \beta & 0 \\ -\sin \beta & \cos \beta & 0 \\ 0 & 0 & 1
\end{pmatrix}
\begin{pmatrix}
\cosh \gamma & 0 & \sinh \gamma  \\ 0 & 1 & 0 \\ 
\sinh \gamma & 0 & \cosh \gamma  \end{pmatrix}
\begin{pmatrix}
\rho \cos \theta \\ \rho \sin \theta \\ 0
\end{pmatrix}
+
\begin{pmatrix}
x_0 \\ y_0 \\ z_0
\end{pmatrix} \, \right| \, \theta \in [0,2\pi ) \right\} \; , \] 
where $x_0,y_0,z_0,\rho,\gamma,\beta$ are all real constants.  
By a rigid motion of $\mathbb{R}^{2,1}$, 
we can move this circle to the horizontal circle 
\[ \{ (\rho \cos \theta, \rho \sin \theta, 0) \, | \, \theta \in [0,2\pi) \} 
\; . \]   
Then the points $\ef_p,\ef_q,\ef_r,\ef_s$ are moved to 
points $(\rho \cos \theta_*, 
\rho \sin \theta_*, 0)$ for $*=p,q,r,s$, respectively.  

Then we can compute the cross ratio in the usual way for 
the space $\mathbb{R}^3$ (that is, we can replace the 
metric for $\mathbb{R}^{2,1}$ with 
the metric for $\mathbb{R}^3$ and then compute the cross ratio, 
which is allowed because the circle is now horizontal 
in $\mathbb{R}^{2,1}$):
\[
q_{pqrs} = \sin (\frac{\theta_p-\theta_q}{2}) 
\csc (\frac{\theta_q-\theta_r}{2})
\sin (\frac{\theta_r-\theta_s}{2})
\csc (\frac{\theta_s-\theta_p}{2}) \; . \]

\begin{remark}
This $q_{pqrs}$ is invariant under isometries of 
$\mathbb{R}^{2,1}$ (by definition), 
but is not M\"obius invariant (unlike the case of 
$\mathbb{R}^3$).  
\end{remark}

Once the $q_{qprs}$ are defined, then the $a_{pq}$ can be defined by 
\[ q_{pqrs} = a_{pq}/a_{ps} \; , \] and then we could use the same 
equations as for the 
$\mathbb{R}^3$ case, that is, the equations in 
Definition \ref{defn:crossratiofactorizingfct}, 
to determine when the surface is discrete isothermic, with 
spacelike quadrilaterals.  

Then, after restricting to discrete isothermic surfaces, we 
could define discrete spacelike CMC surfaces in 
$\mathbb{R}^{2,1}$ by imitating 
the equations from the case of discrete CMC surfaces in 
$\mathbb{R}^3$, as found 
in Theorem \ref{thm10ptpt5}.  This is 
justified by looking at smooth CMC surfaces in 
$\mathbb{R}^3$ and $\mathbb{R}^{2,1}$, 
which have exactly the same equations --  only the 
ambient metric changes, 
see Corollary \ref{cor:fortheR21case}.

So the equations we want for defining a discrete spacelike 
CMC surface in $\mathbb{R}^{2,1}$ are as follows: 
there exist $h,H \in \mathbb{R}$ and normals $n_p$ so that 
\begin{enumerate}
\item $\langle n_p , n_p \rangle_{\mathbb{R}^{2,1}} = -1$, 
\item $d\ef_{pq} \wedge n_q + n_p \wedge d\ef_{pq} = 0$, 
\item $\langle d\ef_{pq} , n_p+n_q \rangle_{\mathbb{R}^{2,1}} = 
0$, and 
\item $h (dn_{pq} + H d\ef_{pq}) = \frac{-a_{pq} d\ef_{pq}}{|d\ef_{pq}|^2}$, 
\end{enumerate}
where here $\langle \cdot , \cdot \rangle_{\mathbb{R}^{2,1}}$ 
represents the $\mathbb{R}^{2,1}$ inner product, and $\wedge$ 
is the $\mathbb{R}^{2,1}$ cross product, and $| \cdot |$ is the 
$\mathbb{R}^{2,1}$ norm.  

\section{Polynomial conserved quantities and Darboux transforms}
\label{section:p.c.q.s} 

\subsection{Polynomial conserved quantities} 
Equation \eqref{linearconservedquantity-rossman} 
can be extended to define discrete isothermic surfaces 
$\ef$ with polynomial conserved quantities, as follows: 

\begin{defn}
\[ P = Q+\lambda P_1+\lambda^2 P_2+ ... + \lambda^{n-1} P_{n-1} + 
\lambda^n Z \] is a {\em polynomial conserved quantity} if 
\begin{equation}\label{polynomialconservedquantity} 
(I+\lambda \tau_{pq}) 
P_q = P_p (I+\lambda \tau_{pq}) \; , 
\end{equation} 
where $Q$, $Z$ and the $P_j$ are maps from the lattice domain to 
$\mathbb{R}^{4,1}$.  
\end{defn}

We sometimes write $Z$ as $P_n$ as well.  
If such a polynomial conserved quantity exists, we say that 
$\ef$ is a {\em special surface of type $n$}, and the above 
Equation \eqref{polynomialconservedquantity} is equivalent to 
\begin{equation}\label{eqn:pcq-constancy} 
  T^\lambda_p P_p (T^\lambda_p)^{-1} \end{equation} being constant 
with respect to the vertices $p$.  

Equation \eqref{polynomialconservedquantity} can be restated as 
$\Gamma_{pq}^\lambda \cdot P_q = P_p$, i.e. 
$P$ is a parallel section, i.e. $P$ is conserved by the connection 
$\Gamma_{pq}^\lambda$, so we can call it a "conserved quantity".  

\subsection{Polynomial conserved quantities for smooth surfaces}\label{therehi} 
Before further exploring discrete surfaces with polynomial 
conserved quantities, let us consider the case of smooth surfaces.  
Definition \ref{first-lcq-defn} and Equation 
\eqref{rossman-first-equation} 
can be extended to define smooth surfaces with polynomial 
conserved quantities \[ P = Q+\lambda P_1+\lambda^2 P_2+ ... + 
\lambda^{n-1} P_{n-1} + \lambda^n Z \; , \] 
where $Q$, $Z$ and the $P_j$ are maps from the domain of definition of 
$x=x(u,v)$ to $\mathbb{R}^{4,1}$, as follows:  

\begin{defn}
$P$ is a {\em polynomial conserved quantity of type $n$} 
if 
\begin{equation}\label{polynomialconservedquantity-smooth} 
dP = \lambda P \tau-\lambda \tau P \; . 
\end{equation}
\end{defn}

We now state a result about the polynomial conserved quantities 
of Darboux transforms of smooth surfaces (recall that Darboux 
transformations were defined in Definition \ref{defn:smoothDarbTrans}): 

\begin{lemma}\label{lem:smooth-case-type-up-one}
If the initial isothermic surface 
$x=x(u,v)$ has a polynomial conserved quantity of order $n$, 
then any Darboux transform 
$\hat x = \hat x(u,v)$ has a polynomial conserved quantity of order 
at most $n+1$.  
\end{lemma}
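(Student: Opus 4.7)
The plan is to construct a polynomial $\hat P(\lambda)$ of degree at most $n+1$ out of $P$, $X$ and $\hat X$, and to verify the conserved quantity equation $d\hat P = \lambda(\hat P \hat\tau - \hat\tau \hat P)$, where $\hat\tau$ is formed from $\hat x$ exactly as $\tau$ was formed from $x$ in Definition \ref{first-lcq-defn}.

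First I would recast both hypotheses in terms of the Calapso transformation $T^\lambda$ of $x$. The polynomial conserved quantity condition \eqref{polynomialconservedquantity-smooth} is equivalent to $T^\lambda P(\lambda)(T^\lambda)^{-1}$ being independent of $(u,v)$ as a polynomial in $\lambda$ of degree $n$ (this is the smooth analog of \eqref{eqn:pcq-constancy}). The Darboux hypothesis, by Definition \ref{defn:smoothDarbTrans}, provides a parameter $\mu \in \mathbb{R}$ and a scalar function $r$ such that $r\, T^\mu \hat X (T^\mu)^{-1}$ is constant in $L^4$; equivalently, by Lemma \ref{8ptpt50}, $\hat x$ satisfies the Riccati equation $d\hat x = \mu(\hat x - x) dx^* (\hat x - x)$.

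The crucial middle step is to express $\hat T^\lambda$, the Calapso transformation of $\hat x$, in terms of $T^\lambda$ via a dressing transformation $g^\lambda$ that is rational in $\lambda$ with its only singularity at $\lambda = \mu$. I would seek $g^\lambda$ of the form $g^\lambda = I + \tfrac{\lambda}{\mu-\lambda}\,\Pi$, where $\Pi$ is a rank-one projector built from $X$ and $\hat X$ (essentially the projector along the line in $\mathbb{R}^{4,1}$ spanned by $X$ and $\hat X$ relative to the mean curvature sphere congruence enveloped by both surfaces). The requisite intertwining $\lambda \hat\tau = \lambda\, g\tau g^{-1} + dg\cdot g^{-1}$ will reduce, after expanding in powers of $\lambda$, to the Riccati equation for $\hat x - x$ together with the Christoffel identities $dx^* = x_u^{-1}du - x_v^{-1}dv$ and its counterpart for $\hat x$.

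Given this dressing, the natural candidate is
\[
\hat P(\lambda) \;=\; (\lambda - \mu)\, g^\lambda\, P(\lambda)\, (g^\lambda)^{-1},
\]
in which the prefactor $(\lambda - \mu)$ precisely cancels the simple pole at $\lambda = \mu$ coming from $(g^\lambda)^{-1}$, so that $\hat P$ is polynomial in $\lambda$ of degree at most $n+1$. Conservation $d\hat P = \lambda(\hat P\hat\tau - \hat\tau\hat P)$ then follows from conservation of $P$ together with the intertwining identity for $g^\lambda$: indeed $\hat T^\lambda\hat P(\lambda)(\hat T^\lambda)^{-1}$ equals $(\lambda-\mu)\,(T^\lambda P(\lambda)(T^\lambda)^{-1})$ up to conjugation by a constant (the value of $g^\lambda$ at a chosen base point), and $T P T^{-1}$ is constant by hypothesis. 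The main obstacle in this plan is producing the dressing factor $g^\lambda$ and verifying its intertwining property; once this is established, both the degree bound $\deg\hat P \le n+1$ and the conserved quantity equation for $\hat P$ are immediate by pole-order analysis and direct algebra.
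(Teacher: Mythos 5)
Your construction is essentially the paper's own: with the normalization $X\hat X + \hat X X = -I$ the operator $\Pi = -X\hat X$ is idempotent, your $g^\lambda = I + \tfrac{\lambda}{\mu-\lambda}\Pi$ is exactly the inverse of the paper's gauge factor $A = I - \tfrac{\lambda}{\mu}\,\tfrac{X\hat X}{X\hat X+\hat X X}$, and your candidate $(\lambda-\mu)\,g^\lambda\,P\,(g^\lambda)^{-1}$ agrees with the paper's $\hat P = \mu(\mu-\lambda)A^{-1}PA$ up to a constant scalar; the intertwining of $\hat T = TA$ with $\hat\tau$ via the Riccati equation \eqref{eqn-riccati} and the constancy of $\hat T\hat P\hat T^{-1}$ likewise follow the paper's route. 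One small slip: since $\Pi^2=\Pi$, the inverse $(g^\lambda)^{-1} = I - \tfrac{\lambda}{\mu}\Pi$ is polynomial in $\lambda$ with no pole; the simple pole at $\lambda=\mu$ sits in $g^\lambda$ itself, and it is that pole which your prefactor $(\lambda-\mu)$ cancels.

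The genuine gap is your closing claim that the degree bound $\deg\hat P \le n+1$ is ``immediate by pole-order analysis.'' It is not: $(\lambda-\mu)g^\lambda$ and $(g^\lambda)^{-1}$ are each linear in $\lambda$, so the naive count only gives $\deg\hat P \le n+2$. The coefficient of $\lambda^{n+2}$ is, up to a nonzero scalar, $\hat X\,(X P_n X)\,\hat X$, where $P_n$ is the top coefficient of $P$. Since $X^2=0$ and $XP_n+P_nX = -2\langle X,P_n\rangle I$, one has $XP_nX = -2\langle X,P_n\rangle X$, so killing the top term requires the additional fact that $P_n\perp X$. That orthogonality is a nontrivial consequence of the conserved quantity equation, obtained by comparing top-order coefficients in $\lambda$ (it is the smooth analogue of Lemma \ref{pnperpF} and Corollary \ref{cor:FPnF} for the discrete case), and it is precisely the step the paper singles out at the end of its proof. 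Without it your construction only yields a polynomial conserved quantity of order at most $n+2$, not $n+1$.
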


\begin{proof}
Let $X$ be a lift of the initial surface $x$ with Calapso transformation 
$T$ and polynomial conserved quantity $P$ of order $n$.  
Then $TPT^{-1}$ is constant.  Let $\hat X$ be a lift of the Darboux 
transform $\hat x$ of $x$, i.e. $T \hat X T^{-1}$ is constant in 
$PL^4$ for some particular choice of $\lambda$, and 
let us refer to that choice of $\lambda$ as $\lambda = \mu$.  
(From now on we take $\mu$ to be that fixed value, and $\lambda$ 
will denote a free real parameter.)  We define 
\[
  A=I - \frac{\lambda}{\mu} \frac{X \hat X}{X\hat X+\hat X X} 
 \] 
(since $X\hat X+\hat X X$ is a scalar multiple of the identity, 
we regard it as a scalar in the denominator here), 
and we can check that 
\[
 A^{-1} = \frac{1}{(\mu-\lambda)(X\hat X+\hat X X)} (\mu X \hat X 
+ (\mu-\lambda) \hat X X) \; , \] which follows immediately from 
the property $X^2=\hat X^2=0$.  

Since we are free to rescale $X$ and $\hat X$, let us rescale them 
so that 
\[ X = \begin{pmatrix} x & -x^2 \\ 1 & -x \end{pmatrix} \; , \;\;\; 
   \hat X = \frac{1}{\delta^2} 
    \begin{pmatrix} \hat x & -\hat x^2 \\ 1 & -\hat x 
    \end{pmatrix} \; , \] 
where $\delta := \hat x - x$.  Then 
\[ X \hat X = \frac{1}{\delta^2} 
   \begin{pmatrix} x \delta & -x \delta \hat x \\ \delta & 
   -\delta \hat x \end{pmatrix} \; , \;\;\; 
   \hat X X = \frac{1}{\delta^2} 
   \begin{pmatrix} -\hat x \delta & \hat x \delta x \\ - \delta & 
   \delta x \end{pmatrix} \; , \]
and also 
\[ X \hat X + \hat X X = -I \; , \;\;\; x \delta \hat x = 
    \hat x \delta x \; . \]  

We also have that the logarithmic derivatives $\tau$ and $\hat \tau$ 
of Calapso transformations of $x$ and $\hat x$ satisfy 
\[ \tau (\partial_u) = \begin{pmatrix}
   x x_u^{-1} & -x x_u^{-1} x \\ x_u^{-1} & - x_u^{-1} x 
\end{pmatrix} \; , \;\;\; 
\tau (\partial_v) = - \begin{pmatrix}
   x x_v^{-1} & -x x_v^{-1} x \\ x_v^{-1} & - x_v^{-1} x 
\end{pmatrix} \; , \]\[ 
\hat \tau (\partial_u) = \begin{pmatrix}
   \hat x \hat x_u^{-1} & -\hat x \hat x_u^{-1} \hat x \\ 
   \hat x_u^{-1} & - \hat x_u^{-1} \hat x 
\end{pmatrix} \; , \;\;\; 
\hat \tau (\partial_v) = - \begin{pmatrix}
   \hat x \hat x_v^{-1} & -\hat x \hat x_v^{-1} \hat x \\ 
    \hat x_v^{-1} & - \hat x_v^{-1} \hat x 
\end{pmatrix} \; .
\]
Furthermore, by Equation \eqref{eqn-riccati}, we have 
\begin{equation}\label{eqn:6.21repeated} 
\hat x_u = \mu \delta x_u^{-1} \delta \; , \;\;\; 
\hat x_v = - \mu \delta x_v^{-1} \delta \; . \end{equation}

Next we should show that 
$d(TA)=TA \cdot \lambda \hat \tau$, so $\hat T = TA$ solves 
$\hat{T}^{-1} d\hat{T} = \lambda \hat{\tau}$.  That is, we 
wish to show that 
\[ A^{-1} \lambda \tau (\partial_u) A + A^{-1} dA - 
\lambda \hat \tau (\partial_u) = \]\[ 
\frac{\mu \lambda}{\mu-\lambda} \begin{pmatrix}
x x_u^{-1} & -x x_u^{-1} x \\ x_u^{-1} & - x_u^{-1} x
\end{pmatrix}
- \lambda \begin{pmatrix}
\hat x \hat x_u^{-1} & -\hat x \hat x_u^{-1} \hat x \\ \hat x_u^{-1} & 
- \hat x_u^{-1} \hat x 
\end{pmatrix}
- \frac{\lambda (\delta \delta_u + \delta_u \delta)}{(\mu-\lambda) 
\delta^4} 
\begin{pmatrix}
x \delta & -x \delta \hat x \\ \delta & - \delta \hat x
\end{pmatrix} + \]\[ 
\frac{\lambda}{(\mu-\lambda) \delta^2} 
\begin{pmatrix}
x_u \delta + x \delta_u & -x_u \delta \hat x-x \delta_u \hat x-x \delta 
\hat x_u \\ \delta_u & - \delta_u \hat x- \delta \hat x_u
\end{pmatrix} + 
\frac{\lambda^2}{\mu (\mu-\lambda) \delta^4} 
\begin{pmatrix}
-\hat x \delta x_u \delta & \hat x \delta x_u \delta \hat x \\ 
-\delta x_u \delta & \delta x_u \delta \hat x 
\end{pmatrix} \]
is zero, and also $A^{-1} \lambda \tau (\partial_v) A + A^{-1} dA - 
\lambda \hat \tau (\partial_v)=0$, 
and this follows from the first equation in 
\eqref{eqn:6.21repeated}.  

Then we define \[ \hat P = \mu (\mu-\lambda) A^{-1} P A \; , \] 
and we can 
show that $\hat T \hat P \hat T^{-1}$ is constant, as follows: 
$d(\hat{T} \hat{P} \hat{T}^{-1}) = \mu (\mu-\lambda) d(T A \cdot A^{-1} 
P A \cdot A^{-1} T^{-1}) = \mu (\mu-\lambda) d(TPT^{-1}) = 0$.  

It is now clear that $\hat P$ is a polynomial conserved quantity 
of degree at most $n+2$.  To show that the degree is actually at 
most $n+1$, it suffices to show that $P_n$ is perpendicular to $X$, 
and so $X P_n X = 0$.  We omit an argument for this, but note that 
the analogous argument for the case of discrete 
surfaces can be found in detail below.  
\end{proof}

\begin{remark}\label{rem:9point4}
The Darboux transform in Lemma \ref{lem:smooth-case-type-up-one} 
is a Baecklund transform exactly when it is of type at most $n$.  
See Remarks \ref{rem:7point48} and \ref{rem:7point49}.  
See also Definition \ref{somebegginnerr} 
and Lemma \ref{beginnerlemma} (discrete case).  
\end{remark}

For an isothermic surface with a polynomial conserved quantity 
of order $n$, we define a complementary surface as follows: take a value 
$\lambda_0$ of $\lambda$ so that 
\[ 
|| P(\lambda_0) ||^2 = || Q+\lambda_0 P_1+\lambda^2_0 P_2+ ... + 
\lambda^{n-1}_0 P_{n-1} + \lambda^n_0 Z ||^2=0
\] and define the complementary surface to be $P(\lambda_0)$.  
This will be a Baecklund transformation, so will be of type at most 
$n$.  We say more about this in Section \ref{compsurfcompsurf}.  

Complementary surfaces can be of type $n$.  
But if a Baecklund transform is of type $n-1$ (Darboux transforms 
must be of type at least $n-1$, as seen in 
Lemma \ref{downdowndown}), then it must be a complementary surface, 
by Lemma 4.10 of \cite{BHRS}.  Examples of type $n-1$ 
Baecklund transforms can come from 
CMC $1$ surfaces in $\mathbb{H}^3$ and minimal surfaces in 
$\mathbb{R}^3$.  In fact, we have the following lemma: 

\begin{lemma}\label{lem:twospecialcases}
In the case $n=1$ (i.e. CMC surfaces), CMC $\pm \sqrt{-\kappa}$ 
surfaces in $M_\kappa$ are the only 
cases where a type $n-1=0$ Baecklund transform can exist.  
In particular, if such a Baecklund transform exists, then 
$\kappa \leq 0$.  
\end{lemma}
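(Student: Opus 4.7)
The plan is to reduce to an analysis of complementary surfaces. By the cited Lemma 4.10 of \cite{BHRS}, any Baecklund transform of type $n-1=0$ of an $n=1$ CMC surface must arise as a complementary surface, that is, as $\hat X = P(\lambda_0) = Q + \lambda_0 Z$ for some real $\lambda_0$ with $\|P(\lambda_0)\|^2 = 0$. I would compute when such $\lambda_0$ exists and when the resulting $\hat X$ actually produces a type $0$ rather than type $1$ structure.

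First I would compute the quadratic $\|P(\lambda)\|^2$ using $Q^2 = \kappa I$, so $\langle Q,Q\rangle = -\kappa$; the identity $\|Z\| = |h|/2$ from \eqref{eq:norm-of-Z-is-positive}; and $\langle Q,Z\rangle = -h H_\kappa/2$, which comes from \eqref{eqn:Hwithscalarfactor}. This yields
\[ \|P(\lambda)\|^2 = -\kappa - \lambda h H_\kappa + \tfrac{\lambda^2 h^2}{4}, \]
whose discriminant is proportional to $H_\kappa^2 + \kappa$. A real root $\lambda_0$, hence a complementary surface, exists if and only if $H_\kappa^2 \geq -\kappa$.

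Next I would argue that the type of the complementary surface is generically $n=1$ and drops to $n-1=0$ exactly when this quadratic has a double root, equivalently when $H_\kappa^2 + \kappa = 0$. Since $H_\kappa$ is real, this forces $\kappa \leq 0$ and $H_\kappa = \pm\sqrt{-\kappa}$, as claimed. In the double-root case, $\lambda_0 = 2H_\kappa/h$ simultaneously satisfies $\|\hat X\|^2 = 0$ and, by vanishing of the derivative of the quadratic, $\langle \hat X, Z\rangle = 0$. The smooth analogs of Lemmas \ref{useful-lemma1}--\ref{useful-lemma2} then permit $\hat X$ itself to serve as a constant-in-$\lambda$ conserved quantity for the Baecklund transform, reducing its type to $0$; by Theorem \ref{cq-sphere-thm} the transform is a sphere. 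For the converse, I would point to the familiar realizations: a minimal surface in $\mathbb{R}^3 = M_0$ (with $H_0 = 0 = \pm\sqrt{-0}$) has complementary surface collapsing to the constant point at infinity represented by $Q$, while a CMC $1$ surface in $\mathbb{H}^3 = M_{-1}$ has complementary surface given by its hyperbolic Gauss map into $\partial\mathbb{H}^3 \cong \mathbb{S}^2$, both of which are sphere-valued and hence type $0$.

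The main obstacle is the rigorous justification of the type-drop step. Concretely, one must verify, from the explicit construction $\hat P = \mu(\mu - \lambda)A^{-1} P A$ in the proof of Lemma \ref{lem:smooth-case-type-up-one}, that the highest coefficient of $\hat P$ as a polynomial in $\lambda$ vanishes exactly when $\lambda_0$ is a double root of $\|P(\lambda)\|^2$. This is a careful bookkeeping computation in which the factor $(\lambda - \lambda_0)^2$ implicit in $\|P(\lambda)\|^2$ must be shown to cancel one power of $\lambda$ from $\hat P$, ensuring that $\hat P$ is genuinely of type $0$ rather than type $1$ in disguise.
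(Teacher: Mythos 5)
Your proposal is correct and follows essentially the same route as the paper: both compute the quadratic $\|P(\lambda)\|^2$ (you in unnormalized form, the paper after normalizing $\|Z\|=1$ to get $\lambda^2 - 2H\lambda - \kappa$), observe that its discriminant is a positive multiple of $H_\kappa^2+\kappa$, and invoke Lemma 4.10 of \cite{BHRS} to identify the existence of a type-$0$ Baecklund transform with a double root, forcing $H_\kappa^2=-\kappa$ and hence $\kappa\le 0$. The extra material you add on justifying the type-drop and on the minimal/CMC-$1$ examples is consistent with, but not required by, the paper's argument, which simply defers those details to \cite{BHRS}.
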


\begin{proof}
When the linear conserved quantity is normalized, we have 
\[ ||\lambda Z + Q||^2 = \lambda^2-2 H \lambda - \kappa \; , \] 
and the discriminant is 
\[ 2 \sqrt{H^2+\kappa} \; . \]  
When a type $0$ Baecklund transform exists, we have 
a higher order zero of $\lambda^2-2 H \lambda - 
\kappa$ (by Lemma 4.10 in \cite{BHRS}), so 
$H^2+\kappa=0$, i.e. $H^2=-\kappa$.  (See \cite{BHRS} for 
further details.)  
\end{proof}

\begin{remark}
Take a smooth surface and apply two Darboux transformations 
given by using $\lambda$ and $\mu$, respectively.  Then 
apply a Darboux transformations to each of those, but now 
using $\mu$ for the case of the surface first made using 
$\lambda$ and using $\lambda$ for the case of the surface 
first made using $\mu$.  By a permutability theorem, this 
second pair of Darboux transformations is just one single 
surface.  Fixing one point on the original surface and 
looking at the other three corresponding points on the four 
(actually only three) transformed surfaces, one has a 
quadrilateral with cross ratio equal to $\lambda/\mu$.  One 
can keep repeating this procedure to make more 
quadrilaterals.  This will result 
in a discrete surface starting from a single point on the 
original smooth surface, and comprized of corresponding 
points on the transformed surfaces (one point for each 
transformed surface).  Because the cross ratios take the 
form $\lambda/\mu$, this discrete surface is 
discrete isothermic.  
\end{remark}

\subsection{Darboux transforms for discrete surfaces} 
The Darboux transforms of discrete surfaces have similar enveloping
properties to the case of smooth surfaces.  In the discrete case, 
the eight vertices of two corresponding quadrilaterals (one on the
original surface and the corresponding 
one on the Darboux transform) all lie in one sphere.  
(This can be seen from the upcoming Lemmas 
\ref{lemmalemma9point8} and \ref{lemmalemma9point10}.)  

Assume $\ef$ is a discrete isothermic surface, 
and that $F$ is a lift of $\ef$.  
We have the Christoffel transformation $T^\lambda$ satisfying 
\[ T_q^\lambda = T_p^\lambda (I+\lambda \tau_{pq}) \; . \] 

\begin{defn}\label{defn:discDarbTrans}
$\hat F$ gives a Darboux transform $\hat \ef$ of $\ef$ if 
\begin{equation}\label{eqn:definingDarbtransfindiscretecase} 
T^\mu_p \hat F_p (T^\mu_p)^{-1} \end{equation} is constant 
in $PL^4$ with respect to vertices $p$, for some value $\mu$.  
\end{defn}

When the term $T^\mu_p \hat F_p (T^\mu_p)^{-1}$ in Equation 
\eqref{eqn:definingDarbtransfindiscretecase} is set to a constant, 
we have what is sometimes called Darboux's linear system.  

In this definition, 
$\hat \ef$ is a Darboux transformation if $T^\mu \hat F (T^\mu)^{-1}$ 
is constant.  Here "constant" means in the projectivized 
sense.  That is, there 
exists an $r_{pq} \in 
\mathbb{R}$ such that $T_p^\mu \hat F_p (T_p^\mu)^{-1} 
= r_{pq} \cdot T_q^\mu \hat F_q (T_q^\mu)^{-1}$.  

Once a choice of $T$ is made, 
it is possible to choose $\hat F$ so that $r_{pq}=1$ on all edges, 
but then $\hat F$ might not be a Moutard lift.  

Just like in the smooth case, where we obtained the Riccati 
equation \eqref{eqn-riccati}, we have: 

\begin{lemma}
Definition \ref{defn:discDarbTrans} is equivalent to 
\begin{equation}\label{eq:RiccatiDiscrete} 
d\hat \ef_{pq} = \mu (\hat \ef-\ef)_p df^*_{pq} 
(\hat \ef-\ef)_q \; . \end{equation}
\end{lemma}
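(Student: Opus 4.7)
The plan is to mimic the proof of the smooth analogue, Lemma \ref{8ptpt50}. First, I translate the constancy of $T^\mu_p \hat F_p (T^\mu_p)^{-1}$ in $PL^4$: it is equivalent, on each edge $pq$, to the existence of a real scalar $r_{pq}$ with $T^\mu_p \hat F_p (T^\mu_p)^{-1} = r_{pq}\, T^\mu_q \hat F_q (T^\mu_q)^{-1}$. Using $(T^\mu_p)^{-1} T^\mu_q = I + \mu \tau_{pq}$ from \eqref{eqn:TpTq}, this rearranges to the matrix equation
\[
 \hat F_p (I + \mu \tau_{pq}) \;=\; r_{pq}\,(I + \mu \tau_{pq})\, \hat F_q,
\]
which I will call $(\ast)$ and expand into four quaternionic entries.

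I choose the standard representative $\hat F_p = \bigl(\begin{smallmatrix} \hat \ef_p & -\hat \ef_p^{\,2} \\ 1 & -\hat \ef_p \end{smallmatrix}\bigr)$ (any rescaling is absorbed into $r_{pq}$) and take $\tau_{pq}$ in the explicit form of \eqref{gen-eqn-for-tau} valid for this lift. The key calculation is to form the combination ``entry $(1,1)$ of $(\ast)$ minus $\hat \ef_p \cdot$ entry $(2,1)$ of $(\ast)$'': since the real scalar $r_{pq}$ commutes with quaternions, the left-hand side cancels to $0$, while the right-hand side simplifies to $r_{pq}\bigl[d\hat \ef_{pq} - \mu(\hat \ef_p - \ef_p)\,d\ef^*_{pq}\,(\hat \ef_q - \ef_q)\bigr]$. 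Since $r_{pq}\neq 0$, this vanishes precisely when the Riccati equation \eqref{eq:RiccatiDiscrete} holds. Analogous combinations ``entry $(2,2) + $ entry $(2,1)\cdot \hat\ef_q$'' and ``entry $(1,2) - \hat\ef_p\cdot$ entry $(2,2)$'' also reduce to the Riccati equation (in the latter case, multiplied on the right by $\hat\ef_q$). Thus $(\ast)$ is equivalent to entry $(2,1)$ (which merely defines $r_{pq}$) together with the Riccati equation.

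For the converse, I assume the Riccati equation holds and define $r_{pq}$ through entry $(2,1)$. The main obstacle is verifying that $r_{pq}$ is real-valued. To do this, I rewrite the Riccati equation in the two equivalent rearranged forms
\[
(I - \mu(\hat\ef_p - \ef_p)\,d\ef^*_{pq})\,(\hat\ef_q - \ef_q) \;=\; (\hat\ef_p - \ef_p) - d\ef_{pq},
\]
\[
(\hat\ef_p - \ef_p)\,(I + \mu\, d\ef^*_{pq}\,(\hat\ef_q - \ef_q)) \;=\; (\hat\ef_q - \ef_q) + d\ef_{pq},
\]
both obtained by algebraic manipulation of $d\hat\ef_{pq} = \mu(\hat\ef_p - \ef_p)\,d\ef^*_{pq}\,(\hat\ef_q - \ef_q)$. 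Combining these with the parallelism $d\ef^*_{pq} \parallel d\ef_{pq}$ (a consequence of the Christoffel relation $d\ef^*_{pq}\, d\ef_{pq} = a_{pq}\in\mathbb{R}$, so that $d\ef^*_{pq}$ and $d\ef_{pq}$ commute as quaternions), one verifies directly that the defining ratio $[1 - \mu (\hat \ef_p - \ef_p)\, d\ef^*_{pq}][1 + \mu\, d\ef^*_{pq}(\hat \ef_q - \ef_q)]^{-1}$ is indeed real. The remaining three matrix-entry equations of $(\ast)$ then follow routinely from the Riccati equation and the definition of $r_{pq}$, completing the equivalence.
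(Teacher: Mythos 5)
Your forward direction is essentially the paper's own proof: the paper likewise expands the projective constancy of $T^\mu \hat F (T^\mu)^{-1}$ along an edge into the four quaternionic entry equations of $(I+\mu\tau_{pq})\hat F_q = r\,\hat F_p(I+\mu\tau_{pq})$, defines $r$ by the $(2,1)$ entry, and extracts the Riccati equation from exactly the rank-one combinations you describe. The paper, however, proves only that direction and leaves the converse to the reader (pointing to the smooth analogue, Lemma \ref{8ptpt50}); you supply it, and you correctly isolate the one nontrivial issue, namely that the scalar $r_{pq}$ defined by the $(2,1)$ entry is real. Your verification of that point is compressed to the point of hiding a needed ingredient: the reality of $\bigl(1+\mu\, d\ef^*_{pq}(\hat\ef_q-\ef_q)\bigr)\bigl(1-\mu(\hat\ef_p-\ef_p)\,d\ef^*_{pq}\bigr)^{-1}$ does not follow by pure algebra from your two rearrangements plus $d\ef^*_{pq}\parallel d\ef_{pq}$. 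Indeed, writing $d\mathcal{D}_p=\hat\ef_p-\ef_p$, $d\mathcal{D}_q=\hat\ef_q-\ef_q$, one computes
\[
\bigl(1+\mu\, d\ef^*_{pq}\,d\mathcal{D}_q\bigr)\overline{\bigl(1-\mu\, d\mathcal{D}_p\, d\ef^*_{pq}\bigr)}
=1-\mu a_{pq}+\mu^2\bigl[d\ef^*_{pq}d\mathcal{D}_p,\;d\ef^*_{pq}d\mathcal{D}_q\bigr]\; ,
\]
using the Riccati equation and $d\ef^*_{pq}d\ef_{pq}=a_{pq}$; the commutator term is purely imaginary, so one still must show it vanishes. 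That requires knowing that $d\ef^*_{pq}$, $d\mathcal{D}_p$ and $d\mathcal{D}_q$ lie in a common $2$-plane of $\Im H$ (so the pairwise products land in a commutative subalgebra $\mathbb{R}\oplus\mathbb{R}n$). This coplanarity is available — the Riccati equation together with $d\ef^*_{pq}=a_{pq}(d\ef_{pq})^{-1}$ forces the cross ratio of $\ef_p,\ef_q,\hat\ef_q,\hat\ef_p$ to be real, hence the four points are concircular and all their differences coplanar (this is exactly the argument of Lemma \ref{lemmalemma9point8}) — but it must be invoked explicitly; without it the ``direct verification'' stalls. With that step inserted, your argument is complete and the remaining entry equations follow as you say.
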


\begin{proof}
We prove just one direction here.  The other direction can be 
proven by an argument analogous to the one in the proof of 
Lemma \ref{8ptpt50}, and we leave that to the reader.  

$T_q^\mu \hat F_q (T_q^\mu)^{-1}$ being parallel to 
$T_p^\mu \hat F_p (T_p^\mu)^{-1}$ is equivalent to the following 
four equations: 
\[ 1 + \mu d\ef_{pq}^* d\mathcal{D}_q = r (1 - 
\mu d\mathcal{D}_p d\ef^*_{pq}) \; , \]
\[ \hat \ef_q + \mu \ef_p d\ef_{pq}^* d\mathcal{D}_q = 
r (\hat \ef_p - \mu \hat \ef_p 
d\mathcal{D}_p d\ef^*_{pq}) \; , \]
\[ \hat \ef_q + \mu d\ef_{pq}^* d\mathcal{D}_q \hat \ef_q = 
r (\hat \ef_p - \mu d\mathcal{D}_p d\ef^*_{pq} 
\ef_q) \; , \]
\[ ( \hat \ef_q + \mu \ef_p d\ef_{pq}^* d\mathcal{D}_q ) 
\hat \ef_q = r \hat \ef_p (\hat \ef_p - \mu 
d\mathcal{D}_p d\ef^*_{pq} \ef_q ) \; , \] 
for some real $r$, where $d\mathcal{D} := 
\hat \ef - \ef$.  Defining $r$ by the first of the 
four equations, we have 
\[ 
(1-\mu d\mathcal{D}_p d\ef^*_{pq}) 
(\hat \ef_q + \mu \ef_p d\ef^*_{pq} d\mathcal{D}_q) 
\hat \ef_q = 
\] 
\[ 
(1+\mu d\ef^*_{pq} d\mathcal{D}_q) \hat \ef_p 
(1 - \mu d\mathcal{D}_p d\ef^*_{pq} ) \hat \ef_q  = 
\] 
\[ 
(1-\mu d\mathcal{D}_p d\ef^*_{pq}) \hat \ef_p 
(1 + \mu d\ef^*_{pq} d\mathcal{D}_q) \hat \ef_q = 
\] 
\[ 
(1+\mu d\ef^*_{pq} d\mathcal{D}_q) \hat \ef_p 
(\hat \ef_p - \mu d\mathcal{D}_p d\ef^*_{pq} \ef_q ) \; . 
\] 
In particular, 
\[ 
\hat \ef_q + \mu \ef_p d\ef^*_{pq} d\mathcal{D}_q = 
\hat \ef_p + \mu \hat \ef_p d\ef^*_{pq} d\mathcal{D}_q \; , 
\] 
\[ 
\hat \ef_q - \mu d\mathcal{D}_p d\ef^*_{pq} \hat \ef_q = 
\hat \ef_p - \mu d\mathcal{D}_p d\ef^*_{pq} \ef_q \; . 
\] 
Summing these last two equations gives Equation 
\eqref{eq:RiccatiDiscrete}.  
\end{proof}

\begin{lemma}\label{lemmalemma9point8}
If $\hat \ef$ is a Darboux transform of $\ef$, then for 
adjacent $p$ and $q$, the four points $\ef_p$, $\ef_q$, 
$\hat \ef_q$ and $\hat \ef_p$ are concircular.  
\end{lemma}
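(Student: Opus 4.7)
The plan is to show concircularity by computing the quaternionic cross ratio of the four points and verifying it is real. Recall (from the discussion following Lemma \ref{lem:generalformulaforcrossratio}, or directly from the calculation in the proof of the preceding lemmas) that four points $p_1,p_2,p_3,p_4 \in \Im H$ lie on a common circle if and only if their quaternionic cross ratio $(p_1-p_2)(p_2-p_3)^{-1}(p_3-p_4)(p_4-p_1)^{-1}$ is real. So it suffices to pick a convenient cyclic ordering of $\ef_p, \ef_q, \hat{\ef}_q, \hat{\ef}_p$ and verify reality.

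I would order the four points as $\hat{\ef}_p,\hat{\ef}_q,\ef_q,\ef_p$ and introduce the abbreviations $\delta_p := \hat{\ef}_p - \ef_p$ and $\delta_q := \hat{\ef}_q - \ef_q$. Using the equivalent Riccati form of the Darboux transformation, established in Equation \eqref{eq:RiccatiDiscrete},
\[
 d\hat{\ef}_{pq} \;=\; \mu\,\delta_p\, d\ef^{*}_{pq}\,\delta_q,
\]
the cross ratio becomes
\[
 q \;=\; (\hat{\ef}_p - \hat{\ef}_q)(\hat{\ef}_q - \ef_q)^{-1}(\ef_q - \ef_p)(\ef_p - \hat{\ef}_p)^{-1}
     \;=\; d\hat{\ef}_{pq}\,\delta_q^{-1}\,d\ef_{pq}\,\delta_p^{-1}.
\]

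Substituting the Riccati expression for $d\hat{\ef}_{pq}$, the factors of $\delta_q$ and $\delta_q^{-1}$ cancel immediately:
\[
 q \;=\; \mu\,\delta_p\,d\ef^{*}_{pq}\,\delta_q\,\delta_q^{-1}\,d\ef_{pq}\,\delta_p^{-1}
     \;=\; \mu\,\delta_p\,(d\ef^{*}_{pq}\,d\ef_{pq})\,\delta_p^{-1}.
\]
By Definition \ref{discreteChristoffeltransform} of the Christoffel transform, $d\ef^{*}_{pq}\,d\ef_{pq} = a_{pq}$ is a real scalar (multiple of the identity); hence it commutes with $\delta_p$, and we obtain $q = \mu\, a_{pq} \in \mathbb{R}$.

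Since the cross ratio is real, the four points $\ef_p,\ef_q,\hat{\ef}_q,\hat{\ef}_p$ are concircular, proving the lemma. There is no real obstacle in this argument --- the only subtlety is choosing the cyclic ordering so that the $\delta_q$ factors annihilate each other, which is what makes the real scalar $a_{pq}$ appear and forces the remaining $\delta_p$-conjugation to act trivially.
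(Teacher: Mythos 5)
Your proof is correct and follows essentially the same route as the paper: both start from the discrete Riccati equation \eqref{eq:RiccatiDiscrete}, use $d\ef^*_{pq}\,d\ef_{pq}=a_{pq}\in\mathbb{R}$ from the definition of the Christoffel transform, and conclude that a quaternionic cross ratio of the four points equals the real number $\mu a_{pq}$. The only cosmetic difference is your choice of cyclic ordering, which lets the $\delta_q$ factors cancel explicitly rather than leaving the reality to be read off from the rearranged Riccati identity as the paper does.
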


\begin{proof}
Because $\hat \ef$ is a Darboux transformation, 
by \eqref{eq:RiccatiDiscrete} we have 
\[ 1 = \mu (\hat \ef_p - \ef_p) 
d\ef_{pq}^* (\hat \ef_q - \ef_q)(d\hat \ef_{pq})^{-1} \]
for some $\mu \in \mathbb{R}$.  So 
\[ (\hat \ef_p - \ef_p)
(\ef_q - \ef_p)^{-1} (\hat \ef_q - \ef_q)
(\hat \ef_q - \hat \ef_p)^{-1} \in \mathbb{R} \; , \]
so the cross ratio of $\ef_p$, $\ef_q$, 
$\hat \ef_q$ and $\hat \ef_p$ is real.  
\end{proof}

\begin{lemma}\label{lemmalemma9point9}
If $\ef^*$ is both a Christoffel and a Darboux transform, then 
$|\ef-\ef^*|$ is constant.  
\end{lemma}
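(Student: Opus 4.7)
The plan is to combine the two defining properties of $\ef^*$ into a single algebraic identity on the vector $d_p := \ef^*_p - \ef_p \in \operatorname{Im} H$. From the Christoffel condition $d\ef^*_{pq}\,d\ef_{pq} = a_{pq} \in \mathbb{R}$ one has $d\ef^*_{pq} = -a_{pq}\,d\ef_{pq}/|d\ef_{pq}|^2$, so $d\ef^*_{pq}$ is a real scalar multiple of $d\ef_{pq}$; consequently
\[
d_q - d_p = d\ef^*_{pq} - d\ef_{pq} = \beta\,d\ef_{pq}
\]
for some real scalar $\beta = \beta_{pq}$. On the other hand, the Riccati form \eqref{eq:RiccatiDiscrete} of the Darboux condition (with $\hat\ef = \ef^*$) reads $d\ef^*_{pq} = \mu\, d_p\, d\ef^*_{pq}\, d_q$, which by the previous real-proportionality rearranges to
\[
d_p\, d\ef_{pq}\, d_q = \mu^{-1}\, d\ef_{pq}.
\]

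Next I would symmetrize over the edge orientation: the same Riccati equation applied to the reversed edge $qp$ forces $d_q\, d\ef_{pq}\, d_p = \mu^{-1}\,d\ef_{pq}$ as well. Adding the two and expanding the left-hand side via the triple-product identity
\[
u v w + w v u = -2\langle u,v\rangle w - 2\langle v,w\rangle u + 2\langle u,w\rangle v, \qquad u,v,w \in \operatorname{Im} H,
\]
with $(u,v,w) = (d_p, d\ef_{pq}, d_q)$, and then substituting $d_q = d_p + \beta\, d\ef_{pq}$, I expect the terms to collapse to
\[
-\bigl(2\langle d_p, d\ef_{pq}\rangle + \beta|d\ef_{pq}|^2\bigr)\,d_p + \bigl(|d_p|^2 - \mu^{-1}\bigr)\,d\ef_{pq} = 0.
\]
In the generic case that $d_p$ and $d\ef_{pq}$ are linearly independent in $\operatorname{Im} H$, matching the two coefficients yields simultaneously $2\langle d_p, d\ef_{pq}\rangle + \beta|d\ef_{pq}|^2 = 0$ and, more importantly, $|d_p|^2 = \mu^{-1}$. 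The symmetric argument with $p$ and $q$ interchanged gives $|d_q|^2 = \mu^{-1}$, and hence $|\ef-\ef^*|^2 = \mu^{-1}$ on every edge; connectedness then propagates the equality to the whole surface.

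The main obstacle is the degenerate case in which $d_p$ is parallel to $d\ef_{pq}$, so that $d_p,\,d_q,\,d\ef_{pq}$ all lie on a single line and the coefficient-matching step is unavailable. To handle this I would take quaternionic norms directly in the identity $d\ef^*_{pq} = \mu\, d_p\, d\ef^*_{pq}\, d_q$ (using that conjugation by an imaginary quaternion is norm-preserving), which gives the edge identity $|\mu|\,|d_p|\,|d_q| = 1$; this determines $|d_p|$ on a degenerate edge from the value already pinned down at $p$ by any non-degenerate edge meeting $p$, and otherwise reduces the claim to a scalar check along the one-dimensional subspace. Verifying the triple-product identity in $\operatorname{Im} H$ is a routine calculation from $uv = -\langle u,v\rangle + u \times v$; the only nontrivial step is thus the two-line algebraic simplification after substituting $d_q = d_p + \beta\,d\ef_{pq}$.
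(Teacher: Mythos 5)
Your argument is correct, but it takes a genuinely different route from the paper. The paper's proof is a two-line geometric one: it invokes the preceding lemma (that $\ef_p$, $\ef_q$, $\ef_q^*$, $\ef_p^*$ are concircular, itself a consequence of the Riccati equation) together with the Christoffel property that $\ef_q-\ef_p$ and $\ef_q^*-\ef_p^*$ are parallel, so the four points form a cyclic trapezoid, whose legs $\ef_p\ef_p^*$ and $\ef_q\ef_q^*$ have equal length. You instead work directly on the quaternionic Riccati identity $d_p\,d\ef_{pq}\,d_q=\mu^{-1}d\ef_{pq}$ and its orientation-reversed companion (which does hold, by the conjugation argument the paper itself uses in the proof of Lemma \ref{lemmalemma9point10}), symmetrize via the triple-product identity in $\operatorname{Im}H$, and extract $|d_p|^2=\mu^{-1}$ by coefficient matching; I checked the cancellation after substituting $d_q=d_p+\beta\,d\ef_{pq}$ and it comes out exactly as you state. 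What your approach buys is self-containedness (no appeal to the cyclic-trapezoid fact) and the explicit value $|\ef-\ef^*|^2=\mu^{-1}$ of the constant, which the paper's proof does not produce; what it costs is the extra bookkeeping for the degenerate collinear case, which the geometric proof absorbs silently (a degenerate ``circle'' is a line, and the trapezoid argument still gives equal legs there), whereas you need the norm identity $|\mu|\,|d_p|\,|d_q|=1$ plus an anchor at a nondegenerate edge --- note that such an anchor exists at every vertex as long as the two edges leaving a vertex are not parallel, since $d_p\ne 0$ (otherwise the Riccati equation would force $d\ef^*_{pq}=0$, contradicting $a_{pq}\ne 0$). One small imprecision: the norm identity on a degenerate edge follows from multiplicativity of the quaternionic norm, not from norm-preservation of conjugation, but the conclusion is unaffected.
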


\begin{proof}
The previous lemma implies that $\ef_p$, $\ef_q$, 
$\ef_q^*$ and $\ef_p^*$ are concircular.  Because $\ef^*$ 
is a Christoffel transform, $\ef_q-\ef_p$ and $\ef_q^* 
- \ef_p^*$ are parallel.  
\end{proof}

\begin{lemma}\label{lemmalemma9point10}
A Darboux transform $\hat \ef$ of $\ef$ has the same cross ratios as $\ef$.  
\end{lemma}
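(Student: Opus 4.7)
The plan is to verify $\hat q_{pqrs} = q_{pqrs}$ on each quadrilateral by direct substitution of the Riccati form of the Darboux equation, combined with the isothermicity of $\ef$ and the edge-concircularity guaranteed by Lemma~\ref{lemmalemma9point8}.

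First I would rewrite the Riccati equation \eqref{eq:RiccatiDiscrete} using $d\ef^*_{pq} = a_{pq}(d\ef_{pq})^{-1}$ from Definition~\ref{discreteChristoffeltransform}, giving on every edge
\[
 d\hat\ef_{pq} \;=\; \mu\,a_{pq}\,\delta_p\,(d\ef_{pq})^{-1}\,\delta_q, \qquad \delta_p := \hat\ef_p - \ef_p.
\]
Substituting into $\hat q_{pqrs} = d\hat\ef_{pq}(d\hat\ef_{qr})^{-1} d\hat\ef_{rs}(d\hat\ef_{sp})^{-1}$, the $\mu$-powers cancel. By the isothermicity identities $a_{pq}=a_{rs}$ and $a_{qr}=a_{sp}=a_{ps}$ (Definition~\ref{defn:crossratiofactorizingfct}), the overall scalar prefactor reduces to
\[
\frac{a_{pq}\,a_{rs}}{a_{qr}\,a_{sp}} \;=\; \frac{a_{pq}^{\,2}}{a_{ps}^{\,2}} \;=\; q_{pqrs}^{\,2} .
\]

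The remaining quaternionic factor is an interlaced product of $(d\ef)^{\pm 1}$ and $\delta$-terms. To reduce it I would invoke Lemma~\ref{lemmalemma9point8}: for each edge $pq$ the four points $\ef_p,\hat\ef_p,\hat\ef_q,\ef_q$ are concircular, hence the corresponding side cross ratio is real and supplies a scalar commutation relation linking $\delta_p,\delta_q,d\ef_{pq}$. Cycling these four edge-relations around the face $pqrs$, the $\delta$-factors shuffle through the $(d\ef)^{\pm 1}$-factors at the cost of real scalars and cancel in pairs, leaving the purely $d\ef$-dependent expression $(d\ef_{pq})^{-1} d\ef_{qr} (d\ef_{rs})^{-1} d\ef_{sp}$. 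Using $a^{-1}=-a/|a|^2$ for $a \in \Im H$, this equals $(d\ef_{pq}\,d\ef_{qr}\,d\ef_{rs}\,d\ef_{sp})/(|d\ef_{pq}|^2 |d\ef_{rs}|^2)$, while $q_{pqrs}$ itself equals the same numerator divided by $|d\ef_{qr}|^2 |d\ef_{sp}|^2$; combined with $|q_{pqrs}|=|d\ef_{pq}||d\ef_{rs}|/(|d\ef_{qr}||d\ef_{sp}|)$, this shows the factor equals precisely $q_{pqrs}^{-1}$. Hence $\hat q_{pqrs} = q_{pqrs}^{\,2}\cdot q_{pqrs}^{-1} = q_{pqrs}$.

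The hard part will be the middle step. The Riccati equation is not symmetric in its endpoints, so the $\delta$-factors do not telescope in a single pass; it is the edge-concircularity from Lemma~\ref{lemmalemma9point8} which supplies exactly the real scalar commutation relations that let the $\delta$-terms migrate through the $d\ef$-terms and cancel in pairs, isolating the $d\ef$-only core. I expect the bookkeeping of signs and non-commutativity in this shuffle to be the only real obstacle; once it is carried out, the imaginary-quaternion identity $a^{-1}=-a/|a|^2$ together with the relation $q_{pqrs}^{\,2} = |d\ef_{pq}|^2|d\ef_{rs}|^2/(|d\ef_{qr}|^2|d\ef_{sp}|^2)$ between the square of the cross ratio and the edge norms makes the final reduction immediate.
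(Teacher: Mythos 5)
Your overall architecture (substitute the Riccati equation \eqref{eq:RiccatiDiscrete} into the cross ratio of $\hat \ef$, extract the scalar prefactor $a_{pq}a_{rs}/(a_{qr}a_{sp})=q^2$, and show the residual quaternionic core equals $q^{-1}$) points at the right answer, and your norm computation identifying $(d\ef_{pq})^{-1}d\ef_{qr}(d\ef_{rs})^{-1}d\ef_{sp}$ with $q^{-1}$ is correct. But the step you yourself flag as ``the hard part'' is a genuine gap: you assert that concircularity of each edge quadrilateral (Lemma \ref{lemmalemma9point8}) supplies ``real scalar commutation relations'' that let the $\delta$-factors migrate through the $d\ef^{\pm1}$-factors and cancel in pairs, but you never state these relations or verify that the shuffle closes up around the face. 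As written, the product $\delta_p(d\ef_{pq})^{-1}\delta_q\cdot\bigl[\delta_q(d\ef_{qr})^{-1}\delta_r\bigr]^{-1}\cdots$ does \emph{not} telescope, because inverting the second and fourth factors reverses the order of their $\delta$'s, so adjacent $\delta$'s fail to meet. Moreover, even after the correct cancellation the $\delta$'s do not disappear entirely: one is left with a conjugation $\delta_p(\cdots)\delta_p^{-1}$ of the core, which only drops out because the core is a \emph{real} scalar --- and that reality is precisely the statement that the Christoffel transform has real cross ratios, i.e.\ Lemma \ref{lemmalemma8point21}, which your write-up uses only implicitly.

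The missing one-line idea that makes everything work is the endpoint symmetry of the Riccati expression: since $\delta_p$, $d\ef^*_{pq}$, $\delta_q$ and $d\hat\ef_{pq}$ are all imaginary quaternions, conjugating \eqref{eq:RiccatiDiscrete} gives
\[
(\hat\ef-\ef)_p\, d\ef^*_{pq}\, (\hat\ef-\ef)_q \;=\; (\hat\ef-\ef)_q\, d\ef^*_{pq}\, (\hat\ef-\ef)_p \; .
\]
Applying this swap to the factors being inverted makes the $\delta_q$, $\delta_r$, $\delta_s$ cancel adjacently with no commutation argument at all, leaving $\hat q=\delta_p\, d\ef^*_{pq}(d\ef^*_{qr})^{-1}d\ef^*_{rs}(d\ef^*_{sp})^{-1}\delta_p^{-1}=\delta_p\, q^*\,\delta_p^{-1}=q^*$, and then $q^*=q$ by Lemma \ref{lemmalemma8point21}. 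I would recommend replacing your proposed concircularity-driven shuffle with this conjugation identity; it is both shorter and avoids the unproven commutation claims on which your reduction currently rests.
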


\begin{proof}
Note that $p$ and $q$ can be switched in Equation 
\eqref{eq:RiccatiDiscrete}, which can be seen just by conjugating 
that equation, so we have 
\[ (\hat \ef - \ef)_p df^*_{pq} (\hat \ef - \ef)_q = 
(\hat \ef - \ef)_q df^*_{pq} (\hat \ef - \ef)_p \; . \]  
Then 
\[ \hat q = d\hat \ef_{pq} (d\hat \ef_{qr} )^{-1} 
d\hat \ef_{rs} (d\hat \ef_{sp} )^{-1} = \]\[ 
(\hat \ef - \ef)_p df^*_{pq} (\hat \ef - \ef)_q 
((\hat \ef - \ef)_q df^*_{qr} (\hat \ef - \ef)_r)^{-1} 
(\hat \ef - \ef)_r df^*_{rs} (\hat \ef - \ef)_s 
((\hat \ef - \ef)_s df^*_{sp} (\hat \ef - \ef)_p)^{-1} = \]\[ 
(\hat \ef - \ef)_p df^*_{pq} (\hat \ef - \ef)_q 
((\hat \ef - \ef)_r df^*_{qr} (\hat \ef - \ef)_q)^{-1} 
(\hat \ef - \ef)_r df^*_{rs} (\hat \ef - \ef)_s 
((\hat \ef - \ef)_p df^*_{sp} (\hat \ef - \ef)_s)^{-1} = \]\[ 
(\hat \ef - \ef)_p df^*_{pq} 
(df^*_{qr})^{-1} df^*_{rs} df^*_{sp} (\hat \ef - \ef)_p^{-1} = q^* \; . \]
Since $q^*=q$, by Lemma \ref{lemmalemma8point21}, 
the proof is completed.  
\end{proof}

Justification for the following definition can be found in 
\cite{BobPink2}, \cite{Udo1}, \cite{Udo-bk} and \cite{HHP}: 

\begin{defn}\label{oldsensedefn}
Let $\ef$ be a discrete isothermic surface in 
$\mathbb{R}^3$.  The $\ef$ is 
CMC in the {\em old sense} if there exists a Christoffel transformation 
that is also a Darboux transformation.  
\end{defn}

Note that Christoffel transformations are 
defined only up to translation and scaling.  

\begin{lemma}\label{oldsenselemma1}
If $\ef$ is CMC in the old sense, then $\ef$ has a linear conserved quantity 
(for $\mathbb{R}^3$).
\end{lemma}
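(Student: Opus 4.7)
The plan is to exhibit an explicit linear conserved quantity $P = Q + \lambda Z$ for $\ef$ and verify the three conditions of Lemma \ref{lem:lcq-in-parts}. From the hypothesis I extract two ingredients: by Definition \ref{discreteChristoffeltransform}, $d\ef^*_{pq} = \lambda_{pq}\, d\ef_{pq}$ with the real scalar $\lambda_{pq} := -a_{pq}/|d\ef_{pq}|^2$; and by Lemma \ref{lemmalemma9point9}, $|\ef^*_p - \ef_p|$ is independent of $p$. I take $Q = \begin{pmatrix} 0 & 1 \\ 0 & 0 \end{pmatrix}$ as in \eqref{choiceofQ} with $\kappa = 0$, set $H := 1$ and $n_p := \ef^*_p - \ef_p$ (so $|n_p|$ is constant), and define
\[ Z_p := \begin{pmatrix} H\ef_p + n_p & -n_p\ef_p - \ef_p n_p - H\ef_p^2 \\ H & -H\ef_p - n_p \end{pmatrix} \]
in the shape predicted by Corollary \ref{cor:AinMarch2008}. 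With this choice, $H\,d\ef_{pq} + dn_{pq} = d\ef^*_{pq}$ holds by construction.

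By Lemma \ref{lem:lcq-in-parts} it remains to verify (a) $dQ = 0$, (b) $dZ_{pq} = Q\tau_{pq} - \tau_{pq}Q$, and (c) $\tau_{pq}Z_q = Z_p\tau_{pq}$. Condition (a) is immediate. For (c), I pass to a Moutard rescaling of the standard lift $F_p = \alpha_p \begin{pmatrix} \ef_p & -\ef_p^2 \\ 1 & -\ef_p \end{pmatrix}$ and invoke Lemma \ref{useful-lemma2}, which reduces (c) to the matrix identity $F_p Z_p + Z_p F_p = 0$; a direct $2 \times 2$ multiplication shows this holds for any $n_p \in \Im H$ and any real constant $H$, independently of the hypothesis. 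For (b), a short computation using \eqref{gen-eqn-for-tau} gives
\[ Q\tau_{pq} - \tau_{pq}Q = \begin{pmatrix} d\ef^*_{pq} & -d\ef^*_{pq}\ef_q - \ef_p d\ef^*_{pq} \\ 0 & -d\ef^*_{pq} \end{pmatrix}, \]
so the $(2,1)$, $(1,1)$, and $(2,2)$ entries of $dZ_{pq} = Q\tau_{pq} - \tau_{pq}Q$ follow at once from $d\ef^*_{pq} = H\,d\ef_{pq} + dn_{pq}$.

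The main obstacle is the $(1,2)$ entry of (b). Substituting $n_p = \ef^*_p - \ef_p$ and $d\ef^*_{pq} = \lambda_{pq}\,d\ef_{pq}$, and applying the identities $ab + ba = -2\langle a,b\rangle_{\mathbb{R}^3}$ and $a^2 = -|a|^2$ for $a, b \in \Im H$, the $(1,2)$ equation collapses after cancellation to the scalar relation
\[ 2\langle d\ef_{pq},\, \ef^*_p - \tfrac{1}{2}(\ef_p + \ef_q)\rangle_{\mathbb{R}^3} + \lambda_{pq}\,|d\ef_{pq}|^2 = 0. \qquad (\star) \]
To establish $(\star)$, I expand $\ef^*_q - \ef_q = (\ef^*_p - \ef_p) + (\lambda_{pq} - 1)d\ef_{pq}$ and equate $|\ef^*_q - \ef_q|^2$ to $|\ef^*_p - \ef_p|^2$ via the second ingredient; cancelling the common term and (in the generic case $\lambda_{pq} \neq 1$) dividing by $\lambda_{pq} - 1$ gives $2\langle \ef^*_p - \ef_p, d\ef_{pq}\rangle_{\mathbb{R}^3} + (\lambda_{pq} - 1)|d\ef_{pq}|^2 = 0$, which rewrites as $(\star)$ using $\ef^*_p - \ef_p = (\ef^*_p - \tfrac{1}{2}(\ef_p+\ef_q)) + \tfrac{1}{2}d\ef_{pq}$ (the degenerate case $\lambda_{pq} = 1$ forces $dn_{pq} = 0$, whereupon both sides of the $(1,2)$ identity vanish). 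Thus all three conditions of Lemma \ref{lem:lcq-in-parts} hold, and $P = Q + \lambda Z$ is the desired linear conserved quantity.
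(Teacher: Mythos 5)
Your construction is identical to the paper's: $Q$ as in \eqref{choiceofQ} with $\kappa=0$, $n_p=\ef^*_p-\ef_p$, $H=1$, and the $Z_p$ of Corollary \ref{cor:AinMarch2008}; the three conditions of Lemma \ref{lem:lcq-in-parts} are exactly the paper's \eqref{oldsense3}--\eqref{oldsense4}, and your scalar relation $(\star)$ is precisely the paper's reduced form $(\ef^*-\ef)_p\,d\ef_{pq}+d\ef_{pq}\,(\ef^*-\ef)_q=0$ of Equation \eqref{oldsense6}. Where you genuinely diverge is in how $(\star)$ is established. The paper substitutes back into the Riccati equation \eqref{oldsense1} and uses the normalization $\mu^{-2}=(\ef^*_p-\ef_p)^4$, i.e.\ it uses the full Darboux property; you use only the constancy of $|\ef^*-\ef|$ from Lemma \ref{lemmalemma9point9} together with the parallelism $d\ef^*_{pq}=\lambda_{pq}\,d\ef_{pq}$. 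On edges with $\lambda_{pq}\neq 1$ your derivation is correct and arguably cleaner: expanding $|\ef^*_q-\ef_q|^2=|\ef^*_p-\ef_p|^2$ and dividing by $\lambda_{pq}-1$ gives $(\star)$ immediately, with no case split on the sign of $\mu$.

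The gap is in the degenerate case $\lambda_{pq}=1$. There $dn_{pq}=0$, but the two $(1,2)$ entries do \emph{not} both vanish: with $H=1$ and $n_q=n_p$ the left side equals $2\langle n_p,d\ef_{pq}\rangle_{\mathbb{R}^3}+|\ef_q|^2-|\ef_p|^2$ while the right side equals $|\ef_q|^2-|\ef_p|^2$, so the identity reduces to $\langle \ef^*_p-\ef_p,\,d\ef_{pq}\rangle_{\mathbb{R}^3}=0$. This cannot follow from the constancy of $|\ef^*-\ef|$, which is vacuous on such an edge (since $\ef^*_q-\ef_q=\ef^*_p-\ef_p$); indeed a planar net translated by a constant vector $c$ is a Christoffel transform at constant distance with $\lambda_{pq}\equiv 1$, yet $\langle c,d\ef_{pq}\rangle_{\mathbb{R}^3}\neq 0$ in general. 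To close the gap you must, as the paper does, invoke the Riccati equation $d\ef^*_{pq}=\mu(\ef^*-\ef)_p\,d\ef^*_{pq}\,(\ef^*-\ef)_q$: with $(\ef^*-\ef)_q=(\ef^*-\ef)_p$ and $\mu\,|(\ef^*-\ef)_p|^2=1$ it forces $d\ef_{pq}\perp(\ef^*-\ef)_p$, which is exactly the missing orthogonality. A smaller logical point: your appeal to Lemma \ref{useful-lemma2} for condition (c) is legitimate only because its converse direction rests on $F_p\,dZ_{pq}\,F_q=0$, which you obtain from condition (b); so (b) must be fully established first, including on the degenerate edges.
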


\begin{proof}
By assumption, there exists an 
$\ef^*$ such that $a_{pq} = d\ef_{pq} d\ef_{pq}^*$ 
and there exists a $\mu \in \mathbb{R}$ such that 
\begin{equation}\label{oldsense1} 
d\ef^*_{pq} = \mu (\ef^*-\ef)_p d\ef^*_{pq} (\ef^*-\ef)_q \; . 
\end{equation}
Set $n_p = s \cdot (\ef^*-\ef)_p$ for some constant $s \in 
\mathbb{R}$.  For simplicity, we assume $\mu>0$, and leave the case 
$\mu < 0$ to the reader.  

Lemma \ref{lemmalemma9point9} 
implies $|n_p|^2$ is constant.  Since $|\ef^*_p-\ef_p|^2$ is 
constant, Equation \eqref{oldsense1} implies 
\begin{equation}\label{oldsense2} 
\mu^{-2} = (\ef^*_p-\ef_p)^4 \end{equation} 
for all vertices $p$.  
Take $Q$ as in \eqref{choiceofQ} with $\kappa=0$, and set 
\[ Z = \begin{pmatrix}
H \ef+n & -\ef n - n \ef - H \ef^2 \\ 1 & - H \ef - n 
\end{pmatrix} \; . \]
The goal is to find values for the real constants $s$ and $H$ so that 
\begin{equation}\label{oldsense3}
dZ = Q \tau - \tau Q
\end{equation}
and 
\begin{equation}\label{oldsense4}
\tau Z_q = Z_p \tau \; . 
\end{equation}
If we take $H=1$, then 
Equation \eqref{oldsense4} holds if and only if $d\ef_{pq} n_q + n_p d\ef_{pq} 
= 0$, and this follows from $(\ef_p^*-\ef_p)^2 = (\ef^*_q-\ef_q)^2$ 
and the fact that $d\ef_{pq}^*$ is parallel to $d\ef_{pq}$.  
Equation \eqref{oldsense3} holds if and only if 
\begin{equation}\label{oldsense5}
d\ef^*_{pq} = H d\ef_{pq} + dn_{pq}
\end{equation}
and 
\begin{equation}\label{oldsense6}
\ef_p n_p + n_p \ef_p - \ef_q n_q - n_q \ef_q + H \ef_p^2 - H \ef_q^2 
= -d\ef^*_{pq} \ef_q - \ef_p d\ef^*_{pq} 
\end{equation}
both hold.  Equation \eqref{oldsense5} holds if $H=s=1$.  Now assume 
that $H=s=1$.  Then Equation \eqref{oldsense6} is equivalent to 
\[
(\ef^*-\ef)_p d\ef_{pq} + d\ef_{pq} (\ef^*-\ef)_q = 0 \; , 
\]
which in turn is equivalent to 
\[
(\ef^*-\ef)_p d\ef^*_{pq} (\ef^*-\ef)_q \cdot \frac{1}{(\ef^*-\ef)^2} 
= - d\ef^*_{pq} \; , 
\]
and this last equation is the same as Equation \eqref{oldsense1} 
by \eqref{oldsense2} and the facts that $(\hat \ef-\ef)^2<0$ 
and $\mu > 0$.  This completes the proof.  
\end{proof}

Although the constant $H$ becomes $1$ in the above proof, 
this does not 
necessarily mean that $H$ is the mean curvature of the CMC surface 
$\ef$, because the linear conserved quantity 
might not be normalized so that 
$-Z^2$ takes the value needed to make $H$ the mean curvature.  

\begin{lemma}\label{lemmalemma9pt13}
If $\ef$ has a linear conserved quantity $Q + \lambda Z$ 
for $\mathbb{R}^3$ (i.e. $Q^2=0$) so that 
$\langle Z,Q \rangle \neq 0$, then $\ef$ is CMC 
in $\mathbb{R}^3$ in the old sense.  
\end{lemma}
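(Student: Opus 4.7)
The plan is to reverse-engineer the construction in Lemma \ref{oldsenselemma1}: given the linear conserved quantity, I must exhibit a Christoffel transform of $\ef$ that is simultaneously a Darboux transform. Since $Q^2 = 0$ corresponds to $\kappa = 0$ with $Q$ of the standard form \eqref{choiceofQ}, Corollary \ref{cor:AinMarch2008} immediately delivers a real constant $H$ and a map $p \mapsto n_p \in \text{Im}H$ with $|n_p|^2$ constant, such that
\[
 Z_p = \begin{pmatrix} H\ef_p + n_p & -n_p\ef_p - \ef_p n_p - H\ef_p^2 \\ H & -H\ef_p - n_p \end{pmatrix},
\]
together with the structural identities $d\ef^*_{pq} = d(H\ef + n)_{pq}$ and $n_p\,d\ef_{pq} + d\ef_{pq}\,n_q = 0$. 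A direct $2 \times 2$ computation of $QZ + ZQ$ with $Q = \bigl(\begin{smallmatrix}0 & 1\\ 0 & 0\end{smallmatrix}\bigr)$ gives $QZ + ZQ = H\cdot I$, so $\langle Z,Q\rangle = -H/2$, and the hypothesis $\langle Z,Q\rangle \neq 0$ is exactly $H \neq 0$.

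Translating the Christoffel transform so that $\ef^* = H\ef + n$ on the nose (not merely up to an additive constant), I then propose the candidate
\[
 \hat\ef := \ef + H^{-1}n = H^{-1}\ef^*.
\]
Because the discrete Christoffel transform is determined only up to translation and real scaling (the scaling freedom coming from the corresponding freedom in the cross-ratio-factorizing function $a_{**}$), this $\hat\ef$ is itself a Christoffel transform of $\ef$. Thus it remains only to check that this particular $\hat\ef$ also satisfies the discrete Riccati equation \eqref{eq:RiccatiDiscrete}
\[
 d\hat\ef_{pq} = \mu\,(\hat\ef-\ef)_p\,d\ef^*_{pq}\,(\hat\ef-\ef)_q
\]
for some edge-independent real $\mu$. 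Substituting $\hat\ef - \ef = H^{-1}n$ turns the right-hand side into $\mu H^{-2}\,n_p\,d\ef^*_{pq}\,n_q$, so the entire problem collapses to the single quaternionic identity
\[
 n_p\,d\ef^*_{pq}\,n_q = |n|^2\,d\ef^*_{pq},
\]
after which the choice $\mu := H/|n|^2$ matches both sides edge by edge.

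The verification of this last identity is the main (and essentially only) obstacle, but it splits cleanly into two pieces using $d\ef^*_{pq} = H\,d\ef_{pq} + dn_{pq}$. For the $d\ef_{pq}$ piece, I apply $n_p\,d\ef_{pq} = -d\ef_{pq}\,n_q$ together with $n_q^2 = -|n|^2$ to compute $n_p\,d\ef_{pq}\,n_q = |n|^2\,d\ef_{pq}$. For the $dn_{pq}$ piece, I expand $dn_{pq} = n_q - n_p$ and use $n_p^2 = n_q^2 = -|n|^2$ to get $n_p\,dn_{pq}\,n_q = n_p n_q^2 - n_p^2 n_q = |n|^2(n_q - n_p) = |n|^2\,dn_{pq}$. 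Both pieces carry a factor of $|n|^2$ and recombine to give the desired $|n|^2\,d\ef^*_{pq}$. This shows that $\hat\ef$ is simultaneously a Christoffel and a Darboux transform, so $\ef$ is CMC in the old sense. Quaternionic noncommutativity is the only thing that requires real care; there are no global or qualitative subtleties.
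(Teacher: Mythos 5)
Your proposal is correct and takes essentially the same route as the paper: both invoke Corollary \ref{cor:AinMarch2008} to get $\ef^* = H\ef + n$, set $\hat\ef = H^{-1}\ef^* = \ef + H^{-1}n$, and reduce the Riccati equation to the identity $n_p\,d\ef^*_{pq}\,n_q = |n|^2\,d\ef^*_{pq}$ with $\mu = H/|n|^2$ (the paper writes this as $\mu = -H/n^2$). You merely spell out the quaternionic verification and the computation $\langle Z,Q\rangle = -H/2$ that the paper leaves implicit.
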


\begin{proof}
We can assume the constant term $Q$ in the linear conserved 
quantity is as in \eqref{choiceofQ} with $\kappa = 0$.  Then 
Corollary \ref{cor:AinMarch2008} implies that 
there exists a constant $H \in \mathbb{R} 
\setminus \{ 0 \}$, and an 
$n_p \in \text{Im} H$ with $|n_p|^2$ constant, such that 
\[ d\ef^*_{pq} = d(H \ef + n)_{pq} 
\; , \;\;\; d\ef_{pq} n_q+n_p d\ef_{pq} = 0 \; . \]  
The goal is to find constants $\mu$ and $\alpha$ in 
$\mathbb{R}$, and a constant $b \in \text{Im} H$ so that 
\[ \alpha d\ef^*_{pq} = \mu (\alpha \ef^*+b-\ef)_p d\ef_{pq}^* 
(\alpha \ef^*+b-\ef)_q 
\; . \]  Here $\hat \ef = \alpha \ef^* + b$, and without loss of 
generality we can take $\ef^* = H \ef + n$.  

Take $b=0$ and $\alpha = H^{-1}$.  Then the goal becomes to find 
$\mu$ such that 
$H^{-1} d\ef^*_{pq} = \mu H^{-1} n_p d\ef^*_{pq} H^{-1} n_q$, and 
$\mu = -H/n^2$ will work.  
\end{proof}

With respect to Lemma \ref{lemmalemma9pt13}, we can treat the case 
$\langle Z,Q \rangle = 0$ separately, and we leave this to the 
reader.  This will lead to the equivalence of discrete minimal 
surfaces as defined here via linear conserved quantities, and 
discrete minimal surfaces as previously defined (see 
\cite{BobPink2}, \cite{Udo1}, \cite{Udo-bk}, \cite{HHP}), 
like this: 

\begin{defn}
$\ef$ is a discrete minimal surface 
in $\mathbb{R}^3$ in the old sense if the Christoffel 
transform $\ef^*$ takes values in a sphere.  
\end{defn}

Now let us turn our attention to a discrete version of Lemma 
\ref{lem:smooth-case-type-up-one}.  
Suppose that the discrete isothermic surface $\ef$ has a 
polynomial conserved quantity $P$ of order $n$.  
Let $\hat \ef$ be a Darboux transform of 
$\ef$ determined by the value 
$\mu \in \mathbb{R}$.  ($\lambda$ and $\mu$ play the same 
roles here as they did in the proof of Lemma 
\ref{lem:smooth-case-type-up-one}.)  
Consider a Christoffel transformation $\hat T^\lambda$ 
of $\hat \ef$ satisfying 
\[ 
\hat T_q^\lambda = \hat T_p^\lambda (1+\lambda \hat \tau_{pq}) 
\; . \]  
Let $F$ and $\hat F$ be lifts into $L^4$ of $\ef$ and 
$\hat \ef$, respectively.  We define \[ A = A_p := 
I - \frac{\lambda}{\mu} \frac{F_p \hat F_p}{F_p \hat F_p + 
\hat F_p F_p} \] for each vertex $p$.   
We want to show \[ (T^\lambda A)_q = (T^\lambda A)_p 
(I+\lambda \hat \tau_{pq}) \; , \] so that we can take 
$\hat T^\lambda = T^\lambda A$, i.e. we want 
\[ (I+\lambda \tau_{pq}) A_q = A_p (I+\lambda \hat \tau_{pq}) \; , \] 
i.e. 
\[ (I+\lambda \tau_{pq}) (I - \frac{\lambda}{\mu} \frac{F_q \hat F_q}{F_q 
\hat F_q + \hat F_q F_q}) = (I - \frac{\lambda}{\mu} 
\frac{F_p \hat F_p}{F_p \hat F_p + \hat F_p F_p}) (I+
\lambda \hat \tau_{pq}) \; . \]  
We can choose $F$ and $\hat F$ so that $F_p$, $F_q$, 
$\hat F_q$ and $\hat F_p$ are a Moutard lift of the concircular 
quadrilateral with vertices $\ef_p$, $\ef_q$, 
$\hat \ef_q$ and $\hat \ef_p$, satisfying the equivalents of 
\eqref{eqn:Moutard} and \eqref{eqn:Moutard-tau}.  We can also let 
$1/\mu$ take the role of the cross ratio factor 
$a_{\ef_p\hat \ef_p} = a_{\ef_q\hat \ef_q}$ on the edges 
$\ef_p\hat \ef_p$ and $\ef_q\hat \ef_q$.  
Then the above equation is equivalent to 
\[ (F_q-\hat F_p) \hat F_q + F_p (F_q-\hat F_p) = 0 \; . \] 
Then, by the definition of Moutard lifts (Definition 
\ref{def:discrete-moutard}), this is equivalent to 
\[ (\hat F_q-F_p) \hat F_q + F_p (\hat F_q - F_p) = 0 \; , \] 
and this final equation is obviously true.  

It is then easily checked that 
\[ A_p^{-1} = \frac{1}{(\mu-\lambda)(F_p \hat F_p + \hat F_p F_p)} 
(\mu F_p \hat F_p+(\mu-\lambda) \hat F_p F_p) \; , \]  
so $(\mu-\lambda)A^{-1}_p$ is linear in $\lambda$.  Noting 
that $A_p$ itself is also linear in $\lambda$, we have that 
\[ \hat P := \mu (\mu-\lambda) A^{-1} P A \] is a polynomial in 
$\lambda$ of degree at most $n+2$.  Note that 
$T^\lambda P (T^\lambda)^{-1}$ is constant.  Also, 
\[ \hat T^\lambda \hat P (\hat T^\lambda)^{-1} = 
\mu (\mu-\lambda) T^\lambda A \cdot A^{-1}PA 
\cdot (T^\lambda A)^{-1} = \mu(\mu-\lambda) T^\lambda 
P (T^\lambda)^{-1} \; , \] so 
$\hat T^\lambda \hat P (\hat T^\lambda)^{-1}$ is constant.  
Thus $\hat P$ is a polynomial conserved quantity of type at most 
$n+2$ for the Darboux transform $\hat \ef$.  

We will see in Corollary \ref{cor:FPnF} below that 
$F P_n F=0$, so $\hat F F P_n F \hat F=0$, which implies that the top term 
of $\hat P$ is zero, so $\hat P$ is of type at most $n+1$.  This 
proves the following theorem (analogous to Lemma 
\ref{lem:smooth-case-type-up-one} for the smooth case):

\begin{theorem}\label{lem:typeatmostnplus1} 
A Darboux transform of a discrete 
special surface of type $n$ is a discrete special surface 
of type at most $n+1$.  
\end{theorem}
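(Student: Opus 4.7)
The plan is to follow the construction already sketched in the lead-up to the statement and to fill in the two missing ingredients: the verification that $\hat{T}^\lambda = T^\lambda A$ really is a Calapso transformation of $\hat{\ef}$, and the degree-reduction argument showing $\hat{P}$ is of type at most $n+1$ rather than $n+2$. Throughout, I would work with Moutard lifts $F$ and $\hat F$ chosen so that on each edge $pq$ of $\ef$ one has $F_pF_q+F_qF_p = a_{pq}\,I$ (so $\tau_{pq} = -F_pF_q$), and also so that on each ``vertical'' edge $\ef_p\hat\ef_p$ the Moutard normalization holds with cross ratio factor $1/\mu$, so that $F_p\hat F_p+\hat F_pF_p$ is a nonzero real scalar.

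First I would verify the gauge identity
\[
(I+\lambda\tau_{pq})A_q = A_p(I+\lambda\hat\tau_{pq}),
\]
which is exactly the condition for $\hat T^\lambda:=T^\lambda A$ to satisfy $\hat T^\lambda_q = \hat T^\lambda_p(I+\lambda\hat\tau_{pq})$. Expanding both sides using $A_p = I-(\lambda/\mu)F_p\hat F_p/(F_p\hat F_p+\hat F_pF_p)$ and comparing $\lambda^2$ coefficients, this reduces (after the Moutard normalization on the four edges of the quadrilateral $\ef_p\ef_q\hat\ef_q\hat\ef_p$) to the identity $(\hat F_q-F_p)\hat F_q+F_p(\hat F_q-F_p)=0$, which holds since $F_p^2=\hat F_q^2=0$ and $\hat F_q-F_p$ is parallel to $F_q-\hat F_p$ by the discrete Moutard equation \eqref{Fparallelity}. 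Next, I would compute $A_p^{-1}$ explicitly and observe that both $A_p$ and $(\mu-\lambda)A_p^{-1}$ are polynomials in $\lambda$ of degree $1$. Hence
\[
\hat P := \mu(\mu-\lambda)A^{-1}PA
\]
is a polynomial in $\lambda$ of degree at most $n+2$. Constancy of $\hat T^\lambda\hat P(\hat T^\lambda)^{-1}$ follows immediately:
\[
\hat T^\lambda\hat P(\hat T^\lambda)^{-1}=\mu(\mu-\lambda)\,T^\lambda P(T^\lambda)^{-1},
\]
which is constant in $p$ because $T^\lambda P(T^\lambda)^{-1}$ is (and the prefactor $\mu(\mu-\lambda)$ is vertex-independent).

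The main obstacle is the degree reduction from $n+2$ to $n+1$, i.e.\ the claim $FP_nF=0$ referenced as Corollary~\ref{cor:FPnF}. The strategy here is to show that the top coefficient $Z=P_n$ of the polynomial conserved quantity $P$ satisfies $F_pZ_p+Z_pF_p=0$ at every vertex; this is the natural analogue of Lemma~\ref{useful-lemma2} for the top coefficient. Indeed, comparing the coefficient of $\lambda^{n+1}$ in \eqref{polynomialconservedquantity} yields $\tau_{pq}Z_q = Z_p\tau_{pq}$, exactly the hypothesis of Lemma~\ref{useful-lemma2}, and its proof carries over verbatim to give $F_pZ_p+Z_pF_p=0$. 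Since $F_p^2=0$, this immediately gives $F_pZ_pF_p=-F_pF_pZ_p=0$. Therefore the leading ($\lambda^{n+2}$) coefficient of $\hat P$, which is a scalar multiple of $\hat F_pF_pZ_pF_p\hat F_p$, vanishes at every vertex. This completes the proof that $\hat P$ has type at most $n+1$, giving $\hat\ef$ the structure of a discrete special surface of type at most $n+1$.

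Finally, I would double check that $\hat P$ is genuinely a polynomial conserved quantity for $\hat\ef$ in the sense of Definition~\ref{polynomialconservedquantity}, i.e.\ that $(I+\lambda\hat\tau_{pq})\hat P_q=\hat P_p(I+\lambda\hat\tau_{pq})$; this is equivalent to the already-established constancy of $\hat T^\lambda\hat P(\hat T^\lambda)^{-1}$, via \eqref{eqn:pcq-constancy}. The main subtlety throughout is keeping track of the scaling freedom in the Moutard lifts so that the identities $F_pF_q+F_qF_p=a_{pq}I$ and $F_p\hat F_p+\hat F_pF_p$ nonzero all hold simultaneously, but this is exactly what the Moutard normalization for concircular quadrilaterals allows, as noted just before the statement of the theorem.
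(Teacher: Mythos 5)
Your proposal is correct, and its main skeleton --- the gauge matrix $A$, the verification that $\hat T^\lambda = T^\lambda A$ intertwines $\tau$ and $\hat\tau$ via the Moutard identity $(\hat F_q-F_p)\hat F_q+F_p(\hat F_q-F_p)=0$, the definition $\hat P=\mu(\mu-\lambda)A^{-1}PA$, and the observation that constancy of $\hat T^\lambda\hat P(\hat T^\lambda)^{-1}$ is inherited from that of $T^\lambda P(T^\lambda)^{-1}$ --- is exactly the paper's construction.

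The one place you genuinely diverge is the degree-reduction step $FP_nF=0$. The paper reaches this (Corollary \ref{cor:FPnF}) through a chain: first $F_p\,dP_{pq}\,F_q=0$ (Lemma \ref{lem:referencedinnextlemmaA}), then the explicit formula for $dP_{pq}$ in Corollary \ref{cor:referencedinnextlemmaB}, then $P_n\perp F$ by inspecting the $\lambda^{n+1}$ term of that formula (Lemma \ref{pnperpF}). You instead read the $\lambda^{n+1}$ coefficient directly off the conserved quantity equation \eqref{polynomialconservedquantity} to get $\tau_{pq}P_{n,q}=P_{n,p}\tau_{pq}$, and then run the forward direction of the argument of Lemma \ref{useful-lemma2} (which uses only $\tau_{pq}=-F_pF_q$, $F_p^2=0$, and the fact that $F_pP_{n,p}+P_{n,p}F_p$ is a real multiple of $I$) to conclude $F_pP_{n,p}+P_{n,p}F_p=0$ and hence $F_pP_{n,p}F_p=0$. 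This is shorter and bypasses the $dP_{pq}$ formula entirely; the paper's longer route has the side benefit of producing Corollary \ref{cor:referencedinnextlemmaB}, which is reused elsewhere (e.g.\ in Lemma \ref{lem:sixfacts}), but for the purposes of this theorem alone your shortcut is perfectly adequate. Your identification of the leading $\lambda^{n+2}$ coefficient of $\hat P$ as a scalar multiple of $\hat F_pF_pP_{n,p}F_p\hat F_p$ is also correct, so the vanishing follows.
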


We now give some results, with the aim of obtaining Corollary 
\ref{cor:FPnF}.  

\begin{lemma}\label{lem:referencedinnextlemmaA}
If $P$ is a polynomial conserved quantity of a discrete 
isothermic surface $\ef$, and if $F$ is a lift of $\ef$, 
then $F_p dP_{pq} F_q = 0$ for all edges $pq$.  
\end{lemma}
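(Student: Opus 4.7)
The plan is to derive an expression for $dP_{pq}$ directly from the defining equation \eqref{polynomialconservedquantity} of a polynomial conserved quantity, and then annihilate both terms by multiplying with $F_p$ on the left and $F_q$ on the right. The key fact is that $F_p \tau_{pq} = 0 = \tau_{pq} F_q$ for any lift $F$ of an isothermic net.

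First I would rearrange \eqref{polynomialconservedquantity}. Subtracting $P_p$ from both sides and cancelling the $\lambda^0$-term gives
\[
dP_{pq} := P_q - P_p = \lambda\bigl(P_p \tau_{pq} - \tau_{pq} P_q\bigr),
\]
which is an identity of polynomials in $\lambda$ with coefficients in $\mathbb{R}^{4,1}$.

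Next I would verify the two annihilation identities $F_p \tau_{pq} = 0$ and $\tau_{pq} F_q = 0$. Using the general form \eqref{gen-eqn-for-tau},
\[
\tau_{pq} \;=\; -\,a_{pq}\,\frac{F_p F_q}{F_p F_q + F_q F_p},
\]
and the fact that $F_p, F_q \in L^4$ satisfy $F_p^2 = F_q^2 = 0$ (since $\langle F_p,F_p\rangle\cdot I = -F_p^2$), one obtains $F_p\tau_{pq}$ proportional to $F_p^2 F_q = 0$ and $\tau_{pq}F_q$ proportional to $F_p F_q^2 = 0$. This step is independent of the choice of scalar normalization of $F$, because the scalar factor $\alpha_p\alpha_q$ in the denominator $F_pF_q+F_qF_p$ simply rescales $\tau_{pq}$.

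Combining the two steps,
\[
F_p\,dP_{pq}\,F_q \;=\; \lambda\, F_p P_p \bigl(\tau_{pq} F_q\bigr) \;-\; \lambda\bigl(F_p \tau_{pq}\bigr) P_q F_q \;=\; 0.
\]
Since this identity holds as a polynomial in $\lambda$, it holds for every value of $\lambda$, which yields the lemma. There is no real obstacle here: once the explicit form of $\tau_{pq}$ together with the lightlike property of $F$ is invoked, the conclusion is immediate. The only minor point to be careful about is the noncommutativity of quaternionic matrix products, which is why one must place $F_p$ and $F_q$ on the correct sides and verify $F_p\tau_{pq}$ and $\tau_{pq}F_q$ separately rather than relying on any symmetry of $\tau_{pq}$.
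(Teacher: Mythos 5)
Your proof is correct and is essentially the paper's own argument: both rest on the annihilation identities $F_p\tau_{pq}=\tau_{pq}F_q=0$ and amount to sandwiching the defining relation \eqref{polynomialconservedquantity} between $F_p$ and $F_q$. The only cosmetic difference is that you first isolate $dP_{pq}=\lambda(P_p\tau_{pq}-\tau_{pq}P_q)$ before multiplying, while the paper multiplies the whole (identically zero) relation at once; your explicit verification of the annihilation identities from \eqref{gen-eqn-for-tau} and $F_p^2=F_q^2=0$ is a welcome extra detail the paper leaves implicit here.
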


\begin{proof}
By Equation \eqref{polynomialconservedquantity}, we have 
\[ 0 = F_p ((1+\lambda \tau_{pq}) P_q-P_p (1+\lambda \tau_{pq})) F_q 
= F_p (P_q-P_p) F_q \; , \]  
because $F_p \tau_{pq} = \tau_{pq} F_q = 0$.  
\end{proof}

\begin{corollary}\label{cor:referencedinnextlemmaB}
If $P$ is a polynomial conserved quantity of $\ef$, 
and if $F$ is a lift of $\ef$, then 
\begin{equation}\label{eqn:DPsubpq} 
dP_{pq} = \frac{\lambda a_{pq}}{\langle F_p,F_q \rangle} 
(\langle P_q, F_q \rangle F_p - \langle P_p, F_p \rangle F_q) 
\end{equation} for all edges $pq$.  
\end{corollary}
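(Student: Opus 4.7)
The plan is to combine the polynomial conserved quantity equation \eqref{polynomialconservedquantity} with the explicit formula \eqref{gen-eqn-for-tau} for $\tau_{pq}$ and with Lemma \ref{lem:referencedinnextlemmaA}, by repeatedly using the Clifford-type identity $XY + YX = -2\langle X,Y\rangle\,I$ that comes from \eqref{star8point2}.

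First I would rewrite \eqref{polynomialconservedquantity} as $dP_{pq} = \lambda(P_p \tau_{pq} - \tau_{pq} P_q)$, and substitute the expression for $\tau_{pq}$ from \eqref{gen-eqn-for-tau}, namely
\[
\tau_{pq} \;=\; \frac{-a_{pq}\,F_pF_q}{F_pF_q+F_qF_p} \;=\; \frac{a_{pq}\,F_pF_q}{2\langle F_p,F_q\rangle},
\]
where the second equality uses \eqref{star8point2}. This yields
\[
dP_{pq} \;=\; \frac{\lambda a_{pq}}{2\langle F_p,F_q\rangle}\bigl(P_pF_pF_q - F_pF_qP_q\bigr).
\]

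Next I would reduce the bracketed expression using $P_pF_p = -2\langle P_p,F_p\rangle I - F_pP_p$ and $F_qP_q = -2\langle P_q,F_q\rangle I - P_qF_q$, giving
\[
P_pF_pF_q \;=\; -2\langle P_p,F_p\rangle F_q - F_pP_pF_q,\qquad F_pF_qP_q \;=\; -2\langle P_q,F_q\rangle F_p - F_pP_qF_q.
\]
Subtracting yields
\[
P_pF_pF_q - F_pF_qP_q \;=\; 2\langle P_q,F_q\rangle F_p - 2\langle P_p,F_p\rangle F_q + F_p\,dP_{pq}\,F_q.
\]
The last term vanishes by Lemma \ref{lem:referencedinnextlemmaA}, so inserting this back into the expression for $dP_{pq}$ gives exactly \eqref{eqn:DPsubpq}.

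There is essentially no obstacle here: the entire argument is a direct algebraic manipulation in the quaternionic matrix model using the single identity $XY+YX=-2\langle X,Y\rangle I$, together with the input from the preceding lemma. The only minor point to keep track of is the sign convention in $\tau_{pq}$ when passing from $F_pF_q+F_qF_p$ to $-2\langle F_p,F_q\rangle$, but this just cancels the factor of $2$ between the formulas.
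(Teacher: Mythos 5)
Your proof is correct, but it takes a genuinely different route from the one in the paper. You work directly: you substitute the closed form \eqref{gen-eqn-for-tau} for $\tau_{pq}$ into $dP_{pq}=\lambda(P_p\tau_{pq}-\tau_{pq}P_q)$, convert $F_pF_q+F_qF_p$ to $-2\langle F_p,F_q\rangle I$ via \eqref{star8point2}, anticommute $P_p$ past $F_p$ and $P_q$ past $F_q$, and observe that the leftover cross term is exactly $F_p\,dP_{pq}\,F_q$, which Lemma \ref{lem:referencedinnextlemmaA} kills. The paper instead first passes to a Moutard lift, uses Lemma \ref{lem:referencedinnextlemmaA} together with a separate argument (that no nonzero spacelike vector $\mathcal{S}$ perpendicular to both $F_p$ and $F_q$ can satisfy $F_p\mathcal{S}F_q=0$) to conclude that $dP_{pq}$ lies in $\operatorname{span}\{F_p,F_q\}$, and only then determines the two coefficients one at a time by sandwiching the conserved-quantity relation between copies of $F_p$ (resp.\ $F_q$). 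Your version buys a shorter and more uniform argument: it needs no reduction to a special lift, no span/perpendicularity lemma, and it produces both coefficients simultaneously; the paper's version buys the intermediate structural statement $dP_{pq}=\alpha F_p-\beta F_q$ explicitly, which is conceptually reusable. Both arguments implicitly require $\langle F_p,F_q\rangle\neq 0$ (equivalently $\ef_p\neq\ef_q$), which is already built into the statement since that quantity appears in the denominator of \eqref{eqn:DPsubpq}.
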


\begin{proof}
Because Equation 
\eqref{eqn:DPsubpq} is not affected by the choice of lift $F$, 
we may assume $F$ is Moutard, and \eqref{eqn:Moutard} and 
\eqref{eqn:Moutard-tau} hold.  
First note that $F_pF_q \neq 0$ if $\ef_p \neq \ef_q$.  Secondly, 
note that if $\mathcal{S} \in \mathbb{R}^{4,1} \setminus 
\{ 0 \}$ is perpendicular to both $F_p$ and $F_q$, then 
$\mathcal{S}$ is 
spacelike and $\mathcal{S}^2$ is a negative real 
scalar times $I$.  

Suppose further that $F_p \mathcal{S} F_q=0$, then 
\[ 0 = \mathcal{S} F_p \mathcal{S} F_q = 
-\mathcal{S}^2 F_pF_q \; , \] which gives a contradiction.  
Therefore Lemma \ref{lem:referencedinnextlemmaA} implies 
\[ dP_{pq} = \alpha F_p - \beta F_q \]  for some reals $\alpha, \beta$.  
Now consider the following computation: 
\[ (F_p P_q+P_q F_p) F_p = F_p P_q F_p = F_p (I+\lambda \tau_{pq}) 
P_q F_p = F_p P_p (I+\lambda \tau_{pq}) F_p = \]\[ 
F_p P_p ((1-\lambda a_{pq}) \cdot I-\lambda \tau_{qp}) F_p = 
(1-\lambda a_{pq}) (F_p P_p + P_p F_p) F_p \; . \]
Thus 
\[ F_p (P_q-P_p) F_p = -\lambda a_{pq} F_p P_p F_p \] and then 
\[ \beta \langle F_p,F_q \rangle F_p = 
\lambda a_{pq} \langle F_p,P_p \rangle F_p \; . \]  
Thus \[ \beta = \frac{\lambda a_{pq} 
\langle F_p,P_p \rangle}{\langle F_p,F_q \rangle} \; . \]  
We can derive $\alpha$ similarly.  
\end{proof}

\begin{lemma}\label{pnperpF}
$P_n \perp F$.  
\end{lemma}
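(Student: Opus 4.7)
The plan is to extract the claim by reading off the top-degree coefficient in $\lambda$ from the identity in Corollary \ref{cor:referencedinnextlemmaB}. Write the polynomial conserved quantity as
\[
P_p = Q + \lambda P_{1,p} + \lambda^2 P_{2,p} + \cdots + \lambda^n P_{n,p},
\]
so that $dP_{pq} = P_q - P_p$ is a polynomial in $\lambda$ of degree \emph{at most} $n$. On the other hand, the right-hand side of \eqref{eqn:DPsubpq} contains an explicit factor of $\lambda$, while $\langle P_q,F_q\rangle$ and $\langle P_p,F_p\rangle$ are each polynomials in $\lambda$ of degree at most $n$. Hence the right-hand side is a polynomial of degree at most $n+1$, and its $\lambda^{n+1}$ coefficient must vanish.

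The $\lambda^{n+1}$ coefficient of the right-hand side of \eqref{eqn:DPsubpq} is
\[
\frac{a_{pq}}{\langle F_p,F_q\rangle}\bigl(\langle P_{n,q},F_q\rangle F_p - \langle P_{n,p},F_p\rangle F_q\bigr),
\]
so I need
\[
\langle P_{n,q},F_q\rangle F_p = \langle P_{n,p},F_p\rangle F_q
\]
along every edge $pq$. Provided $\ef_p\neq \ef_q$, the light cone lifts $F_p$ and $F_q$ are linearly independent in $\mathbb{R}^{4,1}$ (two distinct null lines span a two-dimensional subspace), so this forces $\langle P_{n,p},F_p\rangle = 0$ and $\langle P_{n,q},F_q\rangle = 0$. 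Since every vertex $p$ is an endpoint of some edge, we conclude $\langle P_{n,p},F_p\rangle = 0$ at every vertex, which is exactly $P_n\perp F$.

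The only mild subtlety is the nondegeneracy assumption that adjacent vertices have distinct images; for a genuine discrete surface this is automatic, and in a degenerate edge where $\ef_p=\ef_q$ the claim at those two vertices is equivalent and follows from any neighboring nondegenerate edge. Thus the result reduces to the single observation that a polynomial identity valid for all $\lambda$ has matching coefficients in each degree, and the main work has already been done in Lemma \ref{lem:referencedinnextlemmaA} and Corollary \ref{cor:referencedinnextlemmaB}; there is no significant obstacle left.
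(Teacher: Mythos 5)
Your proof is correct and is essentially identical to the paper's: both read off the $\lambda^{n+1}$ coefficient of the identity in Corollary \ref{cor:referencedinnextlemmaB}, obtain $\langle P_{n,q},F_q\rangle F_p = \langle P_{n,p},F_p\rangle F_q$, and conclude from the linear independence of $F_p$ and $F_q$. Your extra remark about degenerate edges with $\ef_p=\ef_q$ is a harmless addition the paper omits (and is moot anyway, since the formula already presumes $\langle F_p,F_q\rangle\neq 0$).
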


\begin{proof}
Looking at the equation for $dP_{pq}$ in Corollary 
\ref{cor:referencedinnextlemmaB}, there is 
no $\lambda^{n+1}$ term on the left, so the $\lambda^{n+1}$ term on the 
right must be zero.  This means 
\[ \langle P_{n,q},F_q \rangle F_p = \langle P_{n,p},F_p \rangle F_q
\; . \]  Since $F_p$ and $F_q$ are not parallel, it follows that 
\[ \langle P_{n,q},F_q \rangle = \langle P_{n,p},F_p \rangle = 0 
\; . \]  So $\langle P_{n,p},F_p \rangle = 0$ for all vertices $p$.  
\end{proof}

\begin{corollary}\label{cor:FPnF}
$F P_n F=0$.  
\end{corollary}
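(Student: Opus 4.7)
The plan is to derive $F P_n F = 0$ directly from Lemma \ref{pnperpF} together with two basic algebraic facts about the matrix model \eqref{star8point1}--\eqref{star8point2} of $\mathbb{R}^{4,1}$ and the fact that $F$ takes values in the light cone $L^4$.

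First, I would recall the defining property of the $\mathbb{R}^{4,1}$ inner product in this matrix model, namely
\[
\langle X, Y \rangle \cdot I = -\tfrac{1}{2}(XY + YX)
\]
for all $X, Y \in \mathbb{R}^{4,1}$. Taking $X = F_p$ and $Y = F_p$ and using that $F_p \in L^4$, so $\langle F_p, F_p \rangle = 0$, gives $F_p^2 = 0$. This is the key scalar property of lightlike vectors in this model.

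Next, I would apply Lemma \ref{pnperpF}, which states that $\langle P_{n,p}, F_p \rangle = 0$ at every vertex $p$. Using the inner product identity above with $X = F_p$ and $Y = P_{n,p}$, this orthogonality is equivalent to the matrix identity
\[
F_p P_{n,p} + P_{n,p} F_p = 0,
\]
so $F_p P_{n,p} = -P_{n,p} F_p$ at every vertex.

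Finally, combining these two observations gives
\[
F_p P_{n,p} F_p = -(P_{n,p} F_p) F_p = -P_{n,p} F_p^{\,2} = 0,
\]
which is the desired identity. There is no real obstacle here; the result is essentially a one-line consequence of Lemma \ref{pnperpF} once one translates the scalar orthogonality $\langle P_n, F \rangle = 0$ into the anticommutation relation $FP_n = -P_n F$ via the matrix model, and exploits lightlikeness $F^2 = 0$.
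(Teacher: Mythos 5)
Your proposal is correct and is essentially identical to the paper's proof: both convert the orthogonality $\langle P_n, F\rangle = 0$ from Lemma \ref{pnperpF} into the anticommutation relation $FP_n + P_nF = 0$ via the matrix model of the inner product, multiply by $F$, and use the lightlikeness $F^2 = 0$. You merely spell out the intermediate steps slightly more explicitly than the paper does.
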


\begin{proof}
Lemma \ref{pnperpF} gives $F P_n + P_n F =0$, which implies $0 = 
F P_n F+P_n F^2 = F P_n F$.  
\end{proof}

We also have the following stronger version of Corollary 
\ref{cor:referencedinnextlemmaB}, proven in 
\cite{BHRS}: 

\begin{corollary}
The polynomial conserved quantity $P$ satisfies 
\[ dP_{pq} = 
\frac{\lambda a_{pq}}{\langle F_p,F_q \rangle} \{ 
\langle P_q,F_q \rangle F_p - 
\langle P_p,F_p \rangle F_q
 \} \]\[ = 
\frac{\lambda a_{pq}}{(1-\lambda a_{pq}) \langle F_p,F_q \rangle} \{ 
\langle P_p,F_q \rangle F_p - 
\langle P_q,F_p \rangle F_q
 \} \; . \] 
\end{corollary}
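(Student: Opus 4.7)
The first equality of the corollary is already established by the preceding Corollary \ref{cor:referencedinnextlemmaB}, so the plan reduces to proving the second equality. My plan is to show the pair of scalar identities
\[
\langle P_p, F_q\rangle = (1-\lambda a_{pq})\langle P_q, F_q\rangle, \qquad
\langle P_q, F_p\rangle = (1-\lambda a_{pq})\langle P_p, F_p\rangle,
\]
and then simply substitute them into the first formula. Once these hold, the second equality follows by rewriting $\langle P_q,F_q\rangle = (1-\lambda a_{pq})^{-1}\langle P_p,F_q\rangle$ and $\langle P_p,F_p\rangle = (1-\lambda a_{pq})^{-1}\langle P_q,F_p\rangle$ in the expression from Corollary \ref{cor:referencedinnextlemmaB}.

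The key preparatory step is to collect the following annihilation identities, which hold for \emph{any} lift $F$ and follow at once from $\tau_{pq} = a_{pq}F_pF_q/(2\langle F_p,F_q\rangle)$, $F_p^2 = F_q^2 = 0$, and $F_pF_q + F_qF_p = -2\langle F_p,F_q\rangle\, I$:
\[
\tau_{pq}F_q = 0,\quad F_p\tau_{pq}=0,\quad F_q\tau_{pq} = -a_{pq}F_q,\quad \tau_{pq}F_p = -a_{pq}F_p,
\]
together with $\tau_{pq}\tau_{qp} = 0$, which yields $(I+\lambda\tau_{pq})(I+\lambda\tau_{qp}) = (1-\lambda a_{pq})I$ and $\tau_{pq}+\tau_{qp} = -a_{pq}I$. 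Multiplying the conserved-quantity relation $(I+\lambda\tau_{pq})P_q = P_p(I+\lambda\tau_{pq})$ on the right by $(I+\lambda\tau_{qp})$ then gives
\[
(1-\lambda a_{pq})P_p = (I+\lambda\tau_{pq})P_q(I+\lambda\tau_{qp}).
\]

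With this in hand I compute $P_pF_q + F_qP_p$. Using the two equalities $F_q(I+\lambda\tau_{pq}) = (1-\lambda a_{pq})F_q$ and $(I+\lambda\tau_{qp})F_q = (1-\lambda a_{pq})F_q$, the factors of $(1-\lambda a_{pq})$ cancel to yield
\[
P_pF_q + F_qP_p = (I+\lambda\tau_{pq})P_qF_q + F_qP_q(I+\lambda\tau_{qp}) = -2\langle P_q,F_q\rangle I + \lambda\bigl(\tau_{pq}P_qF_q + F_qP_q\tau_{qp}\bigr).
\]
The two remaining $\lambda$-terms look matrix-valued, and this is the main obstacle: individually neither is a scalar multiple of $I$. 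The cancellation is extracted by substituting $P_qF_q = -F_qP_q - 2\langle P_q,F_q\rangle I$ into the first and $F_qP_q = -P_qF_q - 2\langle P_q,F_q\rangle I$ into the second; the terms $\tau_{pq}F_qP_q$ and $P_qF_q\tau_{qp}$ vanish by the annihilation identities, leaving exactly
\[
\tau_{pq}P_qF_q + F_qP_q\tau_{qp} = -2\langle P_q,F_q\rangle(\tau_{pq}+\tau_{qp}) = 2a_{pq}\langle P_q,F_q\rangle\, I.
\]
Hence $P_pF_q + F_qP_p = -2(1-\lambda a_{pq})\langle P_q,F_q\rangle I$, proving the first scalar identity. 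The second identity is obtained by interchanging $p$ and $q$ in the entire argument (using $a_{qp}=a_{pq}$), after which substitution into the formula of Corollary \ref{cor:referencedinnextlemmaB} yields the desired equality.
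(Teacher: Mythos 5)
Your proof is correct. Note that the paper itself offers no argument for this corollary --- it is stated as ``proven in \cite{BHRS}'' --- so there is no in-paper proof to compare against; what you have supplied is a self-contained derivation from material already available in the text. Your reduction is exactly the right one: since $F_p$ and $F_q$ are linearly independent (adjacent vertices being distinct), equality of the two displayed formulas is equivalent to the pair of scalar identities $\langle P_p,F_q\rangle=(1-\lambda a_{pq})\langle P_q,F_q\rangle$ and $\langle P_q,F_p\rangle=(1-\lambda a_{pq})\langle P_p,F_p\rangle$, and these are what you prove. The supporting algebra all checks out against the paper's conventions: from \eqref{gen-eqn-for-tau} one has $\tau_{pq}=a_{pq}F_pF_q/(2\langle F_p,F_q\rangle)$ for an arbitrary lift, from which $\tau_{pq}F_q=F_p\tau_{pq}=0$, $F_q\tau_{pq}=\tau_{pq}F_p=-a_{pq}F_q$ resp. $-a_{pq}F_p$, $\tau_{pq}\tau_{qp}=\tau_{qp}\tau_{pq}=0$, and $\tau_{pq}+\tau_{qp}=-a_{pq}I$ all follow from $F_p^2=F_q^2=0$ and $F_pF_q+F_qF_p=-2\langle F_p,F_q\rangle I$; the identity $(1-\lambda a_{pq})P_p=(I+\lambda\tau_{pq})P_q(I+\lambda\tau_{qp})$ is then immediate from \eqref{polynomialconservedquantity} and \eqref{eqn:invert1plustau}, and your cancellation of the two non-scalar $\lambda$-terms via the anticommutator substitution is valid. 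The interchange of $p$ and $q$ for the second identity is also legitimate, since multiplying the conserved-quantity relation on both sides by $(I+\lambda\tau_{qp})$ yields $(I+\lambda\tau_{qp})P_p=P_q(I+\lambda\tau_{qp})$ (this is the symmetry $\Gamma_{pq}\Gamma_{qp}=I$ noted in the paper). No gaps.
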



The next lemma 
follows from the following symmetry: If $\hat \ef$ is a Darboux 
transform of $\ef$, then $\ef$ is also a Darboux transform of $\hat \ef$.  
So if the order of the polynomial conserved quantity can only go up 
by at most one, then also it can only go down by at most one.  

\begin{lemma}\label{downdowndown}
A Darboux transformation of a special surface of type $n$ 
is special of type at least $n-1$.  
\end{lemma}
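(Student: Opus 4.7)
The plan is to exploit the following symmetry of the Darboux relation in the discrete setting: if $\hat{\ef}$ is a Darboux transform of $\ef$ with parameter $\mu$, then $\ef$ is a Darboux transform of $\hat{\ef}$ with the same parameter $\mu$. Granting this symmetry, the lemma follows in one line. Indeed, suppose $\ef$ is special of type $n$ and $\hat{\ef}$ is a Darboux transform of $\ef$ of type $m$. By symmetry, $\ef$ is a Darboux transform of $\hat{\ef}$, so Theorem \ref{lem:typeatmostnplus1} applied in this reversed direction yields $n \leq m + 1$, i.e.\ $m \geq n-1$, which is exactly the claim.

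The real work therefore lies in establishing the symmetry. The first step is to note that by Lemma \ref{lemmalemma9point10}, $\hat{\ef}$ is itself discrete isothermic and shares the cross-ratio factorizing function $a_{pq}$ with $\ef$, so a Christoffel transform $\hat{\ef}^*$ exists, determined up to translation and a global real scale by $d\hat{\ef}^*_{pq}\, d\hat{\ef}_{pq} = a_{pq}$. The second step is to rewrite the Riccati equation \eqref{eq:RiccatiDiscrete}, namely $d\hat{\ef}_{pq} = \mu\,(\hat{\ef}-\ef)_p\, d\ef^*_{pq}\,(\hat{\ef}-\ef)_q$, by inverting the quaternionic product (using that $(\hat{\ef}-\ef)_p$ is nowhere zero, since otherwise the two surfaces coincide) to obtain
\[
d\ef_{pq} = \mu\,(\ef-\hat{\ef})_p\cdot\bigl[\mu^{-2}\,(\ef-\hat{\ef})_p^{-1} d\ef_{pq} (\ef-\hat{\ef})_q^{-1}\bigr]\cdot(\ef-\hat{\ef})_q.
\]
The bracketed $1$-form is the candidate for $d\hat{\ef}^*$; one verifies, using $d\ef_{pq}\,d\ef^*_{pq} = a_{pq}$, the concircularity of $\ef_p,\ef_q,\hat{\ef}_q,\hat{\ef}_p$ established in Lemma \ref{lemmalemma9point8}, and the constancy of $|\hat{\ef}-\ef|$ from Lemma \ref{lemmalemma9point9} (or rather its analogue $|(\hat{\ef}-\ef)_p|^2$ constant, which follows directly from the Riccati equation as in the proof of Lemma \ref{oldsenselemma1}), that the bracketed form is closed and satisfies the Christoffel equation $\widetilde{d\hat{\ef}^*}_{pq}\,d\hat{\ef}_{pq} = a_{pq}$, hence equals $d\hat{\ef}^*$ up to the constant rescaling allowed in the definition of Christoffel transforms. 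Absorbing that scaling into $\mu$, the Riccati equation becomes $d\ef_{pq} = \mu\,(\ef-\hat{\ef})_p\, d\hat{\ef}^*_{pq}\,(\ef-\hat{\ef})_q$, which is precisely the Darboux equation \eqref{eq:RiccatiDiscrete} with $\ef$ and $\hat{\ef}$ interchanged.

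An alternative, perhaps cleaner route is to work directly with Definition \ref{defn:discDarbTrans}: the hypothesis that $T^\mu_p\hat{F}_p(T^\mu_p)^{-1}$ is constant in $PL^4$ says precisely that $\hat F$ is a parallel section of the flat $\operatorname{Mob}(3)$-connection $\Gamma^\mu$ gauged by $T^\mu$ (cf.\ Section \ref{flatconnections} and Remark \ref{rem:TvsPhi}). Writing $\hat T^\mu := T^\mu A$ for the Calapso transformation of $\hat\ef$, as in the proof of Theorem \ref{lem:typeatmostnplus1}, one computes that $\hat T^\mu_p F_p (\hat T^\mu_p)^{-1}$ is likewise constant in $PL^4$, which is exactly the symmetric Darboux condition for $\ef$ as a transform of $\hat\ef$.

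The main obstacle is the bookkeeping of quaternionic noncommutativity when inverting the Riccati equation and keeping track of the rescaling freedoms in both the lifts $F,\hat F$ and the Christoffel transforms; once the symmetry is in hand, the conclusion is immediate from Theorem \ref{lem:typeatmostnplus1}.
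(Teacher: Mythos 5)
Your proposal is correct and follows essentially the same route as the paper: the paper also deduces the lemma from the symmetry of the Darboux relation (with the same parameter $\mu$) combined with Theorem \ref{lem:typeatmostnplus1} applied in the reversed direction. The only cosmetic difference is in how the symmetry is verified — the paper rewrites the Riccati equation \eqref{eq:RiccatiDiscrete} as the real quaternionic product identity $1 = \mu a_{pq}(\hat\ef_p-\ef_p)(\ef_q-\ef_p)^{-1}(\hat\ef_q-\ef_q)^{-1\cdot(-1)}\cdots$ and invokes $a_{pq}=\hat a_{pq}$ together with the permutation identity of Lemma \ref{lem:forlateruse}, which is the same noncommutativity bookkeeping you describe when inverting the Riccati equation.
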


\begin{proof}
The proof of Lemma \ref{lemmalemma9point8} implies 
\eqref{eq:RiccatiDiscrete} is equivalent to 
\begin{equation}\label{specsurfdownone}
1 = \mu a_{pq} (\hat \ef_p-\ef_p)(\ef_q-\ef_p)^{-1} 
(\hat \ef_q - \ef_q)(\hat \ef_q-\hat \ef_p)^{-1} \; , 
\end{equation}
so we know this equation \eqref{specsurfdownone} holds.  
We wish to show the equation that results when $\ef$ and 
$\hat \ef$ are switched also holds, i.e. that 
\begin{equation}\label{specsurfdowntwo}
1 = \hat \mu \hat a_{pq} (\ef_p-\hat \ef_p) 
(\hat \ef_q-\hat \ef_p)^{-1} 
(\ef_q - \hat \ef_q)(\ef_q-\ef_p)^{-1} 
\end{equation}
holds.  But the equivalence of \eqref{specsurfdownone} 
and \eqref{specsurfdowntwo} follows from $a_{pq} = 
\hat a_{pq}$ (Lemma \ref{lemmalemma9point10}) and Lemma 
\ref{lem:forlateruse}, when taking 
$\hat \mu = \mu$.  Now Theorem 
\ref{lem:typeatmostnplus1} implies the lemma.  
\end{proof}

\subsection{More on Calapso transformations} 
Because the Calapso transformation in \eqref{eqn:pcq-constancy} 
is constant, and Calapso transformations give isometries of 
$\mathbb{R}^{4,1}$ (for each fixed value of $T$), 
we have that $||P_p||^2$ is independent of $p$.  With this, 
it is not difficult to prove the following lemma about 
order $n$ polynomial conserved quantities $P$ of discrete 
isothermic surfaces $\ef$.  Let $F \in PL^4$ be a 
lift of $\ef$.  

\begin{lemma}\label{lem:sixfacts}
The following hold: 
\begin{enumerate}
\item $||Z||^2$ and $||Q||^2$ are constant.  (Lemmas 
\ref{lem:lcq-in-parts} and 
\ref{Zisctecte} when $n=1$)
\item $dP_{pq} = \frac{\lambda a_{pq}}{\langle F_p,F_q \rangle} 
      \{ \langle P_q,F_q \rangle F_p - 
         \langle P_p,F_p \rangle F_q \}$.  (Lemma 
\ref{cor:referencedinnextlemmaB}) 
\item $Z_p \perp F_p$ for all $p$.  (Lemma 
\ref{useful-lemma2} when $n=1$)
\item $||Z||^2 \geq 0$, and $||Z||^2=0$ if and only if $Z$ and 
      $F$ are parallel.  (Corollary 
    \ref{getsusedinch11} in the case of smooth 
surfaces, when $n=1$)
\item $S_{pq} := Z_p + a_{pq} \frac{\langle P_{n-1,q},F_q \rangle}
       {\langle F_p,F_q \rangle} F_p = 
          Z_q + a_{pq} \frac{\langle P_{n-1,p},F_p \rangle}
       {\langle F_p,F_q \rangle} F_q$.  (the curvature sphere) 
\item If $P$ is linear, then $\langle Q,Z \rangle$ is constant as well.  
(Corollary \ref{cor:AinMarch2008} 
when the ambient space is $\mathbb{R}^3$)
\item When $||Z||^2>0$, $S_{pq}$ gives a sphere via 
      \eqref{eq:S-tilde-sphere} containing both $\ef_p$ and $\ef_q$.  
\end{enumerate}
\end{lemma}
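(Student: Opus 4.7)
The plan is to derive everything from two ingredients already essentially in hand: first, the constancy of the conjugate $T^\lambda_p P_p (T^\lambda_p)^{-1}$ combined with the fact that conjugation by $T^\lambda\in\text{Mob}(3)$ is an isometry of $\mathbb{R}^{4,1}$; and second, the explicit edge formula for $dP_{pq}$ that has already been proven as Corollary~\ref{cor:referencedinnextlemmaB}. Reading off coefficients in $\lambda$ of the resulting polynomial identities will give each item in turn.

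Concretely, I would first observe that $\|T^\lambda_p P_p (T^\lambda_p)^{-1}\|^2 = \|P_p\|^2$ is independent of $p$; expanding $\|P_p\|^2$ as a polynomial in $\lambda$ of degree $2n$, every coefficient is vertex-independent. The $\lambda^0$ coefficient is $\|Q\|^2$ and the $\lambda^{2n}$ coefficient is $\|Z\|^2$, proving (1); when $n=1$, the $\lambda^1$ coefficient $2\langle Q,Z\rangle$ is constant, proving (6). Item (2) is exactly Corollary~\ref{cor:referencedinnextlemmaB}. Item (3) is Lemma~\ref{pnperpF} applied to $P_n=Z$, obtained by comparing the $\lambda^{n+1}$ coefficients on both sides of (2). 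For (4), since $F_p$ is lightlike and $Z_p\perp F_p$, the vector $Z_p$ lies in the orthogonal complement of a null line inside $\mathbb{R}^{4,1}$; a standard Minkowski argument shows such a vector is either spacelike or lightlike and parallel to $F_p$ (two orthogonal null vectors must be collinear), so $\|Z_p\|^2\ge 0$ with equality precisely when $Z_p\parallel F_p$.

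For item (5), I would compare the $\lambda^n$ coefficients of both sides of the formula in item (2). On the left, $dP_{pq}=(P_q-P_p)$ contributes $Z_q-Z_p$ at order $\lambda^n$. On the right, because of the overall factor $\lambda$, the $\lambda^n$ contribution comes from the $\lambda^{n-1}$ part of $\langle P_q,F_q\rangle F_p-\langle P_p,F_p\rangle F_q$, namely
\[
\frac{a_{pq}}{\langle F_p,F_q\rangle}\bigl(\langle P_{n-1,q},F_q\rangle F_p-\langle P_{n-1,p},F_p\rangle F_q\bigr),
\]
where the $\lambda^n$ term would have vanished by item (3) ($\langle Z,F\rangle=0$). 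Rearranging gives exactly the two expressions for $S_{pq}$ in item (5). Item (7) then follows immediately: writing $S_{pq}=Z_p+\alpha F_p=Z_q+\beta F_q$ with $\alpha,\beta$ the scalars appearing in (5), item (3) and $\langle F_p,F_p\rangle=\langle F_q,F_q\rangle=0$ give $\langle S_{pq},F_p\rangle=\langle S_{pq},F_q\rangle=0$, while $\|S_{pq}\|^2=\|Z_p\|^2=\|Z\|^2>0$ shows $S_{pq}$ is spacelike, so via \eqref{eq:S-tilde-sphere} it determines a sphere containing both $\ef_p$ and $\ef_q$.

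The main obstacle is really only bookkeeping: one must keep straight which polynomial coefficient in $\lambda$ one is extracting at each step, and in item (4) one must invoke the sign convention (recall $\langle X,X\rangle I=-\tfrac12(XX+XX)$, so ``$Z^2\le 0$'' in the matrix sense of Corollary~\ref{getsusedinch11} corresponds to ``$\|Z\|^2\ge 0$'' in the inner product sense used here) to reconcile item (4) with the analogous statement for smooth surfaces. Everything else reduces to the two mechanisms above.
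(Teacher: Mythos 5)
Your proposal is correct and follows essentially the same route the paper intends: the paper itself only remarks that $\|P_p\|^2$ is vertex-independent because the Calapso conjugation is an isometry, and leaves the rest to the cited lemmas, which is exactly the coefficient-extraction scheme you carry out (items (1), (5), (6) from the $\lambda$-expansion, item (2) from Corollary~\ref{cor:referencedinnextlemmaB}, item (3) as in Lemma~\ref{pnperpF}, item (4) by the standard null-orthogonality argument of Corollary~\ref{getsusedinch11}, and item (7) by direct computation). All steps check out, including the sign-convention remark reconciling $Z^2\le 0$ with $\|Z\|^2\ge 0$.
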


Next we prove the following lemma: 

\begin{lemma}\label{lem:positivecrossratio} 
If a discrete isothermic surface 
$\ef$ has a linear conserved quantity $P = 
Q + \lambda Z$ and lies in a connected space form 
(this rules out two copies of 
$\mathbb{H}^3$), then $||Z||^2=0$ implies 
the cross ratios of $\ef$ 
are positive.  In particular, the quadrilaterals are not embedded.  
\end{lemma}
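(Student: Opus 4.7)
The plan is to exploit the explicit form of $Z_p$ from Corollary \ref{cor:BinMarch2008} together with the curvature-sphere identity in item (5) of Lemma \ref{lem:sixfacts} to obtain an explicit formula for $a_{pq}$ whose sign is manifestly constant in a connected space form. Throughout I will assume $Z\not\equiv 0$ (the degenerate case $Z\equiv 0$ would force $\ef$ onto a sphere by the discrete analogue of Theorem \ref{cq-sphere-thm}, and is excluded implicitly).

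First, I would compute $\|Z_p\|^2$ directly from Corollary \ref{cor:BinMarch2008}. Using $\|Z_p\|^2\cdot I = -Z_p^2$, a short quaternionic computation gives $\|Z_p\|^2 = |n_p|^2$; hence $\|Z\|^2=0$ forces $n_p\equiv 0$, so $Z_p = H_p F_p^{\mathrm{raw}}$, where $F_p^{\mathrm{raw}}$ is the raw quaternionic $2\times 2$ matrix representing $\ef_p$ as in \eqref{star8point4}. Adopting the canonical $M_\kappa$-lift $F_p := \tfrac{2}{1-\kappa\ef_p^2}F_p^{\mathrm{raw}}$, for which $\langle F_p,Q\rangle=-1$, we then have $Z_p=\beta_p F_p$ with $\beta_p = H_p(1-\kappa\ef_p^2)/2 = -\langle Z_p,Q\rangle$. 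Item (6) of Lemma \ref{lem:sixfacts} (which extends from $\mathbb{R}^3$ to arbitrary $\kappa$ because $Q^2=\kappa I$ commutes with every $\tau_{pq}$, so the $\lambda$-part of \eqref{linearconservedquantity-rossman} yields $d\langle Z,Q\rangle=0$) guarantees $\beta_p\equiv\beta$, a nonzero real constant.

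Second, I would apply item (5) of Lemma \ref{lem:sixfacts} with $n=1$, $P_{n-1}=Q$. Substituting $Z_p=\beta F_p$ and $\langle Q,F_q\rangle=-1$, both expressions for $S_{pq}$ collapse to
\[
S_{pq} \;=\; \Bigl(\beta - \tfrac{a_{pq}}{\langle F_p,F_q\rangle}\Bigr)F_p \;=\; \Bigl(\beta - \tfrac{a_{pq}}{\langle F_p,F_q\rangle}\Bigr)F_q.
\]
Since $F_p$ and $F_q$ are linearly independent whenever $\ef_p\neq\ef_q$, both scalar coefficients must vanish, giving the identity $a_{pq} = \beta\,\langle F_p, F_q\rangle$.

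Third, by \eqref{eqn:PjPk} (together with $\ef_*\in\operatorname{Im}H$ so that $(\ef_p-\ef_q)^2 = -|\ef_p-\ef_q|^2$),
\[
\langle F_p, F_q\rangle \;=\; \frac{-2\,|\ef_p-\ef_q|^2}{(1-\kappa\ef_p^2)(1-\kappa\ef_q^2)}.
\]
Under the connected space-form hypothesis, every $\ef_*$ lies on a single sheet of $M_\kappa$, so $1-\kappa\ef_*^2$ has constant sign and the denominator is strictly positive; thus $a_{pq}$ has the constant sign of $-2\beta$, and every cross ratio $q_{pqrs}=a_{pq}/a_{ps}$ is strictly positive. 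For four concircular points, a positive cross ratio is precisely the condition that the pairs $\{\ef_p,\ef_r\}$ and $\{\ef_q,\ef_s\}$ do not separate one another on the circle, so the quadrilateral is self-intersecting rather than embedded. The main obstacle is the bookkeeping in Step 2, where one must verify that the independence of $F_p$ and $F_q$ forces both coefficients to vanish simultaneously; everything else is a routine sign check.
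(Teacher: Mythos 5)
Your proof is correct, and it reaches the paper's key identity by a cleaner route than the paper's own argument. The paper starts from the same observation that $\|Z\|^2=0$ forces $Z_p=r_pF_p$ (there via item (4) of Lemma \ref{lem:sixfacts}; you obtain it from Corollary \ref{cor:BinMarch2008} together with the computation $\|Z_p\|^2=|n_p|^2$), but then expands $dZ=Q\tau-\tau Q$ into three explicit quaternionic component equations: the first two give that $r$ is constant, and the third gives $2r\,d\ef_{pq}=d\ef^*_{pq}(1-\kappa\ef_p^2)(1-\kappa\ef_q^2)$, whence $a_{pq}=d\ef^*_{pq}\,d\ef_{pq}$ has constant sign because $d\ef_{pq}^2=-|d\ef_{pq}|^2<0$ and $1-\kappa\ef^2$ cannot change sign on a connected quadric. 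You bypass the componentwise expansion entirely: constancy of $\beta_p=-\langle Z_p,Q\rangle$ follows from $Q^2=\kappa I$ being central (the same computation as in Lemma \ref{Zisctecte}), and the identity $a_{pq}=\beta\langle F_p,F_q\rangle$ drops out of the curvature-sphere relation, item (5) of Lemma \ref{lem:sixfacts} --- which for $n=1$ is just Corollary \ref{cor:referencedinnextlemmaB} rearranged, so nothing unproven is being invoked. The two final formulas are literally the same, since $\beta\langle F_p,F_q\rangle=\tfrac{-2\beta|d\ef_{pq}|^2}{(1-\kappa\ef_p^2)(1-\kappa\ef_q^2)}$; what your version buys is that the sign mechanism becomes visible in one line from \eqref{eqn:PjPk}, while the paper's version uses only the defining equation $dZ=Q\tau-\tau Q$ and no auxiliary machinery. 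Two small points to tighten: in your Step 2 there is really only one scalar coefficient, and the conclusion is that it vanishes because $cF_p=cF_q$ with $F_p\not\parallel F_q$ forces $c=0$ (the phrase ``both scalar coefficients must vanish'' reads as if there were two independent ones); and you should state explicitly that $\beta\neq 0$, since $\beta=0$ would give $H_p\equiv 0$ (as $1-\kappa\ef_p^2\neq 0$) and hence $Z\equiv 0$, which is exactly the degenerate case you set aside at the outset.
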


\begin{proof}
Let $M_\kappa$ be the connected space form, which we may assume 
is produced by a $Q$ as in \eqref{choiceofQ}.  
Take the lift $F$ of $\ef$ so that $F \in M_\kappa$.  Because $||Z||^2=0$, 
there exists a real-valued function 
$r$ so that $Z=r F$ (by part (4) of Lemma \ref{lem:sixfacts}).  
Then $d(rF) = Q \tau - \tau Q$ gives the three equations 
\[ \frac{2 r_q}{1-\kappa \ef_q^2} - 
\frac{2 r_p}{1-\kappa \ef_p^2} = \kappa (\ef_p d\ef_{pq}^* + 
d\ef_{pq}^* \ef_q) \; , 
\]
\[ \frac{2 r_q \ef_q^2}{1-\kappa \ef_q^2} - 
\frac{2 r_p \ef_p^2}{1-\kappa \ef_p^2} = d\ef_{pq}^* \ef_q + 
\ef_p d\ef_{pq}^* \; , 
\]
\[ \frac{2 r_q \ef_q}{1-\kappa \ef_q^2} - 
\frac{2 r_p \ef_p}{1-\kappa \ef_p^2} = d\ef_{pq}^* + 
\kappa \ef_p d\ef_{pq}^* \ef_q \; . 
\]
The first and second of these equations imply that $r$ is constant 
(i.e. $r_p=r_q$).  

If $\kappa = 0$, the third equation gives $a_{pq} = 
d\ef_{pq} d\ef_{pq}^* = 2 r d\ef_{pq}^2$, 
so all the $a_{pq}$ have the 
same sign, and thus the cross ratios are positive.  

If $\kappa \neq 0$, then 
\[ \kappa \frac{2 r \ef_p \ef_q^2}{1-\kappa \ef_q^2} - \kappa 
\frac{2 r \ef_p \ef_p^2}{1-\kappa \ef_p^2} = \kappa (\ef_p 
d\ef_{pq}^* \ef_q + \ef_p^2 d\ef_{pq}^*) \; , 
\]
and the third of the above three equations gives 
\[ 2 r d\ef_{pq} = d\ef_{pq}^* (1-\kappa \ef_p^2) 
(1-\kappa \ef_q^2) \; . \]
Note that $1-\kappa \ef^2$ never changes sign, 
since $\ef$ stays in the 
connected $3$-dimensional space form $M_\kappa$, 
so all the $a_{pq}$ have the 
same sign.
\end{proof}

\begin{example}
The situation in Lemma \ref{lem:positivecrossratio} 
does indeed occur.  Consider the following 
simple example: Let the domain of $\ef$ be $\mathbb{Z}^2$, and 
              \[ f_{m,n} = j \; , \;\;\; \text{when} \;\; 
m \equiv n \equiv 0 \;\; (\text{mod} \; 2) \; , \]
              \[ f_{m,n} = i+j \; , \;\;\; \text{when} \;\; 
m \equiv 1, n \equiv 0 \;\; (\text{mod} \; 2) \; , \]
              \[ f_{m,n} = 0 \; , \;\;\; \text{when} \;\; 
m \equiv n \equiv 1 \;\; (\text{mod} \; 2) \; , \]
              \[ f_{m,n} = i \; , \;\;\; \text{when} \;\; 
m \equiv 0, n \equiv 1\;\; (\text{mod} \; 2) \; . \]
               All cross ratios are $1/2>0$.  We can take 
all cross ratio factors on vertical edges to be 
$a_{(m,n)(m,n+1)}=2$ and 
               on all horizontal edges to be $a_{(m,n)(m+1,n)}=1$.
Setting 
     \[ Z=Z_{m,n} =-\begin{pmatrix} \ef_{m,n} & -\ef_{m,n}^2 \\ 1 & 
-\ef_{m,n} \end{pmatrix} \; , 
            \;\;\; Q = \begin{pmatrix} 0 & 1 \\ 0 & 0 \end{pmatrix} \; , \]
               then $P = Q + \lambda Z$ is a linear conserved quantity 
of $\ef$, the ambient space is $\mathbb{R}^3$, and $||Z||^2=0$.  
Lemma \ref{lem:positivecrossratio} now implies all quadrilaterals are 
not embedded, which is also immediately clear from the definition 
of $\ef$.  
\end{example}

\begin{lemma}\label{lem:Calapso-preserves-type}
Suppose $\ef$ is a discrete isothermic surface with a conserved 
quantity $P$ of order $n$. 
Let $T_p^\mu$ denote a 
Calapso transformation satisfying $T_q^\mu = 
T_p^\mu (I + \mu \tau_{pq})$.  Then 
the Calapso transform $\ef_p^\mu$ with lift $F_p^\mu = 
T_p^\mu F_p (T_p^\mu)^{-1}$ 
also has a polynomial conserved quantity of order 
$n$, defined by \[
 P_p^\mu = T_p^\mu (P_p(\lambda+\mu)) (T_p^\mu)^{-1} \; , \]  
where $P(\lambda+\mu)$ denotes $P|_{\lambda \to \lambda 
+ \mu}$.  
\end{lemma}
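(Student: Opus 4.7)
The plan is to verify directly that $P^\mu$ satisfies the defining equation \eqref{polynomialconservedquantity} for a polynomial conserved quantity with respect to the Calapso-transformed surface $\ef^\mu$, namely
\[
(I + \lambda \tau_{pq}^\mu) P_q^\mu \;=\; P_p^\mu (I + \lambda \tau_{pq}^\mu),
\]
where $\tau^\mu$ is built from $\ef^\mu$ in the same way that $\tau$ is built from $\ef$. The key technical input is Equation \eqref{eqn:Teqn3}, which gives the transformation rule $\tau_{pq}^\mu = T_p^\mu \tau_{pq} (T_q^\mu)^{-1}$.

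First I would establish the factorization
\[
I + \lambda \tau_{pq}^\mu \;=\; T_p^\mu\bigl(I + (\lambda+\mu)\tau_{pq}\bigr)(T_q^\mu)^{-1}.
\]
This is a short calculation: multiplying both sides on the right by $T_q^\mu$, the left-hand side becomes $T_q^\mu + \lambda T_p^\mu \tau_{pq}$ by \eqref{eqn:Teqn3}, and substituting $T_q^\mu = T_p^\mu(I + \mu \tau_{pq})$ from the definition of the Calapso transformation collects these terms to $T_p^\mu(I + (\lambda+\mu)\tau_{pq})$, matching the right-hand side.

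With this identity in hand, the verification is immediate. Using $P_q^\mu = T_q^\mu P_q(\lambda+\mu)(T_q^\mu)^{-1}$, I compute
\[
(I+\lambda\tau_{pq}^\mu)P_q^\mu
\;=\; T_p^\mu\bigl(I+(\lambda+\mu)\tau_{pq}\bigr)P_q(\lambda+\mu)(T_q^\mu)^{-1}.
\]
Now I apply the defining relation \eqref{polynomialconservedquantity} for the original polynomial conserved quantity $P$, but evaluated at the shifted spectral parameter $\lambda+\mu$ (which is legitimate since \eqref{polynomialconservedquantity} holds identically in $\lambda\in\mathbb R$). This replaces $(I+(\lambda+\mu)\tau_{pq})P_q(\lambda+\mu)$ by $P_p(\lambda+\mu)(I+(\lambda+\mu)\tau_{pq})$. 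Reinserting a cancelling pair $(T_p^\mu)^{-1}T_p^\mu$ and reusing the factorization above from the other direction yields $P_p^\mu(I+\lambda\tau_{pq}^\mu)$, as required.

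Finally, I would check that the order of the conserved quantity is preserved. Expanding $P_p(\lambda+\mu) = Q + (\lambda+\mu)P_{1,p} + \cdots + (\lambda+\mu)^n Z_p$ in powers of $\lambda$ gives a polynomial in $\lambda$ of degree exactly $n$ with leading coefficient $Z_p$, and conjugation by the (invertible, vertex-dependent) element $T_p^\mu\in\mathrm{Mob}(3)$ preserves degree, producing leading coefficient $T_p^\mu Z_p(T_p^\mu)^{-1}$, which is nonzero wherever $Z_p$ is. I do not expect a genuine obstacle here: once \eqref{eqn:Teqn3} is invoked, the argument is a purely formal manipulation exploiting the one-parameter-group behavior of $T^\mu$ (cf.\ Lemma \ref{nextlem:changeof-a-underCalapso}). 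The only point requiring a touch of care is keeping track of which variable is the ``spectral'' one and which is the ``transformation'' one, and ensuring that the shift $\lambda\mapsto\lambda+\mu$ is the correct one to compensate for the twisting by $T^\mu$.
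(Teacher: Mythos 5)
Your proof is correct and takes essentially the same route as the paper's: both hinge on Equation \eqref{eqn:Teqn3} together with the edge relation $T_q^\mu = T_p^\mu(I+\mu\tau_{pq})$, and both invoke the original conserved-quantity equation at the shifted parameter $\lambda+\mu$. You merely repackage the computation as the factorization $I+\lambda\tau_{pq}^\mu = T_p^\mu\bigl(I+(\lambda+\mu)\tau_{pq}\bigr)(T_q^\mu)^{-1}$ applied twice, where the paper expands the same quantities term by term; your explicit check that the $\lambda$-degree stays $n$ is a detail the paper leaves implicit.
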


\begin{proof}
\[ dP^\mu_{pq}+\lambda \tau^\mu_{pq} P^\mu_q - P^\mu_p \lambda 
\tau^\mu_{pq} = \]\[ 
T_q^\mu P_q(\lambda+\mu) (T_q^\mu)^{-1} - T_p^\mu P_p(\lambda+\mu) 
(T_p^\mu)^{-1} + \]\[ 
+ \lambda T_p^\mu \tau_{pq} P_q(\lambda+\mu) (T_q^\mu)^{-1} 
- \lambda T_p^\mu P_p(\lambda+\mu) \tau_{pq} (T_q^\mu)^{-1} \; , \] 
by Equation \eqref{eqn:Teqn3}.  So then 
\[ dP^\mu_{pq}+\lambda \tau^\mu_{pq} P^\mu_q - P^\mu_p \lambda 
\tau^\mu_{pq} = \]\[ 
T_q^\mu P_q(\lambda+\mu) (T_q^\mu)^{-1} - T_p^\mu P_p(\lambda+\mu) 
(T_p^\mu)^{-1} 
- T_p^\mu [P_q(\lambda+\mu)-P_p(\lambda+\mu) + 
\]\[ 
   + \mu \tau_{pq} P_q(\lambda+\mu)-\mu P_p(\lambda+\mu) \tau_{pq} 
] (T_q^\mu)^{-1} = \]\[
T_p^\mu [\mu \tau_{pq} P_q (\lambda+\mu)-P_p (\lambda+\mu) 
\mu \tau_{pq} - \mu \tau_{pq} P_q (\lambda+\mu) + 
\mu P_p (\lambda+\mu) \tau_{pq}] (T_q^\mu)^{-1} 
= 0 \; . \]
\end{proof}

Note that the corresponding result for the case of smooth surfaces 
of the above lemma, in the case $n=1$, implies that the Calapso 
transformations and Lawson transformations of smooth CMC surfaces 
are the same.  We say more about this in Remark 
\ref{rem:9point27} just below.  

\begin{remark}\label{rem:9point27}
Now we 
remark about the Lawson correspondence for smooth surfaces 
$x$ with linear conserved quantities $P = Q + \lambda Z$.  
First we note that, for a smooth surface $x$ with lift $X$ in 
the light cone $L^4$, we have, for the Calapso transformation 
$X^\lambda := TXT^{-1}$, 
\[ dX^\mu = d (T^\mu X (T^\mu)^{-1}) = T^\mu 
(dX + \mu \tau X - X \mu \tau) (T^\mu)^{-1}  = T^\mu 
dX (T^\mu)^{-1} \]
(note that $\tau X = X \tau = 0$), and we similarly have, 
for the linear conserved quantity $P^\mu = Q^\mu + 
\lambda Z^\mu$ of the Calapso transform, \[ 
dZ^\mu = T^\mu 
(dZ + \mu \tau Z - Z \mu \tau) (T^\mu)^{-1} = T^\mu 
dZ (T^\mu)^{-1} \; , \]
by considering the version of Lemma \ref{lem:Calapso-preserves-type} 
for smooth surfaces, and noting that $Z\tau-\tau Z= 0$.  

Let us consider for a moment how to derive the version of Lemma 
\ref{lem:Calapso-preserves-type} for smooth surfaces.  
For the smooth case as well, we have the corresponding properties 
$T^{\mu+\lambda} = T^{\lambda,\mu} T^\lambda$ 
(like we saw in Lemma \ref{nextlem:changeof-a-underCalapso} 
in the case of discrete surfaces) 
and $\tau^\mu = T^\mu \tau (T^\mu)^{-1}$ 
(like Equation \eqref{eqn:Teqn3} 
for the case of discrete surfaces) 
for the smooth surface $x^\mu$ with lift 
$X^\mu=T^\mu X (T^\mu)^{-1}$.  
When $x$ is CMC in some space form, 
we have a linear conserved quantity $P = Q + \lambda Z$, 
and by \eqref{eqn:forpg68}, 
\[ d (T^{\mu+\lambda} P(\mu+\lambda) (T^{\mu+\lambda})^{-1}) = 
0 \; , \] 
so 
\[ d (T^{\mu,\lambda} (T^{\mu} P(\mu+\lambda) (T^{\mu})^{-1}) 
(T^{\mu,\lambda})^{-1}) = 0 \; , \] and so 
$P^\mu(\lambda) = T^\mu P(\lambda + \mu) (T^\mu)^{-1}$ is a linear 
conserved quantity for $x^\mu$.  We have just derived the version of 
Lemma \ref{lem:Calapso-preserves-type} for smooth surfaces (stated 
only for the case $n=1$ here).  

Thus $x^\mu$ is CMC in the space 
form determined by the constant term $Q^\mu$ in $P^\mu$.  
Note that 
\[ P^\mu(\lambda) = \lambda Z^\mu+Q^\mu = 
(\lambda+\mu) T^\mu Z (T^\mu)^{-1} + T^\mu Q (T^\mu)^{-1}
= \]\[ = 
\lambda T^\mu Z (T^\mu)^{-1} + T^\mu (\mu Z+Q) (T^\mu)^{-1} 
\; . \]  
Now, 
\[ ||dX^\mu||^2 = ||T^\mu dX (T^\mu)^{-1}||^2
= ||dX||^2 \; , \] so the metrics of $x$ and $x^\mu$ are the same.  
Also, 
\[ -\langle dX^\mu, dZ^\mu \rangle = 
- \langle 
T^\mu dX (T^\mu)^{-1} , 
T^\mu dZ (T^\mu)^{-1}
\rangle = - \langle dX,dZ \rangle \; , \] 
so the Hopf differentials of $x$ and $x^\mu$ are the same.  
Also, assuming we have normalized $P$ properly, the mean 
curvatures $H$ and $H^\mu$ of $x$ and $x^\mu$ are related by 
\[ H^\mu = -\langle T^\mu Z (T^\mu)^{-1} , 
T^\mu (\mu Z + Q) (T^\mu)^{-1} \rangle = -\mu + H \; . \]  
We conclude that $x \to x^\mu$ is the Lawson correspondence, 
with the surface $x$ in the space form determined by $Q$, and 
the surface $x^\mu$ in the space form determined by $Q^\mu$.  
\end{remark}

\subsection{Baecklund transforms}

\begin{defn}\label{somebegginnerr}
If the Darboux transform $\hat \ef$ (with any lift $\hat F$) 
of a discrete special surface $\ef$ of type $n$ satisfies 
\[
  P(\mu) \perp \hat F \; , 
\]
then we say that $\hat \ef$ is a {\em Baecklund transform} of $\ef$.  
\end{defn}

In this case, it follows that $\hat \ef$ is also a special surface 
of type at most $n$, i.e. not of type $n+1$, as we will now see.  
(This definition is also related to Remark \ref{rem:7point49}.)  

\begin{lemma}\label{lem:ifPhasazero}
If a polynomial 
conserved quantity $P = 
P(\lambda)$ satisfies $P(\mu)=0$ for some $\mu \in 
\mathbb{R}$, 
then there exists a polynomial conserved quantity of order one less.  
\end{lemma}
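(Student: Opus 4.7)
The plan is to perform polynomial division of $P(\lambda)$ by $(\lambda-\mu)$ in the variable $\lambda$, and then check that the quotient inherits the conserved-quantity property from $P$. First I would unpack the hypothesis. Writing $P_p(\lambda)=\sum_{k=0}^n\lambda^k P_{k,p}$ with $P_{0,p}=Q$ and $P_{n,p}=Z_p$, the statement $P(\mu)=0$ means $P_p(\mu)=0$ in $\mathbb{R}^{4,1}$ at every vertex $p$. If it is known only at a single vertex, equation \eqref{eqn:pcq-constancy} propagates it to all of them, since $T_p^\mu P_p(\mu)(T_p^\mu)^{-1}$ is independent of $p$ and vanishes somewhere.

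Next I would carry out synthetic division vertex by vertex. Setting $\tilde P_{n-1,p}:=P_{n,p}=Z_p$ and then recursively $\tilde P_{k-1,p}:=P_{k,p}+\mu\,\tilde P_{k,p}$ for $k=n-1,\dots,1$, I obtain $\mathbb{R}^{4,1}$-valued coefficients assembling into
\[\tilde P_p(\lambda):=\sum_{k=0}^{n-1}\lambda^k\,\tilde P_{k,p},\]
and the recursion combined with the vanishing $P_p(\mu)=0$ produces the factorization $P_p(\lambda)=(\lambda-\mu)\tilde P_p(\lambda)$ as an identity of polynomials in $\lambda$ with coefficients in $\mathbb{R}^{4,1}$. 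The degenerate case $\mu=0$ forces $Q=0$ and can be handled by simply taking $\tilde P_p(\lambda)=\sum_{k=0}^{n-1}\lambda^k P_{k+1,p}$, so in either case a polynomial of degree $n-1$ is produced, whose top coefficient is still $Z_p$.

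Finally I would substitute this factorization into the defining relation \eqref{polynomialconservedquantity} and exploit that $\lambda-\mu$ is a real scalar commuting with all matrix quantities. This immediately gives
\[(\lambda-\mu)\bigl((I+\lambda\tau_{pq})\tilde P_q(\lambda)-\tilde P_p(\lambda)(I+\lambda\tau_{pq})\bigr)=0\]
as a polynomial identity in $\lambda$. Cancelling the common factor $(\lambda-\mu)$ in $\mathbb{R}[\lambda]$ then yields $(I+\lambda\tau_{pq})\tilde P_q=\tilde P_p(I+\lambda\tau_{pq})$ for all $\lambda$, so $\tilde P$ is a polynomial conserved quantity of order $n-1$; that its constant term $\tilde P_{0,p}$ is independent of $p$ follows from the $\lambda^0$ coefficient of this relation. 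The argument is essentially bookkeeping, and the only point requiring a moment of care is the cancellation step, which is valid because $\mathbb{R}[\lambda]$ is an integral domain and multiplication by $\lambda-\mu$ is injective on polynomials with coefficients in the matrix ring. I do not anticipate a substantive geometric obstacle here; the whole content is the formal observation that the constraint in \eqref{polynomialconservedquantity} is linear in $P$ and holds coefficientwise in $\lambda$, so it descends to any polynomial factor.
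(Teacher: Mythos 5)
Your proposal is correct and follows essentially the same route as the paper: factor $P(\lambda)=(\lambda-\mu)\tilde P(\lambda)$ and observe that, since $\lambda-\mu$ is a real scalar independent of the vertex, the conserved-quantity relation descends to the quotient $\tilde P$, which has order $n-1$. The paper phrases the descent via constancy of $T_p^\lambda P_p(T_p^\lambda)^{-1}$ rather than by cancelling $(\lambda-\mu)$ in the edge relation \eqref{polynomialconservedquantity}, but these are equivalent formulations of the same one-line argument, and your extra bookkeeping (synthetic division, propagation of the zero across vertices, the $\mu=0$ case) only makes explicit what the paper leaves implicit.
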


\begin{proof}
$P(\mu)=0$ implies $\tilde P(\lambda) = \frac{1}{\lambda-\mu} 
\cdot P(\lambda)$ is still a polynomial.  Then $T_p^\lambda 
P_p(\lambda) (T_p^\lambda)^{-1}$ is constant 
with respect to $p$, and so 
$T_p^\lambda \tilde P_p(\lambda) (T_p^\lambda)^{-1}$ is too.  
Also, $\text{ord}\tilde P = (\text{ord}P) - 1$.  
\end{proof}

The following lemma justifies the statement we made in Remark 
\ref{rem:9point4}.  

\begin{lemma}\label{beginnerlemma}
For a Darboux transform $\hat \ef$ 
of a type $n$ discrete special surface $\ef$ 
determined by the value 
$\lambda=\mu$, if $P(\mu) \perp \hat \ef$, then $\hat \ef$ 
is of type at most $n$.  
\end{lemma}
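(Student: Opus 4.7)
The plan is to show that the degree-$(n{+}1)$ polynomial conserved quantity constructed for $\hat\ef$ in the proof of Theorem \ref{lem:typeatmostnplus1} has a zero at $\lambda=\mu$ whenever $P(\mu)\perp\hat F$, and then apply Lemma \ref{lem:ifPhasazero} to divide out the factor $(\lambda-\mu)$ and drop the degree to at most $n$.

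Recall that the conserved quantity of $\hat\ef$ built from $P$ takes the form $\hat P(\lambda) = \mu(\mu-\lambda)A_p^{-1}(\lambda)P_p(\lambda)A_p(\lambda)$, where $A_p(\lambda) = I - \frac{\lambda}{\mu}\frac{F_p\hat F_p}{F_p\hat F_p + \hat F_p F_p}$ and the scalar denominator $F_p\hat F_p + \hat F_p F_p = -2\langle F_p,\hat F_p\rangle I$ is nonzero (since $\hat F_p\neq F_p$). First I would substitute the explicit formula
\[
 (\mu-\lambda)A_p^{-1}(\lambda) = \frac{\mu F_p\hat F_p + (\mu-\lambda)\hat F_p F_p}{F_p\hat F_p + \hat F_p F_p}
\]
and evaluate at $\lambda=\mu$; this kills the pole in $A^{-1}$ and gives $(\mu-\lambda)A_p^{-1}(\lambda)\big|_{\lambda=\mu} = \frac{\mu F_p\hat F_p}{F_p\hat F_p + \hat F_p F_p}$. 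In parallel, using $F\hat F + \hat F F$ as a scalar (Remark \ref{rem:matrixtoscalar}), one easily checks $A_p(\mu) = \frac{-\hat F_p F_p}{F_p\hat F_p + \hat F_p F_p}$.

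Putting the pieces together,
\[
 \hat P_p(\mu) \;=\; \frac{\mu^2}{(F_p\hat F_p + \hat F_p F_p)^2}\,F_p\hat F_p\,P_p(\mu)\,\hat F_p F_p.
\]
Now the hypothesis $P(\mu)\perp\hat F$ means $\langle P_p(\mu),\hat F_p\rangle = 0$, which in the quaternionic $\mathbb{R}^{4,1}$ model \eqref{star8point1}--\eqref{star8point2} is equivalent to the anticommutation relation $P_p(\mu)\hat F_p + \hat F_p P_p(\mu) = 0$. Using this to rewrite the middle factor and then invoking $\hat F_p\in L^4$ (so $\hat F_p^2=0$), one obtains $\hat F_p P_p(\mu)\hat F_p = -\hat F_p^2 P_p(\mu) = 0$, and hence $\hat P_p(\mu) = 0$ for every vertex $p$.

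Finally, by Lemma \ref{lem:ifPhasazero} the quotient $\tilde P(\lambda) := \frac{1}{\lambda-\mu}\hat P(\lambda)$ is a polynomial conserved quantity of $\hat\ef$ whose degree is one less than that of $\hat P$. Since Theorem \ref{lem:typeatmostnplus1} bounds $\deg \hat P\le n{+}1$, we conclude $\deg\tilde P\le n$, so $\hat\ef$ is special of type at most $n$. I expect the main obstacle to be the residue/pole bookkeeping at $\lambda=\mu$: the expression $\hat P$ is a priori a rational function, and one must ensure that the cancellation of $(\mu-\lambda)$ against the singularity of $A^{-1}$ is handled correctly so that $\hat P(\mu)$ really is a well-defined (and, under the hypothesis, vanishing) element of $\mathbb{R}^{4,1}$ rather than an indeterminate form.
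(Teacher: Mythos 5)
Your proof is correct and follows essentially the same route as the paper: evaluate $\hat P=\mu(\mu-\lambda)A^{-1}PA$ at $\lambda=\mu$, use the anticommutation $P(\mu)\hat F+\hat F P(\mu)=0$ coming from $P(\mu)\perp\hat F$ together with $\hat F^2=0$ to conclude $\hat P(\mu)=0$, and then lower the degree by Lemma \ref{lem:ifPhasazero}. (One harmless slip: $A_p(\mu)=+\hat F_pF_p/(F_p\hat F_p+\hat F_pF_p)$, not its negative, but the sign does not affect the vanishing.)
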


\begin{proof}
As in the proof of Theorem \ref{lem:typeatmostnplus1}, 
\[
 \hat P = \frac{\mu}{F\hat F+\hat F F} 
 \{ (\mu F \hat F+(\mu-\lambda) \hat F F) P (I - 
 \frac{\lambda}{\mu} \frac{F \hat F}{F\hat F+\hat F F}) \} \; , 
\] so 
\[
 \hat P(\mu) = \frac{\mu}{F\hat F+\hat F F} 
 \{ \mu F (\hat F P(\mu)) (I - 
    \frac{F \hat F}{F\hat F+\hat F F} ) \} = 
\]
\[
 \hat P(\mu) = \frac{\mu}{F\hat F+\hat F F} 
 \{ \mu F (-P(\mu) \hat F) 
 \frac{\hat F F}{F\hat F+\hat F F} ) \} = 0 \; , 
\]
since $\hat F^2=0$.  
Then Lemma \ref{lem:ifPhasazero} proves the result.  
\end{proof}

\begin{lemma}
If $P(\mu)_p \perp \hat F_p$ for one value of $p$, then this 
holds also for any other value of $p$.  
\end{lemma}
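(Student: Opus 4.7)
The plan is to exploit the two invariance properties already established for Darboux transforms and for polynomial conserved quantities, together with the fact that conjugation by a Calapso transformation is an isometry of $\mathbb{R}^{4,1}$. The computation is essentially a one-line check once these ingredients are assembled.

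First I would recall that $T_p^{\mu} \in \mathrm{Mob}(3)$ and that the map $A \mapsto T_p^{\mu} A (T_p^{\mu})^{-1}$ is an isometry of $\mathbb{R}^{4,1}$ (this was noted in the discussion following the definition of $\mathrm{Mob}(3)$, based on the identity $\langle TAT^{-1}, TBT^{-1}\rangle = \langle A,B\rangle$). Therefore, for every vertex $p$,
\[
  \langle \hat F_p, P_p(\mu) \rangle
  \;=\; \langle T_p^{\mu} \hat F_p (T_p^{\mu})^{-1},\; T_p^{\mu} P_p(\mu) (T_p^{\mu})^{-1} \rangle .
\]

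Next, I would apply the two defining constancy properties at the value $\lambda = \mu$. Since $\hat \ef$ is a Darboux transform of $\ef$ determined by $\mu$, Definition \ref{defn:discDarbTrans} says that $T_p^{\mu} \hat F_p (T_p^{\mu})^{-1}$ is constant in $PL^4$; that is, there exist real scalars $r_{pq}$ with $T_p^{\mu} \hat F_p (T_p^{\mu})^{-1} = r_{pq}\, T_q^{\mu} \hat F_q (T_q^{\mu})^{-1}$. Since $P$ is a polynomial conserved quantity, Equation \eqref{eqn:pcq-constancy} gives that $T_p^{\mu} P_p(\mu) (T_p^{\mu})^{-1}$ is literally constant (as an element of $\mathbb{R}^{4,1}$, not merely projectively) with respect to $p$. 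Combining these two facts, the right-hand side above scales by $r_{pq}$ when moving from $p$ to $q$, i.e.
\[
  \langle \hat F_p, P_p(\mu) \rangle \;=\; r_{pq}\, \langle \hat F_q, P_q(\mu) \rangle ,
\]
so vanishing at one vertex forces vanishing at every adjacent vertex, and hence (by connectedness of the lattice domain) at every vertex.

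The only potential obstacle is bookkeeping of the projective scalar $r_{pq}$: $\hat F$ is only defined up to a real scalar at each vertex, so one must be sure that this freedom does not produce spurious behavior. But perpendicularity of a vector to a projectively-defined vector is itself a well-defined, rescaling-invariant condition, so this issue is cosmetic rather than genuine. If desired, one can simply rescale $\hat F$ vertex by vertex so that $T_p^{\mu} \hat F_p (T_p^{\mu})^{-1}$ is a fixed vector of $L^4$ (not merely a fixed line), after which the computation above gives $\langle \hat F_p, P_p(\mu) \rangle$ constant in $p$ with no scalar factors at all, completing the argument.
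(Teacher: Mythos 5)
Your proposal is correct, and it rests on exactly the same two pillars as the paper's own proof: the projective constancy of $T_p^{\mu}\hat F_p(T_p^{\mu})^{-1}$ from the definition of the Darboux transform, and the literal constancy of $T_p^{\mu}P_p(\mu)(T_p^{\mu})^{-1}$ obtained by evaluating the conserved-quantity condition at $\lambda=\mu$. The execution differs slightly in packaging: the paper works with the anticommutator form of perpendicularity, writing $\hat F_p P(\mu)_p=-P(\mu)_p\hat F_p$, substituting $\hat F_p\propto (T_p^{\mu})^{-1}T_q^{\mu}\hat F_q(T_q^{\mu})^{-1}T_p^{\mu}$, and then pushing $P(\mu)_p$ through $I+\mu\tau_{pq}$ via $(I+\mu\tau_{pq})P(\mu)_q=P(\mu)_p(I+\mu\tau_{pq})$ to land on the relation at $q$; you instead invoke the isometry property $\langle TAT^{-1},TBT^{-1}\rangle=\langle A,B\rangle$ of conjugation by elements of $\mathrm{Mob}(3)$ and conclude in one line that $\langle\hat F_p,P_p(\mu)\rangle=r_{pq}\langle\hat F_q,P_q(\mu)\rangle$. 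Your version is the more conceptual of the two and avoids the matrix bookkeeping entirely; its only prerequisites (that $T^{\mu}$ stays in $\mathrm{Mob}(3)$ and that conjugation preserves the $\mathbb{R}^{4,1}$ inner product) are both established earlier in the text, and your remark that perpendicularity is insensitive to the projective scalar $r_{pq}$ correctly disposes of the one place where care is needed. No gap.
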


\begin{proof}
We suppose $P(\mu)_p \perp \hat F_p$ holds at one particular $p$, 
and then show that $P(\mu)_q \perp \hat F_q$ holds for any 
adjacent $q$.  The relation 
\[
   \hat F_p P(\mu)_p = - P(\mu)_p \hat F_p
\] implies that 
\[
 -P(\mu)_p (T_p^\mu)^{-1} T_q^\mu \hat F_q (T_q^\mu)^{-1} 
  T_p^\mu = (T_p^\mu)^{-1} T_q^\mu \hat F_q (T_q^\mu)^{-1} 
  T_p^\mu P(\mu)_p \; , 
\] and so 
\[
 -P(\mu)_p (I+\mu \tau_{pq}) \hat F_q = 
 (I+\mu \tau_{pq}) \hat F_q (I+\mu \tau_{pq})^{-1} 
 P(\mu)_p (I+\mu \tau_{pq}) \; . 
\]  Therefore 
$-P(\mu)_q \hat F_q = \hat F_q P(\mu)_q$, and $\hat F_q \perp 
P(\mu)_q$.  
\end{proof}

\subsection{Complementary surfaces}\label{compsurfcompsurf} 
As promised in Section 
\ref{therehi}, we say more about complementary 
surfaces here.  

If $P$ is a polynomial conserved quantity of order $n$, then 
$||P(\mu)||^2$ has at most $2n$ zeros $\mu_1,...,\mu_{2n}$.  
We can choose $\hat F = P(\mu_j)$ to get another surface, 
for some $j \in \{ 1,...,2n \}$, because 
$P(\mu_j)$ lies in the light cone 
$L^4$.  (Clearly, choices of $\mu$ for which 
$P(\mu)$ is not in the light cone cannot be allowed.)
Then $T^{\mu_j} \hat F (T^{\mu_j})^{-1} = 
T^{\mu_j} P(\mu_j) (T^{\mu_j})^{-1}$ is 
constant, by the definition of a conserved quantity, and 
thus $\hat F$ gives a Darboux transform.  Furthermore, 
\[
 \langle \hat F,P(\mu_j) \rangle = ||P(\mu_j)||^2 = 0 \; , 
\] and so in fact we have a Baecklund transform.  

\begin{defn}
We call the Baecklund transform given by $\hat F = P(\mu_j)$ 
a {\em complementary surface} of $\ef$.  
\end{defn}

\subsection{The spaces in which Darboux transformations live} 
In this section, we include some comments about the ambient 
spaces that Darboux and Baecklund transformations lie in.  
The comments here are less than perfectly organized, and more 
thorough arguments can be found in \cite{BHRS}.  

Let $\ef$ be an isothermic discrete surface with 
normalized linear conserved quantity 
$P=Q + \lambda Z$, $||Z||^2=1$, 
and let $\hat \ef$ be a Darboux transform of $\ef$.  
To shorten notation, 
define $\mathcal{B}=F \hat F + \hat F F$.  Then, as in 
the proof of Theorem \ref{lem:typeatmostnplus1}, the 
conserved quantity for $\hat \ef$ is 
\[ \hat P = \mu \mathcal{B}^{-1} (\mu \mathcal{B}-\lambda \hat F F) P 
\mu^{-1} \mathcal{B}^{-1} (\mu \mathcal{B}-\lambda F \hat F) = 
\lambda^2 \hat Z + \lambda \hat P_1 + \hat Q \; , \]  
where 
\[ \hat Q = \mu^2 Q \; , \;\;\; \hat P_1 = 
\mu^2 Z - \mu \mathcal{B}^{-1} (\hat F F Q+Q F \hat F) \] and 
\[ \hat Z = \mathcal{B}^{-2} \hat F F Q F \hat F- \mu \mathcal{B}^{-1} 
(\hat F F Z+ Z F \hat F) \; . \]  
Using $F^2=\hat F^2=0$, and that $F Z F = 0$ implies $F Z \hat F F Q F = 
\mathcal{B} F Z Q F$ and $F Q F \hat F Z F = \mathcal{B} F Q Z F$ and 
$F Z \hat F F = \mathcal{B} F Z$ and $F \hat F Z F = \mathcal{B} Z F$, 
we have $\hat Z^2 = \mu^2 Z^2$.  Thus, when we normalize $\hat P$ so that 
the leading coefficient has squared norm $+1$, the constant term will 
become $\mu \cdot Q$, so $\ef$ and $\hat \ef$ do not live in 
the same space form, in general, but at least the 
sectional curvatures of the two space 
forms (i.e. the two quadrics) containing $\ef$ and $\hat \ef$ 
have the same sign.  However, it 
does not really matter that they are not in the same space 
form, as Darboux transforms are a notion most naturally considered 
for ambient spaces with just a conformal structure, not 
with a Riemannian structure (and the two quadrics do have a common 
conformal structure).  

In the case that the Darboux transform is actually a Baecklund 
transform with linear conserved quantity $\tilde Q + 
\lambda \tilde Z$, let us suppose that 
\begin{equation}\label{star-ruby} 
\hat P = \lambda s (\lambda \tilde Z + \tilde Q) + t 
(\lambda \tilde Z + \tilde Q) 
\end{equation} for some constants $s$ and $t$.  
Hence $\hat Z = s \tilde Z$, and so 
\[ \tilde Z^2 = (\mu/s)^2 Z^2 \; . \]  
Also, $\tilde Q = t^{-1} \hat Q = t^{-1} \mu^2 Q$.  Then $\hat P_1 = 
s \tilde Q + t \tilde Z$ implies 
\[ \mu^2 Z - \mu \mathcal{B}^{-1} (\hat F F Q+Q F \hat F) = 
s t^{-1} \mu^2 Q + t s^{-1} (\mathcal{B}^{-2} \hat F F Q F \hat F - 
\mu \mathcal{B}^{-1} (\hat F F Z + Z F \hat F)) \; . \]  
Multiplying this on both the left and the right by $F$, we get 
\[ - \mu \mathcal{B}^{-1} (F \hat F F Q F+F Q F \hat F F) = 
s t^{-1} \mu^2 F Q F + 
t s^{-1} (\mathcal{B}^{-2} F \hat F F Q F \hat F F) \; . \]  
So 
\[ (2 \mu + s t^{-1} \mu^2 + t s^{-1}) (F Q F) = 0 \; . \]  
Since $F Q= \mathcal{G} I - Q F$ for some nonzero real scalar 
$\mathcal{G}$, we have 
$F Q F = \mathcal{G} F \neq 0$, so 
\[ (\sqrt{|s t^{-1} \mu^2|} \pm \sqrt{|t s^{-1}|})^2=0 \; . \]
So in fact $\sqrt{|s t^{-1} \mu^2|} - \sqrt{|t s^{-1}|} = 0$, and it 
follows that $s^2 \mu^2 = t^2$.  Thus $\tilde Z^2 = t^2 s^{-4} Z^2$ and 
$\tilde Q = t s^{-2} Q$, so when we normalize $\lambda \tilde Z+\tilde Q$, 
$\tilde Q$ is changed back to the original $Q$.  The conclusion is that 
a Baecklund transform lies in the same space form as the original surface, 
when using normalized conserved quantities, and when assuming 
\eqref{star-ruby}.  In fact, this conclusion is true even without assuming 
\eqref{star-ruby}, see Theorem 4.5 in \cite{BHRS}.  

We gave the arguments here assuming $\ef$ has a linear conserved quantity, 
but the corresponding results and arguments hold in the case 
that $\ef$ has a polynomial conserved quantity as well.  

\subsection{Envelopes} 

\begin{defn}\label{defninenvelopesection}
Let $\ef$ be a discrete isothermic surface with domain $\Sigma 
\subset \mathbb{Z}^2$ and lift $F:\Sigma \to L^4$.  
We say that 
$\ef$ envelops the discrete sphere congruence 
$Z: \Sigma \to \mathbb{R}^{4,1}$, $Z_p$ spacelike for all 
$p \in \Sigma$, if 
\begin{enumerate}
\item $\ef_p \perp Z_p$ for all $p \in \Sigma$ (incidence), 
\item $Z_p \equiv Z_q$ mod $\text{span}\{F_p,F_q\}$ for all edges 
$pq$ with $p,q \in \Sigma$ (touching).  
\end{enumerate}
\end{defn}

\begin{remark}
The top-term coefficient $Z$ of a polynomial conserved quantity of $\ef$ 
is an example of a sphere congruence of $\ef$, by parts (3) and (5) 
of Lemma \ref{lem:sixfacts}.  
\end{remark}

\begin{remark}
Although $Z_p$ itself is a single sphere, it determines 
a pencil of spheres $Z_p+sF_p$ for $s \in \mathbb{R}$.  
\end{remark}

Suppose that $\ef$ is a discrete isothermic surface with lift $F$ that 
envelopes a sphere congruence $Z$, and let $\hat f$ with lift $\hat F$ 
be a Darboux transform of $\ef$.  Let $\hat Z_p = Z_p + s_p F_p$ be 
spheres in the pencils 
produced by $Z$ so that $\hat Z_p$ has incidence 
with $\hat f_p$, i.e. $\langle \hat F_p, \hat Z_p \rangle = 0$.  Let $c_p$ 
be the circle containing both $\ef_p$ and $\hat \ef_p$ that is 
perpendicular to $\hat Z_p$.  

Now, $\ef$ (resp. $\hat \ef$) envelops $Z$ (resp. $\hat Z$) if 
and only if there 
exists a circle $c_{pq}$ (resp. $\hat c_{pq}$) tangent to both $c_p$ and $c_q$ 
for all edges $pq$.  
(This follows from the second enumerated item in Definition 
\ref{defninenvelopesection}, 
which implies there is a sphere common to both the pencil 
produced by $Z_p$ (resp. $\hat Z_p$) and the pencil 
produced by $Z_q$ (resp. $\hat Z_q$).)  
In particular, $c_{pq}$ exists, because $\ef$ envelops 
$Z$.  We then have: 

\begin{lemma}
$\hat \ef$ envelops $\hat Z$.  
\end{lemma}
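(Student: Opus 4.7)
The enveloping condition has two parts: incidence $\langle \hat F_p, \hat Z_p \rangle = 0$ at each vertex (automatic from the defining choice of $s_p$) and touching $\hat Z_p - \hat Z_q \in \text{span}\{\hat F_p, \hat F_q\}$ on each edge $pq$. All the content of the lemma lies in verifying touching.

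The plan is first to exploit the hypothesis that $\ef$ envelops $Z$: write $Z_p - Z_q = \alpha F_p + \beta F_q$, and pair with $F_p$ and $F_q$ (using $\langle Z_p, F_p \rangle = \langle Z_q, F_q \rangle = 0$) to obtain the explicit scalars $\alpha = \langle Z_p, F_q \rangle / \langle F_p, F_q \rangle$ and $\beta = -\langle Z_q, F_p \rangle / \langle F_p, F_q \rangle$. Substituting $\hat Z_p = Z_p + s_p F_p$ and $\hat Z_q = Z_q + s_q F_q$ then immediately gives $\hat Z_p - \hat Z_q = (\alpha + s_p) F_p + (\beta - s_q) F_q$, which already sits inside $\text{span}\{F_p, F_q\}$; so the remaining task is to show that this same combination also sits inside $\text{span}\{\hat F_p, \hat F_q\}$.

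Next I would invoke Lemma \ref{lemmalemma9point8}: the concircularity of $\ef_p, \ef_q, \hat \ef_q, \hat \ef_p$ forces the four lifts $F_p, F_q, \hat F_p, \hat F_q$ to span a $3$-dimensional subspace $W \subset \mathbb{R}^{4,1}$, so $\text{span}\{F_p, F_q\} \cap \text{span}\{\hat F_p, \hat F_q\}$ is exactly $1$-dimensional. Because $\text{span}\{\hat F_p, \hat F_q\}$ is a nondegenerate Lorentzian $2$-plane (two distinct null vectors with $\langle \hat F_p, \hat F_q \rangle \neq 0$), it equals its own double orthogonal, so membership reduces to checking $\langle \hat Z_p - \hat Z_q, \hat F_p \rangle = 0$, the companion condition $\langle \hat Z_p - \hat Z_q, \hat F_q \rangle = 0$ then following by the symmetry $p \leftrightarrow q$. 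Using the incidence $\langle \hat Z_p, \hat F_p \rangle = 0$, this amounts to the single scalar identity $\langle \hat Z_q, \hat F_p \rangle = 0$, and substituting $s_q = -\langle Z_q, \hat F_q \rangle / \langle F_q, \hat F_q \rangle$ turns it into $\langle Z_q, \hat F_p \rangle \langle F_q, \hat F_q \rangle = \langle Z_q, \hat F_q \rangle \langle F_q, \hat F_p \rangle$.

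The hard part will be verifying this final pairing identity. I would handle it by exploiting the unique-up-to-scale linear relation $c_1 F_p + c_2 F_q + c_3 \hat F_p + c_4 \hat F_q = 0$ encoded by the concircularity. Pairing this relation with $Z_q$ (and using $\langle Z_q, F_q \rangle = 0$) yields one equation among $\langle Z_q, F_p \rangle, \langle Z_q, \hat F_p \rangle, \langle Z_q, \hat F_q \rangle$; pairing it with $F_q$ yields a parallel equation among $\langle F_p, F_q \rangle, \langle F_q, \hat F_p \rangle, \langle F_q, \hat F_q \rangle$; and combining these with the touching expansion of $Z_p - Z_q$ and, if needed, the Riccati equation \eqref{eq:RiccatiDiscrete} which pins down the ratios of the $c_i$ in terms of the Darboux parameter $\mu$, should close the required identity. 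Geometrically, what this identity encodes is simply that the tangent circle $c_{pq}$ between $c_p$ and $c_q$ (which exists because $\ef$ envelops $Z$) has a natural companion $\hat c_{pq}$ on the $\hat \ef$ side, arising from the built-in $\ef \leftrightarrow \hat \ef$ symmetry of Darboux pairs (Lemma \ref{lemmalemma9point10}).
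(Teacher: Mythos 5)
Your reduction of the touching condition contains a genuine error that the rest of the plan cannot recover from. You correctly observe that $\hat Z_p - \hat Z_q = (\alpha+s_p)F_p + (\beta-s_q)F_q$ already lies in $\text{span}\{F_p,F_q\}$ and that the remaining task is to place it in $W:=\text{span}\{\hat F_p,\hat F_q\}$. But membership of a vector $v$ in the nondegenerate plane $W$ is equivalent to $v\perp W^{\perp}$, i.e.\ to orthogonality against the $3$-dimensional orthogonal complement of $W$ --- \emph{not} to orthogonality against the generators $\hat F_p,\hat F_q$ of $W$ itself (e.g.\ $\hat F_q\in W$ yet $\langle \hat F_q,\hat F_p\rangle\neq 0$). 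The conditions you propose to verify, $\langle \hat Z_p-\hat Z_q,\hat F_p\rangle=\langle \hat Z_p-\hat Z_q,\hat F_q\rangle=0$, would instead put $\hat Z_p-\hat Z_q$ into $W^{\perp}$; since $W\cap W^{\perp}=\{0\}$ for a nondegenerate plane, proving them together with membership in $W$ would force $\hat Z_p=\hat Z_q$, which is false in general. Concretely, your ``final pairing identity'' is equivalent to $\langle \hat Z_q,\hat F_p\rangle=0$, i.e.\ to the assertion that the tangent sphere $\hat Z_q$ at $\hat\ef_q$ also passes through the adjacent vertex $\hat\ef_p$ --- which does not hold. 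The correct single scalar condition is different: writing the essentially unique relation $c_1F_p+c_2F_q+c_3\hat F_p+c_4\hat F_q=0$ coming from concircularity, the intersection $\text{span}\{F_p,F_q\}\cap W$ is the line spanned by $c_1F_p+c_2F_q$, so what must actually be verified is the proportionality $(\alpha+s_p)\,c_2=(\beta-s_q)\,c_1$. You have not established this, and your route as written would be attempting to prove a false identity.

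For comparison, the paper sidesteps the algebra entirely: it uses the circle characterization of enveloping set up just before the lemma, notes that the circumcircle of $\ef_p,\ef_q,\hat\ef_q,\hat\ef_p$, the circles $c_p$ and $c_q$, and the tangent circle $c_{pq}$ furnished by the hypothesis that $\ef$ envelops $Z$ all lie in a single $2$-sphere, and then obtains the required tangent circle $\hat c_{pq}$ from $\hat\ef_p$ to $\hat\ef_q$ by elementary planar geometry after a M\"obius transformation. If you wish to keep an algebraic route, replace your orthogonality test by the proportionality condition above and close it using the Riccati equation \eqref{eq:RiccatiDiscrete} to pin down the ratios $c_i$.
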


\begin{proof}
Consider the circle through the four points $\ef_p$, $\ef_q$, 
$\hat \ef_q$ and 
$\hat \ef_p$, the circular arc of $c_p$ from $\ef_p$ and $\hat \ef_p$, 
the circular arc of $c_q$ from $\ef_q$ and $\hat \ef_q$, 
and the circular arc of 
$c_{pq}$ from $\ef_p$ and $\ef_q$.  Geometric 
considerations show that all four circles lie 
in one sphere (in fact, by applying a 
M\"obius transformation, we could assume they all lie in a 
Euclidean $2$-plane), 
and so there exists an 
arc of a circle $\hat c_{pq}$ from 
$\hat \ef_p$ to $\hat \ef_q$ tangent to both $c_p$ and $c_q$.  So $\hat \ef$ 
envelops $\hat Z$.  
\end{proof}


\section{Discrete minimal surfaces in 
$\mathbb{R}^3$ and discrete CMC $1$ surfaces 
in $\mathbb{H}^3$}

We have already given definitions of discrete minimal surfaces in 
$\mathbb{R}^3$ and discrete CMC $1$ surfaces in 
$\mathbb{H}^3$ in Chapter \ref{chapondiscreteCMCsurfs}.  
However, in this chapter we describe the ways these particular surfaces 
were first defined in the literature, without using conserved quantities.  
These ways are more directly related to the Weierstrass and Bryant 
representations for smooth minimal surfaces in $\mathbb{R}^3$ and 
smooth CMC $1$ surfaces in $\mathbb{H}^3$.  
They also provide us with a clear reason to describe 
discrete holomorphic functions, which 
are essential to those first definitions.  

\subsection{Discrete holomorphic functions} 
Let $g$ be a map from the lattice $\mathbb{Z}^2$ (or a 
subdomain of $\mathbb{Z}^2$) 
to $\mathbb{C}$.  Then $g$ is a discrete holomorphic function if the 
cross ratios of $g$ satisfy 
\[ (g_q-g_p) (g_r-g_q)^{-1} (g_s-g_r) (g_p-g_s)^{-1} = 
\frac{a_{pq}}{a_{ps}} \; , \]  
with $a_{pq} = a_{rs} \in \mathbb{R}$ and 
$a_{ps} = a_{qr} \in \mathbb{R}$, for all 
quadrilaterals, with vertices $p=(m,n), q=(m+1,n), r=(m+1,n+1), 
s=(m,n+1) \in \mathbb{Z}^2$ in the domain of $g$.  

Throughout this chapter, $a_{pq}$ will denote the cross ratio factorizing 
function of $g$.  

\begin{remark}
Note that this definition of discrete holomorphic functions is the 
same as the definition of those discrete isothermic surfaces 
that lie in a plane.  
\end{remark}

\begin{remark}
The above definition of discrete holomorphic functions is in the "broad" 
sense.  The definition in the "narrow" sense would be that 
$\tfrac{a_{pq}}{a_{ps}}$ is identically $-1$.  
\end{remark}

\begin{remark}
If one takes a discrete derivative or discrete integral of a discrete 
holomorphic function, one will not get another 
discrete holomorphic function, in general.  
\end{remark}

Letting $(m,n)$ denote an arbitrary point in the domain of $g$, 
examples of discrete holomorphic functions $g$ are 
\begin{enumerate}
\item $g_{m,n} = c (m+i n)$ for $m,n \in 
     \mathbb{Z}$ and $c$ a complex constant, 
\item $g_{m,n} = e^{c (m+i n)}$ for $m,n \in 
     \mathbb{Z}$ and $c$ a 
real or pure imaginary constant, 
\item M\"obius and Darboux transformations of any of the above examples,
\item discrete versions of $z^\gamma$ and $\log z$, as in Example 
\ref{exa:10point3} below.  
\end{enumerate}

\begin{example}\label{exa:10point3}
For $\alpha \in (0,2) \in \mathbb{R}$, 
the following discrete holomorphic function is a discrete 
version of $g=z^\alpha$, 
in the narrow sense.  It is defined by the recursion 
\[ \alpha \cdot g_{m,n} = 2 m 
\frac{(g_{m+1,n}-g_{m,n})(g_{m,n}-g_{m-1,n})}{g_{m+1,n}-g_{m-1,n}}
+ 2 n 
\frac{(g_{m,n+1}-g_{m,n})(g_{m,n}-g_{m,n-1})}{g_{m,n+1}-g_{m,n-1}} \; . \]
We start with 
\[ g_{0,0}=0 \; , \;\;\; g_{1,0} = 1 \; , \;\;\; g_{0,1} = i^\alpha \; . \]  
We can use this recursion 
to propagate along the positive axes $\{ g_{m,0} \}$ 
and $\{ g_{0,n} \}$ with $m,n > 0$.  
We can then compute general $g_{m,n}$, $m,n>0$, by using that the 
cross ratio is always $-1$.  It turns out that the $g_{m,n}$ then 
automatically satisfy the above recursion relation for all $m$ and $n$.  
Also, Agafonov 
\cite{Ag} showed that these power functions are embedded in a wedge, and 
there is also a discrete version of $\log z$.  Furthermore, Agafonov 
showed that these $g$ are Schramm circle packings \cite{Schramm}.  
\end{example}

\begin{remark}
Here is an example of a function that is holomorphic in the 
sense here, but is not holomorphic in Schramm's sense \cite{Schramm}: 
\[ g_{0,0} = 0 \; , \;\; g_{1,0} = 1 \; , \;\; g_{2,0} = 2+r \; , \]\[ 
g_{0,1} = i \; , \;\; g_{1,1} = 1+i \; , \;\; g_{2,1} = 2+r+i \; , \]\[ 
g_{0,2} = 2i \; , \;\; g_{1,2} = 1+2i \; , \;\; g_{2,2} = 2+r+2i \; , \]  
for any fixed $r>0$.  In fact, 
Schramm's circle patterns are a special case of the 
definition for discrete holomorphic functions that we use here.  
(If one includes both centers of circles and intersection points of 
circles, then Schramm's circle packings give discrete holomorphic 
functions.)  The definition here (unlike 
Schramm's definition) is loose, in the sense that  
it allows the following flexibility: 
Take a discrete holomorphic function $g_{m,n}$ in the sense here with 
cross ratios identically $-1$.  Fix $g_{m,n}$ where $m+n$ is odd, as 
defined by this function.  Then change the value of 
$g_{0,0}$ freely, and then one 
can find new values for all $g_{m,n}$ where $m+n$ is even (and $(m,n) 
\neq (0,0)$) so that the cross ratios are all still $-1$.  
\end{remark}

\subsection{Smooth minimal surfaces in $\mathbb{R}^3$} 
We can always take a smooth CMC surface to have isothermic coordinates 
$z=u+iv$, $u,v \in \mathbb{R}$ (away from umbilic points), 
and then the Hopf differential becomes $r dz^2$ for some real constant 
$r$.  Rescaling the coordinate $z$ by a constant real factor, 
we may assume $r=1$.  
So now assume we have an isothermic minimal surface with 
Hopf differential function $Q=1$.  
Then \[ \frac{Q dz^2}{dg} = 
\frac{dz}{g^\prime} \; , \] where $g$ is the stereographic 
projection of the Gauss map to the complex plane, and $g^\prime = 
dg/dz$.  The map $g$ taking $z$ in the domain of the immersion 
(of the surface) to $\mathbb{C}$ is holomorphic.  
Because we are avoiding umbilics, we have $g^\prime \neq 0$.  
We are only concerned with local behavior of the surface, 
so we ignore the possiblity that $g$ has poles or other singularities.  
Then the Weierstrass representation is (with $\sqrt{-1}$ regarded as lying 
in the complex plane, unlike the quaternion $i$) 
\[ x = \mbox{Re} \int_{z_0}^z (2 g , 1-g^2,\sqrt{-1}+ 
\sqrt{-1} g^2) \frac{dz}{g^\prime} \; . \]  
Associating $(1,0,0)$, $(0,1,0)$ and $(0,0,1)$ with 
the quaternions $i$, $j$ and $k$, respectively, we have the partial derivatives 
as in the following lemma: 

\begin{lemma}
\[ x_u = (i-gj) j \frac{1}{g_u} (i-g j) \; , \]  
\[ x_v = (i-gj) j \frac{-1}{g_v} (i-g j) \; . \]  
\end{lemma}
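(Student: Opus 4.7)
The plan is to verify both identities by direct quaternion multiplication, starting from the Weierstrass integral. Writing $z = u + \sqrt{-1}\,v$ and $\phi = \bigl(2g,\,1-g^2,\,\sqrt{-1}(1+g^2)\bigr)/g^\prime$, the identity $x = \operatorname{Re}\int_{z_0}^{z}\phi\,dz$ gives $x_u = \operatorname{Re}(\phi)$ and $x_v = -\operatorname{Im}(\phi)$ once components are translated to $\operatorname{Im} H$ via $(a,b,c)\leftrightarrow ai+bj+ck$. Throughout, I embed complex scalars via $\sqrt{-1}\mapsto i$, so $g = g_1 + g_2 \sqrt{-1}$ becomes the quaternion $g_1 + g_2 i$, which commutes with $i$ but satisfies $jg = \bar g j$.

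First I would simplify the leftmost factor: using $ij=k$ and $j^2=-1$, one gets $(i-gj)j = k + g$, so the claim reduces to showing $(g+k)\cdot h\cdot(i-gj) = x_u$, where $h := 1/g_u = 1/g^\prime$. I would next expand $(g+k)h = g/g^\prime + h_1 k + h_2 j$ (using $ki=j$, and that $g\cdot h$ in the commutative subring $\mathbb{R}\oplus\mathbb{R}i$ agrees with complex multiplication $g/g^\prime$). Multiplying this on the right by $(i-gj)$ and collecting terms in the basis $\{1,i,j,k\}$, the scalar part cancels and the $i$, $j$, $k$ coefficients become $2\operatorname{Re}(g/g^\prime)$, $\operatorname{Re}(1/g^\prime)-\operatorname{Re}(g^2/g^\prime)$, and $-\operatorname{Im}(1/g^\prime)-\operatorname{Im}(g^2/g^\prime)$ respectively. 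These are exactly $\operatorname{Re}(2g/g^\prime)$, $\operatorname{Re}((1-g^2)/g^\prime)$, $\operatorname{Re}(\sqrt{-1}(1+g^2)/g^\prime)$, which matches $x_u = \operatorname{Re}(\phi)$.

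For the formula for $x_v$, I would exploit holomorphicity: by Cauchy--Riemann, $g_v = \sqrt{-1}\,g_u$, so $-1/g_v = \sqrt{-1}/g_u = \sqrt{-1}\,h$. Thus $(i-gj)j(-1/g_v)(i-gj)$ is the same expression with $h$ replaced by $\sqrt{-1}\,h$, and repeating the computation (i.e.\ replacing $(h_1,h_2)$ by $(-h_2,h_1)$ and $G:=g/g^\prime$ by $\sqrt{-1}\,G$) yields $i$-, $j$-, $k$-coefficients $-\operatorname{Im}(2g/g^\prime)$, $-\operatorname{Im}((1-g^2)/g^\prime)$, $-\operatorname{Re}((1+g^2)/g^\prime)$, which is precisely $-\operatorname{Im}(\phi) = x_v$.

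The only real obstacle is bookkeeping, not ideas: non-commutativity of $H$ generates many cross terms at each step, and one must be careful to distinguish $\sqrt{-1}\in\mathbb{C}$ (which commutes with $g$ and is inserted into the complex scalar factor) from the quaternion $i$ (which anticommutes with $j$ and $k$). The identities $ji = -k$, $jk = i$, $ki = j$, together with the commutativity of the subring $\mathbb{R}\oplus\mathbb{R}i$ containing $g$ and $h$, suffice to carry out both verifications cleanly.
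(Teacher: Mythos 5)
Your proposal is correct and takes essentially the same route as the paper: both are direct quaternionic verifications resting on the identities $ij=k$, $jg=\bar g j$ and the Cauchy--Riemann relations $g_u=g'$, $g_v=\sqrt{-1}\,g_u$, with the only difference being that you expand the claimed product $(i-gj)j\tfrac{1}{g_u}(i-gj)$ and match it against $\Re(\phi)$ and $-\Im(\phi)$, whereas the paper massages $x_u$ and $x_v$ forward into that factored form. Your simplification $(i-gj)j=k+g$ and the reduction of the $x_v$ case to the $x_u$ case via $-1/g_v=\sqrt{-1}/g_u$ are both sound and, if anything, organize the bookkeeping a little more cleanly.
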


\begin{proof}
This proof uses the holomorphicity of $g$, and uses identification 
of the imaginary complex number $\sqrt{-1}$ with the imaginary 
quaternion $i$.  

Because $g$ is holomorphic, we have $\sqrt{-1} g_u = g_v$ and 
$g^\prime(=g_z)=g_u=-\sqrt{-1} g_v$.  

Then, 
\[ x=\frac{1}{2} \left( 
\int (2g,1-g^2,\sqrt{-1}(1+g^2)) \frac{dz}{g^\prime} + 
\int (2\bar g,1-\bar g^2,
       -\sqrt{-1}(1+\bar g^2)) \frac{d\bar z}{\overline{g^\prime}} 
\right) \; , \]  
so 
\[ x_u = \frac{1}{2} \left( \frac{2 g}{g^\prime} + \frac{2 \bar g}{\overline{g^\prime}} , 
\frac{1-g^2}{g^\prime} + \frac{1-\bar g^2}{\overline{g^\prime}} , 
\frac{\sqrt{-1} (1+g^2)}{g^\prime} - \frac{\sqrt{-1}(1+\bar g^2)}{\overline{g^\prime}} 
\right) = \]
\[ \left( \frac{g}{g^\prime} + \frac{\bar g}{\overline{g^\prime}} \right) i + 
\frac{1}{2} \left( \frac{1-g^2}{g^\prime} + \frac{1-\bar g^2}{\overline{g^\prime}} \right) j + 
\frac{1}{2} \left( \frac{1+g^2}{g^\prime} - \frac{1+\bar g^2}{\overline{g^\prime}} \right) i \cdot k = \]
\[ \left( \frac{g}{g^\prime} + \frac{\bar g}{\overline{g^\prime}} \right) i + 
\frac{1}{2} \left( \frac{1-g^2}{g^\prime} + \frac{1-\bar g^2}{\overline{g^\prime}} - 
\frac{1+g^2}{g^\prime} + \frac{1+\bar g^2}{\overline{g^\prime}} \right) j = \]
\[ \left( \frac{g}{g^\prime} + \frac{\bar g}{\overline{g^\prime}} \right) i + 
\left( \frac{1}{\overline{g^\prime}} - \frac{g^2}{g^\prime}  \right) j = 
\frac{g}{g^\prime} i - i \frac{\bar g}{\overline{g^\prime}} k^2 + 
\frac{1}{\overline{g^\prime}} j - \frac{g^2}{g^\prime} j = \]
\[ \left( \frac{-i}{\overline{g^\prime}} k + \frac{g}{g^\prime} i \right) (1+g k) = 
(k+g) \frac{i}{g^\prime} (1+i g j) = \]
\[ (i-g j) j \frac{1}{g^\prime} (i-g j) = (i - g j) j \frac{1}{g_u} (i - g j) \; . \] 
Similarly, we have 
\[ x_v = \frac{i}{2} \left( \frac{2 g}{g^\prime} - \frac{2 \bar g}{\overline{g^\prime}} , 
\frac{1-g^2}{g^\prime} - \frac{1-\bar g^2}{\overline{g^\prime}} , 
\frac{\sqrt{-1} (1+g^2)}{g^\prime} + \frac{\sqrt{-1}(1+\bar g^2)}{\overline{g^\prime}} 
\right) = \]
\[ - \left( \frac{g}{g^\prime} - \frac{\bar g}{\overline{g^\prime}} \right) + 
\frac{1}{2} \left( \frac{1-g^2}{g^\prime} - \frac{1-\bar g^2}{\overline{g^\prime}} \right) k - 
\frac{1}{2} \left( \frac{1+g^2}{g^\prime} + \frac{1+\bar g^2}{\overline{g^\prime}} \right) k = \]
\[ \left( - \frac{g}{g^\prime} + \frac{\bar g}{\overline{g^\prime}} \right) + 
\frac{1}{2} \left( \frac{1-g^2}{g^\prime} - \frac{1-\bar g^2}{\overline{g^\prime}} - 
\frac{1+g^2}{g^\prime} - \frac{1+\bar g^2}{\overline{g^\prime}} \right) k = \]
\[ \left( - \frac{g}{g^\prime} + \frac{\bar g}{\overline{g^\prime}} \right) + 
\left( - \frac{1}{\overline{g^\prime}} - \frac{g^2}{g^\prime}  \right) k = 
\left( \frac{g}{g^\prime} j - \frac{\bar g}{\overline{g^\prime}} j - 
\frac{1}{\overline{g^\prime}} i - \frac{g^2}{g^\prime} i \right) j = \]
\[ \left(\frac{1}{\overline{g^\prime}} - i \frac{g}{g^\prime} j \right) (-i-\bar g j) j = 
(i-g j) \frac{-i}{\overline{g^\prime}} j (i - g j) = \]
\[ (i-g j) j \frac{i}{g^\prime} (i-g j) = 
(i - g j) j \frac{-1}{g_v} (i - g j) \; . \] 
\end{proof}

\subsection{Discrete minimal surfaces in $\mathbb{R}^3$} 
The smooth case above suggests that the definition for discrete minimal 
surfaces should be 
\[ \ef_q - \ef_p = (i-g_p j) j \frac{a_{pq}}{g_q-g_p} (i-g_q j) \; , \] 
where the map 
$g$ from a domain in $\mathbb{Z}^2$ to 
$\mathbb{C}$ is a discrete holomorphic function.  
Here $p=(m,n)$ and $q$ is either $(m+1,n)$ or $(m,n+1)$.  
As in the smooth case, we avoid "umbilics", so 
\[ g_q-g_p \neq 0 \; . \]  
Taking this as the definition (see \cite{BobPink2} and \cite{Udo1}), 
we have the following two examples: 

\begin{example}
The discrete holomorphic function $c (m+in)$ for $c$ a complex constant 
will produce a minimal surface called a discrete Enneper surface, 
and graphics for this surface can be seen in \cite{BobPink2}.  
\end{example}

\begin{example}
The discrete holomorphic function $e^{c_1 m+i c_2 n}$ for 
choices of real constants $c_1$ and $c_2$ so that the cross ratio is 
identically $-1$ will produce a minimal surface called a discrete 
catenoid, and graphics for this surface also can be seen in \cite{BobPink2}.  
See also Figure 4 in this text.  
\end{example}

\subsection{Smooth CMC $1$ surfaces in 
$\mathbb{H}^3$}\label{lastsubsectionA} 
We can now similarly describe smooth and discrete CMC $1$ surfaces in 
$\mathbb{H}^3$.  Construction of smooth isothermic CMC $1$ surfaces 
starts with the Bryant equation ($g$ is an arbitrary holomorphic 
function such that $g^\prime \neq 0$) 
\[ dF = F \begin{pmatrix}  g & -g^2 \\ 1 & -g \end{pmatrix} 
\frac{dz}{g^\prime} \] 
with solution $F \in \text{SL}_2\mathbb{C}$, 
and the surface is then \[ F \cdot \bar F^t \in \mathbb{H}^3 \; . \]  
Here, hyperbolic $3$-space is 
\[ \mathbb{H}^3 = \{ (x_0,x_1,x_2,x_3) \in \mathbb{R}^{3,1} \, | 
\, x_0>0 , x_0^2-x_1^2-x_2^2-x_3^2 = 1 \}
= \]\[ \left\{ \begin{pmatrix} x_0+x_3 & x_1+ix_2 \\ x_1-ix_2 & 
x_0-x_3 \end{pmatrix} \right\} 
= \{ X \cdot \bar X^t \, | \, X \in \text{SL}_2\mathbb{C} \} \; . \]  

\begin{example}
Take any constant $q \in \mathbb{C} \setminus \{ 0 \}$.  
Then $g = q z$ 
gives 
\[ F = q^{-1/2} \begin{pmatrix}
\cosh (z) & q \sinh (z) - q z \cosh(z) \\ 
\sinh (z) & q \cosh (z) - q z \sinh(z) & 
\end{pmatrix} \; , \]
and $F \bar F^t$ gives a CMC $1$ Enneper cousin in 
$\mathbb{H}^3$.  
\end{example}

\begin{example}
To make CMC $1$ surfaces of revolution, called catenoid 
cousins, one can use $g=e^{\mu z}$ for $\mu$ either real or 
purely imaginary.  
\end{example}

\subsection{Discrete CMC $1$ surfaces in 
$\mathbb{H}^3$}\label{lastsubsectionB} 
Following \cite{Udo1}, the discrete version of the Bryant equation becomes 
\begin{equation}\label{starrats0} 
F_q-F_p = F_p \begin{pmatrix} g_p & -g_pg_q \\ 1 & -g_q \end{pmatrix} 
\frac{\lambda a_{pq}}{g_q-g_p} \; , 
\;\;\; \det F \in \mathbb{R} \; , \end{equation} and $g$ is again a 
discrete holomorphic function with $q_q-q_p \neq 0$.  Now the 
formula in \cite{Udo1} for the surface is different: it is obtained 
using the $\mathbb{R}^{4,1}$ lightcone model by setting 
\[ \begin{pmatrix} a \\ b \end{pmatrix} = \begin{pmatrix} 0 & 1 \\ 
j & 0 \end{pmatrix} 
F_p \begin{pmatrix} i \\ j \end{pmatrix} \] 
and then taking the vertices of the surface as 
\[ \ef_p = r_p 
\begin{pmatrix} -b \bar a & a \bar a \\ b \bar b & - a \bar b \end{pmatrix} 
\in \mathbb{H}^3 \subset L^4 \subset \mathbb{R}^{4,1} \; , \;\;\; 
r_p \in \mathbb{R} \setminus \{ 0 \} \; . \] 
Here $\mathbb{H}^3$ lies in the $4$-dimensional light cone 
$L^4$ in the following way, like in Chapter \ref{chaponsmoothCMCsurf}: 
\[ \mathbb{H}^3 = \left\{ X \in L^4 \, \left| \, 
      X \cdot \begin{pmatrix} -i & 0 \\ 0 & i \end{pmatrix} + 
      \begin{pmatrix} -i & 0 \\ 0 & i \end{pmatrix} \cdot X = 
      2 I \right. \right\} \; . \] 
Note that because the entries of $F$ are complex, 
not quaternionic, it follows that 
$b \bar a$ is purely imaginary quaternionic, so $\ef_p$ 
really does lie in $\mathbb{R}^{4,1}$, and thus in $L^4$.  
The scalar $r_p$ is chosen so that $\ef_p \in \mathbb{H}^3$.  

One can check that $\ef_p$ will be of the form 
\[ r \begin{pmatrix} -(\bar A C+\bar B D)j+i(AD-BC) & C\bar C+D \bar D \\
A \bar A + B \bar B & j (A \bar C + B \bar D)-i(AD-BC)
\end{pmatrix} \; , 
\] where 
\[ F = \begin{pmatrix} A & B \\ C & D 
\end{pmatrix} \; . 
\]  For $\ef_p$ to lie in $\mathbb{H}^3$, we should take 
\[ r = \frac{1}{AD-BC} \; . \]  This means that the coefficient of the 
$i$ term in the diagonal entries will be simply $\pm 1$.  So we can view 
the surface as lieing in the $4$-dimensional space 
$\mathbb{R}^{3,1}$, by simply 
dropping the $x_1$ part off of (this sum of matrices was seen at the 
beginning of Chapter \ref{sect:lcq-smooth}) 
\[ x_1 \begin{pmatrix} i & 0 \\ 0 & -i \end{pmatrix} + 
x_2 \begin{pmatrix} j & 0 \\ 0 & -j \end{pmatrix} + 
x_3 \begin{pmatrix} k & 0 \\ 0 & -k \end{pmatrix} + 
x_4 \begin{pmatrix} 0 & 1 \\ -1 & 0 \end{pmatrix} + 
x_0 \begin{pmatrix} 0 & 1 \\ 1 & 0 \end{pmatrix} \; . \] 

Now, the projection into the Poincare ball model is 
\[ (x_2,x_3,x_4,x_0) \to \frac{(x_2,x_3,x_4)}{1+x_0} = \]  
\begin{equation}\label{starrats1}  
\frac{(\text{Re}(-\bar A C-\bar B D),\text{Im}(-\bar A C-\bar B D),
\tfrac{1}{2}(-A\bar A-B\bar B+C\bar C+D\bar D))}{AD-BC + 
\tfrac{1}{2} (A\bar A+B\bar B+C\bar C+D\bar D)} \; . \end{equation} 

On the other hand, if we simply look at 
\[ \frac{1}{AD-BC} F \bar F^t = \frac{1}{AD-BC} 
\begin{pmatrix}
A \bar A+B \bar B & A \bar C+B \bar D \\ C \bar A+D \bar B & C \bar C+D \bar D
\end{pmatrix} = \]\[ 
\begin{pmatrix}
y_0+y_3 & y_1+\sqrt{-1} y_2 \\ y_1-\sqrt{-1} y_2 & y_0-y_3
\end{pmatrix} \; ,\] and then project to the Poincare ball, we have 
\[ \frac{(y_1,y_2,y_3)}{1+y_0} = \]\begin{equation}\label{starrats2} 
\frac{(\text{Re}(A \bar C+B \bar D),\text{Im}(A \bar C+B \bar D),
\tfrac{1}{2}(A\bar A+B\bar B-C\bar C-D\bar D))}{AD-BC + 
\tfrac{1}{2} (A\bar A+B\bar B+C\bar C+D\bar D)} \; . \end{equation} 
Note that \eqref{starrats1} and \eqref{starrats2} are essentially the same, 
up to a rigid motion of $\mathbb{H}^3$.  Thus we have proven: 

\begin{theorem} (\cite{HRSY}) 
The discrete CMC 1 surface $\ef$ in $\mathbb{H}^3$ given by $F$ solving 
\eqref{starrats0} is 
\[ \ef = \frac{1}{\det F} F \bar F^t \; , \] 
up to a rigid motion of $\mathbb{H}^3$.  
\end{theorem}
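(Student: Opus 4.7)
The plan is to verify the theorem by transporting both expressions for the surface into a common model of $\mathbb{H}^3$ and comparing them coordinate-wise, then exhibiting the rigid motion explicitly. The preceding text has already reduced the $\mathbb{R}^{4,1}$-lightcone formula for $\ef_p$ to the explicit matrix
\[ \frac{1}{AD-BC}\begin{pmatrix} -(\bar A C+\bar B D)j+i(AD-BC) & C\bar C+D \bar D \\ A\bar A+B\bar B & j(A\bar C+B\bar D)-i(AD-BC)\end{pmatrix}, \]
projected into the Poincaré ball as the triple \eqref{starrats1}. I would in parallel compute $\frac{1}{\det F}F\bar F^t$ in the Hermitian matrix model $\mathcal{H}$ of $\mathbb{H}^3$ (introduced in Section \ref{hyperbolic3space}) and push it through the canonical isometry $\mathcal{H}\to\mathcal{P}$ sitting after \eqref{H3toPoincar} to obtain the triple \eqref{starrats2}.

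With both points in the Poincaré ball, the comparison becomes elementary algebra. The denominators of \eqref{starrats1} and \eqref{starrats2} are manifestly identical. For the numerators, $\operatorname{Re}(\bar A C+\bar B D)=\operatorname{Re}(\overline{A\bar C+B\bar D})=\operatorname{Re}(A\bar C+B\bar D)$ while $\operatorname{Im}(\bar A C+\bar B D)=-\operatorname{Im}(A\bar C+B\bar D)$, and the third entries $\tfrac12(-A\bar A-B\bar B+C\bar C+D\bar D)$ and $\tfrac12(A\bar A+B\bar B-C\bar C-D\bar D)$ are negatives of each other. Hence the two projections are related by the map $(x_2,x_3,x_4)\mapsto(-x_2,x_3,-x_4)$, a rotation of $\mathbb{R}^3$ by $\pi$ about the $x_3$-axis. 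This is a Euclidean isometry of the Poincaré ball fixing the origin, hence an isometry of $\mathbb{H}^3$, which is the rigid motion promised by the theorem.

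I expect the main obstacle to be pure bookkeeping rather than any deep idea: the formula defining $\ef_p$ from the column vector $(a,b)^t=\bigl(\begin{smallmatrix}0&1\\j&0\end{smallmatrix}\bigr)F\bigl(\begin{smallmatrix}i\\j\end{smallmatrix}\bigr)$ mixes \emph{quaternionic} conjugation (used in the Hermitian-matrix realisation of $\mathbb{R}^{4,1}$) with the \emph{complex} conjugation acting on the entries $A,B,C,D\in\mathbb{C}$ of $F\in\SL_2\!\mathbb{C}$, and one must commit to the identification of the complex unit $\sqrt{-1}$ with the quaternion $i$ and track the resulting signs carefully. The key sanity check is that the coefficient of $i$ on the diagonal of $\ef_p$ emerges as a real multiple of $\det F$, which is exactly what forces the normalization $r_p=1/\det F$ so that $\ef_p$ lands on the $\mathbb{H}^3$ slice (coefficient of $i$ equal to $\pm 1$) rather than merely in $L^4$. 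Once this identification and normalization are under control, the two explicit Poincaré-ball expressions present themselves and the rigid motion between them reads off directly.
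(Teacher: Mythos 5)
Your proposal is correct and follows essentially the same route as the paper: both arguments push the light-cone expression for $\ef_p$ and the normalized product $\frac{1}{\det F}F\bar F^t$ into the Poincar\'e ball, arrive at \eqref{starrats1} and \eqref{starrats2}, and observe that these agree up to a rigid motion. Your only refinement is to exhibit that motion explicitly as the rotation $(x_2,x_3,x_4)\mapsto(-x_2,x_3,-x_4)$ by $\pi$ about the $x_3$-axis, where the paper merely states the two expressions are ``essentially the same, up to a rigid motion of $\mathbb{H}^3$.''
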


We can now make specific examples by choosing discrete holomorphic 
functions $g_{m,n}$.  For example, we can construct discrete versions of 
the smooth CMC $1$ Enneper cousins and catenoid cousins by using 
discrete versions of $g = q z$ and $g = e^{\mu z}$, $\mu \in 
(\mathbb{R} \cup \sqrt{-1} \mathbb{R}) \setminus \{ 0 \}$, respectively.  

\begin{remark}
Recently, there has been research on a notion of discrete surfaces 
called $s$-isothermic surfaces, and we comment briefly on this here.  
One can "bend" Schramm's circle packings to get surfaces, by 
changing half of the circles (in a checkered pattern) 
into spheres.  
This leads to the notion of discrete $s$-isothermic minimal surfaces.  


We can define discrete $s$-CMC surfaces in this way: 
an $s$-isothermic surface is {\em $s$-CMC} 
if it has a Christoffel transform that is also a Darboux 
transform.  (See Definition \ref{oldsensedefn}.)  

For more on $s$-isothermic surfaces, see 
\cite{Bob-Hoff-Spring} and \cite{bs-book}.  
\end{remark}


\end{document}